\tikzset{
>=stealth',
  punktchain/.style={
    rectangle,
    rounded corners,
    draw=black, thick,
    minimum height=3em,
    text centered,
    on chain},
  line/.style={draw, thick, <-},
  element/.style={
    tape,
    top color=white,
    bottom color=blue!50!black!60!,
    minimum width=8em,
    draw=blue!40!black!90, very thick,
    text width=10em,
    minimum height=3.5em,
    text centered,
    on chain},
  every join/.style={->, thick,shorten >=1pt},
  decoration={brace},
  tuborg/.style={decorate},
  tubnode/.style={midway, right=2pt},
}
\setlist[enumerate,1]{label={\upshape(\arabic*)}}
\setlist[enumerate,2]{label={\upshape(\alph*)},ref=\theenumi\upshape(\alph*)}
\setlist[enumerate,3]{label={\upshape(\roman*)},ref=\theenumi\theenumii\upshape(\roman*)}
\crefname{Prop}{Proposition}{Propositions}
\crefname{Thm}{Theorem}{Theorems}
\crefname{Lem}{Lemma}{Lemmas}
\crefname{enumi}{Case}{Cases}
\def\C{\ensuremath{\mathbb{C}}}
\def\H{\ensuremath{\mathbb{H}}}
\def\N{\ensuremath{\mathbb{N}}}
\def\P{\ensuremath{\mathbb{P}}}
\def\Q{\ensuremath{\mathbb{Q}}}
\def\R{\ensuremath{\mathbb{R}}}
\def\T{\ensuremath{\mathbb{T}}}
\def\Z{\ensuremath{\mathbb{Z}}}
\def\alg{\mathrm{alg}}
\def\Amp{\mathrm{Amp}}
\def\Aut{\mathop{\mathrm{Aut}}\nolimits}
\def\Br{\mathop{\mathrm{Br}}\nolimits}
\def\ch{\mathop{\mathrm{ch}}\nolimits}
\def\Cl{\mathop{\mathrm{Cl}}}
\def\Coh{\mathop{\mathrm{Coh}}\nolimits}
\def\codim{\mathop{\mathrm{codim}}\nolimits}
\def\Cone{\mathop{\mathrm{cone}}}
\def\deg{\mathop{\mathrm{deg}}}
\def\dim{\mathop{\mathrm{dim}}\nolimits}
\def\Ext{\mathop{\mathrm{Ext}}\nolimits}
\def\ext{\mathop{\mathrm{ext}}\nolimits}
\def\lExt{\mathop{\mathcal Ext}\nolimits} 
\def\GL{\mathop{\mathrm{GL}}\nolimits}
\def\Hal{H^*_{\alg}}
\def\Hilb{\mathop{\mathrm{Hilb}}\nolimits}
\def\Hom{\mathop{\mathrm{Hom}}\nolimits}
\def\lHom{\mathop{\mathcal Hom}\nolimits}
\def\RlHom{\mathop{\mathbf{R}\mathcal Hom}\nolimits}
\def\RHom{\mathop{\mathbf{R}\mathrm{Hom}}\nolimits}
\def\id{\mathop{\mathrm{id}}\nolimits}
\def\Id{\mathop{\mathrm{Id}}\nolimits}
\def\im{\mathop{\mathrm{im}}\nolimits}
\def\coker{\mathop{\mathrm{coker}}\nolimits}
\def\Isom{\mathop{\mathrm{Isom}}\nolimits}
\def\Mov{\mathop{\mathrm{Mov}}\nolimits}
\def\mod{\mathop{\mathrm{mod}}\nolimits}
\def\min{\mathop{\mathrm{min}}\nolimits}
\def\Nef{\mathrm{Nef}}
\def\Num{\mathop{\mathrm{Num}}\nolimits}
\def\NS{\mathop{\mathrm{NS}}\nolimits}
\def\Pic{\mathop{\mathrm{Pic}}\nolimits}
\def\PGL{\mathop{\mathrm{PGL}}\nolimits}
\def\Quot{\mathop{\mathrm{Quot}}\nolimits}
\def\rk{\mathop{\mathrm{rk}}}
\def\Sing{\mathop{\mathrm{Sing}}}
\def\Sym{\mathop{\mathrm{Sym}}\nolimits}
\def\td{\mathop{\mathrm{td}}\nolimits}
\def\Cone{\mathop{\mathrm{Cone}}\nolimits}
\def\chr{\mathop{\mathrm{char}}\nolimits}
\def\MG13{\ensuremath{{\mathcal M}_{\Gamma_1(3)}}}
\def\tildeMG13{\ensuremath{\widetilde{\mathcal M}_{\Gamma_1(3)}}}
\def\Stab{\mathop{\mathrm{Stab}}\nolimits}
\def\Stabd{\mathop{\Stab^{\dagger}}\nolimits}
\def\into{\ensuremath{\hookrightarrow}}
\def\onto{\ensuremath{\twoheadrightarrow}}
\def\blank{\underline{\hphantom{A}}}
\def\Db{\mathrm{D}^{\mathrm{b}}}
\newcommand\TFILTB[3]{%
\xymatrix@=1pc{
{0 = {#1}_0} \ar[rr]&&
{{#1}_1} \ar[rr]\ar[ld] &&
{{#1}_2} \ar[r]\ar[ld] &
{\cdots} \ar[r] & { {#1}_{#3-1}} \ar[rr] &&
{{#1}_{#3} = {#1}} \ar[ld]
\\
& *{{#2}_1} \ar@{.>}[ul] &&
{{#2}_2} \ar@{.>}[ul] & &&&
{{#2}_{{#3}}} \ar@{.>}[ul]
}}
\newtheorem*{rep@theorem}{\rep@title}
\newcommand{\newreptheorem}[2]{%
\newenvironment{rep#1}[1]{%
 \def\rep@title{#2 \ref{##1}}%
 \begin{rep@theorem}}%
 {\end{rep@theorem}}}
\newtheorem{Thm}{Theorem}[section]
\newtheorem{Prop}[Thm]{Proposition}
\newtheorem{PropDef}[Thm]{Proposition and Definition}
\newtheorem{Lem}[Thm]{Lemma}
\newtheorem{Cor}[Thm]{Corollary}
\newtheorem{thm-int}{Theorem}
\theoremstyle{definition}
\newtheorem{Def-s}[Thm]{Definition}
\newtheorem{Def}[Thm]{Definition}
\newtheorem{Rem}[Thm]{Remark}
\newtheorem{Ex}[Thm]{Example}
\def\C{\ensuremath{\mathbb{C}}}
\def\H{\ensuremath{\mathbb{H}}}
\def\N{\ensuremath{\mathbb{N}}}
\def\P{\ensuremath{\mathbb{P}}}
\def\Q{\ensuremath{\mathbb{Q}}}
\def\R{\ensuremath{\mathbb{R}}}
\def\Z{\ensuremath{\mathbb{Z}}}
\def\AA{\ensuremath{\mathcal A}}
\def\BB{\ensuremath{\mathcal B}}
\def\CC{\ensuremath{\mathcal C}}
\def\DD{\ensuremath{\mathcal D}}
\def\EE{\ensuremath{\mathcal E}}
\def\FF{\ensuremath{\mathcal F}}
\def\GG{\ensuremath{\mathcal G}}
\def\HH{\ensuremath{\mathcal H}}
\def\MM{\ensuremath{\mathcal M}}
\def\OO{\ensuremath{\mathcal O}}
\def\PP{\ensuremath{\mathcal P}}
\def\QQ{\ensuremath{\mathcal Q}}
\def\TT{\ensuremath{\mathcal T}}
\def\VV{\ensuremath{\mathcal V}}
\def\WW{\ensuremath{\mathcal W}}
\def\ZZ{\ensuremath{\mathcal Z}}
\newcommand{\mor}[1][]{\xrightarrow{#1}}
\newcommand{\isomor}{\mor[\sim]}
\def\X{\ensuremath{\widetilde{X}}}
\def\a{\mathbf{a}}
\def\b{\mathbf{b}}
\def\d{\mathbf{d}}
\def\u{\mathbf{u}}
\def\v{\mathbf{v}}
\def\w{\mathbf{w}}
\def\z{\mathbf{z}}
\def\cal{\mathcal}
\def\Bbb{\mathbb}
\newcommand{\todo}[1]{\vspace{5 mm}\par \noindent
\marginpar{\textsc{ToDo}}
\framebox{\begin{minipage}[c]{0.95 \textwidth}
\tt \textcolor{red}{#1} \end{minipage}}\vspace{5 mm}\par}
\newcommand{\ignore}[1]{}
\begin{document}
\author{Howard Nuer}
\address{H.N.: Department of Mathematics, Statistics, and Computer Science,
University of Illinois at Chicago,
851 S. Morgan Street
Chicago, IL 60607}
\email{hjnuer@gmail.com}

\author{K\={o}ta Yoshioka}
\address{K.Y.: Department of Mathematics, Faculty of Science, Kobe University, Kobe, 657, Japan}
\email{yoshioka@math.kobe-u.ac.jp}
\title[MMP via wall-crossing for moduli sheaves on Enriques surfaces]{MMP via wall-crossing for moduli spaces of stables sheaves on an Enriques surface}
\begin{abstract}
    We use wall-crossing in the Bridgeland stability manifold to systematically study the birational geometry of the moduli space $M_\sigma(\v)$ of $\sigma$-semistable objects of class $\v$ for a generic stability condition $\sigma$ on an arbitrary Enriques surface $X$.  In particular, we show that for any other generic stability condition $\tau$, the two moduli spaces $M_\tau(\v)$ and $M_\sigma(\v)$ are birational.  As a consequence, we show that for primitive $\v$ of odd rank $M_\sigma(\v)$ is birational to a Hilbert scheme of points.  Similarly, in even rank we show that  $M_\sigma(\v)$ is birational to a moduli space of torsion sheaves supported on a hyperelliptic curve when $\ell(\v)=1$.  As an added bonus of our work, we prove that the Donaldson-Mukai map $\theta_{\v,\sigma}:\v^\perp\to\Pic(M_\sigma(\v))$ is an isomorphism for these classes.  Finally, we use our classification to fully describe the geometry of the only two examples of moduli of stable sheaves on $X$ that are uniruled (and thus not K-trivial).
\end{abstract}
\maketitle
\setcounter{tocdepth}{1}
\tableofcontents
\section{Introduction}
For almost forty years, moduli spaces of stable sheaves have attracted great interest from mathematicians and physicists alike.  They have been studied using vastly different mathematical disciplines, but more recently the development and application of Bridgeland stability conditions to their study has unified most of these tools.  Introduced by Bridgeland \cite{Bri07} to formulate a rigorous definition of Douglas's $\pi$-stability for branes in string theory, stability conditions on the derived category $\Db(X)$ of a smooth projective variety $X$ provide an adequately robust arena in which to study moduli spaces of sheaves using tools such as Fourier-Mukai transforms, enumerative and motivic invariants, and the minimal model program.  In particular, as stability conditions move around in a complex manifold $\Stab(X)$ which admits a wall-and-chamber for any given Chern character, there is a strong connection between wall-crossing in $\Stab(X)$ on the one hand, and wall-crossing formulae for enumerative invariants and birational transformations on moduli spaces on the other.

In this paper, we bring this toolbox to bear on the study of moduli spaces of sheaves on an Enriques surface $X$.  In previous work \cite{Nue14a,Nue14b,Yos03,Yos14,Yos16b,Yos16a}, we had shown that for a generic stability condition $\sigma$ in a certain distinguished connected component $\Stabd(X)\subset\Stab(X)$ there exist projective coarse moduli spaces $M_\sigma(\v)$ parametrizing (S-equivalence classes) of $\sigma$-semistable objects of Mukai vector $\v$ (see \cref{sec:ReviewStabilityK3Enriques} for definitions).  We also classified precisely for which Mukai vectors $\v$ (or equivalently Chern character) the moduli space $M_\sigma(\v)$ is nonempty, and we studied some coarse geometric and topological invariants of $M_\sigma(\v)$ (refer again to \cref{sec:ReviewStabilityK3Enriques} for a brief recap of these results).  In particular, for the Mukai vectors (or Chern characters) of stable sheaves, we described some of these invariants for the moduli space of stable sheaves using a combination of modern and classical techniques in conjunction with Bridgeland stability.

We continue our investigation of these moduli spaces in this paper with two more specific goals in mind.  The first is to intimately study the effect on moduli spaces of crossing a wall $\WW$.  More specifically, given a Mukai vector $\v$, a wall $\WW\subset\Stabd(X)$ for $\v$, and two stability conditions $\sigma_\pm$ in the opposite and adjacent chambers separated by $\WW$, we seek a precise answer to the question: how are $M_{\sigma_+}(\v)$ and $M_{\sigma_-}(\v)$ related?  The basic answer is given by \cref{Thm:MainTheorem1} which says that $M_{\sigma_+}(\v)$ and $M_{\sigma_-}(\v)$ are birational.  This question, and others associated with it, are motivated by a larger trend in moduli theory, wherein minimal models of a given moduli space are shown to be moduli spaces in and of themselves, just of slightly different objects.  In the case of K3 surfaces, such a result, stating that all minimal models of $M_\sigma(\v)$ are isomorphic to some $M_\tau(\v)$ for a different $\tau\in\Stabd(X)$, was shown to be true by Bayer and Macr\`{i} in \cite{BM14b}.  While we had hoped to prove such a result in the case of Enriques surfaces, our investigation instead led to a possible counterexample, see \cref{Ex:ConfusingSmallContraction}.  

Our second goal is to use wall-crossing, Fourier-Mukai transforms, and \cref{Thm:MainTheorem1} to pin-point precisely where the moduli spaces $M_\sigma(\v)$ live in the classification of algebraic varieties.  We accomplish this goal with success for almost all Mukai vectors in \cref{Thm:application1,Thm:application2}.  It has become more apparent with these results and similar results for other surfaces that Bridgeland stability conditions are crucial tool not only for studying the birational geometry of moduli spaces, but also more intrinsic and more classical questions about the geometry of moduli spaces.

\subsection*{Summary of Results and Techniques}
Let us turn now to stating our main results and the main tool we use to prove them.  While we briefly introduce notation, the reader is invited to see \cref{sec:ReviewStabilityK3Enriques} for more details.

For an Enriques surface $X$, denote by $\varpi:\widetilde{X}\to X$ the two-to-one K3 universal covering.  The topological invariants of a coherent sheaf or object $E$ in the derived category $\Db(X)$ are encoded in its Mukai vector $$\v(E):=\ch(E)\sqrt{\td(X)}\in H^*(X,\Q).$$  We consider the Mukai lattice $$\Hal(X,\Z):=\v(K(X)),$$ where $K(X)$ is the Grothendieck group, along with the induced pairing $\langle\v(E),\v(F)\rangle=-\chi(E,F)$.  For any primitive $\v\in\Hal(X,\Z)$, if we write $\varpi^*\v=\ell(\v)\w\in\Hal(\widetilde{X},\Z)$ with $\w$ primitive, then $\ell(\v)=1$ or 2 (see \cref{primitive}).  The stability conditions that we consider are all contained in the distinguished connected component $\Stabd(X)$ of $\Stab(X)$ containing those stability conditions $\sigma$ such that skyscraper sheaves of points are $\sigma$-stable.  For $\sigma\in\Stabd(X)$, we denote by $M_\sigma(\v)$ the moduli space of (S-equivalence classes of) $\sigma$-semistable objects $E\in\Db(X)$ with $\v(E)=\v$.  These moduli spaces admit a decomposition $$M_\sigma(\v)=M_{\sigma}(\v,L)\bigsqcup M_\sigma(\v,L+K_X),$$ where $L\in\Pic(X)$ satisfies $c_1(L)=c_1(\v)$ in $H^2(X,\Q)$ and $M_\sigma(\v,L)$ parametrizes $E\in M_\sigma(\v)$ such that $\det(E)=L$.

Our first main result states that any two Bridgeland moduli spaces are birational.  
\begin{Thm}\label{Thm:MainTheorem1}
Let $\v\in\Hal(X,\Z)$ satisfy $\v^2>0$, and let $\sigma,\tau\in\Stabd(X)$ be generic stability conditions with respect to $\v$ (that is, they are not contained on any wall for $\v$).
\begin{enumerate}
\item\label{enum:MT1-two moduli are birational} The two moduli spaces $M_{\sigma}(\v)$ and $M_{\tau}(\v)$ of Bridgeland-semistable objects are birational to each other.
\item\label{enum:MT1-birational map given by FM transform} More precisely, there is a birational map induced by a derived (anti-)autoequivalence $\Phi$ of $\Db(X)$ in the following sense: there exists a common open subset $U\subset M_{\sigma}(\v),U\subset M_{\tau}(\v)$ such that for any $u\in U$, the corresponding objects $E_u\in M_{\sigma}(\v)$ and $F_u\in M_{\tau}(\v)$ satisfy $F_u=\Phi(E_u)$.  If $\Pic(\widetilde{X})=\varpi^*\Pic(X)$, then the complements of $U$ in $M_{\sigma}(\v)$ and $M_{\tau}(\v)$ have codimension at least two. 
\end{enumerate}
\end{Thm}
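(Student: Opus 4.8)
**

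The plan is to reduce the statement to the K3 surface case via the covering map $\varpi:\widetilde X\to X$, exploiting the fact that wall-crossing for moduli on Enriques surfaces should be controlled by wall-crossing on the universal cover, where the Bayer--Macr\`i machinery from \cite{BM14b} is available. The key structural observation is that a wall $\WW$ for $\v$ in $\Stabd(X)$ is crossed by a continuous path $\sigma_t$ of stability conditions, and at each point of the wall there is an induced spherical or fractional-twist autoequivalence realizing the birational modification. My approach is to work locally at a single wall first, prove the two-sided statement there, and then globalize by connecting $\sigma$ and $\tau$ through finitely many chambers.

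First I would establish that it suffices to treat the case where $\sigma$ and $\tau$ lie in chambers adjacent to a single wall $\WW$. Since $\Stabd(X)$ is connected and the wall-and-chamber structure is locally finite, any path from $\sigma$ to $\tau$ crosses finitely many walls, and birationality is transitive; so the general statement follows by composing the autoequivalences $\Phi$ obtained at each wall. This also explains the claim in part~\eqref{enum:MT1-birational map given by FM transform} that $\Phi$ is a single derived (anti-)autoequivalence: it is the composite of the elementary modifications across each wall crossed.

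Second, at a single wall I would construct $\Phi$ explicitly. The generic object $E$ on the wall is $\sigma_0$-semistable with a Jordan--H\"older filtration into $\sigma_0$-stable factors of smaller Mukai vector; for a \emph{generic} $v$ with $\v^2>0$, a generic point of $M_{\sigma_\pm}(\v)$ parametrizes an object that remains stable across the wall, and the locus where stability actually changes is a proper closed subset. The derived autoequivalence $\Phi$ is assembled from the (anti-)autoequivalences attached to the destabilizing classes on the wall (spherical twists, or on an Enriques surface the relevant derived equivalences pulled back or pushed down along $\varpi$), and one checks $\Phi$ sends the generic $\sigma_+$-stable object to a $\sigma_-$-stable object of the same class. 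The common open set $U$ is then the locus of objects stable on both sides and identified by $\Phi$.

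The hard part, and the source of the hypothesis $\Pic(\widetilde X)=\varpi^*\Pic(X)$, will be the \emph{codimension} estimate in part~\eqref{enum:MT1-birational map given by FM transform}. To show $M_{\sigma}(\v)\setminus U$ has codimension at least two, I would need to bound the dimension of the locus of strictly $\sigma_0$-semistable objects on the wall, i.e.\ objects admitting a nontrivial JH filtration. This is exactly a count of the possible destabilizing sub-Mukai-vectors $\w$ with $0<\w^2$ and the dimensions of the corresponding moduli of sub- and quotient objects; under the assumption that no unexpected spherical or isotropic classes appear (which is what $\Pic(\widetilde X)=\varpi^*\Pic(X)$ guarantees by ruling out extra $(-2)$- or $(-1)$-classes descending from the cover), one shows each such stratum has codimension $\geq 2$ by the usual inequality $\dim M_{\sigma}(\w)+\dim M_{\sigma}(\v-\w)+\ext^1\leq \dim M_{\sigma}(\v)-2$. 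Without this Picard hypothesis there can be divisorial contractions — precisely the source of the uniruled examples and the potential counterexample in \cref{Ex:ConfusingSmallContraction} — so the codimension-two conclusion genuinely fails in general, which is why it is stated conditionally. I expect the careful case analysis of these strata, and verifying that the Enriques-specific anti-autoequivalences behave as expected, to be the main technical obstacle.
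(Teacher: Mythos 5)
Your global strategy (connect $\sigma$ and $\tau$ by a path, cross one wall at a time, compose the elementary equivalences) matches the paper, and your instinct to use dimension counts on strata of destabilized objects is also how the paper handles flopping and fake walls (via the Harder--Narasimhan filtration stacks of \cref{sec:DimensionsOfHarderNarasimhan}). But there are two genuine gaps. First, your local step assumes that ``a generic point of $M_{\sigma_\pm}(\v)$ parametrizes an object that remains stable across the wall.'' This fails exactly at the totally semistable walls classified in \cref{classification of walls}: at a Hilbert--Chow wall ($\langle\v,\u\rangle=1$, $\ell(\u)=2$) \emph{every} object of class $\v$ is strictly $\sigma_0$-semistable, and the same happens whenever $\langle\v,\w\rangle<0$ for an effective spherical or exceptional class $\w\in\HH_\WW$. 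The paper must first reduce to the minimal Mukai vector in the $G_{\HH}$-orbit by an explicit sequence of spherical/weakly-spherical twists (\cref{Prop:NonMinimalIsomorphism,Prop:CompositionSphericalExceptional}), and at Hilbert--Chow and LGU walls it produces the birational map not from any stability-preserving locus but from explicit dualization autoequivalences ($E\mapsto E^\vee[2]$, resp.\ $E\mapsto E^\vee\otimes\OO(L)[2]$), which give \emph{isomorphisms} $M_{\sigma_+}(\v)\cong M_{\sigma_-}(\v)$. Your proposal has no mechanism for these walls.

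Second, you misidentify the role of the hypothesis $\Pic(\widetilde{X})=\varpi^*\Pic(X)$. It does not ``rule out extra $(-2)$- or $(-1)$-classes'': exceptional classes (e.g.\ $\v(\OO_X)$) exist on every Enriques surface, and divisorial walls of Hilbert--Chow, LGU and iLGU type occur regardless of the Picard hypothesis, so your claimed stratum inequality ``codimension $\geq 2$'' is simply false at those walls --- the strictly $\sigma_0$-semistable locus there is a divisor or even everything. The codimension-two conclusion nevertheless holds at HC and LGU walls because of the isomorphisms above; the only place the Picard hypothesis enters is the iLGU wall ($\langle\v,\u\rangle=1=\ell(\u)$), where the paper needs it to \emph{descend} the K3 Li--Gieseker--Uhlenbeck Fourier--Mukai kernel from $\widetilde{X}$ to an honest kernel on $X\times X$ (\cref{prop:Enriques-refl,prop:LGUK3}); without it one can only take $U=M_{\sigma_0}^s(\v)$, whose complement is a divisor, which is precisely why the statement is conditional. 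Relatedly, \cref{Ex:ConfusingSmallContraction} concerns a small (flopping) contraction present on any Enriques surface, not a failure mode tied to the Picard hypothesis, so it cannot serve as the justification you give for the conditional clause.
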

We remark that the generic Enriques surface $X$ satisfies $\varpi^*\Pic(X)=\Pic(\widetilde{X})$.  The proof of the analogous statement to \cref{Thm:MainTheorem1} for K3 surfaces \cite[Theorem 1.1]{BM14b} relies heavily on the fact that $M_\sigma(\v)$ is a projective hyperk\"{a}hler manifold in the K3 case.  Our proof is based on studying stacks of Harder-Narasimhan filtrations and is thus more universal.  In fact, the first author has already used this tool to obtain analogous results for bielliptic surfaces \cite{Nue18}.  

Let $Y$ be a smooth projective variety, and suppose we are given a stability condition $\sigma\in\Stabd(Y)$ on a wall $\WW$ for $\v$ and a sequence of Mukai vectors (or Chern characters) $\v_1,\dots,\v_s$ of descending slope with respect to a nearby generic stability condition $\sigma_-$.  Then under some mild assumptions on $Y$, we prove in \cref{Thm:DimensionOfHNFiltrationStack} that the substack of $\sigma$-semistable objects whose Harder-Narasimhan filtration with respect to $\sigma_-$ has $i$-th $\sigma_-$-semistable factor of class $\v_i$ has dimension $$\sum_{i=1}^s\dim\MM_{\sigma_-}(\v_i)+\sum_{i<j}\langle\v_i,\v_j\rangle.$$  See \cref{sec:DimensionsOfHarderNarasimhan} for more details.  In particular, in the case of an Enriques surface, \cref{Thm:DimensionOfHNFiltrationStack} allows us to determine what objects in $M_{\sigma_+}(\v,L)$ are destabilized when crossing the wall $\WW$, and most importantly the dimension of this locus, purely in terms of a hyperpolic lattice $\HH_\WW$ associated with the wall.  This is the content of \cref{classification of walls} which gives a much more refined and detailed classification of the type of birational map in \cref{Thm:MainTheorem1} in terms of the lattices $\HH_\WW$ associated to the walls $\WW$ that are crossed on along a path from $\sigma$ to $\tau$.

One of the most powerful uses of a result like \cref{Thm:MainTheorem1} appears in the next three results.  They all follow a similar pattern.  To study a given moduli space $M_\sigma(\v,L)$, we apply a Fourier-Mukai transform $\Phi:\Db(X)\to\Db(X)$ inducing an isomorphism $M_\sigma(\v,L)\isomor M_{\Phi(\sigma)}(\Phi_*(\v),L')$, where $\Phi_*(\v)$ has a more familiar form, say that of a well-understand type of coherent sheaf.  Using \cref{Thm:MainTheorem1}, we know that there is a birational map $M_{\Phi(\sigma)}(\Phi_*(\v),L')\dashrightarrow M_\tau(\Phi_*(\v),L')$, where $\tau$ is now in the so-called Gieseker chamber so that $M_\tau(\Phi_*(\v),L')$ is simply the moduli space of stable sheaves of Mukai vector $\Phi_*(\v)$ and determinant $L'$.  An argument exactly along these lines gives the next two results:   

\begin{Thm}\label{Thm:application1}
Let $\v$ be a primitive Mukai vector such that $\v^2>0$ is odd.
Then for a general $\sigma$,
there is an (anti-)autoequivalence $\Phi$ of $\Db(X)$
which induces an isomorphism $\Phi:U \to U'$
where $U \subset M_\sigma(\v,L)$ and 
$U' \subset \Hilb^{\frac{\v^2+1}{2}}(X)$ are dense open subsets.  
In particular, $M_\sigma(\v,L)$ is birationally equivalent to
$\Hilb^{\frac{\v^2+1}{2}}(X)$, 
$\pi_1(M_\sigma(\v,L)) \cong \Z/2 \Z$,
$K_{M_\sigma(\v,L)} \not \cong  \OO_{M_\sigma(\v,L)}$ and 
$K_{M_\sigma(\v,L)}^{\otimes 2} \cong  \OO_{M_\sigma(\v,L)}$
for $\v^2 \geq 1$. 
Moreover, if $\Pic(\widetilde{X})=\varpi^*\Pic(X)$, then the complements of $U,U'$ have codimension at least two.
\end{Thm}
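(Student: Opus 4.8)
The plan is to reduce the statement about $M_\sigma(\v,L)$ to one about a Hilbert scheme of points by applying a suitable Fourier--Mukai transform and then invoking \cref{Thm:MainTheorem1}. Since $\v$ is primitive with $\v^2 > 0$ odd, we have $\ell(\v) = 1$ (indeed, if $\ell(\v) = 2$ then $\v^2 = \frac{1}{2}(\varpi^*\v)^2/\ell(\v)^2 \cdot(\text{correction})$ would force an even intersection; this parity constraint is exactly what makes the odd-rank case reduce to the Hilbert scheme). First I would use the theory of Fourier--Mukai transforms on Enriques surfaces to produce an (anti-)autoequivalence $\Phi$ of $\Db(X)$ such that $\Phi_*(\v) = (1,0,1-n)$ for $n = \frac{\v^2+1}{2}$, i.e.\ the Mukai vector of the ideal sheaf of $n$ points. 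The existence of such a $\Phi$ relies on the action of the (anti-)autoequivalence group on the Mukai lattice acting transitively on primitive classes of the same square with $\ell(\v)=1$, a fact I would expect to have been established in the review \cref{sec:ReviewStabilityK3Enriques} or in the cited prior work.

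The key steps in order are as follows. First, establish $\ell(\v)=1$ from the oddness of $\v^2$ and primitivity. Second, construct $\Phi$ mapping $\v$ to $(1,0,1-n)$; the transform $\Phi$ induces an isomorphism of moduli stacks and hence an isomorphism $\Phi\colon M_\sigma(\v,L)\isomor M_{\Phi(\sigma)}(\Phi_*(\v),L')$ for the appropriate determinant line bundle $L'$. Third, apply \cref{Thm:MainTheorem1}\eqref{enum:MT1-birational map given by FM transform} to obtain a birational map $M_{\Phi(\sigma)}(\Phi_*(\v),L')\dashrightarrow M_\tau(\Phi_*(\v),L')$ where $\tau$ lies in the Gieseker chamber. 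Fourth, identify $M_\tau((1,0,1-n),L')$ with the moduli space of Gieseker-stable sheaves of that Mukai vector; for a rank-one class with trivial first Chern class, this is exactly $\Hilb^n(X) = \Hilb^{\frac{\v^2+1}{2}}(X)$, since such a stable sheaf is the ideal sheaf of a length-$n$ subscheme twisted by a line bundle, and fixing the determinant pins down the line bundle. Composing these identifications yields the desired isomorphism $\Phi\colon U\to U'$ of dense open subsets.

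For the topological conclusions, I would read them off from the Hilbert scheme side and transport them along the birational equivalence using that the quantities in question are birational invariants. The fundamental group $\pi_1(\Hilb^n(X))\cong\Z/2\Z$ follows from the corresponding computation for the Enriques surface $X$ itself (whose fundamental group is $\Z/2\Z$, the deck group of $\varpi$) via the standard fact that $\pi_1(\Hilb^n)\cong\pi_1(X)^{\mathrm{ab}}$ for surfaces, or more directly from $H^1$ of the universal K3 cover; since $\pi_1$ is a birational invariant among smooth projective varieties it descends to $M_\sigma(\v,L)$. The canonical bundle statements $K^{\otimes 2}\cong\OO$ but $K\not\cong\OO$ express that $M_\sigma(\v,L)$ is \emph{strictly} K-trivial of torsion order two, reflecting the Enriques (as opposed to K3) nature of $X$: the square of the canonical class is trivial because it is pulled back from the K3 cover where the symplectic form gives triviality, while the canonical class itself is nontrivial of order $2$, matching $K_X$.

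The last sentence, on the codimension-at-least-two statement under the hypothesis $\Pic(\widetilde X)=\varpi^*\Pic(X)$, is the part I expect to require the most care and is the main obstacle. The codimension bound for $U\subset M_\sigma(\v,L)$ comes directly from the second half of \cref{Thm:MainTheorem1}\eqref{enum:MT1-birational map given by FM transform}, whose conclusion is conditioned on precisely this Picard hypothesis; the difficulty is that I must simultaneously control the complement of $U'$ inside $\Hilb^n(X)$. Since the Gieseker chamber moduli space is genuinely a fine moduli space of sheaves and the birational map from \cref{Thm:MainTheorem1} is an isomorphism away from a locus of codimension $\geq 2$, the complement of $U'$ should also have codimension $\geq 2$, but verifying that the wall-crossing loci removed on the $\Phi(\sigma)$ side do not acquire lower codimension after transport to the Gieseker chamber requires checking that no wall between $\Phi(\sigma)$ and $\tau$ produces a divisorial contraction for this particular class $(1,0,1-n)$. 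This is plausible because rank-one classes admit no such walls (the relevant spherical or isotropic sub-objects needed to create a divisor in the exceptional locus are obstructed by the Picard hypothesis), but pinning this down is exactly where the hypothesis $\Pic(\widetilde X)=\varpi^*\Pic(X)$ does its work and where the argument must be most delicate.
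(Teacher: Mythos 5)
Your proposal follows essentially the same route as the paper's proof: the paper produces the (anti-)autoequivalence $\Phi$ with $\rk\Phi_*(\v)=1$ by citing \cite{Yos03} (your appeal to transitivity of the (anti-)autoequivalence action on primitive classes with $\ell(\v)=1$ is the same fact), applies \cref{Thm:MainTheorem1} to cross from $\Phi(\sigma)$ to the Gieseker chamber where the fixed-determinant rank-one moduli space is $\Hilb^{\frac{\v^2+1}{2}}(X)$, and reads off $\pi_1$ and the canonical bundle statements from \cite[sect.~1]{OS11}, for which you instead sketch the standard arguments.

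One clarification on what you call the main obstacle: no further wall analysis is needed for the codimension claim. Part \ref{enum:MT1-birational map given by FM transform} of \cref{Thm:MainTheorem1} asserts, under the hypothesis $\Pic(\widetilde{X})=\varpi^*\Pic(X)$, that the complements of the common open subset $U$ have codimension at least two in \emph{both} moduli spaces simultaneously; applying it to the pair $(\Phi(\sigma),\tau)$ with $\tau$ in the Gieseker chamber for $\Phi_*(\v)$ therefore bounds the complement of $U'$ inside $\Hilb^{\frac{\v^2+1}{2}}(X)$ directly, with no need to re-examine whether intermediate walls induce divisorial contractions for the class $(1,0,1-n)$. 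That case analysis is internal to the proof of \cref{Thm:MainTheorem1} (it is precisely the content of \cref{classification of walls} and the sections proving it), not something to redo at the level of this application, so the step you flagged as delicate is in fact immediate once \cref{Thm:MainTheorem1} is taken as a black box.
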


\begin{Thm}\label{Thm:application2}
Let $\v$ be a primitive Mukai vector such that $\v^2$ is even
 and $\ell(\v)=1$.
Then there is an elliptic fibration $\pi:X \to \P^1$, a curve $C\subset X$,
and a Mukai vector $\w=(0,C,\chi)$ with $\chi\neq0$
such that 
(1) $\pi|_C:C \to \P^1$ is a double cover, and 
(2) there is an (anti-)autoequivalence $\Phi$ of $\Db(X)$
which induces an isomorphism $\Phi:U \to U'$
where $U \subset M_\sigma(\v,L)$ and 
$U' \subset M_H(\w,L')$ are open subsets.  In particular,
$M_\sigma(\v,L)$ is 
birationally equivalent to
$M_H(\w,L')$
for a general $\sigma$. 
If $\Pic(\widetilde{X})=\varpi^*\Pic(X)$ and $\v^2 \geq 2$, then the complements of $U$ and $U'$ are of codimension at least two, $\pi_1(M_\sigma(\v,L)) \cong \Z/2 \Z$,
$K_{M_\sigma(\v,L)} \cong \OO_{M_\sigma(\v,L)}$,
and $h^{p,0}(M_\sigma(\v,L))=0$ for $p \ne 0, \v^2+1$.
\end{Thm}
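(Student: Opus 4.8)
The plan is to follow the template already used for \cref{Thm:application1}: I would first move $\v$ by an (anti-)autoequivalence $\Phi$ of $\Db(X)$ to a rank-zero Mukai vector $\w=(0,C,\chi)$ supported on a curve, and then invoke \cref{Thm:MainTheorem1} to pass to the Gieseker chamber. Concretely, $\Phi$ induces an isomorphism $M_\sigma(\v,L)\isomor M_{\Phi(\sigma)}(\w,L')$ for an appropriate determinant $L'$, and \cref{Thm:MainTheorem1} then supplies a common open set $U'\subset M_{\Phi(\sigma)}(\w,L')$, $U'\subset M_H(\w,L')$ with $H$ in the Gieseker chamber, whose complements have codimension at least two once $\Pic(\widetilde X)=\varpi^*\Pic(X)$. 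Composing the isomorphism with this birational identification yields the asserted $\Phi:U\to U'$.

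First I would carry out the lattice-theoretic reduction producing $\Phi$. The essential input is that every primitive $\v\in\Hal(X,\Z)$ with $\v^2$ even and $\ell(\v)=1$ lies in the same orbit, under the group of (anti-)autoequivalences acting through Hodge isometries of $\Hal(X,\Z)$, as a class of the form $(0,C,\chi)$ with $\chi\neq 0$. Here the parity of $\v^2$ dictates the target: any effective rank-zero curve class has even square, since by adjunction $C^2=2g(C)-2$ because $K_X$ is numerically trivial, so it is exactly the even case that admits a reduction to $(0,C,\chi)$, the odd case being forced into the Hilbert-scheme picture of \cref{Thm:application1}. The hypothesis $\ell(\v)=1$, i.e. the primitivity of $\varpi^*\v$ on the K3 cover $\widetilde X$, is what allows lowering the rank all the way to zero; when $\ell(\v)=2$ the reduction stalls. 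I would deduce the required transitivity from a Witt--Eichler type statement on the Enriques Mukai lattice, realizing the relevant isometries by compositions of line-bundle twists, shifts, and the relative Fourier--Mukai transform of a suitable elliptic fibration.

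Next I would pin down the geometry, and this is where I expect the main difficulty. Having produced $\w=(0,C,\chi)$, I must exhibit an elliptic pencil $\pi:X\to\P^1$ for which $C$ is a bisection, i.e. $C\cdot f=2$ for the fiber class $f$. An Enriques surface carries many elliptic pencils, and after a further twist $C$ may be taken effective and nef; the point is that its elliptic fibrations admit no section but always a minimal multisection of degree $2$, because of the two half-fibers of multiplicity $2$, so $C\cdot f=2$ is the minimal intersection available and realizes the double cover $\pi|_C:C\to\P^1$. The obstacle is that one must control \emph{simultaneously} the orbit of $\v$, the invariant $\ell$, the determinant $L'$, and the effectivity and bisection property of $C$; decoupling the lattice reduction from the geometric realization is the crux.

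Finally I would read off the invariants. The fundamental group $\pi_1(M_\sigma(\v,L))\cong\Z/2\Z$ follows as in \cref{Thm:application1} from the $\Z/2$-covering structure inherited from $\varpi:\widetilde X\to X$. The even parity of $\v^2$ is what upgrades the conclusion from $K^{\otimes 2}\cong\OO$ to $K_{M_\sigma(\v,L)}\cong\OO_{M_\sigma(\v,L)}$: I would verify $K\cong\OO$ directly on the Gieseker model $M_H(\w,L')$, whose support morphism to the linear system of $C$ (equivalently the relative compactified Jacobian of $C\to\P^1$) makes the canonical class computable, and then transport triviality across the open immersion with complement of codimension at least two. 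The same model, fibered via $C\to\P^1$, yields $h^{p,0}=0$ for $p\neq 0,\v^2+1$; since each space is smooth for generic $\sigma$ and primitive $\v$, with complement of codimension at least two, the holomorphic forms extend and $h^{p,0}$ is preserved under the birational identification, so the computation on $M_H(\w,L')$ transfers verbatim.
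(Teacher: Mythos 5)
Your template coincides with the paper's: reduce $\v$ to a torsion class $\w=(0,C,\chi)$ by an (anti-)autoequivalence, apply \cref{Thm:MainTheorem1} to reach the Gieseker chamber, and compute the invariants on the resulting relative compactified Jacobian. However, two of your steps have genuine gaps. The first is the reduction itself, which is the heart of the proof and which you leave unresolved (your ``crux'' paragraph concedes as much). Abstract Witt--Eichler transitivity for isometries of $\Hal(X,\Z)$ does not suffice: what is needed is transitivity for the subgroup of isometries that are actually induced by derived (anti-)equivalences preserving $\Stabd(X)$, and on an Enriques surface there is no derived Torelli theorem letting you realize a given isometry by a functor after the fact. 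The paper's proof is an explicit chain (\cite[Section 4]{Yos16a}, or \cref{Lem:e-poly,Lem:rank2} together with \cref{Lem:nodal}): the $(-1)$-reflection $R_{\OO_X}$ and line-bundle twists bring $\v$ to $(2,\xi',0)$ with $(\xi'^2)=2$; the hypothesis $\ell(\v)=1$ enters at exactly this point, producing an isotropic divisor pairing to $1$ with $\xi'$, from which Riemann--Roch and the Hodge index theorem extract a nef isotropic half-fiber $f$ with $(\xi',f)=1$, hence the elliptic fibration $|2f|$; finally the relative Fourier--Mukai transform with kernel the universal family of $M_\omega((0,2f,1))\cong X$ (Lemma \ref{Lem:splitting type}) carries the fiber-degree-two rank-two class to a rank-zero class supported on a bisection. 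You name these generators, but you never show that their compositions actually reach $(0,C,\chi)$ while keeping control of the determinant $L'$; that is precisely what these lemmas accomplish.

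The second gap is in your transfer of $\pi_1$, $K_M$ and $h^{p,0}$, which rests on the assertion that ``each space is smooth for generic $\sigma$ and primitive $\v$.'' This is false on an Enriques surface: $M_\sigma(\v,L)$ is singular along the locus of objects with $E\otimes\OO_X(K_X)\cong E$ (equivalently $E\cong\varpi_*F$ for $F$ on $\widetilde{X}$), and this locus can be nonempty when the rank is even --- Lemma \ref{ext^2-2} exhibits a two-dimensional irreducible singular locus already for primitive $\v=(2,\xi,0)$ with $\v^2=2$ --- and the same issue arises for the rank-zero class $\w$. So holomorphic forms cannot simply be ``extended'' across the codimension-two complement as on a smooth variety, and the Jacobian-fibration computation on $M_H(\w,L')$ is not a routine smooth-fibration argument. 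The paper sidesteps both problems by verifying \cite[Assumption 2.16]{Sac12} for $M_H(\w,L')$ (via \cite{Yos16a}) and then quoting \cite[Thm. 3.1, Thm. 4.4]{Sac12}, which are proved precisely for these possibly singular relative compactified Jacobians on Enriques surfaces; your conclusions are correct, but as written both the reduction step and the transfer step are unjustified.
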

The remaining case, when $\v^2$ is even and $\ell(\v)=2$, can still be attacked in the same way, but the corresponding moduli space of torsion sheaves is not well understood due to the presence of non-reduced curves of larger multiplicity.

Our third use of \cref{Thm:MainTheorem1} goes in a slightly different direction.  To prove that all minimal models of $M_\sigma(\v,L)$ come from Bridgeland wall-crossing, an important step, arguably a necessary one, is to show that the Donaldson-Mukai map is surjective.  Set $K(X)_\v:=\Set{x \in K(X) \ |\ \langle \v(x),\v \rangle=0}$.
For a universal family $\EE$ on $M_{\sigma}(\v,L) \times X$,
the Donaldson-Mukai map is defined by
\begin{equation}
\begin{matrix}
\theta_{\v,\sigma}:& K(X)_\v & \to & \Pic(M_{\sigma}(\v,L))\\
& x &\mapsto & \det (p_{M_\sigma(\v,L)!}(\EE \otimes p_X^*(x^{\vee}))).
\end{matrix} 
\end{equation}
Then using \cref{Thm:MainTheorem1}, we obtain the following result.
\begin{Cor}\label{Cor:Picard}
Let $\v$ be a primitive Mukai vector with $\v^2 \geq 3$ and $\sigma\in\Stabd(X)$ a generic stability condition with respect to $\v$.  If either
\begin{enumerate}
\item
$\v^2$ is odd, or
\item
$\v^2$ is even, $\ell(\v)=1$, and $\Pic(\widetilde{X})=\varpi^*\Pic(X)$,
\end{enumerate}
then $\theta_{\v, \sigma}$ is an isomorphism.
\end{Cor}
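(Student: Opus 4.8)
The plan is to reduce the statement, via the structure theorems \cref{Thm:application1} and \cref{Thm:application2}, to an explicit computation on a single model moduli space, exploiting two forms of naturality of the determinant construction. First, $\theta$ is functorial under (anti-)autoequivalences: if $\Phi$ is a Fourier--Mukai (anti-)equivalence of $\Db(X)$ carrying $M_\sigma(\v,L)$ isomorphically onto $M_{\Phi(\sigma)}(\Phi_*\v,L')$, then transporting a universal family through $\Phi$ shows that the square
\[
\begin{CD}
K(X)_\v @>{\theta_{\v,\sigma}}>> \Pic(M_\sigma(\v,L)) \\
@V{\Phi_*}VV @VV{\Phi_*}V \\
K(X)_{\Phi_*\v} @>{\theta_{\Phi_*\v,\Phi(\sigma)}}>> \Pic(M_{\Phi(\sigma)}(\Phi_*\v,L'))
\end{CD}
\]
commutes, both vertical arrows being isomorphisms (the left one because $\Phi_*$ preserves the Mukai pairing up to sign, the right one because $\Phi$ is an isomorphism of moduli spaces). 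Hence $\theta_{\v,\sigma}$ is an isomorphism precisely when $\theta_{\Phi_*\v,\Phi(\sigma)}$ is. Second, if two smooth moduli spaces share a common dense open subset $U$ whose complements both have codimension at least two, then restriction to $U$ identifies their Picard groups compatibly with the determinant map, since the two universal families agree over $U$; thus the isomorphy of $\theta$ passes between them.

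Combining these principles with \cref{Thm:MainTheorem1}, \cref{Thm:application1}, and \cref{Thm:application2}, I would reduce each case to a model. A suitable $\Phi$ moves $\v$ to a distinguished class, after which one crosses Bridgeland walls to reach the Gieseker chamber; by the codimension-two refinement in \cref{Thm:MainTheorem1}, each such wall-crossing is an isomorphism in codimension one, hence preserves $\Pic$, once the destabilized loci are known to have codimension at least two. In the even case~(2) this is exactly what the hypothesis $\Pic(\X)=\varpi^*\Pic(X)$ guarantees, and the model is the torsion-sheaf moduli space $M_H(\w,L')$ on the hyperelliptic curve $C$ of \cref{Thm:application2}. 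In the odd case~(1), where $\Phi$ carries $\v$ to the ideal-sheaf class $(1,0,\tfrac12-n)$ with $n=\tfrac{\v^2+1}{2}\ge 2$ and the model is $\Hilb^{n}(X)$, one instead uses the parity of $\v^2$, through the dimension estimate \cref{Thm:DimensionOfHNFiltrationStack} and the wall classification \cref{classification of walls}, to see that the relevant loci remain of codimension at least two without any assumption on $\Pic(\X)$, which is why case~(1) is unconditional.

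It then remains to check that $\theta$ is an isomorphism on the model. For the Hilbert scheme with $n\ge 2$ I would use the classical splitting $\Pic(\Hilb^{n}(X))\cong \Pic(X)\oplus\Z\delta$, with $2\delta$ the class of the exceptional divisor of the Hilbert--Chow morphism, and match it against $K(X)_\v\cong\Z^{11}\oplus\Z/2\Z$: the line-bundle classes reproduce the $\Pic(X)$ summand via the tautological construction, a rank-one class in $K(X)_\v$ maps to the half-diagonal $\delta$, and, crucially for the torsion, the class $[\OO(K_X)]-[\OO_X]$ maps to the canonical bundle $K_{\Hilb^n(X)}$, which is $2$-torsion and nontrivial in accordance with \cref{Thm:application1}. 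As both groups are isomorphic to $\Z^{11}\oplus\Z/2\Z$ and the chosen generators correspond, $\theta$ is an isomorphism. For the even case~(2) I would analyze $M_H(\w,L')$ via its support morphism to the linear system $|C|$ and the relative Jacobian fibration induced by $\pi$, computing its Picard group and matching the rank and the distinguished $2$-torsion class in the same manner, now with $K_M\cong\OO_M$.

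The principal obstacle is the Picard-group computation in the even case~(2): in contrast to the Hilbert scheme, the moduli space $M_H(\w,L')$ of torsion sheaves on the hyperelliptic curve is not standard, and one must determine $\Pic(M_H(\w,L'))$ from the Jacobian fibration and the determinant formalism for one-dimensional sheaves, then identify the image of each generator of $K(X)_\v$. In both cases the delicate point is the $2$-torsion: one has to verify that the unique nontrivial torsion class of $K(X)_\v$, the one coming from $K_X$, lands on the correct torsion line bundle on the model, namely $K_M$ in the odd case and the appropriate square-trivial bundle in the even case where $K_M\cong\OO_M$. A secondary subtlety, already noted, is justifying in the unconditional odd case that the intervening wall-crossings are isomorphisms in codimension one purely from the parity input to the wall classification, rather than from the hypothesis $\Pic(\X)=\varpi^*\Pic(X)$.
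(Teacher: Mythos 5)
Your proposal has the right skeleton (Fourier--Mukai reduction to a model plus wall-crossing compatibility of $\theta$ plus a model computation), but the mechanism you give for the wall-crossing step is wrong, and this is where the real content of the paper's proof lies. Your principle---identify $\Pic$ and $\theta$ across a wall via a common open subset over which the universal families agree, with codimension-two complements---only applies to fake and flopping walls. The chambers you must reach are bounded by \emph{divisorial} walls (Hilbert--Chow, LGU, iLGU, Brill--Noether), at which the destabilized locus is a divisor, so your principle cannot be invoked there. Moreover, your claimed justifications fail: parity does not exclude divisorial walls for odd $\v^2$---on the contrary, by \cref{Lem: Hilbert-Chow} a Hilbert--Chow wall ($\langle\v,\u\rangle=1$, $\ell(\u)=2$) \emph{forces} $\v^2$ odd---and genericity of $X$ does not exclude LGU/iLGU/Brill--Noether walls for even $\v^2$ either; what genericity buys in \cref{Thm:MainTheorem1} is an autoequivalence inducing an isomorphism across such walls, not codimension-two destabilized loci. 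What is actually needed, and what the paper proves as \cref{Prop:Pic-relation} (building on \cref{Prop:phi isomorphism odd rank,Prop:phi isomorphism even rank,Prop:R relates thetas}), is that across a divisorial wall one can still identify $\Pic(M_{\sigma_+}(\v,L))$ with $\Pic(M_{\sigma_-}(\v,L))$, but the Donaldson--Mukai maps satisfy $\theta_{\v,\sigma_+}=\theta_{\v,\sigma_-}\circ R_{\d}$ for a reflection $R_\d$ of $\v^\perp$: the universal families do \emph{not} agree (they differ by a derived dual or a twist), so your argument would yield the false equality $\theta_{\v,\sigma_+}=\theta_{\v,\sigma_-}$. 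The conclusion you want survives only because $R_\d$ is an isometry, but establishing this compatibility is a substantial part of the proof that your proposal skips.

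Two further gaps. First, the unconditionality of the odd case: the paper's mechanism is not parity-controlled wall crossing but a deformation argument---$\theta_{\v,\sigma}$ is defined for relative moduli over a family of Enriques surfaces, so for odd rank one may deform to the generic case $\Pic(\widetilde{X})=\varpi^*\Pic(X)$ (complemented by the odd-rank iLGU construction of \cref{subsec:iLGU odd rank}, which works without genericity); your parity argument as stated has no valid substitute for this. Second, the even-case model: the paper never computes $\Pic(M_H(\w,L'))$ for torsion sheaves on the hyperelliptic curve, which you yourself flag as the ``principal obstacle'' and leave unexecuted. Instead it reduces, via \cref{Prop:Pic-relation} and Fourier--Mukai transforms, to a rank-two class $\v=(2,D,a)$ with $(D,\eta)=1$, $(\eta^2)=0$, and verifies the isomorphism there by an explicit computation on the projective bundle of extensions $0\to\OO_X(D)\to E\to I_Z\to 0$ over $\Hilb^n(X)$, comparing the image of $K(X)_\v$ inside $\NS(P)$ against the rank and $\Z/2\Z$-torsion of $\Pic(M_\sigma(\v,L))$ given by \cite[Theorem 5.1]{Sac12}. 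So even granting your reduction, the proposal is incomplete at exactly the point where a computation must be carried out.
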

In an appendix, we use our results to describe the geometry of the two examples of moduli spaces of sheaves on an Enriques surface $X$ that are uniruled.  These constitute the only counterexamples to the fact that  moduli spaces of stable sheaves on an Enriques surface have numerically trivial canonical divisors.  One of these examples only exists on nodal Enriques surfaces, while the other is ubiquitous, but only applies for Mukai vectors of the form $\v=2\v_0$ where $\v_0^2=1$.  In each case we show that there is always a wall $\WW\subset\Stabd(X)$ that induces the structure of a $\P^1$-fibration on $M_\sigma(\v)$.  Correspondingly, these are the only counterexamples to the conjecture of the first author in \cite{Nue14a} that the Bayer-Macr\`{i} divisor at the wall is big.
\subsection*{Relation to other work}  The relation between wall-crossing and the minimal model program studied here has been explored for many other surfaces.  We refer the reader to \cite{BM14a,BM14b,MYY14b,MYY14} for K3 surfaces, to \cite{ABCH13,BMW14,CH15,CHW14,LZ13} for $\P^2$, to \cite{BC13,AM17} for Hirzebruch and del Pezzo surfaces, and to \cite{MM13,YY14,Yos12} for abelian surfaces.

During the writing phase of this project, the authors became aware of the recent preprint \cite{Bec18}.  In that paper, the author proves part \ref{enum:MT1-two moduli are birational} of \cref{Thm:MainTheorem1} as well as \cref{Thm:application1,Thm:application2} for generic Enriques surfaces (that is, when $\Pic(\widetilde{X})=\varpi^*\Pic(X)$) and for Mukai vectors $\v$ such that $\varpi^*\v$ is primitive (see Theorem 4.4, Proposition 4.6, and Proposition 4.7, respectively, in \cite{Bec18} for the precise results).  The approach is entirely different from ours and is again based on hyperk\"{a}hler geometry, specifically the concept of a constant cycle subvariety of a hyperk\"{a}hler manifold.  Nevertheless, our approach has numerous advantages.  The first is that we do not assume in \cref{Thm:MainTheorem1} that $\v$ is primitive or that $X$ is generic.\footnote{In the case of \cite[Proposition 4.6]{Bec18}, the author actually does not need to assume that $X$ is generic, but instead uses a clever deformation argument with relative moduli spaces of stable sheaves to prove his analogue of \cref{Thm:application1} only for moduli of Gieseker stable sheaves.  This is another benefit of our method; although, as the author points out, one can obtain the same generality as in \cref{Thm:application1} using the construction of relative moduli spaces of Bridgeland stable objects in the forthcoming article \cite{BLMNPS19}}
  The second advantage, which is a consequence of the first, is that we obtain (in \cref{classification of walls}) a complete classification of the birational behavior induced by any wall $\WW$ without any assumptions on $X$ or the divisibility of $\v$.  Even when $\v$ is primitive, our approach has the added advantage of giving necessary and sufficient conditions for a potential numerical wall $\WW$ to actually be a wall, which is not guaranteed by just using the corresponding condition on the covering K3 surface $\widetilde{X}$.
\subsection*{Acknowledgements} 
The authors would like to thank Arend Bayer, Izzet Coskun, Emanuele Macr\`{i}, Benjamin Schmidt, and Xiaolei Zhao for very helpful discussions related to this article.

This project originally began while the authors were attending the workshop, ``Derived categories and Chow groups of Hyperk\"{a}ehler and Calabi-Yau varieties," at the Simons Center in fall of 2016, and we would like to thank the center for its warm hospitality and stimulating environment.  The collaboration was aided by the second author's visits to the University of Illinois at Chicago, and we would like to thank the mathematics department for providing a productive working atmosphere.  H.N. was partially supported by the NSF postdoctoral fellowship DMS-1606283, by the NSF RTG grant DMS-1246844, and by the NSF FRG grant DMS-1664215.
K.Y. was partially supported by the Grant-in-aid for 
Scientific Research (No.\ 18H01113, \ 17H06127, \ 26287007), JSPS.

\subsection*{Notation}
For a complex number $z\in\C$ we denote its real and imaginary parts by $\Re z$ and $\Im z$, respectively.

We will denote by $\Db(X)$ the bounded derived category of coherent sheaves on a smooth projective variety $X$.  On occasion we will consider the bounded derived category $\Db(X,\alpha)$ of the abelian category $\Coh(X,\alpha)$ of $\alpha$-twisted coherent sheaves, where $\alpha\in\Br(X)$ is a Brauer class.  See \cite[pp. 515-516]{BM14b} and the references contained therein for more background on twisted sheaves.

We will use non-script letters ($E,F,G,\dots$) for objects on a fixed scheme and reserve curly letters ($\EE,\FF,\GG,\dots$) for families of such objects.

For a vector $\v$ in a lattice $\HH$ with pairing $\langle\blank,\blank\rangle$, we abuse notation and write $$\v^2:=\langle\v,\v\rangle.$$
For a given lattice $\HH$ and integer $k\in\Z$, we denote by $\HH(k)$ the same underlying lattice with pairing multiplied by the integer $k$.

The intersection pairing on a smooth surface $X$ will be denoted by $(\blank,\blank)$ and the self-intersection of a divisor $D$ by $(D^2)$.  The fundamental class of a smooth projective variety $X$ will be denoted by $\varrho_X$.

By an \emph{irreducible} object of an abelian category, we mean an object that has no non-trivial subobjects.  These are sometimes called \emph{simple} objects in the literature.

Recall that an object $S$ in the derived category of a K3 or Enriques surface is called \emph{spherical} if $\RHom(S,S)=\C\oplus\C[-2]$.  We denote the associated spherical reflection by $R_S$; it is defined by 
\begin{equation}\label{eqn:spherical reflection}
R_S(E):=\Cone{(\RHom(S,E)\otimes S\to E).}
\end{equation}
Similarly, an object $E_0$ in the derived category of an Enriques surface is called \emph{exceptional} if $\RHom(E_0,E_0)=\C$.  In analogy to the spherical case, we denote the associated exceptional , or weakly spherical, reflection by $R_{E_0}$; it is defined by 
\begin{equation}\label{eqn:weakly spherical reflection}
R_{E_0}(E):=\Cone{(\RHom(E_0,E)\otimes E_0\oplus\RHom(E_0(K_X),E)\otimes E_0(K_X)\to E)}.
\end{equation}
This has also been called the Fourier-Mukai transform associated to $(-1)$-reflection in the literature.
\section{Review: Bridgeland stability conditions}

In this section, we summarize the notion of Bridgeland stability conditions on a triangulated category $\DD$.  The main reference is \cite{Bri07}.

\begin{Def}\label{def:slicing}
A slicing $\PP$ of the category $\DD$ is a collection of full extension-closed subcategories $\PP(\phi)$ for $\phi\in\R$ with the following properties :
\begin{enumerate}
    \item $\PP(\phi+1)=\PP(\phi)[1]$.
    \item If $\phi_1>\phi_2$, then $\Hom(\PP(\phi_1),\PP(\phi_2))=0$.
    \item For any $E\in\DD$, there exists a collection of real numbers $\phi_1>\phi_2>\cdots>\phi_n$ and a sequence of triangles
\begin{equation} \label{eq:HN-filt}
 \TFILTB E A n
\end{equation}
with $A_i \in \PP(\phi_i)$.
\end{enumerate}
\end{Def}

The subcategory $\PP(\phi)$ is abelian; its nonzero objects are called semistable of phase $\phi$, and its simple objects are called stable.  Appropriately, the collection of triangles in \eqref{eq:HN-filt} is called the \emph{Harder-Narasimhan (HN) filtration} of $E$, and we define $\phi_{\max}(E):=\phi_1$ and $\phi_{\min}(E):=\phi_n$.  For any $\phi\in\R$, we denote by $\PP(\phi-1,\phi]$ the full subcategory of objects with $\phi_{\min}>\phi-1$ and $\phi_{\max}\leq\phi$.  This is the heart of a bounded t-structure.  We usually consider $\AA=\PP(0,1]$.


Let us fix a lattice of finite rank $\Lambda$ and a surjective map $\v:K(\DD)\onto\Lambda$.

\begin{Def}\label{def:stability condition}
A Bridgeland stability condition on $\DD$ is a pair $\sigma=(Z,\PP)$, where 
\begin{itemize}
		\item the \emph{central charge} $Z:\Lambda\to \C$ is a group homomorphism, and 
		\item $\PP$ is a slicing of $Z$,
\end{itemize}satisfying the following compatibility conditions:
\begin{enumerate}
		\item For all non-zero $E\in\PP(\phi)$, $\frac{1}{\pi}\arg Z(\v(E))=\phi$;
		\item For a fixed norm $\lvert\blank\rvert$ on $\Lambda_\R$, there exists a constant $C>0$ such that 
		\begin{equation*}|Z(\v(E))|\geq C\lvert\v(E)\rvert
		\end{equation*}
		for all semistable $E$.
\end{enumerate}
\end{Def}

We will write $Z(E)$ for $Z(\v(E))$ from here on.  Furthermore, when we wish to refer to the central charge, the heart, or the slicing of a stability condition $\sigma$, we will denote it by $Z_{\sigma}$, $\AA_{\sigma}$, and $\PP_{\sigma}$, respectively.  It is worth noting that giving a stability condition $(Z,\PP)$ is equivalent to giving a pair $(Z,\AA)$ where $Z:\Lambda\to\C$ is a stability function with the HN-property in the sense of \cite{Bri08}.  See \cite[Proposition 3.5]{Bri08} specifically for this equivalence.

The main theorem in \cite{Bri07} asserts that the set $\Stab(\DD)$ of stability conditions is a complex manifold of dimension $\rk(\Lambda)$.  The manifold $\Stab(\DD)$ carries two group actions: the group $\Aut(\DD)$ of autoequivalences acts on the left by $\Phi(Z,\PP)=(Z\circ\Phi_*^{-1},\Phi(\PP))$, where $\Phi\in\Aut(\DD)$ and $\Phi_*$ is the action $K(\DD)$, and the universal cover $\widetilde{\GL}_2^+(\R)$ of matrices in $\GL_2(\R)$ with positive determinant acts on the right.  This second action lifts the action of $\GL_2(\R)$ on $\Hom(K(\DD),\C)=\Hom(K(\DD),\R^2)$.

\section{Review: Stability conditions on Enriques surfaces, K3 surfaces, and moduli spaces}\label{sec:ReviewStabilityK3Enriques}We give here a review of Bridgeland stability conditions on Enriques surfaces and K3 surfaces and their moduli spaces of stable complexes.  The main references are \cite{Nue14a,Yos16b}.  Throughout this section, $Y$ will denote a K3 or Enriques surface.
\subsection{The algebraic Mukai lattice}\label{subsec:algMukaiLattice}Let $X$ be an Enriques surface over an algebraically 
closed field $k$ of $\chr(k) \ne 2$, and let $\widetilde{X}$ be its covering K3 surface with covering map $\varpi:\widetilde{X}\to X$ and covering involution $\iota$.  We denote by $H^*_{\alg}(\widetilde{X},\Z)$ the algebraic part of the whole cohomology of $\widetilde{X}$, namely
\begin{equation}\label{eq:AlgebraicMukaiLattice}
H^*_\alg(\widetilde{X},\Z) = H^0(\widetilde{X},\Z) \oplus \mathrm{NS}(\widetilde{X}) \oplus H^4(\widetilde{X},\Z).
\end{equation}  

Similarly, for the Enriques surface $X$, we define \begin{equation}\label{EnriquesLattice}
H^*_{\alg}(X,\Z):=\Set{(r,D,\tfrac{s}{2}) \ |\ 
r,s \in \Z, r \equiv s \mod 2, D \in \Num(X) }\subset H^{*}(X,\Q),\end{equation}  where $\Num(X)=\NS(X)/\langle K_X\rangle$.  

\begin{Def} Let $Y=X$ or $\widetilde{X}$.  
\begin{enumerate}
    \item We denote by $\v:K(Y)\onto\Hal(Y,\Z)$, the \emph{Mukai vector} $$\v(E):=\ch(E)\sqrt{\td(Y)}.$$  When $Y=\widetilde{X}$, the Mukai vector takes the form \[\v(E)=(r(E),c_1(E),r(E)+\ch_2(E)),\] in the decomposition \eqref{eq:AlgebraicMukaiLattice}, and when $Y=X$ it takes the form \[\v(E)=(r(E),c_1(E),\frac{r(E)}{2}+\ch_2(E)),\] in the decomposition \eqref{EnriquesLattice}. 
    \item The \emph{Mukai pairing} $\langle\blank,\blank\rangle$ is defined on $\Hal(Y,\Z)$ by $$\langle(r,c,s),(r',c',s')\rangle:=(c,c')-rs'-r's\in\Z,$$ where $(\blank,\blank)$ is the intersection pairing on $H^2(Y,\Z)$.  The Mukai pairing has signature $(2,\rho(Y))$ and satisfies $\langle\v(E),\v(F)\rangle=-\chi(E,F)=-\sum_i(-1)^i\ext^i(E,F)$ for all $E,F\in\Db(Y)$.
    \item The \emph{algebraic Mukai lattice} is defined to be the pair $(\Hal(Y,\Z),\langle\blank,\blank\rangle)$.
\end{enumerate}
\end{Def}

Given a Mukai vector $\v\in\Hal(Y,\Z)$, we denote
its orthogonal complement by
\[
\v^\perp:=\Set{\w\in\Hal(Y,\Z)\ | \ \langle\v,\w\rangle=0 }.
\]
We call a Mukai vector $\v$ \emph{primitive} if it is not divisible in $\Hal(Y,\Z)$.  

The covering map $\varpi$ induces an embedding $$\varpi^*:\Hal(X,\Z)\into\Hal(\X,\Z)$$ such that $\langle\varpi^*\v,\varpi^*\w\rangle=2\langle\v,\w\rangle$ and identifies $\Hal(X,\Z)$ with an index 2 sublattice of the $\iota^*$-invariant component of $\Hal(\X,\Z)$.  The following lemma makes this precise:

\begin{Lem}[{\cite[Lem. 2.1]{Nue14a}}]\label{primitive} A Mukai vector $\v=(r,c_1,\frac{s}{2})\in\Hal(X,\Z)$ is primitive if and only if $$\gcd(r,c_1,\frac{r+s}{2})=1.$$ For primitive $\v$, we define $\ell(\v)$ by $\varpi^*\v=\ell(\v)\w$, where $\w$ is primitive in $\Hal(\X,\Z)$.  Then $\ell(\v)=\gcd(r,c_1,s)$ and can be either $1$ or $2$.  Moreover,
\begin{itemize}
\item if $\ell(\v)=1$, then either $r$ or $c_1$ is not divisible by 2;
\item if $\ell(\v)=2$, then $c_2$ must be odd and $r+s\equiv 2\pmod{4}$.
\end{itemize}
\end{Lem}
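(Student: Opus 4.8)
The plan is to reduce the statement to the explicit form of $\varpi^*$ on Mukai vectors, combined with the index-$2$ description recalled just before the lemma, and then to separate the elementary behaviour at odd primes from the genuinely subtle behaviour at the prime $2$. Since $\varpi$ is étale, $\varpi^*\td(X)=\td(\X)$; since it has degree $2$, the point class obeys $\varpi^*\varrho_X=2\varrho_{\X}$. Writing $\v=(r,c_1,\tfrac s2)$, these give $\varpi^*\v=(r,\varpi^*c_1,s)$ in the K3 decomposition, which --- crucially --- carries no parity constraint. I would also note that $\varpi^*\v$ is $\iota^*$-invariant, so that the relevant ambient lattice is $L^+:=\Hal(\X,\Z)^{\iota^*}=\Z\oplus\NS(\X)^{\iota^*}\oplus\Z$; comparing $L^+$ with the image $M=\varpi^*\Hal(X,\Z)$, whose only defect is the parity condition $a\equiv b\bmod2$ on the outer coordinates, the index-$2$ statement of the excerpt forces $\NS(\X)^{\iota^*}=\varpi^*\Num(X)$.

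For part (1) I would describe divisibility in $\Hal(X,\Z)$ directly: $\tfrac1d\v$ lies in the lattice iff $d\mid r$, $c_1\in d\,\Num(X)$, $d\mid s$ and, from the parity constraint, $\tfrac rd\equiv\tfrac sd\bmod2$. As primitivity means non-divisibility by every prime, it is enough to match this, prime by prime, against $p\mid\gcd(r,c_1,\tfrac{r+s}2)$. For odd $p$ the parity clause is automatic and both conditions read $p\mid r,c_1,s$; for $p=2$ divisibility needs the extra condition $r\equiv s\bmod4$, which is precisely $2\mid\tfrac{r+s}2$. Hence the two notions coincide at every prime, proving (1).

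For part (2), $\ell(\v)$ is the divisibility of $\varpi^*\v=(r,\varpi^*c_1,s)$ in $\Hal(\X,\Z)$, namely $\gcd(r,\mathrm{div}(\varpi^*c_1),s)$. Primitivity of $\v$ already forces $\gcd(r,c_1,s)\in\{1,2\}$, since an odd prime factor or a factor of $4$ would divide $\gcd(r,c_1,\tfrac{r+s}2)=1$; moreover $\gcd(r,c_1,s)\mid\ell(\v)$ is immediate, and $\ell(\v)\le2$ follows from the index-$2$ inclusion $M\subset L^+$ by the standard bound on how the divisibility of a vector primitive in $M$ can increase in $L^+$. The whole lemma then comes down to showing $\ell(\v)=2\Rightarrow\gcd(r,c_1,s)=2$, i.e. that $2\mid\varpi^*c_1$ in $\NS(\X)$ forces $2\mid c_1$ in $\Num(X)$. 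I expect this to be the only real obstacle: writing $\varpi^*c_1=2F$, the $\iota^*$-invariance of $\varpi^*c_1$ together with torsion-freeness gives $F\in\NS(\X)^{\iota^*}$, and it is exactly the identity $\NS(\X)^{\iota^*}=\varpi^*\Num(X)$ --- supplied by the index-$2$ statement --- that lets me write $F=\varpi^*G$ and conclude $c_1=2G$ by injectivity of $\varpi^*$. This gives $\ell(\v)=\gcd(r,c_1,s)$.

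The two refinements then follow from the same parity arithmetic. If $\ell(\v)=1$, so $\gcd(r,c_1,s)=1$, then $r$ and $c_1$ cannot both be even, for $2\mid r$ forces $2\mid s$ by the lattice parity and would give $\gcd\ge2$. If $\ell(\v)=2$, so $2\mid r,c_1,s$, then primitivity forbids $2\mid\tfrac{r+s}2$, i.e. $r+s\equiv2\bmod4$; writing $r=2\rho,\ s=2\sigma$ makes $\rho,\sigma$ of opposite parity, hence $r-s\equiv2\bmod4$. Combining this with $c_1^2\equiv0\bmod4$ (as $c_1\in2\,\Num(X)$) and the identity $s=r+c_1^2-2c_2$ yields $2c_2=r-s+c_1^2\equiv2\bmod4$, so $c_2$ is odd.
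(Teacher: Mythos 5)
The paper never proves this lemma: it is imported with the citation \cite[Lem.~2.1]{Nue14a}, so there is no in-paper argument to measure yours against, and your proof has to stand on its own. It does. The coordinate description $\varpi^*\v=(r,\varpi^*c_1,s)$ is right (étale pull-back of $\td$, doubling of the point class), and your extraction of the identity $\NS(\X)^{\iota^*}=\varpi^*\Num(X)$ from the index-two statement recalled in the paper is a correct counting argument: the parity condition alone already accounts for index two, so the middle coordinate can impose no further constraint. That identity is indeed the crux, since it powers the only nontrivial implication, $\ell(\v)=2\Rightarrow 2\mid c_1$ in $\Num(X)$: a class $F$ with $\varpi^*c_1=2F$ is $\iota^*$-invariant by torsion-freeness, hence equals $\varpi^*G$, and injectivity of $\varpi^*$ gives $c_1=2G$. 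The prime-by-prime verification of the primitivity criterion (with the prime $2$ absorbing the condition $r\equiv s\bmod 4$, equivalently $2\mid\tfrac{r+s}{2}$ once $r,s$ are even) and the closing parity arithmetic ($r-s\equiv 2\bmod 4$, $c_1^2\equiv 0\bmod 4$, $2c_2=r-s+c_1^2$) are all correct.

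One step you pass over quickly, though it is easily repaired: $\ell(\v)$ is defined by divisibility in the full lattice $\Hal(\X,\Z)$, while your index bound controls divisibility inside $L^+$. These agree because any $w$ with $\varpi^*\v=mw$ satisfies $m\,\iota^*w=mw$, hence $\iota^*w=w$ by torsion-freeness, so every divisor of $\varpi^*\v$ already lies in $L^+$; your opening remark that "the relevant ambient lattice is $L^+$" is implicitly invoking this, and it deserves a sentence. Note also that the same invariance trick applied to an arbitrary divisor shows directly that the divisibility of $\varpi^*c_1$ in $\NS(\X)$ equals that of $c_1$ in $\Num(X)$, which yields $\ell(\v)=\gcd(r,c_1,s)$ in one stroke and makes the detour through the overlattice bound unnecessary.
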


In particular, for odd rank Mukai vectors or Mukai vectors with $c_1$ primitive, $\varpi^*\v$ is still primitive, while primitive Mukai vectors with $\gcd(r,c_1)=2$ (and thus necessarily $\gcd(r,c_1,s)=2$) must satisfy $\v^2\equiv 0 \pmod 8$, as can be easily seen.

\subsection{Stability conditions on Enriques and K3 surfaces}\label{subsec:StabilityCondOnEnriquesK3}We continue to let $Y=X$ or $\widetilde{X}$.  
\begin{Def}
A (full, numerical) \emph{stability condition} on $Y$ is a Bridgeland stability condition on $\Db(Y)$ with $\Lambda=\Hal(Y,\Z)$ and $\v$ defined as in  \cref{subsec:algMukaiLattice}.
\end{Def}
In particular, for a stability condition $\sigma=(Z,\PP)$ on $Y$, the category $\PP(\phi)$ has finite length for $\phi\in\R$, so any $\sigma$-semistable object $E\in\PP(\phi)$ admits a filtration with $\sigma$-stable objects $E_i\in\PP(\phi)$.  While the filtration itself, called a \emph{Jordan-H\"{o}lder (JH) filtration}, is not unique, the $\sigma$-stable factors $E_i$ are unique, up to reordering.

A connected component $\Stabd(Y)$ of the space of full numerical stability conditions on $\Db(Y)$ is described in \cite{Bri08,Yos16b}.  Let $\beta,\omega\in\NS(Y)_\R$ be two real divisor classes, with $\omega$ ample.  For $E\in\Db(Y)$, define $$Z_{\omega,\beta}(E):=\langle e^{\beta+i\omega},\v(E)\rangle,$$ and consider the heart $\AA_{\omega,\beta}$ defined by 
\begin{equation*} \label{eq:AK3}
\AA_{\omega,\beta}:=\Set{E\in\Db(Y)\ |\ \begin{array}{l}
\bullet\;\;\HH^p(E)=0\mbox{ for }p\not\in\{-1,0\},\\\bullet\;\;
\HH^{-1}(E)\in\FF_{\omega,\beta},\\\bullet\;\;\HH^0(E)\in\TT_{\omega,\beta}\end{array}},
\end{equation*} where $\FF_{\omega,\beta}$ and $\TT_{\omega,\beta}$ are defined by
\begin{enumerate}
    \item $\FF_{\omega,\beta}$ is the set of torsion-free sheaves $F$ such that every subsheaf $F'\subseteq F$ satisfies $\Im Z_{\omega,\beta}(F')\leq 0$;
    \item $\TT_{\omega,\beta}$ is the set of sheaves $T$ such that, for every non-zero torsion-free quotient $T\onto Q$, we have $\Im Z_{\omega,\beta}(Q)>0$.
\end{enumerate}

We have the following result:
\begin{Thm}[{\cite[Prop. 10.3]{Bri08},\cite{Yos16b}}]\label{thm:GeometricStabilityConditions}
Let $\sigma$ be a stability condition such that all skyscraper sheaves $k(y)$ of points $y\in Y$ are $\sigma$-stable.  Then there are $\omega,\beta\in\NS(Y)_\R$ with $\omega$ ample, such that, up to the $\widetilde{\GL}_2^+(\R)$-action, $Z_\sigma=Z_{\omega,\beta}$ and  $\PP_{\sigma}(0,1]=\AA_{\omega,\beta}$.
\end{Thm}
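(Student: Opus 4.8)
The plan is to read off $\sigma$ from the single hypothesis that points are stable, first pinning down the central charge and then the heart. First I would exploit the right $\widetilde{\GL}_2^+(\R)$-action: since every skyscraper $k(y)$ has the same Mukai vector $\v(k(y))=(0,0,1)$, all the $k(y)$ share a central charge and, being stable, a common phase, so I may choose $g\in\widetilde{\GL}_2^+(\R)$ with $Z_\sigma(k(y))=-1$, placing all skyscrapers in $\PP_\sigma(1)$. Because the Mukai pairing on $\Hal(Y,\Z)$ is non-degenerate, I extend $Z_\sigma$ $\C$-linearly and write $Z_\sigma(\blank)=\langle\Omega,\blank\rangle$ for a unique $\Omega\in\Hal(Y,\C)$. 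Pairing against $(0,0,1)$ gives $Z_\sigma(k(y))=-\Omega_0$, where $\Omega_0$ is the rank coordinate of $\Omega$, so the normalization forces $\Omega_0=1$ and hence $\Omega=(1,\beta+i\omega,\Omega_2)$ with $\beta,\omega\in\NS(Y)_\R$ and $\Omega_2\in\C$. A direct computation then yields $\Im Z_\sigma(E)=(\omega,c_1(E)-r(E)\beta)$, independent of $\Omega_2$.

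Next I would prove that $\omega$ is ample. The point is that stability of every $k(y)$ constrains $\omega$ tightly. For the central charge to be a stability function one needs $(\omega^2)>0$; and testing $\Omega$ against classes of sheaves supported on an integral curve $C$, together with the support property $|Z_\sigma(E)|\geq C|\v(E)|$ on semistable $E$, forces $(\omega,C)>0$: if some curve had $(\omega,C)\leq 0$, a pure one-dimensional sheaf on $C$ would have non-positive imaginary central charge and one could exhibit a sub-object of a suitable $k(y)$, contradicting its stability. With $(\omega^2)>0$ and $(\omega,C)>0$ for all curves, Nakai--Moishezon gives ampleness. Equivalently, ampleness is exactly the condition ruling out a spherical (or, in the Enriques case, a weakly spherical/exceptional) class $\delta$ with $\langle\Omega,\delta\rangle=0$ that would destabilize points.

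Then I would identify the hearts, $\PP_\sigma(0,1]=\AA_{\omega,\beta}$. The structural claim is that $\Coh(Y)\subset\PP_\sigma((-1,1])$, so that $\AA_\sigma:=\PP_\sigma(0,1]$ is a tilt of $\Coh(Y)$ at a torsion pair $(\TT,\FF)$. To bound phases I would use $k(y)\in\PP_\sigma(1)$ together with the vanishing $\Hom(\PP_\sigma(\phi_1),\PP_\sigma(\phi_2))=0$ for $\phi_1>\phi_2$ and Serre duality on the surface, $\Ext^i(E,k(y))\cong\Ext^{2-i}(k(y),E\otimes\omega_Y)^\vee$: these forbid a semistable sheaf of phase $>1$ or $\leq-1$ from admitting the $\Hom$-groups with points that every coherent sheaf must have. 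Once $\AA_\sigma$ is known to be a tilt, I match the torsion pair: a torsion-free sheaf lies in $\FF$ precisely when every subsheaf has $\Im Z_\sigma\leq 0$, i.e. non-positive $(\omega,\beta)$-twisted slope, which is exactly the defining property of $\FF_{\omega,\beta}$, and dually for $\TT_{\omega,\beta}$. Hence $\AA_\sigma=\AA_{\omega,\beta}$, and using the residual shear in the stabilizer of the phase-$1$ skyscrapers to fix $\Re\Omega_2$ I can bring $\Omega$ to $e^{\beta+i\omega}$, so that $Z_\sigma=Z_{\omega,\beta}$ up to the group action.

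I expect the main obstacle to be the positivity step together with the structural tilt claim, since both rest on the subtle interplay of point-stability, the support property, and Serre duality rather than on any formal manipulation. The feature special to Enriques surfaces — which is precisely what is added beyond Bridgeland's K3 argument — is the twist by the non-trivial $2$-torsion canonical class in Serre duality: one must run the phase-bounding argument with $\Ext^{2-i}(k(y),E\otimes\OO_Y(K_X))^\vee$ and carefully track the distinction between $\NS(X)$ and $\Num(X)$ encoded in the lattice $\Hal(X,\Z)$. An alternative I would keep in reserve is to pull back along $\varpi:\widetilde{X}\to X$, apply the K3 statement to the induced $\iota^*$-invariant stability condition on $\widetilde{X}$, and descend; but the direct argument above is more self-contained and is the one I would carry out.
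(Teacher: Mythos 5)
This theorem carries no proof in the paper at all: it is quoted as a known review result, with the K3 case attributed to \cite[Prop.~10.3]{Bri08} and the Enriques case to \cite{Yos16b}, so the only comparison available is against those cited arguments. Your sketch is a faithful reconstruction of exactly that standard proof --- normalize with the $\widetilde{\GL}_2^+(\R)$-action so that $Z_\sigma(k(y))=-1$, write $Z_\sigma=\langle \mho_\sigma,\v(\blank)\rangle$ via non-degeneracy of the Mukai pairing, force ampleness of $\omega$ from stability of points together with Nakai--Moishezon, and identify $\PP_\sigma(0,1]$ as the tilt $\AA_{\omega,\beta}$ using $K_X$-twisted Serre duality --- and the only rough edges (deducing a \emph{common} phase for all $k(y)$ requires connectedness of $Y$, and your formula $\Im Z_\sigma(E)=(\omega,c_1(E)-r(E)\beta)$ is not independent of $\mho_{\sigma}$'s degree-four component but only holds after the residual stabilizer action, which needs both the shear and the scaling parameter together with positive-definiteness of the plane spanned by $\Re\mho_\sigma,\Im\mho_\sigma$) are routine repairs, so you are taking essentially the same approach as the proof the paper relies on.
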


We call such stability conditions \emph{geometric}, and we let $U(Y)$ be the open subset of $\Stab(Y)$ consisting of geometric stability conditions and denote by $\Stabd(Y)$ the connected component of $\Stab(Y)$ containing $U(Y)$.  

Using the Mukai pairing, for any stability condition $\sigma=(Z_\sigma,\PP_\sigma)$,
we can find $\mho_\sigma \in \Hal(Y,\C)$ 
such that $$Z_\sigma(\blank)=\langle \mho_\sigma,\v(\blank) \rangle.$$  For $\sigma\in\Stabd(Y)$, $\Re\mho_\sigma$ and $\Im\mho_\sigma$ span a positive definite 2-plane in $\Hal(Y,\R)$.  In fact, we can say more.  Let $P(Y)$ be the set of $\mho\in\Hal(Y,\C)$ with components spanning a positive definite 2-plane, which consists of two connected components, and denote by $P^+(Y)$ the component containing vectors of the form $e^{\beta+i\omega}$.  When $Y=\X$, define the subset 
\begin{equation}\label{eqn:DefOfK3Roots}
\Delta(Y):=\Set{\w\in\Hal(Y,\Z)\ | \ \w^2=-2}.
\end{equation}
When $Y=X$, we define the subsets
\begin{equation}\label{eqn:DefOfEnriquesRootsSep}
\begin{split}
    \Delta(Y)_{-1}&:=\Set{\w\in\Hal(Y,\Z)\ |\ \w^2=-1}\\
    \Delta(Y)_{-2}&:=\Set{\w\in\Hal(Y,\Z)\ |\ \begin{array}{c}\w^2=-2 \mbox{ and }c_1(\v)\equiv D\pmod 2,\\
        \;\;\;\;\mbox{where $D$ is a nodal cycle} 
    \end{array}},
\end{split}
\end{equation}
and take their union 
\begin{equation}\label{eqn:DefOfEnriquesRoots}
    \Delta(Y):=\Delta(Y)_{-1}\cup\Delta(Y)_{-2}.
\end{equation}
In either case, we consider the subset $$P_0^+(Y):=P^+(Y)\backslash\bigcup_{\w\in\Delta(Y)}\w^\perp.$$  Then we have the following fundamental theorem:
\begin{Thm}[{\cite[Prop. 8.3,Thm. 13.2]{Bri08},\cite[Cor. 3.8]{MMS09}}]\label{thm:CoveringMap}
The map $$\ZZ:\Stabd(Y)\to\Hal(Y,\C),\;\;\;\;\sigma\mapsto\mho_\sigma$$ is a covering map of the open subset $P_0^+(Y)$, where $Z_\sigma(\blank)=\langle\mho_\sigma,\v(\blank)\rangle$.  If we let $$\Aut_0^\dagger(Y):=\Set{\Phi\in\Aut(\Db(Y))\ | \ \Phi_*=\id_{\Hal(Y,\Z)},\Phi(\Stabd(Y))=\Stabd(Y)},$$ then the group of deck transformations for the covering map $\ZZ$ is precisely $\Aut_0^\dagger(Y)$.  Finally, let $\T$ be the subgroup of $\Aut(\Db(Y))$ generated by $R_{\OO_C(k)}$, where $C$ is a $(-2)$-curve, and $R^2_T$, where $T$ is a spherical or exceptional object (of nonvanishing rank).  Then $$\Stabd(Y)=\bigcup_{\Phi\in\T}\Phi(\overline{U(Y)}).$$
\end{Thm}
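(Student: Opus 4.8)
The plan is to handle the two cases $Y=\widetilde X$ and $Y=X$ in tandem, reducing the Enriques case to the K3 case. For $Y=\widetilde X$ this is Bridgeland's theorem \cite{Bri08}, and its proof has three stages: (i) show $\ZZ$ is a local homeomorphism onto an open subset of $P_0^+(Y)$; (ii) upgrade this to the statement that $\ZZ$ is a covering of all of $P_0^+(Y)$; and (iii) identify the deck group and deduce the reflection decomposition. For $Y=X$ I would then descend everything along the covering $\varpi\colon\widetilde X\to X$ following \cite{MMS09}: a stability condition on $X$ is the same datum as an $\iota^*$-invariant geometric-component stability condition on $\widetilde X$, so the covering assertion for $X$ is obtained by restricting the K3 statement to the $\iota^*$-fixed locus, the two root sets $\Delta(X)_{-1}$ and $\Delta(X)_{-2}$ being exactly the classes that descend from $\Delta(\widetilde X)$ under $\varpi^*$.

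For stage (i), the local homeomorphism is immediate from the general manifold structure recalled in \cref{thm:GeometricStabilityConditions}'s setting: the map sending $\sigma$ to its central charge $Z_\sigma\in\Hom(\Hal(Y,\Z),\C)$ is a local homeomorphism, and since the Mukai pairing is nondegenerate we identify $\Hom(\Hal(Y,\Z),\C)\cong\Hal(Y,\C)$ via $\mho_\sigma$. That the image lands in $P^+(Y)$ is checked on the geometric chamber, where by \cref{thm:GeometricStabilityConditions} we may take $\mho_\sigma=e^{\beta+i\omega}$ up to the $\widetilde{\GL}_2^+(\R)$-action, whose real and imaginary parts span a positive-definite plane; this is then propagated over all of $\Stabd(Y)$ by connectedness, using that $P^+(Y)$ is one of the two components of $P(Y)$. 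Finally the image avoids every $\w^\perp$ with $\w\in\Delta(Y)$: if $\mho_\sigma\in\w^\perp$, then the spherical (resp. exceptional) object $S$ with $\v(S)=\w$ would satisfy $Z_\sigma(S)=\langle\mho_\sigma,\w\rangle=0$, which is impossible for a nonzero semistable object by the support property in the definition of a stability condition. Hence the image lies in $P_0^+(Y)$.

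The genuinely hard part is stage (ii), the completeness of path-lifting that turns the local homeomorphism into a covering of $P_0^+(Y)$. Given a path in $P_0^+(Y)$ and a lift of its initial point, one lifts using the local-homeomorphism property; the obstacle is to show the lift does not escape the boundary of $\Stabd(Y)$ before reaching the endpoint. This is governed by a uniform control on masses: the support property supplies a constant $C>0$ with $|Z_\sigma(E)|\ge C|\v(E)|$ for all semistable $E$, so that as long as the path stays away from the root hyperplanes the set of classes that can become unstable stays bounded, and no degeneration can occur except precisely at the excluded locus $\bigcup_{\w\in\Delta(Y)}\w^\perp$. I expect this compactness argument, which amounts to identifying exactly the spherical/exceptional classes that control the codimension-one degenerations, to be the main technical obstacle, just as in Bridgeland's Theorem 13.2.

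For stage (iii), any $\Phi\in\Aut_0^\dagger(Y)$ acts on $\Stabd(Y)$ with $Z_{\Phi(\sigma)}=Z_\sigma\circ\Phi_*^{-1}=Z_\sigma$ since $\Phi_*=\id_{\Hal(Y,\Z)}$, hence $\mho_{\Phi(\sigma)}=\mho_\sigma$ and $\Phi$ is a deck transformation; this exhibits $\Aut_0^\dagger(Y)$ as a subgroup of the deck group acting freely on each fiber. Conversely, because $\ZZ$ is a covering, each fiber is a torsor under the deck group, so it suffices to show that any two stability conditions with the same central charge differ by such an autoequivalence, which is the rigidity input identifying the full deck group with $\Aut_0^\dagger(Y)$. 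The decomposition $\Stabd(Y)=\bigcup_{\Phi\in\T}\Phi(\overline{U(Y)})$ then follows from the wall-and-chamber structure downstairs: $\ZZ(\overline{U(Y)})$ is a closed chamber of the hyperplane arrangement $\{\w^\perp\}_{\w\in\Delta(Y)}$, crossing a wall along a root applies the corresponding reflection $R_{\OO_C(k)}$ or $R_T^2$, and since these reflections generate $\T$ and act transitively on chambers, pulling back through $\ZZ$ exhausts $\Stabd(Y)$.
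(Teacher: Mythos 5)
First, note that the paper itself contains no proof of this statement: \cref{thm:CoveringMap} is a review result, quoted with citations to \cite[Prop.~8.3, Thm.~13.2]{Bri08} and \cite[Cor.~3.8]{MMS09}, so there is no internal argument to compare against. Your outline does follow the architecture of those cited proofs (Bridgeland's three stages for $Y=\widetilde{X}$, then descent along $\varpi$ for $Y=X$), but as a proof it contains one concretely broken step and defers the two genuinely hard ones.

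The broken step is the root-hyperplane avoidance in your stage (i). You argue that $\mho_\sigma\in\w^\perp$ for $\w\in\Delta(Y)$ is impossible because the spherical or exceptional object $S$ with $\v(S)=\w$ would satisfy $Z_\sigma(S)=0$, contradicting the support property. But the support property constrains only $\sigma$-\emph{semistable} objects, and nothing guarantees that $S$ is $\sigma$-semistable, or even that it lies in a shift of the heart: in general $Z_\sigma(S)$ is an alternating sum of the nonzero central charges of the HN factors of its cohomology objects, and such sums can cancel. The real content at this point is the nontrivial lemma that for \emph{every} $\sigma\in\Stabd(Y)$ and every class in $\Delta(Y)$ there exists a $\sigma$-semistable object of exactly that class; for K3 surfaces this is part of Bridgeland's argument, and for Enriques surfaces it is precisely the existence statement of \cref{Thm:exist:nodal} and \cite[Lem.~4.3]{Yos16b} --- indeed the nodal-cycle condition in the definition of $\Delta(Y)_{-2}$ in \eqref{eqn:DefOfEnriquesRootsSep} is there exactly because existence fails without it. Two further points. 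Your claim that $\Delta(X)_{-1}$ and $\Delta(X)_{-2}$ are ``exactly the classes that descend from $\Delta(\widetilde{X})$'' is not right as stated: a class $\w\in\Delta(X)_{-2}$ pulls back to $\varpi^*\w$ of square $-4$, which is a sum $\delta+\iota^*\delta$ of two distinct spherical classes on $\widetilde{X}$ rather than an element of $\Delta(\widetilde{X})$, so matching the Enriques hyperplane arrangement with the restriction of the K3 one is a genuine lattice computation carried out in \cite{MMS09}, not a formal restriction to the $\iota^*$-fixed locus. And your stages (ii) and (iii) --- the covering property itself and the identification of the full deck group with $\Aut_0^\dagger(Y)$ --- are explicitly deferred to \cite{Bri08} rather than argued, so what you have written is a correct road map to the literature rather than a proof.
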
 
Here $R_T$ is the spherical or exceptional twist.
\subsubsection{Inducing stability conditions}\label{subsubsec:inducing stability}The natural pull-back and push-forward functors relate stability conditions on $\widetilde{X}$ to those on $X$ and vice-versa.  Indeed, a stability condition $\sigma=(Z_\sigma,\PP_\sigma)\in\Stabd(X)$ \emph{induces} a stability condition $\varpi^*(\sigma)=(Z_{\varpi^*(\sigma)},\PP_{\varpi^*(\sigma)})\in\Stabd(\widetilde{X})$ by defining 
$$Z_{\varpi^*(\sigma)}:=Z_\sigma\circ\varpi_*,\;\;\;\PP_{\varpi^*(\sigma)}(\phi):=\Set{E\in\Db(\widetilde{X})\ |\ \varpi_*(E)\in\PP_\sigma(\phi)}.$$  In the opposite direction, for a stability condition $\sigma'=(Z_{\sigma'},\PP_{\sigma'})\in\Stabd(\widetilde{X})$, we can induce a stability condition $\varpi_*(\sigma')=(Z_{\varpi_*(\sigma')},\PP_{\varpi_*(\sigma')})\in\Stabd(X)$ via the definition
$$Z_{\varpi_*(\sigma')}:=Z_{\sigma'}\circ\varpi^*,\;\;\;\PP_{\varpi_*(\sigma')}(\phi):=\Set{E\in\Db(X)\ |\ \varpi^*(E)\in\PP_{\sigma'}(\phi)}.$$

It was shown in \cite{MMS09} that $\varpi^*:\Stabd(X)\to\Stabd(\widetilde{X})$ is a closed embedding onto the submanifold of $\Stabd(\widetilde{X})$ consisting of $\iota^*$-invariant stability conditions, albeit using different notation.  
\subsection{Walls}\label{subsec:Walls}Of paramount importance to our investigation here, the space of Bridgeland stability conditions admits a well-behaved wall and chamber structure.  For a fixed Mukai vector $\v\in\Hal(Y,\Z)$, there exists a locally finite set of \emph{walls} (real codimension one submanifolds with boundary) in $\Stabd(Y)$, depending only on $\v$, with the following properties:
\begin{enumerate}
    \item When $\sigma$ varies in a chamber, that is, a connected component of the complement of the union of walls, the sets of $\sigma$-semistable and $\sigma$-stable objects of class $\v$ do not change.
    \item When $\sigma$ lies on a single wall $\WW\subset\Stabd(Y)$, there is a $\sigma$-semistable object that is unstable in one of the adjacent chambers and semistable in the other adjacent chamber.
    \item These same properties remain true for the wall and chamber structure on $\WW_1\cap\cdots\cap\WW_k$ whose walls are $\WW_1\cap\cdots\cap\WW_k\cap\WW$ for an additional wall $\WW$ for $\v$.
\end{enumerate}
These walls were originally defined in \cite[Prop. 2.3]{Bri08} for K3 surfaces, and more generally in \cite{Tod08}.  By the construction of these walls, it follows that, for primitive $\v$ and $\sigma$ in a chamber for $\v$, a JH-filtration factor of a $\sigma$-semistable object of class $m\v$ ($m\in\N$) must have class $m'\v$ for $1\leq m'<m$.  In particular, for $\sigma$ in a chamber for $\v$, $\sigma$-stability coincides with $\sigma$-semistability.

\begin{Def}\label{def:generic} Let $\v\in\Hal(Y,\Z)$.  A stability condition $\sigma\in\Stabd(Y)$ is called \emph{generic} with respect to $\v$ if it does not lie on any wall for $\v$.
\end{Def}

It is worth recalling that, given a polarization $H\in\Amp(Y)$ and the Mukai vector $\v$ of an $H$-Gieseker semistable sheaf, there exists a chamber $\CC$ for $\v$, the \emph{Gieseker chamber}, where the set of $\sigma$-semistable objects of class $\v$ coincides with the set of $H$-Gieseker semistable sheaves \cite[Prop. 14.2]{Bri08}.

\subsection{Moduli stacks and moduli spaces}

For $\sigma \in \Stab^\dagger(Y)$, 
let $\MM_\sigma(\v)$ be the moduli stack of $\sigma$-semistable objects $E$
with $\v(E)=\v$ and $\MM_\sigma^s(\v)$ 
the open substack of $\sigma$-stable
objects. That is, for a scheme $T$,
$\MM_{\sigma}(\v)(T)$ is the category of $\EE \in\Db(Y \times T)$ such that $\EE$ is relatively perfect over $T$ (\cite[Def. 2.1.1]{Lie}) and $\EE_t$ are $\sigma$-semistable objects
with $\v(\EE_t)=\v$ for all $t \in T$. 
By (the proof of) \cite[Thm. 4.12]{Tod08}, 
$\MM_\sigma(\v)$ is an Artin stack of finite type which is an open substack of Lieblich's ``mother of all moduli spaces" $\MM$ which parametrizes families $\EE\in\Db(Y\times T)$ with $\EE$ relatively perfect over $T$ and such that $\Ext^i(\EE_t,\EE_t)=0$ for all $i<0$ and $t\in T$ (see \cite{Lie}).

We say two objects $E_1$ and $E_2$ in $\MM_\sigma(\v)(k)$ are S-equivalent if they have the same JH-filtration factors.  For $\sigma\in\Stabd(Y)$ generic with respect to $\v$, $\MM_{\sigma}(\v)$ (resp. $\MM_\sigma^s(\v)$) admits a projective coarse moduli scheme $M_\sigma(\v)$ (resp. $M_\sigma^s(\v)$), which parametrizes S-equivalence
classes of $\sigma$-semistable (resp. $\sigma$-stable) objects $E$ with $\v(E)=\v$ (see \cite{BM14a} for the K3 case and \cite[sect. 9]{Nue14b},\cite{Yos16b} for the Enriques case).  

It was shown in \cite{BM14a} and \cite{Yos16b} that there exists an autoequivalence $\Phi\in\Aut(\Db(Y))$ inducing an isomorphism between $\MM_\sigma(\v)$ and the stack $\MM_H(\v')$ of $H$-Gieseker semistable sheaves of Mukai vector $\v'=\Phi(\v)$ for some polarization $H$ generic with respect to $\v'$.  Thus $$\MM_\sigma(\v)\cong[Q(\v')^{ss}/\GL_N],$$ where $Q(\v')$ is the open subscheme of the Quot scheme parametrizing quotients $$\lambda:\OO_Y(-mH)^{\oplus N}\onto E$$ such that 
\begin{enumerate}
    \item $\v(E)=\v'$;
    \item $\lambda$ induces an isomorphism $H^0(Y,\OO_Y^{\oplus N})\cong H^0(Y,E(mH))$;
    \item $H^i(Y,E(mH))=0$ for $i>0$,
\end{enumerate}
where $m\gg 0$ is fixed and $Q(\v')^{ss}$ (resp. $Q(\v')^s$) is the open sublocus where $E$ is semistable (resp. stable).  It follows that \begin{equation}\label{eqn:DimensionOfStackAndCoarse}\dim\MM_\sigma^s(\v)=\dim Q(\v')^{s}-\dim\GL_N=(\dim Q(\v')^s-\dim\PGL_N)-1=\dim M_\sigma^s(\v)-1,\end{equation} since $\PGL_N$ acts freely on $Q(\v')^s$.  

For $L \in \NS(Y)$, we let $\MM_\sigma(\v,L)$ be the substack of $\MM_\sigma(\v)$
consisting of $E$ with $c_1(E)=L$.
We define $\MM_\sigma^s(\v,L)$, 
$M_\sigma(\v,L)$ and $M_\sigma^s(\v,L)$ similary.  When the determinant is irrelevant, we drop $L$ from the notation.  In particular, as $\NS(\widetilde{X})=\Num(\widetilde{X})$, we drop it from the notation in the K3 case.  On the other hand, in the Enriques case, when $Y=X$, $$\MM_\sigma(\v)=\MM_\sigma(\v,L)\bigsqcup\MM_\sigma(\v,L+K_X).$$


\subsection{Some properties of moduli spaces}We recall here what is known about the moduli spaces $\MM_\sigma(\v)$ and their coarse moduli spaces $M_\sigma(\v)$.  Before we get into details for K3 surfaces and Enriques surfaces individually, let us point out that using the definition of inducing stability conditions in \cref{subsubsec:inducing stability} we can relate their respective moduli spaces.  Indeed, for $\sigma\in\Stabd(X)$, $\v\in\Hal(X,\Z)$, and $\w\in\Hal(\widetilde{X},\Z)$, there are morphisms of stacks 
$$\MM_{\varpi^*(\sigma)}(\w)\to\MM_\sigma(\varpi_*(\w)),\;\;\;E\mapsto\varpi_*(E)$$
$$\MM_\sigma(\v)\to\MM_{\varpi^*(\sigma)}(\varpi^*\v),\;\;\;E\mapsto\varpi^*(E).$$  Only the second of these requires comment.  For the stability condition $\varpi_*(\varpi^*(\sigma))$, we have $Z_{\varpi_*(\varpi^*(\sigma))}=2Z_\sigma$ and $\PP_{\varpi_*(\varpi^*(\sigma))}(\phi)=\PP_\sigma(\phi)$, so in particular $E\in\Db(X)$ is $\sigma$-semistable if and only if $\varpi^*(E)$ is $\varpi^*(\sigma)$-semistable. 
\subsubsection{$Y=\widetilde{X}$ is a K3 surface} The following result gives precise conditions on nonemptiness of the moduli spaces $M_\sigma(\v)$ in the K3 case and is proven in \cite{BM14a} and \cite{BM14b}.
\begin{Thm}[{\cite[Thm. 2.15]{Bri08}}]\label{thm:nNnemptinessModuliK3}
Let $\widetilde{X}$ be a K3 surface over $k$, and let $\sigma\in\Stabd(\widetilde{X})$ be a generic stability condition with respect to $\v=m\v_0\in\Hal(\widetilde{X},\Z)$, where $\v_0$ is primitive and $m>0$.
\begin{enumerate}
    \item The coarse moduli space $M_\sigma(\v)$ is non-empty if and only if $\v_0^2\geq-2$.
    \item Either $\dim M_\sigma(\v)=\v^2+2$ and $M_\sigma^s(\v)\neq\varnothing$, or $m>1$ and $\v_0^2\leq0$.
    \item When $\v_0^2>0$, $M_\sigma(\v)$ is a normal irreducible projective variety with $\Q$-factorial singularities.  
\end{enumerate}
\end{Thm}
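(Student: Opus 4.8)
The plan is to reduce the whole statement to the classical theory of Gieseker-semistable sheaves and then transport the conclusions back through an autoequivalence. The essential input, recalled just above, is that for any $\sigma\in\Stabd(\widetilde{X})$ (cf.\ \cref{thm:CoveringMap}) there is an autoequivalence $\Phi\in\Aut(\Db(\widetilde{X}))$ and a polarization $H$ generic with respect to $\v'=\Phi(\v)$ such that $\Phi$ induces an isomorphism of stacks $\MM_\sigma(\v)\cong\MM_H(\v')$, and hence isomorphisms $M_\sigma(\v)\cong M_H(\v')$ of coarse spaces and $M_\sigma^s(\v)\cong M_H^s(\v')$ of stable loci. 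Since $\Phi$ acts on $\Hal(\widetilde{X},\Z)$ as an isometry (up to sign), it preserves both the Mukai pairing and the divisibility of a class: writing $\v=m\v_0$ with $\v_0$ primitive gives $\v'=m\v_0'$ with $\v_0'=\Phi(\v_0)$ primitive, $(\v')^2=\v^2$, $(\v_0')^2=\v_0^2$, and $m$ unchanged. It therefore suffices to prove (1)--(3) for $M_H(\v')$, every numerical quantity being carried over verbatim.

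For the Gieseker model I would invoke the known results of Mukai and Yoshioka part by part. For (1), the bound $(\v_0')^2\geq-2$ is necessary because the Mukai pairing is even and, at any stable sheaf $E$, Serre duality on the K3 surface (where $K_{\widetilde{X}}\cong\OO_{\widetilde{X}}$) gives $\hom(E,E)=\ext^2(E,E)=1$, whence $\ext^1(E,E)=(\v')^2+2\geq0$; a short argument with Jordan--H\"{o}lder factors then excludes any semistable sheaf whose primitive class has square below $-2$. The sufficiency direction, that a semistable sheaf exists for every class with $(\v_0')^2\geq-2$, is the substantive statement and is exactly Yoshioka's nonemptiness theorem. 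For (2), the same Ext computation shows that wherever $M_H^s(\v')\neq\varnothing$ the stable locus is smooth of dimension $(\v')^2+2$, and as the strictly semistable locus is a proper closed subset one obtains $\dim M_H(\v')=(\v')^2+2$; the stable locus can be empty only when every semistable sheaf of class $\v'$ decomposes into rigid or isotropic Jordan--H\"{o}lder factors, which forces $m>1$ and $(\v_0')^2\leq0$. This yields the asserted dichotomy.

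For (3), when $(\v_0')^2>0$ the normality, irreducibility, and $\Q$-factoriality of $M_H(\v')$ are again due to Yoshioka: irreducibility follows from that of the smooth stable locus together with its density, while normality and $\Q$-factoriality come from the realization of $M_H(\v')$ as a GIT good quotient of the Quot-scheme locus $Q(\v')^{ss}$ recalled in \eqref{eqn:DimensionOfStackAndCoarse}. Pulling these conclusions back along $M_\sigma(\v)\cong M_H(\v')$ finishes the argument.

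The real obstacle lies not in this reduction but in the classical input it rests on: the sufficiency half of (1) and the irreducibility in (3) for Gieseker moduli are delicate and rely on Yoshioka's deformation and elementary-modification techniques (see also \cite{BM14a,BM14b}). Granting those, the only work specific to the Bridgeland setting is the lattice bookkeeping showing that $\Phi$ preserves $\v^2$, $\v_0^2$, and $m$, so that $M_\sigma(\v)$ inherits nonemptiness, its dimension, and its singularity type directly from the Gieseker model $M_H(\v')$.
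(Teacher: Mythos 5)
You should first note that the paper contains no proof of \cref{thm:nNnemptinessModuliK3} at all: it is a recalled result, and the bracketed citation to \cite{Bri08} is evidently a slip (Bridgeland's paper constructs no moduli spaces); the surrounding text correctly says the theorem "is proven in \cite{BM14a} and \cite{BM14b}", and the statement is essentially Theorem 2.15 of the latter. So the only meaningful comparison is with the proof in those references, and your strategy coincides with it: identify $M_\sigma(\v)$ with a Gieseker-type moduli space via a derived equivalence, import Yoshioka's classical nonemptiness, dimension and irreducibility results, and check that the equivalence preserves $\v^2$, $\v_0^2$ and the multiplicity $m$. Your Serre-duality/Ext computation for the necessity half of (1), the dichotomy in (2), and the lattice bookkeeping are all fine and are exactly what happens in the cited proof.

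The one genuine gap is the reduction step itself. You assert that there is an autoequivalence $\Phi\in\Aut(\Db(\widetilde{X}))$ inducing $\MM_\sigma(\v)\cong\MM_H(\Phi(\v))$ with $\MM_H$ a moduli stack of \emph{untwisted} Gieseker semistable sheaves on the \emph{same} K3 surface $\widetilde{X}$. That is stronger than what is available. What \cite{BM14a} actually proves is that for generic $\sigma$ there is a derived (anti-)equivalence from $\Db(\widetilde{X})$ to $\Db(Y',\alpha')$, where $Y'$ is a Fourier--Mukai partner K3 (constructed as $M_\sigma(\w)$ for a suitable isotropic class $\w$) and $\alpha'$ is a Brauer class recording the obstruction to an untwisted universal family; this identifies $M_\sigma(\v)$ with a moduli space of $\alpha'$-twisted $H$-Gieseker semistable sheaves on $Y'$. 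In general one can arrange neither $Y'\cong\widetilde{X}$ nor $\alpha'$ trivial, so the classical input your argument rests on must be Yoshioka's theorem for \emph{twisted} sheaves (in the form recalled and extended in \cite{BM14a}), not the untwisted theory. (The present paper's review section does state the untwisted-autoequivalence version for $Y$ a K3 or Enriques surface, but for K3 surfaces that sentence should itself be read with this twisted caveat; it is the Enriques case, via \cite{Yos16b}, that the paper actually uses.) With this correction your argument goes through verbatim, since the induced isometry of twisted Mukai lattices still preserves squares and divisibility. A second, smaller inaccuracy: in (3), for $m>1$ and $\v_0^2>0$ the moduli space is singular, and its normality and $\Q$-factoriality are not formal consequences of the GIT presentation; they are theorems of Kaledin--Lehn--Sorger type (with the $m=2$, $\v_0^2=2$ case handled separately), which is how \cite{BM14a,BM14b} obtain them.
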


\subsubsection{$Y=X$ is an Enriques surface}

The following results follow from \cite{Nue14a}. 
Since $\MM_\sigma(v)$ is isomorphic to a moduli stack of Gieseker semi-stable
sheaves \cite{Yos16b}, they also follow from corresponding results for
Gieseker semistable sheaves \cite{Yos14,Yos16a}.  Since the Enriques case is more subtle, we break the statement into smaller pieces, beginning with the primitive case:

\begin{Thm}[{cf. \cite{Nue14b},\cite[Thm. 3.1]{Yos14},\cite[Theorem 4.10]{Yos16a}}]\label{Thm:exist:nodal}
Let $X$ be an Enriques surface over $k$, and let $\sigma\in\Stabd(X)$ be a generic stability condition with respect to primitive $\v\in\Hal(X,\Z)$.  Then for $L\in\NS(X)$ such that $[L\mod K_X]=c_1(\v)$, $M_\sigma(\v,L) \ne \varnothing$ if and only if
\begin{enumerate}
\item
$\ell(\v)=1$ and $\v^2\geq -1$ or 
\item 
$\ell(\v)=2$ and $\v^2>0$ or 
\item
$\ell(\v)=2$, $\v^2=0$, and $L \equiv \frac{r}{2}K_X \pmod 2$ or
\item\label{enum:SphericalNonemptiness}
$\v^2=-2$,
$L \equiv D+\frac{r}{2}K_X \pmod 2$, where 
$D$ is a nodal cycle, that is, $(D^2)=-2$ and $H^1({\cal O}_D)=0$.
\end{enumerate}
Furthermore, when non-empty,
\begin{enumerate}
\item
$M_\sigma(\v,L)$ is connected, and
\item
if $X$ is unnodal or $\v^2\geq 4$, 
then $M_\sigma(\v,L)$ is irreducible.
\end{enumerate}
\end{Thm}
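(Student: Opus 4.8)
The plan is to reduce everything to Gieseker stability and then to exploit the K3 double cover $\varpi:\widetilde X\to X$. By \cite{Yos16b} the stack $\MM_\sigma(\v)$ is isomorphic, via an autoequivalence of $\Db(X)$, to a stack $\MM_H(\v')$ of $H$-Gieseker semistable sheaves for a polarization $H$ generic with respect to $\v'=\Phi(\v)$; since such (anti-)autoequivalences preserve $\v^2$, the invariant $\ell$ of \cref{primitive}, and (after tracking the induced action on $\Num(X)$) the determinant-congruence classes of \eqref{eqn:DefOfEnriquesRootsSep}, it suffices to prove the statement for Gieseker semistable sheaves. From now on $E$ denotes such a sheaf, and I would compare it with sheaves on $\widetilde X$ through the adjoint pair $(\varpi^*,\varpi_*)$, using that $\varpi_*\OO_{\widetilde X}=\OO_X\oplus K_X$, so that $\varpi_*\varpi^*E=E\oplus E(K_X)$ and $\varpi^*\varpi_*F=F\oplus\iota^*F$.

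For the \emph{necessity} of the numerical conditions I would pull back. If $M_\sigma(\v,L)\neq\varnothing$ then $\varpi^*E$ is $\varpi^*\sigma$-semistable of class $\varpi^*\v=\ell(\v)\w$, and since $\langle\varpi^*\cdot,\varpi^*\cdot\rangle=2\langle\cdot,\cdot\rangle$ we have $\ell(\v)^2\w^2=2\v^2$. When $\ell(\v)=1$, $\w$ is primitive with $\w^2=2\v^2$, so \cref{thm:nNnemptinessModuliK3} forces $\w^2\geq-2$, i.e. $\v^2\geq-1$, which is condition (1). When $\ell(\v)=2$ the class $\varpi^*\v=2\w$ is imprimitive with $\w^2=\v^2/2$, so the K3 bound alone is too weak; here I would combine the parity constraints of \cref{primitive} (namely $r+s\equiv2\pmod 4$, and $\v^2\equiv0\pmod 8$) with Serre duality. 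Stability of $E$ together with $\Ext^2(E,E)\cong\Hom(E,E(K_X))^\vee$ controls when $E\cong E(K_X)$, and analyzing the two possible descents of an $\iota$-invariant stable factor of $\varpi^*E$ pins down the admissible determinant: this yields the congruence $L\equiv\tfrac r2 K_X$ in the rigid case $\v^2=0$ (condition (3)) and, in the exceptional case $\v^2=-2$, forces $\v\in\Delta(X)_{-2}$ and hence $L\equiv D+\tfrac r2K_X$ for a nodal cycle $D$ (condition (4)).

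For \emph{sufficiency} and the geometric statements I would run the construction in the opposite direction. By \cref{thm:nNnemptinessModuliK3} a stable sheaf $F$ of primitive class $\w$ on $\widetilde X$ exists whenever $\w^2\geq-2$; when $F$ can be chosen $\iota$-invariant it descends to $X$ in exactly two ways, differing by the two $\iota$-linearizations, i.e. by $K_X$ --- this is precisely the source of the decomposition $M_\sigma(\v)=M_\sigma(\v,L)\sqcup M_\sigma(\v,L+K_X)$ and of the determinant conditions. For $\ell(\v)=1$ this identifies $M_\sigma(\v,L)$ with (a union of components of) the fixed locus of the involution induced by $\iota$ on $M_{\widetilde X}(\w)$; nonemptiness of this fixed locus for $\w^2\geq-2$ gives condition (1). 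The imprimitive and boundary cases $\ell(\v)=2$, $\v^2\in\{0,-2\}$ I would instead settle by exhibiting the relevant semi-rigid sheaves and, for $\v^2=-2$, the exceptional object of $R_{E_0}$-type built from a nodal cycle $D$, which exists exactly under condition (4).

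Finally, \emph{connectedness and irreducibility} follow from smoothness. Serre duality gives $\Ext^2(E,E)\cong\Hom(E,E(K_X))^\vee$, which vanishes for stable $E$ unless $E\cong E(K_X)$; hence on the open locus where $E\not\cong E(K_X)$ the space $M_\sigma(\v,L)$ is smooth of dimension $\ext^1(E,E)=\v^2+1$, consistent with \eqref{eqn:DimensionOfStackAndCoarse}. The complementary locus $Z=\{E\cong E(K_X)\}$ consists of pushforwards $\varpi_*F$, and a dimension count shows $\codim Z\geq2$ once $\v^2\geq4$; combined with connectedness of $M_\sigma(\v,L)$ (which I would deduce from the irreducibility of $M_{\widetilde X}(\w)$ in \cref{thm:nNnemptinessModuliK3} via the descent correspondence) this makes the smooth locus connected, hence irreducible, and so $M_\sigma(\v,L)$ is irreducible. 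For unnodal $X$ the set $\Delta(X)_{-2}$ is empty, removing the extra components that nodal cycles would otherwise contribute, so irreducibility persists to the lowest admissible $\v^2$. I expect the main obstacle to be precisely the $\ell(\v)=2$ and boundary cases $\v^2\in\{-2,0\}$: there the pullback class $2\w$ is imprimitive, the naive K3 comparison breaks down, and one must track the $\iota$-equivariant structure together with the fine arithmetic of \cref{primitive} to fix the exact determinant-congruence and nodal-cycle conditions, as well as to identify and bound the dimension of the locus $Z$ responsible for possible reducibility.
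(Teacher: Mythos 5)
The paper does not actually prove \cref{Thm:exist:nodal}; it recalls it from \cite{Nue14b,Yos14,Yos16a}, observing only that the Bridgeland statement reduces to the Gieseker one because $\MM_\sigma(\v)$ is isomorphic to a stack of Gieseker semistable sheaves by \cite{Yos16b}. Your opening reduction coincides with that stated route, but everything afterwards is your own attempted proof of the cited results, and its core step is circular. The fixed locus of $\iota^*$ on $M_{\widetilde{X}}(\w)$ consists \emph{exactly} of the stable sheaves that descend to $X$, so asserting that this fixed locus is nonempty is equivalent to the statement you are proving; \cref{thm:nNnemptinessModuliK3} gives nonemptiness of $M_{\widetilde{X}}(\w)$ but says nothing about fixed points of the induced involution, and such involutions can perfectly well be free. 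A counterexample inside the same family: for $\w=(0,0,1)$ one has $M_{\widetilde{X}}(\w)=\widetilde{X}$ and the induced involution is the Enriques involution itself, which is fixed-point free --- consistently, $(0,0,1)$ is not a pullback class from $\Hal(X,\Z)$. Deciding when the fixed locus is nonempty is precisely the arithmetic content of conditions (1)--(4), which is why the cited proofs do not argue by descent but construct semistable sheaves on $X$ directly (Fourier--Mukai transforms along elliptic fibrations, induction on the rank, deformation of the Enriques surface), using the cover only for necessity-type constraints. Your necessity direction (pullback, the K3 bound, and the parity constraints of \cref{primitive}) is the essentially sound part of the proposal; note also that sufficiency in the case $\ell(\v)=2$, $\v^2>0$ is never addressed at all.

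The connectedness and irreducibility claims have independent gaps. Connectedness of $M_\sigma(\v,L)$ cannot be deduced from irreducibility of $M_{\widetilde{X}}(\w)$: the image of $E\mapsto\varpi^*E$ lies in the fixed locus, a \emph{proper closed} subvariety of $M_{\widetilde{X}}(\w)$, and a closed subvariety of an irreducible variety need not be connected; connectedness on the Enriques side is itself a nontrivial theorem of the cited papers. Worse, the final inference ``$M$ connected, $M\setminus Z$ smooth, $\codim Z\geq 2$, hence $M$ irreducible'' is false: if $M$ had two irreducible components meeting only inside $Z$, then $M$ would be connected while $M\setminus Z$ is smooth but disconnected. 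This is exactly the failure mode the paper exhibits in its appendix (\cref{prop:irred-comp:v^2=2}): for $\v^2=2$ on a nodal surface the moduli space has two components glued along the locus $\Set{E\cong E(K_X)}$. The missing ingredient is normality: since $\ext^2(E,E)=\hom(E,E(K_X))\leq 1$ for stable $E$, the moduli space is analytically-locally a hypersurface in a smooth space, hence satisfies Serre's condition S2; together with your codimension-two bound on the locus $Z\supset\Sing(M)$ this gives R1 and hence normality, and normal plus connected does imply irreducible. This is how \cref{prop:pss} and the cited proofs conclude for $\v^2\geq 4$; without it your irreducibility argument does not close.
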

\cref{enum:SphericalNonemptiness} in Theorem \ref{Thm:exist:nodal} only occurs when $X$ contains a smooth rational curve $C$ which necessarily satisfies $C^2=-2$, in which case $X$ is called a \emph{nodal} Enriques surface.  An Enriques surfaces containing no smooth rational curve is called \emph{unnodal}.  A Mukai vector $\v$ as in \ref{enum:SphericalNonemptiness} and an object $E\in M_\sigma^s(\v)$ are called \emph{spherical}, and it can be shown that the existence of a spherical object on $X$ is equivalent to $X$ being nodal \cite{Kim94}.  Similarly, an object $E\in M_\sigma^s(\v)$ with $\v^2=-1$ is called \emph{exceptional}.

For non-primitive Mukai vectors, we phrase the results in terms of the moduli stacks, as it is in this form that we will use them.  We state the positive square case first:

\begin{Prop}[{cf. \cite[Lem. 1.5, Cor. 1.6]{Yos16a}, \cite[Thm. 8.2]{Nue14b}}]\label{prop:pss}
Let $\v\in\Hal(X,\Z)$ be a Mukai vector with $\v^2 >0$, and let $\sigma\in\Stabd(X)$ be generic with respect to $\v$.  For $L\in\NS(X)$ such that $[L\mod K_X]=c_1(\v)$, we set
\begin{equation*}
\MM_\sigma(\v,L)^{pss}:=\Set{E \in \MM_\sigma(\v,L) \ | \
\text{$E$ is properly $\sigma$-semistable}}
\end{equation*}

Then
\begin{enumerate}
\item
$\dim \MM_\sigma(\v,L)^{pss} \leq \v^2-1$.
Moreover $\dim \MM_\sigma(\v,L)^{pss} \leq \v^2-2$ unless
$\v=2\v_0$ with $\v_0^2=1$.
 \item
$\MM_\sigma(\v,L)^{s} \ne \varnothing$, $\MM_\sigma(\v,L)$ is reduced, and 
$\dim \MM_\sigma(\v,L)=\v^2$.
\item
$\MM_\sigma(\v,L)$ is normal, unless
\begin{enumerate}
\item $\v=2\v_0$ with $\v_0^2=1$ and 
$L \equiv \frac{r}{2}K_X \pmod 2$, or
\item $\v^2=2$.
\end{enumerate}
\end{enumerate}
\end{Prop}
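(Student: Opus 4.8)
The plan is to transport the statement to Gieseker-semistable sheaves and then estimate the properly semistable locus by a Jordan--H\"older stratification. Recall from \cref{sec:ReviewStabilityK3Enriques} and \cite{Yos16b} that there is an autoequivalence $\Phi\in\Aut(\Db(X))$ and a polarization $H$ generic for $\v'=\Phi(\v)$ with $\MM_\sigma(\v,L)\cong\MM_H(\v',L')\cong[Q(\v')^{ss}/\GL_N]$ and $(\v')^2=\v^2$, so all three assertions may be checked for $H$-Gieseker semistable sheaves; this is essentially the content of \cite[Lem.~1.5, Cor.~1.6]{Yos16a} and \cite[Thm.~8.2]{Nue14b}. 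I would reprove them intrinsically, keeping the quotient presentation in reserve for the Cohen--Macaulay input to normality.

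For part (1), argue by induction on $m$, where $\v=m\v_0$ with $\v_0$ primitive (the base case $m=1$ reducing to \cref{Thm:exist:nodal}, where $\MM_\sigma(\v,L)^{pss}$ is empty since semistability coincides with stability for generic $\sigma$ and primitive $\v$). Stratify $\MM_\sigma(\v,L)^{pss}$ by the multiset of classes $\u_1,\dots,\u_n$ of the $\sigma$-stable Jordan--H\"older factors; since $\sigma$ is generic, the wall-and-chamber description in \cref{subsec:Walls} forces $\u_i=m_i\v_0$ with $\sum_i m_i=m$ and each $m_i<m$, so in particular no factor is spherical or exceptional, which is what keeps the estimate clean. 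For stable factors of equal phase, Serre duality on $X$ (using that $A_j\otimes K_X$ is again stable of the same phase) gives $\hom(A_i,A_j)=\ext^2(A_i,A_j)=0$ generically, hence $\langle\u_i,\u_j\rangle=\ext^1(A_i,A_j)\ge 0$. Using the inductive hypothesis $\dim\MM^s_\sigma(m_i\v_0)=(m_i\v_0)^2$ together with \eqref{eqn:DimensionOfStackAndCoarse}, the bookkeeping underlying \cref{Thm:DimensionOfHNFiltrationStack} gives the stratum dimension $\sum_i\u_i^2+\sum_{i<j}\langle\u_i,\u_j\rangle$, so its codimension in $\MM_\sigma(\v,L)$ is exactly $\sum_{i<j}\langle\u_i,\u_j\rangle=\v_0^2\sum_{i<j}m_im_j$ because $\v^2=\sum_i\u_i^2+2\sum_{i<j}\langle\u_i,\u_j\rangle$. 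For $m\ge 2$ this is minimized over partitions of $m$ by the split $\v=\v_0+(m-1)\v_0$, giving $(m-1)\v_0^2$. As $\v_0^2\ge 1$ (because $\v^2>0$), the codimension is $\ge 2$ unless $m=2$ and $\v_0^2=1$, where it drops to $1$ --- exactly the claimed exceptional case.

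Granting (1), the properly semistable locus is a proper closed substack of dimension $<\v^2$, so $\MM^s_\sigma(\v,L)$ is open, dense, and nonempty (nonemptiness of $\MM_\sigma(\v,L)$ itself following from \cref{Thm:exist:nodal} via the Gieseker identification). On the open set where $E\not\cong E\otimes K_X$ one has $\ext^2(E,E)=\hom(E,E\otimes K_X)=0$ by Serre duality, so there the stack is smooth of dimension $\ext^1(E,E)-\hom(E,E)=\v^2$; this gives $\dim\MM_\sigma(\v,L)=\v^2$, and reducedness follows since the complementary locus $\{E\cong E\otimes K_X\}$ has strictly smaller dimension and thus supports neither an extra component nor generic nonreducedness. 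This is part (2).

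For part (3) I would verify Serre's criterion $R_1+S_2$. The quotient presentation $[Q(\v')^{ss}/\GL_N]$ together with the deformation theory of sheaves on a surface (obstructions in $\ext^2$) yields the Cohen--Macaulay property, hence $S_2$; the delicate point is regularity in codimension one, since the singular locus lies in $\MM_\sigma(\v,L)^{pss}\cup\{E\cong E\otimes K_X\}$. By part (1) the first piece has codimension $\ge 2$ except when $\v=2\v_0$ with $\v_0^2=1$, where it is a divisor; since $\det E=\det A_1\otimes\det A_2$ runs over the determinant component congruent to $\tfrac r2K_X\pmod 2$, this divisor meets $\MM_\sigma(\v,L)$ precisely under that congruence, isolating subcase (a). The locus $\{E\cong E\otimes K_X\}$, where the generically free $\otimes K_X$-symmetry degenerates, requires a separate dimension count showing it becomes a divisor exactly when $\v^2=2$, isolating subcase (b). I expect the real obstacle to be this last count: pinning down the dimension of the $\otimes K_X$-fixed locus --- and securing the Cohen--Macaulayness feeding $S_2$ --- sharply enough to confirm that $R_1$, and hence normality, fails in exactly the two listed cases.
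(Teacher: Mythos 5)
First, note that the paper itself does not prove \cref{prop:pss}: it is a review statement, handled exactly by your opening move (transport to Gieseker moduli via the autoequivalence of \cite{Yos16b}) followed by citing \cite{Yos14,Yos16a} and \cite{Nue14b}. Your sketch of parts (1) and (2) then runs along the same lines as those cited proofs (Jordan-H\"older stratification, the dimension bookkeeping behind \cref{Thm:DimensionOfHNFiltrationStack}, deformation theory) and is sound in outline, modulo two patchable points: nonemptiness for non-primitive $\v$ does not follow from \cref{Thm:exist:nodal}, which is stated for primitive classes, but from taking direct sums of stable objects of class $\v_0$; and the stratum bound must also be checked in the degenerate configurations where Jordan-H\"older factors coincide or differ by $\otimes K_X$, since there $\hom$ and $\ext^2$ between factors do not vanish --- your word ``generically'' skips exactly these, and they are not a side issue (see below).

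The genuine gap is in part (3), and it is not the one you flag at the end. You conflate the properly semistable locus with the singular locus: at a polystable point $E=A_1\oplus A_2$ with $A_i\ncong A_j(K_X)$ for all $i,j$, Serre duality gives $\Ext^2(E,E)^\vee=\Hom(E,E(K_X))=0$, so the stack is \emph{smooth} there, and a properly semistable divisor by itself does not threaten $R_1$. Indeed, in the exceptional case $\v=2\v_0$, $\v_0^2=1$, the properly semistable locus is a divisor in \emph{both} determinant components --- for $\v=(2,0,-1)$ both $I_{z_1}\oplus I_{z_2}$ (determinant $0$) and $I_{z_1}\oplus I_{z_2}(K_X)$ (determinant $K_X$) occur --- yet only the component with $L\equiv\tfrac{r}{2}K_X\pmod 2$ is non-normal. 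Your determinant mechanism is therefore wrong: since $\det(A(K_X))=\det(A)+\rk(\v_0)K_X$, what is confined to the component $L\equiv\tfrac{r}{2}K_X\pmod 2$ is not the properly semistable divisor but the $\otimes K_X$-fixed locus, namely the obstructed polystable points $A\oplus A(K_X)$ together with the fixed stable locus $\Set{E\ |\ E\cong E(K_X)}=\Set{\varpi_*(F)}$, which is precisely where $\Ext^2\neq 0$. The dimension of that fixed locus --- which you defer to ``a separate dimension count'' --- is the real crux of both (2) and (3): it is computed via $E=\varpi_*(F)$ from moduli of stable objects on $\widetilde{X}$, and it is a divisor exactly in the two listed exceptional cases; this is what the paper's appendix carries out (cf.\ \cref{prop:connected} and the $\v^2=2$ analysis), where non-normality is exhibited by producing two irreducible components crossing along that fixed divisor. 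Note finally that your Cohen--Macaulay/$S_2$ input is not free either: it comes from l.c.i.-ness, which requires actual dimension to equal expected dimension, i.e.\ the very same dimension bounds including the fixed locus.
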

The statements in Proposition \ref{prop:pss} remain true for the coarse moduli spaces with dimensions adjusted in accordance with \eqref{eqn:DimensionOfStackAndCoarse}.  In particular, $\dim M_\sigma(\v,L)=\v^2+1$.  

For Mukai vectors with $\v^2\leq 0$ it is particularly useful to use the moduli stacks for dimension estimates as we now see:
\begin{Prop}[cf. {\cite[Proposition 1.9]{Yos16a}}]\label{prop:isotropic}
Let $\u\in\Hal(X,\Z)$ be an isotropic and primitive Mukai vector, and let $\sigma\in\Stabd(X)$ be generic with respect to $\u$.
\begin{enumerate}
\item
If $\MM_\sigma^s(m\u) \ne \varnothing$, then
$m=1,2$.
\item
$\MM_\sigma^s(2\u,L) \ne \varnothing$
if and only if $\ell(\u)=1$ and $L \equiv 0 \pmod 2$.
Moreover
$$
\MM_\sigma^s(2\u)=\Set{\varpi_*(F) \ | \ F \in 
\MM_{\varpi^*(\sigma)}^s(\varpi^*\v),\;
\iota^*(F) \not \cong F }.
$$ 
In particular, $\MM_\sigma^s(2\u)$ is smooth of
dimension 1.
\item
$\dim \MM_\sigma(m\u) \leq\lfloor\frac{m\ell(\u)}{2}\rfloor$.
\end{enumerate} 
\end{Prop}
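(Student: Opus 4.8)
The plan is to reduce all three statements to the geometry of the covering K3 surface via the pull-back/push-forward formalism of \cref{subsubsec:inducing stability}, and then to the geometry of $0$-dimensional sheaves on an auxiliary K3 surface via a Fourier--Mukai transform. By \cref{primitive} write $\varpi^*\u=\ell(\u)\w$ with $\w$ primitive; since $\langle\varpi^*\u,\varpi^*\u\rangle=2\u^2=0$ we get $\w^2=0$, so $\w$ is primitive isotropic on $\widetilde X$. As $\varpi^*\u$ is $\iota^*$-invariant we have $\iota^*\w=\w$, and since $\varpi^*(\sigma)$ is $\iota$-invariant, $\iota^*$ preserves $\w$-stability; hence $\iota$ induces an automorphism $\iota_S$ of the $2$-dimensional moduli space $S:=M_{\varpi^*(\sigma)}(\w)$, which is a K3 surface by \cref{thm:nNnemptinessModuliK3} (for $\sigma$ generic, $\varpi^*(\sigma)$ is generic with respect to $\w$). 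Transporting the Hodge structure along the induced isometry and using $\iota^*\omega_{\widetilde X}=-\omega_{\widetilde X}$, the involution $\iota_S$ acts by $-1$ on $H^{2,0}(S)$, hence is nontrivial, so $\dim S^{\iota_S}\le 1$. I would then fix the (possibly twisted) Fourier--Mukai equivalence $\Phi\colon\Db(\widetilde X)\isomor\Db(S,\alpha)$ attached to a quasi-universal family of $\w$-stable objects; normalized so that every $\w$-stable object goes to a skyscraper, it sends the class $n\w$ to the length class of a $0$-dimensional sheaf, thereby identifying $\MM_{\varpi^*(\sigma)}(n\w)$ with the stack of length-$n$, $0$-dimensional $\alpha$-twisted sheaves on $S$. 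Finally, since $\varpi$ is \'etale, pull-back upgrades to an equivalence $\Db(X)\isomor\Db_{\iota}(\widetilde X)$ onto the $\iota$-equivariant category, so $E\mapsto\Phi(\varpi^*E)$ identifies $\MM_\sigma(m\u)$ with the stack of $\iota_S$-equivariant length-$(m\ell(\u))$ sheaves on $S$.

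The heart of the matter is the following length bound: a $0$-dimensional sheaf $T$ of length $n$ on a smooth surface satisfies $\dim_\C\End(T)\ge n$. Working point by point on the support, $\End(T)$ is the commutant of the commuting nilpotent pair of operators given by a regular system of parameters acting on the $n$-dimensional vector space $T$. By the Motzkin--Taussky theorem the commuting variety $\{(A,B)\in M_n\times M_n:AB=BA\}$ is irreducible, so orbit dimension (lower semicontinuous under $\GL_n$-conjugation) is maximal on a dense open set; the generic simultaneously diagonalizable pair has commutant of dimension $n$, i.e. orbit of dimension $n^2-n$. Hence every commuting pair has commutant of dimension at least $n^2-(n^2-n)=n$, which gives the claim. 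I expect this step, together with propagating it through the twisted and $\iota$-equivariant settings, to be the principal technical obstacle.

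Granting the length bound, part (1) is immediate: for stable $E$ one computes, via adjunction and $\varpi_*\varpi^*E=E\oplus E(K_X)$, that $\dim\Hom(\varpi^*E,\varpi^*E)=\dim\Hom(E,E)+\dim\Hom(E,E(K_X))\le 2$ (the first summand is $1$, the second is $0$ or $1$ since $E,E(K_X)$ are stable of equal phase). As $\Phi(\varpi^*E)$ has length $m\ell(\u)$, the bound forces $m\ell(\u)\le 2$, hence $m\le 2$. For part (2), when $\ell(\u)=2$ the sheaf $\Phi(\varpi^*E)$ has length $4$, so $\dim\Hom(\varpi^*E,\varpi^*E)\ge 4>2$ and $\MM_\sigma^s(2\u)=\varnothing$; when $\ell(\u)=1$ it has length $2$, and the $\iota_S$-equivariant length-$2$ sheaves that are stable are exactly the reduced free orbits $k(s)\oplus k(\iota_S s)$ with $\iota_S s\neq s$ (the two-fixed-point, single-fixed-point, and punctual configurations push forward to $E_0\oplus E_0(K_X)$ or otherwise decompose, hence are not stable), i.e. the $\varpi_*F$ with $\iota^*F\not\cong F$. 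Such $F$ exist because $\iota_S$ is nontrivial; the determinant constraint $L\equiv 0\pmod 2$ is read off from $\det(\varpi_*F)$ using the projection formula for the \'etale double cover; and the free locus $S\setminus S^{\iota_S}$ is an open subset of a K3 surface, so $\MM_\sigma^s(2\u)$ is smooth of dimension $2-1=1$.

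For part (3), under the same identification $\MM_\sigma(m\u)$ is the stack of $\iota_S$-equivariant length-$n$ sheaves on $S$ with $n=m\ell(\u)$. I would stratify by the $\iota_S$-orbit type of the reduced support: a stratum with $a$ free orbits and $b$ fixed points, so $2a+b=n$, has coarse parameter dimension $2a+b\dim S^{\iota_S}$ and carries an $(a+b)$-dimensional torus of equivariant automorphisms, giving stack dimension $a+b(\dim S^{\iota_S}-1)\le a\le\lfloor n/2\rfloor$ by $\dim S^{\iota_S}\le 1$ and $2a+b=n$. The non-reduced (punctual) strata add automorphisms faster than parameters and so contribute strictly less. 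Taking the maximum over strata yields $\dim\MM_\sigma(m\u)\le\lfloor m\ell(\u)/2\rfloor$. The remaining difficulty is to make this stratified count uniform over the non-reduced strata and to carry out the stacky and twisted automorphism bookkeeping rigorously.
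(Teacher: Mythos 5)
Your proposal takes a genuinely different route from the paper: the paper does not reprove this proposition at all, but transports it from the corresponding Gieseker-moduli statement \cite[Prop.~1.9]{Yos16a} via the autoequivalence (from \cite{Yos16b}) identifying $\MM_\sigma(\v)$ with a stack of Gieseker-semistable sheaves. Your direct argument has correct and attractive ingredients: the bound $\dim_\C\End(T)\ge \mathrm{length}(T)$ for $0$-dimensional sheaves on a smooth surface, proved by Motzkin--Taussky irreducibility of the commuting variety plus upper semicontinuity of the commutant dimension, is the standard proof of a true fact, and, granting your dictionary, it gives part (1) very cleanly.

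However, there is a load-bearing gap: the parenthetical claim that $\varpi^*(\sigma)$ is generic with respect to $\w$. Genericity of $\sigma$ for $\u$ only excludes walls for $\w$ cut out by classes $\a$ with $\varpi_*\a\notin\Q\u$; if instead $\a+\iota^*\a\in\Z\w$, then $Z_{\varpi^*(\sigma)}(\a)=Z_\sigma(\varpi_*\a)$ is a \emph{fixed} positive rational multiple of $Z_{\varpi^*(\sigma)}(\w)$ for every $\sigma\in\Stabd(X)$, so that wall contains the entire image of $\varpi^*$. Such classes exist with $\a^2=-2$ on special Enriques surfaces: writing $\a=\tfrac{1}{2}(\varpi^*\u+\b)$, one needs an anti-invariant $\b\in\Hal(\widetilde{X},\Z)$ with $\b^2=-8$ and $\b\equiv\varpi^*\u\pmod 2$, and such pairs $(\u,\a)$ can be produced, for instance, on an Enriques surface containing two disjoint $(-2)$-curves. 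Since spherical classes carry semistable objects at \emph{every} point of $\Stabd(\widetilde{X})$, and $\a$, $\iota^*\a=\w-\a$ have equal phase at every induced stability condition, every $\varpi^*(\sigma)$ then lies on a wall for $\w$: semistable objects of class $m\ell(\u)\w$ (including $\varpi^*\varpi_*A$ for $A$ spherical stable of class $\a$) have JH factors not proportional to $\w$, their image under $\Phi$ is not a $0$-dimensional sheaf, and your identification of $\MM_\sigma(m\u)$ with equivariant punctual sheaves --- hence all three parts of the argument --- collapses precisely in the cases (nodal $X$, spherical objects) that the proposition, stated for arbitrary Enriques surfaces, must cover and that the proof in \cite{Yos16a} handles by a separate case analysis. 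Your claim does hold when $\Pic(\widetilde{X})=\varpi^*\Pic(X)$, since then there are no anti-invariant algebraic classes; note that the authors themselves sidestep exactly this trap in \cref{subsec:iLGU odd rank} by working not at $\varpi^*(\sigma_0)$ but at a nearby generic $\sigma''$ on the wall. Secondary soft spots: the intertwining $\Phi\circ\iota^*\cong\iota_S^*\circ\Phi$ requires an equivariance of the twisted kernel that you assert rather than prove; the determinant claim in (2) needs $\det\varpi_*F\cong\mathrm{Nm}(\det F)\otimes\det(\varpi_*\OO_{\widetilde{X}})^{\otimes\rk F}$ together with the parity observation that isotropic Mukai vectors on an Enriques surface necessarily have even rank; and the punctual strata in (3) are deferred rather than estimated.
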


Finally, we consider the negative square case:
\begin{Lem}\label{Lem:dimension negative}
Let $\w\in\Hal(X,\Z)$ be a primitive spherical or exceptional class, and let $\sigma\in\Stabd(X)$ be a generic stability condition with respect to $\w$.  Then
$$\dim\MM_{\sigma}(m\w)= \begin{cases}
 -m^2, & \text{ if }\w^2=-2,\\

-\frac{m^2}{2}, & \text{ if }\w^2=-1,m\equiv 0\pmod 2,\\

-\frac{m^2+1}{2}, & \text{ if }\w^2=-1,m\equiv 1\pmod 2.\\
 \end{cases}$$
\end{Lem}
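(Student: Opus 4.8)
The plan is to reduce the computation to understanding the $\sigma$-stable objects of class $\w$ together with the vanishing of $\Ext^1$ among them. First I would note that, since $\w$ is primitive and $\sigma$ is generic, every Jordan--H\"older factor of a $\sigma$-semistable object of class $m\w$ is $\sigma$-stable of class $m'\w$ for some $1\le m'\le m$ (by the wall structure recalled in \cref{subsec:Walls}). I would rule out $m'\ge 2$ by a Riemann--Roch estimate: if $F$ were $\sigma$-stable of class $m'\w$, then $\ext^1(F,F)=(m'\w)^2+\ext^0(F,F)+\ext^2(F,F)$, where $\ext^0(F,F)=1$ by stability and, by Serre duality on $X$, $\ext^2(F,F)=\hom(F,F\otimes K_X)\le 1$ (two stable objects of equal phase). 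Since $(m'\w)^2=m'^2\w^2\le -m'^2$, this forces $\ext^1(F,F)\le -m'^2+2<0$ for $m'\ge 2$, which is absurd. Hence every $\sigma$-semistable object of class $m\w$ is an iterated extension of $\sigma$-stable objects of class exactly $\w$ (which exist by \cref{Thm:exist:nodal}).

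The crux is then the interaction with the canonical twist $\otimes K_X$, which is exactly what separates the two cases. First observe that $\otimes K_X$ is an autoequivalence preserving $\sigma$: it fixes $\v$ (as $K_X$ is numerically trivial) and preserves the heart $\AA_{\omega,\beta}$ and its slopes, hence sends $\sigma$-stable objects of class $\w$ to $\sigma$-stable objects of class $\w$ of the same phase. For a stable $E$ of class $\w$, Serre duality gives $\ext^2(E,E)=\hom(E,E\otimes K_X)$. In the spherical case $\ext^2(E,E)=1$, so there is a nonzero map $E\to E\otimes K_X$ between stable objects of equal phase, whence $E\cong E\otimes K_X$; a comparison of Euler characteristics (using $\chi(E,E')=-\w^2=2$ and $\hom=\ext^2=0$ for non-isomorphic stable objects of equal phase) then shows any two stable objects of class $\w$ coincide, so there is a unique such $E$, rigid with $\ext^1(E,E)=0$. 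In the exceptional case $\ext^2(E_0,E_0)=0$ forces $E_0\not\cong E_0\otimes K_X$, producing two distinct stable objects $A:=E_0$ and $B:=E_0\otimes K_X$ of class $\w$; the same Euler-characteristic argument shows these are the only two. A short computation with $\chi(\blank,\blank)=-\langle\w,\w\rangle=1$ and Serre duality then yields $\Hom(A,B)=\Hom(B,A)=0$ and the total vanishing $\Ext^1(A,A)=\Ext^1(A,B)=\Ext^1(B,A)=\Ext^1(B,B)=0$.

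Finally I would assemble the dimension count. Since all the relevant $\Ext^1$-groups vanish, every iterated extension of copies of $A$ and $B$ splits, so the $\sigma$-semistable objects of class $m\w$ are precisely the direct sums $A^{\oplus a}\oplus B^{\oplus b}$ with $a+b=m$ in the exceptional case, and $E^{\oplus m}$ in the spherical case. Each such object is rigid, hence an isolated point of the stack, and $\End(A^{\oplus a}\oplus B^{\oplus b})=M_a(\C)\oplus M_b(\C)$ (no cross terms), so $\Aut=\GL_a\times\GL_b$; the contribution of this point to $\dim\MM_\sigma(m\w)$ is therefore $\ext^1-\dim\End=-(a^2+b^2)$ (respectively $-m^2$ for $E^{\oplus m}$). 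Taking the maximum over the finitely many components gives $\dim\MM_\sigma(m\w)=-m^2$ when $\w^2=-2$, and $-\min_{a+b=m}(a^2+b^2)$ when $\w^2=-1$; minimizing $a^2+b^2$ at $a=\lfloor m/2\rfloor,\ b=\lceil m/2\rceil$ produces $-\tfrac{m^2}{2}$ for even $m$ and $-\tfrac{m^2+1}{2}$ for odd $m$, as claimed. The main obstacle is the second paragraph: correctly determining, via the $K_X$-twist and Serre duality, that there is a single stable object of class $\w$ in the spherical case but two mutually $\Ext^1$-orthogonal ones in the exceptional case, since this dichotomy is exactly the source of the two different formulas.
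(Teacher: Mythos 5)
Your proof is correct and follows essentially the same route as the paper: identify all $\sigma$-semistable objects of class $m\w$ as direct sums of the $\sigma$-stable spherical/exceptional objects of class $\w$, then compute $\dim\MM_\sigma(m\w)$ at each rigid polystable point as $\ext^1 - \dim\Aut = -\dim\Aut$ and take the maximum, minimizing $a^2+b^2$ over $a+b=m$ in the exceptional case. The only difference is that where the paper simply cites [Nue14b, Proposition 9.9] and [Nue14b, Lemma 9.2] for the classification (a unique stable spherical object $S$ with $M_\sigma(m\w)=\{S^{\oplus m}\}$, resp.\ exactly two exceptional objects $E$, $E(K_X)$ with $M_\sigma(m\w)=\{E^{\oplus i}\oplus E(K_X)^{\oplus m-i}\}_{i=0}^m$), you reprove these facts from scratch via Serre duality, the Euler-characteristic bound ruling out stable objects of class $m'\w$ with $m'\geq 2$, and the vanishing of all relevant $\Ext^1$-groups forcing extensions to split --- a correct, self-contained substitute for the citation.
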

\begin{proof}
By \cite[Proposition 9.9]{Nue14b}, in case $\w^2=-2$, the coarse moduli space $M_{\sigma}(m\w)$ consists of a single point, $S^{\oplus m}$, where $S$ is the unique $\sigma$-stable spherical object of class $\w$.  As $\Aut(S^{\oplus m})=\GL_m(k)$, we get $$\dim \MM_{\sigma}(m\w)=\dim M_{\sigma}(m\w)-\dim\Aut(S^{\oplus m})=-m^2,$$   If $\w^2=-1$, then by \cite[Lemma 9.2]{Nue14b} the coarse moduli space $M_{\sigma}(m\w)$ consists of the $m+1$ points $\Set{E^{\oplus i}\oplus E(K_X)^{\oplus m-i}}_{i=0}^m$, where $E$ and $E(K_X)$ are the two $\sigma$-stable exceptional objects of class $\w$.  As $E$ and $E(K_X)$ are both exceptional, $\Aut(E^{\oplus i}\oplus E(K_X)^{\oplus m-i})=\GL_i(k)\times \GL_{m-i}(k)$ of dimension $i^2+(m-i)^2$.  But then 
\begin{align}
\begin{split}\dim\MM_{\sigma}(m\w)&=\max_{0\leq i\leq m}\dim_{E^{\oplus i}\oplus E(K_X)^{\oplus m-i}} M_{\sigma}(m\w)-\Aut(E^{\oplus i}\oplus E(K_X)^{\oplus m-i})\\
&=-\min_{0\leq i\leq m} i^2+(m-i)^2,\\
\end{split}
\end{align}
which gives the dimension as claimed.
\end{proof}

\subsection{Line bundles on moduli spaces}\label{subsec:LineBundles}
We again let $Y=X$ or $\X$, and we recall the definition of the Donaldson-Mukai morphism.  Fix a Mukai vector $\v\in\Hal(Y,\Z)$, a stability condition $\sigma\in\Stabd(Y)$, and a universal family $\EE\in\Db(M_\sigma(\v,L)\times Y)$.  Then we have the following definition.
\begin{Def} Set $K(Y)_\v:=\Set{x\in K(Y)\ |\ \langle\v(x),\v\rangle=0}$.  The Donaldson-Mukai morphism from $K(Y)_\v$ to $\Pic(M_\sigma(\v,L))$ is defined by:  
\begin{equation}\label{eqn:DonaldsonMukai}
\begin{matrix}
\theta_{\v,\sigma}:& K(Y)_\v & \to & \Pic(M_{\sigma}(\v,L))\\
 & x & \mapsto & \det (p_{M_\sigma(\v,L)!}(\EE \otimes p_Y^*(x^{\vee}))).
\end{matrix} 
\end{equation}
More generally, for a scheme $S$ and a family $\EE\in\Db(S\times Y)$ over $S$ of objects in $M_\sigma(\v,L)$, there is a Donaldson-Mukai morphism $\theta_\EE:K(Y)_\v\to\Pic(S)$ associated to $\EE$, defined as in \eqref{eqn:DonaldsonMukai}, which satisfies $\theta_\EE=\lambda_\EE^*\theta_{\v,\sigma}$, where $\lambda_\EE:S\to M_{\sigma}(\v,L)$ is the associated classifying map.  See \cite[Section 8.1]{HL10} for more details.

Setting 
\begin{equation}\label{eqn:def of xi}
\xi_\sigma:=\Im\frac{ \mho_\sigma}{\langle \mho_\sigma, \v \rangle}
\in \v^\perp,
\end{equation} we define the numerical divisor class
\begin{equation}\label{eqn:def of ell(sigma)}
\ell_\sigma:=\theta_{\v,\sigma}(\xi_\sigma)\in\Num(M_\sigma(\v,L)),
\end{equation}
where we abuse notation by also using $\theta_{\v,\sigma}$ for the extension of \eqref{eqn:DonaldsonMukai} to the Mukai lattice.
\end{Def}

From \cref{subsec:Walls} it follows that the moduli space $M_\sigma(\v,L)$ and $\EE$ remain constant when varying $\sigma$ in a chamber for $\v$, so for each chamber $\CC$, we get a map $$\ell_{\CC}:\CC\to\NS(M_\CC(\v,L)),\;\;\sigma\mapsto\ell_\sigma,$$ where the notation $M_\CC(\v,L)$ denotes the fixed moduli space.  By the proof of the projectivity of $M_\sigma(\v,L)$ in \cite{Yos16b} with the argument
in \cite{MYY14} or \cite{BM14a}, we get a generalization of 
\cite[Theorem 10.3]{Nue14b} and can say even more:
\begin{Thm}[{cf. \cite[Theorem 4.1]{BM14a} in the K3 case}]\label{Thm:NefAmpleDivisor}
For a generic $\sigma$,
$\ell_\sigma=\theta_{\v,\sigma}(\xi_\sigma)$ is an ample divisor on $M_\sigma(\v,L)$.
\end{Thm}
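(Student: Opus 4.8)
The plan is to run the Positivity Lemma of Bayer--Macr\`i \cite[Theorem 4.1]{BM14a} in our setting and then upgrade the resulting nefness to ampleness by transporting the question to a Gieseker moduli space. First I would establish the positivity statement in the form needed here: for a proper scheme $S$ and a family $\EE\in\Db(S\times X)$ of $\sigma$-semistable objects of class $\v$, the numerical class $\theta_\EE(\xi_\sigma)\in\Num(S)$ is nef, and for an integral projective curve $C\subset S$ one has $\theta_\EE(\xi_\sigma)\cdot C=0$ if and only if the fibers $\EE_c$ ($c\in C$) are all S-equivalent. This part is essentially formal and uses only the format $Z_\sigma(\blank)=\langle\mho_\sigma,\v(\blank)\rangle$ together with the finite-length property of $\PP_\sigma(\phi)$: after normalizing by the $\widetilde{\GL}_2^+(\R)$-action so that $\v$ has phase $1$, one computes $\theta_\EE(\xi_\sigma)\cdot C=\Im\!\left(-Z_\sigma(\Phi_\EE(\OO_C))/Z_\sigma(\v)\right)$, where $\Phi_\EE=p_{S!}(\EE\otimes p_X^*(\blank))$. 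Semistability of the fibers forces all Harder--Narasimhan phases of $\Phi_\EE(\OO_C)$ to be $\le 1$, giving $\theta_\EE(\xi_\sigma)\cdot C\ge 0$; and vanishing forces $\Phi_\EE(\OO_C)$ to lie in $\PP_\sigma(1)$, which by finite length translates into S-equivalence of all the fibers.

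Second, I would specialize to $S=M_\sigma(\v,L)$ with (a family representing) its universal object for generic $\sigma$. Because the coarse moduli space separates S-equivalence classes, no integral curve in $M_\sigma(\v,L)$ parametrizes a family of mutually S-equivalent objects; by the equality clause above this yields $\ell_\sigma\cdot C>0$ for every integral curve $C$, and in particular $\ell_\sigma$ is nef. For non-primitive $\v$, where a genuine universal family may be absent on the strictly semistable locus, I would instead work through the presentation $\MM_\sigma(\v)\cong[Q(\v')^{ss}/\GL_N]$ recalled above, descending the determinant line bundle from $Q(\v')^{ss}$ and applying the same computation there.

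Third, and this is where the real work lies, I would upgrade ``nef and strictly positive on all curves'' to ``ample''. This step does not follow from Kleiman's criterion alone, since a nef class that is positive on every curve can still sit on the boundary of the nef cone (a limiting ray of the cone of curves may kill it). Instead I would use the (anti-)autoequivalence $\Phi$ of \cite{Yos16b} inducing an isomorphism $M_\sigma(\v,L)\cong M_H(\v',L')$ onto a Gieseker moduli space for a polarization $H$ generic with respect to $\v'$, under which $\theta_{\v,\sigma}$ matches the Donaldson determinant map on the Gieseker side. On the Gieseker model the class corresponding to $\ell_\sigma$ is, via the GIT construction of $M_H(\v',L')$ as a quotient of $Q(\v')^{ss}$, a positive multiple of the natural GIT polarization and hence genuinely ample; pulling back through $\Phi$ gives ampleness of $\ell_\sigma$. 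This is exactly the mechanism by which the projectivity of $M_\sigma(\v,L)$ was established in \cite{Yos16b}, so the content of the theorem is really the precise identification of $\ell_\sigma$ with that ample class.

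The main obstacle is this last passage from nef to ample, which forces one to match the Bayer--Macr\`i determinant class $\theta_{\v,\sigma}(\xi_\sigma)$ with the GIT polarization on the Gieseker side. The Enriques-specific bookkeeping makes this delicate: one must track the $K_X$-twist through the determinant splitting $M_\sigma(\v)=M_\sigma(\v,L)\sqcup M_\sigma(\v,L+K_X)$, verify that an anti-autoequivalence (as opposed to an autoequivalence) preserves the relevant numerical class up to the expected sign, and handle the non-primitive case through the quotient stack presentation rather than a universal family. Once these identifications are in place, ampleness of $\ell_\sigma$ for generic $\sigma$ follows.
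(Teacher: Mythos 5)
Your proposal is correct and follows essentially the same route as the paper, which gives no self-contained argument for this theorem but simply invokes the projectivity proof of \cite{Yos16b} together with the arguments of \cite{MYY14} and \cite{BM14a}. Your three steps — the Positivity Lemma, strict positivity of $\ell_\sigma$ on curves because the coarse moduli space separates S-equivalence classes, and the upgrade from nef to ample by transporting $\ell_\sigma$ through the Fourier--Mukai identification with a Gieseker moduli space and matching it against the ample determinant (GIT) classes there — are exactly the content of those citations.
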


\subsection{Wall-crossing and Birational transformations} In this last subsection, we recall one more result that will be essential for our study of the connection between crossing walls in $\Stabd(Y)$ and birational transformations of the moduli space $M_\sigma(\v,L)$.  Let $\v\in\Hal(Y,\Z)$ with $\v^2>0$, and let $\WW$ be a wall for $\v$.  We say $\sigma_0\in\WW$ is \emph{generic} if it does not belong to any other wall, and we denote by $\sigma_+$ and $\sigma_-$ two generic stability conditions nearby $\WW$ in two opposite adjacent chambers.  Then all $\sigma_\pm$-semistable objects are still $\sigma_0$-semistable, and thus the universal familes $\EE^\pm$ on $M_{\sigma_\pm}(\v)\times Y$ induce nef divisors $\ell_{\sigma_0,\pm}$ on $M_{\sigma_\pm}(\v)$ by $$\ell_{\sigma_0,\pm}:=\theta_{\v,\sigma_\pm}(\xi_{\sigma_0}).$$  The main result about $\ell_{\sigma_0,\pm}$ is the following:
\begin{Thm}[{\cite[Thm. 1.4(a)]{BM14a},\cite[Thm. 11.3]{Nue14b}}]\label{Thm:WallContraction} Let $\v\in\Hal(Y,\Z)$ satisfy $\v^2>0$, and let $\sigma_\pm$ be two stability conditions in opposite chambers nearby a generic $\sigma_0\in\WW$.  Then:
\begin{enumerate}
\item The divisors $\ell_{\sigma_0,\pm}$ are semiample on $M_{\sigma_\pm}(\v)$.  In particular, they induce contractions $$\pi^\pm:M_{\sigma_\pm}(\v)\to\overline{M}_\pm,$$ where $\overline{M}_\pm$ are normal projective varieties.  When $Y=\X$, the divisors $\ell_{\sigma_0,\pm}$ are big so that $\pi^\pm$ are birational and $\overline{M}_\pm$ are irreducible.
\item For any curve $C\subset M_{\sigma_\pm}(\v)$, $\ell_{\sigma_0,\pm}.C=0$ if and only if the two objects $\EE_c^\pm$ and $\EE_{c'}^\pm$ corresponding to two general points $c,c'\in C$ are S-equivalent.  In particular, the curves contracted by $\pi^\pm$ are precisely the curves of objects that are S-equivalent with respect to $\sigma_0$.
\end{enumerate}
\end{Thm}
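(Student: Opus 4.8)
The plan is to establish everything through a \emph{Positivity Lemma} in the spirit of Bayer--Macrì \cite{BM14a}: show directly that $\ell_{\sigma_0,\pm}$ is nef with the asserted numerical characterization of its orthogonal curves, and then identify the contraction $\pi^\pm$ with the natural morphism $M_{\sigma_\pm}(\v)\to M_{\sigma_0}(\v)$ sending a $\sigma_\pm$-stable object to the $S$-equivalence class of its $\sigma_0$-Jordan--H\"older factors. The target $\overline M_\pm$ will be the projective moduli space $M_{\sigma_0}(\v)$ of $\sigma_0$-semistable objects, whose polarization pulls back to $\ell_{\sigma_0,\pm}$; this is what forces semiampleness. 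Since all $\sigma_\pm$-semistable objects of class $\v$ are $\sigma_0$-semistable, the two universal families $\EE^\pm$ are genuine families of $\sigma_0$-semistable objects, which is the only input the argument needs.

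First I would normalize using the $\widetilde{\GL}_2^+(\R)$-action from \cref{thm:CoveringMap}, arranging $Z_{\sigma_0}(\v)\in\R_{<0}$; then every $\sigma_0$-semistable object of class $\v$ lies in $\PP_{\sigma_0}(1)\subset\AA_{\sigma_0}$. Next, for a proper integral curve $C\subset M_{\sigma_\pm}(\v)$ I restrict $\EE^\pm$ to $C\times Y$ and form the integral functor $\Phi_C(\blank):=\mathbf{R}(p_Y)_*\big(p_C^*(\blank)\otimes(\EE^\pm|_{C\times Y})\big)\colon\Db(C)\to\Db(Y)$, so that $\Phi_C(k(c))=\EE^\pm_c\in\PP_{\sigma_0}(1)$. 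Applying Grothendieck--Riemann--Roch to the definition \eqref{eqn:DonaldsonMukai} of the Donaldson--Mukai map, together with the description \eqref{eqn:def of xi} of $\xi_{\sigma_0}$ and the identity $Z_{\sigma_0}(\blank)=\langle\mho_{\sigma_0},\v(\blank)\rangle$, yields the key intersection formula
\[
\ell_{\sigma_0,\pm}.C \;=\; \langle \xi_{\sigma_0},\,\v(\Phi_C(\OO_C))\rangle \;=\; \Im\frac{Z_{\sigma_0}(\Phi_C(\OO_C))}{Z_{\sigma_0}(\v)},
\]
up to the fixed overall sign in the determinant construction, which is chosen so the inequality below comes out positive. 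After normalization this reads $\ell_{\sigma_0,\pm}.C=-\Im Z_{\sigma_0}(\Phi_C(\OO_C))$.

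The heart of the proof is the phase bound $\Phi_C(\OO_C)\in\PP_{\sigma_0}([1,2])$. Since every fiber $\EE^\pm_c$ lies in $\PP_{\sigma_0}(1)$ and $\mathbf{R}(p_Y)_*$ over the one-dimensional base $C$ has cohomological amplitude $[0,1]$, the only potentially nonzero $\sigma_0$-cohomologies of $\Phi_C(\OO_C)$ sit in $\PP_{\sigma_0}(1)$ and $\PP_{\sigma_0}(1)[-1]=\PP_{\sigma_0}(2)$; making this precise requires the theory of $\sigma_0$-flat families and the relative-heart spectral sequence for the pushforward. Granting it, all Harder--Narasimhan factors of $\Phi_C(\OO_C)$ have phase in $[1,2]$, so $\Im Z_{\sigma_0}(\Phi_C(\OO_C))\le 0$ and hence $\ell_{\sigma_0,\pm}.C\ge 0$, giving nefness. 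For the equality statement, $\ell_{\sigma_0,\pm}.C=0$ forces $\Phi_C(\OO_C)\in\PP_{\sigma_0}(\{1,2\})$, i.e.\ extremal phases, and a degeneration/HN-jumping argument shows this happens exactly when the objects $\EE^\pm_c$ share a common set of $\sigma_0$-stable JH factors in $\PP_{\sigma_0}(1)$ for general $c\in C$ --- that is, precisely when they are $\sigma_0$-$S$-equivalent; conversely, if they are $S$-equivalent then $\Phi_C(\OO_C)$ splits along phases $1$ and $2$ and the intersection vanishes.

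Finally, semiampleness and the contraction: I would construct $M_{\sigma_0}(\v)$ as a normal projective variety parametrizing $S$-equivalence classes of $\sigma_0$-semistable objects (via the GIT/Gieseker identification underlying \cref{prop:pss} and \cref{Thm:exist:nodal}), carrying an ample class $A$ with $(\pi^\pm)^*A=\ell_{\sigma_0,\pm}$; since $\ell_{\sigma_0,\pm}$ is the limit of the ample classes $\ell_\sigma$ of \cref{Thm:NefAmpleDivisor} as $\sigma\to\sigma_0$, this pullback description makes it semiample with $\pi^\pm$ exactly the contraction of the $\ell_{\sigma_0,\pm}$-trivial curves, i.e.\ the $S$-equivalence curves. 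When $Y=\X$ is K3, $M_{\sigma_\pm}(\v)$ is holomorphic symplectic and $\theta_{\v,\sigma_\pm}$ matches the Mukai pairing with the Beauville--Bogomolov form, so $q(\ell_{\sigma_0,\pm})=\langle\xi_{\sigma_0},\xi_{\sigma_0}\rangle\ge 0$; since $\xi_{\sigma_0}$ lies in the positive-definite part spanned by $\Re\mho_{\sigma_0},\Im\mho_{\sigma_0}$, this square is positive, and a nef class of positive square on a hyperk\"ahler variety is big, forcing $\pi^\pm$ birational and $\overline M_\pm$ irreducible. The main obstacle is the phase bound of the third paragraph: establishing $\Phi_C(\OO_C)\in\PP_{\sigma_0}([1,2])$ rigorously, and then translating extremal phases into $S$-equivalence, is where all the delicate homological-algebra input (flatness over the heart, the pushforward spectral sequence, and openness of the heart in families) is concentrated; the projectivity of $M_{\sigma_0}(\v)$ needed for semiampleness is a secondary but essential technical point.
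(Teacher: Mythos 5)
A preliminary point of orientation: the paper never proves this theorem. It is a recalled result, cited to \cite[Thm.~1.4(a)]{BM14a} for $Y=\X$ and to \cite[Thm.~11.3]{Nue14b} for $Y=X$, so the only meaningful comparison is with the proofs in those references. Your proposal reconstructs exactly their architecture --- normalize by the $\widetilde{\GL}_2^+(\R)$-action, express $\ell_{\sigma_0,\pm}.C$ as (a sign times) $\Im Z_{\sigma_0}(\Phi_C(\OO_C))$ for the integral functor attached to $\EE^\pm|_{C\times Y}$, control the phases of $\Phi_C(\OO_C)$, characterize the $\ell$-trivial curves as curves of $\sigma_0$-$S$-equivalent objects, and then get bigness on the K3 side from the Beauville--Bogomolov form --- so in outline you are on the same track as the cited proofs.

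However, the step you yourself call the heart of the proof is both misstated and unjustified, and as written the nefness inequality does not follow. First, the phase bookkeeping is backwards: since $\PP(\phi)[1]=\PP(\phi+1)$, one has $\PP_{\sigma_0}(1)[-1]=\PP_{\sigma_0}(0)$, not $\PP_{\sigma_0}(2)$; the correct amplitude statement for $\mathbf{R}(p_Y)_*$ over the one-dimensional base is that $\Phi_C(\OO_C)$ lies in $\langle \AA_{\sigma_0},\AA_{\sigma_0}[-1]\rangle = \PP_{\sigma_0}(-1,1]$. Second, and fatally, this corrected amplitude bound gives no sign: HN factors of $\Phi_C(\OO_C)$ with phases in $(0,1)$ and with phases in $(-1,0)$ contribute to $\Im Z_{\sigma_0}$ with opposite signs, so $\ell_{\sigma_0,\pm}.C\geq 0$ cannot be deduced from it. What Bayer--Macr\`{i} actually prove, and where all the work lies, is the strictly stronger statement that $\Phi_C(\OO_C)$ lies in the heart $\PP_{\sigma_0}(0,1]$ itself; this is not a formal consequence of cohomological amplitude but requires the Abramovich--Polishchuk sheaf of $t$-structures over $C$ \cite{AP06}, the corresponding notion of a flat family, and a genuinely nontrivial vanishing argument, and the same machinery is what makes the equality case (your ``degeneration/HN-jumping argument'') work. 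Finally, your semiampleness step presupposes a projective coarse moduli space $M_{\sigma_0}(\v)$ at the wall whose ample class pulls back to $\ell_{\sigma_0,\pm}$; in \cite{BM14a} this projective model is a \emph{consequence} of the theorem (nef and big on a $K$-trivial variety gives semiample by base-point freeness, and $\overline{M}_\pm$ is defined as the image), while \cite{Nue14b} obtains semiampleness in the Enriques case by pulling the class back from the covering K3, where bigness is available (compare \cref{Rem:ReflectionBMDivisors}). As stated, your construction assumes something essentially equivalent to what is being proven, unless you supply the Fourier--Mukai identification with Gieseker moduli at the corresponding wall of the ample cone as in \cite{MYY14,Yos16b}.
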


This theorem leads us to the following definition describing a wall $\WW$ in terms of the geometry of the induced morphisms $\pi^\pm$.
\begin{Def}
We call a wall $\WW$:
\begin{enumerate}
\item a \emph{fake wall}, if there are no curves contracted by $\pi^\pm$;
\item a \emph{totally semistable wall}, if $M_{\sigma_0}^s(\v)=\varnothing$;
\item a \emph{flopping wall}, if we can identify $\overline{M}_+=\overline{M}_-$ and the induced map $M_{\sigma_+}(\v)\dashrightarrow M_{\sigma_-}(\v)$ induces a flopping contraction;
\item a \emph{divisorial wall}, if the morphisms $\pi^\pm$ are both divisorial contractions;
\item a \emph{$\P^1$-wall}, if the morphisms $\pi^\pm$ are both $\P^1$-fibrations.
\end{enumerate}
\end{Def}

A non-fake wall $\WW$ such that $\codim(M_{\sigma_\pm}(\v)\backslash M^s_{\sigma_0}(\v))\geq 2$ is necessarily a flopping wall by \cite[Thm. 1.4(b)]{BM14a} and \cite[Thm. 11.3]{Nue14b}.  

\section{Dimension estimates of substacks of Harder-Narasimhan filtrations}\label{sec:DimensionsOfHarderNarasimhan}
 Having finished our review of known tools for studying wall-crossing, we develop here the first tool we will use to classify the behavior induced by crossing a wall.  In this section, we will denote by $Y$ any smooth projective variety satisfying openness of stability and boundedness of Bridgeland semistable objects as in \cite[Lemma 3.4]{Tod08}.

For Mukai vectors $\v_1,\v_2,\dots,\v_s$ with the same phase $\phi$ with respect to $\sigma$, 
let $\FF(\v_1,\dots,\v_s)$ be the stack of filtrations:
for a scheme $T$, 
\begin{equation}\FF(\v_1,\dots,\v_s)(T):=\Set{0 \subset \FF_1 \subset \cdots \subset \FF_s\ | \ \FF_i/\FF_{i-1} \in \MM_{\sigma}(\v_i)(T), 1\leq i\leq s,\FF_s\in\MM_{\sigma}(\v)(T)},
\end{equation}
where $\v=\v_1+\cdots+\v_s$.  Then we have the following result.

\begin{Prop}With the notation as above, let $Y$ be a smooth projective variety satisfying boundedness and openness of stability.  Then $\FF(\v_1,\dots,\v_s)$ is an Artin stack of finite type.
\end{Prop}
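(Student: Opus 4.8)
The plan is to induct on the length $s$ of the filtration, peeling off the top graded piece and recognizing the resulting fibers as relative stacks of extensions. The base case $s=1$ is precisely the statement that $\MM_\sigma(\v_1)$ is an Artin stack of finite type, which is \cite[Thm. 4.12]{Tod08} together with Lieblich's representability of the mother stack $\MM$. Note first that the requirement ``$\FF_s\in\MM_\sigma(\v)$'' in the definition is automatic once the subquotients are fixed: since each $\FF_i/\FF_{i-1}$ is $\sigma$-semistable of phase $\phi$, and $\sigma$-semistable objects of a fixed phase form an abelian category closed under extensions, every $\FF_i$ (and in particular $\FF_s$) is $\sigma$-semistable of phase $\phi$, with $\v(\FF_s)=\v_1+\cdots+\v_s=\v$. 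Thus each $\FF_i$ lies fiberwise in the abelian subcategory $\PP_\sigma(\phi)$ and is relatively perfect with $\Ext^{<0}(\FF_s,\FF_s)=0$, so that $\FF(\v_1,\dots,\v_s)$ does map into $\MM$.

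For the inductive step I would study the morphism
\[
q:\FF(\v_1,\dots,\v_s)\longrightarrow \FF(\v_1,\dots,\v_{s-1})\times\MM_\sigma(\v_s),\qquad (\FF_\bullet)\longmapsto(\FF_{\leq s-1},\,\FF_s/\FF_{s-1}),
\]
which records the truncated filtration together with its top graded piece, and prove that the base change of $q$ to any scheme is an algebraic stack of finite type; granting this, algebraicity and finite type descend from the target to the source, and since the target is an Artin stack of finite type (a product of such, by induction and hypothesis), so is $\FF(\v_1,\dots,\v_s)$. To analyze $q$, fix a scheme $T$ mapping to the target, i.e.\ a family $\AA:=\FF_{s-1}$ over $Y\times T$ of objects of class $\v_1+\cdots+\v_{s-1}$ and a family $\BB\in\MM_\sigma(\v_s)(T)$. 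The fiber of $q$ over $T$ is then the stack whose $T'$-points are extensions $0\to\AA_{T'}\to\EE\to\BB_{T'}\to 0$ in $\Db(Y\times T')$; fiberwise over $t\in T$ such an extension is classified by $\Ext^1(\BB_t,\AA_t)$, with automorphism group, as a filtered object, the additive group $\Hom(\BB_t,\AA_t)$.

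The essential use of the hypothesis that all $\v_i$ share the phase $\phi$ enters here. Because $\AA_t$ and $\BB_t$ both lie in $\PP_\sigma(\phi)$, which sits inside the heart of a bounded t-structure, we have $\Ext^{<0}(\BB_t,\AA_t)=0$. Consequently the complex $\mathbf{R}p_{T*}\RlHom(\BB,\AA)\in\Db(T)$ — perfect, since $Y$ is smooth and proper and $\AA,\BB$ are relatively perfect — has vanishing cohomology in negative degrees. I would then identify the fiber of $q$ with the linear (Picard) stack attached to the degree-$[0,1]$ part of this complex: locally on $T$ it is the quotient of the affine bundle $\mathbb{V}(\lExt^1_{p_T}(\BB,\AA))$ of extension classes by the vector group $\lHom_{p_T}(\BB,\AA)$ of filtered automorphisms. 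Such linear stacks attached to a bounded-below perfect complex over a finite-type base are algebraic and of finite type (this is the relative-$\RHom$ representability of Lieblich \cite{Lie}, in the same spirit as the construction of $\MM$ itself); finiteness of $\Ext^1$ gives finite type, and openness of stability ensures that imposing $\sigma$-semistability on $\EE$ and on $\BB$ only cuts out open substacks. This gives the claimed algebraicity and finite type of $q$, closing the induction.

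The main obstacle is exactly this middle step: upgrading the naive pointwise picture ``extensions $=\Ext^1$, automorphisms $=\Hom$'' into a genuine algebraic stack of finite type over $T$. This forces one to work with the relative complex $\mathbf{R}p_{T*}\RlHom(\BB,\AA)$ rather than with individual $\Ext$-groups, and it is here that the equal-phase hypothesis does its work by keeping this complex concentrated in non-negative degrees; without that vanishing the relevant object would acquire negative-degree (derived) directions and fail to be an honest Artin stack. An alternative would be to transport everything, via the Fourier--Mukai equivalence of \cite{Yos16b} identifying $\MM_\sigma$ with a stack of Gieseker-semistable sheaves, to a relative flag-Quot scheme; but the formulation above avoids having to check that a single equivalence simultaneously sheafifies every class $\v_i$, and it keeps the argument entirely within the framework of $\MM$ used throughout this section.
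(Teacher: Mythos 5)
Your overall architecture — induct on $s$, fiber the stack of filtrations over the truncated filtration plus one extra datum, and descend algebraicity from the target — is sound, and your observation that the condition $\FF_s\in\MM_\sigma(\v)(T)$ is automatic (extension-closure of $\PP_\sigma(\phi)$) is correct. But your choice of the extra datum is genuinely different from the paper's, and it is what creates your ``main obstacle.'' The paper's morphism remembers the truncated filtration together with the \emph{total} object $\FF_s\in\MM_\sigma(\v)$, not the top quotient in $\MM_\sigma(\v_s)$. Its fiber over a scheme $T$ is then the locus of maps $\FF_{s-1}\to\FF_s$ whose cone lies fiberwise in the heart, i.e.\ an open subscheme (by the Open Heart Property of \cite{Tod08}) of Inaba's Hom-scheme \cite[Prop.~1.1]{Ina02}; in particular the morphism is representable by \emph{schemes}, and no extension-stack or Picard-stack machinery is needed. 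The paper then checks representability of the diagonal by hand, exhibiting $\Isom$ of filtrations as a closed subspace of $\Isom_{\MM_\sigma(\v)/T}(\FF_s,\FF_s')$; in your formulation this is absorbed into the principle that a morphism representable by algebraic stacks with algebraic target has algebraic source, which is true but should be cited, since it is exactly where the diagonal condition hides.

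The genuine gap is your local description of the fiber of $q$ as the quotient of the ``affine bundle'' $\mathbb{V}(\lExt^1_{p_T}(\BB,\AA))$ by the ``vector group'' $\lHom_{p_T}(\BB,\AA)$. Neither sheaf is locally free in general: $\hom(\BB_t,\AA_t)$ jumps on $T$ in precisely the present situation — this is why the paper's own proof of \cref{Thm:DimensionOfHNFiltrationStack} stratifies $T$ by $n(t)=\hom((\EE_s)_t,(\FF_{s-1})_t)$ before it forms the vector bundles $V_i^0,V_i^1$ — and where $\hom$ jumps, the formation of $\lExt^1_{p_T}$ fails base change and the functor of extension classes is not represented by $\mathbb{V}$ of any coherent sheaf. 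So the load-bearing sentence of your argument is false as written. The repair is the one you gesture at but do not execute: the equal-phase vanishing $\Ext^{<0}(\BB_t,\AA_t)=0$ gives that $K=\mathbf{R}p_{T*}\RlHom(\BB,\AA)$ is a perfect complex of tor-amplitude in $[0,\dim Y]$; present it locally on $T$ by a finite complex $K^0\to K^1\to K^2\to\cdots$ of free modules, and identify the fiber of $q$ with the quotient stack of the closed linear subscheme of sections of $K^1$ killed by the differential, modulo the translation action of sections of $K^0$ — objects give extension classes, and the stabilizer $\Ker(K^0\to K^1)$ recovers $\Hom(\BB_t,\AA_t)$ as automorphisms. (Alternatively, stratify $T$ as in the paper's \cref{Thm:DimensionOfHNFiltrationStack}.) With that replacement your induction closes, and the result is an honest alternative to the paper's proof: extension stacks with their $\Hom$-worth of automorphisms on your side, schematic Hom-spaces cut out by the Open Heart Property on the paper's.
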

\begin{proof}
We prove the proposition by induction on $s$.  Assuming that $\FF(\v_1,\dots,\v_{s-1})$ is an Artin stack of finite type,
we shall prove that 
$\FF(\v_1,\dots,\v_{s-1},\v_s)$ is also an Artin stack of finite type.
We set $\v\:=\sum_{i=1}^s \v_i$.
It is sufficient to show that
\begin{enumerate}
\item\label{enum:MorphismRepresentable} the natural morphism
$\FF(\v_1,\dots,\v_{s-1},\v_s) \to 
\FF(\v_1,\dots,\v_{s-1}) \times \MM_{\sigma}(\v)$
is representable by schemes, and
\item\label{enum:DiagonalRepresentable} the diagonal morphism $\Delta:\FF(\v_1,\dots,\v_s)\to\FF(\v_1,\dots,\v_s)\times\FF(\v_1,\dots,\v_s)$ is representable.
\end{enumerate}
Indeed, if we take a smooth surjective morphism $M\to\FF(\v_1,\dots,\v_{s-1})\times\MM_\sigma(\v)$ from a scheme $M$ of finite type, then we get a smooth surjective morphism $\FF(\v_1,\dots,\v_s)\times_{\FF(\v_1,\dots,\v_{s-1})\times\MM_\sigma(\v)}M\to\FF(\v_1,\dots,\v_s)$, where $\FF(\v_1,\dots,\v_s)\times_{\FF(\v_1,\dots,\v_{s-1})\times\MM_\sigma(\v)}M$ is a finite type scheme by \ref{enum:MorphismRepresentable}.  The statement in \ref{enum:DiagonalRepresentable} is simply the other condition in the definition of an Artin stack \cite[Def. 8.1.4]{Ols16}

We prove \ref{enum:MorphismRepresentable} first.  Let $T$ be a scheme and 
$T \to \FF(\v_1,\dots,\v_{s-1}) \times \MM_{\sigma}(\v)$
a morphism. Then
we have a family of filtrations
\begin{equation}
0 \subset \FF_1 \subset \cdots \subset \FF_{s-1}
\end{equation} 
on $T \times Y$
such that $\FF_i/\FF_{i-1}$ are relatively perfect over $T$, 
$(\FF_i/\FF_{i-1})_t \in \MM_{\sigma}(\v_i)$ for all $t \in T$ 
and a family of objects $\FF_s$ such that
$\FF_s$ is relatively perfect over $T$ and
$(\FF_s)_t \in \MM_{\sigma}(\v)$.
By \cite[Prop. 1.1]{Ina02},
there is a scheme $p:Q \to T$ 
which represents the functor $\QQ:(Sch/T) \to (Sets)$ defined by 
\begin{equation}
\QQ(U \overset{\varphi}{\to} T)=
\{f \mid f:(\varphi \times 1_Y)^*(\FF_{s-1}) \to (\varphi \times 1_Y)^*(\FF_s)\}.
\end{equation}
Let 
$\xi:(p \times 1_Y)^*(\FF_{s-1}) \to (p \times 1_Y)^*(\FF_s)$
be the universal family of homomorphisms.
Let $Q^0$ be the subscheme
of $Q$ such that $\Cone(\xi_q)=\Cone(\xi)_q\in\AA_\sigma$ for all $q\in Q^0$, which is open by the Open Heart Property \cite[Theorem 3.8]{Tod08},\cite[Theorem 3.3.2]{AP06}.  It follows that $\xi_q$ is injective in $\AA_{\sigma}$ for all $q \in Q^0$. 
Then on $Q^0$ we have a family of filtrations
\begin{equation}
0 \subset \FF_1  \subset \cdots \subset \FF_s.
\end{equation} 
Therefore
\begin{equation}
Q^0 \cong \FF(\v_1,\dots,\v_s) 
\times_{\FF(\v_1,\dots,\v_{s-1}) \times \MM_{\sigma}(\v)}
T.
\end{equation}
In particular, the morphism $\FF(\v_1,\dots,\v_s)\to\FF(\v_1,\dots,\v_{s-1})\times\MM_\sigma(\v)$ is representable by schemes, as claimed.

Now let us prove \ref{enum:DiagonalRepresentable}.  By \cite[Lem. 8.1.8]{Ols16}, it is equivalent to showing that for every scheme $T$ and two families of filtrations $\FF,\FF'\in\FF(\v_1,\dots,\v_s)(T)$, the sheaf $\Isom_{\FF(\v_1,\dots,\v_s)/T}(\FF,\FF')$ is an algebraic space.  So let $T$ be a scheme and consider two families
\begin{equation}
\begin{split}
\FF: & 0 \subset \FF_1 \subset\cdots \subset \FF_s\\
\FF': & 0 \subset \FF_1'\subset \cdots \subset \FF_s'
\end{split}
\end{equation} 
of relatively perfect filtrations.  An isomorphism $\phi:\FF \to \FF'$ is
an isomorphism $\FF_s \to \FF_s'$ as families in $\MM_\sigma(\v)(T)$ which preserves the filtration. But $\phi$ preserves the filtration if and only if the induced maps
$\FF_i \to \FF_s'/\FF_i'$ are the 0-map for all $0<i<s$.
This is a closed condition by \cite[Prop. 1.1]{Ina02}, and if $\phi(\FF_i) \subseteq \FF_i'$, then we must in fact have equality as they are both families of $\sigma$-semistable
objects with the same Mukai vector.  Hence the sheaf $\Isom_{\FF(\v_1,\dots,\v_s)/T}(\FF,\FF')$ is parametrized by a closed algebraic subspace of the algebraic space
$\Isom_{\MM_\sigma(\v)/T}(\FF_s,\FF_s')$, which shows that $\Isom_{\FF(\v_1,\dots,\v_s)/T}(\FF,\FF')$ is an algebraic space, as required.
\end{proof}

We have a natural morphism
\begin{equation*}
\begin{matrix}
\FF(\v_1,\dots,\v_s)&\longrightarrow&\FF(\v_1,\dots,\v_{s-1})\times\MM_{\sigma}(\v_s)\\
(0\subset\FF_1\subset\cdots\subset\FF_s)&\longmapsto&((0\subset\FF_1\subset\cdots\subset\FF_{s-1}),\FF_s/\FF_{s-1})
\end{matrix},
\end{equation*} 
and hence a morphism
$$
\Pi:\FF(\v_1,\dots,\v_s) \to \prod_{i=1}^s \MM_{\sigma}(\v_i).
$$
Let $$\FF(\v_1,\dots,\v_s)^*:=\Pi^{-1}\left(\prod_{i=1}^s(\MM_{\sigma_-}(\v_i)\cap\MM_\sigma(\v_i))\right)\subset \FF(\v_1,\dots,\v_s)$$ be the open substack of
$\FF(\v_1,\dots,\v_s)$ where each $\FF_i/\FF_{i-1}$ is $\sigma_-$-semistable as well, where $\sigma_-$ is sufficiently close to
$\sigma$.  The intersections are taken within the large moduli space $\MM$, and as $\MM_{\sigma}(\v)$ is an open substack of $\MM$ for any $\sigma\in\Stab(Y)$ by openness of stability, it follows that $\MM_{\sigma_-}(\v_i)\cap\MM_\sigma(\v_i)$ is open in $\MM_\sigma(\v_i)$.  Thus $\FF(\v_1,\dots,\v_s)^*$ is indeed well-defined and an open substack of $\FF(\v_1,\dots,\v_s)$, as claimed.

While we cannot say much more about the stack of filtrations in general, if we assume that $\v_1,\dots,\v_s$ are the Mukai vectors of the semistable factors of the Harder-Narasimhan
filtration with respect to $\sigma_-$ of an object $E\in\MM_{\sigma}(\v)$, with $\v=\sum_{i=1}^s\v_i$, then the natural map 
\begin{equation}
\begin{matrix}
 \FF(\v_1,\dots,\v_s)^*& \to &\MM_{\sigma}(\v)\\
 (0\subset\FF_1\subset\cdots\subset\FF_s)&\mapsto&\FF_s
\end{matrix}   
\end{equation}
 is injective with image the substack of $\MM_{\sigma}(\v)$ parameterizing objects with Harder-Narasimhan filtration factors having Mukai vectors
$\v_1,\dots,\v_s$.  In this case, we can say even more and prove the following theorem, whose proof is similar to that of \cite[Prop. 6.2]{Bri12}.
\begin{Thm}\label{Thm:DimensionOfHNFiltrationStack} As above, suppose that $Y$ satisfies boundedness and openness of stability, and let $\v_1,\dots,\v_s$ be the Mukai vectors of the semistable factors of the Harder-Narasimhan filtration with respect to $\sigma_-$ of some object $E\in\MM_{\sigma}(\v)$, where $\v=\sum_{i=1}^s\v_i$.  Suppose further that for any $\sigma\in\Stab(Y)$ and $E,E'\in\AA_\sigma$ such that $\phi_{\min}(E')>\phi_{\max}(E)$ we have $\Hom(E,E'[k])=0$ for $2\leq k\leq\dim Y$.  Then 
\begin{equation}\label{eqn:DimensionOfHNFiltraionStack}
\begin{split}
\dim \FF(\v_1,\dots,\v_s)^*=&
\dim \FF(\v_1,\dots,\v_{s-1})^*+\dim \MM_{\sigma_-}(\v_s)+
\langle \v-\v_s,\v_s \rangle\\
=& \sum_{i=1}^s \dim \MM_{\sigma_-}(\v_i)+\sum_{i<j}\langle \v_i,\v_j \rangle.
\end{split}
\end{equation}

\end{Thm}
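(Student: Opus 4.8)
The plan is to prove the one-step recursion on the first line of \eqref{eqn:DimensionOfHNFiltraionStack} and then obtain the closed formula on the second line by induction on $s$. The base case $s=1$ is the tautology $\FF(\v_1)^*=\MM_{\sigma_-}(\v_1)\cap\MM_{\sigma}(\v_1)$ together with the empty sum $\sum_{i<j}\langle\v_i,\v_j\rangle=0$. Summing the recursion and using $\v-\v_s=\v_1+\cdots+\v_{s-1}$, so that $\langle\v-\v_s,\v_s\rangle=\sum_{i<s}\langle\v_i,\v_s\rangle$, telescopes into $\sum_i\dim\MM_{\sigma_-}(\v_i)+\sum_{i<j}\langle\v_i,\v_j\rangle$. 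Thus everything reduces to the recursion, which I would extract from a single forgetful morphism.

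Following \cite[Prop. 6.2]{Bri12}, I would study the morphism that remembers the length-$(s-1)$ sub-filtration (of total class $\v-\v_s$) together with the top quotient,
\[
\rho\colon \FF(\v_1,\dots,\v_s)^*\longrightarrow \FF(\v_1,\dots,\v_{s-1})^*\times\bigl(\MM_{\sigma_-}(\v_s)\cap\MM_\sigma(\v_s)\bigr),
\]
sending $(0\subset\FF_1\subset\cdots\subset\FF_s)$ to the pair consisting of $(0\subset\cdots\subset\FF_{s-1})$ and $\FF_s/\FF_{s-1}$. This $\rho$ is surjective: given a point $(F_{s-1},G)$ of the target, the split extension $F_s=F_{s-1}\oplus G$ is $\sigma$-semistable of phase $\phi$, since all factors share the $\sigma$-phase $\phi$ and $\PP_\sigma(\phi)$ is extension-closed, so it lies in $\FF(\v_1,\dots,\v_s)^*$ over $(F_{s-1},G)$. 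The fibre of $\rho$ over such a pair is the stack of extensions $0\to F_{s-1}\to F_s\to G\to 0$ inside the abelian category $\PP_\sigma(\phi)$: its isomorphism classes are $\Ext^1(G,F_{s-1})$ and every extension has automorphism group (fixing the ends) equal to $\Hom(G,F_{s-1})$ acting trivially, so the fibre is the quotient stack $[\Ext^1(G,F_{s-1})/\Hom(G,F_{s-1})]$ of dimension $\ext^1(G,F_{s-1})-\hom(G,F_{s-1})$. To run this in families rather than fibrewise, I would use the relative $\Hom$- and $\Ext$-sheaves supplied by \cite[Prop. 1.1]{Ina02} (already invoked in the preceding proposition) to present $\rho$, étale-locally on its base, as the quotient of the affine bundle given by relative $\Ext^1$ by the additive group given by relative $\Hom$, exactly as in the Hall-algebra computation of \cite{Bri12}.

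It then remains to identify $\ext^1(G,F_{s-1})-\hom(G,F_{s-1})$ with $\langle\v-\v_s,\v_s\rangle$. Since $\langle\v(G),\v(F_{s-1})\rangle=-\chi(G,F_{s-1})$, one has
\[
\ext^1(G,F_{s-1})-\hom(G,F_{s-1})=\langle\v-\v_s,\v_s\rangle+\sum_{k\ge 2}(-1)^{k}\,\ext^{k}(G,F_{s-1}),
\]
and I would kill every higher term. Because $\v_1,\dots,\v_s$ are listed in strictly descending $\sigma_-$-slope, the $\sigma_-$-Harder--Narasimhan factors of $F_{s-1}$ all have strictly larger $\sigma_-$-phase than the $\sigma_-$-semistable object $G$, so $\phi_{\min}(F_{s-1})>\phi_{\max}(G)$; the standing hypothesis, applied in the $\sigma_-$-heart containing $G$ and $F_{s-1}$, then yields $\ext^{k}(G,F_{s-1})=0$ for $2\le k\le\dim Y$. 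The single remaining top degree $k=\dim Y+1$ vanishes by Serre duality: for a surface with numerically trivial canonical bundle, $\ext^{\,3}(G,F_{s-1})\cong\hom\bigl(F_{s-1},(G\otimes\omega_Y)[-1]\bigr)^{\vee}$, which is a negative $\Ext$ between the heart objects $F_{s-1}$ and $G\otimes\omega_Y$ (the latter having the same phase as $G$), hence zero. Therefore the relative dimension of $\rho$ is the \emph{constant} $\langle\v-\v_s,\v_s\rangle$, and surjectivity with constant fibre dimension gives $\dim\FF(\v_1,\dots,\v_s)^*=\dim\FF(\v_1,\dots,\v_{s-1})^*+\dim\MM_{\sigma_-}(\v_s)+\langle\v-\v_s,\v_s\rangle$, the desired recursion.

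I expect the principal obstacle to be the family version of the fibre computation: upgrading the pointwise identification of fibres with $[\Ext^1/\Hom]$ to a genuine statement that $\rho$ has everywhere constant relative dimension and that dimension is additive along it for Artin stacks. The vanishing of $\ext^{k}$ for all $k\ge 2$ is exactly what makes $\ext^1-\hom$ a purely topological, hence locally constant, quantity, forcing the relative dimension to be constant rather than merely generic; assembling this cleanly via relative $\Ext$-sheaves and the quotient-stack presentation of the extension stack is the delicate point. A secondary bookkeeping issue is to check that the open condition $\cap\,\MM_\sigma(\v_s)$ does not lower the dimension, i.e.\ that $\dim\bigl(\MM_{\sigma_-}(\v_s)\cap\MM_\sigma(\v_s)\bigr)=\dim\MM_{\sigma_-}(\v_s)$ on the components meeting the image of $\rho$.
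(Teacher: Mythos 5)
Your proposal is correct and takes essentially the same route as the paper: the paper likewise reduces to the one-step recursion and computes it via Bridgeland's extension-stack argument from \cite{Bri12}, identifying the fibres over (sub-filtration, top quotient) with $[\Ext^1(G,F_{s-1})/\Hom(G,F_{s-1})]$ and using the phase hypothesis (together with vanishing of negative and top-degree $\Ext$'s for heart objects) to make the relative dimension the constant $\langle \v-\v_s,\v_s\rangle$, then inducting. The "delicate point" you flag is handled in the paper exactly along the lines you sketch: one stratifies the base by $\hom((\EE_s)_t,(\FF_{s-1})_t)$ (constancy of $\chi$ and vanishing of $\ext^{\geq 2}$ force $\hom$ and $\ext^1$ to jump together), forms the vector bundles $V_i^0,V_i^1$ of relative $\Hom$ and $\Ext^1$ via \cite[Prop. 1.1]{Ina02}, and uses the universal extension over $V_i^1$ to build a stratified atlas-with-relations for $\FF(\v_1,\dots,\v_s)^*$ whose dimension count yields the recursion.
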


\begin{proof}
For an atlas $\varphi:T \to \FF(\v_1,\dots,\v_{s-1})^* \times \MM_{\sigma_-}(\v_s)$,
we set 
$R:=T \times_{\FF(\v_1,\dots,\v_{s-1})^* \times \MM_{\sigma_-}(\v_s)} T$.
Let $0 \subset \FF_1 \subset \cdots \subset \FF_{s-1}$ and
$\EE_s$ be objects on $T \times Y$ 
corresponding to the morphism $\varphi$. 
We note that for all $t\in T$, $\Hom((\EE_s)_t,(\FF_{s-1})_t [k])=0$ for
$k \ne 0,1$.  Indeed, for $k\not\in[0,\dim Y]$, this is clear as $(\EE_s)_t,(\FF_{s-1})_t\in\AA_{\sigma_-}$, while for $k\in[2,\dim Y]$, this follows from the hypothesis of the theorem.  We shall stratify $T=\bigcup_i T_i$ by $n(t):=\hom((\EE_s)_t,(\FF_{s-1})_t)$ so that $n(t)$ is constant on $T_i$ and $n_{|T_i} \ne n_{|T_j}$ for $i \ne j$.  It follows that $$\chi((\EE_s)_t,(\FF_{s-1})_t)=\hom((\EE_s)_t,(\FF_{s-1})_t)-\hom((\EE_s)_t,(\FF_{s-1})_t[1])$$ is constant for all $t\in T$, and thus on each $T_i$, both $n(t)$ and $\hom((\EE_s)_t,(\FF_{s-1})_t[1])$ are constant.
We set $R_{ij}:=R \times_{T \times T} T_i \times T_j$.
Then $R_{ij}=\varnothing $ if $i \ne j$ and we have a stratification
$R=\bigcup_i R_{ii}$.  Let $p_i^k:V_i^k \to T_i$ $(k=0,1)$ be the vector bundles associated to 
$\lHom_{p_i}((\EE_s)_{|T_i},(\FF_{s-1})_{|T_i}[k])$, 
where $p_i:T_i \times Y \to T_i$ is the projection.

As in \cite[Lemma 6.1]{Bri12}, there is a universal extension $$0\to (p_i^1)^*\FF_{s-1}\to \FF\to(p_i^1)^*\EE_s\to 0$$ over $V_i^1$, and the family $\FF$ determines a morphism $q_i:V_i^1\to\FF(\v_1,\dots,\v_s)^*$  which factors through
$$
V_i^1 \to 
T_i \times_{ \FF(\v_1,\dots,\v_{s-1})^* \times \MM_{\sigma_-}(\v_s)}
\FF(\v_1,\dots,\v_s)^*.
$$
As in \cite[p. 131]{Bri12}, one can show that there is an isomorphism
$$
V_i^1 \times_{\FF(\v_1,\dots,\v_s)^*} V_i^1 \cong 
V_i^0 \times_{R_{ii}} V_i^1
$$
with a commutative diagram
\begin{equation}
\begin{CD}
V_i^0 \times_{R_{ii}} V_i^1 @>>> V_i^1 \times V_i^1 \\
@VVV @VVV\\
R_{ii} @>>> T_i \times T_i.
\end{CD}
\end{equation}
It follows that in this description $\Set{V_i^1\times V_i^1}_i$ provide an atlas for $\FF(\v_1,\dots,\v_s)^*$ with relations given by $\Set{V_i^0\times_{R_{ii}}V_i^1}_i$ just as $\Set{T_i\times T_i}_i$ and $\Set{R_{ii}}_i$, respectively, do for $\FF(\v_1,\dots,\v_{s-1})^*\times\MM_{\sigma_-}(\v_s)$.  Since
\begin{equation}
\begin{split}
& \dim V_i^1 \times V_i^1-\dim V_i^0 \times_{R_{ii}} V_i^1\\
=& \dim T_i \times T_i +2\rk V_i^1-(\dim R_{ii}+\rk V_i^1+\rk V_i^0)\\
=& (\dim T_i \times T_i-\dim R_{ii})+\langle \v-\v_s, \v_s \rangle,
\end{split}
\end{equation}
we get 
\begin{equation}
\begin{split}
\dim \FF(\v_1,\dots,\v_s)^* =& 
\max_i \{\dim V_i^1 \times V_i^1-\dim V_i^0 \times_{R_{ii}} V_i^1\}\\
=& \max_i \{(\dim T_i \times T_i-\dim R_{ii})\}+\langle \v-\v_s, \v_s \rangle\\
=&
\dim \FF(\v_1,\dots,\v_{s-1})^*+
\dim \MM_{\sigma_-}(\v_s)+\langle \v-\v_s, \v_s \rangle.
\end{split}
\end{equation}
This gives the first equation in \eqref{eqn:DimensionOfHNFiltraionStack}, while the second follows by induction.  
\end{proof}
  
We can apply the above theorem to study the locus in $\MM_{\sigma_+}(\v)$ of strictly $\sigma_0$-semistable objects.  Recall our setup: $X$ is an Enriques surface, and for a given $\v\in\Hal(X,\Z)$ with $\v^2>0$, we take a generic stability condition $\sigma_0$ on a wall $\WW$ for $\v$ and two generic nearby stability conditions $\sigma_\pm$ in opposite adjacent chambers.  By letting $\sigma=\sigma_0$ in the above theorem, we get the following result:
\begin{Prop}\label{Prop:HN codim} Let $X$ be an Enriques surface, and suppose that $\v_1,\dots,\v_s$ are the Mukai vectors of the semistable factors of the Harder-Narasimhan filtration with respect to $\sigma_-$ of an object $E\in\MM_{\sigma_+}(\v)$, where $\v=\sum_{i=1}^s\v_i$ satisfies $\v^2>0$.  Then letting $\FF(\v_1,\dots,\v_s)^o:=\FF(\v_1,\dots,\v_s)^*\cap\MM_{\sigma_+}(\v)$, where the intersection is taken in $\MM_{\sigma_0}(\v)$, we have
\begin{equation}\label{eqn:HNFiltrationCodim}\codim\FF(\v_1,\dots,\v_s)^o
\geq\sum_{i=1}^s \left(\v_i^2-\dim\MM_{\sigma_-}(\v_i)\right)+\sum_{i<j}\langle \v_i,\v_j\rangle,
\end{equation}
where the codimension is taken with respect to $\MM_{\sigma_+}(\v)$.
\end{Prop}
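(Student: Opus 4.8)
The plan is to treat this as a largely formal consequence of \cref{Thm:DimensionOfHNFiltrationStack}, since the genuine geometric content---the dimension of the stack of filtrations---has already been computed there. What remains is to set up the correct ambient stack for the codimension, get the direction of the inequality right, and verify that the hypotheses of \cref{Thm:DimensionOfHNFiltrationStack} are met on an Enriques surface.

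First I would observe that $\MM_{\sigma_+}(\v)$ is an \emph{open} substack of $\MM_{\sigma_0}(\v)$: every $\sigma_+$-semistable object of class $\v$ remains $\sigma_0$-semistable because $\sigma_+$ lies in a chamber adjacent to the wall $\WW$, and openness of stability (together with the fact that $\MM_{\sigma_0}(\v)$ is open in Lieblich's stack $\MM$) makes this inclusion open. Viewing $\FF(\v_1,\dots,\v_s)^*$ inside $\MM_{\sigma_0}(\v)$ via the injective map sending a filtration to its top term $\FF_s$---which is legitimate precisely because the $\v_i$ are the $\sigma_-$-HN factors---it follows that $\FF(\v_1,\dots,\v_s)^o=\FF(\v_1,\dots,\v_s)^*\cap\MM_{\sigma_+}(\v)$ is open in $\FF(\v_1,\dots,\v_s)^*$, whence $\dim\FF(\v_1,\dots,\v_s)^o\leq\dim\FF(\v_1,\dots,\v_s)^*$. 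This is where the inequality becomes a $\geq$: passing to the open piece can only drop the dimension.

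Next I would apply \cref{Thm:DimensionOfHNFiltrationStack} with $\sigma=\sigma_0$ to evaluate $\dim\FF(\v_1,\dots,\v_s)^*=\sum_i\dim\MM_{\sigma_-}(\v_i)+\sum_{i<j}\langle\v_i,\v_j\rangle$, and combine with $\dim\MM_{\sigma_+}(\v)=\v^2$ from \cref{prop:pss} (valid since $\v^2>0$). This gives
\[
\codim\FF(\v_1,\dots,\v_s)^o=\dim\MM_{\sigma_+}(\v)-\dim\FF(\v_1,\dots,\v_s)^o\geq \v^2-\sum_{i=1}^s\dim\MM_{\sigma_-}(\v_i)-\sum_{i<j}\langle\v_i,\v_j\rangle.
\]
Finally, since $\v=\sum_i\v_i$, bilinearity of the Mukai pairing yields the identity $\v^2=\sum_i\v_i^2+2\sum_{i<j}\langle\v_i,\v_j\rangle$; substituting this for $\v^2$ and regrouping produces exactly $\sum_i(\v_i^2-\dim\MM_{\sigma_-}(\v_i))+\sum_{i<j}\langle\v_i,\v_j\rangle$, which is the right-hand side of \eqref{eqn:HNFiltrationCodim}.

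The only non-formal step, and hence the place I would expect to spend real care, is checking the standing hypothesis of \cref{Thm:DimensionOfHNFiltrationStack}, namely that $\Hom(E,E'[2])=0$ whenever $\phi_{\min}(E')>\phi_{\max}(E)$ in a heart $\AA_\sigma$. This is precisely where the Enriques (as opposed to K3) geometry enters, and it reduces to Serre duality $\Hom(E,E'[2])\cong\Hom(E',E(K_X))^\vee$ together with the numerical triviality of $K_X$: because $c_1(K_X)=0$ in $\Num(X)$, the twist $E(K_X)$ has the same Mukai vector and hence the same phases as $E$, so $\phi_{\min}(E')>\phi_{\max}(E)=\phi_{\max}(E(K_X))$ forces $\Hom(E',E(K_X))=0$ by the slicing axiom. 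With this verified, the proposition follows by the purely numerical manipulation above.
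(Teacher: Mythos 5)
Your proposal is correct and follows essentially the same route as the paper's proof: openness of $\FF(\v_1,\dots,\v_s)^o$ in $\FF(\v_1,\dots,\v_s)^*$ via openness of stability, \cref{Thm:DimensionOfHNFiltrationStack} applied at $\sigma_0$ together with $\dim\MM_{\sigma_+}(\v)=\v^2$ from \cref{prop:pss}, and the identity $\v^2=\sum_i\v_i^2+2\sum_{i<j}\langle\v_i,\v_j\rangle$. The verification of the $\Ext^2$-vanishing hypothesis via Serre duality and numerical $K$-triviality is also exactly the paper's argument, merely placed at the end rather than the beginning.
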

\begin{proof}
In order to apply Theorem \ref{Thm:DimensionOfHNFiltrationStack}, we first observe that the hypothesis of the theorem is met where $\sigma=\sigma_0$ in this case.  Indeed, for $E,E'\in\AA_{\sigma_0}$ such that $\phi_{\min}(E')>\phi_{\max}(E)$, Serre duality gives $$\Hom(E,E'[2])=\Hom(E',E(K_X))=0,$$ where the last equality follows since $X$ is numerically $K$-trivial so that $\phi_{\max}(E(K_X))=\phi_{\max}(E)$.

Noting that $\FF(\v_1,\dots,\v_s)^o$ is an open substack of $\FF(\v_1,\dots,\v_s)^*$ by openness of stability, we get that $\dim\FF(\v_1,\dots,\v_s)^o\leq\dim\FF(\v_1,\dots,\v_s)^*$, with equality if and only if the component of $\FF(\v_1,\dots,\v_s)^*$ of largest dimension contains a $\sigma_+$-semistable object.  As $\v^2>0$, $\dim\MM_{\sigma_+}(\v)=\v^2$ by Proposition \ref{prop:pss}, so 
\begin{equation}
    \begin{split}
        \codim\FF(\v_1,\dots,\v_s)^o&=\v^2-\dim\FF(\v_1,\dots,\v_s)^o\geq\v^2-\dim\FF(\v_1,\dots,\v_s)^*\\
        &=\v^2-\left(\sum_{i=1}^s\dim\MM_{\sigma_-}(\v_i)+\sum_{i<j}\langle\v_i,\v_j\rangle\right)\\
        &=\sum_{i=1}^s\left(\v_i^2-\dim\MM_{\sigma_-}(\v_i)\right)+\sum_{i<j}\langle\v_i,\v_j\rangle,
    \end{split}
\end{equation}
as claimed.
\end{proof}
While Proposition \ref{Prop:HN codim} is phrased for Enriques surfaces, it applies to any $K$-trivial surface, with \eqref{eqn:HNFiltrationCodim} modified appropriately.  In particular, the first author uses it in \cite{Nue18} to study wall-crossing for bielliptic surfaces.

\section{The hyperbolic lattice associated to a wall}\label{sec:HyperbolicLattice}
In order to effectively use the estimates provided by Proposition \ref{Prop:HN codim}, we need to gain some understanding of the Mukai vectors $\v_i$ which appear as Harder-Narasimhan factors of an object $E\in\MM_{\sigma_+}(\v)$ when we cross a wall $\WW$.  From their definition, these walls are associated to the existence of another Mukai vector with the same phase as $\v$, so to any wall $\WW$ it is natural to consider the set of these ``extra'' classes, as in the following definition.  As it turns out, this set will contain all of the Mukai vectors we are interested in.
\begin{PropDef}\label{hyperbolic} To a wall $\WW$, let $\HH_{\WW}\subset\Hal(X,\Z)$ be the set of Mukai vectors $$\HH_\WW:=\Set{\w\in\Hal(X,\Z)\ |\ \Im\frac{Z(\w)}{Z(\v)}=0\mbox{ for all }\sigma\in\WW}.$$
Then $\HH_{\WW}$ has the following properties:
\begin{enumerate}
\item It is a primitive sublattice of rank two and of signature $(1,-1)$ (with respect to the restriction of the Mukai form).
\item Let $\sigma_+,\sigma_-$ be two sufficiently close and generic stability conditions on opposite sides of the wall $\WW$, and consider any $\sigma_+$-stable object $E\in M_{\sigma_+}(\v)$.  Then any HN-filtration factor $A_i$ of $E$ with respect to $\sigma_-$ satisfies $\v(A_i)\in\HH_{\WW}$.
\item If $\sigma_0$ is a generic stability condition on the wall $\WW$, the conclusion of the previous claim also holds for any $\sigma_0$-semistable object $E$ of class $\v$.
\item Similarly, let $E$ be any object with $\v(E)\in\HH_{\WW}$, and assume that it is $\sigma_0$-semistable for a generic stability condition $\sigma_0\in\WW$.  Then every Jordan-H\"{o}lder factor of $E$ with respect to $\sigma_0$ will have Mukai vector contained in $\HH_{\WW}$.
\end{enumerate}
\end{PropDef}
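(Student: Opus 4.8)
The plan is to treat the four claims in two groups: the purely lattice-theoretic statement (1), and the stability-theoretic statements (2)--(4), which I would deduce from a single genericity principle. For (1), I would first note that the defining condition $\Im\frac{Z_\sigma(\w)}{Z_\sigma(\v)}=0$ is $\R$-linear in $\w$ for each fixed $\sigma$, so $\HH_\WW$ is a subgroup; and since the condition is unchanged under $\w\mapsto n\w$, it is saturated, hence primitive. Writing $Z_\sigma(\blank)=\langle\mho_\sigma,\blank\rangle$, a short computation rewrites the condition as $\langle\gamma(\sigma),\w\rangle=0$, where $\gamma(\sigma):=\Im(\overline{Z_\sigma(\v)}\,\mho_\sigma)\in\Hal(X,\R)$, so that $\HH_\WW\otimes\R=\big(\operatorname{span}_\R\{\gamma(\sigma):\sigma\in\WW\}\big)^\perp$. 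Since $\langle\gamma(\sigma),\v\rangle=\Im|Z_\sigma(\v)|^2=0$, we always have $\v\in\HH_\WW$; and since $\WW$ is a genuine wall there is a destabilizing class $\w_0\notin\R\v$ aligned with $\v$ along all of $\WW$, giving $\w_0\in\HH_\WW$ and hence $\rk\HH_\WW\ge 2$.

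The subtle point is the reverse bound $\rk\HH_\WW\le 2$, which is where I expect the main difficulty to lie, since it genuinely uses that $\WW$ has real codimension one. Using that $\ZZ:\Stabd(X)\to\Hal(X,\C)$ is a local biholomorphism (\cref{thm:CoveringMap}), I would pass to the local coordinate $\mho$ and consider the holomorphic functions $g_\w=\langle\mho,\w\rangle/\langle\mho,\v\rangle$. The wall is an open piece of the real hypersurface $\{\Im g_{\w_0}=0\}$, and membership in $\HH_\WW$ forces $\Im g_\w\equiv 0$ there. A holomorphic function whose imaginary part vanishes on this hypersurface must be constant along the complex fibres of $g_{\w_0}$, which forces $g_\w=a\,g_{\w_0}+b$ with $a,b\in\R$, i.e. $\w\in\operatorname{span}_\R(\v,\w_0)$; thus $\HH_\WW\otimes\R=\operatorname{span}_\R(\v,\w_0)$ has rank $2$. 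For the signature, $\v^2>0$ supplies a positive vector, while for any $\sigma\in\WW$ the kernel of $Z_\sigma|_{\HH_\WW}$ is a nonzero line contained in $\mho_\sigma^\perp$, which is negative definite because $\mho_\sigma$ spans a positive-definite $2$-plane and the Mukai form has only a two-dimensional positive part; this supplies a negative vector. A rank-two lattice carrying vectors of both signs has signature $(1,-1)$, proving (1).

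For (2) and (3) I would isolate the principle that, for $\sigma_0$ generic on $\WW$, every destabilizing factor has class in $\HH_\WW$. Concretely, a $\sigma_+$-stable $E$ of class $\v$ is $\sigma_0$-semistable, and as $\sigma_-\to\sigma_0$ the $\sigma_-$-phases of its HN factors $A_i$ converge to $\phi_{\sigma_0}(\v)$, so each $A_i$ is $\sigma_0$-semistable with $Z_{\sigma_0}(A_i)$ aligned with $Z_{\sigma_0}(\v)$; making this phase-collision precise uses local finiteness of the walls near $\sigma_0$ and is the second place I would check carefully. Now if some $\v(A_i)\notin\HH_\WW$ (necessarily $\notin\R\v$), then the alignment locus of $\v(A_i)$ with $\v$ does not contain $\WW$, so the proper sub/quotient objects $F_i\subset E$ of equal $\sigma_0$-phase but unequal $\sigma_-$-phase exhibit a genuine wall through $\sigma_0$ distinct from $\WW$ — contradicting that $\sigma_0$ lies on no other wall. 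Hence every $\v(A_i)\in\HH_\WW$. Statement (3) is proved identically, beginning from an arbitrary $\sigma_0$-semistable $E$ of class $\v$ in place of a $\sigma_+$-stable one.

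Finally, (4) follows the same pattern with Jordan--H\"older factors in place of HN factors: if $\v(E)\in\HH_\WW$ then $Z_{\sigma_0}(E)$ is aligned with $Z_{\sigma_0}(\v)$, so the $\sigma_0$-stable JH factors $B_j$ of $E$ all share the phase $\phi_{\sigma_0}(\v)$, and the genericity principle above then forces $\v(B_j)\in\HH_\WW$. The real work, and the two steps I would scrutinize most, are the rank-two bound in (1) and the attendant phase-collision limit in (2)--(3); once those are in place, everything else is formal.
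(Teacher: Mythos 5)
Your proposal is correct and takes essentially the same route as the paper: the paper's entire proof is the one-line remark that the proof of \cite[Proposition 5.1]{BM14a} carries over word for word, and your argument---saturation plus a codimension-one count giving rank two and signature $(1,-1)$ in (1), followed by the phase-collision and genericity-of-$\sigma_0$ argument for (2)--(4)---is precisely that argument, written out. The only difference is cosmetic: you justify $\rk\HH_\WW\leq 2$ via a holomorphic-fibration argument along the level sets of $Z(\w_0)/Z(\v)$, which is one legitimate way of making precise the codimension count implicit in the cited proof of Bayer--Macr\`{i}.
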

\begin{proof} The proof of \cite[Proposition 5.1]{BM14a} carries over word for word.
\end{proof}

We would like to characterize the type of the wall $\WW$, i.e. the type of birational transformation induced by crossing it, in terms of the lattice $\HH_{\WW}$.  We will find it helpful to also go in the opposite direction as in \cite[Definition 5.2]{BM14a}:
\begin{Def} Let $\HH\subset\Hal(X,\Z)$ be a primitive rank two hyperbolic sublattice containing $\v$.  A \emph{potential wall} $\WW$ associated to $\HH$ is a connected component of the real codimension one submanifold consisting of those stability conditions $\sigma$ such that $Z_\sigma(\HH)$ is contained in a line.
\end{Def}

We will also have cause to consider two special convex cones in $\HH_{\R}$.  The first is defined as follows (see \cite[Definition 5.4]{BM14a}):

\begin{Def} Given any hyperbolic lattice $\HH\subset \Hal(X,\Z)$ of rank two containing $\v$, denote by $P_{\HH}\subset \HH_{\R}$ the cone generated by classes $\u\in\HH$ with $\u^2\geq 0$ and $\langle\v,\u\rangle>0$.  We call $P_{\HH}$ the \emph{positive cone} of $\HH$, and a class in $P_{\HH}\cap\HH$ a \emph{positive class}.
\end{Def}

The next cone, called the \emph{effective cone} and whose integral classes are \emph{effective classes}, is classified by the following proposition (see \cite[Proposition 5.5]{BM14a} for the analogue in the K3 case):

\begin{Prop} Let $\WW$ be a potential wall associated to a hyperbolic rank two sublattice $\HH\subset\Hal(X,\Z)$.  For any $\sigma\in\WW$, let $C_{\sigma}\subset\HH_{\R}$ be the cone generated by classes $\u\in\HH$ satisfying the two conditions $$\Re\frac{Z_{\sigma}(\u)}{Z_{\sigma}(\v)}>0\mbox{ and }\begin{cases}
\u^2\geq -1, & \mbox{or}\\
\u^2=-2, & c_1(\u)\equiv D\pmod2,\mbox{ $D$ a nodal cycle}
\end{cases}.$$  Then this cone does not depend on the choice of $\sigma\in\WW$, so we may and will denote it by $C_{\WW}$.  Moreover, it contains $P_{\HH}$.

If $\u\in C_{\WW}$, then there exists a $\sigma$-semistable object of class $\u$ for every $\sigma\in\WW$, and if $\u\notin C_{\WW}$, then for a generic $\sigma\in\WW$ there does not exist a $\sigma$-semistable object of class $\u$.
\end{Prop}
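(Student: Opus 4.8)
The plan is to follow \cite[Proposition 5.5]{BM14a}, adapting each step to the Enriques lattice $\Hal(X,\Z)$ and to the refined set of rigid classes $\Delta(X)=\Delta(X)_{-1}\cup\Delta(X)_{-2}$. The key simplification is that on $\WW$ the image $Z_\sigma(\HH)$ lies on a line, so $Z_\sigma(\u)/Z_\sigma(\v)$ is \emph{real} for every $\u\in\HH$; hence the defining inequality $\Re(Z_\sigma(\u)/Z_\sigma(\v))>0$ is just positivity of a real number, and the generators of $C_\sigma$ are exactly the effective classes sharing the phase of $\v$. For the independence in the first assertion, I would show that no generator ever meets the boundary line $\{Z_\sigma(\u)=0\}$ as $\sigma$ ranges over $\WW$. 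Since $\Re\mho_\sigma,\Im\mho_\sigma$ span a positive definite $2$-plane and $\Hal(X,\Z)$ has signature $(2,\rho)$, the real subspace $\ker Z_\sigma=\{\u:\langle\mho_\sigma,\u\rangle=0\}$ is negative definite, so $Z_\sigma(\u)\neq 0$ whenever $\u^2\ge 0$. For the remaining generator types, $\u^2=-1$ (so $\u\in\Delta(X)_{-1}$) and $\u^2=-2$ with $c_1(\u)$ nodal (so $\u\in\Delta(X)_{-2}$), the map $\ZZ$ of \cref{thm:CoveringMap} sends $\Stabd(X)$ into $P_0^+(X)=P^+(X)\setminus\bigcup_{\w\in\Delta(X)}\w^\perp$, whence $Z_\sigma(\u)=\langle\mho_\sigma,\u\rangle\neq 0$ there as well. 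Therefore the sign of the real number $Z_\sigma(\u)/Z_\sigma(\v)$ is continuous and nowhere zero on the connected set $\WW$, hence constant, giving $C_\sigma=C_\WW$.

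For the containment $P_\HH\subseteq C_\WW$, I would work in $\HH_\R\cong\R^{1,1}$ and consider the real-linear functional $f_\sigma(\u):=\Re(Z_\sigma(\u)/Z_\sigma(\v))$, which satisfies $f_\sigma(\v)=1$ and whose kernel is the line $\ker(Z_\sigma|_{\HH_\R})$, spanned by a class of negative square by the previous paragraph. Since $\v^2>0$, the vector $\v$ lies in one of the two positive-square cones of $\HH_\R$, namely $P_\HH$. A negative-square line meets neither positive cone, so each positive cone lies entirely in one half-space of $f_\sigma$; as $f_\sigma(\v)=1>0$, all of $P_\HH$ lies in $\{f_\sigma>0\}$. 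Because $\u^2\ge 0$ forces $\u^2\ge -1$, every positive class satisfies both defining conditions of $C_\WW$, proving $P_\HH\subseteq C_\WW$.

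The non-existence half of the third assertion is the cleaner one, so I would prove it first. Suppose $\sigma_0\in\WW$ is generic and $E$ is $\sigma_0$-semistable with $\v(E)=\u\in\HH$. By the fourth property of \cref{hyperbolic}, each Jordan--H\"older factor $A_i$ of $E$ has $\v(A_i)\in\HH_\WW=\HH$, is $\sigma_0$-stable, and shares the phase of $\v$, so $f_{\sigma_0}(\v(A_i))>0$. Since $A_i$ is stable we have $\hom(A_i,A_i)=1$, and Serre duality on the numerically $K$-trivial surface $X$ bounds $\ext^2(A_i,A_i)=\hom(A_i,A_i(K_X))\le 1$, giving $\v(A_i)^2=-1+\ext^1-\ext^2\ge -2$; moreover $\v(A_i)^2=-2$ forces $A_i$ spherical and hence the nodal condition $c_1\equiv D\pmod 2$ by \cref{Thm:exist:nodal}. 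Thus each $\v(A_i)$ is a generator of $C_\WW$ and $\u=\sum_i\v(A_i)\in C_\WW$. Contrapositively, if $\u\notin C_\WW$ then for generic $\sigma\in\WW$ no semistable object of class $\u$ exists.

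For existence, fix $\sigma\in\WW$ and let $\phi$ be the common phase of the positive-real-part classes of $\HH$; the abelian category $\PP_\sigma(\phi)$ has finite length and is extension-closed, so any extension of $\sigma$-semistable objects of phase $\phi$ is again $\sigma$-semistable. It therefore suffices to produce a $\sigma$-semistable object for each generator class and to write an arbitrary $\u\in C_\WW\cap\HH$ as a non-negative integral combination of generators. The building blocks are supplied by the nonemptiness results already established: positive classes $\u^2>0$ by \cref{Thm:exist:nodal} and \cref{prop:pss}, primitive isotropic classes by \cref{prop:isotropic}, and the exceptional ($\u^2=-1$) and nodal spherical ($\u^2=-2$) classes by their unique stable objects as in \cref{Lem:dimension negative}. \textbf{The main obstacle} is exactly the combinatorial step of realizing an arbitrary lattice point of $C_\WW$ as such a combination: unlike the real cone, the effective \emph{monoid} need not visibly exhaust all lattice points. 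I expect to resolve this by induction on $\u^2$ and $\langle\v,\u\rangle$, peeling off an extremal effective class along a boundary ray of $C_\WW$ and checking that the remainder again lies in $C_\WW\cap\HH$, all while tracking the Enriques-specific parity constraints---the value of $\ell(\u)$, the $K_X$-torsion in the determinant, and the nodal hypothesis---that dictate which classes in \cref{Thm:exist:nodal} and \cref{prop:isotropic} are genuinely effective.
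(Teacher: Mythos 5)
Your first three paragraphs are essentially correct and follow the same route as the paper, which simply declares the proof of \cite[Proposition 5.5]{BM14a} to carry over with \cref{Thm:exist:nodal} supplying nonemptiness: the independence of $\sigma$ via negative definiteness of $\ker Z_\sigma$ together with $\mho_\sigma\in P_0^+(X)$ (\cref{thm:CoveringMap}), the containment $P_\HH\subseteq C_\WW$, and the non-existence direction via Jordan--H\"older factors and \cref{hyperbolic} are all handled adequately. Two small inaccuracies there: the JH factors share the phase of $E$, not of $\v$, so what your argument literally proves is $\u\in C_\WW\cup(-C_\WW)$ (the same implicit convention under which the statement itself must be read, since shifting by $[1]$ negates Mukai vectors); and to extract the nodal congruence for a $\sigma_0$-stable factor $A_i$ with $\v(A_i)^2=-2$ you cannot apply \cref{Thm:exist:nodal} directly at $\sigma_0$, which need not be generic for $\v(A_i)$ --- you should first use openness of stability to perturb to a stability condition generic for that class, or cite \cite[Lemma 4.3]{Yos16b} as the paper does in analogous situations.

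The genuine gap is the existence half, which is precisely where the paper's remark about ``the more subtle condition on $\u^2$'' has content. Your reduction is the right one --- (i) each generator class carries a $\sigma$-semistable object for every $\sigma\in\WW$, and (ii) every lattice point of $C_\WW$ is a $\Z_{\geq 0}$-combination of generators --- but neither claim is established. For (i), the nonemptiness results you cite hold only for stability conditions generic with respect to the class in question, and a point of $\WW$ need not be generic for a generator class; you must add the standard but necessary argument that semistability of a fixed object is a closed condition, so objects semistable in an adjacent chamber for that class remain semistable on its closure, hence on $\WW$. Claim (ii) you explicitly leave open, and the proposed ``induction on $\u^2$ and $\langle\v,\u\rangle$'' is not well-founded as stated: subtracting the extremal negative class can decrease $\u^2$ before it increases it. What is actually needed is this: if $\u$ lies in the flap $C_\WW\setminus P_\HH$, that flap is bounded by the ray through an effective exceptional or spherical class $\w$ and an isotropic boundary ray $\xi$ of $P_\HH$; writing $\u=\alpha\w+\beta\xi$ with $\alpha,\beta\geq 0$, one shows that for $k=\lfloor\alpha\rfloor$ or $k=\lceil\alpha\rceil$ the remainder $\u-k\w$ is again a generator (it lands in $P_\HH$, or is isotropic, or has square $-1$, or square $-2$ with the nodal congruence). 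On a K3 this is essentially automatic because the Mukai lattice is even, so a remainder of square in $(-4,0]$ must have square $-2$ or $0$; on an Enriques surface the remainder can a priori have square $-3$, or square $-2$ failing the nodal condition, and one must verify --- using integrality of the pairing --- that in exactly those bad cases the other choice of $k$ produces a remainder of non-negative square lying in $P_\HH$. Without this case analysis (or an equivalent), the assertion that every class of $C_\WW\cap\HH$ supports semistable objects, i.e.\ the half of the proposition that actually uses \cref{Thm:exist:nodal}, remains unproven in your proposal.
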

\begin{proof} The proof is identical to that of \cite[Proposition 5.5]{BM14a} except that for the statements about the existence of semistable objects we must use Theorem \ref{Thm:exist:nodal}.  This accounts for the more subtle condition on $\u^2$ compared to the corresponding condition $\u^2\geq -2$ for K3 surfaces.
\end{proof}

We also recall \cite[Remark 5.6]{BM14a}:
\begin{Rem} From the positivity condition on $\Re\frac{Z_{\sigma}(\u)}{Z_{\sigma}(\v)}$, it is clear that $C_{\WW}$ contains no line through the origin, i.e. if $\u\in C_{\WW}$ then $-\u\notin C_{\WW}$.  Thus there are only finitely many classes in $C_{\WW}\cap(\v-C_{\WW})\cap\HH$.

We use this fact to make the following assumption: when we refer to a generic $\sigma_0\in\WW$, we mean that $\sigma_0$ is not in any of the other walls associated to the finitely many classes in $C_{\WW}\cap(\v-C_{\WW})\cap\HH$.  Likewise, $\sigma_{\pm}$ will refer to stability conditions in adjacent chambers to $\WW$ in this more refined wall-and-chamber decomposition.
\end{Rem}

Finally, we single-out two types of primitive hyperbolic lattices as the nature of our arguments differ greatly between them:

\begin{Def} We say that $\WW$ is \emph{isotropic} if $\HH_{\WW}$ contains an isotropic class and \emph{non-isotropic} otherwise.
\end{Def}

We begin our investigation by determining precisely the kind of Mukai vectors that can be contained in $\HH_\WW$.
\begin{Prop}\label{Prop:lattice classification}Let $\HH$ be the hyperbolic lattice associated to a wall $\WW$ and $\sigma_0=(Z,\PP_0)\in\WW$ generic.  Then $\HH$ and $\sigma_0$ satisfy one of the following mutually exclusive conditions:
\begin{enumerate}
\item\label{enum:nonegativeclasses} $\HH$ contains no effective spherical or exceptional classes.  
\item\label{enum:OneNegative} \begin{enumerate}
	\item\label{enum:OneSpherical} $\HH$ contains precisely one effective spherical class, and there exists a unique $\sigma_0$-stable spherical object $S$ with $\v(S)\in\HH$.
    \item\label{enum:OneExceptional} $\HH$ contains precisely one effective exceptional class, and there exists exactly two $\sigma_0$-stable exceptional objects $E,E(K_X)$ with $\v(E)=\v(E(K_X))\in\HH$.
    \end{enumerate}
\item\label{enum:TwoNegative} There are infinitely many effective spherical or exceptional classes in $\HH$, and either 
\begin{enumerate}
\item\label{enum:TwoSpherical} there exist exactly two $\sigma_0$-stable spherical objects $S,T$ whose classes are in $\HH$; or
\item\label{enum:TwoExceptional} there exist exactly four $\sigma_0$-stable exceptional objects $E_1,E_1(K_X),E_2,E_2(K_X)$ with $\v(E_1)=\v(E_1(K_X)),\v(E_2)=\v(E_2(K_X))\in\HH$; or
\item\label{enum:OneExceptionalOneSpherical} there exists exactly one $\sigma_0$-stable spherical object $S$ and exactly two $\sigma_0$-stable exceptional objects $E,E(K_X)$ with $\v(S),\v(E)=\v(E(K_X))\in\HH$.
\end{enumerate}
In case \ref{enum:TwoNegative}, $\HH$ is non-isotropic.
\end{enumerate}
\end{Prop}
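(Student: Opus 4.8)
The plan is to follow the strategy of \cite[Sections 5 and 6]{BM14a}, reducing the classification to a lattice-theoretic dichotomy for $\HH$ together with the dictionary between effective negative classes and $\sigma_0$-stable objects provided by \cref{Thm:exist:nodal} and \cref{Lem:dimension negative}. The two features absent from the K3 setting of \cite{BM14a} are the exceptional classes ($\w^2=-1$) sitting alongside the spherical ones ($\w^2=-2$), and the fact that an exceptional class supports the \emph{two} stable objects $E,E(K_X)$ rather than a single one; both are dealt with by bookkeeping once the lattice picture is fixed.

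The crux I would isolate first is a lattice lemma: an \emph{isotropic} rank-two hyperbolic lattice $\HH$ of signature $(1,-1)$ — that is, one representing $0$ nontrivially, equivalently with $-\det\HH$ a perfect square — contains at most one primitive class $\w$ with $\w^2\in\{-1,-2\}$ up to sign, and of a single type. Indeed, two non-proportional such classes $\w_1,\w_2$ span a finite-index sublattice with Gram determinant $\w_1^2\w_2^2-\langle\w_1,\w_2\rangle^2$, and signature $(1,-1)$ forces $\langle\w_1,\w_2\rangle^2>\w_1^2\w_2^2$. A short check on the three possibilities $(\w_1^2,\w_2^2)\in\{(-1,-1),(-1,-2),(-2,-2)\}$ shows that $\langle\w_1,\w_2\rangle^2-\w_1^2\w_2^2$ is never a perfect square, so $\HH$ must be non-isotropic. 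This at once yields the mutual exclusivity of \cref{enum:OneSpherical,enum:OneExceptional} and the non-isotropy asserted in \cref{enum:TwoNegative}. Conversely, when $\HH$ is non-isotropic the proper isometry group $O^+(\HH)$ is infinite — it contains a hyperbolic element of infinite order coming from the fundamental solution of the associated Pell equation — so a single negative class already has infinite orbit, and $\HH$ carries infinitely many spherical or exceptional classes.

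Next I would pass from classes to objects. Recall from \cite[Rem. 5.6]{BM14a} that $C_{\WW}$ contains no line through the origin, so it is strictly convex with exactly two extremal rays. A $\sigma_0$-stable spherical or exceptional object $A$ must have $\v(A)$ spanning an extremal ray, for otherwise $\v(A)$ is a positive combination of two effective classes and one produces (via the existence of semistable objects of those classes and an Euler-characteristic count) a destabilizing subobject of the same $\sigma_0$-phase; conversely a primitive negative class on an extremal ray carries a $\sigma_0$-semistable object by \cref{Thm:exist:nodal}, necessarily stable by the same extremality. Hence the $\sigma_0$-stable negative classes are precisely the primitive negative classes on the two extremal rays of $C_{\WW}$, of which there are $0$, $1$, or $2$, giving \cref{enum:nonegativeclasses,enum:OneNegative,enum:TwoNegative} respectively. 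Finally \cref{Lem:dimension negative} records that a spherical class supports the unique stable object $S$ while an exceptional class supports exactly $E$ and $E(K_X)$; tallying one or two objects per extremal ray according to type then reproduces every object count in the sub-cases of \cref{enum:OneNegative,enum:TwoNegative}.

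The step I expect to be hardest is showing that, in the non-isotropic case with negative classes, \emph{both} extremal rays of $C_{\WW}$ are negative, so that \cref{enum:TwoNegative} really yields two stable classes and not one. Here I would exploit the dynamics of the hyperbolic generator $g$ of $O^+(\HH)$: its two fixed rays are the irrational isotropic rays bounding the positive cone $P_{\HH}$, and for any negative class $\w$ the orbit $\{g^n\w\}$ accumulates onto \emph{both} of them as $n\to\pm\infty$. Since both isotropic rays pair positively with $\v$, the far-out orbit members are effective on each side of $\v$, so effective negative classes occur on both sides and bound $C_{\WW}$ there. Pinning down the two outermost such classes as the actual Harder--Narasimhan/Jordan--H\"{o}lder factors on the wall — the genuinely delicate point — is where I would lean on the totally-semistable-wall analysis of \cite[Section 6]{BM14a}, now allowing exceptional objects among the factors alongside spherical ones.
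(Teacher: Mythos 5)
Your lattice computations are sound (your ``perfect square'' checks for $m^2-1$, $m^2-4$, $m^2-2$ are exactly the paper's irrationality argument for non-isotropy in \cref{enum:TwoNegative}), and your extremality argument does handle \cref{enum:OneNegative} correctly. But there is a genuine gap at the step that carries the real content of \cref{enum:TwoNegative}: the claim that a $\sigma_0$-stable spherical or exceptional object must have class on an extremal ray of $C_\WW$. Your justification — that a decomposition $\v(A)=\a_1+\a_2$ into effective classes ``produces (via the existence of semistable objects of those classes and an Euler-characteristic count) a destabilizing subobject'' — does not work as stated. The existence of $\sigma_0$-semistable objects $A_1,A_2$ of classes $\a_1,\a_2$ gives no map between them and $A$; to extract a nonzero Hom you need a sign on an Euler pairing, and the relevant pairings here (e.g.\ $\chi(A_1,A)=-\a_1^2-\langle\a_1,\a_2\rangle$) have no definite sign in this generality. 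The paper's actual mechanism is different and is what makes the count work: if $S_1,S_2,S_3$ are $\sigma_0$-stable objects of the same phase, pairwise non-isomorphic even after twisting by $K_X$, then stability forces $\Hom(S_i,S_j)=\Hom(S_j,S_i(K_X))=0$, whence $\langle\v(S_i),\v(S_j)\rangle=\ext^1(S_i,S_j)\geq 0$; feeding these \emph{non-negativity} constraints into the signature $(1,-1)$ of $\HH$ (writing $\w_3=x\w_1+y\w_2$ and bounding $y/x$) forces $\w_3^2>0$, a contradiction. Without this Hom-vanishing-plus-signature argument (or an equivalent), your assertion ``stable negative classes are precisely the primitive negative classes on the two extremal rays'' — and hence the ``exactly two/four/one+two'' object counts — is unproven; note also that extremality of the stable classes is in the paper a \emph{consequence}, established only in Sections 7--8 after stable objects on the extremal rays are constructed, not an input.

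A secondary issue: your infinitude claim in the non-isotropic case uses an abstract hyperbolic element of $O^+(\HH)$ coming from Pell. For exceptional classes ($\w^2=-1$) this is fine, since every square $-1$ class is effective up to sign; but for spherical classes a class of square $-2$ is effective only if $c_1\equiv D\pmod 2$ for a nodal cycle $D$, and abstract lattice isometries do not preserve this condition. The paper avoids the problem by generating the orbit with the reflections $R_S$, $R_{E_0}$ attached to the actual stable objects: these are induced by autoequivalences of $\Db(X)$, so the orbit of the class of a spherical object consists of classes of spherical objects, which automatically satisfy the nodal condition. Your argument can be repaired the same way (or by observing that the Pell unit acts with finite order on $\HH/2\HH$, so infinitely many orbit classes are congruent to $\w$ mod $2$), but as written the step is incomplete.
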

\begin{proof}
Suppose that $\HH$ contains precisely one effective spherical (resp. exceptional) class $\w$.  Then by Theorem \ref{Thm:exist:nodal}, there exists a unique $\sigma_+$-stable object $S$ with $\v(S)=\w$ (resp. precisely two $\sigma_+$-stable objects $E$ and $E(K_X)$ with $\v(E)=\v(E(K_X))=\w$), which must then be spherical (resp. exceptional) by \cite[Lemma 4.3]{Yos16b}.  Suppose that $S$ (resp. $E$) is strictly $\sigma_0$-semistable.  Then by \cite[Lemma 4.6]{Yos16b} every $\sigma_0$-stable factor $F$ of $S$ (resp. $E\oplus E(K_X)$) must satisfy $\Ext^1(F,F)=0$.  But then $\v(F)^2<0$, so by \cite[Lemma 4.3]{Yos16b} $\v(F)^2=-1$ or $\v(F)^2=-2$ and $c_1(F)\equiv D\pmod 2$ for some nodal cylce $D$, i.e. $\v(F)$ is either an effective spherical or exceptional class.  But this is a contradiction to the assumption, so $S$ is (resp. $E,E(K_X)$ are) $\sigma_0$-stable, giving Case \ref{enum:OneNegative}.

It remains to consider Case \ref{enum:TwoNegative}.  Let $\phi$ be the phase of $\v$ with respect to $\sigma_0$.  It will suffice for our purposes to show that, up to twisting by $K_X$, there cannot be any combination of three stable spherical or exceptional objects $S_1,S_2,S_3$ in $\PP_0(\phi)$.  Since each $S_i$ is $\sigma_0$-stable of the same phase and distinct up to twisting by $K_X$, we must have $\Hom(S_i,S_j)=\Hom(S_j,S_i(K_X))=0$ for each $i\neq j$.  Thus if $\w_i=\v(S_i)$, then $\langle \w_i,\w_j\rangle=\ext^1(S_i,S_j)\geq 0$.

Now any two of the $\w_i$ must be linearly independent, and we may choose, say, $\w_1$ and $\w_2$ to represent either both spherical or both exceptional $\sigma_0$-stable objects.    Denote by $m:=\langle \w_1,\w_2\rangle\geq 0$.  Since $\HH$ has signature $(1,-1)$, $$\langle \w_1,\w_2\rangle^2> \w_1^2\w_2^2=\begin{cases}
1, & \text{ if }\w_1^2=\w_2^2=-1,\\
4, & \text{ if }\w_1^2=\w_2^2=-2.\\
\end{cases}$$
So $m\geq 2$ or $3$.  We write $\w_3=x\w_1+y\w_2$ with $x,y\in\Q$, and from $\langle \w_3,\w_1\rangle,\langle \w_3,\w_2\rangle\geq 0$, we get that \begin{equation}\label{eq:positivity}\begin{cases}
\frac{1}{m}\leq \frac{y}{x}\leq m, & \text{ if }\w_1^2=\w_2^2=-1,\\
\frac{2}{m}\leq \frac{y}{x}\leq \frac{m}{2}, & \text{ if }\w_1^2=\w_2^2=-2.\\  
\end{cases}\end{equation}    But then since $$m-\sqrt{m^2-1}\leq\frac{1}{m}\leq\frac{y}{x}\leq m\leq m+\sqrt{m^2-1}$$ for $m\geq 2$ and $$\frac{m-\sqrt{m^2-4}}{2}\leq\frac{2}{m}\leq\frac{y}{x}\leq\frac{m}{2}\leq\frac{m+\sqrt{m^2-4}}{2}$$ for $m\geq 3$, we see that $$\w_3^2=x^2\begin{cases}
-1+2m(\frac{y}{x})-(\frac{y}{x})^2, &\text{ if }\w_1^2=\w_2^2=-1,\\
-2+2m(\frac{y}{x})-2(\frac{y}{x})^2, &\text{ if }\w_1^2=\w_2^2=-2,
\end{cases}$$ must be positive, in contradiction to the fact that $\w_3^2=-1$ or $-2$.  Thus, we see that, up to tensoring by $K_X$, there can only be at most two $\sigma_0$-stable spherical or exceptional objects with Mukai vectors in $\HH$.  Notice further that if $\HH$ admits any combination of two linearly independent spherical or exceptional classes, then the group generated by the associated spherical and $(-1)$ reflections is infinite, so the orbit of a spherical or exceptional class gives infinitely many Mukai vectors of the same kind. 

Furthermore, we see that solving the quadratic equation $$0=\u^2=(x\w_1+y\w_2)^2$$ gives \begin{equation}\label{eq:isotropic solutions}\frac{y}{x}=\begin{cases}
m\pm\sqrt{m^2-1}, & \text{ if }\w_1^2=\w_2^2=-1,\\

\frac{m\pm\sqrt{m^2-4}}{2}, & \text{ if }\w_1^2=\w_2^2=-2,
\end{cases}\end{equation} which are irrational.  Thus in subcases \ref{enum:TwoSpherical} and \ref{enum:TwoExceptional}, there can be no isotropic classes, as these would give rational solutions in \eqref{eq:isotropic solutions}.

Finally, it only remains to show that in subcase \ref{enum:OneExceptionalOneSpherical} $\HH$ is non-isotropic.  Similar to the previous  subcases, we write an integral isotropic class $\u=x\w_1+y\w_2$ with $x,y\in\Q$, where $\w_1=\v(S)$ is a spherical class and $\w_2=\v(E)=\v(E(K_X))$ is an exceptional class.  Solving the quadratic equation $\u^2=0$ gives $$\frac{y}{x}=m\pm\sqrt{m^2-2},$$ where $m=\langle \w_1,\w_2\rangle$.  But $S$ and $E$ are $\sigma_0$-stable objects of the same phase with classes in a lattice of signature $(1,-1)$, so we must have $m\geq 2$, as in the arguments for the preceeding cases.  But this gives a contradiction as $m^2-2$ cannot be a square for $m\geq 2$.
\end{proof}

All of our main results are a consequence of the following classification theorem, which essentially says that the birational behavior induced by crossing $\WW$ is entirely determined by $\HH_\WW$ and $C_\WW$.
\begin{Thm}\label{classification of walls}
Let $\HH\subset \Hal(X,\Z)$ be a primitive hyperbolic rank two sublattice containing $\v$, and let $\WW\subset\Stabd(X)$ be a potential wall associated to $\HH$.  

The set $\WW$ is a totally semistable wall if and only if one of the following conditions hold:
\begin{description}
\item[(TSS1)] there exists a spherical or exceptional class $\w\in C_\WW\cap\HH$ such that $\langle \v,\w\rangle<0$; 
\item[(TSS2)] there exists an isotropic class $\u\in\HH$ with $\ell(\u)=2$ and $\langle \v,\u\rangle=1$; or
\item[(TSS3)] there exists a primitive isotropic class $\u\in\HH$ such that $\langle \v,\u\rangle=\ell(\u)$ and $\langle \v,\w\rangle=0$ for a spherical $\w\in C_\WW\cap\HH$; or
\item[(TSS4)] there exists a primitive isotropic class $\u\in\HH$ such that $\langle\v,\u\rangle=2=\ell(\u)$ and $\langle\v,\w\rangle=0$ for an exceptional $\w\in C_{\WW}\cap\HH$.
\end{description}
In addition,
\begin{enumerate}
\item\label{thm:Classification,Divisorial} The set $\WW$ is a wall inducing a divisorial contraction if one of the following conditions hold:
\begin{description*}
\item[(Brill-Noether)] there exists a spherical class $\w\in C_\WW\cap\HH$ such that $\langle \w,\v\rangle=0$, or there exists an exceptional class $\w\in C_\WW\cap\HH$ such that $\langle\w,\v\rangle=0$ and $\v-2\w\in\Delta(X)_{-2}\cap C_{\WW}\cap\HH$; or
\item[(Hilbert-Chow)] there exists an isotropic class $\u$ with $\langle \v,\u\rangle=1$ and $\ell(\u)=2$; or
\item[(Li-Gieseker-Uhlenbeck)] there exists a primitive isotropic class $\u\in\HH$ with $\langle \v,\u\rangle=2=\ell(\u)$; or
\item[(induced Li-Gieseker-Uhlenbeck)] there exists an isotropic class $\u\in\HH$ with $\langle \v,\u\rangle=1=\ell(\u)$ and $\v^2\geq 3$.
\end{description*} 
\item\label{thm:Classification,Fibration} The set $\WW$ is a wall inducing a $\P^1$-fibration on $M_{\sigma_+}(\v,L)$ if one of the following conditions hold:
\begin{description*}
\item[(Exceptional case)] there exists a primitive isotropic class $\u$ with $\langle \v,\u\rangle=2=\ell(\u)$, an exceptional class $\w\in C_\WW\cap\HH$ with $\langle \v,\w\rangle=0$, and $L\equiv K_X\pmod 2$, or
\item[(Spherical case)] there exists an isotropic class $\u$ with $\langle \v,\u\rangle=\ell(\u)$, a spherical class $\w\in C_\WW\cap\HH$ with $\langle \v,\w\rangle=0$, and $L\equiv D+\frac{\rk \v}{2}K_X\pmod 2$, where $D$ is a nodal cycle.
\end{description*}
\item\label{thm:Classification,Flops}
Otherwise, if $\v$ is primitive and either
\begin{enumerate}
\item \label{enum:sum2positive}
$\v^2\geq 3$ and $\v$ can be written as the sum 
$\v = \a_1 + \a_2$ with $\a_i\in P_\HH$ such that $L\equiv \frac{r}{2}K_X\pmod 2$ if for each $i$, $\a_i^2=0$ and $\ell(\a_i)=2$; or 
\item\label{enum:exceptional} there exists an exceptional class $\w\in\HH$ and either
\begin{enumerate}
\item\label{enum:exceptionalflop1}
$0< \langle  \w,\v\rangle\leq\frac{\v^2}{2}$, or
\item\label{enum:exceptionalflop2}
$\langle \v,\w\rangle=0$ and $\v^2\geq 3$; or
\end{enumerate}
\item\label{enum:spherical} there exists a spherical class $\w\in \pm (C_\WW\cap\HH)$ and either
\begin{enumerate}
\item\label{enum:sphericalflop1}
$0 < \langle \w, \v\rangle < \frac{\v^2}2$, or
\item\label{enum:sphericalflop2}
$\langle \w,\v\rangle=\frac{\v^2}{2}$ and $\v-\w$ is a spherical class,
\end{enumerate}
\end{enumerate}
then $\WW$ induces a small contraction.
\item In all other cases, $\WW$ is either a fake wall or not a wall at all.
\end{enumerate}

\end{Thm}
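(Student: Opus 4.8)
The plan is to adapt the proof of \cite[Theorem 5.7]{BM14a} to the Enriques setting, using the codimension estimate \cref{Prop:HN codim} as the engine and substituting the Enriques existence and dimension results for their K3 counterparts. For each way of writing $\v=\v_1+\cdots+\v_s$ as a sequence of $\sigma_-$-Harder--Narasimhan classes in $\HH$, the locus $\FF(\v_1,\dots,\v_s)^o\subset M_{\sigma_+}(\v,L)$ of objects destabilized across $\WW$ has codimension at least $\sum_i(\v_i^2-\dim\MM_{\sigma_-}(\v_i))+\sum_{i<j}\langle\v_i,\v_j\rangle$. By \cref{Thm:WallContraction} and the remark following it, the behavior of $\WW$ is governed by the minimal such codimension: a minimal codimension $\geq 2$ on a non-fake wall forces a small contraction, and codimension exactly $1$ forces a divisorial contraction. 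Whether $\WW$ is totally semistable, i.e.\ whether $M^s_{\sigma_0}(\v)=\varnothing$, is a separate axis that may coexist with a divisorial contraction---indeed the Hilbert--Chow condition coincides with (TSS2)---and which refines to a $\P^1$-fibration under the appropriate determinant congruence.

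First I would establish the totally semistable characterization. Here I would study the forced Jordan--H\"older structure of a generic $\sigma_0$-semistable object using \cref{Prop:lattice classification} to enumerate the spherical and exceptional classes in $\HH$, together with the existence criteria \cref{Thm:exist:nodal} and \cref{prop:isotropic}. Condition (TSS1) is the case of a spherical or exceptional $\w\in C_\WW\cap\HH$ with $\langle\v,\w\rangle<0$: the associated stable object $S$ (resp.\ the pair $E,E(K_X)$) is forced into the Jordan--H\"older filtration of every object of class $\v$, obstructing stability. Conditions (TSS2)--(TSS4) come from an isotropic class $\u$ with $\ell(\u)=2$, or from an isotropic class together with an orthogonal spherical or exceptional companion; the invariant $\ell(\u)$ controls $\dim\MM_{\sigma_-}(\u)$ through \cref{prop:isotropic}, and the numerology $\langle\v,\u\rangle\in\{1,2\}$ is exactly what forces the generic object to split.

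Next I would determine the contraction type by minimizing the destabilizing codimension. The key simplification is that a positive factor $\v_i$ with $\v_i^2>0$ contributes $0$ to the first sum, since $\dim\MM_{\sigma_-}(\v_i)=\v_i^2$ by \cref{prop:pss}, whereas spherical, exceptional, and isotropic factors contribute through \cref{Lem:dimension negative} and \cref{prop:isotropic}; balancing these against the cross-terms $\langle\v_i,\v_j\rangle$ pins down the codimension. The four divisorial cases are precisely the minimal decompositions yielding codimension $1$, and I would identify each by solving its defining numerical equation in $\HH$---a spherical $\w$ with $\langle\w,\v\rangle=0$ for Brill--Noether, an isotropic $\u$ with $\langle\v,\u\rangle=1$ and $\ell(\u)=2$ for Hilbert--Chow, and $\langle\v,\u\rangle=2=\ell(\u)$ for the two Li--Gieseker--Uhlenbeck variants. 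The two $\P^1$-fibration cases refine (TSS3) and (TSS4) by the stated congruence on $L$; these are the two uniruled moduli spaces discussed in the introduction and the appendix, and I would verify the fibration structure on $M_{\sigma_+}(\v,L)$ directly. Finally, when $\v$ is primitive and none of the above hold, the surviving decompositions---positive splittings $\v=\a_1+\a_2$, exceptional classes with $0<\langle\w,\v\rangle\leq\v^2/2$, and spherical classes with $0<\langle\w,\v\rangle<\v^2/2$ or the boundary spherical case---all give codimension $\geq 2$ and hence a flop, while every remaining wall contracts no curve and is fake or not a wall.

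The hardest part will be the accounting at the codimension-$1$ versus codimension-$2$ boundary, where the Enriques-specific phenomena concentrate. The determinant congruences $L\equiv\cdots\pmod 2$, which separate the components $M_\sigma(\v,L)$ and $M_\sigma(\v,L+K_X)$ and are invisible on the cover $\widetilde{X}$, must be propagated through every decomposition; and the exceptional classes, whose moduli consist of the two points $E,E(K_X)$ and whose reflection \eqref{eqn:weakly spherical reflection} differs from an honest spherical twist, have no analogue in \cite{BM14a}. In particular, justifying the extra hypothesis $\v-2\w\in\Delta(X)_{-2}\cap C_\WW\cap\HH$ in the exceptional Brill--Noether case, and pinning down the precise congruences in the fibration cases, will require careful use of \cref{prop:isotropic} and \cref{Lem:dimension negative} rather than a direct transcription of the K3 arguments.
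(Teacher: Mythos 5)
Your overall engine (the Harder--Narasimhan codimension estimate of \cref{Prop:HN codim}, organized by the lattice types of \cref{Prop:lattice classification}) is indeed the paper's engine, but the logical rule you build on top of it is false on an Enriques surface, and this is precisely where the Enriques classification diverges from the K3 case of \cite{BM14a,BM14b}. You claim that on a non-fake wall, minimal destabilizing codimension $\geq 2$ forces a small contraction while codimension exactly $1$ forces a divisorial contraction. The second half of this dichotomy fails: by \cref{Thm:WallContraction}, $\pi^+$ contracts a curve if and only if the curve parametrizes objects that are S-equivalent with respect to $\sigma_0$, so a codimension-one strictly semistable locus is contracted only when it is swept out by positive-dimensional families of extensions within a single S-equivalence class, i.e.\ only when the relevant $\Ext^1$ groups between Jordan--H\"older factors have dimension at least $2$. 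Because exceptional classes have odd square, on an Enriques surface one frequently has $\ext^1(F,T)=\langle\v-\w,\w\rangle=1$; the extension is then unique in its S-equivalence class and the divisor contains no contracted curves at all. Concretely, if $\w\in C_\WW\cap\HH$ is exceptional with $\langle\v,\w\rangle=0$ and $\v^2\geq 3$, the strictly $\sigma_0$-semistable locus is a divisor, yet $\WW$ induces a \emph{small} contraction, contracting the $\P^1\times\P^1$'s built simultaneously from $\Ext^1(T,F)$ and $\Ext^1(T(K_X),F)$: this is \cref{Lem:ExceptionalDivisorialNonContraction} combined with \cref{enum-prop:exceptionalflop2} of \cref{prop:flops}, i.e.\ exactly the flopping case \ref{enum:exceptionalflop2} of the statement you are proving. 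The same phenomenon pervades the isotropic analysis (\cref{Lem:isotropic totally semistable divisorial contraction l=1,Lem:isotropic divisorial l=1 1,Lem:isotropic divisorial 1-1 2,Lem:IsotropicNoncontractedDivisor l=2}), where codimension-one loci are or are not contracted depending on the determinant $L$ and on whether the orthogonal negative class is spherical or exceptional. Your rule would misclassify all of these as divisorial walls, and it can never produce the extra hypotheses that the theorem actually carries --- the condition $\v-2\w\in\Delta(X)_{-2}\cap C_\WW\cap\HH$ in the exceptional Brill--Noether case, or the congruences on $L$ in the fibration cases --- because those conditions come precisely from the $\ext^1$ counts and explicit extension constructions that your outline replaces by a codimension count.

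There is a second, structural gap: the codimension machinery only applies when $\v$ is minimal in its $G_\HH$-orbit, i.e.\ when $\langle\v,\w\rangle\geq 0$ for all effective spherical and exceptional classes; otherwise the wall is totally semistable by \cref{Lem: condition for totally semistable wall} (this is (TSS1)), and \cref{Prop:HN codim} by itself says nothing about which contraction $\pi^\pm$ then induces. The paper spends most of \cref{Sec:TotallySemistable-non-isotropic} and \cref{subsec:non-minimal case} building the reduction to the minimal case: the Pell-equation description of the $G_\HH$-orbit, the tilted hearts $\AA_i$ and $\AA_i^*$, and the proofs that compositions of spherical and weakly spherical twists induce isomorphisms of moduli spaces which preserve S-equivalence (\cref{Prop:NonMinimalIsomorphism,Prop:CompositionSphericalExceptional}), which is what transports contraction types from the minimal class $\v_0$ back to $\v$. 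Your proposal mentions these reflections only as a difficulty to be flagged, not as a step of the argument; without them your proof would establish the classification only for minimal $\v$, and in particular could not determine the contraction type of any wall satisfying (TSS1), nor complete the converse directions of the divisorial and flopping criteria for non-minimal classes.
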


The proof of the above theorem will occupy us for the next four sections, but before we enter into a more involved and lattice specific analysis of the wall-crossing behavior, we present a general result on the codimension of the strictly $\sigma_0$-semistable locus corresponding to the simplest Harder-Narasimhan filtration as above:
\begin{Prop}\label{Prop:HN filtration all positive classes}
As above, let $\FF(\a_1,\dots,\a_n)^o$ be the substack of $\MM_{\sigma_+}(\v)$ parametrizing objects with $\sigma_-$ Harder-Narasimhan filtration factors of classes $\a_1,\dots,\a_n$ (in order of descending phase with respect to $\phi_{\sigma_-}$), and suppose that $\a_i^2>0$ for all $i$.  Then $\codim\FF(\a_1,\dots,\a_n)^o\geq 2$. \end{Prop}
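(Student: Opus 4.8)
The plan is to feed the Harder--Narasimhan data directly into the codimension estimate of \cref{Prop:HN codim} and then extract a purely lattice-theoretic lower bound. Applying that proposition to the factors $\a_1,\dots,\a_n$ gives
\[
\codim\FF(\a_1,\dots,\a_n)^o\geq\sum_{i=1}^n\bigl(\a_i^2-\dim\MM_{\sigma_-}(\a_i)\bigr)+\sum_{i<j}\langle\a_i,\a_j\rangle.
\]
First I would dispose of the single-factor sum: since $\a_i^2>0$, \cref{prop:pss} yields $\dim\MM_{\sigma_-}(\a_i)=\a_i^2$, so each summand $\a_i^2-\dim\MM_{\sigma_-}(\a_i)$ vanishes and the bound collapses to $\codim\FF(\a_1,\dots,\a_n)^o\geq\sum_{i<j}\langle\a_i,\a_j\rangle$. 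It therefore suffices to prove $\sum_{i<j}\langle\a_i,\a_j\rangle\geq 2$, the case $n=1$ being vacuous since there is then no destabilization.

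The heart of the argument is to bound each cross term $\langle\a_i,\a_j\rangle$ from below using the geometry of $\HH_\WW$. By \cref{hyperbolic} every $\a_i$ lies in $\HH_\WW$, a rank-two lattice of signature $(1,-1)$. Moreover each $\a_i$ is effective, i.e.\ $\a_i\in C_\WW$: indeed $\a_i^2>0\geq-1$, and since the $\sigma_-$-phases of the Harder--Narasimhan factors converge to the phase of $\v$ as $\sigma_-\to\sigma_0$, one has $\Re\frac{Z_{\sigma_0}(\a_i)}{Z_{\sigma_0}(\v)}>0$. Because $\a_i^2>0$ and $C_\WW$ is a pointed cone containing $\v$, I would then argue that all the $\a_i$ lie in the single component $P_\HH$ of the positive cone of $\HH_\WW$ that contains $\v$: if some $\a_i$ lay in the opposite component, then $-\a_i\in P_\HH\subseteq C_\WW$ while also $\a_i\in C_\WW$, contradicting the fact that $C_\WW$ contains no line through the origin.

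With all factors in one component of the positive cone, the reverse Cauchy--Schwarz inequality in the hyperbolic plane $\HH_{\WW,\R}$ gives $\langle\a_i,\a_j\rangle^2\geq\a_i^2\a_j^2$, with equality precisely when $\a_i$ and $\a_j$ are proportional. Since $\a_1,\dots,\a_n$ are the factors of a Harder--Narasimhan filtration, their $\sigma_-$-phases are strictly decreasing, so for $i\neq j$ the classes $\a_i,\a_j$ are non-proportional and the inequality is strict: $\langle\a_i,\a_j\rangle>\sqrt{\a_i^2\a_j^2}\geq 1$. As $\langle\a_i,\a_j\rangle$ is an integer, this forces $\langle\a_i,\a_j\rangle\geq 2$, and summing over the nonempty set of pairs (here $n\geq 2$) yields $\sum_{i<j}\langle\a_i,\a_j\rangle\geq 2$, which completes the proof.

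I expect the only genuinely delicate point to be the claim that every $\a_i$ lies in the same component $P_\HH$ of the positive cone---that is, that a positive-norm effective class in $C_\WW$ cannot sit on the far side of $\v$. Everything else (the vanishing of the single-factor sum, the reverse Cauchy--Schwarz bound, and the strictness coming from the distinct $\sigma_-$-phases) is routine once the cone statement is in place.
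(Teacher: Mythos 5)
Your proposal is correct and follows essentially the same route as the paper's proof: apply \cref{Prop:HN codim}, kill the diagonal terms via $\dim\MM_{\sigma_-}(\a_i)=\a_i^2$, and bound each cross term by $\langle\a_i,\a_j\rangle\geq 2$ using the signature $(1,-1)$ of $\HH_\WW$. The only difference is that you spell out two points the paper leaves implicit---that all $\a_i$ lie in the same component $P_\HH$ of the positive cone (so the pairings are positive) and that non-proportionality, forced by the distinct $\sigma_-$-phases, makes the reverse Cauchy--Schwarz inequality strict---which is a faithful filling-in of the same argument rather than a new one.
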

\begin{proof}
By Theorem \ref{Thm:exist:nodal}, the assumption that $\a_i^2>0$ implies that $\dim\MM_{\sigma_-}(\a_i)=\a_i^2$.  Thus by Proposition \ref{Prop:HN codim}, $$\codim\FF(\a_1,\dots,\a_n)^o\geq\sum_{i<j}\langle \a_i,\a_j\rangle.$$  But as $\a_i^2\geq 1$ and $\HH$ has signature $(1,-1)$, we must have $$\langle \a_i,\a_j\rangle>\sqrt{\a_i^2 \a_j^2}\geq 1,$$ for $i<j$.  Thus $\langle \a_i,\a_j\rangle\geq 2$.  It follows that $$\codim\FF(\a_1,\dots,\a_n)^o\geq n(n-1)\geq 2,$$ as $n\geq 2$.
\end{proof}
It follows from the proposition that in order for there to be more interesting wall-crossing behavior, $\HH$ must contain some class $\w$ with $\w^2\leq 0$.  We will begin with the non-isotropic case first in the next section.

\section{Totally semistable non-isotropic walls}\label{Sec:TotallySemistable-non-isotropic}
In this section we describe the criterion for a potential non-isotropic wall $\WW$ to be totally semistable, that is, every $E\in M_{\sigma_+}(\v)$ is strictly $\sigma_0$-semistable.  We will see that by applying an appropriate sequence of spherical or weakly spherical twists, we can always reduce to the case of a non-totally semistable wall to study the birational behavior of crossing $\WW$.  Let us begin with a sufficient condition for $\WW$ to be totally semistable in general.  

\begin{Lem}\label{Lem: condition for totally semistable wall}
Let $\WW$ be a potential wall such that $\langle \v,\w\rangle<0$ for an effective spherical or exceptional class $\w\in\HH_{\WW}$.  Then $\WW$ is totally semistable.
\end{Lem}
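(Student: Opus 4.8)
The plan is to prove the stronger statement that $M^s_{\sigma_0}(\v)=\varnothing$: I will show that an arbitrary $\sigma_0$-semistable object $F$ of class $\v$ is necessarily strictly $\sigma_0$-semistable. Write $\sigma_0=(Z,\PP_0)$ and let $\phi$ be the phase of $\v$, so $F\in\PP_0(\phi)$. The first step is to replace the hypothesis on $\w$ by a genuine $\sigma_0$-stable destabilizer. Since $\w\in C_\WW\cap\HH_\WW$ is effective, the description of the effective cone $C_\WW$ provides a $\sigma_0$-semistable object $W$ of class $\w$; as $\w\in\HH_\WW$ forces $Z(\w)$ to be a real multiple of $Z(\v)$ and $\w\in C_\WW$ makes it a \emph{positive} multiple, we have $W\in\PP_0(\phi)$. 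Let $S_1,\dots,S_k$ be the $\sigma_0$-stable Jordan--H\"{o}lder factors of $W$ (all of phase $\phi$), so that $\w=\sum_i\v(S_i)$ in $\HH_\WW$. Because $\langle\v,-\rangle$ is additive, $\sum_i\langle\v,\v(S_i)\rangle=\langle\v,\w\rangle<0$, so at least one factor $S:=S_j$ satisfies $\langle\v,\v(S)\rangle<0$; moreover $\v(S)\neq\v$ since $\langle\v,\v(S)\rangle<0<\v^2$. (By \cite[Lemma 4.6]{Yos16b} each $S_i$ is in fact rigid, hence spherical or exceptional, which matches the statement, but this is not needed below.)

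With $S$ in hand I would run the usual Euler-characteristic argument. From $\langle\v(F),\v(S)\rangle=-\chi(F,S)$ and $\langle\v,\v(S)\rangle<0$ we get $\chi(F,S)>0$. Expanding $\chi(F,S)=\hom(F,S)-\ext^1(F,S)+\ext^2(F,S)$ and applying Serre duality on $X$, which gives $\ext^2(F,S)=\hom(S,F(K_X))$, yields
\[
\hom(F,S)+\hom(S,F(K_X))\ \geq\ \chi(F,S)\ >\ 0 .
\]
Hence at least one of the two Hom-spaces is nonzero. If $\hom(F,S)\neq0$, a nonzero map $F\to S$ in the abelian category $\PP_0(\phi)$ has image equal to the simple (i.e.\ $\sigma_0$-stable) object $S$, so $S$ is a quotient of $F$; since $\v(S)\neq\v$ this is a proper quotient and $F$ is strictly $\sigma_0$-semistable. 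If instead $\hom(S,F(K_X))\neq0$, I would rewrite it, using $2K_X=0$, as $\hom(S(K_X),F)\neq0$; then $S(K_X)$ is a simple object of $\PP_0(\phi)$ with $\v(S(K_X))=\v(S)\neq\v$, and the corresponding map realizes $S(K_X)$ as a proper subobject of $F$, again forcing $F$ to be strictly semistable. As $F$ was arbitrary, $M^s_{\sigma_0}(\v)=\varnothing$ and $\WW$ is totally semistable.

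The step that needs the most care is the $K_X$-twist produced by Serre duality. On a K3 surface the term $\ext^2(F,S)$ equals $\hom(S,F)$ and immediately gives $S$ as a subobject of $F$; on an Enriques surface it only gives $\hom(S,F(K_X))$, which a priori relates $S$ to the twisted object $F(K_X)$ rather than to $F$. The resolution I would use is that $-\otimes\OO_X(K_X)$ is an autoequivalence of $\Db(X)$ acting trivially on $\Hal(X,\Z)$; since its action is numerical it preserves the heart and central charge of every geometric stability condition, so it restricts to the identity on the open set $U(X)$, and being a deck transformation of the covering map $\ZZ$ it must then fix $\sigma_0$ itself. Consequently $S(K_X)$ is again $\sigma_0$-stable of phase $\phi$, and together with $2K_X=0$ this converts the twisted Hom-space into an honest destabilizing subobject of $F$. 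A minor secondary point is ensuring the destabilizer $S$ is genuinely $\sigma_0$-stable with $\langle\v,\v(S)\rangle<0$, which is handled by passing to a Jordan--H\"{o}lder factor of $W$ as above.
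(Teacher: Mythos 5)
Your proof is correct and is essentially the paper's argument read contrapositively: the paper assumes a $\sigma_0$-stable object $E$ of class $\v$ exists, passes to a Jordan--H\"{o}lder factor $E_0$ of a semistable object of class $\w$ with $\langle\v,\v(E_0)\rangle<0$, uses stability to force $\Hom(E,E_0)=\Hom(E_0(K_X),E)=0$, and derives the contradiction $0>\langle\v,\v(E_0)\rangle=\ext^1(E,E_0)\geq 0$, whereas you run the identical Serre-duality/Mukai-pairing computation forward to produce a destabilizing sub- or quotient object of any semistable $F$. Your explicit justification that $-\otimes\OO_X(K_X)$ fixes $\sigma_0$ (via the deck-transformation argument) is a fact the paper's proof uses implicitly when asserting $E_0(K_X)$ is $\sigma_0$-stable of the same phase, so the two proofs share all essential ingredients.
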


\begin{proof}
Suppose there were a $\sigma_0$-stable object $E$ of class $\v$.  Let $\tilde{E_0}$ be a $\sigma_0$-semistable object $\tilde{E_0}$ with $\v(\tilde{E_0})=\w$.  As all stable factors of $\tilde{E_0}$ are spherical or exceptional \cite[Lemma 4.3, Lemma 4.6]{Yos16b}, we may find a $\sigma_0$-stable object $E_0$ such that $\langle \v,\v(E_0)\rangle<0$ and $\v(E_0)^2=-1$ or $-2$.  As $E$ and $E_0$ (resp. $E$ and $E_0(K_X)$) are non-isomorphic $\sigma_0$-stable objects of the same phase, we must have $\Hom(E,E_0)=\Hom(E_0(K_X),E)=0$.  But then $0>\langle \v,\v(E_0)\rangle=\ext^1(E,E_0)\geq 0$, a contradiction.
\end{proof}

For a non-isotropic wall, the condition in Lemma \ref{Lem: condition for totally semistable wall} is actually necessary as we see now.

\begin{Lem}\label{Lem:non-isotropic no totally semistable wall}Suppose that $\HH$ is non-isotropic and $\langle \v,\w\rangle\geq 0$ for all spherical or exceptional classes $\w\in C_\WW\cap\HH$.  Then $\WW$ cannot be a totally semistable wall, and if $\codim(M_{\sigma_+}(\v)\backslash M^s_{\sigma_0}(\v))=1$ then $\langle \v,\w\rangle=0$ for some spherical or exceptional class $\w\in C_\WW\cap\HH$ or $\langle\v,\w\rangle=1=\langle\v,\v-\w\rangle$ for spherical classes $\w,\v-\w\in C_\WW\cap\HH$.  Moreover, the generic member of $M_{\sigma_+}(\v)\backslash M^s_{\sigma_0}(\v)$ has HN-filtration factors of classes $\w$ and $\v-\w$ with respect to $\sigma_-$-stability.
\end{Lem}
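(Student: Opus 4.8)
The plan is to reduce the statement to a combinatorial estimate inside the rank-two lattice $\HH=\HH_\WW$ and then run a short case analysis. By \cref{hyperbolic}, for $\sigma_\pm$ sufficiently close to $\sigma_0$ the HN factors with respect to $\sigma_-$ of any $E\in M_{\sigma_+}(\v)$ have Mukai vectors $\v_1,\dots,\v_s\in\HH$ summing to $\v$, and since $\HH$ is non-isotropic each $\v_i$ is either \emph{positive} ($\v_i^2>0$) or, by \cref{Prop:lattice classification}, a positive multiple $m_i\w_i$ of an effective spherical or exceptional class $\w_i\in C_\WW\cap\HH$. Thus the destabilized locus $M_{\sigma_+}(\v)\setminus M^s_{\sigma_0}(\v)$ is the union, over the finitely many such decompositions (finiteness of $C_\WW\cap(\v-C_\WW)\cap\HH$ was noted above), of the substacks $\FF(\v_1,\dots,\v_s)^o$, and by \cref{Prop:HN codim},
\[
\codim\FF(\v_1,\dots,\v_s)^o\ \geq\ \sum_{i<j}\langle\v_i,\v_j\rangle-\sum_{i=1}^s e_i,\qquad e_i:=\dim\MM_{\sigma_-}(\v_i)-\v_i^2\geq 0 .
\]
Plugging in \cref{prop:pss} and \cref{Lem:dimension negative}, the defect $e_i$ vanishes when $\v_i$ is positive or a single exceptional class, equals $m_i^2$ when $\v_i=m_i\w_i$ is spherical, and equals $\lfloor m_i^2/2\rfloor$ when $\v_i=m_i\w_i$ is exceptional. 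In particular, if every $\v_i$ is positive then \cref{Prop:HN filtration all positive classes} already gives $\codim\geq 2$, so any low-codimension stratum must involve a spherical or exceptional factor.

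The hypothesis enters as follows: for a negative factor $\v_k=m_k\w_k$ the assumption $\langle\v,\w_k\rangle\geq0$ gives $\langle\v,\v_k\rangle\geq0$, i.e. $\sum_{i\neq k}\langle\v_i,\v_k\rangle\geq -\v_k^2=|\v_k^2|$, which is exactly the positivity needed to compensate the defect $e_k\leq\tfrac12|\v_k^2|$. I would first treat the two-factor decompositions $\v=\v_1+\v_2$ directly. When the negative factor $\v_2$ is a spherical or exceptional class of multiplicity one one finds $\codim\geq 1+\langle\v,\w\rangle$, while when both factors are negative one finds $\codim\geq\langle\w_1,\w_2\rangle-2$, $\langle\w_1,\w_2\rangle-1$, or $\langle\w_1,\w_2\rangle$ according as the pair is spherical--spherical, spherical--exceptional, or exceptional--exceptional, and higher multiplicities only increase these bounds. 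Combining these with the strict signature inequality $\langle\w_i,\w_j\rangle^2>\w_i^2\w_j^2$ (whence $\langle\w_i,\w_j\rangle\geq 3,2,2$ in the three cases) and with $\langle\v,\w\rangle\geq0$, every two-factor stratum has $\codim\geq1$, with equality exactly when either $\langle\v,\w\rangle=0$ for a spherical or exceptional $\w\in C_\WW\cap\HH$, or $\v=\w_1+\w_2$ with $\w_1,\w_2$ spherical and $\langle\w_1,\w_2\rangle=3$; the latter is precisely $\langle\v,\w_1\rangle=1=\langle\v,\w_2\rangle$ with $\w_1,\v-\w_1$ spherical. These are the two alternatives in the statement, and in each the generic object of the stratum has exactly the two HN factors $\w$ and $\v-\w$.

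It remains to show that decompositions with $s\geq 3$ contribute in codimension $\geq 2$, so that they never form the generic part of the destabilized locus. Here I would merge the last two factors: comparing $\FF(\v_1,\dots,\v_{s-1},\v_s)^o$ with $\FF(\v_1,\dots,\v_{s-2},\v_{s-1}+\v_s)^o$, the codimension formula shows the difference equals $\langle\v_{s-1},\v_s\rangle-e_{s-1}-e_s+e_{s-1,s}$, which is the codimension of the two-term splitting of $\v_{s-1}+\v_s$ and is therefore non-negative, and which I expect to be at least $1$ whenever that splitting genuinely occurs. Downward induction would then reduce any $s\geq3$ stratum to a two-factor one of strictly larger (by at least $s-2$) codimension. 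Granting this, $\codim\geq1$ for every nontrivial stratum; since codimension is a non-negative integer and the destabilized locus is a \emph{finite} union of such strata, $M^s_{\sigma_0}(\v)\neq\varnothing$ and $\WW$ is not totally semistable, while $\codim\big(M_{\sigma_+}(\v)\setminus M^s_{\sigma_0}(\v)\big)=1$ forces a two-factor stratum of one of the two special forms, whose generic member has HN factors $\w$ and $\v-\w$.

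The main obstacle is precisely the $s\geq3$ reduction. The naive aggregate estimate, which bounds each defect by $e_k\leq\tfrac12|\v_k^2|\leq\tfrac12\sum_{i\neq k}\langle\v_i,\v_k\rangle$ and sums over $k$, loses exactly the slack coming from the strict signature inequality between distinct spherical/exceptional rays; this degeneration is already visible for $\v=\w_1+\w_2$ with two spherical classes, where the aggregate bound collapses to $\codim\geq0$ even though the true value is $1$. One must therefore keep the negative--negative pairings explicit and verify that the merged class $\v_{s-1}+\v_s$ retains enough positivity for the two-factor estimate to apply with its full strength (i.e. that the dropped codimension is genuinely $\geq1$). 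This is where the interplay of the hypothesis $\langle\v,\w\rangle\geq0$, the exact stacky dimensions of \cref{Lem:dimension negative}, and the signature of $\HH$ has to be controlled simultaneously rather than in aggregate.
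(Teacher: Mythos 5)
Your two-factor analysis is essentially sound: the defect bookkeeping against \cref{prop:pss} and \cref{Lem:dimension negative} is correct, the bounds $1+\langle\v,\w\rangle$ and $\langle\w_1,\w_2\rangle-2$, $\langle\w_1,\w_2\rangle-1$, $\langle\w_1,\w_2\rangle$ check out, and the equality cases you extract match the statement (though "higher multiplicities only increase these bounds" is asserted rather than proved, and genuinely needs the hypothesis $\langle\v,\w_i\rangle\geq0$ to constrain the ratio $m_1/m_2$ — without it, $3m_1m_2-m_1^2-m_2^2$ is negative already for $(m_1,m_2)=(1,3)$). The genuine gap is exactly where you flag it: the reduction of $s\geq3$ strata, and your proposed merging argument does not close it. Under merging, the change in the codimension formula is $\langle\v_{s-1},\v_s\rangle+e'-e_{s-1}-e_s$, where $e'$ is the defect of $\v':=\v_{s-1}+\v_s$. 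Your justification that this is non-negative ("it is the codimension of the two-term splitting of $\v'$") fails on two counts. First, that quantity is not the codimension of a substack of $\MM_{\sigma_-}(\v')$: the stack $\FF(\v_{s-1},\v_s)^*$ maps to $\sigma_0$-semistable objects, not $\sigma_-$-semistable ones, so no containment forces positivity; moreover \cref{Thm:DimensionOfHNFiltrationStack} only computes dimensions of decompositions arising as actual HN filtrations, which the merged sequence need not be. Second, and decisively, your "codim $\geq 1$" for two-factor strata was proved only under the standing hypothesis on the \emph{ambient} class, and $\v'$ need not inherit it from $\v$. Concretely, take spherical $\w_1,\w_2$ with $\langle\w_1,\w_2\rangle=3$, $\v_{s-1}=\w_1$, $\v_s=3\w_2$: then $\v'=\w_1+3\w_2$ satisfies $\v'^2=-2$ and $\langle\v',\w_2\rangle=-3<0$ (so $\WW$ is totally semistable for $\v'$ by \cref{Lem: condition for totally semistable wall}), and if $\v'$ is effective spherical the difference equals $9+1-1-9=0$. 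So the inductive step "strictly larger by at least $s-2$" is false, and the codimension-one classification cannot be completed this way.

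The paper's proof closes this gap with a different grouping, and it is worth internalizing why it works where pairwise merging cannot. Instead of merging two consecutive factors, group \emph{all} positive factors into $\a=\sum_{i\in I}\a_i$ and set $\b=\v-\a$, $k=\v^2-\a^2$. The hypothesis is used exactly once, applied to $\v$ itself: since $\b$ is a non-negative combination of effective spherical/exceptional classes, $\langle\v,\b\rangle\geq0$, whence $\a^2\leq\v^2-1$, i.e.\ $k\geq 1$ (and $k\geq5$ if $\b^2>0$). Then the identity $k=\sum_{i\in I^c}\a_i^2+2\sum_{i<j,\,(i,j)\in(I^2)^c}\langle\a_i,\a_j\rangle$ converts the bound of \cref{Prop:HN codim} into
\[
\codim\FF(\a_1,\dots,\a_n)^o\ \geq\ \frac{k}{2}+\sum_{i\in I^c}\Bigl(\frac{\a_i^2}{2}-\dim\MM_{\sigma_-}(\a_i)\Bigr)+\sum_{i<j,\,(i,j)\in I^2}\langle\a_i,\a_j\rangle ,
\]
and the half-square normalization makes every defect term non-negative ($0$ for spherical multiples and even exceptional multiples, $\tfrac12$ for odd exceptional multiples). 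All negative--negative and mixed pairings are absorbed into $k$ and never need individual lower bounds — which is precisely the information your per-pair signature slack cannot propagate through an induction on $s$. This yields $\codim\geq\tfrac{k}{2}+|I|(|I|-1)>0$ uniformly in the number of factors, and the codimension-one cases then drop out of the same formula ($|I|\leq1$, $k\leq2$) by the short lattice computations that your two-factor analysis essentially reproduces.
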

\begin{proof}
Consider the stack $\FF(\a_1,\dots,\a_n)^o$ of Harder-Narasimhan filtrations with respect to $\sigma_-$-stability as in Section \ref{sec:DimensionsOfHarderNarasimhan}.  We wish to estimate the codimension of $\FF(\a_1,\dots,\a_n)^o$.    

Suppose that $I=\Set{i\ |\ \a_i^2>0}\neq\varnothing$ and let $\a:=\sum_{i\in I}\a_i$.  Write $\b:=\v-\a$.  If $\b^2>0$, then we automatically have $\langle \a,\b\rangle>\sqrt{\a^2 \b^2}\geq 1$, so $\v^2=\a^2+2\langle \a,\b\rangle+\b^2\geq \a^2+5$.  If, instead $\b^2<0$, then note that $\b$ is the sum of effective spherical and/or exceptional classes.  Indeed, for $i\notin I$, $\a_i^2<0$ and $M_{\sigma_-}(\a_i)\ne\varnothing$, so $\a_i$ is the sum of effective spherical and/or exceptional classes by \cite[Lemma 4.3, Lemma 4.6]{Yos16b}, so the same holds for $\b=\sum_{i\notin I}\a_i$.  By the assumption that $\v$ pairs non-negatively with all effective spherical and exceptional classes, we see that $\langle\v,\b\rangle\geq 0$, so $\a^2=\v^2-2\langle \v,\b\rangle+\b^2\leq \v^2-1$.  

In any case we have $\a^2<\v^2$, and we write $\a^2=\v^2-k$ with $k\in\Z_{\geq 0}$ and $k>0$ if $\v\neq \a$.  Expanding the squares on each side gives $$0=\sum_{i\in I^c}\a_i^2+2\sum_{i<j,(i,j)\in (I^2)^c}\langle \a_i,\a_j\rangle-k,$$ and rearranging gives \begin{equation}\label{non-isotropic estimate}
\sum_{i<j,(i,j)\in (I^2)^c}\langle \a_i,\a_j\rangle =\frac{k}{2} -\frac{1}{2}\sum_{i\in I^c}\a_i^2.\end{equation}

By Proposition \ref{Prop:HN codim},
we get that \begin{align}\label{codim estimate}
\begin{split}\codim\FF(\a_1,\dots,\a_n)^o&\geq\sum_i (\a_i^2-\dim\MM_{\sigma_-}(\a_i))+\sum_{i<j}\langle \a_i,\a_j\rangle\\
&=\sum_{i\in I^c}(\a_i^2-\dim\MM_{\sigma_-}(\a_i))+\sum_{i<j,(i,j)\in I^2}\langle \a_i,\a_j\rangle+\sum_{i<j,(i,j)\in (I^2)^c}\langle \a_i,\a_j\rangle, 
\end{split}
\end{align}
since $\dim\MM_{\sigma_-}(\a_i)=\a_i^2$ for $i\in I$.  Using \eqref{non-isotropic estimate} and writing $\a_i=m_i \w_i$ with $\w_i$ primitive for $i\in I^c$, we have
\begin{align}\label{eqn: non-isotropic codimension}
\begin{split}
\codim\FF(\a_1,\dots,\a_n)^o&\geq\frac{k}{2}+\sum_{i\in I^c}\left(\frac{\a_i^2}{2}-\dim\MM_{\sigma_-}(\a_i)\right)+\sum_{i<j,(i,j)\in I^2}\langle \a_i,\a_j\rangle\\
&=\frac{k}{2}+\sum_{i\in I^c:\w_i^2\equiv m_i\equiv 1\pmod 2}\frac{1}{2}+\sum_{i<j,(i,j)\in I^2}\langle \a_i,\a_j\rangle,
\end{split}
\end{align}
where the final equality follows from Lemma \ref{Lem:dimension negative}.  Moreover, for $(i,j)\in I^2$ with $i\neq j$, the signature of $\HH$ forces $\langle \a_i,\a_j\rangle\geq 2$.  Thus $$\codim\FF(\a_1,\dots,\a_n)^o \geq \frac{k}{2}+|I|(|I|-1)>0.$$  As this holds true for all possible HN-filtrations of objects in $M_{\sigma_+}(\v)$ with respect to $\sigma_-$-stability, $\WW$ cannot be totally semistable.  Moreover, note that if $|I|=0$, so that $\a=0$, the estimate in \eqref{eqn: non-isotropic codimension} is still valid.  In that case, we must have $k=\v^2>0$ from which we see that $\codim\FF(\a_1,\dots,\a_n)^o >0$.

For the second and third claim, we note that if $\codim\FF(\a_1,\dots,\a_n)^o=1$, then from \eqref{eqn: non-isotropic codimension} we see that $|I|=0,1$ and $k=1,2$.  If $|I|=1$, then $\b=\v-\a$ must satisfy $\b^2<0$ (otherwise $k\geq 5$ as we saw above), and solving for $\langle \v,\b\rangle$ in $\v^2-k=(\v-\b)^2$ gives $$0\leq 2\langle \v,\b\rangle=\b^2+k\leq \b^2+2<2,$$ so $\langle \v,\b\rangle=0$ and $\b^2=-k$.  Thus $\b$ is an effective spherical or exceptional class orthogonal to $\v$.  Moreover, as $\b=\sum_{i\notin I}\a_i$, it follows from the assumption on $\v$ and $\langle\b,\v\rangle=0$ that $n=2$ and $\b=\a_1$ or $\a_2$. Letting $\w=\b$, we get the claim about the HN-filtration of the generic member of $M_{\sigma_+}(\v)\backslash M_{\sigma_0}^s(\v)$.

If instead $|I|=0$, then $k=\v^2=1,2$, and we rearrange the $\a_i$, if necessary, so that $\langle\v,\w_i\rangle\leq\langle\v,\w_{i+1}\rangle$ for all $i$.   Let us assume first that $\v^2=2$, and note that it follows from \eqref{eqn: non-isotropic codimension} that $m_i$ must be even if $\w_i^2=-1$.
If $\langle \v,\w_1 \rangle=0$, then as $(\v-\w_1)^2=2+\w_1^2$ and $\HH$ is non-isotropic, we must have $\w_1^2=-1$.  It is easy to see that then $\HH=\Z \v \perp \Z \w_1$.
For $\w_j$ ($j \ne 1$), we set $\w_j=x_j \v+y_j \w_1$ for $x_j,y_j\in\Z$.
Then $0 \leq \langle \v,\w_j \rangle=2x_j$.
Since $\v=\sum_{j=1}^n m_j\w_j$, it follows that $1=\sum_{j \ne } m_j x_j$, so $n=2$ and
$m_2=x_2=1$. Hence $\v=m_1 \w_1+\w_2$.
Since $2-m_1^2=\w_2^2$, $m_1=2$ and $\w_2^2=-2$.

If $\langle \v,\w_i \rangle>0$ for all $i$, then from $$2=\v^2=\sum_{i=1}^n m_i\langle\v,\w_i\rangle,$$ we see that $n=2$ and
$\langle \v,\w_1 \rangle=\langle \v,\w_2 \rangle=m_1=m_2=1$.
Thus $\w_1^2=(\v-\w_2)^2=\v^2-2\langle\v,\w_2\rangle+\w_2^2=\w_2^2$.
As the $m_i$ are odd, we must have $\w_i^2=-2$, which implies $\langle \w_1,\w_2 \rangle=3$.

Now assume that $\v^2=1$.
Then from $$1=\v^2=\sum_{i=1}^n m_i\langle\v,\w_i\rangle,$$ we see that $n=2$ and $\v=m_1 \w_1+m_2 \w_2$ with $\langle \v,\w_1 \rangle=0$ and $\langle \v,\w_2 \rangle=m_2=1$.
Since $(\v-\w_1)^2=1+\w_1^2$ and $\HH$ is not isotropic, we must have $\w_1^2=-2$.
Then $$\v^2-2m_1^2=(\v-m_1\w_1)^2=\w_2^2$$ implies $\w_2^2=-1$ and $m_1=1$.
Hence $\langle \w_1,\w_2 \rangle=2$.
\end{proof}

The previous lemma justifies singling out Mukai vectors which pair non-negatively with all effective spherical or exceptional classes, and we will spend the remainder of this section using the theory of Pell's equation to reduce to this case.  Such a Mukai vector is called \emph{minimal in its $G_{\HH}$-orbit}, or simply \emph{minimal} for short, because of the following definition.

\begin{PropDef}\label{PropDef: minimal vectors}
Let $G_{\HH}\subset\Aut(\HH)$ be the group generated by spherical and exceptional reflections associated to effective spherical and exceptional classes in $C_{\WW}$.  For a given positive class $\v\in P_{\HH}\cap\HH$, the $G_{\HH}$-orbit of $\v$ contains a unique class $\v_0$ such that $\langle \v,\w\rangle\geq 0$ for all effective spherical and exceptional classes $w\in C_{\WW}$.  We call $\v_0$ the minimal class of the orbit $G_{\HH}\cdot\v$.
\end{PropDef}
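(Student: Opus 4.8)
The plan is to recognize $G_\HH$ as a reflection group acting on the rank-two hyperbolic lattice $\HH$ and to identify the minimal class $\v_0$ with the representative of the orbit $G_\HH\cdot\v$ lying in the closure of the fundamental chamber. First I would record the basic structure: each generator of $G_\HH$ is the reflection $\rho_\w(x)=x-\tfrac{2\langle x,\w\rangle}{\w^2}\w$ in an effective spherical or exceptional class $\w\in C_\WW\cap\HH$ (so $\w^2=-2$ or $-1$), and since $\rho_\w\in\Aut(\HH)$ fixes the positive-definite line $\w^\perp$ and negates the negative class $\w$, it preserves each of the two branches of the positive cone. Hence $G_\HH$ preserves $P_\HH$, and the orbit $G_\HH\cdot\v$ lies entirely in the interior of $P_\HH$ on the hyperbola $\{x:x^2=\v^2\}$; in particular every element of the orbit is again a positive class.

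Next I would use \cref{Prop:lattice classification} to pin down the generators. That proposition shows that, up to tensoring by $K_X$, there are at most two $\sigma_0$-stable spherical or exceptional objects with class in $\HH$, with primitive classes $\w_1,\w_2$; correspondingly $G_\HH$ is generated by at most two reflections. If $\HH$ contains no effective negative classes (case \ref{enum:nonegativeclasses}) then $G_\HH$ is trivial and $\v_0=\v$; if it contains exactly one (case \ref{enum:OneNegative}) then $G_\HH\cong\Z/2\Z$ and the claim is immediate. In the remaining case \ref{enum:TwoNegative} the proof of \cref{Prop:lattice classification} gives $m:=\langle\w_1,\w_2\rangle\geq 2$, so $\rho_{\w_1}\rho_{\w_2}$ is a hyperbolic isometry of infinite order and $G_\HH$ is the infinite dihedral group $\langle\rho_{\w_1},\rho_{\w_2}\rangle$. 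This is exactly where Pell's equation enters, as announced in the text: the eigenvalues of the translation $\rho_{\w_1}\rho_{\w_2}$ are the fundamental units attached to the binary quadratic form $x\mapsto x^2$ on $\HH$, and they yield an explicit enumeration of the orbit $G_\HH\cdot\v$ and of the effective negative classes as the positive root system generated by the simple roots $\w_1,\w_2$.

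With this in hand I would conclude by the standard fundamental-domain property of reflection groups. Projectivizing the interior of $P_\HH$ to an open interval, each effective root $\w$ determines a wall $\w^\perp$ and the half-space $\{\langle x,\w\rangle\geq 0\}$; because every effective root lies in the salient cone $C_\WW$, these half-spaces are consistently oriented and their intersection $\mathcal C:=\{x\in P_\HH:\langle x,\w\rangle\geq 0\text{ for all effective }\w\}$ is the closure of a single chamber. Since the closure of a Weyl chamber is a strict fundamental domain, $G_\HH\cdot\v$ meets $\mathcal C$ in exactly one point $\v_0$, which is the asserted minimal class; uniqueness persists even when $\v_0$ lies on a wall (i.e. $\langle\v_0,\w\rangle=0$ for some effective $\w$), since the reflection in that wall fixes $\v_0$. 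I expect the main obstacle to be the bookkeeping in this last step: one must verify that the root hyperplanes are locally finite in the interior of $P_\HH$ and that the effective classes are precisely the positive roots of the dihedral system, so that $\mathcal C$ really is a fundamental chamber rather than an empty or degenerate intersection. Both are handled by the Pell-equation enumeration that follows \cref{Prop:lattice classification}: it lists the finitely many primitive roots pairing with $\v$ below any bound and confirms that reflecting in an effective root $\w$ with $\langle\v_0,\w\rangle<0$ strictly decreases the position of $\v_0$ along the orbit, forcing termination at the unique representative in $\mathcal C$.
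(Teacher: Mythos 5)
Your argument is correct, and it is essentially the approach the paper itself takes: the paper omits the proof, citing the identical statement in Bayer--Macr\`{i} \cite[Proposition and Definition 6.6]{BM14b}, whose proof is precisely this hyperbolic reflection-group/fundamental-chamber argument, and the Pell-equation enumeration of the roots $\w_n$ and chambers $\CC_n$ that you invoke to make it rigorous is exactly what the paper develops in the subsections following \cref{PropDef: minimal vectors} (in particular the recursions showing every effective root is a non-negative combination of the two simple roots $\w_0,\w_1$, so that the intersection of all half-spaces is the closure of the single chamber $\CC_0$). Your case analysis via \cref{Prop:lattice classification} also correctly disposes of the degenerate cases (trivial and $\Z/2\Z$ groups, where isotropic classes may occur) before restricting to the infinite dihedral, non-isotropic case where the interior-of-cone and discreteness arguments apply.
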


The proof of the existence of $\v_0$ is almost identical to that of \cite[Proposition and Definition 6.6]{BM14b}, so we omit it.  In the remainder of this section we will explore the action of $G_\HH$ more fully.  We will consider explicitly only Cases \ref{enum:TwoExceptional} and \ref{enum:OneExceptionalOneSpherical}, as Case \ref{enum:TwoSpherical} is covered in detail in \cite[Section 6]{BM14b}.  Furthermore, our discussion is clearly irrelevant to Case \ref{enum:nonegativeclasses}, and applies in a much simpler but completely analogous way in Case \ref{enum:OneNegative}.  

For simplicity, we observe that up to the action of $\widetilde{\GL}_2^+(\R)$, we may assume that the phase of all objects in $C_\WW$ is 1.  We make this assumption throughout the rest of this section.

\subsection{Stability}
In order to study the behavior of stability under $G_\HH$,
we must study the interplay between rotating a stability function and tilting.  

Let $\BB$ be an abelian category with a stability function $Z:\Db(\BB) \to \C$ such that $Z(\BB \setminus \{0 \})= \H \cup \R_{<0}$.  We set $\phi(E):=\frac{1}{\pi}\arg Z(E)$.  
\begin{Def}
$E \in \BB$ is $Z$-semistable if $\phi(F) \leq \phi(E)$ for all subobjects $F$ of $E$ in $\BB$.
\end{Def}
We assume that $Z$ satisfies the HN-filtration property and that the category $$\Set{E\in\BB\ | \ E\mbox{ is $Z$-semistable of phase $\phi$}}$$
is finite so that JH-filtrations exist, see \cite[Sections 3 and 4]{Bri08}. 
Now we recall the definition of a torsion pair:
\begin{Def}[{\cite{HRS96}}]\label{defn:TorsionPair}
A torsion pair in an abelian category $\BB$ is a pair of full subcategories $(\TT, \FF)$ of $\BB$ which satisfy $\Hom(T,F) = 0$ for $T\in\TT$ and $F\in\FF$, and such that every object $E\in\BB$ fits into a short exact sequence
$$0\to T\to E\to F\to 0$$ for some pair of objects $T\in\TT$ and $F\in\FF$.  We write $T=\TT(E)$ and $F=\FF(E)$.
\end{Def}
One often constructs torsion pairs via HN-filtrations as in the following definition of the rotation of a stability function $Z$ and the tilt of corresponding torsion pair.
\begin{Def}\label{defn:B'}
For a real number $\theta \in (-1,1)$, 
we set $Z_\theta:=e^{\pi \sqrt{-1} \theta} Z$.
\begin{enumerate}
\item[(1)]
If $\theta \geq 0$, then
let $(\TT',\FF')$ be the torsion pair of $\BB$
such that
\begin{enumerate}
\item
$\TT'$ is generated by $Z$-stable objects $E \in \BB$ with
$\phi(E)+\theta>1$. 
\item
$\FF'$ is generated by $Z$-stable objects $E \in \BB$ with
$\phi(E)+\theta \leq 1$. 
\end{enumerate}
We set $\BB':=\langle \TT'[-1],\FF' \rangle$
and $Z':=Z_\theta$.  We denote the corresponding phase by $\phi'$.
\item[(2)]
If $\theta \leq 0$, then
let $(\TT'',\FF'')$ be the torsion pair of $\BB$
such that
\begin{enumerate}
\item
$\TT''$ is generated by $Z$-stable objects $E \in \BB$ with
$\phi(E)+\theta>0$. 
\item
$\FF''$ is generated by $Z$-stable objects $E \in \BB$ with
$\phi(E)+\theta \leq 0$. 
\end{enumerate}
We set $\BB'':=\langle \TT'',\FF''[1] \rangle$
and $Z'':=Z_\theta$.  We denote the corresponding phase by $\phi''$.
\end{enumerate}
\end{Def}
We determine now precisely when an object in the tilted category is semistable with respect to the rotated stability function, generalizing \cite[Lemma 6.10]{BM14b}.
\begin{Prop}\label{Prop:BB}
\begin{enumerate}
\item\label{enum:B'}
Assume that $E\in \BB'$.  Then $E$ is $Z'$-semistable if and only if 
\begin{enumerate}
\item
 $E \in \FF'$
and $E$ is $Z$-semistable or 
\item
 $E[1] \in \TT'$ and $E[1]$ is $Z$-semi-stable.
\end{enumerate}
\item\label{enum:B''}
Assume that $E\in \BB''$.  Then $E$ is $Z''$-semistable if and only if 
\begin{enumerate}
\item
 $E[-1] \in \FF''$ and $E[-1]$ is $Z$-semistable or
\item
 $E \in \TT''$ and
$E$ is $Z$-semistable.
\end{enumerate}
\end{enumerate}
\end{Prop}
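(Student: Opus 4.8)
The plan is to recognize the rotation-and-tilt operation as nothing more than a reindexing of the slicing attached to $(Z,\BB)$, so that the asserted dichotomy becomes pure phase bookkeeping. First I would invoke the standing hypotheses---that $Z$ has the HN property and that each category of $Z$-semistable objects of fixed phase has finite length---to produce, via \cite[Prop. 3.5]{Bri08}, the slicing $\PP$ of $\Db(\BB)$ with $\PP(0,1]=\BB$ and $\PP(\phi)$ the category of $Z$-semistable objects of phase $\phi$ (extended by $\PP(\phi+1)=\PP(\phi)[1]$). In this language the $Z$-semistable objects of $\BB$ are exactly $\bigcup_{\phi\in(0,1]}\PP(\phi)$, and the torsion classes defining the tilt read $\TT'=\PP(1-\theta,1]$ and $\FF'=\PP(0,1-\theta]$, since $\phi+\theta>1\iff\phi>1-\theta$ and $\phi+\theta\le 1\iff\phi\le 1-\theta$ (here the finiteness hypothesis is precisely what lets me pass between the ``generated by $Z$-stable objects'' description and these half-open $\PP$-intervals).

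Next I would introduce the reindexed family $\PP'(\phi):=\PP(\phi-\theta)$ and verify that $(Z',\PP')$ is again a slicing compatible with $Z'=Z_\theta=e^{\pi\sqrt{-1}\,\theta}Z$. The slicing axioms are inherited verbatim because $\PP'$ is a pure relabeling, and the compatibility is immediate: if $E\in\PP'(\phi)=\PP(\phi-\theta)$ then $Z(E)\in\R_{>0}\,e^{\pi\sqrt{-1}(\phi-\theta)}$, whence $Z'(E)\in\R_{>0}\,e^{\pi\sqrt{-1}\,\phi}$. The crucial step is then the identification of hearts $\PP'(0,1]=\BB'$. Indeed $\PP'(0,1]=\PP(-\theta,1-\theta]$, and this splits as $\PP(0,1-\theta]$ together with $\PP(-\theta,0]=\PP(1-\theta,1][-1]$; by the previous paragraph these are exactly $\FF'$ and $\TT'[-1]$, so $\PP(-\theta,1-\theta]=\langle\TT'[-1],\FF'\rangle=\BB'$ by definition of the tilt.

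With the hearts identified the conclusion is automatic. An object $E\in\BB'$ is $Z'$-semistable if and only if $E\in\PP'(\phi)=\PP(\phi-\theta)$ for a single $\phi\in(0,1]$. If $\phi>\theta$, then $\phi-\theta\in(0,1-\theta]$, so $E\in\PP(0,1-\theta]=\FF'$ is $Z$-semistable---this is alternative (a). If $\phi\le\theta$, then $\phi-\theta\in(-\theta,0]$, so $E[1]\in\PP(\phi-\theta+1)\subset\PP(1-\theta,1]=\TT'$ is $Z$-semistable---this is alternative (b). Conversely any object of the form (a) or (b) lies in some $\PP(\psi)$ with $\psi+\theta\in(0,1]$, hence in $\PP'(\psi+\theta)$ with $\psi+\theta\in(0,1]$, so is $Z'$-semistable; as $\phi\mapsto\phi-\theta$ is a bijection the two conditions are equivalent, proving part (1). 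Part (2), with $\theta\le 0$, is the mirror image: the same reindexing $\PP''(\phi):=\PP(\phi-\theta)$ has heart $\PP(-\theta,1-\theta]=\langle\TT'',\FF''[1]\rangle=\BB''$, with the roles of the two boundary strata and of the shift $[\pm1]$ interchanged, and reading off the two $\PP$-pieces yields the stated alternatives.

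The only genuine obstacle is the heart identification in the second paragraph: one must be scrupulous that the strict versus non-strict inequalities defining $\TT',\FF'$ (and $\TT'',\FF''$) match the half-open intervals $\PP(1-\theta,1]$ and $\PP(0,1-\theta]$, and that finiteness legitimately replaces the generation-by-stable-objects description by these intervals. A purely internal alternative, following \cite[Lemma 6.10]{BM14b}, would instead compare the $Z$- and $Z'$-Harder--Narasimhan filtrations of a fixed $E\in\BB'$ directly; this avoids naming the slicing but reproduces the same phase bookkeeping with more moving parts, so I would favor the reindexing argument above.
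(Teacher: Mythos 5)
Your proof is correct, but it takes a genuinely different route from the paper's. The paper argues entirely inside the abelian categories: given a $Z'$-semistable $E\in\BB'$, it uses the canonical sequence $0\to\HH^0(E)\to E\to\HH^1(E)[-1]\to 0$ to force $E\in\FF'$ or $E[1]\in\TT'$, and then compares subobjects of $E$ taken in $\BB$ with subobjects and quotients taken in $\BB'$ (inserting an intermediate $E_1\subset E_1'\subset E$ with $E_1'/E_1\in\TT'$, $E/E_1'\in\FF'$, and dually), with the converse handled by long exact sequences; part (2) is then deduced from part (1) applied to $\BB[1]$ with $Z^*=Z\circ[-1]$. You instead lift everything to the slicing level: identify $\TT'=\PP(1-\theta,1]$ and $\FF'=\PP(0,1-\theta]$ (correctly flagging that finiteness/JH filtrations are what justify replacing "generated by stable objects" with interval categories), observe that $\PP'(\phi):=\PP(\phi-\theta)$ is again a slicing compatible with $Z_\theta$ whose heart is exactly $\BB'$ (resp. $\BB''$), and then invoke the heart--slicing correspondence of \cite[Prop. 3.5]{Bri08} to turn the dichotomy into phase bookkeeping. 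What your approach buys is brevity and transparency -- it makes visible that tilting here is literally rotation of the slicing, and both parts of the proposition follow from one computation -- at the cost of invoking the correspondence between stability functions with the HN property and slicings, plus the verification that the relabeled pair $(Z',\PP')$ is again a stability condition. The paper's proof is longer and more hands-on, but it is self-contained at the level of torsion pairs and subobjects and never needs the triangulated machinery; this is also why it generalizes verbatim to the twisted/equivariant settings used later in the paper (Propositions following it reuse the same subobject-chasing template for $R_{T_i^\pm}$). Either argument suffices for the way \cref{Prop:BB} is used in \cref{Prop:isom-pm}.
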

Although this result might be known to experts, we provide a proof for the sake of convenience.
\begin{proof}
We prove \ref{enum:B'} first.  Note that $\phi'(E_1)>\phi'(E_2)$ for all $E_1 \in \FF'$ and $E_2 \in \TT'[-1]$.  Then, if $E\in\BB'$ is $Z'$-semistable, it follows from the canonical exact sequence in $\BB'$,
\begin{equation}
0 \to \HH^0(E) \to E \to \HH^1(E)[-1] \to 0,
\end{equation}
that either $\HH^0(E)=0$ or $\HH^1(E)=0$.

Assume first that $\HH^1(E)=0$, that is, $E \in \FF'$.  Let $E_1$ be a subobject of $E$ in $\BB$.  By considering the HN-filtration of $E/E_1$, we see that there is a subobject $E_1' \subset E$ in $\BB$ such that $E_1 \subset E_1'$, $E_1'/E_1 \in \TT'$ and $E/E_1' \in \FF'$.  Since $E/E_1'$ is a quotient object of $E$ in $\BB'$, $Z'$-semistability of $E$ implies $$\phi(E)+\theta=\phi'(E) \leq \phi'(E/E_1')=\phi(E/E_1')+\theta.$$  In particular, $\phi(E)\leq\phi(E/E_1')$.  From the definitions of $\TT'$ and $\FF'$ and our choice of $E_1'$, we see that $\phi(E_1'/E_1) \geq \phi(E/E_1) \geq \phi(E/E_1')$, so it follows that $\phi(E) \leq \phi(E/E_1)$.  Therefore $E$ is $Z$-semistable.

Now assume that $\HH^0(E)=0$, that is, $E[1] \in \TT'$.  Let $E_1$ be a subobject of $E[1]$ in $\BB$.  Then there is a subobject $E_1' \subset E_1$ in $\BB$ such that $E_1' \in \TT'$ and $E_1/E_1' \in \FF'$.  Since $E[1]/E_1' \in \TT'$, $E_1'$ is a subobject of $E[1]$ in $\BB'[1]$.  By the $Z'$-semi-stability of $E$, $$\phi(E_1')+\theta=\phi'(E_1') \leq \phi'(E[1])=\phi(E[1])+\theta,$$ so in particular $\phi(E_1')\leq\phi(E[1])$.  Since $\phi(E_1') \geq \phi(E_1) \geq \phi(E_1/E_1')$, from the definition of $\TT'$ and $\FF'$,  we see that $\phi(E_1) \leq \phi(E[1])$.  Therefore $E$ is $Z$-semistable.

Next we shall prove the converse direction.  Let $E$ be a $Z$-semistable object of $\BB$.  We first assume that $E \in \FF'$.  Taking the long exact sequence associated to a given short exact sequence in $\BB'$,
\begin{equation}
0 \to E_1 \to E \to E_2 \to 0,
\end{equation}
 we get
\begin{equation*}
0 \to \HH^0(E_1) \to E \overset{\varphi}{\to} \HH^0(E_2)
\to \HH^1(E_1) \to 0.
\end{equation*}
Then $\phi(\im \varphi) \leq \phi(E_2)$ by $\HH^1(E_1)\in \TT'$.  By the $Z$-semistability of $E$, $\phi(E) \leq \phi(\im \varphi)$ so that $$\phi'(E)-\theta=\phi(E) \leq\phi(E_2)=\phi'(E_2)-\theta.$$  Therefore $E$ is $Z'$-semistable.

We next assume that $E \in \TT'$.  Again we take the long exact sequence associated to a short exact sequence in $\BB'$,
\begin{equation}
0 \to E_1 \to E[-1] \to E_2 \to 0,
\end{equation}
and we get the exact sequence
\begin{equation}
0 \to \HH^0(E_2) \to \HH^1(E_1) \overset{\varphi}{\to} E \to \HH^1(E_2) \to 0.
\end{equation}
Then $\phi(\im \varphi) \geq \phi(H^1(E_1))$ by $\HH^0(E_2) \in \FF'$.  From the $Z$-semistability of $E$, it follows that $\phi(E) \geq \phi(\im \varphi)$, and thus we have $$\phi'(E[-1])-\theta=\phi'(E)-\theta=\phi(E) \geq\phi(H^1(E_1))=\phi(E_1[1])=\phi'(E_1[1])-\theta=\phi'(E_1)-\theta.$$  Therefore $E$ is $Z'$-semi-stable, as required.

To prove \ref{enum:B''}, we note that for the abelian category $\BB[1]$ with 
the stability function 
$$
Z^*:\BB[1] \overset{[-1]}{\to} \BB \overset{Z}{\to}\C,
$$  
we have
$(\BB[1])'=\BB''$, where $Z_{1+\theta}^*=Z_\theta$.
Hence the claim follows from \ref{enum:B'}.
\end{proof}

We will use Proposition \ref{Prop:BB} to study how stability is effected under the action of $G_\HH$.  To do so, we must investigate Cases \ref{enum:TwoExceptional} and \ref{enum:OneExceptionalOneSpherical} separately.
\subsection{\ref{enum:TwoExceptional}: Exactly two $\sigma_0$-stable exceptional objects up to $-\otimes\OO_X(K_X)$}\label{subsec:TwoExceptional}

Recall from Proposition \ref{Prop:lattice classification} that in this case $\HH$ contains infinitely many exceptional classes, precisely two of which represent $\sigma_0$-stable objects (up two tensoring with $\OO_X(K_X)$).  Denote one of these classes by $\w_0$.

We may complete $\w_0$ to a basis so that $\HH=\Z\w_0+\Z\z$,
where $\langle\w_0,\z\rangle=0$ and $D:=\z^2>0$.
Recalling the definition of $\Delta(X)_{-1}$ in \eqref{eqn:DefOfEnriquesRootsSep}, we see that $\Delta(X)_{-1}\cap\HH$ is described by the Pell equation
\begin{equation}\label{eq:Pell}
x^2-Dy^2=1.
\end{equation}
Note that $\sqrt{D}$ must be irrational since $\HH$ would be isotropic otherwise, in contradiction to Case \ref{enum:TwoNegative} of Proposition \ref{Prop:lattice classification}.

Recall, for example from \cite[Theorem 8.6]{Lev96}, that the solutions to Pell's equation \eqref{eq:Pell} form a group isomorphic to $\Z\oplus\Z/2\Z$.  Indeed, let $(p_1,q_1)$ be the fundamental solution of 
\eqref{eq:Pell} with $p_1<0$ and $q_1>0$, and define $p_n, q_n \in\Z$ by
\begin{equation}
p_n+q_n \sqrt{D}=
\begin{cases}
-(-p_1-q_1 \sqrt{D})^n, & n > 0\\
(-p_1-q_1 \sqrt{D})^n, & n \leq 0
\end{cases}.
\end{equation}
Then setting $\w_n:=p_n\w_0+q_n\z$, we see that all solutions to \eqref{eq:Pell} are given by 
$$
\Delta(X)_{-1}\cap\HH=\Set{\pm\w_n \ | \ n \in \Z}.
$$ 
Recall that to an exceptional object $E_0\in\Db(X)$, we get a weakly spherical reflection $R_{E_0}$ as in 
\eqref{eqn:weakly spherical reflection}.  We abuse notation by using $R_{\v(E_0)}$ to denote the action on cohomology, which is given by $$R_{\v(E_0)}(\v)=\v+2\langle\v,\v(E_0)\rangle\v(E_0).$$  Then it is easy to see that
\begin{equation}\label{eqn:ExceptionalVectorsReflections}
\begin{split}
\w_{n+1}=&-R_{\w_n}(\w_{n-1}),\; (n \geq 2)\\
\w_{n-1}=&-R_{\w_n}(\w_{n+1}),\; (n \leq -1)\\
\w_2=&R_{\w_1}(\w_0),\;\; \w_{-1}=R_{\w_0}(\w_1).
\end{split}
\end{equation}

\begin{figure}
   \begin{tikzpicture}[scale=1]
   \draw [->] (-4,0) -- (4,0);
   \draw[->] (0,-3) -- (0,3);
   \path [fill=gray!50,opacity=0.2] (-4,3) -- (-4,.8) -- (4,-.8) -- (4,3) -- cycle;
   \draw[gray,domain=-4:4] plot (\x,{-.2*\x});
           \draw [red,domain=-4:4] plot (\x,{(1/sqrt(2))*\x});
   \draw [red,domain=-4:4] plot (\x,{-(1/sqrt(2))*\x});
   \draw [blue,domain=-4:-1] plot (\x,{sqrt(.5*(pow(\x,2)-1))});
   \draw [blue,domain=1:4] plot (\x,{sqrt(.5*(pow(\x,2)-1))});
   \draw [blue,domain=-4:-1] plot (\x,{-sqrt(.5*(pow(\x,2)-1))});
   \draw [blue,domain=1:4] plot (\x,{-sqrt(.5*(pow(\x,2)-1))});
   \filldraw [gray] (1,0) circle (1.5pt) node [anchor=south west] {$\w_0$};
   \filldraw [gray] (-1,0) circle (1.5pt) node [anchor=north east] {$-\w_0$};
   \filldraw [gray] (2.00149,-1.21698) circle (1.5pt);
   \filldraw [gray] (1.43593,-0.733002) circle (1.5pt) node [anchor=west] {$-\w_1$};
   \filldraw [gray] (2.00149,1.21698) circle (1.5pt) node [anchor=west] {$\w_{-2}$};
   \filldraw [gray] (-1.43593,0.733002) circle (1.5pt) node [anchor=east] {$\w_1$};
   \filldraw [gray] (-2.00149,1.21698) circle (1.5pt) node [anchor=east] {$\w_2$};
   \filldraw [gray] (1.43593,0.733002) circle (1.5pt) node [anchor=west] {$\w_{-1}$};
	\node[below] at (-3.5,0.75) {$Z^{-1}(0)$};
	\node[left] at (0,2.5) {$\Re\frac{Z_{\sigma_0}(\blank)}{Z_{\sigma_0}(\v)}>0$};
	\node[left] at (0,-2.5) {$\Re\frac{Z_{\sigma_0}(\blank)}{Z_{\sigma_0}(\v)}<0$};
   \end{tikzpicture}
   \caption{The shaded gray area is the half plane containing $\u$ such that $\Re\frac{Z_{\sigma_0}(\u)}{Z_{\sigma_0}(\v)}>0$, which is bounded by the line $Z^{-1}(0)$. The hyperbola is defined by $\u^2=-1$, and the lines by $\u^2=0$.}
   \label{fig:TwoExceptional}
\end{figure}
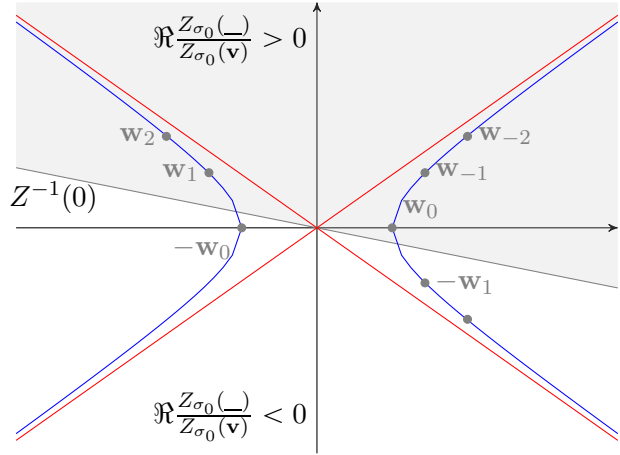

Since $Z^{-1}(0) \cap \HH_\R$ is negative definite and $\lim_{n \to \pm \infty}\frac{p_n}{q_n}=\mp \sqrt{D}$, it is not difficult to see that for $\w'=-\w_1$, $Z^{-1}(0) \cap (\R_{>0}\w_0+\R_{>0}\w') \ne \varnothing$ and $\Delta(X)_{-1} \cap (\R_{>0}\w_0+\R_{>0}\w')= \varnothing$, see Figure \ref{fig:TwoExceptional}.  Then $\Re\frac{Z_{\sigma_0}(-\w')}{Z_{\sigma_0}(\v)}>0$, and since $\w_1=-\w'$ we get
$$
\Set{\u \in \HH \ | \ \Re\frac{Z_{\sigma_0}(\u)}{Z_{\sigma_0}(\v)}>0, \u^2 \geq -1}\subset\Q_{\geq 0}\w_0+\Q_{\geq 0}\w_1.
$$
It follows that $C_\WW=\R_{>0}\w_0+\R_{>0}\w_1$.  Furthermore, the positive cone can be described as 
$$
P_{\HH}=\Set{x\w_0+y\z \in \HH \ |\ y^2 D-x^2>0, y \geq 0 }.
$$
It follows that $\w_n\in C_{\WW}$ for all $n\in\Z$.  Let $T_0$ and $T_1$ be $\sigma_0$-semistable objects with $\v(T_i)=\w_i$ for $i=0$ and $1$, respectively.  By construction, $T_0$ is $\sigma_0$-stable, and we claim that $T_1$ is as well.  Indeed, by \cite[Lemma 4.3, Lemma 4.6]{Yos16b}, any $\sigma_0$-stable factor $\tilde{T}$ of $T_0$ must be exceptional so that $\v(\tilde{T})\in\Delta(X)_{-1}\cap C_{\WW}$.  In particular, by the description in \eqref{eqn:ExceptionalVectorsReflections}, $\v(\tilde{T})=a\w_0+b\w_1$, where $a$ and $b$ are nonnegative integers.  This gives a contradiction unless $\v(\tilde{T})=\w_1$ so that $T_1$ is $\sigma_0$-stable, as claimed.

We set
\begin{equation}
\CC_n:=\Set{x \w_0+y\z \ |\
 \frac{Dq_{n+1}}{p_{n+1}}y<x<\frac{Dq_n}{p_n}y, y>0}.
\end{equation}
Note that for $n<0$, we have 
\begin{equation}
    \CC_n=\Set{\u\in C_\WW\ | \ \langle\u,\w_{n+1}\rangle<0<\langle\u,\w_n\rangle},
\end{equation}
and for $n>0$, 
\begin{equation}
    \CC_n=\Set{\u\in C_\WW\ | \ \langle\u,\w_{n+1}\rangle>0>\langle\u,\w_n\rangle},
\end{equation}
while for $n=0$,
\begin{equation}
    \CC_n=\Set{\u\in C_\WW\ | \ 0<\langle\u,\w_n\rangle,\langle\u,\w_{n+1}\rangle}.
\end{equation}
 Then $\{\CC_n \mid n \in \Z \}$ is the chamber decomposition
of $P_{\HH}$ under the action of $G_\HH$.

For $\v_0 \in \CC_0$, 
we set 
\begin{equation}\label{eqn:OrbitOfv0}
\v_n:= 
\begin{cases}
R_{\w_n} \circ R_{\w_{n-1}} \circ \cdots \circ R_{\w_1}(\v_0), & n>0\\
R_{\w_{n+1}}^{-1} \circ R_{\w_{n+2}}^{-1} 
\circ \cdots \circ R_{\w_0}^{-1}(\v_0), & n < 0.
\end{cases}
\end{equation}
Then for $\v \in \CC_n$, there is $\v_0 \in \CC_0$ such that
$\v=\v_n$ for this $\v_0$.

\subsubsection{The abelian categories $\AA_i$}

Up to reordering, we may assume that $\phi^+(T_1)>\phi^+(T_0)$ (and hence $\phi^-(T_1)<\phi^-(T_0)$), where $\phi^{\pm}$ denotes the phase with respect to $\sigma_{\pm}$, respectively.  For $i\in\Z$, let $T_i^\pm\in\PP_0(1)$ be $\sigma^\pm$-stable objects with
$\v(T_i^\pm)=\w_i$.
Then 
\begin{equation}
\phi^+(T_1^+) > \phi^+(T_2^+)>\cdots>\phi^+(E)>
\cdots >\phi^+(T_{-1})>\phi^+(T_0^+)
\end{equation}
for any $\sigma_+$-stable object $E$ with
$\v(E)^2 \geq 0$.
We note that $T_i^+=T_i^-=T_i$ $(i=0,1)$ are $\sigma_0$-stable objects.

We make the following definition which generalizes the approach of \cite[Lemma 6.8]{BM14b}.
\begin{Def}
Assume that $i  \geq 0$.
\begin{enumerate}
\item[(1)]
Let $(\TT_i,\FF_i)$ be the torsion pair of $\PP_0(1)$ such that
\begin{enumerate}
\item
$\TT_i=\langle T_1^+,T_1^+(K_X),T_2^+,T_2^+ (K_X),...,
T_i^+,T_i^+ (K_X) \rangle$ 
is the subcategory of $\PP_0(1)$ generated by $\sigma_+$-stable objects
$F$ with $\phi^+(F)>\phi^+(T_{i+1}^+)$ and
\item
$\FF_i$ is the subcategory of $\PP_0(1)$ generated by 
$\sigma_+$-stable objects $F$
with $\phi^+(F) \leq \phi^+(T_{i+1}^+)$.
\end{enumerate}
Let $\AA_i:=\langle \TT_i[-1],\FF_i \rangle$ be the tilting.
\item[(2)]
Let $(\TT_i^*,\FF_i^*)$ be the torsion pair of $\PP_0(1)$ such that
\begin{enumerate}
\item
$\TT_i^*$ is the subcategory of $\PP_0(1)$ generated by 
$\sigma_-$-stable objects $F$
with $\phi^-(F) \geq \phi^-(T_{i+1}^-)$.
\item
$\FF_i^*=\langle T_1^-,T_1^-(K_X),T_2^-,T_2^-(K_X),...,
T_i^-,T_i^-(K_X) \rangle$ 
is the subcategory of $\PP_0(1)$ generated by $\sigma_-$-stable objects
$F$ with $\phi^-(F)<\phi^-(T_{i+1}^-)$.
\end{enumerate}
Let $\AA_i^*:=\langle \TT_i^*,\FF_i^*[1] \rangle$ be the tilting.
\end{enumerate}
\end{Def}
Since
$\TT_0=0$ and $\FF_0^*=0$,
we have $\AA_0=\AA_0^*=\PP_0(1)$.

\begin{Rem}\label{rem:simple-objects}
We note that from the definition of $\AA_i$ (resp. $\AA_i^*$), it follows that:
\begin{enumerate}
\item 
$T_{i+1}^+,T_{i+1}^+(K_X), T_i^+[-1],T_i^+(K_X)[-1]$ are irreducible objects of $\AA_i$.
\item
$T_{i+1}^-,T_{i+1}^-(K_X), T_i^-[1],T_i^-(K_X)[1]$ are irreducible objects of $\AA_i^*$.
\end{enumerate}
\end{Rem}
With these notions in place, we prove the following stronger form of the induction claim in \cite[p. 541]{BM14b}.

\begin{Prop}\label{Prop:equiv1}
For $i\geq 0$, $R_{T_{i+1}^+}$ induces an equivalence
$\AA_i \to \AA_{i+1}$.
\end{Prop}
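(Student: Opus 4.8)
Set $T:=T_{i+1}^+$. The plan is to prove the containment $R_T(\AA_i)\subseteq\AA_{i+1}$ and then upgrade it to an equality. Since every category $\PP_0(\phi)$ has finite length, the heart $\PP_0(1)$ is a finite-length abelian category, and hence so are its tilts $\AA_i$ and $\AA_{i+1}$. As $R_T$ is an autoequivalence of $\Db(X)$, the image $R_T(\AA_i)$ is again the heart of a bounded t-structure; because a heart of a bounded t-structure that is contained in another such heart must coincide with it, it suffices to establish the inclusion $R_T(\AA_i)\subseteq\AA_{i+1}$. Finally, since $\AA_i$ is of finite length it is the extension-closure of its finitely many simple objects and $\AA_{i+1}$ is extension-closed, so I only need to verify that $R_T$ maps each simple object of $\AA_i$ into $\AA_{i+1}$.

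The first and cleanest step treats the four distinguished simple objects of \cref{rem:simple-objects}. Serre duality on $X$ (using $K_X^{\otimes 2}\cong\OO_X$) together with $\RHom(T,T)=\C$ gives $\RHom(T(K_X),T)=\C[-2]$ and $\RHom(T,T(K_X))=\C[-2]$; substituting these into the defining triangle \eqref{eqn:weakly spherical reflection} and computing the cone yields $R_T(T_{i+1}^+)=T_{i+1}^+(K_X)[-1]$ and $R_T(T_{i+1}^+(K_X))=T_{i+1}^+[-1]$, which are exactly two of the distinguished simple objects of $\AA_{i+1}$. For the other two, I would combine the reflection identities \eqref{eqn:ExceptionalVectorsReflections} at the level of classes with the vanishing of $\Hom$-groups forced by the inequalities $\phi^+(T_j^+)>\phi^+(T_{i+1}^+)$ for $j\le i$ to conclude $R_T(T_i^+[-1])=T_{i+2}^+$ and $R_T(T_i^+(K_X)[-1])=T_{i+2}^+(K_X)$, again landing on simple objects of $\AA_{i+1}$.

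The substantial step is to control $R_T$ on the remaining simple objects of $\AA_i$, namely those $\sigma_+$-stable $E$ of phase strictly below $\phi^+(T_{i+1}^+)$ (in particular the positive-square ones). These need not be right-orthogonal to the pair $\{T,T(K_X)\}$: Serre duality forces $\Hom(T,E)=0$ but only identifies $\Ext^2(T,E)\cong\Hom(E,T(K_X))^*$, which can be nonzero, so $R_T$ may genuinely move such an $E$. The mechanism I would use is that $\AA_{i+1}$ is the tilt of $\AA_i$ at the torsion pair whose torsion part is generated by the simple objects $T$ and $T(K_X)$, and that $R_T$ realizes this tilt: applying \cref{Prop:BB} to the $\sigma_+$-stability function on $\PP_0(1)$ and the rotation by the phase gap below $T$ describes precisely which objects are semistable after the tilt, and this lets me track the factors appearing in the cone \eqref{eqn:weakly spherical reflection} and show that $R_T(E)$ has $\AA_{i+1}$-cohomology concentrated in degree zero, so that $R_T(E)\in\AA_{i+1}$. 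This is the analogue, carried out with the exceptional reflection in place of a spherical twist, of the inductive argument sketched in \cite[p.\ 541]{BM14b}.

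I expect this last step to be the main obstacle. Two features make it harder than the K3 situation of \cite{BM14b}: first, one twists simultaneously by the pair $\{T,T(K_X)\}$ rather than a single spherical object, so the two summands of \eqref{eqn:weakly spherical reflection} and the fact that $K_X$ is only numerically trivial must be tracked through every Serre-duality computation; and second, one must rule out that $R_T(E)$ lands in a neighbouring tilt rather than in $\AA_{i+1}$, which amounts to the adjacency of the consecutive exceptional objects $T_{i+1}^+$ and $T_{i+2}^+$ in the $\sigma_+$-phase ordering. I would extract this adjacency from the chamber decomposition of $P_{\HH}$ into the cones $\CC_n$ together with the reflection formulas \eqref{eqn:ExceptionalVectorsReflections}.
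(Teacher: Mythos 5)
Your proposal has genuine gaps, and they sit exactly where the content of the proposition lies. The reduction framework is already broken in two places. First, finite length does not pass to tilts: the category of finite-dimensional representations of the Kronecker quiver is finite length, yet one of its tilts is $\Coh(\P^1)$, which is not; so ``hence so are its tilts $\AA_i$ and $\AA_{i+1}$'' is an unproved (and in general false) assertion, and with it the reduction to simple objects collapses. Second, even granting finite length, your description of the simples is wrong: $\PP_0(1)$ contains positive-dimensional families of $\sigma_0$-stable objects, so there are infinitely many simples, and a $\sigma_+$-stable object $E\in\FF_i$ of phase below $\phi^+(T_{i+1}^+)$ is typically \emph{not} simple in $\AA_i$. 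Indeed, for $j\le i$ any non-split extension $0\to E\to B\to T_j^+\to 0$ in $\PP_0(1)$ automatically has $B\in\FF_i$ (non-splitness makes the connecting map $\Hom(T_j^+,T_j^+)\to\Ext^1(T_j^+,E)$ injective), and the rotated triangle exhibits $T_j^+[-1]$ as an $\AA_i$-subobject of $E$ with quotient $B$; since $\ext^1(T_j^+,E)\ge\langle\w_j,\v(E)\rangle$, such extensions exist for essentially all positive-square classes. (Your heart-in-heart shortcut, by contrast, is salvageable and would genuinely let you skip the paper's second containment $R_{T_{i+1}^+}^{-1}(\AA_{i+1})\subseteq\AA_i$, which contains its hardest case $E\in\TT_{i+1}$; but note that $\AA_i$ and $\AA_{i+1}$ are hearts of bounded t-structures only on the triangulated subcategory generated by $\PP_0(1)$, not on $\Db(X)$, so you must also check that $R_{T_{i+1}^+}$ and its inverse preserve that subcategory, which follows from their cone descriptions but needs saying.)

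The ``substantial step'' is not an argument, and since your route discards the paper's inverse-direction proof, the forward containment has to be complete. \cref{Prop:BB} characterizes which objects of a tilted heart are semistable for a rotated stability function; it computes nothing about where the functor $R_{T_{i+1}^+}$ sends an object, so ``this lets me show that $R_T(E)$ has $\AA_{i+1}$-cohomology concentrated in degree zero'' is precisely the statement to be proved, asserted rather than established. What the paper actually does here is a direct computation with the triangle \eqref{eqn:weakly spherical reflection}: for $E\in\FF_i$ the evaluation map $\varphi$ is injective, so $\Phi^{-1}(E)=0$ and $\coker\varphi\in\FF_i$, and then the identities $\Phi(T_{i+1}^+)=T_{i+1}^+(K_X)[-1]$ and $\Phi(T_{i+1}^+(K_X))=T_{i+1}^+[-1]$ convert $\Hom(T_{i+1}^+(D),\Phi(E))$ into $\Hom(T_{i+1}^+(D'),E[-1])=0$, upgrading $\Phi^0(E)\in\FF_i$ to $\Phi^0(E)\in\FF_{i+1}$; for $E\in\TT_i$ one needs Serre duality together with $\phi^+_{\min}(E)>\phi^+(T_{i+1}^+)$ to kill $\Phi^1(E)$. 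None of this is supplied or replaced by your mechanism. Finally, your treatment of the remaining two distinguished simples of Remark \ref{rem:simple-objects} is circular: the identification $R_{T_{i+1}^+}(T_i^+[-1])=T_{i+2}^+$ (up to a $K_X$-twist) is proved in the paper only as a consequence of Proposition \ref{Prop:equiv1} together with Proposition \ref{Prop:isom-2}, and the reflection identities \eqref{eqn:ExceptionalVectorsReflections} only determine the Mukai vector of $R_{T_{i+1}^+}(T_i^+[-1])$, which neither identifies the object nor places it in $\AA_{i+1}$.
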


\begin{proof}
We set $\Phi:=R_{T_{i+1}^+}$ and $\Phi^p(E):=\HH^p(\Phi(E))$ for $E \in \Db(X)$.  

We first prove that 
$R_{T_{i+1}^+}(\AA_i) \subset \AA_{i+1}$ by showing that $\Phi(\FF_i)\subset\AA_{i+1}$ and $\Phi(\TT_i)\in\AA_{i+1}[1]$.  Then the claim follows for a general $E\in\AA_i$ from the short exact sequence 
\begin{equation}\label{eqn:canonical short exact sequence}
0\to \HH^0(E)\to E\to\HH^1(E)[-1]\to 0,
\end{equation}
as $\HH^0(E)\in\FF_i$ and $\HH^1(E)\in\TT_i$.

Observe first that for $E \in \PP_0(1)$, 
$\Ext^p(T_{i+1}^+,E)=\Ext^p(T_{i+1}^+(K_X),E)=0$ for $p \ne 0,1,2$.
Hence, from the definition of $R_{T_{i+1}^+}$ in \eqref{eqn:weakly spherical reflection}, we have an exact sequence
\begin{equation}
\begin{CD}
0 @>>> \Phi^{-1}(E) @>>> \Hom(T_{i+1}^+,E) \otimes T_{i+1}^+ \oplus  
\Hom(T_{i+1}^+(K_X),E) \otimes T_{i+1}^+ (K_X) @>{\varphi}>> E \\
 @>>> \Phi^0(E) @>>> \Ext^1(T_{i+1}^+,E) \otimes T_{i+1}^+ \oplus  
\Ext^1(T_{i+1}^+(K_X),E) \otimes T_{i+1}^+ (K_X) @>>> 0
\end{CD},
\end{equation}
and also an isomorphism
\begin{equation}
\Phi^1(E) \cong \Ext^2(T_{i+1}^+,E) \otimes T_{i+1}^+\oplus  
\Ext^2(T_{i+1}^+(K_X),E) \otimes T_{i+1}^+ (K_X) \in \TT_{i+1}.
\end{equation}

Suppose first that $E \in \FF_i$ so that
$\phi_{\max}^+(E) \leq \phi^+(T_{i+1}^+)$. Then it follows that 
$\varphi$ is injective (so that $\Phi^{-1}(E)=0$) and $\coker \varphi \in \FF_i$.  Thus $\Phi^0(E) \in \FF_i$, but in fact we can say more.  Noting that $\Phi(T_{i+1}^+(K_X))=T_{i+1}^+[-1]$ (and similarly that $\Phi(T_{i+1}^+)=T_{i+1}^+(K_X)[-1]$), we get
\begin{equation}
\begin{split}
\Hom(T_{i+1}^+,\Phi(E))& =\Hom(\Phi(T_{i+1}^+(K_X))[1],\Phi(E))
=\Hom(T_{i+1}^+(K_X),E[-1])=0,\\
\Hom(T_{i+1}^+(K_X),\Phi(E))&=\Hom(\Phi(T_{i+1}^+)[1],\Phi(E))
=\Hom(T_{i+1}^+,E[-1])=0,
\end{split}
\end{equation}
where the final equality follows from $\phi_{\max}^+(E[-1])=\phi_{\max}^+(E)-1<\phi^+(T_{i+1}^+)=\phi^+(T_{i+1}^+(K_X))$.  From the triangle $$\Phi^0(E)\to\Phi(E)\to\Phi^{\geq 1}(E)[-1]\to\Phi^0(E)[1]$$ we see that this implies $\Hom(T_{i+1}^+,\Phi^0(E))=\Hom(T_{i+1}^+(K_X),\Phi^0(E))=0$ so that $\Phi^0(E) \in \FF_{i+1}$.  Therefore $\Phi(E) \in \AA_{i+1}$, as claimed.

Now we assume that $E \in \TT_i$, from which it follows 
$\coker \varphi \in \TT_i \subset \TT_{i+1}$.
Since $T_{i+1}^+, T_{i+1}^+ (K_X)\in \TT_{i+1}$,
we get $\Phi^0(E) \in \TT_{i+1}$.
By $T_{i+1}^+,T_{i+1}^+(K_X) \in \FF_i$, 
we get $\Phi^{-1}(E) \in \FF_i$.
Since 
\begin{equation}
\begin{split}
\Hom(T_{i+1}^+,\Phi(E)[-1])& =\Hom(\Phi(T_{i+1}^+(K_X)),\Phi(E)[-2])
=\Hom(T_{i+1}^+(K_X),E[-2])=0,\\
\Hom(T_{i+1}^+(K_X),\Phi(E)[-1])&=\Hom(\Phi(T_{i+1}^+),\Phi(E)[-2])
=\Hom(T_{i+1}^+,E[-2])=0,
\end{split}
\end{equation}
for the same reasons as above, we see from the triangle $$\Phi^{-1}(E)\to\Phi(E)\to\Phi^{\geq 0}(E)\to\Phi^{-1}(E)[1]$$ that $\Hom(T_{i+1}^+,\Phi^{-1}(E))=\Hom(T_{i+1}^+(K_X),\Phi^{-1}(E))=0$.  Thus we get $\Phi^{-1}(E) \in \FF_{i+1}$.  Finally, by Serre duality we have 
\begin{equation*}
\begin{split}
    \Ext^2(T_{i+1}^+,E)^\vee\cong\Hom(E,T_{i+1}^+(K_X))=0,\\
    \Ext^2(T_{i+1}^+(K_X),E)^\vee\cong\Hom(E,T_{i+1}^+)=0,
\end{split}
\end{equation*} since $E\in\TT_i$ implies that $\phi^+_{\min}(E)>\phi^+(T_{i+1}^+)=\phi^+(T_{i+1}^+(K_X))$.  Therefore, $\Phi^1(E)=0$, so $\Phi(E) \in \AA_{i+1}[1]$, as required.

We next claim that $R_{T_{i+1}^+}^{-1}(\AA_{i+1}) \subset \AA_i$.
Let $\Psi$ be the inverse of $\Phi$, and set
$\Psi^p(E):=\HH^p(\Psi(E))$ for any $E\in\Db(X))$.  Recall that for any $E\in\Db(X)$ we have a distinguished triangle 
\begin{equation}
    \Psi(E)\to E\to T_{i+1}^+[2]\otimes\RHom(T_{i+1}^+,E)\oplus T_{i+1}^+(K_X)[2]\otimes\RHom(T_{i+1}^+(K_X),E)\to\Psi(E)[1].
\end{equation}
Then
\begin{equation}\label{eq:Psi-1}
\Hom(T_{i+1}^+,E(K_X)) \otimes T_{i+1}^+ \oplus
\Hom(T_{i+1}^+,E) \otimes T_{i+1}^+(K_X) \cong \Psi^{-1}(E)
\end{equation}
and we have an exact sequence
\begin{equation}\label{eqn:Psi-2}
\begin{CD}
0 @>>> \Ext^1(T_{i+1}^+,E(K_X)) \otimes T_{i+1}^+ \oplus
\Ext^1(T_{i+1}^+,E) \otimes T_{i+1}^+(K_X) @>>> \Psi^0(E) @>>>E \\
 @>{\psi}>> \Ext^2(T_{i+1}^+,E(K_X)) \otimes T_{i+1}^+ \oplus
\Ext^2(T_{i+1}^+,E) \otimes T_{i+1}^+(K_X) @>>> \Psi^1(E) @>>>0.
\end{CD}
\end{equation}
We prove the claim by showing that $\Psi(\FF_{i+1})\subset\AA_i$ and $\Psi(\TT_{i+1})\subset\AA_i[1]$ which suffices by considering the exact sequence \eqref{eqn:canonical short exact sequence}.

First assume that 
$E \in \FF_{i+1}$.
Then $\Psi^{-1}(E)=0$ by \eqref{eq:Psi-1}.  From \eqref{eqn:Psi-2}, we get $\Psi^0(E) \in \FF_i$ and $\Psi^1(E) \in \TT_{i+1}$.
Since
\begin{equation}\label{eq:T_{i+1}}
\begin{split}
\Hom(\Psi(E),T_{i+1}^+[p])& =\Hom(E,\Phi(T_{i+1}^+)[p])=
\Hom(E, T_{i+1}^+(K_X)[p-1])=0,\\
\Hom(\Psi(E),T_{i+1}^+(K_X)[p])& =\Hom(E,\Phi(T_{i+1}^+(K_X))[p])=
\Hom(E, T_{i+1}^+[p-1])=0
\end{split}
\end{equation}
for $p \leq 0$,
$\Psi^1(E) \in \TT_i$.
Therefore $\Psi(E) \in \AA_i$.

Now assume that $E \in \TT_{i+1}$.  Then $\Psi^{-1}(E)\in\FF_i$ by \eqref{eq:Psi-1}.  We must show that $\Psi^0(E)\in\TT_i$ and $\Psi^{1}(E)=\coker \psi=0$.  As a quotient of an object generated by
$T_{i+1}^+$ and $T_{i+1}^+(K_X)$, $\Psi^1(E)=\coker \psi \in \TT_{i+1}$.
Moreover, for the same reason we note that $\im \psi \in \TT_{i+1}$ as $E \in \TT_{i+1}$.  As a subobject of an object generated by
$T_{i+1}^+$ and $T_{i+1}^+(K_X)$, we also have $\im \psi \in \FF_i$.
Hence $\phi^+(\im \psi)=\phi^+(T_{i+1}^+)$.  Similarly, $\coker \psi$ is a $\sigma_+$-semistable object of $\phi^+(\coker
\psi)=\phi^+(T_{i+1}^+)$.  

It follows that $\coker \psi$ and $\im \psi$ are direct sums of $T_{i+1}^+$ and
$T_{i+1}^+(K_X)$.  Now by using \eqref{eq:T_{i+1}} we get 
$$0=\Hom(\Psi(E),T_{i+1}^+(D)[-1])=\Hom(\Psi^1(E),T_{i+1}^+(D)),$$
for $D=0,K_X$.  Thus we have $\Psi^1(E)=\coker \psi=0$.

Writing $\TT_i(E)$ and $\FF_i(E)$ for the components of $E$ in the torsion pair $(\TT_i,\FF_i)$, it follows from $\im\psi\in\FF_i$ that $\TT_i(E)\subset\ker\psi$, and it is easy to see that then $\TT_i(\ker\psi)=\TT_i(E)$.  Applying $\Hom(-,T_{i+1}^+(D))$ $(D=0,K_X)$ to the short exact sequence $$0\to\TT_i(E)\to E\to\FF_i(E)\to 0,$$ we see that $\hom(E,T_{i+1}^+(D))=\hom(\FF_i(E),T_{i+1}^+(D))$ for $D=0,K_X$.  As 
\begin{equation*}
    \begin{split}
        \ext^2(T_{i+1}^+,E)=\hom(E,T_{i+1}^+(K_X)),\\
        \ext^2(T_{i+1}^+,E(K_X))=\hom(E,T_{i+1}^+),
    \end{split}
\end{equation*} it follows from the short exact sequence \begin{equation}\label{eqn:ses for ker psi}0\to\ker\psi/\TT_i(E)\to\FF_i(E)\to\Ext^2(T_{i+1}^+,E(K_X)) \otimes T_{i+1}^+ \oplus
\Ext^2(T_{i+1}^+,E) \otimes T_{i+1}^+(K_X)\to 0\end{equation} that $\Hom(\ker\psi/\TT_i(E),T_{i+1}^+(D))=0$ by the same reasoning.  But this forces $\ker\psi/\TT_i(E)=0$.  Indeed, as $E\in\TT_{i+1}$, we have $\FF_i(E)\in\FF_i\cap\TT_{i+1}$, so $\FF_i(E)$ is a direct sum of copies of $T_{i+1}^+$ and $T_{i+1}^+(K_X)$, and thus so is $\ker\psi/\TT_i(E)$ from \eqref{eqn:ses for ker psi}.  Thus we have $\ker\psi=\TT_i(E)\in\TT_i$ so that $\Psi^0(E)\in\TT_{i+1}$.  Then \eqref{eq:T_{i+1}} implies $\Psi^0(E) \in \TT_i$.  Therefore $\Psi(E)\in \AA_i[1]$, as required.
\end{proof}

As $\AA_i^*$ deals in parallel with objects in $\PP_0(1)$ considered with respect to $\sigma_-$-stability, we have the following result:
\begin{Prop}\label{Prop:equiv2}
$R_{T_{i+1}^-}^{-1}$ induces an equivalence
$\AA_i^* \to \AA_{i+1}^*$.
\end{Prop}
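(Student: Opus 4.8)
The plan is to deduce this from Proposition \ref{Prop:equiv1} rather than to rerun the entire argument, exploiting the shift-duality already isolated in the proof of Proposition \ref{Prop:BB}. The starting observation is that $\AA_i^*$ is an overall shift of a category built by exactly the ``positive-rotation'' tilting procedure that produces $\AA_i$. Writing $\widetilde{\AA}_i:=\langle \TT_i^*[-1],\FF_i^* \rangle$ for the direct $\sigma_-$-analogue of $\AA_i$, one has $\AA_i^*=\langle \TT_i^*,\FF_i^*[1] \rangle=\widetilde{\AA}_i[1]$. Since every spherical or exceptional twist is exact and hence commutes with the shift functor $[1]$, the functor $R_{T_{i+1}^-}^{-1}$ induces an equivalence $\AA_i^*\to\AA_{i+1}^*$ if and only if it induces one $\widetilde{\AA}_i\to\widetilde{\AA}_{i+1}$. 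So it suffices to treat the unshifted categories.

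First I would record the two inputs coming from the wall. Because $\sigma_+$ and $\sigma_-$ lie in opposite chambers adjacent to $\WW$, the $\sigma_0$-stable objects $T_j$ carry reversed phase orderings on the two sides: the descending chain $\phi^+(T_1^+)>\phi^+(T_2^+)>\cdots$ becomes the ascending chain $\phi^-(T_1^-)<\phi^-(T_2^-)<\cdots$. Consequently $(\TT_i^*,\FF_i^*)$ is precisely the $\sigma_-$-mirror of $(\TT_i,\FF_i)$, with the generators $T_1^-,\dots,T_i^-(K_X)$ now living in the free part. Second, I would record the mirror of the structural identities used in Proposition \ref{Prop:equiv1}: inverting the relations $R_{T_{i+1}^+}(T_{i+1}^+)=T_{i+1}^+(K_X)[-1]$ and $R_{T_{i+1}^+}(T_{i+1}^+(K_X))=T_{i+1}^+[-1]$ shows that $R_{T_{i+1}^-}^{-1}$ sends $T_{i+1}^-\mapsto T_{i+1}^-(K_X)[1]$ and $T_{i+1}^-(K_X)\mapsto T_{i+1}^-[1]$. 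Thus the inverse twist shifts these generators \emph{up} by one; a quick check confirms this is exactly what moves $T_{i+1}^-[-1]\in\TT_i^*[-1]$ to the unshifted copy $T_{i+1}^-(K_X)\in\FF_{i+1}^*$, mirroring how the forward twist moved $T_{i+1}^+\in\FF_i$ down to $T_{i+1}^+(K_X)[-1]\in\TT_{i+1}[-1]$ on the $\sigma_+$ side.

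With these identifications, the verifications $R_{T_{i+1}^-}^{-1}(\widetilde{\AA}_i)\subset\widetilde{\AA}_{i+1}$ and $R_{T_{i+1}^-}(\widetilde{\AA}_{i+1})\subset\widetilde{\AA}_i$ are formally the same computation as in Proposition \ref{Prop:equiv1}: split a general object by its canonical sequence in the tilted heart, write the defining triangle of the inverse twist, and compute the cohomology objects $\Phi^p,\Psi^p$ together with the relevant $\Hom$-groups. Every vanishing invoked there comes from a phase inequality combined with Serre duality and the numerical $K$-triviality of $X$ (as in $\Hom(E,E'[2])\cong\Hom(E',E(K_X))^\vee$), and each inequality holds here with its direction reversed, in accordance with the reversed phase ordering. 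Hence Proposition \ref{Prop:equiv1}, read with $\sigma_+$ replaced by $\sigma_-$ and the forward twist replaced by its inverse, yields the equivalence $\widetilde{\AA}_i\to\widetilde{\AA}_{i+1}$, and therefore $\AA_i^*\to\AA_{i+1}^*$.

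I expect the main obstacle to be bookkeeping rather than a new idea: one must check that the single dictionary ``$\sigma_+\leftrightarrow\sigma_-$, forward twist $\leftrightarrow$ inverse twist, torsion $\leftrightarrow$ free, $[-1]\leftrightarrow[1]$'' stays internally consistent throughout — in particular that the irreducible objects of $\widetilde{\AA}_i$ (the mirror of Remark \ref{rem:simple-objects}) and the direction of each $\Hom$-vanishing line up after the shift. The cleanest safeguard against sign and shift errors is to route the $[-1]\leftrightarrow[1]$ swap through the relation $(\BB[1])'=\BB''$ established in the proof of Proposition \ref{Prop:BB}, which disposes of that part uniformly and leaves only the genuine $\sigma_+\leftrightarrow\sigma_-$ reversal to be tracked by hand.
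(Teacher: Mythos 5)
Your central step is not a valid deduction, and that is where the gap lies. Proposition \ref{Prop:equiv1} ``read with $\sigma_+$ replaced by $\sigma_-$ and the forward twist replaced by its inverse'' is not an instance of Proposition \ref{Prop:equiv1}: no relabeling of the data produces that reading. The only genuine symmetry available is the exchange of the two chambers, and because the indexing of the $T_j$ is pinned to the phase conventions ($\phi^+(T_1^+)>\cdots>\phi^+(T_0^+)$, reversed for $\sigma_-$), that exchange forces the reindexing $T_j^{\pm}\mapsto T_{1-j}^{\mp}$; applying Proposition \ref{Prop:equiv1} to the relabeled data yields the second part of Proposition \ref{Prop:equiv3} (forward twists $R_{T_i^-}$ on the negative-index categories $\AA_i$), not Proposition \ref{Prop:equiv2}. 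What actually relates \ref{Prop:equiv1} to \ref{Prop:equiv2} is an arrow-reversing (opposite-category) duality --- which is exactly why your dictionary must also include ``torsion $\leftrightarrow$ free'' and ``$[-1]\leftrightarrow[1]$'', and why kernels/cokernels, subobjects/quotients, injectivity/surjectivity all trade places. No functor implements this duality (relabeling chambers does not reverse arrows, and neither does $(\BB[1])'=\BB''$, which only fixes the shift conventions), so there is no meta-theorem transporting the proof: your deferred ``bookkeeping'' is the entire proof, carried out with every arrow reversed. Concretely, the hardest step of Proposition \ref{Prop:equiv1} --- the case $E\in\TT_{i+1}$ of the $\Psi$-analysis, where one shows $\coker\psi=0$ and $\ker\psi=\TT_i(E)$ via \eqref{eqn:ses for ker psi} --- dualizes to an assertion about $\coker\varphi$ and closure of torsion parts under quotients that must be proved afresh; nothing in your write-up does this, and it does not follow from the statement of \ref{Prop:equiv1}.

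For the record, the paper does not reduce Proposition \ref{Prop:equiv2} to Proposition \ref{Prop:equiv1} at all: it runs a direct, and in fact shorter, computation with the forward twist $\Phi=R_{T_{i+1}^-}$, proving the single inclusion $\Phi(\AA_{i+1}^*)\subset\AA_i^*$ by splitting into $E\in\FF_{i+1}^*$ and $E\in\TT_{i+1}^*$ and using the vanishings \eqref{eq:Phi2}; the case $E\in\FF_{i+1}^*$ is easy precisely because such $E$ is an iterated extension of the $T_j^-(D)$ with $j\le i+1$, so $\ker\varphi$, $\im\varphi$ and $\Phi^1(E)$ are all controlled by $T_{i+1}^-$ and $T_{i+1}^-(K_X)$. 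Your preliminary observations are correct and would fit into such a direct proof --- the identity $\AA_i^*=\widetilde{\AA}_i[1]$, the formulas $R_{T_{i+1}^-}^{-1}(T_{i+1}^-)=T_{i+1}^-(K_X)[1]$ and $R_{T_{i+1}^-}^{-1}(T_{i+1}^-(K_X))=T_{i+1}^-[1]$, and the placement of $T_1^-,\dots,T_{i+1}^-(K_X)$ in the free parts --- but they only account for the generators; the argument for arbitrary objects of $\TT_i^*$ and $\FF_i^*$ is what Proposition \ref{Prop:equiv2} requires, and it is missing.
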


\begin{proof}
We set $\Phi:=R_{T_{i+1}^-}$ and $\Psi:=R_{T_{i+1}^-}^{-1}$.
We only show that 
$\Phi(\AA_{i+1}^*) \subset \AA_i^*$.
We note that 
\begin{equation}\label{eq:Phi2}
\begin{split}
\Hom(T_{i+1}^-,\Phi(E)[p])& =\Hom(\Phi(T_{i+1}^-(K_X))[1],\Phi(E)[p])
=\Hom(T_{i+1}^-(K_X),E[p-1])=0,\\
\Hom(T_{i+1}^-(K_X),\Phi(E)[p])&=\Hom(\Phi(T_{i+1}^-)[1],\Phi(E)[p])
=\Hom(T_{i+1}^-,E[p-1])=0
\end{split}
\end{equation}
for $E \in \PP_0(1)$ and $p \leq 0$.
Assume that $E \in \FF_{i+1}^*$.
For the morphism
\begin{equation}
\varphi:\Hom(T_{i+1}^-,E) \otimes T_{i+1}^- \oplus  
\Hom(T_{i+1}^-(K_X),E) \otimes T_{i+1}^- (K_X)
\to E,
\end{equation}
$\ker\varphi$ and $\im \varphi$ are generated by
$T_{i+1}^-,T_{i+1}^- (K_X)$.
By \eqref{eq:Phi2}, $\Phi^{-1}(E)=0$ and $\Phi^0(E) \in \FF_i^*$.
Since $\Phi^1(E)$ is generated by
$T_{i+1}^-,T_{i+1}^- (K_X)$,
$\Phi^1(E) \in \TT_i^*$.
Therefore $\Phi(E[1]) \in \AA_i^*$.

Assume that $E \in \TT_{i+1}^*$.
Then $\Phi^1(E)=0$.
Since $\Phi^{-1}(E) \in \FF_{i+1}^*$, \eqref{eq:Phi2} implies
$\Phi^{-1}(E) \in \FF_i^*$.
Since $\coker \varphi, T_{i+1}^-, T_{i+1}^- (K_X) \in \TT_i^*$,
$\Phi^0(E) \in \TT_i^*$.
Therefore $\Phi(E) \in \AA_i^*$.
\end{proof}

For negative $i$, we must make the following definition.
\begin{Def}
Assume that $i \leq 0$.
\begin{enumerate}
\item[(1)]
Let $(\TT_i^*,\FF_i^*)$ be the torsion pair of $\PP_0(1)$ such that
\begin{enumerate}
\item
$\TT_i^*$ is generated by $\sigma_+$-stable objects $E$ with 
$\phi^+(E) \geq \phi^+(T_i^+)$.
\item
$\FF_i^*:=\langle T_0^+,T_0^+ (K_X),...,T_{i+1}^+, T_{i+1}^+ (K_X) \rangle$.
\end{enumerate}
Let $\AA_i^*=\langle \TT_i^*,\FF_i^*[1] \rangle$ be the tilting.
\item[(2)]
Let $(\TT_i,\FF_i)$ be a torsion pair of $\PP_0(1)$ such that
\begin{enumerate}
\item
$\TT_i:=\langle T_0^-,T_0^- (K_X),...,T_{i+1}^-, T_{i+1}^- (K_X) \rangle$.
\item
$\FF_i$ is generated by $\sigma_-$-stable objects $E$ with 
$\phi^-(E) \leq \phi^-(T_i^-)$.
\end{enumerate}
Let $\AA_i=\langle \TT_i[-1],\FF_i\rangle$ be the tilting.
\end{enumerate}
\end{Def}
Since $\FF_0^*=\TT_0=0$,
we have $\AA_0^*=\AA_0=\PP_0(1)$.
We also have a similar result to Remark \ref{rem:simple-objects}.

Moreover, we also have the following claims whose proofs are similar to those of Proposition \ref{Prop:equiv1} and Proposition \ref{Prop:equiv2}.
\begin{Prop}\label{Prop:equiv3}
Assume that $i \leq 0$.
\begin{enumerate}
\item
We have an equivalence
$R_{T_i^+}^{-1}:\AA_i^* \to \AA_{i-1}^*$.
\item
We have an equivalence
$R_{T_i^-}:\AA_i \to \AA_{i-1}$.
\end{enumerate}
\end{Prop}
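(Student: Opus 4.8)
The plan is to follow the method of Propositions \ref{Prop:equiv1} and \ref{Prop:equiv2} essentially verbatim after re-indexing, using two standing facts: an exceptional (weakly spherical) reflection $R_{T_i^{\pm}}$, being defined by \eqref{eqn:weakly spherical reflection}, is an autoequivalence of $\Db(X)$; and if $\AA\subseteq\BB$ for two hearts of bounded t-structures, then $\AA=\BB$. Consequently, for each part it suffices to prove a single inclusion of hearts. For part (1) I would show $R_{T_i^+}(\AA_{i-1}^*)\subset\AA_i^*$: the left-hand side is again a bounded heart, so the containment forces equality, whence $R_{T_i^+}^{-1}\colon\AA_i^*\to\AA_{i-1}^*$ is an equivalence. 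This is the exact analogue of the inclusion $\Phi(\AA_{i+1}^*)\subset\AA_i^*$ checked in the proof of Proposition \ref{Prop:equiv2}, now with $(T_i^+,\sigma_+,i-1)$ in the role of $(T_{i+1}^-,\sigma_-,i+1)$. Likewise, for part (2) I would show $R_{T_i^-}(\AA_i)\subset\AA_{i-1}$, the analogue of the forward inclusion $\Phi(\AA_i)\subset\AA_{i+1}$ from the proof of Proposition \ref{Prop:equiv1}, with $(T_i^-,\sigma_-,i-1)$ in the role of $(T_{i+1}^+,\sigma_+,i+1)$.

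For part (1), set $\Phi:=R_{T_i^+}$ and $\Phi^p(E):=\HH^p(\Phi(E))$. First I would record the reflection identities $\Phi(T_i^+)=T_i^+(K_X)[-1]$ and $\Phi(T_i^+(K_X))=T_i^+[-1]$, together with the vanishing $\Ext^p(T_i^+,E)=\Ext^p(T_i^+(K_X),E)=0$ for $p\neq0,1,2$ and $E\in\PP_0(1)$; these give the same two exact sequences computing $\Phi^{-1}(E)$, $\Phi^0(E)$, $\Phi^1(E)$ as in the proof of Proposition \ref{Prop:equiv1}, with $\Phi^1(E)$ always generated by $T_i^+,T_i^+(K_X)$. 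The argument then splits according to the canonical sequence $0\to F[1]\to E\to T\to 0$ of an object of $\AA_{i-1}^*=\langle\TT_{i-1}^*,\FF_{i-1}^*[1]\rangle$. For $F\in\FF_{i-1}^*=\langle T_0^+,\dots,T_i^+\rangle$ one runs the computation of Proposition \ref{Prop:equiv2} verbatim to get $\Phi^{-1}(F)=0$, $\Phi^0(F)\in\FF_i^*=\langle T_0^+,\dots,T_{i+1}^+\rangle$, and $\Phi^1(F)\in\TT_i^*$, the last point being that the reflection carries the pair $T_i^+,T_i^+(K_X)$, which lie in $\FF_{i-1}^*$, into $\TT_i^*$ (indeed $\phi^+(T_i^+)<\phi^+(T_{i-1}^+)$, so $T_i^+\in\FF_{i-1}^*$ while $T_i^+\in\TT_i^*$); hence $\Phi(F[1])\in\AA_i^*$. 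For $T\in\TT_{i-1}^*$, Serre duality gives $\Ext^2(T_i^+,T)^\vee\cong\Hom(T,T_i^+(K_X))=0$ (using $\phi^+(T)\geq\phi^+(T_{i-1}^+)>\phi^+(T_i^+)$), so $\Phi^1(T)=0$; the $\Hom$-vanishing analogous to \eqref{eq:Phi2} forces $\Phi^{-1}(T)=0$; and $\Phi^0(T)\in\TT_i^*$ because $\coker\varphi$ is a quotient of $T\in\TT_{i-1}^*\subset\TT_i^*$ while the remaining summands $T_i^+,T_i^+(K_X)$ lie in $\TT_i^*$. Thus $\Phi(T)\in\AA_i^*$, completing the inclusion $\Phi(\AA_{i-1}^*)\subset\AA_i^*$.

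Part (2) is the mirror computation under $\sigma_+\leftrightarrow\sigma_-$, with $\AA_i=\langle\TT_i[-1],\FF_i\rangle$ in place of the starred categories and the forward reflection $\Phi:=R_{T_i^-}$ playing the role of $R_{T_{i+1}^+}$ in Proposition \ref{Prop:equiv1}; here $T_i^-\in\FF_i$ is carried into $\TT_{i-1}=\langle T_0^-,\dots,T_i^-\rangle$, and one checks $\Phi(\FF_i)\subset\AA_{i-1}$ and $\Phi(\TT_i)\subset\AA_{i-1}[1]$ by the identical cohomological bookkeeping (stability plus Serre duality), then invokes the heart-containment lemma. Throughout, the analogue of Remark \ref{rem:simple-objects} identifying the irreducible objects of $\AA_i^*$ and $\AA_i$ for $i\leq 0$ is available and serves as a useful consistency check that the reflections match up the simple objects on the two sides.

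I expect the only genuine work to be the phase and $\Ext$-bookkeeping in the negative-index regime: relative to the positive-index case one must keep straight that the finite-length subcategories $\FF_{\bullet}^*$ and $\TT_{\bullet}$ and the $\sigma_\pm$-phase orderings of the $T_n^\pm$ are reversed, and then verify all the resulting $\Hom$- and $\Ext^2$-vanishings so that each $\Phi^p$ lands in the prescribed torsion or torsion-free part. Choosing to prove only the single "forward" inclusion in each part, and upgrading it to an equality via the heart-containment lemma, is precisely what lets me avoid the more delicate inverse computation (the analogue of the $\ker\psi/\TT_i(E)$ analysis at the end of the proof of Proposition \ref{Prop:equiv1}).
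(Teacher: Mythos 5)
Your proposal is correct and takes essentially the same route as the paper: the paper's own justification for \cref{Prop:equiv3} is precisely that the proofs are the re-indexed analogues of \cref{Prop:equiv1,Prop:equiv2}, and your dictionary $(T_i^+,\sigma_+,\,i-1)\leftrightarrow(T_{i+1}^-,\sigma_-,\,i+1)$ for part (1) and $(T_i^-,\sigma_-,\,i-1)\leftrightarrow(T_{i+1}^+,\sigma_+,\,i+1)$ for part (2) is the right one. Your device of proving a single inclusion of hearts and then invoking the standard fact that a bounded heart contained in another bounded heart must equal it is sound, and it is consonant with the paper's own economy: the proof of \cref{Prop:equiv2} likewise establishes only $\Phi(\AA_{i+1}^*)\subset\AA_i^*$.

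One intermediate claim is overstated, though it does not damage the argument. In the torsion case $T\in\TT_{i-1}^*$, the $\Hom$-vanishing analogous to \eqref{eq:Phi2} does \emph{not} force $\Phi^{-1}(T)=0$. The kernel of the evaluation map $\varphi\colon\Hom(T_i^+,T)\otimes T_i^+\oplus\Hom(T_i^+(K_X),T)\otimes T_i^+(K_X)\to T$ is a subobject of a direct sum of copies of $T_i^+,T_i^+(K_X)$, and such a subobject can be nonzero while receiving no maps from $T_i^+(D)$ (kernels of evaluation maps out of copies of a stable object are typically not again sums of that object; compare $\ker\bigl(H^0(\OO_{\P^2}(1))\otimes\OO_{\P^2}\to\OO_{\P^2}(1)\bigr)=\Omega_{\P^2}(1)$). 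The vanishing $\Phi^{-1}=0$ is available only in the \emph{free} case, where the image of $\varphi$ sits inside an object of $\FF_{i-1}^*$ and hence forces kernel and image to be iterated extensions of $T_i^+(D)$. What the torsion case of \cref{Prop:equiv2} actually gives, and what you need, is the weaker statement $\Phi^{-1}(T)\in\FF_i^*$: since $\FF_{i-1}^*$ is the torsion-free part of a torsion pair it is closed under subobjects, so $\Phi^{-1}(T)\in\FF_{i-1}^*$; one checks $\TT_i^*\cap\FF_{i-1}^*=\langle T_i^+,T_i^+(K_X)\rangle$, so a nonzero $(\TT_i^*,\FF_i^*)$-torsion part of $\Phi^{-1}(T)$ would produce a nonzero map from some $T_i^+(D)$, which the $\Hom$-vanishing excludes. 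With $\Phi^{-1}(T)\in\FF_i^*$, $\Phi^0(T)\in\TT_i^*$, and $\Phi^1(T)=0$ one still concludes $\Phi(T)\in\AA_i^*$, and the rest of your plan goes through unchanged.
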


\subsubsection{Preservation of stability}
Having defined the abelian categories $\AA_i$ and $\AA_i^*$, we will relate a certain stability on them to $\sigma_\pm$-stability.  
\begin{Ex}\label{ex:BB}
Recall Definition \ref{defn:B'} and Proposition \ref{Prop:BB}, and let $\BB=\PP_0(1)$.
\begin{enumerate}
\item[(1)]
We take an orientation preserving injective homomorphism $Z:\HH \to \C$ such that
$$Z(\w_0),Z(\w_1) \in \H \cup \R_{<0},\mbox{ and }Z(\w_1)/Z(\w_0) \in \H.$$
The second condition means that $\phi_Z(\w_1)>\phi_Z(\w_0)$.  Then $E \in \PP_0(1)$ is $\sigma_+$-semistable if and only if $E$ is $Z$-semistable.  In this case, $\AA_i$ $(i \geq 0)$ is an example of $\BB'$ for some $\theta \geq 0$, and $\AA_i^*$ $(i \leq 0)$ is an example of $\BB''$ for some $\theta \leq 0$. 
\item[(2)]
We take an orientation reversing injective homomorphism $Z:\HH \to \C$ such that 
$$Z(\w_0),Z(\w_1) \in \H \cup \R_{<0},\mbox{ and }Z(\w_0)/Z(\w_1) \in \H.$$
Then $E \in \PP_0(1)$ is $\sigma_-$-semistable if and only if $E$ is $Z$-semistable.  In this case, $\AA_i$ $(i \leq 0)$ is an example of $\BB'$, and $\AA_i^*$ $(i \geq 0)$ is an example of $\BB''$. 
\end{enumerate}
\end{Ex}
Now we can finally prove a sequence of comparison results that allow us to reduce our analysis to the case of minimal Mukai vectors.
\begin{Prop}\label{Prop:isom-pm}
\begin{enumerate}
\item
\begin{enumerate}
\item
$R_{T_1^+}:\AA_0 \to \AA_1$ induces an isomorphism
$M_{\sigma_-}(\v) \to M_{\sigma_+}(\v')$, where  $\v'=R_{\w_1}(\v)$.
\item
$R_{T_1^-}^{-1}:\AA_0^* \to \AA_1^*$ induces an isomorphism
$M_{\sigma_+}(\v) \to M_{\sigma_-}(\v')$, where  $\v'=R_{\w_1}^{-1}(\v)$.
\end{enumerate}
\item
\begin{enumerate}
\item
$R_{T_0^-}:\AA_0 \to \AA_{-1}$ induces an isomorphism
$M_{\sigma_+}(\v) \to M_{\sigma_-}(\v')$, where  $\v'=R_{\w_0}(\v)$.
\item
$R_{T_0^+}^{-1}:\AA_0^* \to \AA_{-1}^*$ induces an isomorphism
$M_{\sigma_-}(\v) \to M_{\sigma_+}(\v')$, where  $\v'=R_{\w_0}^{-1}(\v)$.
\end{enumerate}
\end{enumerate}
\end{Prop}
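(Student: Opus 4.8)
The plan is to prove statement (1)(a) in full and then obtain the other three parts by the evident symmetry of the construction (swapping the roles of $\sigma_+$ and $\sigma_-$, of $\AA_i$ and $\AA_i^*$, and of $\w_0$ and $\w_1$, and invoking Proposition \ref{Prop:equiv2} or Proposition \ref{Prop:equiv3} in place of Proposition \ref{Prop:equiv1}). So fix $\Phi := R_{T_1^+}$. By Proposition \ref{Prop:equiv1} with $i=0$, $\Phi$ is an equivalence of \emph{abelian} categories $\AA_0 = \PP_0(1) \to \AA_1$, and on cohomology it acts by the reflection $R_{\w_1}$. Since $R_{\w_1}$ is an isometry of $\HH$ with $R_{\w_1}(\w_1)=-\w_1$ (using $\w_1^2=-1$), it sends $\v$ to $\v' = R_{\w_1}(\v)$ with $(\v')^2 = \v^2 > 0$. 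Because every $\sigma_-$-semistable object of class $\v$ has phase $1$ after our normalization, and hence lies in $\PP_0(1)=\AA_0$, the functor $\Phi$ is defined on all of $M_{\sigma_-}(\v)$ and carries it into $\AA_1$ with the correct Mukai vector $\v'$.

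The crux is to match the two notions of stability through $\Phi$, for which I would use the machinery of Definition \ref{defn:B'}, Proposition \ref{Prop:BB}, and Example \ref{ex:BB}. By Example \ref{ex:BB}(2), $\sigma_-$-semistability on $\AA_0=\PP_0(1)$ is exactly $Z$-semistability for a suitable orientation-reversing central charge, while by Example \ref{ex:BB}(1) the heart $\AA_1$ (with $i=1\geq 0$) is realized as a tilt $\BB'$ of $\PP_0(1)$ for the orientation-preserving $\sigma_+$-central charge, with rotated charge $Z'=Z_\theta$. The point I would verify by a direct computation on the basis $\{\w_0,\w_1\}$ (using $R_{\w_1}(\w_1)=-\w_1$, that $R_{\w_1}$ fixes $\w_1^{\perp}$, and that every class in $C_\WW$ has phase $1$ at $\sigma_0$) is that $\Phi$ intertwines these two stability functions: the $\sigma_-$-charge on $\AA_0$ is carried by $R_{\w_1}$ to a $\widetilde{\GL}_2^+(\R)$-rotation of the $\sigma_+$-charge $Z'$ on $\AA_1$. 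Since an exact equivalence of hearts that matches central charges up to such a rotation automatically takes semistable (resp. stable) objects to semistable (resp. stable) objects and preserves Jordan--H\"older factors, this shows $\Phi$ carries $\sigma_-$-semistable objects of $\AA_0$ to $Z'$-semistable objects of $\AA_1$.

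It then remains to see that these $Z'$-semistable image objects of class $\v'$ are genuine points of $M_{\sigma_+}(\v')$, i.e.\ honest $\sigma_+$-semistable objects of $\PP_0(1)$. For this I invoke the dichotomy of Proposition \ref{Prop:BB}: a $Z'$-semistable $F\in\AA_1$ either lies in $\FF_1$ (and is $\sigma_+$-semistable in $\PP_0(1)$) or satisfies $F[1]\in\TT_1$. The second alternative is impossible here, since $\TT_1=\langle T_1^+, T_1^+(K_X)\rangle$ consists of iterated extensions of objects of class $\w_1$, so any such $F$ would satisfy $\v(F)^2 \le 0$, contradicting $(\v')^2=\v^2>0$. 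Hence $\Phi$ restricts to a bijection between $\sigma_-$-semistable objects of class $\v$ and $\sigma_+$-semistable objects of class $\v'$, compatibly with S-equivalence. Finally, because $\Phi$ is of Fourier--Mukai type it operates in families and therefore descends to a morphism of coarse moduli spaces $M_{\sigma_-}(\v)\to M_{\sigma_+}(\v')$; the equivalence $\Phi^{-1}=R_{T_1^+}^{-1}\colon\AA_1\to\AA_0$ furnishes the inverse, so the map is an isomorphism.

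The main obstacle I anticipate is the central-charge bookkeeping in the second paragraph: correctly pinning down the orientation (Example \ref{ex:BB}(1) versus (2)) and the precise $\widetilde{\GL}_2^+(\R)$-rotation $\theta$ so that ``$Z'$-semistable in $\AA_1$'' is literally ``$\sigma_+$-semistable'', while tracking the shift conventions of Proposition \ref{Prop:BB} under $\Phi$. Once that identity of stability functions is in hand, the remaining parts (1)(b), (2)(a), (2)(b) follow \emph{mutatis mutandis}, using Proposition \ref{Prop:equiv2} and Proposition \ref{Prop:equiv3}, the category $\AA_i^*$ in place of $\AA_i$, and $\w_0$ in place of $\w_1$ where appropriate.
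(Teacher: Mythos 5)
Your proposal is correct and follows essentially the same route as the paper: Proposition \ref{Prop:equiv1} (resp. \ref{Prop:equiv2}, \ref{Prop:equiv3}) supplies the equivalence of hearts, Example \ref{ex:BB} together with Proposition \ref{Prop:BB} supplies the comparison of stabilities, and the decisive mechanism is exactly the one you isolate — that the reflection reverses the orientation of $\HH$, so it carries the orientation-reversing $\sigma_-$-charge to (a $\GL_2^+(\R)$-translate of) the orientation-preserving $\sigma_+$-charge, and vice versa. The details you flag as the "main obstacle" (the orientation bookkeeping and ruling out the $\TT_1$-shift alternative in Proposition \ref{Prop:BB} via $(\v')^2>0$) are precisely the implicit content of the paper's terse proof, so there is no gap.
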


\begin{proof}
By Example \ref{ex:BB}, we can apply Proposition \ref{Prop:BB}
to compare the stabilities on $\AA_i$ and $\AA_i^*$ to 
$\sigma_\pm$-stability on $\AA_0=\AA_0^*$.
Since the orientation of $\HH$ is reversed under the reflection,
the claims follow from Proposition \ref{Prop:BB} and 
Propositions \ref{Prop:equiv1},
\ref{Prop:equiv2}, \ref{Prop:equiv3}.
\end{proof}
This result has the following significant corollary:

\begin{Cor}\label{Cor:OrthogonalIsomorphism 1}
If $\langle \v,\w_1 \rangle=0$,
then $R_{T_1}$ induces an isomorphism
$M_{\sigma_-}(\v) \to M_{\sigma_+}(\v)$. 
If $\langle \v,\w_0 \rangle=0$,
then $R_{T_0}$ induces an isomorphism
$M_{\sigma_+}(\v)\to M_{\sigma_-}(\v)$. 
\end{Cor}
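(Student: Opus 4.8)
The plan is to deduce the corollary directly from Proposition \ref{Prop:isom-pm}, the point being that the orthogonality hypothesis turns $\v$ into a fixed vector of the relevant reflection. First I would recall the cohomological action of the weakly spherical reflection attached to an exceptional object, namely $R_{\w_i}(\v)=\v+2\langle\v,\w_i\rangle\w_i$, as recorded just before \eqref{eqn:ExceptionalVectorsReflections}. Here $T_1=T_1^+=T_1^-$ and $T_0=T_0^+=T_0^-$ are the $\sigma_0$-stable exceptional objects with $\v(T_i)=\w_i$, so the reflections $R_{T_1}$ and $R_{T_0}$ are unambiguous (independent of the $\pm$ decoration) and their induced actions on $\Hal(X,\Z)$ are exactly $R_{\w_1}$ and $R_{\w_0}$, respectively.

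For the first statement, I would substitute $\langle\v,\w_1\rangle=0$ into this formula to obtain $R_{\w_1}(\v)=\v+2\cdot 0\cdot\w_1=\v$; that is, $\v$ lies in the fixed locus of $R_{\w_1}$. Applying Proposition \ref{Prop:isom-pm}(1)(a), which produces an isomorphism $M_{\sigma_-}(\v)\to M_{\sigma_+}(\v')$ induced by $R_{T_1^+}=R_{T_1}$ with $\v'=R_{\w_1}(\v)$, and inserting $\v'=\v$, yields the desired isomorphism $M_{\sigma_-}(\v)\to M_{\sigma_+}(\v)$. The second statement is identical: if $\langle\v,\w_0\rangle=0$ then $R_{\w_0}(\v)=\v$, so Proposition \ref{Prop:isom-pm}(2)(a), giving $R_{T_0^-}=R_{T_0}$ inducing $M_{\sigma_+}(\v)\to M_{\sigma_-}(\v')$ with $\v'=R_{\w_0}(\v)=\v$, produces $M_{\sigma_+}(\v)\to M_{\sigma_-}(\v)$.

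Since the whole argument collapses to the single algebraic identity $R_{\w_i}(\v)=\v$ under orthogonality, there is no genuine obstacle to overcome; all the substance is carried by Proposition \ref{Prop:isom-pm}. The only step deserving a moment's care is bookkeeping: one must match the directions of the two isomorphisms (from $\sigma_-$ to $\sigma_+$ in the $\w_1$ case, from $\sigma_+$ to $\sigma_-$ in the $\w_0$ case) with the correct parts (1)(a) and (2)(a) of Proposition \ref{Prop:isom-pm}, and confirm that the functors $R_{T_1^+}$ and $R_{T_0^-}$ appearing there are the same as $R_{T_1}$ and $R_{T_0}$, which holds precisely because the objects $T_0,T_1$ are $\sigma_0$-stable.
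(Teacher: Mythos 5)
Your proposal is correct and follows exactly the paper's own argument: the paper's proof is precisely to apply Proposition \ref{Prop:isom-pm} and observe that orthogonality gives $R_{\w_i}(\v)=\v+2\langle\v,\w_i\rangle\w_i=\v$ for $i=0,1$. Your extra bookkeeping (matching parts (1)(a) and (2)(a) of the proposition, and noting $T_i^+=T_i^-=T_i$) just makes explicit what the paper leaves implicit.
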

\begin{proof}
We apply Proposition \ref{Prop:isom-pm} and observe that for $i=0,1$ we get $R_{\w_i}(\v)=\v+2\langle\v,\w_i\rangle=\v$.
\end{proof}

In order to use the sequence of weakly spherical reflections to reduce any $\v$ to the minimal Mukai vector in its orbit, as in \eqref{eqn:OrbitOfv0}, we consider the next step in the program beyond that in Proposition \ref{Prop:isom-pm}.
\begin{Prop}\label{Prop:isom-2}
\begin{enumerate}
\item\label{enum:iPositive}
Assume that $i>0$.
\begin{enumerate}
\item
$R_{T_{i+1}^+} \circ R_{T_i^+}:\AA_{i-1} \to \AA_{i+1}$
induces an isomorphism
$M_{\sigma_+}(\v) \to M_{\sigma_+}(\v')$, where 
$\v'=R_{\w_{i+1}} \circ R_{\w_i}(\v)$.
\item
$R_{T_{i+1}^-}^{-1} \circ R_{T_i^-}^{-1}:\AA_{i-1}^* \to \AA_{i+1}^*$
induces an isomorphism
$M_{\sigma_-}(\v) \to M_{\sigma_-}(\v')$, where
$\v'=R_{\w_{i+1}}^{-1} \circ R_{\w_i}^{-1}(\v)$.
\end{enumerate}
\item\label{enum:iNegative}
Assume that $i < 0$.
\begin{enumerate}
\item
$R_{T_{i}^-} \circ R_{T_{i+1}^-}:\AA_{i+1} \to \AA_{i-1}$
induces an isomorphism
$M_{\sigma_-}(\v) \to M_{\sigma_-}(\v')$, where
$\v'=R_{\w_{i}} \circ R_{\w_{i+1}}(\v)$.
\item
$R_{T_{i}^+}^{-1} \circ R_{T_{i+1}^+}^{-1}:\AA_{i+1}^* \to \AA_{i-1}^*$
induces an isomorphism
$M_{\sigma_+}(\v) \to M_{\sigma_+}(\v')$, where
$\v'=R_{\w_{i}}^{-1} \circ R_{\w_{i+1}}^{-1}(\v)$.
\end{enumerate}
\end{enumerate}
\end{Prop}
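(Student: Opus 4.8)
The plan is to prove all four statements exactly as in \cref{Prop:isom-pm}, the only difference being that we compose \emph{two} of the abelian-category equivalences from \cref{Prop:equiv1,Prop:equiv2,Prop:equiv3} and then compare semistability on the source and target hearts by means of \cref{Prop:BB}. The organizing principle is that a single reflection $R_{T_j^{\pm}}$ reverses the orientation of $\HH$ and therefore---through Example~\ref{ex:BB}---interchanges the $\sigma_+$- and $\sigma_-$-interpretations of semistability. Composing two reflections restores the original orientation, which is precisely why each isomorphism in \cref{Prop:isom-2} has matching subscripts $M_{\sigma_+}\to M_{\sigma_+}$ or $M_{\sigma_-}\to M_{\sigma_-}$, in contrast to the mixed subscripts of \cref{Prop:isom-pm}.

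First I would carry out statement (1)(a). As $i>0$ we have $i-1\geq 0$, so \cref{Prop:equiv1} furnishes equivalences $R_{T_i^+}\colon\AA_{i-1}\to\AA_i$ and $R_{T_{i+1}^+}\colon\AA_i\to\AA_{i+1}$, and their composite is the functor in the statement. By Example~\ref{ex:BB}(1), for the orientation-preserving $Z$ with $\phi_Z(\w_1)>\phi_Z(\w_0)$ the hearts $\AA_{i-1},\AA_i,\AA_{i+1}$ are each of the form $\BB'$ (for a suitable $\theta\geq 0$) relative to $\BB=\PP_0(1)$, and $\sigma_+$-semistability on $\PP_0(1)$ is exactly $Z$-semistability. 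Applying \cref{Prop:BB}(1) to each of the two rotation-and-tilt steps shows that $E\in\AA_{i-1}$ is $\sigma_+$-semistable if and only if its image in $\AA_{i+1}$ is $\sigma_+$-semistable. Since each reflection acts on cohomology by $R_{\w}(\v)=\v+2\langle\v,\w\rangle\w$, the class is carried to $\v'=R_{\w_{i+1}}\circ R_{\w_i}(\v)$; and because the composite is an autoequivalence of $\Db(X)$ it operates in families, upgrading the bijection on semistable objects to the asserted isomorphism $M_{\sigma_+}(\v)\to M_{\sigma_+}(\v')$.

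The remaining three statements follow the identical template, with the heart/side dictionary supplied by Example~\ref{ex:BB}. Statement (1)(b) uses the starred hearts $\AA_{i-1}^*,\AA_i^*,\AA_{i+1}^*$ ($i-1\geq 0$), the equivalences of \cref{Prop:equiv2}, and the orientation-reversing $Z$ of Example~\ref{ex:BB}(2), under which these hearts are of type $\BB''$ and $Z$-semistability is $\sigma_-$-semistability, so \cref{Prop:BB}(2) applies. For statement (2) we have $i<0$, hence $i+1\leq 0$ and all indices are $\leq 0$: case (a) uses the unstarred hearts $\AA_j$ ($j\leq 0$) and \cref{Prop:equiv3}(2), which by Example~\ref{ex:BB}(2) are of type $\BB'$ with $Z$-semistability equal to $\sigma_-$-semistability, so \cref{Prop:BB}(1) applies; case (b) uses $\AA_j^*$ ($j\leq 0$), \cref{Prop:equiv3}(1), and Example~\ref{ex:BB}(1), under which these are of type $\BB''$ with $Z$-semistability equal to $\sigma_+$-semistability, so \cref{Prop:BB}(2) applies. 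In each case the cohomological action of the composite reflection yields the stated $\v'$.

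The entire argument is bookkeeping rather than new input, and the sole genuine subtlety I anticipate is the orientation accounting. One must verify that the two reflections composed here really do restore the original orientation of $\HH$ and deposit each object into exactly the heart that Example~\ref{ex:BB} attaches to the \emph{same} side of the wall; concretely, one checks that the intermediate heart $\AA_i$ (resp.\ $\AA_i^*$) is simultaneously the target of the first reflection equivalence and the source of the second, and that a single consistent choice of $Z$ and sign of $\theta$ governs both steps. Once this matching is pinned down, \cref{Prop:BB} together with the cohomological reflection formula delivers the four isomorphisms mechanically, exactly as in the proof of \cref{Prop:isom-pm}.
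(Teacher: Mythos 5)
Your proposal is correct and follows essentially the same route as the paper: the paper's own proof simply notes that $R_{T_{i+1}^+} \circ R_{T_i^+}$ and $R_{T_{i+1}^-}^{-1} \circ R_{T_i^-}^{-1}$ preserve the orientation of $\HH$ and then invokes the argument of Proposition \ref{Prop:isom-pm}, i.e.\ exactly the combination of Example \ref{ex:BB}, Proposition \ref{Prop:BB}, and Propositions \ref{Prop:equiv1}, \ref{Prop:equiv2}, \ref{Prop:equiv3} that you spell out. Your more detailed bookkeeping of which hearts are of type $\BB'$ versus $\BB''$ and which side of the wall each corresponds to is precisely the content the paper leaves implicit.
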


\begin{proof}
\ref{enum:iPositive}
Since $R_{T_{i+1}^+} \circ R_{T_i^+}$ and 
$R_{T_{i+1}^-}^{-1} \circ R_{T_i^-}^{-1}$
preserve the orientation,
we get the claims by a similar argument as in Proposition \ref{Prop:isom-pm}.  
The proof of \ref{enum:iNegative} is similar.
\end{proof}
It turns out that the composition of two consecutive weakly spherical reflections as in the previous proposition is independent of $i$, which is the content of the following lemma.
\begin{Lem}
\begin{enumerate}
\item\label{enum:ReflectionsOfExceptionals}

$R_{T_i^+}(T_{i-1}^+) =
\begin{cases}
T_{i+1}^+[1],T_{i+1}^+(K_X)[1], & i \ne 0,1,\\
T_{i+1}^+, T_{i+1}^+(K_X), & i =0,1.
\end{cases}$

\item\label{enum:CompositionConsecutiveReflections}
We have $R_{T_i^+} \circ R_{T_{i-1}^+}=R_{T_{i+1}^+} \circ R_{T_i^+}$.  In particular,
$R_{T_{i+1}^+} \circ R_{T_i^+} =R_{T_1} \circ R_{T_0}$ for all $i$.
\end{enumerate}
\end{Lem}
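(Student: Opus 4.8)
The plan is to derive part~\ref{enum:CompositionConsecutiveReflections} formally from part~\ref{enum:ReflectionsOfExceptionals}, so the genuine content is part~\ref{enum:ReflectionsOfExceptionals}. The key input is that $R_{T_i^+}$ is an equivalence of hearts: for $i\geq 1$ it is the equivalence $\AA_{i-1}\to\AA_i$ of \cref{Prop:equiv1}, and for $i\leq 0$ it is the equivalence $\AA_{i-1}^*\to\AA_i^*$ coming from \cref{Prop:equiv3}. Such an equivalence carries irreducible objects to irreducible objects, and by Remark~\ref{rem:simple-objects} (together with its negative-index analogue) a suitable shift of $T_{i-1}^+$ is a distinguished irreducible object of the source heart — concretely $T_{i-1}^+[-1]$ when $i\geq 2$, while for $i=1$ (resp. $i=0$) the object $T_0^+=T_0$ (resp. $T_{-1}^+$) already lies in $\PP_0(1)=\AA_0$ (resp. in $\AA_{-1}^*$) as a genuine $\sigma_0$- (resp. $\sigma_+$-) stable irreducible object. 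First I would apply $R_{T_i^+}$ to this representative, obtaining an irreducible object of the target heart, and compute its Mukai vector from the cohomological formula $R_{\w_i}(\v)=\v+2\langle\v,\w_i\rangle\w_i$ together with \eqref{eqn:ExceptionalVectorsReflections}. Matching this class against the distinguished irreducibles of the target heart — which comprise $T_{i+1}^+$, $T_{i+1}^+(K_X)$ and, in the negative-index range, their shifts — identifies the image; reading off the total shift of $T_{i-1}^+$ onto (a twist of) $T_{i+1}^+$ then produces exactly the dichotomy in the statement: no shift for $i=0,1$ and a shift by $[1]$ otherwise.

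The step I expect to be the main obstacle is identifying this irreducible image with a $K_X$-twist of $T_{i+1}^+$, rather than some other simple object of the tilted heart carrying the same Mukai vector $\w_{i+1}$. To settle this I would use that any irreducible object $F$ of a heart is automatically stable for the associated (rotated) central charge, since it admits no proper nonzero subobjects. By Example~\ref{ex:BB} the hearts $\AA_i$ ($i\geq 0$) and $\AA_i^*$ ($i\leq 0$) are instances of the tilts $\BB'$ and $\BB''$ of $\PP_0(1)$, so \cref{Prop:BB} translates stability in them into $\sigma_+$-stability on $\PP_0(1)$ up to a shift. Hence $F$, up to shift, is a $\sigma_+$-stable object whose class is the exceptional class $\pm\w_{i+1}\in C_\WW\cap\HH$, and by the uniqueness of $\sigma_+$-stable exceptional objects in \cref{Thm:exist:nodal} it must be $T_{i+1}^+$ or $T_{i+1}^+(K_X)$. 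Since $R_{T_i^+}$ commutes with $-\otimes\OO_X(K_X)$ — an immediate check from the definition \eqref{eqn:weakly spherical reflection}, using that $K_X$ is $2$-torsion — once $R_{T_i^+}(T_{i-1}^+)=T_{i+1}^+[\epsilon]$ is known we obtain $R_{T_i^+}(T_{i-1}^+(K_X))=T_{i+1}^+(K_X)[\epsilon]$ for free; this is why I never have to decide which of $T_{i+1}^+,T_{i+1}^+(K_X)$ is the image of $T_{i-1}^+$, and the two-object form of the statement follows.

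For part~\ref{enum:CompositionConsecutiveReflections} I would use two general features of the weakly spherical reflection. First, $R_{E_0}$ depends on $E_0$ only through its shift- and $K_X$-twist class, i.e. $R_{E_0[1]}=R_{E_0(K_X)}=R_{E_0}$, which follows directly from \eqref{eqn:weakly spherical reflection}. Second, for any autoequivalence $\Phi$ commuting with $-\otimes\OO_X(K_X)$ there is a conjugation identity $R_{\Phi(E_0)}=\Phi\circ R_{E_0}\circ\Phi^{-1}$, obtained by applying $\Phi$ to the triangle \eqref{eqn:weakly spherical reflection} and using the natural isomorphisms $\RHom(E_0,-)\cong\RHom(\Phi E_0,\Phi(-))$ and $\RHom(E_0(K_X),-)\cong\RHom(\Phi(E_0)(K_X),\Phi(-))$. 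Taking $\Phi=R_{T_i^+}$ and $E_0=T_{i-1}^+$, and feeding in part~\ref{enum:ReflectionsOfExceptionals} (which says $T_{i+1}^+$ equals $R_{T_i^+}(T_{i-1}^+)$ up to shift and $K_X$-twist), yields $R_{T_{i+1}^+}=R_{T_i^+}\circ R_{T_{i-1}^+}\circ R_{T_i^+}^{-1}$, that is $R_{T_{i+1}^+}\circ R_{T_i^+}=R_{T_i^+}\circ R_{T_{i-1}^+}$. Thus $R_{T_{i+1}^+}\circ R_{T_i^+}$ is independent of $i$, and evaluating at $i=0$ together with $T_0^+=T_0$, $T_1^+=T_1$ gives the normal form $R_{T_1}\circ R_{T_0}$.
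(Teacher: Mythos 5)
Your proposal is correct, and for part~\ref{enum:CompositionConsecutiveReflections} it coincides with the paper's proof: the paper also deduces it from the conjugation identity $R_{T_i^+}\circ R_{T_{i-1}^+}\circ R_{T_i^+}^{-1}=R_{R_{T_i^+}(T_{i-1}^+)}$ (citing \cite[Lemma 8.21]{HL10}) together with the shift/$K_X$-twist invariance of $R_{(-)}$ that you make explicit. For part~\ref{enum:ReflectionsOfExceptionals}, however, your route is genuinely different from the paper's. The paper does not argue with irreducible objects at all: for $i\geq 2$ it invokes \cref{Prop:isom-2}, which says the \emph{composite} $R_{T_i^+}\circ R_{T_{i-1}^+}:\AA_{i-2}\to\AA_i$ induces an isomorphism $M_{\sigma_+}(\w_{i-1})\to M_{\sigma_+}(\w_{i+1})$, and then applies this to the specific object $T_{i-1}^+(K_X)$, using the computation $R_{T_{i-1}^+}(T_{i-1}^+(K_X))=T_{i-1}^+[-1]$ to peel off the inner reflection; the cases $i=0,1$ and $i<0$ are handled the same way via \cref{Prop:isom-pm} and \cref{Prop:isom-2}\ref{enum:iNegative}. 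The reason the paper works with composites of two reflections is orientation: a single reflection reverses the orientation of $\HH$ and hence exchanges $\sigma_+$- and $\sigma_-$-stability in the moduli-isomorphism formulation. You sidestep this by working one reflection at a time inside the hearts: exact equivalences preserve simple objects, Remark~\ref{rem:simple-objects} (and its negative-index analogue) identifies the relevant shift of $T_{i-1}^+$ as simple in the source heart, and then a single application of \cref{Prop:BB} via Example~\ref{ex:BB}, together with genericity of $\sigma_+$, primitivity of $\w_{i+1}$, and uniqueness of stable exceptional objects, pins down the image. Both arguments ultimately rest on the same machinery (the equivalences of \cref{Prop:equiv1,Prop:equiv3}, \cref{Prop:BB}, and uniqueness of stable exceptional objects), but yours inlines that machinery directly where the paper routes through the packaged moduli statements; what your version buys is that the shift dichotomy ($[1]$ for $i\neq 0,1$, no shift for $i=0,1$) falls out transparently from the sign of the Mukai vector and heart membership, while the paper's version buys brevity by reusing propositions it has already proved. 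Two small bookkeeping points: the uniqueness you attribute to \cref{Thm:exist:nodal} is really the finiteness of $M_\sigma(\w)$ for exceptional $\w$ (as in \cref{Prop:lattice classification} and \cref{Lem:dimension negative}), and the passage from semistable (which is what \cref{Prop:BB} gives) to stable needs the primitivity of $\w_{i+1}$ plus genericity of $\sigma_+$; neither affects the validity of your argument.
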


\begin{proof}
\ref{enum:ReflectionsOfExceptionals}
Assume that $i \geq 2$.
By Proposition \ref{Prop:isom-2} \ref{enum:iPositive},
$R_{T_{i}^+}\circ R_{T_{i-1}^+}:\AA_{i-2} \to \AA_{i}$
induces an isomorphism
$$
M_{\sigma_+}(\w_{i-1}) \to M_{\sigma_+}(\w_{i+1}).
$$
Hence $R_{T_{i}^+}\circ R_{T_{i-1}^+}(T_{i-1}^+(K_X))
=R_{T_{i}^+}(T_{i-1}^+[-1])$
is a $\sigma_+$-stable object with Mukai vector $\w_{i+1}$.
Then we get 
$R_{T_{i}^+}(T_{i-1}^+[-1])=T_{i+1}^+,T_{i+1}^+(K_X)$.
If $i=0,1$, then Proposition \ref{Prop:isom-pm}
implies $R_{T_i^+}(T_{i-1}^+)=T_{i+1}^+,T_{i+1}^+(K_X)$.
Assume that $i<0$.
Then Proposition \ref{Prop:isom-2} \ref{enum:iNegative} implies
$R_{T_{i}^+}\circ R_{T_{i-1}^+}:\AA_{i-2}^* \to \AA_{i}^*$
induces an isomorphism
$$
M_{\sigma_+}(-\w_{i-1}) \to M_{\sigma_+}(-\w_{i+1}).
$$
Hence we get 
$R_{T_{i}^+}(T_{i-1}^+)=T_{i+1}^+[1],T_{i+1}^+(K_X)[1]$.

\ref{enum:CompositionConsecutiveReflections}
Since $R_{T_i^+} \circ R_{T_{i-1}^+} \circ R_{T_i^+}^{-1}=
R_{R_{T_i^+}(T_{i-1}^+)}$ by \cite[Lemma 8.21]{HL10}, the claim follows from \ref{enum:ReflectionsOfExceptionals}.
\end{proof}

In the same way, we also see that
 $R_{T_i^-} \circ R_{T_{i+1}^-} =R_{T_0} \circ R_{T_1}$ for all $i$.  This leads us to make the following definition.

\begin{Def}
We set $R_+:=R_{T_1} \circ R_{T_0}$ and
$R_-:=R_{T_0} \circ R_{T_1}$.
\end{Def}
We have finally studied the action of $G_\HH$ enough to prove our main reduction result.
\begin{Prop}\label{Prop:NonMinimalIsomorphism} Let $\v_n\in\CC_n$ be defined as in \eqref{eqn:OrbitOfv0}.  That is, $\v_n$ is in the orbit of $\v_0\in\CC_0$.
\begin{enumerate}
\item\label{enum:NonMinimalIsomorphism n even}
If $n$ is even, then $R_+^{\frac{n}{2}} \circ R_-^{\frac{n}{2}}$ induces a birational map
\begin{equation}
M_{\sigma_-}(\v_n) \cong M_{\sigma_-}(\v_0) \dashrightarrow M_{\sigma_+}(\v_0)
\cong M_{\sigma_+}(\v_n),
\end{equation}
which is isomorphic in codimension one.
\item\label{enum:NonMinimalIsomorphism n odd}
If $n$ is odd, then $R_+^{\frac{n-1}{2}} \circ R_{T_1} \circ R_{T_1} \circ R_-^{\frac{n-1}{2}}$ induces a birational map
\begin{equation}
M_{\sigma_-}(\v_n) \cong M_{\sigma_-}(\v_1) \cong M_{\sigma_+}(\v_0) 
\dashrightarrow M_{\sigma_-}(\v_0) \cong M_{\sigma_+}(\v_1)
\cong M_{\sigma_+}(\v_n),
\end{equation}
which is isomorphic in codimension one.
\end{enumerate}
\end{Prop}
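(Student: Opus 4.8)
The plan is to realize the stated composite autoequivalence as a chain of the isomorphisms already produced in Propositions \ref{Prop:isom-pm} and \ref{Prop:isom-2}, to check that the chain collapses to a single genuinely birational step, and then to bound the indeterminacy of that step. I would begin by recording the combinatorics of the reflections on cohomology. Since $R_+=R_{T_1}\circ R_{T_0}$ and $R_-=R_{T_0}\circ R_{T_1}$ act on $\HH$ as $R_{\w_1}R_{\w_0}$ and $R_{\w_0}R_{\w_1}$, and each $R_{\w_i}$ is an involution, we have $R_+R_-=R_-R_+=\id$ on $\HH$. Using $\w_2=R_{\w_1}(\w_0)$, the conjugation identity $R_{R_{\w_1}(\w_0)}=R_{\w_1}R_{\w_0}R_{\w_1}$, and part \ref{enum:CompositionConsecutiveReflections} of the Lemma on consecutive double reflections, a direct induction grouping the factors of $R_{\w_n}\circ\cdots\circ R_{\w_1}$ in pairs shows $\v_{2k}=R_+^{k}(\v_0)$ when $n=2k$ and $\v_{2k+1}=R_+^{k}R_{\w_1}(\v_0)=R_+^{k}(\v_1)$ when $n=2k+1$. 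In particular the cohomological action of the proposed composite fixes $\v_n$, consistent with the composite relating moduli spaces of the same class $\v_n$.

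Next I would chain the geometric isomorphisms. For even $n=2k>0$, repeated application of the relevant part of Proposition \ref{Prop:isom-2} in its $\sigma_-$-form (together with the identification of every consecutive double reflection with $R_-^{\pm1}$) yields a genuine isomorphism $R_-^{k}\colon M_{\sigma_-}(\v_n)\isomor M_{\sigma_-}(\v_0)$, and symmetrically $R_+^{k}\colon M_{\sigma_+}(\v_0)\isomor M_{\sigma_+}(\v_n)$; the case $n<0$ is the mirror image after exchanging the roles of $R_+$ and $R_-$. For odd $n$ the same iteration reduces to the chambers $\CC_{\pm1}$, and the two factors $R_{T_1}$ furnished by Proposition \ref{Prop:isom-pm} provide the isomorphisms $M_{\sigma_-}(\v_1)\isomor M_{\sigma_+}(\v_0)$ and $M_{\sigma_-}(\v_0)\isomor M_{\sigma_+}(\v_1)$, using $R_{\w_1}(\v_1)=\v_0$ and $R_{\w_1}(\v_0)=\v_1$. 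In either parity every map in the chain is an isomorphism except the wall-crossing identification $M_{\sigma_-}(\v_0)\dashrightarrow M_{\sigma_+}(\v_0)$ of the \emph{same} minimal class $\v_0$ across $\WW$, which is the identity on the common open locus $M_{\sigma_0}^{s}(\v_0)$.

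It remains to prove that this central wall-crossing is an isomorphism in codimension one, and this is the heart of the matter. Because $\v_0$ lies in the \emph{open} chamber $\CC_0$ we have $\langle\v_0,\w_0\rangle,\langle\v_0,\w_1\rangle>0$; since every effective exceptional class $\w_n\in C_\WW\cap\HH$ is a non-negative combination of $\w_0$ and $\w_1$, this forces $\langle\v_0,\w\rangle>0$ for \emph{all} effective exceptional $\w\in C_\WW\cap\HH$, and in Case \ref{enum:TwoExceptional} there are no spherical classes at all. Hence $\v_0$ satisfies the hypothesis of Lemma \ref{Lem:non-isotropic no totally semistable wall} with strict positivity, which excludes both codimension-one alternatives of that lemma; thus $\codim\big(M_{\sigma_\pm}(\v_0)\setminus M_{\sigma_0}^{s}(\v_0)\big)\geq 2$. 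By Theorem \ref{Thm:WallContraction} the birational map is an isomorphism exactly over $M_{\sigma_0}^{s}(\v_0)$, whose complement we have just bounded, so it is an isomorphism in codimension one; composing with the flanking isomorphisms gives the assertion for all $n$.

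The step I expect to be the main obstacle is the index bookkeeping in the middle stage: one must check that the successive reflections are genuinely composable, i.e. that each reflection carries the tilted heart $\AA_i$ (resp. $\AA_i^{*}$) attached to $\sigma_+$- (resp. $\sigma_-$-) stability of the intermediate chamber to the next one, so that the telescoping product really equals $R_+^{k}$ or $R_-^{k}$ and lands on the claimed moduli space. Once this compatibility is secured, the codimension bound follows cleanly from Lemma \ref{Lem:non-isotropic no totally semistable wall} (via the estimate of Proposition \ref{Prop:HN codim}), and the odd-$n$ and negative-$n$ variants follow from the same template.
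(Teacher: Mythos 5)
Your proposal is correct and follows essentially the same route as the paper: the flanking isomorphisms come from iterating Proposition \ref{Prop:isom-2} (and Proposition \ref{Prop:isom-pm} in the odd case), and the central wall-crossing $M_{\sigma_-}(\v_0)\dashrightarrow M_{\sigma_+}(\v_0)$ together with the codimension-two bound on the complement of $M_{\sigma_0}^s(\v_0)$ is exactly what Lemma \ref{Lem:non-isotropic no totally semistable wall} supplies for the minimal class $\v_0$. Your additional checks (the cohomological telescoping $\v_{2k}=R_+^k(\v_0)$, $\v_{2k+1}=R_+^k(\v_1)$, and the strict positivity $\langle\v_0,\w_n\rangle>0$ ruling out the codimension-one alternatives) are details the paper leaves implicit, but they are consistent with its argument.
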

\begin{proof}
\ref{enum:NonMinimalIsomorphism n even}
Using $n/2$ applications of Proposition \ref{Prop:isom-2}, we get that $R_-^{\frac{n}{2}}$ induces an isomorphism $M_{\sigma_-}(\v_n)\isomor M_{\sigma_-}(\v_0)$, and by Lemma \ref{Lem:non-isotropic no totally semistable wall} the open subset of $\sigma_0$-stable objects $M_{\sigma_0}^s(\v_0)$ provides a birational map $$M_{\sigma_-}(\v_0)\dashrightarrow M_{\sigma_+}(\v_0).$$  Moreover, the complement of $M_{\sigma_0}^s(\v_0)$ in $M_{\sigma_\pm}(\v_0)$ has codimension at least 2.  Using Proposition \ref{Prop:isom-2} again, we get that $R_+^{\frac{n}{2}}$ gives an isomorphism $M_{\sigma_+}(\v_0)\isomor M_{\sigma_+}(\v_n)$, which gives the result.

The proof of \ref{enum:NonMinimalIsomorphism n odd} follows similarly by using Proposition \ref{Prop:isom-pm} twice in the middle.
\end{proof}

The complementary result for $\v$ on the boundary of some $\CC_n$, which generalizes Corollary \ref{Cor:OrthogonalIsomorphism 1}, is the following.
\begin{Prop}\label{Prop:OrthgonalIsomorphism 2}
Suppose that $\v\in C_\WW$ satisfies $\langle\v,\w_n\rangle=0$ for some $n$.  Then $M_{\sigma_+}(\v)\cong M_{\sigma_-}(\v)$.
\end{Prop}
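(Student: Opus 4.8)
The plan is to reduce the general orthogonality condition $\langle\v,\w_n\rangle=0$ to the two cases $n=0$ and $n=1$ that are already settled by \cref{Cor:OrthogonalIsomorphism 1}, and then to transport the resulting isomorphism back to $\v$ by means of the translation autoequivalences $R_\pm$. The central observation is that on cohomology $R_+$ and $R_-$ act by the isometries $r_+:=R_{\w_1}\circ R_{\w_0}$ and $r_-:=R_{\w_0}\circ R_{\w_1}=r_+^{-1}$, and that, because $R_{T_{i+1}^+}\circ R_{T_i^+}=R_+$ for every $i$ (the lemma immediately preceding the definition of $R_\pm$), one has $r_+=R_{\w_{i+1}}\circ R_{\w_i}$ for all $i$. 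Consequently $r_+$ is the translation element of the infinite dihedral group generated by the reflections $R_{\w_m}$, so it shifts the index of the exceptional classes by two, $r_+(\pm\w_i)=\pm\w_{i+2}$; hence $r_+^{k}(\w_0)=\pm\w_{2k}$ and $r_+^{k}(\w_1)=\pm\w_{2k+1}$, and in particular $r_+$ preserves the parity of $n$.

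First I would record that $R_+$ and $R_-$ realize the \emph{same} cohomological shift $r_+$ simultaneously on the $\sigma_+$- and on the $\sigma_-$-moduli spaces. Indeed, the positive-index case of \cref{Prop:isom-2} says in its first half that $R_+$ induces an isomorphism $M_{\sigma_+}(\u)\to M_{\sigma_+}(r_+(\u))$, while its second half says that $R_-^{-1}$ induces an isomorphism $M_{\sigma_-}(\u)\to M_{\sigma_-}(r_+(\u))$; the negative-index case realizes $r_+^{-1}$ on both moduli spaces in the same manner. Iterating the appropriate power (with exponent $k=\lfloor n/2\rfloor$ and the sign chosen so as to move $\w_n$ toward $\w_0$ or $\w_1$) therefore produces a Mukai vector $\v':=r_+^{\mp k}(\v)$ together with isomorphisms $M_{\sigma_+}(\v)\cong M_{\sigma_+}(\v')$ and $M_{\sigma_-}(\v)\cong M_{\sigma_-}(\v')$. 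Since $\langle\v',\w_{j}\rangle=\langle\v,r_+^{\pm k}(\w_{j})\rangle=\pm\langle\v,\w_n\rangle=0$ for the appropriate $j\in\{0,1\}$, and since $r_+$ preserves parity, $\v'$ satisfies $\langle\v',\w_0\rangle=0$ when $n$ is even and $\langle\v',\w_1\rangle=0$ when $n$ is odd.

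At this point \cref{Cor:OrthogonalIsomorphism 1} applies to $\v'$ and gives $M_{\sigma_+}(\v')\cong M_{\sigma_-}(\v')$. Chaining the three isomorphisms yields
\[
M_{\sigma_+}(\v)\cong M_{\sigma_+}(\v')\cong M_{\sigma_-}(\v')\cong M_{\sigma_-}(\v),
\]
as desired. The main obstacle I anticipate lies in the bookkeeping needed to invoke \cref{Prop:isom-2} at each step: that proposition requires the index $i$ to be strictly positive (respectively strictly negative), so one must check that, as $\v$ is translated toward the chamber $\CC_0$, the successive vectors genuinely lie in the chambers $\CC_m$ for which the relevant hypothesis is satisfied, and one must keep track of the signs in $r_+(\pm\w_i)=\pm\w_{i+2}$ (this is harmless, since $\langle\v',\w_j\rangle=0$ and $\langle\v',-\w_j\rangle=0$ are equivalent). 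The cases $n\le -1$ are handled identically by using $r_-=r_+^{-1}$ in place of $r_+$, which is precisely why only the parity of $n$, and not its sign, matters for the reduction.
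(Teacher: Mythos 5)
Your proposal is correct and is essentially the paper's own proof: the paper likewise translates $\v$ by $R_-^{\lfloor n/2\rfloor}$ to a vector $\v_j$ (with $j\in\{0,1\}$ determined by the parity of $n$) satisfying $\langle\v_j,\w_j\rangle=0$, applies the twist $R_{T_j^+}$ (i.e.\ \cref{Prop:isom-pm}, the content of \cref{Cor:OrthogonalIsomorphism 1}) to pass between $M_{\sigma_-}(\v_j)$ and $M_{\sigma_+}(\v_j)$, and translates back with $R_+^{\lfloor n/2\rfloor}$, which is exactly the chain of three isomorphisms you construct. The only cosmetic difference is that the paper packages this as the single composed (anti-)autoequivalence $R_+^{\lfloor n/2\rfloor}\circ R_{T_j^+}\circ R_-^{\lfloor n/2\rfloor}$ rather than phrasing the reduction through the infinite dihedral group and its translation element $r_+$.
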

\begin{proof}
First let us suppose that $n$ is odd.  Then we  take $\v_1 =R_-^{\frac{n-1}{2}}(\v)$ which satisfies $\langle\v_1,\w_1\rangle=0$.
In this case, by Propositions \ref{Prop:isom-pm} and \ref{Prop:isom-2}, the composition
$R_+^{\frac{n-1}{2}} \circ R_{T_1^+}
\circ R_-^{\frac{n-1}{2}}$ induces
an isomorphism
\begin{equation}\label{eq:FM-birat3}
M_{\sigma_-}(\v) \cong M_{\sigma_-}(\v_1) \cong M_{\sigma_+}(\v_1) 
\cong M_{\sigma_+}(\v).
\end{equation}

If instead $n$ is even, then we take $\v_0=R_-^{\frac{n}{2}}(\v)$, which satisfies $\langle\v_0,\w_0\rangle=0$.  In this case, $R_+^{\frac{n}{2}} \circ R_{T_0^+}
\circ R_-^{\frac{n}{2}}$ induces
an isomorphism
\begin{equation}\label{eq:FM-birat4}
M_{\sigma_-}(\v) \cong M_{\sigma_-}(\v_0) \cong M_{\sigma_+}(\v_0) 
\cong M_{\sigma_+}(\v).
\end{equation}
\end{proof}
We can conclude from Propositions \ref{Prop:NonMinimalIsomorphism} and \ref{Prop:OrthgonalIsomorphism 2} that, in the case of a non-isotropic wall of type \ref{enum:TwoExceptional}, the two moduli spaces $M_{\sigma_+}(\v)$ and $M_{\sigma_-}(\v)$ are birational.  As identical statements to these propositions hold for non-isotropic walls of type \ref{enum:TwoSpherical} (see \cite[Prop. 6.8 and Lem. 7.5]{BM14b}), the same conclusion holds in that case.  Now we move on to the final non-isotropic case we need to deal with.


\subsection{\cref{enum:OneExceptionalOneSpherical}: Exactly one $\sigma_0$-stable spherical and exceptional object, respectively, up to $-\otimes\OO_X(K_X)$}\label{subsec:OneSphericalOneExceptional}

We shall briefly explain the case where $\HH$ contains 
effective exceptional and spherical vectors.  Denote the Mukai vector of the unique $\sigma_0$-stable exceptional object (up to $-\otimes\OO_X(K_X)$) by $\w_0$, and complete it to a basis of $\HH$ such that $\HH=\Z\w_0 +\Z\z$, where $\z$ satisfies $\langle \w_0,\z \rangle=0$ and $D:=\z^2>0$.  

Furthermore, we know there is an effective $\w \in C_\WW\cap \HH$ with $\w^2=-2$.  Then $x^2-D y^2=2$ has an integral solution, which implies $\sqrt{D}$ is irrational.  Let $(s,t)$ be a solution of $x^2-D y^2=2$, and set $\alpha:=s+t \sqrt{D}$.  Then $-\alpha^2/2=(1-s^2)-s t \sqrt{D},$ and $(x,y)=(1-s^2,-s t)$ is a solution of $x^2-y^2D=1$.  Let $(x_1,y_1)$ be a minimal solution of $x^2-y^2D=1$ such that $x_1<0$ and $y_1>0$, and set $\beta:=x_1+y_1 \sqrt{D}$.  Then $-1<\beta<0$ (see \cite[Thm. 11.3.1]{AW04}) and $\alpha^2/2=\pm\beta^n$ for some $n$.  

We claim that $n$ is odd.  If not, then $u+v\sqrt{D}:=\alpha/\beta^{\frac{n}{2}}$
satisfies $(u+v\sqrt{D})^2=2$.  But then $\sqrt{D}=\frac{2-u^2-Dv^2}{2uv}$, which contradicts the irrationality of $\sqrt{D}$.  Hence $n$ is odd and $\alpha^2/2=-\beta^n$.  Set $n=2k+1$.  Then $s_1+t_1 \sqrt{D}=\alpha/\beta^{k}$ satisfies $(s_1+t_1 \sqrt{D})^2=-2\beta$ and $s_1^2-t_1^2 D=2$.  So replacing $(s,t)$ by $(s_1,t_1)$ (and possibly taking a conjugate), we may assume that $\alpha:=s_1 +t_1 \sqrt{D}$ satisfies $N(\alpha)=2$, $\beta=-\alpha^2/2$, and $s_1<0$ and $t_1>0$.  Moreover, the same argument applied to any other solution $\gamma$ of the equation $x^2-Dy^2=2$ shows that $\gamma=\pm\frac{\alpha^{2n+1}}{2^n}$, where $\gamma^2/2=-\beta^{2n+1}$.  

We define $(s_n,t_n) \in \Z \times \Z$ by 
\begin{equation}
\begin{split}
s_{2n}+t_{2n} \sqrt{D}:=& (-1)^{n+1}\beta^n=-\alpha^{2n}/2^n,\; (n \geq 1)\\
s_{2n-1}+t_{2n-1}\sqrt{D}:=& \alpha^{2n-1}/2^{n-1},\; (n \geq 1)\\
s_0+t_0\sqrt{D}:=& 1,\\
s_{-2n}+t_{-2n} \sqrt{D}:=&-s_{2n}+t_{2n}\sqrt{D}=(-\beta)^{-n},\; (n \geq 1)\\
s_{-2n+1}+t_{-2n+1} \sqrt{D}:=& -s_{2n-1}+t_{2n-1}\sqrt{D}
=2^{n} (-\alpha)^{-(2n-1)},\; (n \geq 1).\\
\end{split}
\end{equation}
For $n \in \Z$, we set $\w_n:=s_n \w_0+t_n \z$.
Then 
$$
\w_n^2=\begin{cases}
-1,& 2 \mid n\\
-2, & 2 \nmid n
\end{cases} 
$$
It is easy to see that
\begin{equation}\label{eqn:SphericalAndExceptionalVectorsReflections}
\begin{split}
\w_{n+1}=& -R_{\w_n}(\w_{n-1}),\;(n \geq 2)\\
\w_{n-1}=& -R_{\w_n}(\w_{n+1}),\;(n \leq -1)\\
\w_2=& R_{\w_1}(\w_0),\;\w_{-1}=R_{\w_0}(\w_1),
\end{split}
\end{equation}
where $R_{\w_n}$ is, by abuse of notation, the action on $\Hal(X,\Z)$ induced by spherical/weakly spherical reflection through $\w_n$.  To be concrete, we have 
\begin{equation*}
    \begin{split}
        R_{\w_n}(\u)&=\u+2\langle\u,\w_n\rangle\w_n,\;2\mid n\\
        R_{\w_n}(\u)&=\u+\langle\u,\w_n\rangle\w_n,\;2\nmid n.
    \end{split}
\end{equation*}

From this description it is clear that 
\begin{equation}\label{eqn:RootsInH-1}
\Delta(X)\cap\HH\subseteq\Set{\pm \w_n\ | \ n\in \Z },
\end{equation}
where $\Delta(X)$ was defined in \eqref{eqn:DefOfEnriquesRoots}.  By the assumption on $\HH$ and $\WW$, there exists a $\sigma_0$-stable spherical object $T$, so we must have $\v(T)=\pm\w_{2n+1}$ for some $n\in\Z$.  In particular, $c_1(\w_{2n+1})\equiv Z\pmod 2$, where $Z$ is a nodal cycle by Theorem \ref{Thm:exist:nodal}.  From \eqref{eqn:SphericalAndExceptionalVectorsReflections}, we see that 
\begin{equation}\label{eqn:RootsAreSpherical}
    c_1(\w_{2j+1})\equiv c_1(\w_1)\pmod 2,\mbox{ for all }j\in\Z,
\end{equation} so the inclusion in \eqref{eqn:RootsInH-1} is an equality:
\begin{equation}\label{eqn:RootsInH-2}
\Delta(X)\cap\HH=\Set{\pm\w_n\ |\ n\in\Z}.
\end{equation}

As in \cref{subsec:TwoExceptional}, it is not difficult to see that 
$$
\Set{\u \in \HH \ | \ \frac{Z_{\sigma_0}(\u)}{Z_{\sigma_0}(\v)}>0,\u^2\geq -2 } \subset
\Q_{\geq 0}\w_0+\Q_{\geq 0}\w_1, 
$$ 
and thus $C_\WW=\R_{>0}\w_0+\R_{>0}\w_1$.  It follows from this and \eqref{eqn:RootsInH-2} that there are $\sigma_0$-stable objects, $T_0$ and $T_1$, of classes $\w_0$ and $\w_1$, respectively.  Moreover, as we then have $$\Delta(X)\cap C_\WW=\Set{\w_n \ |\ n \in \Z },$$ it follows from \eqref{eqn:RootsAreSpherical} and Theorem \ref{Thm:exist:nodal} that there are $\sigma_\pm$-stable objects $T_n^\pm$ with
$\v(T_n^\pm)=\w_n$.  Note that $T_i^+=T_i^-=T_i$ for $i=0,1$.

Let $\CC_n \subset C_\WW$ be the region between 
$\w_n^\perp$ and $\w_{n+1}^\perp$ as in \cref{subsec:TwoExceptional}. 
Then $\CC_0$ is again the fundamental domain of the Weyl group $G_\HH$ associated to
$\Delta(X)\cap\HH$.  Up to reordering, we may assume that  
\begin{equation}
\phi^+(T_1^+) > \phi^+(T_2^+)>\cdots>\phi^+(E)>
\cdots >\phi^+(T_{-1}^+)>\phi^+(T_0^+)
\end{equation}
for any $\sigma_+$-stable object $E$ with
$\v(E)^2 \geq 0$
and
\begin{equation}
\phi^-(T_1^-) < \phi^-(T_2^-)<\cdots<\phi^-(E)<
\cdots <\phi^-(T_{-1}^-)<\phi^-(T_0^-)
\end{equation}
for any $\sigma_-$-stable object $E$ with
$\v(E)^2 \geq 0$.

We note that $T_n^\pm (K_X)=T_n^\pm$ if $n$ is odd and
$T_n^\pm (K_X) \not \cong T_n^\pm$ if $n$ is even.

As in \cref{subsec:TwoExceptional}, we see that
\begin{equation}
R_+:=R_{T_1} \circ R_{T_0} =R_{T_{i+1}^+} \circ R_{T_i^+} 
\end{equation}
is an equivalence which preserves $\sigma_+$-semistability
and
\begin{equation}
R_-:=R_{T_0} \circ R_{T_1}=R_{T_{i-1}^-} \circ R_{T_i^-} 
\end{equation}
is an equivalence which preserves $\sigma_-$-semistability.  Here, $R_{T_0}$ and $R_{T_1}$ are the weakly-spherical and spherical reflections associated to the exceptional and spherical objects $T_0$ and $T_1$, respectively.  Then we have the following results.

\begin{Prop}\label{Prop:CompositionSphericalExceptional}
Let $\v\in C_\WW\cap\HH$.  
\begin{enumerate}
\item Suppose that $\v\in\CC_n$.
\begin{enumerate}
\item
If $n$ is even, then $R_+^{\frac{n}{2}} \circ R_-^{\frac{n}{2}}$ induces a birational map
\begin{equation}
M_{\sigma_-}(\v) \cong M_{\sigma_-}(\v_0)\dashrightarrow M_{\sigma_+}(\v_0)
\cong M_{\sigma_+}(\v),
\end{equation}
isomorphic in codimension one, where $\v_0=R_-^{\frac{n}{2}}(\v)$.
\item
If $n$ is odd, then $R_+^{\frac{n-1}{2}} \circ R_{T_1}^2 \circ R_-^{\frac{n-1}{2}}$ induces a birational map
\begin{equation}
M_{\sigma_-}(\v) \cong M_{\sigma_-}(\v_1) \cong M_{\sigma_+}(\v_0) 
\dashrightarrow M_{\sigma_-}(\v_0) \cong M_{\sigma_+}(\v_1)
\cong M_{\sigma_+}(\v_n),
\end{equation}
isomorphic in codimension one, where $\v_1=R_-^{\frac{n-1}{2}}(\v)$ and $\v_0=R_{T_1}(R_-^{\frac{n-1}{2}}(\v))$.
\end{enumerate}
\item
Suppose that $\langle \v, \w_n \rangle=0$.
\begin{enumerate}
\item
If $n$ is even, then $R_+^{\frac{n}{2}} \circ R_{T_0^+}
\circ R_-^{\frac{n}{2}}$ induces
an isomorphism
\begin{equation}
M_{\sigma_-}(\v) \cong M_{\sigma_-}(\v_0) \cong M_{\sigma_+}(\v_0) 
\cong M_{\sigma_+}(\v),
\end{equation}
where $\v_0=R_-^{\frac{n}{2}}(\v)$.
\item
If $n$ is odd, then $R_+^{\frac{n-1}{2}} \circ R_{T_1^+}\circ R_-^{\frac{n-1}{2}}$ induces an isomorphism
\begin{equation}
M_{\sigma_-}(\v) \cong M_{\sigma_-}(\v_1) \cong M_{\sigma_+}(\v_1) 
\cong M_{\sigma_+}(\v),
\end{equation}
where $\v_1=R_-^{\frac{n-1}{2}}(\v)$.
\end{enumerate}
\end{enumerate}
\end{Prop}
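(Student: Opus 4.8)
The plan is to run, essentially verbatim, the reduction-to-minimal-class argument used in Section \ref{subsec:TwoExceptional} for Proposition \ref{Prop:NonMinimalIsomorphism} and Proposition \ref{Prop:OrthgonalIsomorphism 2}, now with the \emph{mixed} reflections $R_{T_0}$ (weakly spherical, since $\w_0^2=-1$) and $R_{T_1}$ (genuinely spherical, since $\w_1^2=-2$). The backbone is already recorded above: $R_+=R_{T_1}\circ R_{T_0}=R_{T_{i+1}^+}\circ R_{T_i^+}$ preserves $\sigma_+$-semistability and $R_-=R_{T_0}\circ R_{T_1}=R_{T_{i-1}^-}\circ R_{T_i^-}$ preserves $\sigma_-$-semistability, both independently of $i$. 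What remains is to promote these equivalences to the stated isomorphisms and birational maps of moduli spaces and to assemble them along the $G_\HH$-orbit.

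First I would establish the one-step comparison results, i.e. the exact analogues of Proposition \ref{Prop:isom-pm}, Proposition \ref{Prop:isom-2}, and Corollary \ref{Cor:OrthogonalIsomorphism 1}. Building the tilted hearts $\AA_i,\AA_i^*$ from the torsion pairs generated by the $\sigma_\pm$-stable objects $T_j^\pm$ (keeping track that $T_n^\pm(K_X)=T_n^\pm$ for $n$ odd and $T_n^\pm(K_X)\not\cong T_n^\pm$ for $n$ even), I would prove the equivalences $R_{T_{i+1}^+}\colon\AA_i\to\AA_{i+1}$ and $R_{T_{i+1}^-}^{-1}\colon\AA_i^*\to\AA_{i+1}^*$ as in Proposition \ref{Prop:equiv1} and Proposition \ref{Prop:equiv2}, then transfer $\sigma_\pm$-stability through them using Proposition \ref{Prop:BB} via Example \ref{ex:BB}. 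This yields that $R_+$ (resp.\ $R_-$) induces isomorphisms $M_{\sigma_+}(\v)\cong M_{\sigma_+}(R_+(\v))$ (resp.\ $M_{\sigma_-}(\v)\cong M_{\sigma_-}(R_-(\v))$), that the single reflections $R_{T_0},R_{T_1}$ swap $\sigma_+$- and $\sigma_-$-stability across the wall, and that when $\langle\v,\w_i\rangle=0$ for $i=0,1$ the reflection $R_{T_i}$ gives a genuine isomorphism $M_{\sigma_+}(\v)\cong M_{\sigma_-}(\v)$ (since $R_{\w_i}(\v)=\v$ and no object of class $\v$ is destabilized).

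With these in hand, Part (1) is assembled as in Proposition \ref{Prop:NonMinimalIsomorphism}. For $\v\in\CC_n$ with $n$ even I set $\v_0:=R_-^{n/2}(\v)\in\CC_0$, so $R_-^{n/2}$ gives $M_{\sigma_-}(\v)\cong M_{\sigma_-}(\v_0)$; since $\v_0$ is the minimal vector of its $G_\HH$-orbit it pairs non-negatively with all effective spherical and exceptional classes, so Lemma \ref{Lem:non-isotropic no totally semistable wall} shows $\WW$ is not totally semistable and that $M_{\sigma_\pm}(\v_0)\setminus M_{\sigma_0}^s(\v_0)$ has codimension $\geq 2$, furnishing the birational map $M_{\sigma_-}(\v_0)\dashrightarrow M_{\sigma_+}(\v_0)$ isomorphic in codimension one with common open set $M_{\sigma_0}^s(\v_0)$; finally $R_+^{n/2}$ gives $M_{\sigma_+}(\v_0)\cong M_{\sigma_+}(\v)$. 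For $n$ odd I reduce with $R_-^{(n-1)/2}$ to $\v_1\in\CC_1$, apply the single spherical reflection $R_{T_1}$ to get $M_{\sigma_-}(\v_1)\cong M_{\sigma_+}(\v_0)$ with $\v_0=R_{T_1}(\v_1)\in\CC_0$, insert the codimension-one birational identification at the minimal class $\v_0$, and return via a second $R_{T_1}$ and $R_+^{(n-1)/2}$; this is precisely the role of $R_{T_1}^2$ in the middle. Part (2) is then the orthogonal specialization: for $\langle\v,\w_n\rangle=0$ with $n$ even I put $\v_0=R_-^{n/2}(\v)$, so $\langle\v_0,\w_0\rangle=0$, replace the birational map at the minimal class by the genuine isomorphism $R_{T_0^+}$, and conjugate by $R_\pm^{n/2}$; the odd case is identical with $\w_1$ and $R_{T_1^+}$ in the middle.

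The main obstacle is the first step: checking that the tilting equivalences and the stability comparison survive when the two $\sigma_0$-stable generators have \emph{different} types. Concretely, in the analogues of Proposition \ref{Prop:equiv1} and Proposition \ref{Prop:equiv2} one must track the Serre-duality vanishings $\Ext^2(T_{i+1}^\pm,E)^\vee\cong\Hom(E,T_{i+1}^\pm(K_X))$ separately by the parity of $i+1$, since for a spherical $T_{i+1}$ one has $T_{i+1}(K_X)\cong T_{i+1}$ while for an exceptional $T_{i+1}$ one has $T_{i+1}(K_X)\not\cong T_{i+1}$. This parity changes the very shape of the defining triangle of the reflection functor in \eqref{eqn:weakly spherical reflection} — a single summand $\RHom(T_{i+1},E)\otimes T_{i+1}$ in the spherical case versus the $K_X$-paired pair of summands in the exceptional case — so the computations of $\Phi^p(E)$ and the membership arguments $\Phi(\FF_i)\subset\AA_{i+1}$, $\Phi(\TT_i)\subset\AA_{i+1}[1]$ must be run with the appropriate summand count at each index. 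Once these parity-sensitive vanishings and the corresponding generation statements (the analogue of Remark \ref{rem:simple-objects}) are verified, the remainder of the argument is formally identical to Section \ref{subsec:TwoExceptional} and the chains of isomorphisms assemble exactly as stated.
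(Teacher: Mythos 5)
Your proposal is correct and follows essentially the same route the paper takes: the paper explicitly states that Proposition \ref{Prop:CompositionSphericalExceptional} is proven by defining $\AA_i$ and $\AA_i^*$ analogously to \cref{subsec:TwoExceptional}, showing $R_{T_i^\pm}$ induces isomorphisms on moduli with proofs identical to \cref{Prop:equiv1,Prop:equiv2,Prop:isom-pm,Prop:isom-2} up to the parity adjustments when $T_i^\pm$ is spherical, and invoking \cref{Lem:non-isotropic no totally semistable wall} for the birational map at the minimal class. The parity-sensitive bookkeeping you flag as the main obstacle (single summand $\RHom(T,E)\otimes T$ versus the $K_X$-paired summands in \eqref{eqn:weakly spherical reflection}, together with the corresponding Serre-duality vanishings) is precisely the "minor adjustment" the paper cites as its reason for omitting the word-for-word repetition.
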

Proposition \ref{Prop:CompositionSphericalExceptional} is proven by defining $\AA_i$ and $\AA_i^*$ analogously to \cref{subsec:TwoExceptional} and by showing that $R_{T_i^{\pm}}$ induces isomorphisms on moduli with identical proofs except for the minor adjustments when $i$ is odd so that $T_i^{\pm}$ is spherical.  To avoid this word-for-word repetition, we omit these proofs.  Note that the birational map $M_{\sigma_+}(\v_0)\dashrightarrow M_{\sigma_-}(\v_0)$ follows again from Lemma \ref{Lem:non-isotropic no totally semistable wall}.

We have seen in this section that, regardless of its type, a non-isotropic wall $\WW$ induces a birational map $M_{\sigma_+}(\v)\dashrightarrow M_{\sigma_-}(\v)$, and this map is isomorphic outside of a subvariety of codimension at least 2 unless $\langle\v,\w\rangle=0$ for some $\w\in C_\WW\cap\Delta(X)$.  We finish off the section by explaining how this birational map relates to the contraction maps $$\pi^\pm:M_{\sigma_\pm}(\v)\to\overline{M}_\pm$$ induced by $\ell_{\sigma_0,\pm}$, as in Theorem \ref{Thm:WallContraction}.  Recall that the curves contracted by $\pi^\pm$ are precisely those curves of $\sigma_+$-semistable objects that become S-equivalent with respect to $\sigma_0$.  

Let $E$ be a $\sigma_0$-stable object of class $\v_0\in\overline{\CC_0}$. Then
it is an irreducible object of $\AA_0$.
By Propositions \ref{Prop:equiv1}, \ref{Prop:equiv2}, and \ref{Prop:equiv3}, and their analogues in the other subcases of Proposition \ref{Prop:lattice classification}\ref{enum:TwoNegative}, it follows that for a non-negative integer $n$, $E_n^+:=R_{T_n^+} \circ R_{T_{n-1}^+} \circ  \cdots \circ R_{T_1^+}(E)$
is an irreducible object of $\AA_n$
and $E_n^-:=R_{T_n^-}^{-1} \circ 
R_{T_{n-1}^-}^{-1} \circ  \cdots \circ R_{T_1^-}^{-1}(E)$
is an irreducible object of $\AA_n^*$.
By Remark \ref{rem:simple-objects} and 
the definition of $R_{T_i^\pm}$,
$E_n^\pm$ are successive extensions of $E$ by $T_i^\pm,T_i^\pm(K_X)$.
In particular, $E_n^\pm$ is S-equivalent to 
\begin{equation}\label{eqn:Sequivalenceunderreflections}
\begin{cases}
E \oplus T_0^{\oplus k_0} \oplus T_1^{\oplus k_1}, &\mbox{ in Case \ref{enum:TwoSpherical}}\\
E\oplus \left(T_0\oplus T_0(K_X)\right)^{\oplus k_0}\oplus \left(T_1\oplus T_1(K_X)\right)^{\oplus k_1}, &\mbox{ in Case \ref{enum:TwoExceptional}}\\
E\oplus \left(T_0\oplus T_0(K_X)\right)^{\oplus k_0}\oplus T_1^{\oplus k_1}, &\mbox{ in Case \ref{enum:OneExceptionalOneSpherical}}
\end{cases}\end{equation}
with respect to $\sigma_0$, where 
\begin{equation}\label{eqn:Sequivalenceunderreflections-classes}
\v(E_n^\pm)=\begin{cases}
\v(E)+k_0 \w_0+k_1 \w_1, &\mbox{ in Case \ref{enum:TwoSpherical}}\\
\v(E)+2k_0\w_0+2k_1\w_1, &\mbox{ in Case \ref{enum:TwoExceptional}}\\
\v(E)+2k_0\w_0+k_1\w_1, &\mbox{ in Case \ref{enum:OneExceptionalOneSpherical}}
\end{cases}.\end{equation} 
Thus if $\v=R_{\w_n} \circ R_{\w_{n-1}} \circ  \cdots \circ R_{\w_1}(\v_0)$ for $\v_0\in\CC_0$, then the contraction map $\pi^\pm:M_{\sigma_\pm}\to\overline{M}_\pm$ is injective on the image of $M_{\sigma_0}^s(\v_0)$ under the isomorphism $M_{\sigma_\pm}(\v_0)\isomor M_{\sigma_\pm}(\v)$ given by the above composition of Fourier-Mukai functors.\footnote{As written, this isomorphism exists for $n$ even.  For $n$ odd, the composition of Fourier-Mukai functors would give $M_{\sigma_\mp}(\v_0)\isomor M_{\sigma_\pm}(\v)$}  In particular, $\pi^\pm$ is birational.  Moreover, the S-equivalence in \eqref{eqn:Sequivalenceunderreflections} true even if $E$ is strictly $\sigma_0$-semistable, so the curves contracted by $\pi^\pm:M_{\sigma_\pm}(\v)\to\overline{M}_\pm$ are in direct correspondence with the curves of S-equivalent objects contracted by $\pi_0^\pm:M_{\sigma_\pm}(\v_0)\to\overline{M}_{0,\pm}$, where this is the analogous morphism for $\v_0$.

When $\v_0\in\CC_0$ proper, that is, $\langle\v,\w\rangle>0$ for all $\w\in\Delta(X)\cap C_\WW$, the codimension of the exceptional locus of $\pi_0^\pm$ is at least two, so the same remains true of $\pi^\pm$.  When $\v\in \w_n^\perp$, however, this exceptional locus has codimension one and the two moduli spaces $M_{\sigma_\pm}(\v)$ are isomorphic.  We explore in the next section when this divisor is contracted upon crossing the wall $\WW$.

\section{Divisorial contractions in the non-isotropic case}
In this section we aim to prove the following result.

\begin{Prop}\label{Prop:NonisotropicDivisorialContraction}
Assume that the potential wall $\WW$ is non-isotropic.  Then $\WW$ induces a divisorial contraction on $M_{\sigma_+}(\v,L)$ if and only if either there exists a spherical class $\w\in\Delta(X)_{-2}\cap C_\WW$ such that $\langle\v,\w\rangle=0$ or there exists an exceptional class $\w\in\Delta(X)_{-1}\cap C_{\WW}$ such that $\langle\v,\w\rangle=0$, $\v-2\w$ is spherical, and $L\equiv D+\frac{\rk\v}{2}K_X\pmod 2$ for a nodal cycle $D$.  If $T$ is a $\sigma_{\pm}$-stable spherical object of class $\w$ (or $\v-2\w$ in the second case), then the contracted divisor can be described as a Brill-Noether divisor for $T$: it is given either by the condition $\Hom(T,\blank)\neq 0$ or by $\Hom(\blank,T)\neq 0$.
\end{Prop}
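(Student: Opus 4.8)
The plan is to prove both implications by first reducing to a minimal class and then reading off the type of contraction from the dimension estimates of \cref{Prop:HN codim}. First I would invoke \cref{Cor:OrthogonalIsomorphism 1} and \cref{Prop:OrthgonalIsomorphism 2} (with their spherical analogues): the reflections in $G_\HH$ identify the wall-crossing for $\v$ with that for a class lying on a boundary $\w^\perp$ of the fundamental chamber $\CC_0$, where $\w$ is one of the $\sigma_0$-stable spherical or exceptional classes, and these identifications are isomorphisms in codimension one, so they preserve the divisorial nature of the contraction. Hence I may assume $\langle\v,\w\rangle=0$ with $\w$ a $\sigma_0$-stable class. Conversely, by \cref{Lem:non-isotropic no totally semistable wall}, if $\v$ is minimal and $\WW$ satisfies neither $\langle\v,\w\rangle=0$ for a spherical or exceptional $\w\in C_\WW\cap\HH$ nor the two-spherical configuration $\langle\v,\w\rangle=1=\langle\v,\v-\w\rangle$, then $\codim(M_{\sigma_+}(\v)\setminus M^s_{\sigma_0}(\v))\geq 2$, whence by \cref{Thm:WallContraction} the wall is a flopping or fake wall and never divisorial. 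This establishes that orthogonality is necessary and reduces the statement to the codimension-one configurations.

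Next I would settle the spherical orthogonal case $\w^2=-2$, $\langle\v,\w\rangle=0$, letting $T$ be the $\sigma_0$-stable spherical object of class $\w$. By \cref{Lem:non-isotropic no totally semistable wall} the generic strictly semistable object has $\sigma_-$-Harder--Narasimhan factors of classes $\w$ and $\v-\w$; since $\langle\w,\v-\w\rangle=-\w^2=2$ and $(\v-\w)^2=\v^2-2$, the exceptional locus $D$ is a $\P^1$-bundle, with fibers $\P(\Ext^1(T,E'))\cong\P^1$, over $M_{\sigma_-}(\v-\w)$, a variety of dimension $\v^2-1=\dim\overline{M}_\pm-2$. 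Thus $D$ is a divisor and $\pi^\pm$ contracts its $\P^1$-fibers onto the codimension-two locus $M_{\sigma_-}(\v-\w)$, which is exactly a divisorial contraction. Because $T$ has extremal $\sigma_\pm$-phase, appearing as the bottom $\sigma_-$-HN factor on $M_{\sigma_+}(\v)$ and as the top $\sigma_+$-HN factor on $M_{\sigma_-}(\v)$, I would identify $D$ with the Brill--Noether locus $\{\Hom(\blank,T)\neq 0\}$ on $M_{\sigma_+}(\v)$ and $\{\Hom(T,\blank)\neq 0\}$ on $M_{\sigma_-}(\v)$; genericity of the $\sigma_0$-stable locus together with $\langle\v,\w\rangle=0$ forces both $\Hom$-groups to vanish off $D$.

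For the exceptional case $\w^2=-1$, $\langle\v,\w\rangle=0$, the crucial observation is that the destabilization by the two $\sigma_0$-stable exceptional objects $E_0,E_0(K_X)$ has extension space of dimension $\langle\w,\v-\w\rangle=1$, so the associated putative divisor carries only $\P^0$-fibers and is \emph{not} contracted by $\pi^\pm$; this is why bare orthogonality produces a small contraction once $\v^2\geq 3$. A genuine divisorial contraction occurs precisely when the auxiliary class $\v-2\w$, of square $(\v-2\w)^2=\v^2-4$, is itself spherical, that is $\v^2=2$. In that situation \cref{Thm:exist:nodal} furnishes a $\sigma_0$-stable spherical object $T$ of class $\v-2\w$ exactly when $L\equiv D+\tfrac{\rk\v}{2}K_X\pmod 2$ for a nodal cycle $D$, and I would show, using \cref{Prop:HN codim} and \cref{Lem:dimension negative}, that the strictly semistable locus is then the codimension-one Brill--Noether locus $\{\Hom(T,\blank)\neq 0\}$ (or $\{\Hom(\blank,T)\neq 0\}$), contracted by $\pi^\pm$ onto the finitely many S-equivalence classes with factors $E_0,E_0(K_X),T$; the two-spherical configuration is instead a small contraction and is absorbed into the flopping analysis of \cref{classification of walls}.

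The step I expect to be the main obstacle is exactly this dichotomy within the exceptional case, namely separating $\v^2=2$ (divisorial) from $\v^2\geq 3$ (small). This demands careful bookkeeping of \emph{all} strictly $\sigma_0$-semistable strata through \cref{Prop:HN codim} and \cref{Lem:dimension negative} — not only the generic $(\w,\v-\w)$ stratum but the deeper strata involving both $E_0$ and $E_0(K_X)$ together with the spherical factor $\v-2\w$ — in order to compute the fiber dimension of $\pi^\pm$ and hence the codimension of the contracted locus. Verifying that the determinant hypothesis $L\equiv D+\tfrac{\rk\v}{2}K_X\pmod 2$ is precisely the parity condition needed both for the spherical object $T$ to exist and for the splitting $\v=2\w+(\v-2\w)$ to be realized by genuine objects is the most delicate point.
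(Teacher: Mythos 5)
Your architecture is the same as the paper's: reduce to a minimal class via the $G_\HH$-reflections, extract necessity of an orthogonality condition from \cref{Lem:non-isotropic no totally semistable wall}, and then decide whether the codimension-one locus is contracted by producing families of extensions -- $\P^1$-fibers in the spherical case, rigid extensions in the exceptional case with the single exception $\v^2=2$, $\v-2\w$ spherical. This is exactly how the paper organizes \cref{Lem:NonisotropicDivisorialContraction,Lem:ExceptionalDivisorialNonContraction}, which together with \cref{Lem:non-isotropic no totally semistable wall} give the Proposition.

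The genuine gap is that every Ext-dimension your argument relies on is unjustified as stated. To get $\ext^1(T,F)=\langle\w,\v-\w\rangle=2$ in the spherical case (and $\ext^1(F,T_0)=1$ in the exceptional case, and $\ext^1(S,T_0\oplus T_0(K_X))=4$ in the $\v^2=2$ subcase) you need \emph{both} $\Hom(T,F)=0$ and $\Ext^2(T,F)\cong\Hom(F,T(K_X))^{\vee}=0$. Neither follows from $F$ being a Harder--Narasimhan factor, i.e.\ merely $\sigma_-$-semistable: for semistable objects of \emph{different} $\sigma_-$-phases only one of these two Hom-groups vanishes for phase reasons, and the other can be nonzero. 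Both vanish once $F$ is $\sigma_0$-stable (distinct stable objects of equal phase, with $T(K_X)\cong T$ in the spherical case). Guaranteeing $M_{\sigma_0}^s(\v-\w)\neq\varnothing$ is precisely why the paper proves that $\v-\w_1$ (resp.\ $\v-\w_0$) is again \emph{minimal}, so that \cref{Lem:non-isotropic no totally semistable wall} applies to it; that minimality verification -- the case analysis with the inequalities in $m$ and $y$, plus the separate treatment of the boundary values $\v^2=1$ (spherical) and the exclusion $\v^2\neq n^2$ (exceptional) -- is the bulk of \cref{Lem:NonisotropicDivisorialContraction,Lem:ExceptionalDivisorialNonContraction} and is entirely absent from your outline. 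Without it, your claim that the exceptional locus is a $\P^1$-bundle over $M_{\sigma_-}(\v-\w)$, and your claim that the exceptional-case extensions are rigid, are both unproved, so neither direction of the equivalence is complete.

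A secondary point: you dispose of the other codimension-one configuration of \cref{Lem:non-isotropic no totally semistable wall}, namely $\v=\w+(\v-\w)$ with both classes spherical and $\langle\v,\w\rangle=\langle\v,\v-\w\rangle=1$, by saying it is ``absorbed into the flopping analysis.'' That citation is circular: \cref{classification of walls}(3) and \cref{prop:flops} take as hypothesis that $\WW$ is \emph{not} divisorial, so they cannot be invoked to prove it is not divisorial. Worse, a direct count in that configuration is not reassuring: the nontrivial extensions of one $\sigma_0$-stable spherical object by the other form a $\P(\Ext^1)\cong\P^2$ of pairwise non-isomorphic, mutually S-equivalent, $\sigma_+$-stable objects inside the three-dimensional $M_{\sigma_+}(\v,L)$, so whatever disposes of this case must be an actual argument. (The paper's own proof of the Proposition, which just cites the three lemmas, is also silent on this configuration -- but your write-up should flag it as open rather than claim it is handled.)
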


By Lemma \ref{Lem:non-isotropic no totally semistable wall}, we know that the locus of strictly $\sigma_0$-semistable objects has codimension one if and only if $\langle\v,\w\rangle=0$ for some class $\w\in \Delta(X)\cap C_\WW$.  In particular, this condition must be  met if $\WW$ induces a divisorial contraction.  In the next two lemmas, we prove that only when $\w$ is spherical does this divisor get contracted.

\begin{Lem}\label{Lem:NonisotropicDivisorialContraction}
Suppose that $\HH$ is non-isotropic and $\WW$ is a potential wall associated to $\HH$.  If there exists an effective spherical class $\w$ with $\langle \v,\w\rangle=0$, then $\WW$ induces a divisorial contraction.  

A generic element $E$ in the contracted divisor $D$ admits a short exact sequence $$0\to T\to E\to F\to 0\mbox{ or }0\to F\to E\to T\to 0,$$ where $T\in M^s_{\sigma_+}(\w)$ and $F\in M^s_{\sigma_+}(\v-\w)$, such that the inclusion $T\into E$ or $F\into E$ is one of the filtration steps in a JH-filtration for $E$ with respect to $\sigma_0$.
\end{Lem}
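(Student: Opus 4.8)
The plan is to identify the strictly $\sigma_0$-semistable locus as a divisor swept out by rational curves of $\sigma_0$-S-equivalent objects, and then contract it using \cref{Thm:WallContraction}. Throughout I may assume $\v$ is minimal in the sense of \cref{PropDef: minimal vectors}, so that $\langle\v,\w'\rangle\geq 0$ for every effective spherical or exceptional $\w'\in C_\WW\cap\HH$ and \cref{Lem:non-isotropic no totally semistable wall} applies; the latter already tells us that $\WW$ is not totally semistable and that the generic strictly $\sigma_0$-semistable object has its $\sigma_-$-Harder--Narasimhan filtration with factors of classes $\w$ and $\v-\w$. First I would record the numerology: from $\w^2=-2$ and $\langle\v,\w\rangle=0$ one gets $(\v-\w)^2=\v^2-2$ and $\langle\w,\v-\w\rangle=2$, and since $\HH$ is non-isotropic $(\v-\w)^2=\v^2-2\neq 0$; I carry out the argument in the principal case $(\v-\w)^2>0$, the case $(\v-\w)^2=-1$ (so that $F$ is exceptional) being completely parallel. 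Feeding this into \cref{Prop:HN codim} for the relevant stratum $\FF(\w,\v-\w)^o$ gives
$$\codim\FF(\w,\v-\w)^o\geq\bigl(\w^2-\dim\MM_{\sigma_-}(\w)\bigr)+\bigl((\v-\w)^2-\dim\MM_{\sigma_-}(\v-\w)\bigr)+\langle\w,\v-\w\rangle.$$
Using $\dim\MM_{\sigma_-}(\w)=-1$ from \cref{Lem:dimension negative} and $\dim\MM_{\sigma_-}(\v-\w)=(\v-\w)^2$ from \cref{prop:pss}, the right-hand side equals $(-2+1)+0+2=1$. Together with \cref{Lem:non-isotropic no totally semistable wall}, which pins this stratum down as the generic strictly $\sigma_0$-semistable locus, this shows that $D:=M_{\sigma_+}(\v)\setminus M^s_{\sigma_0}(\v)$ is a divisor.

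Next I would read off the short exact sequence. By \cref{Prop:lattice classification} and \cref{Thm:exist:nodal}, the effective spherical class $\w$ is represented by a unique $\sigma_0$-stable (hence $\sigma_\pm$-stable) spherical object $T$, so $M^s_{\sigma_+}(\w)=\{T\}$. For generic $E\in D$ the $\sigma_-$-Harder--Narasimhan factor of class $\v-\w$ is $\sigma_-$-stable, and, since $\sigma_0$ has been chosen generic on $\WW$ and thus off the walls for the finitely many classes in $C_\WW\cap(\v-C_\WW)\cap\HH$, it is moreover $\sigma_0$-stable; call it $F\in M^s_{\sigma_+}(\v-\w)$. Depending on whether $\phi_{\sigma_-}(\w)$ exceeds or is exceeded by $\phi_{\sigma_-}(\v-\w)$, the filtration reads as $0\to T\to E\to F\to 0$ or $0\to F\to E\to T\to 0$. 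As $T$ and $F$ are both $\sigma_0$-stable of the common phase $\phi_{\sigma_0}(\v)$, the displayed inclusion is a single Jordan--H\"older step for $E$ at $\sigma_0$, which is exactly the claimed structure.

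To see that $D$ is actually contracted, I would compute the extension space. The objects $T$ and $F$ are non-isomorphic $\sigma_0$-stable objects of equal phase (equality would force $\v=2\w$, hence $\v^2=-8$), so $\Hom(F,T)=0$; moreover Serre duality gives $\Ext^2(F,T)\cong\Hom(T,F(K_X))^\vee=0$, since $F(K_X)$ is again $\sigma_0$-stable of class $\v-\w\neq\w$. Hence $\ext^1(F,T)=-\chi(F,T)=\langle\v-\w,\w\rangle=2$, and the non-split extensions of $F$ by $T$ sweep out a $\P(\Ext^1(F,T))\cong\P^1$. All of these extensions are $\sigma_0$-S-equivalent to $T\oplus F$, so by \cref{Thm:WallContraction}(2) they lie on curves contracted by $\pi^\pm$. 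Since $T$ is rigid, the S-equivalence classes appearing in $\pi^+(D)$ are parametrized by $M_{\sigma_0}(\v-\w)$, of dimension $(\v-\w)^2+1=\v^2-1$, whereas $\dim D=\v^2$; thus $\pi^+|_D$ has one-dimensional $\P^1$-fibers and contracts the divisor $D$ to a locus of codimension two in $\overline{M}_+$. Running the same argument with the roles of $T$ and $F$ interchanged handles $\pi^-$, so both $\pi^\pm$ are divisorial contractions and $\WW$ is a divisorial wall.

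The step I expect to be the main obstacle is showing that the extensions parametrized by $\P(\Ext^1(F,T))$ are genuinely $\sigma_+$-stable and trace out a $\P^1$ inside $M_{\sigma_+}(\v)$ rather than collapsing to a point; this is the standard but delicate check that an extension of $\sigma_+$-stable objects whose sub has strictly smaller $\sigma_+$-phase is again $\sigma_+$-stable, for which I would adapt the corresponding argument of Bayer--Macr\`i \cite{BM14b}. A secondary point needing care is the $\sigma_0$-stability of the generic $F$ and of its twist $F(K_X)$ underlying the vanishings $\Hom(F,T)=\Ext^2(F,T)=0$; this follows from the density of $M^s_{\sigma_0}(\v-\w)$ in $M_{\sigma_-}(\v-\w)$ together with the genericity convention for $\sigma_0$.
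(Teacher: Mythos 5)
Your overall strategy is the paper's strategy: produce, for each suitable $F$, a $\P^1$ of non-split extensions of $T$ by $F$ (all S-equivalent to $T\oplus F$ at $\sigma_0$, all $\sigma_+$-stable by the Bayer--Macr\`i extension lemma), sweep out a divisor by varying $F$, and conclude by a dimension count plus relative Picard rank one. However, there is a genuine gap at the step where you produce $F$: you assert that the generic $\sigma_-$-HN factor of class $\v-\w$ is $\sigma_0$-stable ``since $\sigma_0$ has been chosen generic on $\WW$.'' Genericity of $\sigma_0$ only means $\sigma_0$ avoids the finitely many \emph{other} walls meeting $\WW$; it says nothing about classes lying in $\HH_\WW$ itself, since for any $\a\in\HH_\WW$ the central charge $Z_{\sigma_0}(\a)$ is aligned with $Z_{\sigma_0}(\v)$ along \emph{all} of $\WW$. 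In particular $\WW$ could perfectly well be a totally semistable wall for $\v-\w$, which by \cref{Lem: condition for totally semistable wall,Lem:non-isotropic no totally semistable wall} happens exactly when $\langle\v-\w,\w'\rangle<0$ for some effective spherical or exceptional class $\w'\in C_\WW\cap\HH$. Minimality of $\v$ does not automatically transfer to $\v-\w$: in Cases (3a) and (3c) of \cref{Prop:lattice classification} the lattice $\HH$ contains infinitely many effective spherical/exceptional classes, and one must check the pairing of $\v-\w$ against the \emph{other} generator ($\w_0$, say, when $\w=\w_1$). This is precisely the computational core of the paper's proof: using $\langle\v,\w_1\rangle=0$, $\v^2\geq3$ (note $\v^2=2$ is excluded by non-isotropy), and Pell-type inequalities on $m=\langle\w_0,\w_1\rangle$, one shows $\langle\w_0,\v-\w_1\rangle\geq0$, i.e.\ $\v-\w_1$ is minimal in its $G_\HH$-orbit; only then does \cref{Lem:non-isotropic no totally semistable wall}, applied to $\v-\w_1$, yield that $M_{\sigma_0}^s(\v-\w_1)$ is nonempty (indeed dense), giving you the objects $F$.

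Note that this gap infects your earlier claim as well: your codimension estimate via \cref{Prop:HN codim} only bounds $\codim\FF(\w,\v-\w)^o$ \emph{below} by $1$, and \cref{Lem:non-isotropic no totally semistable wall} only says that \emph{if} the strictly semistable locus is a divisor then it has this HN type; to know the locus actually \emph{is} a divisor you need the extension family to exist, which again requires $M_{\sigma_0}^s(\v-\w)\neq\varnothing$. Once minimality of $\v-\w$ is established, everything else in your write-up (the vanishings $\hom(F,T)=\hom(T,F)=\ext^2(F,T)=0$, $\ext^1(F,T)=2$, $\sigma_+$-stability of the extensions via \cite[Lemma 6.9]{BM14b}, and the count $1+(\v-\w)^2+1=\v^2=\dim M_{\sigma_+}(\v)-1$) matches the paper. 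You should also not wave off the case $(\v-\w)^2=-1$ (i.e.\ $\v^2=1$) as ``completely parallel'': there one must first identify $\v-\w_1$ with the exceptional class $\w_0$ (which forces $m=2$), and then run the extension argument with $T_0$ in place of $F$, as the paper does.
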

\begin{proof}
By the discussion at the end of Section \ref{Sec:TotallySemistable-non-isotropic}, we can use a composition of spherical or weakly-spherical reflections, as in \cite[Corollary 7.3, Lemma 7.5]{BM14b} (in Case \ref{enum:TwoSpherical} and Proposition  \ref{Prop:CompositionSphericalExceptional} (in Case \ref{enum:OneExceptionalOneSpherical}, to reduce the discussion to the case of a minimal Mukai vector.  We assume this to be the case.  Then the spherical class must be $\w_0$ or $\w_1$ (in Case \ref{enum:TwoSpherical}) or $\w_1$ (in Case \ref{enum:OneExceptionalOneSpherical}), and we assume it is $\w_1$ with the other case being  dealt with similarly.  As in \cite[Lemma 7.4]{BM14b}, we first prove that $\v-\w_1$ is also minimal.

Let us start by assuming that $\v^2\geq 2$.  We note that we must in-fact have $\v^2\geq3$, since $\v^2=2$ gives $(\v-\w_1)^2=0$, contrary to the assumption that $\HH$ is non-isotropic.  Write $\v=x\w_0+y\w_1$ with $x,y\in\Q$.  Then $0=\langle\w_1,\v\rangle=0$ gives $y=\frac{m}{2}x$, where recall that $m=\langle\w_0,\w_1\rangle$.  As $\langle\w_1,\v-\w_1\rangle=2$, to show that $\v-\w_1$ is minimal it suffices to check that 
$$
0\leq \langle \w_0,\v-\w_1\rangle=(x\w_0^2+ym)-m=my\left(\frac{2\w_0^2}{m^2}+1\right)-m=m \left(y \left(1+\frac{2\w_0^2}{m^2}\right)-1 \right).
$$  
We now consider the Cases \ref{enum:TwoSpherical} and \ref{enum:OneExceptionalOneSpherical} separately. 

First suppose we are in Case \ref{enum:TwoSpherical}.  Then $\w_0^2=-2$, so $\v^2\geq 3$ implies that $\frac{3}{2}\leq y^2(1-\frac{4}{m^2})$, and as in the proof of Proposition \ref{Prop:lattice classification}, we have $m\geq 3$.  If $m=3$, then it is easy to show that the equations $\v^2=3$ and $\langle \v,\w_1\rangle=0$ have no rational solutions, so we may assume that $v^2\geq 4$.  But then $2\leq y^2(1-\frac{4}{m^2})$, so $$y^2 \left(1-\frac{4}{m^2}\right)^2=
y^2 \left(1-\frac{4}{m^2}\right)
\frac{5}{9}\geq\frac{10}{9}>1.$$  Taking square-roots, we see that $$y\left(1+\frac{2\w_0^2}{m^2}\right)>1,$$ and therefore we have $\langle\w_0,\v-\w_1\rangle>0$.  If, instead, $m\geq 4$, then we get $(1-\frac{4}{m^2})\geq\frac{3}{4}$, from which it follows that 
$$y^2 \left(1-\frac{4}{m^2}\right)^2\geq
\frac{9}{8}>1,$$ so indeed $\langle \w_0,\v-\w_1\rangle>0$. 

Now suppose we are in \cref{enum:OneExceptionalOneSpherical}.  Then $\w_0^2=-1$, so $\v^2\geq 3$ is equivalent to $\frac{3}{2}\leq y^2(1-\frac{2}{m^2})$, and now $m\geq 2$.  If $m=2$, then again one can easily check that the equations $\v^2=3$ and $\langle \v,\w_1\rangle =0$ have no rational solutions, so $\v^2\geq 4$, i.e. $y^2(1-\frac{2}{m^2})\geq 2$.  Thus 
$$y^2 \left(1-\frac{2}{m^2}\right)^2=
y^2 \left(1-\frac{2}{m^2}\right)\frac{1}{2}\geq 1,$$ so $\langle \w_0,\v-\w_1\rangle\geq 0$. If, instead, $m\geq 3$, then $\left(1-\frac{2}{m^2}\right)\geq\frac{7}{9}$ and thus $$y^2 \left(1-\frac{2}{m^2}\right)^2\geq y^2 \left(1-\frac{2}{m^2}\right)\frac{7}{9}\geq\frac{7}{6}>1,$$ so we get $\langle \w_0,\v-\w_1\rangle>0$. 

As we have shown that $\v-\w_1$ is minimal, Lemma \ref{Lem:non-isotropic no totally semistable wall} guarantees that the generic element $F\in M_{\sigma_+}(\v-\w_1)$ is also $\sigma_0$-stable.  But then for the unique $\sigma_0$-stable spherical object $T_1$ with $\v(T_1)=\w_1$ we have $\ext^2(F,T_1)=\hom(T_1(K_X),F)=\hom(T_1,F)=0=\hom(F,T_1)$ by stability.  Thus $\ext^1(F,T_1)=\langle \v-\w_1,\w_1\rangle=2$, so there is a family of extensions $$0\to F\to E_p\to T_1\to 0,$$ parametrized by $p\in\P^1=\P(\Ext^1(T_1,F))$, which are all S-equivalent with respect to $\sigma_0$.  By \cite[Lemma 6.9]{BM14b}, they are $\sigma_+$-stable.  Thus $\pi^+$ contracts this rational curve.  Varying $F\in M_{\sigma_0}^s(\v-\w_1)$ sweeps out a family of $\sigma_+$-stable objects in $M_{\sigma_+}(\v)$ of dimension $1+(\v-\w_1)^2+1=\v^2=\dim M_{\sigma_+}(\v)-1$.  Thus we get a divisor contracted by $\pi^+$, which must then have relative Picard rank one, so this is the only component contracted by $\pi^+$.  

Finally, suppose that $\v^2=1$.  Then $(\v-\w_1)^2=-1$, so we must be in Case \ref{enum:OneExceptionalOneSpherical} with $\v-\w_1=\pm\w_n$ for some $n\in\Z$.  As $\w_n\in\Z\w_0+\Z\w_1$, it follows that $\v=x\w_0+y\w_1$ with $x,y\in\Z$.  But then $\langle\v,\w_1\rangle=0$ is equivalent to $x=\frac{2}{m}y$ so that $\v^2=1$ is equivalent to $$1=x^2\left(\frac{m^2}{2}-1\right)=\frac{x^2(m^2-2)}{2},$$ whose only solution in the positive integers is $x=1$ and $m=2$.  Thus we have $\v-\w_1=\w_0$, so we are, as above, guaranteed that the unique member of $M_{\sigma_+}(\v-\w_1)$ is $\sigma_0$-stable.  The same argument gives a family of extensions
$$0\to T_0\to E_p\to T_1\to 0,$$ parametrized by $p\in\P^1=\P(\Ext^1(T_1,T_0))$, which are S-equivalent with respect to $\sigma_0$ but are $\sigma_+$-stable.  This curve is contracted by $\pi^+$ and is a divisor in the two-dimensional moduli space $M_{\sigma_+}(\v)$, so $\pi^+$ again has relative Picard rank one.  These extensions thus give the unique curve contracted by $\pi^+$, as required.
\end{proof}

Having confirmed that when $\w$ is spherical we do get a divisorial contraction, we prove that this is not the case when $\w$ is exceptional with one exception.

\begin{Lem}\label{Lem:ExceptionalDivisorialNonContraction}
Suppose that $\HH$ is non-isotropic and $\WW$ is a potential wall associated to $\HH$.  If there exists an effective exceptional class $\w$ with $\langle \v,\w\rangle=0$, then $\WW$ only induces a divisorial contraction on $M_{\sigma_+}(\v,L)$ if $\v^2=2$, $L\equiv D+\frac{\rk\v}{2}K_X\pmod 2$ for a nodal cycle $D$, and $\HH$ falls into \cref{enum:OneExceptionalOneSpherical} of \cref{Prop:lattice classification}.  

In this case, for $E$ in the contracted divisor, there is a short exact sequence $$0\to T\oplus T(K_X)\to E\to S\to 0\mbox{ or }0\to S\to E\to T\oplus T(K_X)\to 0,$$ where $T\in M^s_{\sigma_+}(\w)$ and $S\in M^s_{\sigma_+}(\v-2\w)$, such that the inclusion $T\into E$ or $S\into E$ is one of the filtration steps in a JH-filtration for $E$ with respect to $\sigma_0$, while the generic $E\in M_{\sigma_+}(\v)$ satisfies $\Hom(T,E)=\Hom(E,T)=0$.  

In general, there is nevertheless a divisor $D_{\sigma_+}(\v)$ whose generic element $E$ admits a short exact sequence $$0\to T\to E\to F\to 0\mbox{ or }0\to F\to E\to T\to 0,$$ where $T\in M^s_{\sigma_+}(\w)$ and $F\in M^s_{\sigma_+}(\v-\w)$, such that the inclusion $T\into E$ or $F\into E$ is one of the filtration steps in a JH-filtration for $E$ with respect to $\sigma_0$, while the generic $E\in M_{\sigma_+}(\v)$ satisfies $\Hom(T,E)=\Hom(E,T)=0$.  Moreover, when $\v$ is minimal, this divisor is precisely the locus of strictly $\sigma_0$-semistable objects.
\end{Lem}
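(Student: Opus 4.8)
The plan is to follow the strategy of the spherical case in \cref{Lem:NonisotropicDivisorialContraction}, contrasting the numerics of the exceptional reflection with those of the spherical one. First I would use the reduction machinery from the end of \cref{Sec:TotallySemistable-non-isotropic} --- namely the isomorphisms of \cref{Prop:CompositionSphericalExceptional} (in Case \ref{enum:OneExceptionalOneSpherical}) and their analogues from \cite{BM14b} (in Case \ref{enum:TwoExceptional}) --- which commute with the contraction morphisms $\pi^\pm$ and hence preserve the type of the wall, so that I may assume $\v$ is minimal. Then the effective exceptional class orthogonal to $\v$ is a generator $\w$ of $\HH$; write $T$ for the $\sigma_0$-stable exceptional object with $\v(T)=\w$. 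By \cref{Lem:non-isotropic no totally semistable wall} the hypothesis $\langle\v,\w\rangle=0$ forces the strictly $\sigma_0$-semistable locus to be a divisor whose generic point has $\sigma_-$-Harder--Narasimhan factors of classes $\w$ and $\v-\w$. Since $\chi(T,E)=-\langle\w,\v\rangle=0$, this locus is the degeneracy locus of a map of vector bundles of equal rank in the family $\RHom(T,\EE)$ over $M_{\sigma_+}(\v)$, hence a genuine divisor; identifying it with the strictly semistable locus yields $D_{\sigma_+}(\v)=\Set{E : \Hom(T,E)\neq0}$ and the final assertion of the lemma.

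Next I would carry out the key numerical contrast. For a generic $F\in M^s_{\sigma_0}(\v-\w)$ one has, since $\w\neq\v-\w$ are distinct classes, $\Hom(F,T)=0$ and $\Ext^2(F,T)\cong\Hom(T,F(K_X))^\vee=0$ by stability, so that $\ext^1(F,T)=-\chi(F,T)=\langle\v-\w,\w\rangle=-\w^2=1$. Thus the extension $0\to T\to E\to F\to0$ is unique up to scalar: each $\sigma_0$-S-equivalence class $[T\oplus F]$ with $F$ stable contains a single $\sigma_+$-stable object, so $\pi^+$ is injective on the dense open subset of $D_{\sigma_+}(\v)$ parametrized by stable $F$. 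As $D_{\sigma_+}(\v)$ is the only divisor in the strictly semistable locus (so $\pi^+$ has relative Picard rank one, exactly as in \cref{Lem:NonisotropicDivisorialContraction}), $\WW$ fails to induce a divisorial contraction whenever the generic $F$ of class $\v-\w$ is $\sigma_0$-stable. This is where the exceptional case departs from the spherical one, in which $\ext^1(F,T)=2$ produces a contracted $\P^1$.

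I would then isolate the only exceptional configuration. The generic $F$ fails to be $\sigma_0$-stable precisely when $\v-\w$ lies on a wall, i.e.\ is orthogonal to an effective negative class; since $\langle\v-\w,\w\rangle=1\neq0$ and $\HH$ has rank two, such a class must be proportional to the generator $\v-2\w$ of $(\v-\w)^\perp$, and it must be \emph{spherical}, since an orthogonal exceptional class again gives $\ext^1=1$ and no new $\P^1$. Writing $\v-2\w=k\w'$ with $\w'$ primitive spherical, the identity $(\v-2\w)^2=\v^2-4=-2k^2$ together with $\v^2>0$ forces $\v^2=2$ and $k=1$, so $\v-2\w$ is itself a primitive spherical class. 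Its effectivity requires $\HH$ to contain a spherical class, whence $\HH$ is of type \ref{enum:OneExceptionalOneSpherical}, and the existence of the $\sigma_0$-stable spherical object $S$ of class $\v-2\w$ forces, by \cref{Thm:exist:nodal}\ref{enum:SphericalNonemptiness}, that $L\equiv D+\tfrac{\rk\v}{2}K_X\pmod2$ for a nodal cycle $D$. This establishes necessity of all three conditions.

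Finally, for the special case I would produce the promised filtration and contraction. With $S$ of class $\v-2\w$ one computes $\ext^1(S,T)=\ext^1(S,T(K_X))=\langle\v-2\w,\w\rangle=-2\w^2=2$, while for a strictly $\sigma_0$-semistable $F$ (S-equivalent to $S\oplus T(K_X)$) the additional map into $F(K_X)$ raises $\ext^1(F,T)$ to $2$; the resulting $\P^1$'s of extensions assemble, over the contracted curves of $M_{\sigma_+}(\v-\w)$, into a positive-dimensional family of $\sigma_+$-stable objects all S-equivalent to $S\oplus T\oplus T(K_X)$, realized by the short exact sequence $0\to T\oplus T(K_X)\to E\to S\to0$ and dually on the other side of $\WW$. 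These are $\sigma_+$-stable by \cite[Lem.~6.9]{BM14b}, and $\pi^+$ contracts the family. I expect the main obstacle to be exactly this last step: reconciling the two-step description $0\to T\to E\to F\to0$ with the three-step filtration by $T,T(K_X),S$, and proving via a careful dimension count on the Ext-quiver of the rigid objects $S,T,T(K_X)$ that in the special case the strictly semistable divisor is genuinely contracted rather than mapped birationally, while confirming that in every other configuration the uniqueness of the extension keeps $\pi^+$ injective on $D_{\sigma_+}(\v)$.
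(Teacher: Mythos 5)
Your necessity argument rests on the dichotomy ``$\WW$ induces a divisorial contraction only if the generic $F$ of class $\v-\w$ fails to be $\sigma_0$-stable,'' and this dichotomy is false --- it fails exactly in the case the lemma is about. The paper first proves (via the lattice computation with $m=\langle\w_0,\w_1\rangle$ that your proposal omits entirely) that $\v-\w$ is \emph{minimal} in every case, including the special one: when $\v^2=2$ and $\HH$ is of type \ref{enum:OneExceptionalOneSpherical} of \cref{Prop:lattice classification} one has $\langle\v-\w,\w\rangle=1$ and $\langle\v-\w,\v-2\w\rangle=\v^2-2=0$, so by \cref{Lem:non-isotropic no totally semistable wall} the generic $F\in M_{\sigma_+}(\v-\w)$ \emph{is} $\sigma_0$-stable there. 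The contracted divisor in that case is not built from such $F$ at all: it is the $\P^1\times\P^1$ of extensions $0\to T\oplus T(K_X)\to E\to S\to 0$ with $S$ of class $\v-2\w$, a different stratum of the strictly semistable locus (with $\sigma_-$-HN factors of classes $2\w$ and $\v-2\w$) which your framework, confined to extensions involving $T$ and objects of class $\v-\w$, never detects in the necessity direction. In particular your assertion that $D_{\sigma_+}(\v)$ ``is the only divisor in the strictly semistable locus'' is precisely what breaks down in the special case.

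You arrive at the correct numerical conditions only through a second, compensating error: the claim that ``the generic $F$ fails to be $\sigma_0$-stable precisely when $\v-\w$ is orthogonal to an effective negative class.'' Orthogonality to an effective spherical or exceptional class produces a codimension-one strictly semistable locus whose generic complement is still stable; failure of generic stability (total semistability) requires a strictly \emph{negative} pairing, by \cref{Lem: condition for totally semistable wall} and \cref{Lem:non-isotropic no totally semistable wall}. Moreover $(\v-\w)^\perp\cap\HH$ is generated by a multiple of $\v-\v^2\w$, not of $\v-2\w$; the two agree only when $\v^2=2$, so your computation $(\v-2\w)^2=\v^2-4=-2k^2$ presupposes the conclusion. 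A valid proof needs the paper's two missing ingredients: (i) the verification that $\v-\w$ is minimal, which together with the classification of codimension-one HN strata in the proof of \cref{Lem:non-isotropic no totally semistable wall} shows that outside the special case the entire strictly semistable locus is the divisor of unique, uncontracted extensions $0\to T\to E\to F\to 0$; and (ii) in the special case, the direct construction of the contracted $\P^1\times\P^1$ via \cite[Lemmas 6.1--6.3]{CH15} --- note that \cite[Lemma 6.9]{BM14b} does not apply to the non-stable subobject $T\oplus T(K_X)$, contrary to your last paragraph --- together with the determinant bookkeeping $L=\det E\equiv 2\det T+K_X+\det S$, which is what actually pins down the component $M_{\sigma_+}(\v,L)$ on which the contraction occurs (your appeal to \cref{Thm:exist:nodal} constrains $\det S$, not $L$).
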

\begin{proof}
As before, we may assume that $\v$ is minimal.  Then in terms of Proposition \ref{Prop:lattice classification}, $\HH_\WW$ must fall into cases \ref{enum:OneExceptional}, \ref{enum:TwoExceptional}, or \ref{enum:OneExceptionalOneSpherical}.  By minimality of $\v$, $\w$ must be $\w_0$ or $\w_1$ in Case \ref{enum:TwoExceptional} or $\w_0$ in Case \ref{enum:OneExceptionalOneSpherical}, and we assume it is $\w_0$ with the other case being dealt with similarly.  Furthermore, observe that there cannot exist any $n\in\Z$ such that $\v^2=n^2$ as then $(\v-n\w_0)^2=0$, contrary to the hypothesis that $\WW$ is non-isotropic.  So, in particular, $\v^2\geq 2$, from which it follows that $(\v-\w_0)^2>0$.

Let us first show that $\v-\w_0$ is minimal.  As $\langle \v-\w_0,\w_0\rangle=1>0$, this is clear in \cref{enum:OneExceptional}, so we can restrict ourselves to Cases \ref{enum:TwoExceptional} and \ref{enum:OneExceptionalOneSpherical}.  Thus it remains to show that $0\leq\langle \v-\w_0,\w_1\rangle$.  Writing $\v=x\w_0+y\w_1$, the conditions $\langle \v,\w_0\rangle=0, \v^2\geq 2$, and $\langle \v-\w_0,\w_1\rangle\geq 0$ become $$y=\frac{x}{m},x^2\left(1+\frac{\w_1^2}{m^2}\right)\geq 2,\mbox{ and }m\left[x\left(1+\frac{\w_1^2}{m^2}\right)-1\right]\geq 0,$$ respectively.    As $m\geq 2$ in either case, we get $$\left(1+\frac{\w_1^2}{m^2}\right)=\begin{cases}
1-\frac{1}{m^2}, &\text{if }\w_1^2=-1,\\
1-\frac{2}{m^2},&\text{if }\w_1^2=-2
\end{cases}\geq\begin{cases}
\frac{3}{4}, &\text{if }\w_1^2=-1,\\
\frac{1}{2}, &\text{if }\w_1^2=-2,
\end{cases}\geq\frac{1}{2},$$ in either case.
Thus $$x^2\left(1+\frac{\w_1^2}{m^2}\right)^2\geq x^2\left(1+\frac{\w_1^2}{m^2}\right)\left(\frac{1}{2}\right)\geq 2\left(\frac{1}{2}\right)\geq 1.$$  Taking square-roots gives that indeed $$x\left(1+\frac{\w_1^2}{m^2}\right)-1\geq 0,$$ as required.

We consider the case when $\v^2=2$, $L\equiv D+\frac{\rk\v}{2}K_X\pmod 2$, and $\HH$ falls into \cref{enum:OneExceptionalOneSpherical} of \cref{Prop:lattice classification}.  As $\langle\v-\w_0,\v-2\w_0\rangle=0$ and $(\v-2\w_0)^2=-2$, it follows from the minimality of $\v-\w_0$ that $\v-2\w_0=\w_1=\v(S)$, for the unique $\sigma_0$-stable spherical object $S$.  We denote by $T_0$ the unique $\sigma_0$-stable object of class $\w_0$ (up to $-\otimes\OO_X(K_X)$).  By stability, we have $$\hom(S,T_0(D))=\ext^2(S,T_0(D))=\hom(T_0(D),S)=0,$$ for $D=0,K_X$.  Thus $\ext^1(S,T_0\oplus T_0(K_X))=\langle\v-2\w_0,2\w_0\rangle=4$, so by \cite[Lemma 6.1-6.3]{CH15} there is a $\P^1\times \P^1$ worth of non-isomorphic $\sigma_+$-stable $E$ fitting into a short exact sequence $$0\to T_0\oplus T_0(K_X)\to E\to S\to 0.$$  As this gives a contracted divisor, it must be the only contracted divisor, as claimed.

We now treat the general case, that is, either $\v^2>2$ or $\v^2=2$ and either $L\nequiv D+\frac{\rk\v}{2}K_X\pmod 2$ or $\HH$ does not fall into \cref{enum:OneExceptionalOneSpherical} of \cref{Prop:lattice classification}.  As $\v-\w_0$ is minimal, it follows from Lemma \ref{Lem:non-isotropic no totally semistable wall} that there exists a $\sigma_0$-stable object $F$ of class $\v-\w_0$.  By stability $\hom(F,T_0)=\ext^2(F,T_0)=\hom(F,T_0(K_X))=0$, so $\ext^1(F,T_0)=\langle \v-\w_0,\w_0\rangle=1$, and there exists a unique non-trivial extension $$0\to T_0\to E\to F\to 0,$$ which is $\sigma_+$-stable by \cite[Lemma 6.9]{BM14b}.  By a dimension count, upon varying $F\in M_{\sigma_0}^s(\v-\w_0)$ these extensions sweep out a divisor of strictly $\sigma_0$-semistable objects which does not get contracted by $\pi^+$.  Moreover, from the proof of \cref{Lem:non-isotropic no totally semistable wall}, it follows that this is precisely the locus strictly $\sigma_0$-semistable objects.
\end{proof}

\begin{Rem}
We will see in Section \ref{Sec:FloppingWalls} that, if $\v^2\geq 3$, then in the setup of Lemma \ref{Lem:ExceptionalDivisorialNonContraction}, $\WW$ induces a small contraction, contracting a $\P^1\times\P^1$.  As the weakly-spherical reflection $R_{T}$ induces an isomorphism $M_{\sigma_+}(\v)\isomor M_{\sigma_-}(\v)$ that acts as the identity on $M_{\sigma_+}(\v)\backslash D_{\sigma_+}(\v)$, where $\Hom(T,E)=\Hom(E,T(K_X))=0$, we see that $M_{\sigma_-}(\v)$ cannot be the flop of $\pi^+$.  It is unclear whether examples such as this show that there are minimal models of $M_{\sigma_+}(\v)$ which cannot be obtained by Bridgeland wall-crossing.  On the other hand, it may be possible to reach this minimal model by crossing a different wall bounding the chamber containing $\sigma_+$.
\end{Rem}

\begin{proof}[Proof of Proposition \ref{Prop:NonisotropicDivisorialContraction}]
This follows directly from Lemma \ref{Lem:non-isotropic no totally semistable wall}, Lemma \ref{Lem:NonisotropicDivisorialContraction}, and Lemma \ref{Lem:ExceptionalDivisorialNonContraction}.
\end{proof}

\begin{Rem}\label{Rem:NonIsotropicDeterminantIrrelevant}
It is important to note that with the exception of \cref{Lem:ExceptionalDivisorialNonContraction}, everything we have said thus far in the non-isotropic case applies to each component $M_{\sigma_+}(\v,L),M_{\sigma_+}(\v,L+K_X)$, where $c_1(\v)=[L\mod K_X]$.  In particular, by taking $F\in M^s_{\sigma_0}(\v-\w,L'),M_{\sigma_0}^s(\v-\w,L'+K_X)$ in Lemmas \ref{Lem:NonisotropicDivisorialContraction} and \ref{Lem:ExceptionalDivisorialNonContraction}, we get divisors with the described properties in each component $M_{\sigma_+}(\v,L),M_{\sigma_+}(\v,L+K_X)$.  We will see in the next section that we must take great care to treat the determinants differently as the wall-crossing behavior is often radically different in each component, in a similar way to the case $\v^2=2$, $L\equiv D+\frac{\rk\v}{2}K_X\pmod2$, and $\v-2\w$ is spherical in \cref{Lem:ExceptionalDivisorialNonContraction}.
\end{Rem}

\section{Isotropic walls}\label{Sec:Isotropic walls}
We finally treat the case of isotropic walls.  We divide our discussion in two.  We will first discuss the case that $\HH$ contains a primitive isotropic vector $\u$ with $\ell(\u)=2$, in which case the wall $\WW$ corresponds, after a Fourier-Mukai transform, to the contraction to the Uhlenbeck compactification, see \cite{Li97,Lo12}.  We will consider separately the case that $\HH$ only contains primitive isotropic vectors $\u$ with $\ell(\u)=1$.  In both cases, we again use the stack of Harder-Narasimhan filtrations, as in Section \ref{sec:DimensionsOfHarderNarasimhan}, to study the wall-crossing behavior.  We begin by studying in more detail the isotropic lattice $\HH$, its isotropic vectors, and the associated moduli spaces.
\subsection{Preliminaries}
We state here a result that summarizes the facts we will need for a more detailed study of wall-crossing in the isotropic case.
\begin{Prop}\label{Prop:isotropic lattice} Assume that there exists an isotropic class $\u\in\HH$.  Then there are two effective, primitive, isotropic classes $\u_1$ and $\u_2$ in $\HH$, which satisfy $P_{\HH}=\R_{\geq 0}\u_1+\R_{\geq 0}\u_2$ and $\langle\v',\u_i\rangle\geq 0$ for $i=1,2$ and any $\v'\in P_{\HH}$.  Moreover, one of the following mutually exclusive conditions holds:
\begin{enumerate}
    \item\label{enum:IsotropicLatticeNoEffectiveNegatives} $C_\WW=P_\HH$ and $\ell(\u_1)\geq\ell(\u_2)$.  
    In this case, $M^s_{\sigma_0}(\u_i)=M_{\sigma_0}(\u_i)$ for each $i=1,2$ and a generic $\sigma_0\in\WW$; or 
    \item\label{enum:IsotropicLatticeEffectiveExceptional} There exists an exceptional class $\w\in C_\WW\cap\HH$.  In this case $\ell(\u_1)=\ell(\u_2)$, $\HH=\Z\w+\Z\u_1$ and $\u_2=\u_1+2\langle\u_1,\w\rangle\w$.  Consequently, $\langle\u_1,\u_2\rangle=2\langle\u_1,\w\rangle^2$ and $C_\WW=\R_{\geq 0}\u_1+\R_{\geq 0}\w$.  Finally, in this case $M_{\sigma_0}^s(\u_1)=M_{\sigma_0}(\u_1)$ for a generic $\sigma_0\in\WW$, while $\WW$ is a totally semistable wall for $\u_2$; or
    \item\label{enum:IsotropicLatticeEffectiveSpherical} There exists a spherical class $\w\in C_\WW\cap\HH$.  In this case, we again have $\ell(\u_1)=\ell(\u_2)$, $\HH=\Z\w+\Z\u_1$, and $\u_2=\u_1+\langle\u_1,\w\rangle\w$.  Consequently, $\langle\u_1,\u_2\rangle=\langle\u_1,\w\rangle^2$ and $\HH$ is even in this case.\footnote{All of these conclusions continue to hold in Case \ref{enum:IsotropicLatticeNoEffectiveNegatives} if $\HH$ admits a class $\w\notin C_\WW$ with $\w^2=-2$.}  Finally, $C_\WW=\R_{\geq 0}\u_1+\R_{\geq 0}\w$ and $M_{\sigma_0}^s(\u_1)=M_{\sigma_0}(\u_1)$ for a generic $\sigma_0\in\WW$, while $\WW$ is a totally semistable wall for $\u_2$.
\end{enumerate}
\end{Prop}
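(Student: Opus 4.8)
The plan is to settle the lattice-theoretic assertions first and then read off the moduli statements from the earlier lemmas. Since $\HH$ is hyperbolic of signature $(1,-1)$ and contains an isotropic vector, the form on $\HH_\R$ is an isotropic binary form whose null locus is a union of two distinct lines; each line contains a unique primitive integral class up to sign, and I fix the signs so that $\u_1,\u_2\in\overline{P_\HH}$. The positive cone $P_\HH$ is the connected component of $\{\x\in\HH_\R : \x^2\ge 0\}$ on which $\langle\v,\,\cdot\,\rangle>0$; as it is bounded by the two null rays, $P_\HH=\R_{\ge0}\u_1+\R_{\ge0}\u_2$. The inequality $\langle\v',\u_i\rangle\ge0$ for $\v'\in P_\HH$ is the standard positivity of the Mukai pairing on the closure of a positive cone in a hyperbolic plane: diagonalizing the form over $\R$ as $\diag(1,-1)$, both classes satisfy $x\ge|y|$, whence $\langle\v',\u_i\rangle = x_1x_2-y_1y_2\ge x_1x_2-|y_1||y_2|\ge0$.

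For the trichotomy I would feed $\HH$ into \cref{Prop:lattice classification}. Because $\HH$ is isotropic, its Case~\ref{enum:TwoNegative} is excluded, so precisely one of the following holds: $\HH$ has no effective spherical or exceptional class, or exactly one effective exceptional class, or exactly one effective spherical class; these yield \ref{enum:IsotropicLatticeNoEffectiveNegatives}, \ref{enum:IsotropicLatticeEffectiveExceptional}, and \ref{enum:IsotropicLatticeEffectiveSpherical}, respectively. In the first situation every class generating $C_\WW$ has non-negative square, so $C_\WW=P_\HH$, and I relabel the null classes so that $\ell(\u_1)\ge\ell(\u_2)$.

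The heart of the proof is \ref{enum:IsotropicLatticeEffectiveExceptional} and \ref{enum:IsotropicLatticeEffectiveSpherical}. Let $\w$ denote the effective exceptional ($\w^2=-1$) or spherical ($\w^2=-2$) class and put $k:=\langle\u_1,\w\rangle$, labelling the null classes so that $k>0$. This labelling is forced: the reflection $R_\w$ is an integral isometry of $\HH$ with $R_\w(\w)=-\w$, so it interchanges the two null rays and reverses the sign of the pairing with $\w$; hence exactly one primitive null class pairs positively with $\w$, and $k\ne0$ since $\langle\u_1,\w\rangle=0$ would force $\w\in\Q\u_1$. Applying $R_\w$ to $\u_1$ produces a primitive isotropic class $R_\w(\u_1)=\u_1+2k\w$ (resp.\ $\u_1+k\w$) lying in $C_\WW$ and not proportional to $\u_1$, hence equal to $\u_2$; the asserted value $\langle\u_1,\u_2\rangle=2k^2$ (resp.\ $k^2$) is then immediate, and in the spherical case $(a\w+b\u_1)^2=2(abk-a^2)$ shows $\HH$ is even. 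To obtain $\HH=\Z\w+\Z\u_1$, I would choose $\mathbf{y}\in\HH$ attaining the least positive value $e$ of $\langle\,\cdot\,,\u_1\rangle$; since $\u_1^\perp\cap\HH=\Z\u_1$, the pair $\{\mathbf{y},\u_1\}$ is a basis of $\HH$. Writing $\w=a\mathbf{y}+b\u_1$ gives $\w^2=a^2\mathbf{y}^2$, and as $\w^2\in\{-1,-2\}$ is squarefree while $a^2$ is a perfect square dividing it, $a=\pm1$; thus $\Z\w+\Z\u_1=\Z\mathbf{y}+\Z\u_1=\HH$. Finally $C_\WW=\R_{\ge0}\u_1+\R_{\ge0}\w$ because $\w$ is the only effective negative direction while $\u_2$ lies in the interior of the cone.

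It then remains to extract the moduli statements. That $\WW$ is totally semistable for $\u_2$ follows from $\langle\u_2,\w\rangle=-k<0$ together with \cref{Lem: condition for totally semistable wall}. For $M_{\sigma_0}^s(\u_1)=M_{\sigma_0}(\u_1)$ — and for both $\u_i$ in \ref{enum:IsotropicLatticeNoEffectiveNegatives} — I would invoke the final part of \cref{hyperbolic}: the Jordan--H\"older factors of a $\sigma_0$-semistable object of class $\u_1$ have classes in $C_\WW$, and since $\u_1$ is primitive and spans an extremal ray of the strictly convex cone $C_\WW$, the only such decomposition is the trivial one, so every such object is stable. The equality $\ell(\u_1)=\ell(\u_2)$ comes from the fact that $R_\w$ is induced by an autoequivalence of $\Db(X)$ compatible with $\varpi^*$, and such autoequivalences preserve the divisibility of $\varpi^*(\,\cdot\,)$ and hence $\ell$. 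The main obstacle is the basis identity $\HH=\Z\w+\Z\u_1$: everything downstream — the exact integral formula for $\u_2$, the value of $\langle\u_1,\u_2\rangle$, and evenness — hinges on it, and the clean route through is the minimal-value argument combined with the squarefreeness of $\w^2$.
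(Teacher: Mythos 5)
Your overall architecture is sound and runs parallel to the paper's proof: you construct the two effective primitive isotropic classes, get the trichotomy from the (non-)existence of an effective negative class (you route this through \cref{Prop:lattice classification}, the paper argues directly with $C_\WW$ versus $P_\HH$; both work, since Case \ref{enum:TwoNegative} there is non-isotropic), identify $\u_2$ as $R_\w(\u_1)$, and deduce the moduli statements from extremality of $\u_1$ in $C_\WW$ together with \cref{Lem: condition for totally semistable wall}, exactly as the paper does. However, your central lattice computation is wrong. Writing $\w=a\mathbf{y}+b\u_1$ with $e:=\langle\mathbf{y},\u_1\rangle\neq 0$, you assert $\w^2=a^2\mathbf{y}^2$, but since $\u_1$ is isotropic the correct identity is
\[
\w^2=a^2\mathbf{y}^2+2abe ,
\]
and the cross term does not vanish. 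All that survives is $a\mid\w^2$ (as $a$ divides both summands). For $\w^2=-1$ this still forces $a=\pm 1$, so the exceptional case of $\HH=\Z\w+\Z\u_1$ goes through by accident; but for $\w^2=-2$ you only get $a\in\{\pm1,\pm2\}$, and your squarefreeness reasoning (which rested on the false divisibility $a^2\mid\w^2$) cannot exclude $a=\pm 2$. That case has to be killed by parity: $a=\pm2$ gives $-2=4\mathbf{y}^2\pm 4be$, i.e.\ $-1=2(\mathbf{y}^2\pm be)$, an odd integer equal to an even one. This is precisely what the paper does (it writes $\w=x\u_1+y\v'$, notes $y\mid \w^2$ because the $\u_1^2$-term vanishes, and rules out $y=\pm2$ by the same parity contradiction). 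Since the formula for $\u_2$, the value of $\langle\u_1,\u_2\rangle$, the evenness of $\HH$, and $C_\WW=\R_{\geq0}\u_1+\R_{\geq0}\w$ all hinge on $\HH=\Z\w+\Z\u_1$, the spherical case of your proof is genuinely incomplete as written.

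A second, smaller gap: your proof of $\ell(\u_1)=\ell(\u_2)$ by invoking that $R_\w$ is ``induced by an autoequivalence compatible with $\varpi^*$'' which ``preserves divisibility'' is not yet an argument in the spherical case. The naive induced map on $\Hal(\widetilde{X},\Z)$ is the reflection in the square $(-4)$ class $\varpi^*\w$, which is an integral isometry only if $\varpi^*\w$ has divisibility $2$ in $\Hal(\widetilde{X},\Z)$ --- something you neither prove nor may take for granted, because spherical classes exist only on nodal Enriques surfaces, exactly where $\NS(\widetilde{X})$ is strictly larger than $\varpi^*\Num(X)$. One can repair this by showing that $\varpi^*S$ splits into two mutually orthogonal spherical objects and using the product of the two integral $(-2)$-reflections, but that is real extra work. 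The paper sidesteps it with a two-line mod $2$ computation: in the exceptional case $\u_2=\u_1+2\langle\u_1,\w\rangle\w\equiv\u_1\pmod 2$ immediately, and in the spherical case $\ell(\u_1)=2$ forces $\langle\u_1,\w\rangle$ to be even (spherical classes have even rank and even $s$-component, and $c_1(\u_1)$ is divisible by $2$), whence again $\u_2\equiv\u_1\pmod 2$. I recommend replacing both of your problematic steps with these direct arithmetic arguments; the rest of your proposal then matches the paper's proof.
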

\begin{proof}
If $\u\in\HH$ is a primitive isotropic class, then up to replacing $\u$ by $-\u$, we may assume that $\u$ is effective, so we set $\u_1=\u$.  Completing $\u_1$ to a basis $\HH=\Z\u_1+\Z\v'$, we see that \begin{equation}\label{eqn:OtherIsotropic}0=(x\u_1+y\v')^2=2xy\langle \u_1,\v'\rangle+y^2 (\v')^2\end{equation} has a second integral solution, since we can assume $y\neq 0$ and $\langle \u_1,\v'\rangle\neq 0$ from the signature of $\HH$.  Taking the unique effective primitive class on the corresponding line, we get $\u_2$.  Clearly $P_{\HH}$ is as claimed, and the inequality $\langle \v',\u_i\rangle\geq 0$ follows accordingly.

If $C_\WW=P_\HH$, then the claim about moduli spaces in Case \ref{enum:IsotropicLatticeNoEffectiveNegatives} follows from the fact that $\u_1$ and $\u_2$ are primitive vectors on extremal rays of $C_\WW$.  Moreover, up to renumbering, we assume that $\ell(\u_1)\geq \ell(\u_2)$ and, in case of equality, $\langle\v,\u_1\rangle\geq\langle\v,\u_2\rangle$.

Suppose that there exists a class $\w\in C_\WW$ that is not in $P_\HH$.  Then either $\w$ is exceptional or spherical as in Case \ref{enum:IsotropicLatticeEffectiveExceptional} or \ref{enum:IsotropicLatticeEffectiveSpherical}, respectively.

Let us consider first Case \ref{enum:IsotropicLatticeEffectiveExceptional}, and write $\w\in C_\WW\cap\Delta(X)_{-1}$ as $\w=x\u_1+y\v'$.  Then 
$$-1=\w^2=y(y \v'^2+2x \langle \v',\u_1 \rangle)$$ implies
$y=\pm 1$.  Replacing $\v'$ by $\w$, we see that $\HH=\Z\u_1+\Z\w$.  Then it is easy to that the other primitive effective isotropic vector must satisfy \begin{equation}\label{eqn:U2Exceptional}\u_2=\u_1+2 \langle \u_1,\w \rangle \w.\end{equation}  Pairing this equality with $\u_1$, it is clear that $\langle \u_1,\u_2\rangle=2\langle\u_1,\w\rangle^2$.  Moreover, we see that $c_1(\u_2)\equiv c_1(\u_1)\pmod 2$ from \eqref{eqn:U2Exceptional}, so we get the last statement that $\ell(\u_1)=\ell(\u_2)$.

Now consider Case \ref{enum:IsotropicLatticeEffectiveSpherical}, and write $\w\in C_\WW\cap\Delta(X)_{-2}$ as $\w=x\u_1+y\v'$.  Then $$-2=\w^2=y(y\v'^2+2x\langle\u_1,\v'\rangle)$$ implies that $y=\pm1,\pm2$.  If $y=\pm2$, then $$\mp1=\pm 2{\v'}^2+2x\langle\u_1,\v'\rangle=2(\pm\v'+x\langle\u_1,\v'\rangle),$$ which is absurd.  Thus $y=\pm1$, so that replacing $\v'$ by $\w$ we have $\HH=\Z\u_1+\Z\w$.  In this case, the other primitive effective isotropic vector must satisfy \begin{equation}\label{eqn:U2Spherical}
    \u_2=\u_1+\langle\u_1,\w\rangle\w.
\end{equation}
Pairing \eqref{eqn:U2Spherical} with $\u_1$ gives $\langle\u_1,\u_2\rangle=\langle\u_1,\w\rangle^2$.  To see that $\ell(\u_1)=\ell(\u_2)$, observe that if $\ell(\u_i)=2$ for say $i=1$, then $2\mid c_1(\u_1)$ implies that $2\mid\langle\u_1,\w\rangle$ as $\rk\w\equiv\rk\u_1\equiv 0\pmod 2$.  Thus $\u_1\equiv\u_2\pmod 2$ so that $\ell(\u_2)=2$ as well.  Otherwise, $\ell(\u_1)=\ell(\u_2)=1$, and we have equality again.  Finally, note that for any $\v=x\u_1+y\w$ with $x,y\in\Z$, we have $$\v^2=2xy\langle\u_1,\w\rangle+y^2\w^2=2xy\langle\u_1,\w\rangle-2y^2$$ is even, as claimed.

Observe that Cases \ref{enum:IsotropicLatticeEffectiveExceptional} and \ref{enum:IsotropicLatticeEffectiveSpherical} are indeed mutually exclusive since $\HH$ is an odd lattice in the first case and an even lattice in the latter.

For the statements about $C_\WW$ and moduli spaces in Cases \ref{enum:IsotropicLatticeEffectiveExceptional} and \ref{enum:IsotropicLatticeEffectiveSpherical}, observe that $C_\WW=P_\HH+\R_{\geq 0}\w$ and $\langle\u_2,\w\rangle=-\langle\u_1,\w\rangle$ in either case.  So, up to reordering, we may suppose that $\langle\u_1,\w\rangle>0$ and $\langle\u_2,\w\rangle<0$.  In particular, $\u_1$ is an extremal ray of $C_\WW$ (see Figure \ref{fig:IsotropicWithNegative}), and $\WW$ is totally semistable for $\u_2$ by Lemma \ref{Lem: condition for totally semistable wall}.  Thus $M^s_{\sigma_0}(\u_1)=M_{\sigma_0}(\u_1)$, as claimed.
\end{proof}
\begin{figure}
   \begin{tikzpicture}[scale=1]
   \draw [->] (-4,0) -- (4,0);
   \draw[->] (0,-1) -- (0,4);
   \path [fill=gray!50,opacity=0.2] (0,4) -- (0,0) -- (4,0) -- (4,4) -- cycle;
	\path [fill=gray!60,opacity=0.4] (0,0) -- (4,0) -- (4,4) -- (4/3,4) -- cycle;
           \draw [red,domain=-4:4] plot (\x,{0});
   \draw [red,domain=-1/3:4/3] plot (\x,{3*\x});
   \filldraw [black] (0,3) circle (1.5pt) node [anchor=north east] {$\w$};
	\filldraw [black] (1,0) circle (1.5pt) node [anchor=north east] {$\u_1$};
	\filldraw [black] (1,3) circle (1.5pt) node [anchor=north east] {$\u_2$};
	\node[below] at (2,2) {$P_\HH$};
   \end{tikzpicture}
   \caption{The red lines are defined by $\u^2=0$.  The dark gray region is the positive cone $P_\HH$, while the first quadrant is the effective cone $C_\WW$.}
   \label{fig:IsotropicWithNegative}
\end{figure}
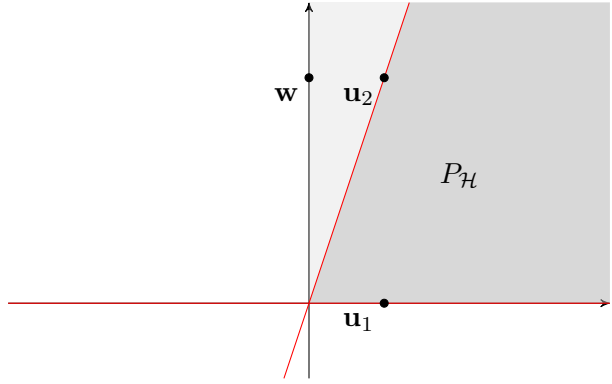
\begin{Rem}\label{Rem:Odd lattice}
It is worth noting that if $\HH$ contains a vector $\v$ such that $\v^2$ is odd, then $\ell(\u_1)=\ell(\u_2)$.  Indeed, as above, we may complete $\u_1$ to a basis so that $\HH=\Z\u_1+\Z\v'$, and as $\v^2$ is odd, we must have $\v'^2$ is odd as well.  From \eqref{eqn:OtherIsotropic}, we see immediately that in writing $\u_2=x\u_1+y\v'$ with $\gcd(x,y)=1$, we must have $y$ even and $x$ odd.  But then $c_1(\u_1)\equiv c_1(\u_2)\pmod 2$ so that $\ell(\u_1)=\ell(\u_2)$, as claimed.
\end{Rem}
\begin{Rem}\label{Rem:Even and Odd pairings}
We note that $\ell(\u_1)=2$ and $2\mid\rk(\u_2)$ force $\langle\u_1,\u_2\rangle$ to be even.  Let us be more specific each case.  

In Case \ref{enum:IsotropicLatticeNoEffectiveNegatives}, we must have $\langle\u_1,\u_2\rangle\geq 4$ if $\ell(\u_2)=2$.  Indeed, if $\ell(\u_2)=2$, then $\u_1-\u_2$ is divisible by 2 in $\Hal(X,\Z)$, and thus in the saturated sublattice $\HH$ as well, so $\langle\u_1,\u_2\rangle=2$ would imply that $\HH$ contains the exceptional class $\frac{\u_1-\u_2}{2}$, an impossibility.  If, instead, $\ell(\u_2)=1$, then $\HH$ must be even by Remark \ref{Rem:Odd lattice}.  

In Case \ref{enum:IsotropicLatticeEffectiveSpherical}, when there exists an effective spherical class, we also have $\langle\u_1,\u_2\rangle\geq 4$ since then $\langle\u_1,\w\rangle$ is even so that  $\langle\u_1,\u_2\rangle=\langle\u_1,\w\rangle^2$ is divisible by 4. 

In \cref{enum:IsotropicLatticeEffectiveExceptional}, we have $\langle\u_1,\w\rangle$ is odd so that $4\nmid\langle\u_1,\u_2\rangle=2\langle\u_1,\w\rangle^2$.  Indeed, writing $\u_1=(2r_1,2c_1,s_1)$ and $\w=(r,c,\frac{s}{2})$ with $r,s$ odd integers, we observe that $$\langle\u_1,\w\rangle=2c_1.c-rs_1-r_1s\equiv -(r_1+s_1)\equiv1\pmod2$$ by \cref{primitive}.
\end{Rem}
\begin{Rem}\label{Rem:IsotropicOrientation}
Up to relabeling $\sigma_+$ and $\sigma_-$, we may assume that the orientation on $\HH_\R$ is as in Figure \ref{fig:IsotropicWithNegative}, even in Case \ref{enum:IsotropicLatticeNoEffectiveNegatives}.  That is, we will assume for the remainder of this section that $\phi^+(\u_1)<\phi^+(\u_2)<\phi^+(\w)$.
\end{Rem}

Our main result about isotropic walls is the following classification:

\begin{Prop}\label{Prop:isotropic-classification}
Assume that $\HH_{\WW}$ is isotropic and $\v\in\HH_\WW$ with $\v^2>0$. Set $r:=\rk \v$.
\begin{enumerate}
\item
If $\WW$ is totally semistable, that is, $M_{\sigma_0}^s(\v,L)=\emptyset$, then
\begin{enumerate}
\item\label{enum:IostropicTSS-NonMinimal} $\HH_\WW$ contains an effective exceptional or spherical class $\w$ such that $\langle\v,\w\rangle<0$; or
\item\label{enum:IsotropicTSS-HC} $\HH_\WW$ contains a primitive isotropic $\u$ such that $\ell(\u)=2$ and $\langle \v,\u \rangle=1$; or 
\item\label{enum:IsotropicTSS-P1Fibration Spherical l=1} $\HH_\WW$ contains a primitive isotropic $\u$ and an effective spherical class $\w$ such that $\langle \v,\u \rangle=\ell(\u)=2$, $\langle \v,\w \rangle=0$, $L\equiv D+\frac{r}{2}K_X \pmod 2$,
where $D$ is a nodal cycle; or 
\item\label{enum:IsotropicTSS-P1Fibration Exceptional} $\HH_\WW$ contains a primitive isotropic $\u$ and an effective exceptional class $\w$ such that
$\langle \v,\u \rangle=\ell(\u)=2$, $\langle \v,\w \rangle=0$, and $L \equiv K_X \pmod 2$; 
or
\item\label{enum:IsotropicTSS-P1Fibration Spherical l=2}$\HH_\WW$ contains a primitive isotropic $\u$ and an effective spherical class $\w$ such that $\langle \v,\u \rangle=1=\ell(\u)$, $\langle\v,\w\rangle=0$, and $L \equiv D+\frac{r}{2}K_X \pmod 2$, 
where $D$ is a nodal cycle.
\end{enumerate}
\item
$\WW$ induces a divisorial contraction if and only if  
\begin{enumerate}
\item\label{enum:IsotropicDivisorialContraction-HC} $\HH_\WW$ contains a primitive isotropic $\u$ and an effective exceptional class $\w$ such that $\langle\v,\u\rangle=1$, $\ell(\u)=2$, and $\langle\v,\w\rangle\neq0$; or
\item\label{enum:IsotropicDivisorialContraction-<v.w>=0} $\HH_\WW$ contains a primitive isotropic $\u$ and an effective spherical class $\w$ such that $\langle\v,\w\rangle=0$ and $\langle\v,\u\rangle>\ell(\u)$; or 
\item\label{enum:IsotropicDivisorialContraction-<v.u>=l(u)} $\HH_\WW$ contains a primitive isotropic $\u$ and an effective exceptional or spherical class $\w$ such that $\langle\v,\u\rangle=\ell(\u)$ and $\langle\v,\w\rangle\neq0$; or
\item\label{enum:IsotropicDivisorialContraction-<v.u>=l(u) no negative} $\HH_\WW$ satisfies $C_\WW=P_\HH$, contains a primitive isotropic $\u$ such that $\langle\v,\u\rangle=\ell(\u)$, and $\v^2\geq \ell(\u)+2$.
\end{enumerate}
\end{enumerate}
\end{Prop}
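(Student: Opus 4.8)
The plan is to run a case analysis governed by the structural trichotomy of \cref{Prop:isotropic lattice}, converting every statement about stability into a numerical statement about the codimension estimate \eqref{eqn:HNFiltrationCodim} of \cref{Prop:HN codim}. Throughout I would write an arbitrary strictly $\sigma_0$-semistable object of class $\v$ as lying in some Harder--Narasimhan stratum $\FF(\v_1,\dots,\v_s)^o$ with $s\geq 2$ and all $\v_i\in C_\WW$ of the same $\sigma_0$-phase, so that the strictly semistable locus is the finite union of these strata. The input dimension formulas are $\dim\MM_{\sigma_-}(\v_i)=\v_i^2$ for $\v_i^2>0$ (\cref{prop:pss}), the isotropic defect $\dim\MM_{\sigma_-}(m\u)\leq\lfloor m\ell(\u)/2\rfloor$ (\cref{prop:isotropic}), and the negative-square values of \cref{Lem:dimension negative}; grouping the indices by the sign of $\v_i^2$ turns \eqref{eqn:HNFiltrationCodim} into a sum of non-positive defect terms and non-negative cross terms $\langle\v_i,\v_j\rangle$.

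For the totally semistable direction (part (1), necessity) I would first dispose of \cref{enum:IostropicTSS-NonMinimal}: if $\v$ is not minimal in its $G_\HH$-orbit in the sense of \cref{PropDef: minimal vectors}, then $\langle\v,\w\rangle<0$ for some effective spherical or exceptional $\w$, which is exactly \cref{enum:IostropicTSS-NonMinimal} and already suffices for total semistability by \cref{Lem: condition for totally semistable wall}. Assuming $\v$ minimal, total semistability forces some stratum with $s\geq2$ to be dense, i.e.\ to have codimension $0$ in \eqref{eqn:HNFiltrationCodim}. Since any cross term between two positive classes is $\geq 2$ by the signature $(1,-1)$ of $\HH$ (as in \cref{Prop:HN filtration all positive classes}), vanishing codimension forces the non-negative cross terms to be exactly compensated by the isotropic defects $\lfloor m\ell(\u)/2\rfloor$ and the negative-square terms of \cref{Lem:dimension negative}; solving this balance under the constraint $\langle\v',\u_i\rangle\geq 0$ of \cref{Prop:isotropic lattice} pins down $\langle\v,\u\rangle$ and $\ell(\u)$ and yields precisely the arithmetic of \cref{enum:IsotropicTSS-HC}--\cref{enum:IsotropicTSS-P1Fibration Spherical l=2}. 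The determinant congruences are then read off from \cref{Thm:exist:nodal}: the orthogonal spherical or exceptional factor $\w$ can destabilize every object of a given component $M_{\sigma_0}(\v,L)$ only if it occurs there as a genuine $\sigma_0$-stable object, which by \cref{Thm:exist:nodal} requires $L$ to satisfy the stated congruence.

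For the divisorial direction (part (2)) I would prove both implications through the codimension-one strata. For sufficiency I treat the cases carrying an effective spherical or exceptional class (\cref{enum:IsotropicDivisorialContraction-HC}, \cref{enum:IsotropicDivisorialContraction-<v.w>=0}, \cref{enum:IsotropicDivisorialContraction-<v.u>=l(u)}) as in \cref{Lem:NonisotropicDivisorialContraction}: over the generic $F$ of the complementary moduli $M^s_{\sigma_0}$ one builds a projective space of extensions $0\to T\to E\to F\to0$ (or its dual) of mutually $S$-equivalent, $\sigma_+$-stable objects, which \cref{Thm:WallContraction} shows are contracted by $\pi^+$, and a dimension count exhibits the swept locus as a divisor of relative Picard rank one. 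The purely isotropic case \cref{enum:IsotropicDivisorialContraction-<v.u>=l(u) no negative}, where $C_\WW=P_\HH$, is instead handled by the Fourier--Mukai transform attached to the extremal isotropic $\u$, which identifies $\pi^+$ with the Gieseker-to-Uhlenbeck contraction (the Li--Gieseker--Uhlenbeck morphism, cf.\ \cite{Li97,Lo12}), manifestly divisorial once $\v^2\geq\ell(\u)+2$. For necessity, a divisorial $\WW$ has a codimension-one contracted locus, so the grouping of \eqref{eqn:HNFiltrationCodim} forces some stratum to have codimension exactly one, and the resulting numerical balance reproduces the four listed alternatives.

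The hard part will be separating divisorial contractions from the $\P^1$-fibrations and small contractions that share the same leading codimension-one numerics. A codimension-one strictly semistable stratum is necessary but not sufficient for a divisorial contraction: when the complementary class $\v-\langle\v,\u\rangle\u$ (or $\v-2\w$) is itself spherical or exceptional, the base $\overline{M}_\pm$ of $\pi^\pm$ drops dimension and one obtains a $\P^1$-fibration instead, while if the contracted locus has codimension $\geq 2$ one obtains a small contraction. Controlling exactly when these degenerations occur is where the invariant $\ell(\u)$ and the determinant congruences of \cref{Thm:exist:nodal} are indispensable, and where I expect the bookkeeping---tracking separately the two components $M_{\sigma_+}(\v,L)$ and $M_{\sigma_+}(\v,L+K_X)$, whose wall-crossing behavior can differ---to be the most delicate step.
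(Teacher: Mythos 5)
Your overall architecture is the same as the paper's: the trichotomy of \cref{Prop:isotropic lattice}, the Harder--Narasimhan stratum estimates of \cref{Prop:HN codim} (this is exactly how \cref{Prop:LGU walls of low codimension} and \cref{Prop: 1-1 case totally semistable and codim 1} are proved), reduction to minimal $\v$ by spherical/weakly-spherical twists, extension constructions for sufficiency, and determinant bookkeeping via \cref{Thm:exist:nodal}. However, one step as proposed would fail.

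The gap is your treatment of case \ref{enum:IsotropicDivisorialContraction-<v.u>=l(u) no negative}. You propose to handle it by ``the Fourier--Mukai transform attached to the extremal isotropic $\u$,'' identifying $\pi^+$ with the Li--Gieseker--Uhlenbeck contraction, ``manifestly divisorial.'' Such a transform exists only when $\ell(\u)=2$: then $M_{\sigma_0}(\u)\cong X$ is a fine two-dimensional moduli space whose universal family serves as the kernel (this is the setup preceding \cref{Prop:Uhlenbeck morphism}). But case \ref{enum:IsotropicDivisorialContraction-<v.u>=l(u) no negative} also contains $\ell(\u)=1$, i.e.\ the induced-LGU walls with $\langle\v,\u\rangle=1$ and $\v^2\geq 3$, and there $M_{\sigma_0}(\u)$ is zero-dimensional by \cref{prop:isotropic} (a single stable object together with its $K_X$-twist), so no Fourier--Mukai kernel of the required shape exists on $X$. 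The paper's substitute transform $\Phi_{X\to X}^{\EE^{\vee}}$ of \cref{prop:Enriques-refl} is built from the covering K3 and requires $\Pic(\widetilde{X})=\varpi^*\Pic(X)$, a hypothesis the proposition does not make; this is precisely why the paper proves the divisorial contraction in this subcase by hand, in \cref{Lem:isotropic divisorial l=1 1}: the relevant destabilizing factors lie in the two-dimensional (non-proper) moduli $M^s_{\sigma_0}(2\u)$, and for $E_1\in M^s_{\sigma_0}(2\u)$, $E_2\in M^s_{\sigma_0}(\v-2\u)$ one has $\ext^1(E_2,E_1)=\langle\v-2\u,2\u\rangle=2$, giving a $\P^1$ of S-equivalent $\sigma_+$-stable extensions whose union over the two moduli is a contracted divisor. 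Since this is the same extension construction you already invoke for the cases carrying a negative class, the repair is cheap --- but as written, your argument for (2d) breaks in the $\ell(\u)=1$ subcase.

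A secondary point: you correctly observe that a codimension-one strictly semistable stratum is necessary but not sufficient for a divisorial contraction, and defer this as ``the most delicate step,'' but you do not name the mechanism that decides it. In the paper the dichotomy is always settled by computing $\ext^1$ between the Jordan--H\"{o}lder factors of the generic member of the divisor: if $\ext^1=1$ the extension is unique in its S-equivalence class and the divisor is not contracted (e.g.\ \cref{Lem:IsotropicNoncontractedDivisor l=2}, and the second halves of \cref{Lem:isotropic totally semistable divisorial contraction l=1,Lem:isotropic divisorial 1-1 2}), while $\ext^1\geq 2$ produces contracted positive-dimensional families. Without articulating this criterion, the ``only if'' direction of part (2) --- excluding the non-contracted codimension-one configurations from the list (a)--(d) --- remains unproved in your proposal.
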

\begin{Rem}Let us make a few comment about irreducible components and determinants.
\begin{enumerate}
\item
In \cref{enum:IsotropicTSS-P1Fibration Spherical l=1}, then
$\v=\w+\u$ and $\v^2=2$.
If $L \equiv D+\frac{r}{2}K_X+K_X \pmod 2$, then $M_{\sigma_0}^s(\v,L)=M_{\sigma_0}(\v,L)$.  In \cref{enum:IsotropicTSS-P1Fibration Exceptional}, $\v=2(\w+\u)$. If $L \equiv K_X \pmod 2$, then
a connected component of $M_{\sigma_+}(\v,L)$ has two irreducible
components (Proposition \ref{prop:connected}) and $\WW$ is totally semistable for one of them.
If $L \equiv 0 \pmod 2$, then $M_{\sigma_+}(\v,L)\setminus M_{\sigma_0}^s(\v,L)$ is a divisor.  In \cref{enum:IsotropicTSS-P1Fibration Spherical l=2},
we also see that
$\v=\w+2\u$. By Proposition \ref{prop:irred-comp:v^2=2},
$M_\sigma (\v)$ has two irreducible components and
each component becomes totally semistable at walls
$\WW$ (of type \ref{enum:IsotropicTSS-P1Fibration Spherical l=1}) and $\WW'$ (of type \ref{enum:IsotropicTSS-P1Fibration Spherical l=2}).
\item
In \cref{enum:IsotropicDivisorialContraction-<v.u>=l(u) no negative} with $\ell(\u)=1$, 
assume that $\u=\u_1$.  Then we can show
$\v=\frac{\v^2}{2}\u_1+\u_2$ with $\langle \u_1,\u_2 \rangle=1$.  In particular $\ell(\u_1)=\ell(\u_2)=1$.
\end{enumerate}
\end{Rem}

We will prove \cref{Prop:isotropic-classification} in the next two subsections.  For clarity of discussion, in the next section we will first tackle the case that $\v$ is minimal, that is, $\langle\v,\w\rangle\geq0$ for the unique exceptional/spherical class $\w\in\HH_\WW$.  Then we will show in \cref{subsec:non-minimal case} that if $\langle\v,\w\rangle<0$, we may apply the spherical or weakly-spherical twist associated to $T$ of class $\w$ to reduce the wall-crossing behavior to that of a minimal $\v$.
\subsection{Minimal Mukai vectors}
In this section we assume that $\v$ is \emph{minimal}, i.e. that $\langle \v,\w\rangle\geq 0$ for the (unique) spherical or exceptional effective class $\w$, if it exists.  When such $\w$ exists, let us denote by $T$ the spherical or exceptional $\sigma_0$-stable object of class $\w$.  We divide our analysis into two based on whether $\ell(\u_i)=2$ for some $i=1,2$ or not.  We begin with the case where $\ell(\u_i)=2$ for some $i$.
\subsubsection{$\ell(\u_i)=2$ for some $i$}
By Proposition \ref{Prop:isotropic lattice} above, we may assume that $i=1$ so that $M_{\sigma_0}^s(\u_1)=M_{\sigma_0}(\u_1)$ and $\ell(\u_1)=2$ implies that $M_{\sigma_0}(\u_1)\cong X$ by \cite[Lemma 9.3]{Nue14b}.  Using the Fourier-Mukai transform with kernel given by the universal family of $M_{\sigma_0}(\u_1)$, $$\Phi:\Db(X)\cong\Db(X),$$ we get $\Phi(\u_1)=(0,0,1)$.  By construction of $\Stabd(X)$, skyscraper sheaves of points on $X$ are $\Phi(\sigma_0)$-stable. By Bridgeland's Theorem \ref{thm:GeometricStabilityConditions}, there exist divisor classes $\omega,\beta\in\NS(X)_{\Q}$, with $\omega$ ample, such that up to the $\GL_2(\R)$-action, $\Phi(\sigma_0)=\sigma_{\omega,\beta}$. In particular, the category $\PP_{\omega,\beta}(1)$ is the extension-closure of skyscraper sheaves of points and the shifts, $F[1]$, of $\mu_{\omega}$-stable torsion-free sheaves $F$ with slope $\mu_{\omega}(F) =\omega\cdot\beta$. Since $\sigma_0$ by assumption does not lie on any other wall with respect to $\v$, the divisor $\omega$ is generic with respect to  $\Phi(\v)$. Under these identifications, we have the following result whose proof is identical to that of \cite[Theorem 3.2.7]{MYY14b}, \cite[Proposition 8.2]{BM14b}, and \cite[Section 5]{LQ14}.

\begin{Prop}\label{Prop:Uhlenbeck morphism}
An object $E$ of class $\v$ is $\sigma_+$-stable if and only if $\Phi(E)$ is the shift $F[1]$ of a $(\beta,\omega)$-Gieseker stable sheaf $F$ on $X$; therefore $[-1]\circ\Phi$ induces the following identification of moduli spaces: $$M_{\sigma_+}(\v) = M_{\omega}^{\beta}(-\Phi(\v)).$$
Moreover, the contraction morphism $\pi^+$ induced by the wall $\WW$ is the Li-Gieseker-Uhlenbeck (LGU) morphism to the Uhlenbeck compactification.

Similarly, an object $F$ of class $\v$ is $\sigma_-$-stable if and only if it is the shift $F^\vee[1]$ of the derived dual of a $(-\beta,\omega)$-Gieseker stable sheaf on $X$.
\end{Prop}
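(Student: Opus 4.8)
The plan is to transport the whole problem through the Fourier–Mukai equivalence $\Phi$ together with the shift $[-1]$, reducing everything to a comparison between the \emph{geometric} stability condition $\sigma_{\omega,\beta}$ and ordinary Gieseker stability, exactly as in the cited references. Once $\Phi(\u_1)=(0,0,1)$ and $\Phi(\sigma_0)=\sigma_{\omega,\beta}$ (up to the $\widetilde{\GL}_2^+(\R)$-action), the wall $\WW$ becomes the wall at which the skyscraper class $(0,0,1)$ acquires the same phase as $\Phi(\v)$, and $\Phi(\sigma_\pm)$ are small deformations of $\sigma_{\omega,\beta}$ into the two adjacent chambers. Since $\langle\v,\u_1\rangle=\langle\Phi(\v),(0,0,1)\rangle=-\rk\Phi(\v)>0$, the class $-\Phi(\v)$ is that of a torsion-free sheaf of positive rank, and the genericity of $\omega$ with respect to $\Phi(\v)$ is precisely what lets Bridgeland stability just off the wall detect Gieseker stability.

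First I would analyze the heart $\AA_{\omega,\beta}$. Its phase-one category $\PP_{\omega,\beta}(1)$ is the extension closure of the skyscrapers $k(x)$ (of class $\u_1$) together with the shifts $F[1]$ of $\mu_\omega$-stable torsion-free sheaves of slope $\omega\cdot\beta$. Given a $\sigma_+$-stable $E$ of class $\v$, I would show, using the phase ordering fixed in \cref{Rem:IsotropicOrientation} and this description of $\PP_{\omega,\beta}(1)$, that $\Phi(E)$ is a genuine shift $F[1]$ of a $\mu_\omega$-semistable torsion-free sheaf $F$ rather than an extension involving skyscrapers. The equivalence of the two stability notions then follows from the standard comparison: because $\omega$ is generic with respect to $\Phi(\v)$, the refinement of $\mu_\omega$-stability by the phase function of $Z_{\omega,\beta}$ reproduces the reduced-Hilbert-polynomial ordering, so $E=F[1]$ is $\sigma_+$-stable precisely when $F$ is $(\beta,\omega)$-Gieseker stable. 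Running this for every $\sigma_+$-stable $E$ and conversely for every Gieseker-stable sheaf gives the bijection $M_{\sigma_+}(\v)=M_\omega^\beta(-\Phi(\v))$, which upgrades to an isomorphism of moduli spaces by comparing universal families.

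For the $\sigma_-$ side I would exploit the derived dualizing functor. Crossing $\WW$ from $\sigma_+$ to $\sigma_-$ corresponds, after $\Phi$, to replacing $\beta$ by $-\beta$ together with an application of $(\blank)^\vee$; concretely, $E$ is $\sigma_-$-stable if and only if $E^\vee$ is stable for the corresponding geometric condition with $\beta$ replaced by $-\beta$. Feeding this into the previous paragraph yields that $F$ of class $\v$ is $\sigma_-$-stable if and only if $F=G^\vee[1]$ for a $(-\beta,\omega)$-Gieseker stable sheaf $G$. The only Enriques-specific care needed here is keeping track of $K_X$ in the Serre-duality computations, which is harmless since $K_X$ is numerically trivial.

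Finally, to identify $\pi^+$ with the Li–Gieseker–Uhlenbeck morphism, I would invoke \cref{Thm:WallContraction}(2): the curves contracted by $\pi^+$ are exactly the curves of objects that become S-equivalent at $\sigma_0$. Under the identification above, two Gieseker-stable sheaves become S-equivalent at the geometric wall $\sigma_{\omega,\beta}$ precisely when their Jordan–Hölder factors differ only by skyscraper contributions of class $\u_1=(0,0,1)$, i.e. exactly when they have the same Uhlenbeck data (same reflexive hull together with the same length/singularity measure). This matches the fibers of the LGU morphism of \cite{Li97,Lo12}, giving the identification. The main obstacle I anticipate is the stability comparison of the second paragraph — verifying that objects of class $\Phi(\v)$ in the geometric heart are forced to be honest shifts of torsion-free sheaves and that $\sigma_+$-stability coincides with Gieseker stability — but this is formal and carries over verbatim from \cite{MYY14b,BM14b,LQ14}, since the structure of $\AA_{\omega,\beta}$ is identical for Enriques surfaces.
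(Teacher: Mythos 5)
Your proposal is correct and follows essentially the same route as the paper: after the setup identifying $\Phi(\u_1)=(0,0,1)$ and $\Phi(\sigma_0)=\sigma_{\omega,\beta}$, the paper simply declares the proof ``identical to'' \cite[Theorem 3.2.7]{MYY14b}, \cite[Proposition 8.2]{BM14b}, and \cite[Section 5]{LQ14}, and what you have written out (the structure of $\PP_{\omega,\beta}(1)$, the comparison of Bridgeland stability just off the wall with Gieseker stability via genericity of $\omega$, dualizing for the $\sigma_-$ side, and matching contracted S-equivalence classes with Uhlenbeck data) is precisely the content of those cited arguments transported through $\Phi$.
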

It follows from the above description that a $\sigma_+$-stable object $E$ becomes $\sigma_0$-semistable if and only if $F=\Phi(E)[-1]$ is not locally free or if $F$ is not $\mu$-stable, as these are the sheaves contracted by the Uhlenbeck contraction.
\begin{Prop}\label{Prop:LGU walls of low codimension}
Assume that $\HH_{\WW}$ contains a primitive isotropic vector $\u$ with $\ell(\u)=2$.  Suppose that $\v\in\HH_{\WW}$ satisfies $\v^2>0$ and $\langle \v,\w\rangle\geq 0$ for the (unique) effective spherical or exceptional class $\w$ (if it exists).
\begin{enumerate}
\item\label{enum:IsotropicTotallySemistable} If $\WW$ is totally semistable for $M_{\sigma_+}(\v)$, then either 
\begin{enumerate}
    \item\label{enum:IsotropicTotallySemistable-HC} $\langle\v,\u\rangle=1$ for primitive isotropic $\u\in\HH$ with $\ell(\u)=2$; or
    \item\label{enum:IsotropicTotallySemistable-Exceptional/Spherical} $\langle\v,\u\rangle=2$ for primitive isotropic $\u\in\HH$ with $\ell(\u)=2$ and $\langle\v,\w\rangle=0$ for the unique spherical or exceptional class $\w$.
\end{enumerate}
\item\label{enum:IsotropicCodimOne} If $\codim(M_{\sigma_+}(\v)\backslash M^s_{\sigma_0}(\v))=1$, then either 
\begin{enumerate}
    \item\label{enum:IsotropicCodimOne <v.u>=2=l(u)}$\langle\v,\u\rangle=2=\ell(\u)$ for primitive isotropic $\u\in\HH$; or
    \item\label{enum:IsotropicCodimOne <v.w>=0}$\langle\v,\w\rangle=0$ for the unique spherical or exceptional class $\w$; or 
\end{enumerate}
\item In all other cases, $\codim{M_{\sigma_+}(\v)\backslash M_{\sigma_0}^s(\v)}\geq 2$.
\end{enumerate}
\end{Prop}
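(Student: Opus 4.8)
The plan is to derive all three assertions simultaneously from a single lower bound on the codimension of the strictly $\sigma_0$-semistable locus, obtained by stratifying it according to its $\sigma_-$-Harder--Narasimhan type. Each such type is a decomposition $\v=\sum_{i=1}^n\a_i$ with $n\ge 2$ and every factor $\a_i\in C_\WW\cap\HH$ (descending $\sigma_-$-phase), and by \cref{Prop:HN codim} the corresponding stratum $\FF(\a_1,\dots,\a_n)^o$ has codimension in $M_{\sigma_+}(\v)$ at least
\begin{equation*}
N(\a_1,\dots,\a_n):=\sum_{i=1}^n\bigl(\a_i^2-\dim\MM_{\sigma_-}(\a_i)\bigr)+\sum_{i<j}\langle\a_i,\a_j\rangle .
\end{equation*}
So the whole statement reduces to showing that $N\ge 2$ for every admissible type except a short explicit list, on which I then compute the actual codimension.

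For the lower bound I would feed in the three dimension regimes for a factor $\a\in C_\WW\cap\HH$. If $\a^2>0$, then $\dim\MM_{\sigma_-}(\a)=\a^2$ by \cref{prop:pss}, so $\a$ contributes $0$ to the first sum; if $\a=m\u$ is isotropic with $\u\in\{\u_1,\u_2\}$, then $\a^2=0$ and its contribution is $-\dim\MM_{\sigma_-}(m\u)\ge-\lfloor m\ell(\u)/2\rfloor$ by \cref{prop:isotropic}; and if $\a$ is a multiple of the spherical or exceptional class $\w$, \cref{Lem:dimension negative} gives its (negative) contribution explicitly. The cross terms are governed by the signature $(1,-1)$: any two classes in the closed positive cone $\overline{P_\HH}$ pair non-negatively, so the only possibly negative cross terms involve the ray $\R_{\ge 0}\w$, where $\langle\u_1,\w\rangle>0$ and $\langle\u_2,\w\rangle<0$ by \cref{Prop:isotropic lattice}. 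Minimality of $\v$ (i.e. $\langle\v,\w\rangle\ge0$) together with the parity bounds of \cref{Rem:Even and Odd pairings} (which, when $\ell(\u_1)=2$, force $\langle\u_1,\u_2\rangle$ even and, outside the exceptional-class subcase, at least $4$) are precisely what prevent these terms from dragging $N$ below $2$ for the generic type.

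With these inputs I would run the enumeration in two stages. First, if some factor $\a_i$ is positive, reasoning analogous to \cref{Prop:HN filtration all positive classes} together with the non-negativity of the remaining contributions forces $N\ge 2$ except in boundary decompositions of $\v$ into a positive class plus an isotropic or orthogonal negative class of minimal pairing; tracking these yields the conditions $\langle\v,\u\rangle=2=\ell(\u)$ and $\langle\v,\w\rangle=0$. Second, if all factors are isotropic or negative, the defect $-\sum_i\lfloor m_i\ell(\u^{(i)})/2\rfloor$ must be absorbed by the cross pairings; using $M_{\sigma_0}(\u_1)\cong X$ (so that $\dim\MM_{\sigma_-}(m\u_1)$ is controlled by $\lfloor 2m/2\rfloor=m$) and the bounds on $\langle\u_1,\u_2\rangle$, I would show the only surviving types are $\v=\tfrac{\v^2}{2}\,\u_1+\u_2$-type configurations and $\v=2(\u+\w)$-type configurations, producing $\langle\v,\u\rangle=1$ and $\langle\v,\u\rangle=2$ with $\langle\v,\w\rangle=0$. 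On each survivor I then compute $N$ exactly: $N=0$ recovers total semistability (part (1)), $N=1$ the codimension-one walls (part (2)), and all remaining types give $N\ge 2$ (part (3)). The Uhlenbeck description of \cref{Prop:Uhlenbeck morphism}---non-locally-free versus non-$\mu$-stable sheaves, with $\langle\v,\u_1\rangle$ the transformed rank---supplies the geometric interpretation and can be used to confirm that the bound $N$ is attained, so the numerical conditions are not merely necessary.

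The step I expect to be the main obstacle is the second stage: controlling mixed Harder--Narasimhan types in which isotropic factors $m_i\u^{(i)}$ of differing $\ell$-values are combined with a multiple of the negative class $\w$. Here the negative cross term $\langle\u_2,\w\rangle<0$, the floor functions $\lfloor m\ell/2\rfloor$, and the parity constraints of \cref{Rem:Even and Odd pairings} interact delicately, and one must invoke minimality of $\v$ to rule out the configurations that would otherwise give $N<0$. Pinning down, inside this analysis, the exact borderline between $N=1$ and $N\ge 2$---equivalently, deciding when an ostensibly codimension-one stratum fails to contain $\sigma_+$-stable objects and so drops out---is the delicate point, and is where the distinction $\ell(\u)=1$ versus $\ell(\u)=2$ (and, ultimately, the determinant $L$) enters.
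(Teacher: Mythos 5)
Your proposal follows essentially the same route as the paper's proof: both stratify the strictly $\sigma_0$-semistable locus by $\sigma_-$-Harder--Narasimhan type, bound each stratum via \cref{Prop:HN codim} with the dimension inputs from \cref{prop:pss}, \cref{prop:isotropic}, and \cref{Lem:dimension negative}, and then enumerate types using the signature of $\HH_\WW$, the structure results of \cref{Prop:isotropic lattice}, the parity constraints of \cref{Rem:Even and Odd pairings}, and minimality of $\v$ to isolate exactly the decompositions giving codimension $0$ or $1$. The only cosmetic difference is organizational (you split cases by presence of a positive factor, the paper by presence of a negative one), and your extra remark about confirming attainment via \cref{Prop:Uhlenbeck morphism} is not needed for the proposition itself, since the converse statements are handled in the subsequent lemmas.
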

\begin{proof}
We assume that $\u_1$, $\u_2$, and $\w$ are labelled and oriented in accordance with \cref{Rem:IsotropicOrientation} and the discussion preceeding \cref{Prop:Uhlenbeck morphism}.  In particular, we assume that $\ell(\u_1)=2$ and $\langle\u_1,\w\rangle>0$.

For a given $E\in M_{\sigma_+}(\v)$, let the Harder-Narasimhan filtration of $E$ with respect to $\sigma_-$ correspond to a decomposition $\v=\sum_i \a_i$.  Using Proposition \ref{Prop:HN codim}, we shall estimate the codimension of the sublocus $\FF(\a_0,\ldots,\a_n)^o$ of destabilized objects which is equal to
 \begin{equation}
\sum_i (\a_i^2-\dim \MM_{\sigma_-}(\a_i))+\sum_{i<j}\langle \a_i,\a_j \rangle.
\end{equation}

(I) We first assume that one of the $\a_i$ satisfies $\a_i^2<0$, say $\a_0=b_0 \w$ for an effective spherical or exceptional class $\w$.  Then we are Cases \ref{enum:IsotropicLatticeEffectiveExceptional} or \ref{enum:IsotropicLatticeEffectiveSpherical} in Proposition \ref{Prop:isotropic lattice} so that $\ell(\u_1)=\ell(\u_2)=2$.

Assume that $\a_1$ and $\a_2$ are isotropic.  We may set $\a_1=b_1 \u_1$ and $\a_2=b_2 \u_2$.  Then 
 \begin{equation}\label{eq:l=2, case I}
\begin{split}
& \sum_i (\a_i^2-\dim \MM_{\sigma_-}(\a_i))+\sum_{i<j}\langle \a_i,\a_j \rangle\\
\geq & (\a_0^2-\dim \MM_{\sigma_-}(\a_0))+\sum_{i \geq 1}b_0 \langle \w,\a_i \rangle
-b_1-b_2+b_1 b_2 \langle \u_1,\u_2 \rangle\\
= & -\dim\MM_{\sigma_-}(\a_0)+b_0 \langle \w,\v \rangle-b_1-b_2+b_1 b_2 \langle \u_1,\u_2 \rangle\\
\geq & -\dim\MM_{\sigma_-}(\a_0)-b_1-b_2+b_1 b_2 \langle \u_1,\u_2 \rangle,
\end{split}
\end{equation} 
where the first inequality follows from Proposition \ref{prop:isotropic} and the second inequality follows from the assumption that $\langle\v,\w\rangle\geq 0$.

First suppose that $\w^2=-2$.  Then we note that  $\dim\MM_{\sigma_-}(\a_0)=-b_0^2$ and $\langle\u_1,\w\rangle$ is even because $\ell(\u_1)=2$ and $\w^2=-2$ is even.  From Proposition \ref{prop:isotropic} we also have $\langle\u_1,\u_2\rangle=\langle\u_1,\w\rangle^2\geq 4$.  Thus 
\begin{equation}\label{eq:l=2, case I spherical}
\begin{split}
\codim\FF(\a_0,\ldots,\a_n)^o &\geq b_0^2+4b_1 b_2-b_1-b_2\geq 1+2b_1b_2+b_1(b_2-1)+b_2(b_1-1)\geq 3.
\end{split}
\end{equation}
So we must have $\codim\FF(\a_0,\dots,\a_n)^o\geq 3$ in this case.

If instead $\w^2=-1$, then $\dim\MM_{\sigma_-}(\a_0)=\left\lfloor-\frac{b_0^2}{2}\right\rfloor$.  Moreover, by \cref{Rem:Even and Odd pairings} $\langle\u_1,\w\rangle$ and $\frac{\langle\u_1,\u_2\rangle}{2}$ are odd.  Thus
\begin{equation}\label{eq:l=2, case I exceptional}
\begin{split}
\codim\FF(\a_0,\ldots,\a_n)^o &\geq \frac{b_0^2}{2}+2b_1b_2-b_1-b_2\geq\frac{1}{2}+b_1(b_2-1)+b_2(b_1-1)\geq\frac{1}{2}.
\end{split}
\end{equation}
Thus $\codim\FF(\a_0,\dots,\a_n)^o\geq 1$ with equality only if $b_0=b_1=b_2=1$.  But then $\v=\w+\u_1+\u_2$ so that $\langle\v,\w\rangle=\w^2=-1<0$, contrary to the assumption.  Thus $\codim\FF(\a_0,\dots,\a_n)^o\geq 2$ in this case.

Now assume that $\a_1=b_1 \u_j$ and $\a_i^2>0$ for $i>1$.  Then 
\begin{equation}
\begin{split}
 \sum_i (\a_i^2-\dim \MM_{\sigma_-}(\a_i))+\sum_{i<j}\langle \a_i,\a_j \rangle
&\geq  \a_0^2-\dim\MM_{\sigma_-}(\a_0)+\sum_{i \geq 1}b_0 \langle \w,\a_i \rangle
-b_1+\sum_{i \geq 2} b_1 \langle \u_j,\a_i \rangle\\
&\geq -\dim\MM_{\sigma_-}(\a_0)+b_0 \langle \w,\v \rangle
-b_1+b_1 \langle \u_j,\a_2 \rangle\\ 
&\geq -\dim\MM_{\sigma_-}(\a_0)+b_1(\langle\u_j,\a_2\rangle-1).
\end{split}
\end{equation}

If $\w^2=-2$, then $\HH$ is even by Proposition \ref{Prop:isotropic lattice}, so $\ell(\u_j)=2$ implies that $2\mid\langle\u_j,\a_2\rangle$.  Thus $$\codim\FF(\a_0,\dots,\a_n)^o\geq-\dim\MM_{\sigma_-}(\a_0)+b_1=b_0^2+b_1\geq2.$$  

If $\w^2=-1$, then $$\codim\FF(\a_0,\ldots,\a_n)^o\geq-\dim\MM_{\sigma_-}(\a_0)=-\left\lfloor-\frac{b_0^2}{2}\right\rfloor\geq 1,$$ with equality in the last inequality only if $b_0=1$, $\langle\v,\w\rangle=0$ and $\langle\u_j,\a_2\rangle=1$.  But $$1\leq\langle\v,\u_j\rangle=\langle\w,\u_j\rangle+\langle\a_2,\u_j\rangle=\langle\w,\u_j\rangle+1,$$ which forces $j=1$.  Writing $\a_2=x\u_1+y\w$, we see that $1=\langle\u_1,\a_2\rangle$ forces $y=1$ and $\langle\u_1,\w\rangle=1$.  Thus, in addition to $\langle\v,\w\rangle=0$, we also have $\langle\v,\u_1\rangle=2$.  So $\v$ falls into both types \ref{enum:IsotropicCodimOne <v.u>=2=l(u)} and \ref{enum:IsotropicCodimOne <v.w>=0}. Otherwise, we get $\codim\FF(\a_0,\dots,\a_n)^o\geq 2$.

We can now assume that there are no positive classes in the Harder-Narasimhan factors, i.e. $\v=b_0 \w+b_1 \u_j$.  But $\v^2>0$ forces $j=1$ (see Figure \ref{fig:IsotropicWithNegative}), so we may assume this outright.  Then $0 \leq \langle \v,\w \rangle=b_0 \w^2+b_1 \langle \u_1,\w \rangle$, so our estimate becomes 

 \begin{equation}\label{eq:l=2, case I no positive}
\begin{split}
\codim\FF(\a_0,\a_1)^o=&\sum_i (\a_i^2-\dim \MM_{\sigma_-}(\a_i))+\sum_{i<j}\langle \a_i,\a_j \rangle\\
= & b_0^2 \w^2-\dim\MM_{\sigma_-}(\a_0)-b_1+b_0b_1\langle \w,\u_1\rangle\\
\geq & b_0^2 \w^2-\dim\MM_{\sigma_-}(\a_0)+\frac{b_0b_1\langle \w,\u_1\rangle}{2}+\frac{b_1}{2}\left(b_0\langle \w,\u_1\rangle-2\right)\\
\geq & b_0^2 \w^2-\dim\MM_{\sigma_-}(\a_0)+\frac{b_0b_1\langle \w,\u_1\rangle}{2}+\frac{b_1}{2}\left(\langle\v,\u_1\rangle-2\right).
\end{split}
\end{equation}

If $\w^2=-2$, then $b_0^2\w^2-\dim\MM_{\sigma_-}(\a_0)=-b_0^2$, and $\HH$ is even so that again $\ell(\u_1)=2$ implies that $2\mid\langle\v,\u_1\rangle$.  The last line of \eqref{eq:l=2, case I no positive} then becomes $$\codim\FF(\a_0,\a_1)^o\geq-b_0^2+\frac{b_0 b_1\langle \w,\u_1\rangle}{2}+\frac{b_1}{2}(\langle \v,\u_1\rangle-2)=\frac{b_0}{2}\langle \v,\w\rangle+\frac{b_1}{2}(\langle \v,\u_1\rangle-2)\geq 0,$$ with equality only if $\langle \v,\w\rangle=0$ and $\langle \v,\u_1\rangle=2$, as in Case \ref{enum:IsotropicTotallySemistable-Exceptional/Spherical}.  
Moreover, $\codim \FF(\a_0,\a_1)^o\geq 2$ unless $\langle \w,\u_1\rangle=2$ and $\v=\w+2\u_1$.  In this case we have $\codim\FF(\a_0,\a_1)^o=1$ as in Case \ref{enum:IsotropicCodimOne <v.u>=2=l(u)}.

If $\w^2=-1$, then $b_0^2\w^2-\dim\MM_{\sigma_-}(\a_0)=\left\lceil-\frac{b_0^2}{2}\right\rceil$, so the last line of \eqref{eq:l=2, case I no positive} gives \begin{equation}\label{eqn:l=2 case I no positive exceptional}\codim\FF(\a_0,\a_1)^o\geq-\frac{b_0^2}{2}+\frac{b_0 b_1\langle \w,\u_1\rangle}{2}+\frac{b_1}{2}(\langle \v,\u_1\rangle-2)=\frac{b_0}{2}\langle \v,\w\rangle+\frac{b_1}{2}(\langle \v,\u_1\rangle-2)\geq0\end{equation}
unless $\langle \v,\u_1\rangle=1$, in which case $b_0=1=\langle\w,\u_1\rangle$.  But then $b_0^2\w^2-\dim\MM_{\sigma_-}(\a_0)=0$,  so $$\codim\FF(\a_0,\a_1)^o\geq \frac{b_1}{2}-\frac{b_1}{2}=0,$$ and indeed $\codim\FF(\a_0,\a_1)^o=0$ when $\langle\v,\u_1\rangle=1$ as in Case \ref{enum:IsotropicTotallySemistable-HC}.  We get equality in \eqref{eqn:l=2 case I no positive exceptional} only if $\langle\v,\w\rangle=0$ and $\langle\v,\u_1\rangle=2$.  We can derive from these two equations that $\langle\w,\u_1\rangle=1$ and $\v=2\w+2\u_1$, in which case indeed $\codim\FF(\a_0,\a_1)^o=0$ as in Case \ref{enum:IsotropicTotallySemistable-Exceptional/Spherical}.  Moreover, $\codim \FF(\a_0,\a_1)^o\geq 2$ unless $\frac{b_0}{2}\langle\v,\w\rangle=1$ and $\langle \v,\u_1\rangle=2$; or $\langle\v,\w\rangle=0$ and $\frac{b_1}{2}(\langle\v,\u_1\rangle-2)=1$;  or $\frac{b_0}{2}\langle\v,\w\rangle=\frac{1}{2}$ and $\langle\v,\u_1\rangle=2$.  In the latter two cases, however, we would have $\langle\w,\u_1\rangle=2$, which is impossible, as noted in \cref{Rem:Even and Odd pairings}.  Thus we are left with  the first case, in which $\v=2\w+3\u_1$ and $\langle\w,\u_1\rangle=1$.  Notice that $\langle\v,\u_1\rangle=2$ so that we are in \cref{enum:IsotropicCodimOne <v.u>=2=l(u)}.

Finally, assume that other than $\a_0=b_0 \w$, $\a_i^2>0$ for all $i>0$.  Then the estimate becomes 

\begin{equation}\label{eq:l=2 case I no isotropic}
\begin{split}
\codim\FF(\a_0,\ldots,\a_n)^o=&\sum_i (\a_i^2-\dim \MM_{\sigma_-}(\a_i))+\sum_{i<j}\langle \a_i,\a_j \rangle\\
= & -\dim\MM_{\sigma_-}(\a_0)+b_0\langle \w,\v\rangle+\sum_{0<i<j}\langle \a_i,\a_j\rangle\geq \frac{b_0^2}{2}>0.
\end{split}
\end{equation}
Moreover, $\codim\FF(\a_0,\ldots,\a_n)^o\geq 2$ unless $\langle \w,\v\rangle=0$ and $\v=\w+\a_1$, in which case $\FF(\a_0,\a_1)^o$ has codimension one as in \cref{enum:IsotropicCodimOne <v.w>=0}.  Note that in this case we require $0<\a_1^2=(\v-\w)^2=\v^2+\w^2$, so $\v^2>2$ or $\v^2>1$ if $\w^2=-2$ or $\w^2=-1$, respectively.

(II) We next assume that $\a_i^2 \geq 0$ for all $i$.

Suppose first that $\a_1=b_1 \u_1$ and $\a_2=b_2 \u_2$.  Then we can be in any case of Proposition \ref{Prop:isotropic lattice}, and our estimate now becomes
 \begin{equation}\label{eq:l=2 case II}
\begin{split}
\codim\FF(\a_1,\dots,\a_n)^o=& \sum_i (\a_i^2-\dim \MM_{\sigma_-}(\a_i))+\sum_{i<j}\langle \a_i,\a_j \rangle\\
\geq &
-b_1-\left\lfloor\frac{b_2\ell(\u_2)}{2}\right\rfloor+b_1 b_2 \langle \u_1,\u_2 \rangle\\
\geq &b_1(b_2-1)+b_2(b_1-1)\geq0,
\end{split}
\end{equation}
with equality only if $b_1=b_2=1$, $\ell(\u_2)=2$, and $\langle\u_1,\u_2\rangle=2$, so that $\HH$ falls into \cref{enum:IsotropicLatticeEffectiveExceptional} of \cref{Prop:isotropic lattice}.  In particular, we have $\langle\v,\u_1\rangle=2$ and $\langle\v,\w\rangle=\langle\u_1,\w\rangle+\langle\u_2,\w\rangle=0$, as in  \cref{enum:IsotropicTotallySemistable-Exceptional/Spherical}.  Furthermore, by \cref{Rem:Even and Odd pairings} we have $\codim\FF(\a_1,\dots,\a_n)^o=1$ only if $\HH$ falls into Case \ref{enum:IsotropicLatticeEffectiveExceptional} of Proposition \ref{Prop:isotropic lattice} and $\v=2\u_1+\u_2$ or $\u_1+2\u_2$ with $\langle\u_1,\u_2\rangle=2$, so that $\langle\v,\u\rangle=2=\ell(\u)$ as in \cref{enum:IsotropicCodimOne <v.u>=2=l(u)}, or $\HH$ falls into Case \ref{enum:IsotropicLatticeNoEffectiveNegatives} of \cref{Prop:isotropic lattice} and $\v=\u_1+\u_2$ with $\ell(\u_2)=1$ and $\langle\u_1,\u_2\rangle=2$, so that $\langle\v,\u_1\rangle=2$, as in \cref{enum:IsotropicCodimOne <v.u>=2=l(u)}.  Otherwise, $\codim\FF(\a_1,\dots,\a_n)^o\geq 2$.

Now we assume that $\a_1=b_1 \u_j$ and $\a_i^2>0$ for $i \geq 2$.
In this case, we also see that
\begin{equation}
\begin{split}
\codim\FF(\a_1,\dots,\a_n)^o=&\sum_i (\a_i^2-\dim \MM_{\sigma_-}(\a_i))+\sum_{i<j}\langle \a_i,\a_j \rangle\\
=&-\left\lfloor\frac{b_1\ell(\u_j)}{2}\right\rfloor+\sum_{i>1}b_1\langle \u_j,\a_i\rangle+\sum_{1<i<k}\langle \a_i,\a_k\rangle\\
\geq&b_1(\langle \v,\u_j\rangle-1)+\sum_{1<i<k}\langle \a_i,\a_k\rangle\\
\geq&b_1(\langle \v,\u_j\rangle-1)\geq 0.
\end{split}
\end{equation}
Thus $\codim\FF(\a_1,\dots,\a_n)^o=0$ only if $\langle\v,\u_j\rangle=1$, $\ell(\u_j)=2$, and $\v=b_1\u_j+\a_2$, as in \cref{enum:IsotropicTotallySemistable-HC}.  Similarly, $\codim\FF(\a_1,\dots,\a_n)^o=1$ only if either $\v=\u_j+\a_2$ with $\langle\v,\u_j\rangle=2$ and $\ell(\u_j)=2$ as in \cref{enum:IsotropicCodimOne <v.u>=2=l(u)}, or $\v=\u_2+\a_2$, $2\u_2+\a_2$ with $\langle\v,\u_2\rangle=1=\ell(\u_2)$.  But this latter case is impossible.  Indeed, $\HH$ must be even by \cref{Rem:Odd lattice} and $\ell(\u_1)\neq\ell(\u_2)$.  But then we see that we can write  $\u_1=-\frac{\a_2^2}{2}\u_2+\a_2$ so that $\langle\u_1,\u_2\rangle=1$, which is impossible as $\ell(\u_1)=2$.

Finally, if $\a_i^2>0$ for all $i$, then $\codim\FF(\a_1,\dots,\a_n)^o\geq 2$ by Proposition \ref{Prop:HN filtration all positive classes}.
\end{proof}

\begin{Rem}
Proposition \ref{Prop:LGU walls of low codimension} can be proven by using the Fourier-Mukai transform $\Phi$ to translate the problem to the equivalent problem of determining the codimensions of the strictly $\mu$-semistable locus and the non-locally free locus.  One could then use the estimates in \cite{Yos16a} to get the result.  However, there is a small error in Case C there, which misses the spherical case of \cref{enum:IsotropicTotallySemistable-Exceptional/Spherical}, so we use the method above.  We explore this example more fully in \cref{prop:irred-comp:v^2=2}.  
\end{Rem}

\begin{Rem}
The reader may notice that when $\langle \v,\w\rangle=0$ and $\langle \v,\u\rangle=2=\ell(\u)$, we simultaneously claim that $\codim(M_{\sigma_+}(\v)\backslash M^s_{\sigma_0}(\v))$ is both zero and one.  Indeed, we shall prove in \cref{App: exceptional case} that for one choice of the determinant $L$, $M_{\sigma_+}(\v,L)$ contains a connected component with two irreducible components, $M_0$ and $M_1$.  For $M_1$, $\WW$ is a totally semistable wall inducing a $\P^1$-fibration over the singular locus, where it meets $M_0$.  The strictly $\sigma_0$ locus on $M_0$ is this singular locus, which is a divisor.  When $\w^2=-2$, this describes all of $M_{\sigma_+}(\v,L)$ as it is connected.  For the other determinant, $L+K_X$, the strictly $\sigma_0$-semistable locus is a divisor, if non-empty.
\end{Rem}
Now we demonstrate the converse of \ref{Prop:LGU walls of low codimension} in the following sequence of lemmas.  Furthermore, we determine precisely when the divisor in \cref{enum:IsotropicCodimOne} gets contracted.  We make free use of the Fourier-Mukai transform $\Phi$ to translate the problem to that of moduli of sheaves.
\begin{Lem}\label{Lem: Hilbert-Chow}
Assume that $\HH_{\WW}$ contains a primitive isotropic vector $\u$ such that $\ell(\u)=2$ and that $\langle \v,\u\rangle=1$ for $\v$ minimal.  Then $\v^2$ is odd and $\HH_{\WW}$ contains an exceptional class.  Moreover, $\WW$ is totally semistable and, if $\v^2>1$, induces a divisorial contraction.  
\end{Lem}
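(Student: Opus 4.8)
The plan is to establish the four assertions in order — that $\v^2$ is odd, that $\HH_\WW$ contains an exceptional class, that $\WW$ is totally semistable, and that $\WW$ is divisorial when $\v^2>1$ — using first a lattice–parity computation and then the Fourier--Mukai/Uhlenbeck description of \cref{Prop:Uhlenbeck morphism}. I would begin by noting that $\v$ is automatically primitive: if $\v=m\v_0$ then $\langle\v,\u\rangle=m\langle\v_0,\u\rangle=1$ forces $m=1$, so in particular $\dim M_{\sigma_+}(\v)=\v^2+1$ by \cref{prop:pss}.

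To see that $\v^2$ is odd I would work in the coordinates $\u=(r_\u,c_\u,\tfrac{s_\u}{2})$ and $\v=(r_\v,c_\v,\tfrac{s_\v}{2})$, so that $\langle\v,\u\rangle=(c_\v,c_\u)-\tfrac{r_\v s_\u+r_\u s_\v}{2}$. Since $\ell(\u)=2$, \cref{primitive} gives that $r_\u,c_\u,s_\u$ are all even with $\tfrac{r_\u+s_\u}{2}$ odd; reducing the pairing modulo $2$ and using $r_\v\equiv s_\v\pmod 2$ collapses it to $\langle\v,\u\rangle\equiv r_\v\cdot\tfrac{r_\u+s_\u}{2}\equiv r_\v\pmod 2$, so $\langle\v,\u\rangle=1$ forces $r_\v$ odd. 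Because $\Num(X)$ is an even lattice, $(c_\v,c_\v)$ is even, whence $\v^2=(c_\v,c_\v)-r_\v s_\v\equiv r_\v^2\equiv 1\pmod 2$. This is routine bookkeeping, so I would not dwell on it.

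For the exceptional class I would complete $\u=\u_1$ to a basis $\HH=\Z\u_1+\Z\v'$; surjectivity of $\langle\,\cdot\,,\u_1\rangle$ (it attains $1$) lets me choose $\langle\u_1,\v'\rangle=1$, giving Gram matrix $\bigl(\begin{smallmatrix}0&1\\1&e\end{smallmatrix}\bigr)$ with $e=(\v')^2$ odd (since $\v^2$ is odd). A one-line computation identifies the second primitive isotropic class as $\u_2=-e\u_1+2\v'$, so $\langle\u_1,\u_2\rangle=2$ and $\tfrac{\u_1-\u_2}{2}$ is an integral class of square $-1$. To upgrade this to the \emph{effective} exceptional class the statement demands, I would invoke the trichotomy of \cref{Prop:isotropic lattice}: $\HH$ is odd so it is not of the (even) spherical type \ref{enum:IsotropicLatticeEffectiveSpherical}; by \cref{Rem:Odd lattice} we have $\ell(\u_2)=\ell(\u_1)=2$, and then \cref{Rem:Even and Odd pairings} forbids $\langle\u_1,\u_2\rangle=2$ in the negative-free case \ref{enum:IsotropicLatticeNoEffectiveNegatives}; this leaves precisely \cref{enum:IsotropicLatticeEffectiveExceptional}, which by definition contains an effective exceptional class $\w\in C_\WW\cap\HH$.

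Finally, for total semistability and the divisorial contraction I would pass through the Fourier--Mukai transform $\Phi$ of \cref{Prop:Uhlenbeck morphism} attached to $\u_1$, under which $M_{\sigma_+}(\v)\cong M_\omega^\beta(-\Phi(\v))$ and $\pi^+$ becomes the Li--Gieseker--Uhlenbeck morphism. Since $\Phi(\u_1)=(0,0,1)$ and $\langle(r,c,\tfrac{s}{2}),(0,0,1)\rangle=-r$, the identity $\langle\v,\u_1\rangle=1$ forces $-\Phi(\v)$ to have rank one, so $M_\omega^\beta(-\Phi(\v))$ parametrizes rank-one torsion-free sheaves $I_Z\otimes L$, and $\dim M_{\sigma_+}(\v)=\v^2+1$ yields $\operatorname{len}(Z)=n=\tfrac{\v^2+1}{2}\geq 1$. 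By the stability criterion of \cref{Prop:Uhlenbeck morphism}, a $\sigma_+$-stable object remains $\sigma_0$-stable only if its sheaf is locally free and $\mu$-stable; as no rank-one sheaf with $n\geq 1$ is locally free, $M_{\sigma_0}^s(\v)=\varnothing$ and $\WW$ is totally semistable. In rank one the LGU morphism is exactly the Hilbert--Chow morphism $\Hilb^n(X)\to\Sym^n(X)$, an isomorphism for $n=1$ and a divisorial contraction for $n\geq 2$; since $\v^2>1$ is odd we have $\v^2\geq 3$, hence $n\geq 2$ and $\pi^+$ is divisorial. The only genuine subtlety I anticipate is keeping the parities and the factor of $\tfrac12$ in the third Mukai coordinate consistent, together with ensuring the exceptional class is effective rather than merely present in $\HH$; the geometric content is imported wholesale from \cref{Prop:Uhlenbeck morphism}.
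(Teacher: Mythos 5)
Your parity computation, your Gram--matrix derivation of the exceptional class, and your final Hilbert--Chow paragraph all follow the paper's route (the paper gets the exceptional class slightly more directly, as $\v-\tfrac{\v^2+1}{2}\u$, but your derivation via $\langle\u_1,\u_2\rangle=2$ and $\tfrac{\u_1-\u_2}{2}$ is a perfectly valid variant, and both arguments then appeal to the trichotomy of \cref{Prop:isotropic lattice} to land in \cref{enum:IsotropicLatticeEffectiveExceptional}). However, there is one genuine gap, and it is exactly the step where the hypothesis ``$\v$ minimal'' --- which appears in the lemma statement but nowhere in your proof --- must be used.

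You silently set $\u=\u_1$. This is not a harmless relabeling: in \cref{Prop:isotropic lattice}, $\u_1$ is the distinguished effective isotropic class with $M_{\sigma_0}^s(\u_1)=M_{\sigma_0}(\u_1)\cong X$, while $\WW$ is a \emph{totally semistable} wall for $\u_2$. The Fourier--Mukai transform of \cref{Prop:Uhlenbeck morphism} is built from the universal family of $\sigma_0$-stable objects of class $\u_1$, so it can only be attached to $\u_1$, and the rank of $-\Phi(\v)$ it produces is $\langle\v,\u_1\rangle$, not $\langle\v,\u\rangle$. The hypothesis of the lemma only provides \emph{some} primitive isotropic $\u$ with $\ell(\u)=2$ and $\langle\v,\u\rangle=1$; a priori this $\u$ could be $\u_2$. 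In that case your own lattice computation (applied to the basis $\{\u_2,\v\}$) shows that the other isotropic class satisfies $\langle\v,\u_1\rangle=\v^2$, so $-\Phi(\v)$ would have rank $\v^2$ rather than $1$, and the entire Hilbert-scheme identification, the total semistability, and the Hilbert--Chow divisorial contraction all collapse.

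The missing reduction is short but essential, and it is where minimality enters: by \cref{enum:IsotropicLatticeEffectiveExceptional} of \cref{Prop:isotropic lattice} one has $\u_2=\u_1+2\langle\u_1,\w\rangle\w$ with $\langle\u_1,\w\rangle>0$, so if $\langle\v,\u_2\rangle=1$ then pairing with $\v$ and using $\langle\v,\w\rangle\geq 0$ (minimality) together with $\langle\v,\u_1\rangle>0$ (positivity of $\v$) gives
\begin{equation*}
1=\langle\v,\u_2\rangle=\langle\v,\u_1\rangle+2\langle\u_1,\w\rangle\langle\v,\w\rangle\geq\langle\v,\u_1\rangle>0,
\end{equation*}
whence $\langle\v,\u_1\rangle=1$ as well, and one may legitimately assume $\u=\u_1$. (Incidentally, combined with your computation this also forces $\langle\v,\w\rangle=0$ and $\v^2=1$ in that case, so for $\v^2>1$ the given $\u$ is automatically $\u_1$; but this cannot be known without the displayed inequality.) With this two-line reduction inserted before your final paragraph, your argument is complete and coincides with the paper's proof.
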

\begin{proof}
Let us first prove that $\v^2$ is odd and $\HH$ contains an exceptional class.  Write $\v=(r,c,\frac{s}{2})$ and $\u=(2r',2c',s')$, with $r'+s'$ odd, as $\ell(\u)=2$.  Then as $r\equiv s\pmod 2$, it follows that $$1=\langle\v,\u\rangle=2c.c'-r's-rs'\equiv-r(r'+s')\equiv r\pmod2,$$ from which we see that $r$ is odd, or equivalently $\v^2$ is odd.  Thus $\v-\frac{\v^2+1}{2}\u\in\HH_\WW$ is an exceptional class.

As $\HH$ falls into \cref{enum:IsotropicLatticeEffectiveExceptional} of \cref{Prop:isotropic lattice}, we see that $\ell(\u_2)=2$ and \begin{equation}\label{eqn:u2 and u1}
    \u_2=\u_1+2\langle\u_1,\w\rangle\w.
\end{equation}  
We observe from \eqref{eqn:u2 and u1} that we may assume that $\u=\u_1$.  Indeed, if $\langle\v,\u_2\rangle=1$, then pairing \eqref{eqn:u2 and u1} with $\v$, we get  $$1=\langle \v,\u_2\rangle=\langle\v,\u_1\rangle+2\langle\u_1,\w\rangle\langle\v,\w\rangle\geq \langle\v,\u_1\rangle>0,$$ so that $\langle\v,\u_1\rangle=1$ as well.  It follows that $M_{\sigma_+}(\v)\cong M_{\omega}^{\beta}(-\Phi(\v))$ is isomorphic to the Hilbert scheme of points.    Finally, $\WW$ is the Hilbert-Chow wall inducing the Hilbert-Chow morphism $\Hilb^n(X)\to\Sym^n(X)$, which is a divisorial contraction for $1<n=\frac{\v^2+1}{2}$, and every ideal sheaf is strictly semistable as in \cite[Proposition 13.1]{Nue14b}.
\end{proof}

Although the behavior in \cref{Lem: Hilbert-Chow} is analogous with the corresponding case on K3 surfaces, we see some new behavior in the next two lemmas.  We begin with the exceptional case of \cref{enum:IsotropicTotallySemistable-Exceptional/Spherical} in \cref{Prop:LGU walls of low codimension}.  

\begin{Lem}\label{Lem:P1FibrationExceptional}
Suppose that $\HH_{\WW}$ contains a primitive isotropic $\u$ such that  $\langle\v,\u\rangle=2=\ell(\u)$ and $\langle \v,\w\rangle=0$ for an exceptional class $\w\in C_\WW\cap\HH_{\WW}$.  Then $\WW$ is totally semistable for, and induces a $\P^1$-fibration on, precisely one component of $M_{\sigma_+}(\v,2L'+K_X)$.
\end{Lem}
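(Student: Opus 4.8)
The plan is to translate the lattice hypotheses into explicit numerics, pass to a moduli space of rank-two sheaves via the Fourier--Mukai transform of \cref{Prop:Uhlenbeck morphism}, and then read off the two components and the fibration from the structure of the Li--Gieseker--Uhlenbeck contraction. First I would carry out the lattice reduction. Since $C_\WW\cap\HH_\WW$ contains an effective exceptional class, case \ref{enum:IsotropicLatticeEffectiveExceptional} of \cref{Prop:isotropic lattice} applies: $\HH_\WW=\Z\w+\Z\u_1$, $\u_2=\u_1+2\langle\u_1,\w\rangle\w$, and $\langle\u_1,\w\rangle$ is odd by \cref{Rem:Even and Odd pairings}. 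Writing $\v=a\u_1+b\w$, the relation $\langle\v,\w\rangle=0$ forces $b=a\langle\u_1,\w\rangle$, and then $\langle\v,\u\rangle=2$ (the pairings against $\u_1$ and $\u_2$ agree because $\langle\v,\w\rangle=0$) gives $a\langle\u_1,\w\rangle^2=2$; as $\langle\u_1,\w\rangle$ is odd this yields $\langle\u_1,\w\rangle=1$, $a=b=2$, and $\u_2=\u_1+2\w$. Hence
\begin{equation*}
\v=2\v_0,\qquad \v_0:=\u_1+\w,\qquad \v_0^2=2\langle\u_1,\w\rangle+\w^2=1,\qquad \v^2=4,
\end{equation*}
so we land in the exceptional, non-normal case $\v=2\v_0$ with $\v_0^2=1$ of \cref{prop:pss}. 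Moreover $\rk\v_0=\rk\u_1+\rk\w$ is odd, so the determinant $2L'+K_X$ is exactly the one congruent to $\tfrac{\rk\v}{2}K_X\pmod 2$ singled out there.

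Next I would pass to sheaves. Applying the transform $\Phi$ of \cref{Prop:Uhlenbeck morphism}, which sends $\u_1$ to the point class $(0,0,1)$, identifies $M_{\sigma_+}(\v)$ with a moduli space of Gieseker-stable rank-two sheaves and $\pi^+$ with the morphism to the Uhlenbeck space; a $\sigma_+$-stable object is strictly $\sigma_0$-semistable precisely when the corresponding sheaf $F$ is not locally free or not $\mu$-stable. Let $M_0$ denote the closure of the $\mu$-stable locally free locus (the ordinary component, of dimension $\v^2+1=5$), on which $\WW$ is visibly not totally semistable. The second locus consists of non-locally-free $F$ with $\ell(F^{\vee\vee}/F)=1$; here $\v(F^{\vee\vee})=\v(F)+(0,0,1)$ is isotropic of square $0$, and the existence of such locally free double duals with the correct determinant holds exactly for $\det=2L'+K_X$ (this is the determinant-sensitive nonemptiness of \cref{prop:isotropic} transported through $\Phi$). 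Its closure $M_1$ consists entirely of sheaves that are not locally free, hence is totally $\sigma_0$-semistable.

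The $\P^1$-fibration then comes for free from the Uhlenbeck description. On $M_1$ the assignment $F\mapsto(F^{\vee\vee},\supp(F^{\vee\vee}/F))$ maps to the four-dimensional base $M_{\sigma_0}(\v(F^{\vee\vee}))\times X$, and the fibre over a pair $(E,x)$ is the Quot scheme of length-one quotients of $E$ supported at $x$, namely $\P((E\otimes k(x))^\vee)\cong\P^1$. By \cref{Thm:WallContraction} the members of such a $\P^1$ are $S$-equivalent at $\sigma_0$ (same double dual, same zero-cycle) and are precisely the curves contracted by $\pi^+$. This realises $\pi^+|_{M_1}$ as a $\P^1$-fibration onto its image, the singular locus $M_0\cap M_1$; note $\dim M_1=4+1=5$, consistent with $M_0\cap M_1$ being a divisor in $M_0$.

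The hard part, which I would carry out (or cite) in \cref{App: exceptional case}, is not the fibration itself but the global component structure. One must show that $M_1$ is a genuinely \emph{separate} irreducible component rather than a boundary divisor of $M_0$ — this is special to the present $\v$ and is what forces the two-component statement of \cref{prop:connected} — and that the closure $M_1$ acquires no $\mu$-stable locally free (hence $\sigma_0$-stable) members, so that $\WW$ is totally semistable on $M_1$ but not on $M_0$, giving ``precisely one'' component. The decisive and most delicate input is the determinant bookkeeping through $\Phi$: matching $2L'+K_X$ with the nonemptiness of the isotropic moduli space $M_{\sigma_0}(\v(F^{\vee\vee}))$ is exactly what distinguishes the two determinants (the other, $2L'$, yielding instead the divisorial behaviour recorded in \cref{Prop:LGU walls of low codimension}), and this is the step I expect to require the most care.
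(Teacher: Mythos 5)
Your overall strategy is the same as the paper's: reduce by the lattice computation to $\v=2\v_0$ with $\v_0=\u_1+\w$, $\v_0^2=1$, transport everything through the Fourier--Mukai equivalence of \cref{Prop:Uhlenbeck morphism} so that $\pi^+$ becomes the Li--Gieseker--Uhlenbeck morphism on a rank-two Gieseker moduli space, and defer the component count to \cref{App: exceptional case} (the paper cites \cref{prop:connected} for exactly this). Your lattice reduction, including the parity argument giving $\langle\u_1,\w\rangle=1$ and the identification of $2L'+K_X$ with the determinant $\equiv\tfrac{\rk\v}{2}K_X\pmod 2$, is correct and in fact more explicit than the paper's normalization $\v=(2,0,-1)$, $\w=(1,0,\tfrac12)$.

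However, your identification of the totally semistable component --- which is the substance of the lemma --- is wrong. You describe it as the locus of non-locally-free $F$ with $\ell(F^{\vee\vee}/F)=1$ and $F^{\vee\vee}$ \emph{locally free} of isotropic vector $(2,0,0)$. That locus is empty. On a surface the reflexive hull $F^{\vee\vee}$ is locally free, and Gieseker stability of $F$ forces $F^{\vee\vee}$ to be Gieseker semistable of vector $(2,0,0)$ and determinant $\equiv K_X$ (any equal-slope rank-one $L\subset F^{\vee\vee}$ meets $F$ in a subsheaf with $\chi\le -1$, whence $\chi(L)\le 0$). But every semistable sheaf with this primitive isotropic vector ($\ell=2$) is one of the pushforwards $\varpi_*(I_z)$, $z\in\widetilde{X}$ --- this is precisely the identification $M_{\sigma_0}(\u)\cong X$ via \cite[Lemma 9.3]{Nue14b} used in the paper --- and none of these is locally free; alternatively, a locally free semistable $G$ with $\v(G)=(2,0,0)$ would have $\chi(G)=1$, forcing $\Hom(\OO_X,G)\neq 0$ or $\Hom(G,\OO_X(K_X))\neq 0$ and hence a destabilizing $\OO_X\into G$. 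The actual component consists of colength-\emph{two} modifications: stable $E$ fitting into $0\to E\to \OO_X\oplus\OO_X(K_X)\to\C_p\oplus\C_q\to 0$ with both components surjective, so $E^{\vee\vee}$ is the fixed polystable bundle of vector $(2,0,1)$, of square $-4$, not isotropic. Your picture can be salvaged --- the kernels $\ker(\varpi_*(I_z)\onto\C_x)$ do sweep out this component --- but then the fibration map $F\mapsto(F^{\vee\vee},\supp(F^{\vee\vee}/F))$ you propose is not well defined: each stable $F$ arises in exactly two ways, by peeling off either point of $\supp(E^{\vee\vee}/E)$ first, so the base is $S^2X$ (the Uhlenbeck image), not $M_{\sigma_0}((2,0,0))\times X$, and the contracted fibers are the curves of $\sigma_0$-S-equivalent stable objects, $\P\Hom(\OO_X\oplus\OO_X(K_X),\C_p)\times\P\Hom(\OO_X\oplus\OO_X(K_X),\C_q)/(\Aut(\OO_X\oplus\OO_X(K_X))/\C^*)$, which are only \emph{birational} to $\P^1$, not Quot-scheme $\P^1$'s. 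Finally, the determinant sensitivity should be cited from \cref{Thm:exist:nodal} (case of $\ell(\v)=2$, $\v^2=0$) rather than \cref{prop:isotropic}: the vector $(2,0,0)$ is primitive with $\ell=2$, so \cref{prop:isotropic} does not apply to it.
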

\begin{proof}
As $\langle\v,\w\rangle=0$, we see from \eqref{eqn:u2 and u1} that  $\langle\v,\u_2\rangle=\langle\v,\u_1\rangle$, so we may assume that $\u=\u_1$.  After applying $[-1]\circ\Phi$, and possibly tensoring by a line bundle, we may assume that $\v=(2,0,-1)$, $\w=(1,0,\frac{1}{2})$, and $M_{\sigma_+}(\v)$ is isomorphic to $M_H(-\Phi(\v))$, the moduli space of H-Gieseker semistable sheaves of Mukai vector $-\Phi(\v)$ with respect to a generic polarization $H$.  The contraction $\pi^+$ is then the LGU-contraction morphism as in Proposition \ref{Prop:Uhlenbeck morphism}.  As mentioned in \cite[Remark 2.3]{Yos16a} and proven in Section \ref{App: exceptional case} (see \cref{prop:connected}), there is precisely one component of $M_{\sigma_+}(\v,K_X)$ consisting of stable non-locally free sheaves $E$ fitting into the short exact sequence $$0\to E\to F\mor[(\phi_1,\phi_2)]\C_p\oplus\C_q\to 0,$$ where $F:=\OO_X\oplus\OO_X(K_X)$ and  $\phi_1:\OO_X\to\C_p\oplus\C_q$ and $\phi_2:\OO_X(K_X)\to\C_p\oplus\C_q$ are both surjective.  The polystable object in the same S-equivalence class as $E$ with respect to $\sigma_0$ is $(\C_p\oplus\C_q)[-1]\oplus F$, and the set of distinct $\sigma_+$ stable objects in the same S-equivalence class is parametrized by $\P\Hom(F,\C_p)\times\P\Hom(F,\C_q)/(\Aut(F)/\C^*)$, which is a curve birational to $\P^1$.  Thus $\WW$ is totally semistable for this component and induces a $\P^1$-fibration.  
\end{proof}

We get similar behavior in the spherical case:

\begin{Lem}\label{Lem:P1FibrationSpherical}
Suppose that $\HH_{\WW}$ contains a primitive isotropic $\u$ such that $\langle \v,\u\rangle=2=\ell(\u)$ and $\langle \v,\w\rangle=0$ for a spherical class $w\in C_\WW\cap\HH_{\WW}$.  
Then $\WW$ is totally semistable for, and induces a $\P^1$-fibration on, precisely one component of $M_{\sigma_+}(\v,L)$ with $L \equiv Z+\frac{\rk \v}{2}K_X \mod 2$,
where $Z$ is a nodal cycle. Moreover
$M_{\sigma_+}(\v,L+K_X)=M_{\sigma_0}^s(\v,L+K_X)$.
\end{Lem}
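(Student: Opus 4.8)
The plan is to follow the template of \cref{Lem:P1FibrationExceptional}, with the single $\sigma_0$-stable spherical object $T$ of class $\w$ playing the role of the exceptional pair $E,E(K_X)$; the \emph{uniqueness} of $T$ (in contrast to the $K_X$-pair of the exceptional case) is exactly the feature that will confine all strictly semistable objects to a single determinant. First I would normalize the data. Since $\HH$ is of type \cref{enum:IsotropicLatticeEffectiveSpherical} in \cref{Prop:isotropic lattice}, we have $\u_2=\u_1+\langle\u_1,\w\rangle\w$; pairing with $\v$ and using $\langle\v,\w\rangle=0$ gives $\langle\v,\u_2\rangle=\langle\v,\u_1\rangle=2$, so I may take $\u=\u_1$ with $\ell(\u_1)=2$. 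Writing $\v=a\u_1+b\w$ in the basis $\HH=\Z\u_1+\Z\w$, the equation $\langle\v,\u_1\rangle=b\langle\u_1,\w\rangle=2$ together with the fact that $\langle\u_1,\w\rangle$ is even and at least $2$ (\cref{Rem:Even and Odd pairings}) forces $\langle\u_1,\w\rangle=2$ and $b=1$, and then $\langle\v,\w\rangle=2a-2=0$ gives $a=1$. Hence $\v=\u_1+\w$ is primitive with $\v^2=2$, so $\dim M_{\sigma_+}(\v,L)=3$.

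Next I would pass to sheaves. Because $\ell(\u_1)=2$, \cite[Lemma 9.3]{Nue14b} gives $M_{\sigma_0}(\u_1)\cong X$, and the associated Fourier--Mukai transform $\Phi$ sends $\u_1$ to $(0,0,1)$; as in \cref{Prop:Uhlenbeck morphism}, after $[-1]\circ\Phi$ and a line-bundle twist the space $M_{\sigma_+}(\v)$ becomes a moduli space of Gieseker-stable sheaves with $\pi^+$ the Li--Gieseker--Uhlenbeck morphism. Under $\Phi$ the $\u_1$-objects become skyscrapers $\C_p$ ($p\in X$), while $\w$ remains spherical and is represented by the transformed unique $\sigma_0$-stable spherical object, still denoted $T$. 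By \cref{Thm:exist:nodal}\ref{enum:SphericalNonemptiness} such a spherical object exists only for the determinant with $\det\equiv D+\tfrac{\rk\v}{2}K_X\pmod2$ for a nodal cycle $D$; transporting this back through $\Phi$ identifies the distinguished determinant of the statement as $L\equiv Z+\tfrac{\rk\v}{2}K_X\pmod2$.

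Then I would build the fibration. For the determinant $L$, the codimension analysis of \cref{Prop:LGU walls of low codimension} (case \ref{enum:IsotropicTotallySemistable-Exceptional/Spherical}) shows that a strictly $\sigma_0$-semistable $E$ of class $\v=\w+\u_1$ has Jordan--H\"older factors $T$ and a single $\C_p$, this being the only way to write the primitive class $\v$ with $\v^2=2$ as a sum of $\sigma_0$-stable classes in $C_\WW\cap\HH$. Stability and equality of phases yield $\Hom(\C_p,T)=\Hom(T,\C_p)=0$, and Serre duality with numerical $K$-triviality gives $\Ext^2(\C_p,T)=0$, so $\ext^1(\C_p,T)=\langle\u_1,\w\rangle=2$ and $\P(\Ext^1(\C_p,T))\cong\P^1$ parametrizes the non-split extensions $0\to T\to E\to\C_p\to0$ (the direction being fixed by \cref{Rem:IsotropicOrientation}). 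These are $\sigma_+$-stable by \cite[Lemma 6.9]{BM14b} and pairwise S-equivalent to $T\oplus\C_p$ for $\sigma_0$; letting $p$ range over $X$ produces a $\P^1$-fibration over $X$ of dimension $3=\dim M_{\sigma_+}(\v,L)$. By \cref{prop:irred-comp:v^2=2} this is one of the two irreducible components, it is made up entirely of strictly $\sigma_0$-semistable objects (hence totally semistable), and by \cref{Thm:WallContraction} $\pi^+$ contracts the $\P^1$-fibers, exhibiting the asserted $\P^1$-fibration.

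Finally, for the determinant $L+K_X$ every strictly $\sigma_0$-semistable object of class $\v$ is such an extension of $\C_p$ by $T$, so has determinant $\det(T\oplus\C_p)=\det T$; as $T$ is the \emph{unique} spherical object its determinant is a fixed line bundle, which equals $L$ rather than $L+K_X$. Thus no strictly $\sigma_0$-semistable objects occur in $M_{\sigma_+}(\v,L+K_X)$, giving $M_{\sigma_+}(\v,L+K_X)=M_{\sigma_0}^s(\v,L+K_X)$. I expect the main obstacle to be the determinant bookkeeping---tracking $\det$ through $[-1]\circ\Phi$ and through the extensions, and rigorously justifying that the uniqueness of $T$ (versus the exceptional $K_X$-pair) forces the entire strictly semistable locus onto the single determinant $L$.
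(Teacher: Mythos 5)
Your core construction is the paper's: reduce to $\u=\u_1$ using $\langle\v,\u_2\rangle=\langle\v,\u_1\rangle$, pass through $[-1]\circ\Phi$ to the LGU picture, and realize the strictly $\sigma_0$-semistable locus as a $\P^1$-bundle over $X$ assembled from the unique $\sigma_0$-stable spherical object $T$ and the class-$\u_1$ objects. Where you genuinely diverge is the claim $M_{\sigma_+}(\v,L+K_X)=M_{\sigma_0}^s(\v,L+K_X)$: the paper argues at the sheaf level that $\gcd(2,D)=1$ forces stable $=$ $\mu$-stable, so a non-locally-free sheaf with determinant $L+K_X$ would have double dual a $\mu$-stable spherical bundle in $M_H(\w,L+K_X)=\varnothing$, whence every sheaf with that determinant is $\mu$-stable and locally free, hence $\sigma_0$-stable. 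You instead use determinant rigidity of the Jordan--H\"older factors: since $\v=\u_1+\w$ is the unique effective decomposition in $C_\WW\cap\HH_\WW$, any strictly $\sigma_0$-semistable object has factors $T$ and a class-$\u_1$ object, whose determinants are fixed ($T$ is genuinely unique because spherical objects satisfy $T\cong T(K_X)$, and $c_1$ is constant on the connected family $M_{\sigma_0}(\u_1)\cong X$). This is cleaner and avoids the $\mu$-stability detour; both arguments are valid, and your explicit normalization $\v=\u_1+\w$, $\langle\u_1,\w\rangle=2$, $\v^2=2$ is correct and consistent with the paper's remark following \cref{Prop:isotropic-classification}.

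The one step that fails as written is the direction of your extensions. Under the orientation fixed in \cref{Rem:IsotropicOrientation}, $\phi^+(\u_1)<\phi^+(\w)$, and a non-split extension of non-isomorphic $\sigma_0$-stable objects of the same $\sigma_0$-phase is $\sigma_+$-stable precisely when the subobject has the smaller $\sigma_+$-phase. Your sequence $0\to T\to E\to\C_p\to 0$ puts the spherical object as the subobject, so each such $E$ has a destabilizing subobject of larger $\sigma_+$-phase: your family consists of $\sigma_-$-stable objects and is a component of $M_{\sigma_-}(\v,L)$, not of $M_{\sigma_+}(\v,L)$ as the lemma requires. The correct family is the one the paper writes at the sheaf level as kernels $0\to E\to T\to\C_p\to 0$, i.e.\ heart-level extensions $0\to\C_p\to E[1]\to T[1]\to 0$, parametrized by $\P(\Hom(T,\C_p))\cong\P^1$. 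Relatedly, your count $\ext^1(\C_p,T)=2$, $\ext^2(\C_p,T)=0$ is only correct if $T$ denotes the shifted sheaf $T[1]$ in the tilted heart; for the sheaf itself one has $\Ext^1(\C_p,T)=0$ and $\Ext^2(\C_p,T)\cong\C^2$, so the extensions you write would all split in the category of sheaves. Swapping sub and quotient (equivalently, tracking the shift consistently) repairs both points, after which the rest of your argument goes through.
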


\begin{proof}
As in the proof of \cref{Lem:P1FibrationExceptional}, $\langle\v,\w\rangle=0$ means that $\langle\v,\u_2\rangle=\langle\v,\u_1\rangle$, so we may assume that $\u=\u_1$, and applying $[-1]\circ\Phi$, we may assume that $\v=(2,D,s)$ with $\gcd(2,D)=1$ and $s\in\Z$ and $\w=(2,D,\frac{D^2+2}{4})$, where $4\mid D^2+2$ since $D=c_1(\v)\equiv Z\pmod 2$ for a nodal cycle $Z$.  Then $M_{\sigma_+}(\v)$ is isomorphic to
$M_H(2,D,s)$.  Let $T$ be the stable spherical bundle with $\v(T)=\w$.  Then we have a family of non-locally free sheaves $E$ parameterized by a $\P^1$-bundle over $X$: 
$$
0 \to E \to T \to \C_p \to 0.
$$
This $\P^1$-bundle is one component of $M_{\sigma_+}(\v,L)$, and each such $E$ is strictly $\sigma_0$-semistable, S-equivalent to $T\oplus\C_p[-1]$.  As $\det(T)\equiv Z+\frac{\rk(\v)}{2}K_X\pmod 2$ and $\det(\C_p)=0$ so that $\det(E)\equiv Z+\frac{\rk(\v)}{2}K_X\pmod2$, we get the first claim.

For the other determinant, notice that as $\gcd(2,D)=1$, all stable sheaves are $\mu$-stable, so for any non-locally free $E\in M_H((2,D,s),L+K_X)$, $E^{\vee\vee}$ would be a $\mu$-stable locally free sheaf in $M_H((2,D,\frac{D^2+2}{4}),L+K_X)=\emptyset$.  Thus every $E\in M_H((2,D,s),L+K_X)$ is a $\mu$-stable locally free sheaf so that $M_{\sigma_+}(\v,L+K_X)=M_{\sigma_0}^s(\v,L+K_X)$, as claimed.
\end{proof}

Having considered the behavior of totally semistable walls, we move on to determining when the codimension one strictly $\sigma_0$-semistable locus gets contracted.  We begin with \cref{enum:IsotropicCodimOne <v.u>=2=l(u)}:
\begin{Lem}\label{Lem: isotropic divisorial l=2 1}
Suppose that $\langle \v,\u_1\rangle=2=\ell(\u_1)$ for $\v$ minimal.  Assume further that 
\begin{equation}\label{eqn:RestrictionsOnv^2}
    \begin{cases}
    \v^2\geq 4,\;\;\;&\mbox{in \cref{enum:IsotropicLatticeNoEffectiveNegatives} of \cref{Prop:isotropic lattice}};\\
    \v^2\geq 3,\v^2\neq 4,\;\;\;&\mbox{in \cref{enum:IsotropicLatticeEffectiveExceptional} of \cref{Prop:isotropic lattice}};\\   \v^2>2,\;\;\;&\mbox{ in \cref{enum:IsotropicLatticeEffectiveSpherical} of \cref{Prop:isotropic lattice}}.
    \end{cases}
\end{equation}
Then $\WW$ is not a totally semistable wall and induces a divisorial contraction on $M_{\sigma_+}(\v,L)$.
\end{Lem}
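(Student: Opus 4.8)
The plan is to translate the entire problem into one about Gieseker-stable sheaves by means of the Fourier-Mukai transform $\Phi$ of \cref{Prop:Uhlenbeck morphism}. Since $\langle\v,\u_1\rangle=2=\ell(\u_1)$ and $\Phi(\u_1)=(0,0,1)$ is the class of a skyscraper sheaf, the sheaves $F$ with $[-1]\circ\Phi(E)=F[1]$ have rank $\rk(-\Phi(\v))=\langle\v,\u_1\rangle=2$, so $M_{\sigma_+}(\v,L)\cong M_\omega^\beta(-\Phi(\v))$ is a moduli space of rank-two Gieseker-stable sheaves and $\pi^+$ is the Li-Gieseker-Uhlenbeck morphism to the Uhlenbeck compactification (\cite{Li97,Lo12}). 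First I would record that $\WW$ is not totally semistable: the $\v^2$-restrictions in \eqref{eqn:RestrictionsOnv^2} are used precisely to guarantee $\langle\v,\w\rangle>0$ for the (possible) effective spherical or exceptional class $\w$, thereby excluding the totally semistable / $\P^1$-fibration regime of \cref{enum:IsotropicTotallySemistable-Exceptional/Spherical} in \cref{Prop:LGU walls of low codimension} (the excluded value $\v^2=4$ in case \ref{enum:IsotropicLatticeEffectiveExceptional} is exactly the exceptional fibration of \cref{Lem:P1FibrationExceptional}, and $\v^2=2$ in case \ref{enum:IsotropicLatticeEffectiveSpherical} the spherical one). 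Hence we lie in case \ref{enum:IsotropicCodimOne <v.u>=2=l(u)} of \cref{Prop:LGU walls of low codimension}, so $\codim(M_{\sigma_+}(\v)\setminus M^s_{\sigma_0}(\v))=1$ and $M^s_{\sigma_0}(\v)$ is a nonempty dense open subset.

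Next I would identify the contracted divisor with the locus of non-locally-free sheaves arising from a single-point elementary modification. Concretely, matching the Harder-Narasimhan description obtained in the proof of \cref{Prop:LGU walls of low codimension}, the generic strictly $\sigma_0$-semistable $E$ has, on the sheaf side, double dual $F^{\vee\vee}$ of Mukai vector $\v-\u_1$ fitting into $0\to F\to F^{\vee\vee}\to \C_p\to 0$, with $\C_p$ of class $\u_1$ (the point class, whose moduli $M_{\sigma_0}(\u_1)\cong X$ by \cite[Lemma 9.3]{Nue14b}). The crucial input is that $M_{\sigma_+}(\v-\u_1)$ has the expected dimension $\v^2-3$ with generic member a $\mu$-stable locally free sheaf. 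Since $(\v-\u_1)^2=\v^2-4$, the restrictions \eqref{eqn:RestrictionsOnv^2} force $(\v-\u_1)^2\geq -1$, and I would split into subcases by sign: when $(\v-\u_1)^2>0$, nonemptiness, genericity and $\dim M_{\sigma_+}(\v-\u_1)=(\v-\u_1)^2+1=\v^2-3$ follow from \cref{Thm:exist:nodal} and \cref{prop:pss}; when $(\v-\u_1)^2=0$ (only in case \ref{enum:IsotropicLatticeNoEffectiveNegatives}, where $\v=\u_1+\u_2$) the double dual ranges over $M_{\sigma_+}(\u_2)$ of dimension $1=\v^2-3$; and when $(\v-\u_1)^2=-1$ (case \ref{enum:IsotropicLatticeEffectiveExceptional}, $\v^2=3$) over the finitely many exceptional objects, again of dimension $0=\v^2-3$. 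In each subcase one checks by stability that $\hom(F^{\vee\vee},\C_p)=2$ with the higher $\Ext$'s vanishing, so the modifications $E$ are parametrized by $\P\bigl((F^{\vee\vee})_p^\vee\bigr)\cong\P^1$ and are $\sigma_+$-stable by \cite[Lemma 6.9]{BM14b}.

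A dimension count then finishes the argument. Varying $F^{\vee\vee}\in M_{\sigma_+}(\v-\u_1)$, the point $p\in X$, and the surjection within $\P^1$, these modifications sweep out a locus of dimension $\dim M_{\sigma_+}(\v-\u_1)+\dim X+1=(\v^2-3)+2+1=\v^2$, a divisor in $M_{\sigma_+}(\v)$ of dimension $\v^2+1$. By \cref{Thm:WallContraction}, the curves contracted by $\pi^+$ are exactly the curves of $\sigma_+$-stable objects that become S-equivalent at $\sigma_0$; since all modifications over a fixed pair $(F^{\vee\vee},p)$ lie in the single $\sigma_0$-S-equivalence class of the Uhlenbeck point $(F^{\vee\vee},p)$, each such $\P^1$ is contracted, and the image locus $\{(F^{\vee\vee},p)\}$ has dimension $(\v^2-3)+2=\v^2-1$. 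Thus $\pi^+$ contracts a divisor onto a locus of codimension two, i.e. $\WW$ induces a divisorial contraction, and as the modification construction applies equally to a double dual in either determinant component, the conclusion holds for $M_{\sigma_+}(\v,L)$ for the given $L$.

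The main obstacle will be verifying, uniformly across the three lattice cases of \cref{Prop:isotropic lattice} and across the sign subcases of $(\v-\u_1)^2$, that $\v-\u_1$ is effective with $M_{\sigma_+}(\v-\u_1)$ nonempty and generically locally free and $\mu$-stable of dimension $\v^2-3$. This is precisely where the delicate numerical restrictions \eqref{eqn:RestrictionsOnv^2} are needed: they rule out exactly the degenerate values of $\v^2$ ($\v^2=2$, and $\v^2=4$ in the exceptional case) for which the double-dual locus fails to give a contracted divisor and one instead lands in a $\P^1$-fibration treated in \cref{Lem:P1FibrationExceptional} and \cref{Lem:P1FibrationSpherical}.
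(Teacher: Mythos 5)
Your overall strategy coincides with the paper's: produce, for each pair consisting of an object of class $\v-\u_1$ and an object of class $\u_1$ (whose moduli space $M_{\sigma_0}(\u_1)\cong X$ is two-dimensional since $\ell(\u_1)=2$), a $\P^1$ of non-isomorphic $\sigma_+$-stable objects of class $\v$ that are S-equivalent at $\sigma_0$, and then count dimensions to see that these sweep out a contracted divisor. The paper does this with Bridgeland extensions $0\to G\to E\to F\to 0$, $G\in M_{\sigma_0}(\u_1)$, $F\in M^s_{\sigma_0}(\v-\u_1,L')$, while you pass through $\Phi$ and phrase it as elementary modifications along a skyscraper; these are the same construction in different coordinates. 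The genuine gap is in what you call the ``crucial input.'' You claim that for $(\v-\u_1)^2>0$ the statement that $M_{\sigma_+}(\v-\u_1)$ has generic member a $\mu$-stable locally free sheaf ``follows from \cref{Thm:exist:nodal} and \cref{prop:pss}.'' It does not: those results give nonemptiness and the dimension of the $\sigma_+$-moduli space, but say nothing about $\sigma_0$-stability of its generic member. On the sheaf side, ``$\mu$-stable locally free'' is precisely ``$\sigma_0$-stable,'' i.e.\ precisely the assertion that $\WW$ is not a totally semistable wall for the class $\v-\u_1$ --- a statement of exactly the same kind as the one being proved, which needs its own argument.

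Worse, the assertion is false as stated in boundary cases that genuinely occur under \eqref{eqn:RestrictionsOnv^2}. In \cref{enum:IsotropicLatticeEffectiveExceptional} take $\v=3\u_1+2\w$ with $\langle\u_1,\w\rangle=1$, so $\v^2=8$ and $\langle\v,\u_1\rangle=2$; then $\v-\u_1=2(\u_1+\w)$ satisfies $\langle\v-\u_1,\w\rangle=0$ and $\langle\v-\u_1,\u_1\rangle=2=\ell(\u_1)$, so by \cref{Lem:P1FibrationExceptional} an entire irreducible component of $M_{\sigma_+}(\v-\u_1,\cdot)$ (for one choice of determinant) consists of strictly $\sigma_0$-semistable objects fibered in $\P^1$'s, and no member of that component is $\mu$-stable locally free. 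The same happens in \cref{enum:IsotropicLatticeEffectiveSpherical} with $\v=2\u_1+\w$, $\langle\u_1,\w\rangle=2$, $\v^2=6$, via \cref{Lem:P1FibrationSpherical}. This is why the paper's proof has three steps you are missing: (i) rule out $(\v-\u_1)^2=-1$ outright by the parity constraint of \cref{Rem:Even and Odd pairings} (your subcase ``$\v^2=3$ in \cref{enum:IsotropicLatticeEffectiveExceptional}'' is in fact vacuous, since there $\langle\v,\u_1\rangle=2$ forces $4\mid\v^2$); (ii) handle $(\v-\u_1)^2=0$ by showing it forces $\v=\u_1+\u_2$ with $\ell(\u_2)=1$ in \cref{enum:IsotropicLatticeNoEffectiveNegatives}, where $M_{\sigma_0}^s(\u_2)=M_{\sigma_0}(\u_2)$; and (iii) for $(\v-\u_1)^2>0$, verify the non-totally-semistable criteria of \cref{Prop:LGU walls of low codimension} for the class $\v-\u_1$, and in the equality case $\langle\v-\u_1,\w\rangle=0$ use \cref{Lem:P1FibrationExceptional,Lem:P1FibrationSpherical} to choose $F$ in the component of $M_{\sigma_+}(\v-\u_1,L')$ that actually contains $\sigma_0$-stable objects. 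Without this case analysis and component selection, your dimension count has no objects to feed on.
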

\begin{proof}
In all cases, we will show that our assumptions imply that $\WW$ is not totally semistable for $M_{\sigma_+}(\v-\u_1,L'),M_{\sigma_+}(\v-\u_1,L'+K_X)$.  Assuming we have done this, then taking $F\in M_{\sigma_0}^s(\v-\u_1,L')$ (or $M_{\sigma_0}^s(\v-\u_1,L'+K_X)$) and $G\in M_{\sigma_0}(\u_1)$, we get a $\P^1$ worth of distinct extensions $$0\to G\to E\to F\to 0$$ of objects in $M_{\sigma_+}(\v,L)$ (or $M_{\sigma_+}(\v,L+K_X)$, respectively) that are S-equivalent with respect to $\sigma_0$.  A quick dimension count shows that these sweep out a divisor.  

Now we show that $\WW$ is not totally semistable for $M_{\sigma_+}(\v-\u_1,L'),M_{\sigma_+}(\v-\u_1,L'+K_X)$.  Under the assumptions in \eqref{eqn:RestrictionsOnv^2}, we may have $(\v-\u_1)^2=-1$ in \cref{enum:IsotropicLatticeEffectiveExceptional} of \cref{Prop:isotropic lattice}.  But then $\v-\u_1=\w$ and $\langle\u_1,\w\rangle=2$, which is impossible by \cref{Rem:Even and Odd pairings}.  Otherwise, $(\v-\u_1)^2=\v^2-4\geq 0$, with equality only if $\v=\u_1+k\u_2$.  In this case, $\langle\v,\u_1\rangle=2$ implies that $k=1$, $\langle\u_1,\u_2\rangle=2$, $\ell(\u_2)=1$, and $\HH$ falls into \cref{enum:IsotropicLatticeNoEffectiveNegatives} by \cref{Rem:Even and Odd pairings} and \eqref{eqn:RestrictionsOnv^2}.  So $\v-\u_1=\u_2$, and  $M_{\sigma_0}^s(\u_2)=M_{\sigma_0}(\u_2)$ so that $\WW$ is not totally semistable for $M_{\sigma_+}(\u_2)$ (or a wall at all).

It remains to show that $\WW$ is not totally semistable when $(\v-\u_1)^2>0$.  As both conditions \ref{enum:IsotropicTotallySemistable-HC} and \ref{enum:IsotropicTotallySemistable-Exceptional/Spherical} of \cref{Prop:LGU walls of low codimension} for $\WW$ being totally semistable require the existence of a spherical/exceptional class (see \cref{Lem: Hilbert-Chow} for \cref{enum:IsotropicTotallySemistable-HC}), $\WW$ is automatically not totally semistable in \cref{enum:IsotropicLatticeNoEffectiveNegatives} of \cref{Prop:isotropic lattice}.  In \cref{enum:IsotropicLatticeEffectiveExceptional,enum:IsotropicLatticeEffectiveSpherical}, if we write $\v=x\u_1+y\w$ with $x,y\in\Z_{\geq 0}$, then by \cref{Rem:Even and Odd pairings}, $\langle\v,\u_1\rangle=2$ and $\v^2>4$ is equivalent to $x>2$, $y=2$ and $\langle\u_1,\w\rangle=1$ in \cref{enum:IsotropicLatticeEffectiveExceptional}, while in \cref{enum:IsotropicLatticeEffectiveSpherical}, it is equivalent to $x>1$, $y=1$ and $\langle\u_1,\w\rangle=2$.  Thus $\langle\v-\u_1,\w\rangle\geq 0$, and in case of equality, we may choose $F$ above to be in the component of $M_{\sigma_+}(\v-\u_1,L')$ (or $M_{\sigma_+}(\v-\u_1,L'+K_X)$) that contains $\sigma_0$-stable objects, as guaranteed by \cref{Lem:P1FibrationExceptional,Lem:P1FibrationSpherical}.  If $\langle\v-\u_1,\w\rangle>0$, then $\langle\v-\u_1,\u_2\rangle>\langle\v-\u_1,\u_1\rangle=2$, so $\WW$ is not totally semistable for $M_{\sigma_+}(\v-\u_1)$ by \cref{Prop:LGU walls of low codimension}, as claimed.

\end{proof}
It is worth noting that the possibilities excluded by the condition \eqref{eqn:RestrictionsOnv^2} have either been dealt with already, or are irrelevant.  Indeed, suppose $\langle\v,\u_1\rangle=2$ and $0<\v^2<4$.  If $\v^2=3$, then $(\v-\u_1)^2=-1$, so we must be in \cref{enum:IsotropicLatticeEffectiveExceptional} of \cref{Prop:isotropic lattice}, which was included in \eqref{eqn:RestrictionsOnv^2}.  If $\v^2=2$, then $(\v-\u_1)^2=-2$, so in \cref{enum:IsotropicLatticeEffectiveSpherical} we have already seen (and will prove in \cref{App: exceptional case}) that the divisorial component of the strictly $\sigma_0$-semistable locus is not contracted, while in \cref{enum:IsotropicLatticeNoEffectiveNegatives} we see that $\WW$ is not a wall for $\v$.  This is also the case if $\v^2=1$.  Finally, if $\v^2=4$ in \cref{enum:IsotropicLatticeEffectiveExceptional}, then again we have already seen (and will prove in \cref{App: exceptional case}) that the divisorial component of the strictly $\sigma_0$-semistable locus is not contracted.

Now we consider \cref{enum:IsotropicCodimOne <v.w>=0} of \cref{Prop:LGU walls of low codimension}:
\begin{Lem}\label{Lem: isotropic divisorial l=2 2}
Suppose that $\HH_{\WW}$ contains a primitive isotropic vector $\u$ with $\ell(\u)=2$ and $\v^2>2$.  If $\langle \v,\w\rangle=0$ for an effective spherical class $\w$, then $\WW$ induces a divisorial contraction.
\end{Lem}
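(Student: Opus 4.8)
The plan is to reproduce the Brill--Noether extension argument of \cref{Lem:NonisotropicDivisorialContraction} in the present isotropic, $\ell(\u)=2$ setting. First I would collect the lattice data. Because $\HH_\WW$ contains the effective spherical class $\w$, \cref{Prop:isotropic lattice} puts us in its spherical case, so $\HH_\WW=\Z\u_1+\Z\w$ is even, $\langle\u_1,\w\rangle=2m'$ with $m'\geq1$, and $\u_2=\u_1+2m'\w$. A direct check shows $\pm\w$ are the only classes of square $-2$ in $\HH_\WW$, so $\w$ is the unique effective spherical class; by \cref{Thm:exist:nodal} and the reasoning of \cref{Prop:lattice classification} there is a unique $\sigma_0$-stable spherical object $S$ with $\v(S)=\w$, and uniqueness forces $S\cong S(K_X)$. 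The relation $\langle\v,\w\rangle=0$ forces $\v=c(\u_1+m'\w)$ for an integer $c\geq1$, whence $\v^2=2c^2m'^2$ and $\langle\v,\u_1\rangle=\langle\v,\u_2\rangle=2cm'^2$. Since $\v^2>2$ gives $cm'\geq2$, both pairings are $\geq4$; the totally-semistable criterion of \cref{Prop:LGU walls of low codimension} then excludes total semistability of $\WW$ for $\v$, so $\pi^+$ is birational.

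Next I would show $\v-\w$ is a minimal positive class for which $\WW$ is not totally semistable. Here $(\v-\w)^2=\v^2+\w^2=\v^2-2>0$ and $\langle\v-\w,\w\rangle=-\w^2=2>0$, so $\v-\w\in C_\WW$ is minimal. Moreover $\langle\v-\w,\u_1\rangle=2m'(cm'-1)\geq2$ and $\langle\v-\w,\u_2\rangle\geq\langle\v-\w,\u_1\rangle$, while $\langle\v-\w,\w\rangle=2\neq0$; by \cref{Prop:LGU walls of low codimension} none of the totally-semistable patterns occurs, so $M_{\sigma_0}^s(\v-\w)\neq\varnothing$ and is dense of dimension $(\v-\w)^2+1=\v^2-1$ in $M_{\sigma_+}(\v-\w)$ (the single borderline case $m'=1,c=2$ only drops the strictly semistable locus to a divisor, leaving the generic object $\sigma_0$-stable). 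By the nonemptiness results this remains true in each determinant component, so I may fix a determinant $L$ and choose a $\sigma_0$-stable $F'$ of class $\v-\w$ with the determinant making the construction below land in $M_{\sigma_+}(\v,L)$.

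Finally I would build the contracted divisor. By stability $\Hom(S,F')=\Hom(F',S)=0$, and since $S\cong S(K_X)$ Serre duality gives $\ext^2(S,F')=\hom(F',S(K_X))=\hom(F',S)=0$; hence $\ext^1(S,F')=-\chi(S,F')=\langle\w,\v-\w\rangle=2$, so the non-split extensions $0\to F'\to E\to S\to 0$ are parametrized by $\P^1=\P(\Ext^1(S,F'))$. With the orientation of \cref{Rem:IsotropicOrientation} the object $S$ of class $\w$ has largest phase, so $F'$ is the lower-phase subobject and each $E$ is $\sigma_+$-stable by \cite[Lem.~6.9]{BM14b}; on the wall these $E$ all have the $\sigma_0$-Jordan--H\"{o}lder factors of $S\oplus F'$ and are therefore mutually S-equivalent. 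Letting $F'$ vary over the $(\v^2-1)$-dimensional $M^s_{\sigma_0}(\v-\w)$ sweeps out a subvariety of dimension $\v^2=\dim M_{\sigma_+}(\v)-1$, i.e.\ a divisor, each of whose $\P^1$-fibers is contracted by $\pi^+$ (by \cref{Thm:WallContraction}, curves of $\sigma_0$-S-equivalent objects are exactly the contracted ones). Thus $\pi^+$ contracts a divisor; as $\pi^+$ has relative Picard rank one this is its whole exceptional locus, and the mirror construction on the $\sigma_-$-side shows $\pi^-$ is divisorial too, so $\WW$ induces a divisorial contraction. I expect the main obstacle to be the bookkeeping of the second paragraph---verifying that $\v-\w$ escapes every totally-semistable pattern (in particular handling the borderline $m'=1,c=2$ case) and that a suitable $\sigma_0$-stable $F'$ exists in each determinant component.
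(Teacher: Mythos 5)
Your proposal is correct and follows essentially the same route as the paper: set $\a=\v-\w$, use evenness of the lattice (from $\ell(\u_1)=\ell(\u_2)=2$) together with the totally-semistable classification of \cref{Prop:LGU walls of low codimension} to get $M_{\sigma_0}^s(\a)\neq\varnothing$, and then sweep out the contracted divisor by the $\P^1$'s of S-equivalent extensions $0\to A\to E\to T\to 0$. The extra bookkeeping you include (the explicit parametrization $\v=c(\u_1+m'\w)$, the borderline case, and the relative Picard rank remark) is consistent with, but not needed beyond, the paper's argument.
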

\begin{proof}
Let $T$ be the unique $\sigma_0$-stable spherical object of class $\w$.  Consider $\a:=\v-\w$.  Then $\a^2=\v^2-2>0$ and $\langle \a,\w\rangle=2$.  By \cref{Prop:isotropic lattice} we see that $\HH$ is even, and from $\ell(\u)=2$ it follows that $\ell(\u_1)=\ell(\u_2)=2$.  Thus $\langle\a,\u_i\rangle$ is even and at least 2, by \cref{Rem:Even and Odd pairings}.  By \cref{Prop:LGU walls of low codimension}, $M_{\sigma_0}^s(\a)\neq\varnothing$, so letting $A$ vary in $M^s_{\sigma_0}(\a)$, we see that the $\P^1$'s of S-equivalent extensions $$0\to A\to E\to T\to 0$$ sweep out a contracted divisor in $M_{\sigma_+}(v)$.
\end{proof}
With the exact same proof, one can show that the analogous situation for $\w$ exceptional results in a divisor of strictly $\sigma_0$-semistable objects that does not get contracted.
\begin{Lem}\label{Lem:IsotropicNoncontractedDivisor l=2}
Suppose that $\HH_\WW$ contains a primitive isotropic vector $\u$ with $\ell(\u)=2$ and $\v^2\neq 1,4$.  If $\langle\v,\w\rangle=0$ for an effective exceptional class $\w$, then the locus $M_{\sigma_+}(\v)\backslash M_{\sigma_0}^s(\v)$ is a divisor that is not contracted.
\end{Lem}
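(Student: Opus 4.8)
The plan is to mirror the proof of \cref{Lem: isotropic divisorial l=2 2}, replacing the spherical class by the exceptional one and tracking how the change $\w^2=-1$ (rather than $-2$) converts a contracted divisor into a non-contracted one. Since $\HH_\WW$ contains an effective exceptional class $\w$, \cref{Prop:isotropic lattice} places us in its case \ref{enum:IsotropicLatticeEffectiveExceptional}: the lattice $\HH_\WW$ is odd, $\ell(\u_1)=\ell(\u_2)=2$, $\HH_\WW=\Z\u_1+\Z\w$ with $\u_2=\u_1+2\langle\u_1,\w\rangle\w$, and $\w$ is (up to sign) the unique exceptional class in $C_\WW$, with $\langle\u_1,\w\rangle$ odd by \cref{Rem:Even and Odd pairings}. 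Writing $\v=x\u_1+y\w$ with $x,y\in\Z_{\geq0}$, the hypothesis $\langle\v,\w\rangle=0$ forces $y=x\langle\u_1,\w\rangle$, whence $\v^2=(x\langle\u_1,\w\rangle)^2$ is a perfect square; the exclusions $\v^2\neq1,4$ therefore amount to $\v^2\geq9$.

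Next I would set $\a:=\v-\w$ and record $\a^2=\v^2+\w^2=\v^2-1>0$ together with $\langle\a,\w\rangle=\langle\v,\w\rangle-\w^2=1$. The first substantive step is to check that $\WW$ is \emph{not} totally semistable for $\a$, so that $M_{\sigma_0}^s(\a)\neq\varnothing$. For this I invoke \cref{Prop:LGU walls of low codimension}: its totally-semistable condition requiring $\langle\a,\w\rangle=0$ fails because $\langle\a,\w\rangle=1$, while the Hilbert--Chow condition \ref{enum:IsotropicTotallySemistable-HC}, namely $\langle\a,\u\rangle=1$ for a primitive isotropic $\u$ with $\ell(\u)=2$, can only hold when $\v^2=4$: indeed $\langle\a,\u_1\rangle=\langle\u_1,\w\rangle(x\langle\u_1,\w\rangle-1)\geq2$ and $\langle\a,\u_2\rangle=\langle\u_1,\w\rangle(x\langle\u_1,\w\rangle+1)\geq4$ once $\v^2\geq9$, whereas $\v^2=4$ gives $\langle\u_1,\w\rangle=1$, $x=2$ and $\langle\a,\u_1\rangle=1$. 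Thus the excluded value $\v^2=4$ is exactly the one for which $\a$ becomes totally semistable, and otherwise $M_{\sigma_0}^s(\a)\neq\varnothing$.

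With $A\in M_{\sigma_0}^s(\a)$ fixed and $T$ the $\sigma_0$-stable exceptional object of class $\w$, stability and Serre duality give $\hom(T,A)=\hom(A,T)=0$ and $\ext^2(T,A)=\hom(A,T(K_X))^\vee=0$, where $A\not\cong T,T(K_X)$ because $\a^2>0>\w^2$. Hence $\ext^1(T,A)=-\chi(T,A)=\langle\w,\a\rangle=1$, so there is a \emph{unique} non-split extension $0\to A\to E\to T\to0$, which is $\sigma_+$-stable by \cite[Lemma 6.9]{BM14b} and strictly $\sigma_0$-semistable with Jordan--H\"older factors $A,T$. As $A$ ranges over $M_{\sigma_0}^s(\a)$, of dimension $\a^2+1=\v^2$, these objects sweep out a locus of dimension $\v^2=\dim M_{\sigma_+}(\v)-1$, hence a divisor; by \cref{Prop:LGU walls of low codimension} this is the full codimension-one locus $M_{\sigma_+}(\v)\setminus M_{\sigma_0}^s(\v)$ (replacing $T$ by $T(K_X)$ yields the corresponding extensions in the other determinant component). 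Crucially, since $\ext^1(T,A)=1$ there is no positive-dimensional family of S-equivalent $\sigma_+$-stable objects: the map sending $E$ to its $\sigma_0$-Jordan--H\"older class $[A\oplus T]$ is generically injective, so by \cref{Thm:WallContraction} the contraction $\pi^+$ is generically finite on this divisor, which therefore is not contracted.

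The main obstacle is the middle step: establishing $M_{\sigma_0}^s(\a)\neq\varnothing$ cleanly and confirming that the extension locus just constructed really is \emph{all} of the strictly $\sigma_0$-semistable locus rather than one stratum of it. The former hinges on the perfect-square identity $\v^2=(x\langle\u_1,\w\rangle)^2$, which isolates $\v^2=4$ as the unique bad value and explains the hypothesis $\v^2\neq1,4$; the latter follows from the codimension estimates in \cref{Prop:LGU walls of low codimension}, which show that the Harder--Narasimhan stratum with factors $A,T$ is the only one of codimension one and is thus dense in $M_{\sigma_+}(\v)\setminus M_{\sigma_0}^s(\v)$. Everything else is the verbatim analogue of the spherical argument, with the single numerical change $\langle\a,\w\rangle=1$ in place of $2$ being responsible for the qualitatively different, non-contracting, conclusion.
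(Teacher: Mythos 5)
Your proposal is correct and follows essentially the same route as the paper: the paper gives no separate proof, stating only that the lemma follows by "the exact same proof" as the spherical case (\cref{Lem: isotropic divisorial l=2 2}), and your argument is precisely that proof adapted to $\w^2=-1$, with $\ext^1(T,A)=\langle\w,\a\rangle=1$ forcing a unique (hence non-contracted) extension in place of the contracted $\P^1$. Your perfect-square observation $\v^2=(x\langle\u_1,\w\rangle)^2$ explaining the exclusions $\v^2\neq 1,4$ (isotropic $\a$, resp.\ Hilbert--Chow totally semistable $\a$) is exactly the numerical content left implicit in the paper.
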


\begin{Lem}\label{Lem: isotropic divisorial l=2 3}
Let $\WW$ be a potential wall and $\v$ minimal, and suppose that $\HH_\WW$ contains a primitive isotropic class $\u$ such that $\ell(\u)=2$.  Assume that either $\langle \v,\u\rangle=1$, or $\langle\v,\u\rangle=2$ and 
\begin{equation}\label{eqn:RestrictionsOnv^2-2}
    \begin{cases}
    \v^2\geq 4,\;\;\;&\mbox{in \cref{enum:IsotropicLatticeNoEffectiveNegatives} of \cref{Prop:isotropic lattice}};\\
    \v^2\geq 3,\v^2\neq 4,\;\;\;&\mbox{in \cref{enum:IsotropicLatticeEffectiveExceptional} of \cref{Prop:isotropic lattice}};\\   \v^2>2,\;\;\;&\mbox{ in \cref{enum:IsotropicLatticeEffectiveSpherical} of \cref{Prop:isotropic lattice}}.
    \end{cases}
\end{equation}
Then $\WW$ induces a divisorial contraction on $M_{\sigma_+}(\v,L)$.
\end{Lem}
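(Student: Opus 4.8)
The plan is to recognize that this lemma packages together the two detailed analyses already carried out, and to prove it by splitting on the value of $\langle \v,\u\rangle$. Before doing so, I would invoke the structural dichotomy of \cref{Prop:isotropic lattice}: since $\HH_\WW$ contains a primitive isotropic class $\u$ with $\ell(\u)=2$, and the two primitive effective isotropic classes $\u_1,\u_2$ satisfy $\ell(\u_1)\geq\ell(\u_2)$ in \cref{enum:IsotropicLatticeNoEffectiveNegatives} and $\ell(\u_1)=\ell(\u_2)$ in \cref{enum:IsotropicLatticeEffectiveExceptional} and \cref{enum:IsotropicLatticeEffectiveSpherical}, it follows that $\ell(\u_1)=2$. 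With the orientation convention of \cref{Rem:IsotropicOrientation} fixed, this allows me to arrange, after the relabeling $\u_1\leftrightarrow\u_2$ (together with $\sigma_+\leftrightarrow\sigma_-$, which is harmless since a divisorial wall contracts divisors on both sides), that the class in the hypothesis is $\u_1$.

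In the case $\langle\v,\u\rangle=1$, I would apply \cref{Lem: Hilbert-Chow} verbatim: it shows that $\v^2$ is odd, that $\HH_\WW$ falls into \cref{enum:IsotropicLatticeEffectiveExceptional} of \cref{Prop:isotropic lattice}, and that $\WW$ induces the Hilbert--Chow contraction $\Hilb^n(X)\to\Sym^n(X)$ with $n=\tfrac{\v^2+1}{2}$, which is divisorial exactly when $n>1$. Here I would record that the hypothesis forces $\v^2>1$: writing $\v=a\u_1+\w$ forces $\langle\u_1,\w\rangle=1$ from $\langle\v,\u_1\rangle=1$, whence $\v^2=2a-1$ is odd and the degenerate value $a=1$ (where $M_{\sigma_+}(\v)\cong X$ and the morphism is an isomorphism) is the excluded trivial case.

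In the case $\langle\v,\u\rangle=2$, the restrictions \eqref{eqn:RestrictionsOnv^2-2} on $\v^2$ are precisely those appearing as \eqref{eqn:RestrictionsOnv^2} in \cref{Lem: isotropic divisorial l=2 1}; since they are conditions on $\HH_\WW$ and $\v^2$ alone, they are insensitive to the choice of $\u_i$. After identifying $\u$ with $\u_1$, the conclusion is then exactly \cref{Lem: isotropic divisorial l=2 1}: $\WW$ is not totally semistable and induces a divisorial contraction on $M_{\sigma_+}(\v,L)$, realized by the divisor swept out by the $\P^1$-families of $\sigma_0$-S-equivalent extensions $0\to G\to E\to F\to 0$ with $G\in M_{\sigma_0}(\u_1)$ and $F\in M^s_{\sigma_0}(\v-\u_1)$.

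The one genuinely substantive bookkeeping step, and the one I expect to require the most care, is verifying that the reduction to $\u=\u_1$ is legitimate when the hypothesized class is $\u_2$ with $\langle\v,\u_2\rangle=2$. Here I would use the explicit relations $\u_2=\u_1+2\langle\u_1,\w\rangle\w$ (resp. $\u_2=\u_1+\langle\u_1,\w\rangle\w$) from \cref{Prop:isotropic lattice} together with minimality $\langle\v,\w\rangle\geq 0$ and $\langle\u_1,\w\rangle>0$: pairing with $\v$ gives $\langle\v,\u_1\rangle=2-2\langle\u_1,\w\rangle\langle\v,\w\rangle\leq 2$, and since $\langle\v,\u_1\rangle>0$ for $\v^2>0$ in the positive cone, the only possibility compatible with the parity constraints of \cref{Rem:Even and Odd pairings} is $\langle\v,\w\rangle=0$ and $\langle\v,\u_1\rangle=2$. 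Thus the pairing condition transfers cleanly to $\u_1$, and in \cref{enum:IsotropicLatticeNoEffectiveNegatives} the symmetry of $\u_1,\u_2$ permits the relabeling outright. With this reduction in place, the lemma follows by direct appeal to \cref{Lem: Hilbert-Chow} and \cref{Lem: isotropic divisorial l=2 1}.
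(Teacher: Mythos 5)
Your proof is correct and follows the same skeleton as the paper's: dispose of $\langle\v,\u\rangle=1$ by \cref{Lem: Hilbert-Chow}, of $\u=\u_1$ by \cref{Lem: isotropic divisorial l=2 1}, handle $\u=\u_2$ in \cref{enum:IsotropicLatticeNoEffectiveNegatives} by symmetry (exactly the paper's ``the proof of \cref{Lem: isotropic divisorial l=2 1} applies with $\u_2$ instead''), and finish with lattice bookkeeping for $\u=\u_2$ in \cref{enum:IsotropicLatticeEffectiveExceptional,enum:IsotropicLatticeEffectiveSpherical}. In that last sub-case, however, you and the paper take genuinely different routes and reach \emph{opposite} intermediate conclusions. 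You use positivity of $\langle\v,\u_1\rangle$ together with \cref{Rem:Even and Odd pairings} to force $\langle\v,\w\rangle=0$ and $\langle\v,\u_1\rangle=2$, then quote \cref{Lem: isotropic divisorial l=2 1}; the paper instead rules out $\langle\v,\w\rangle=0$ (with $\langle\v,\u_2\rangle=2$ it would give $\v^2=4$, resp.\ $2$, contradicting \eqref{eqn:RestrictionsOnv^2-2}), deduces $0<\langle\v,\u_1\rangle<\langle\v,\u_2\rangle=2$, hence $\langle\v,\u_1\rangle=1$, and quotes \cref{Lem: Hilbert-Chow}. Both deductions are sound, and their juxtaposition shows that this sub-case is in fact vacuous: no minimal $\v$ with $\langle\v,\u_2\rangle=2$ satisfying \eqref{eqn:RestrictionsOnv^2-2} exists once an effective spherical or exceptional class is present. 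Each argument is therefore a valid implication from an unsatisfiable hypothesis, and neither route is ``more right'' than the other; yours has the small advantage of exposing the parity obstruction, the paper's of staying closer to the Hilbert-Chow picture.

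Two cautions. First, your opening blanket relabeling $\u_1\leftrightarrow\u_2$ is not legitimate in \cref{enum:IsotropicLatticeEffectiveExceptional,enum:IsotropicLatticeEffectiveSpherical}: there the labels are pinned by $\langle\u_1,\w\rangle>0>\langle\u_2,\w\rangle$, a condition intrinsic to $\WW$ and untouched by swapping $\sigma_+$ and $\sigma_-$ ($\u_1$ is extremal in $C_\WW$ and carries $\sigma_0$-stable objects, while $\WW$ is totally semistable for $\u_2$). You effectively retract this in your final paragraph and do the honest computation, which is what saves the argument, but the retraction is necessary, not optional. Second, in the $\langle\v,\u\rangle=1$ case your claim that the hypotheses force $\v^2>1$ is not justified: $\v=\u_1+\w$ with $\w$ exceptional and $\langle\u_1,\w\rangle=1$ is minimal, satisfies every hypothesis, and has $\v^2=1$, yet there the Hilbert-Chow morphism contracts nothing. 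This imprecision is inherited from the lemma itself (the paper's proof silently carries the same restriction, since \cref{Lem: Hilbert-Chow} only yields a divisorial contraction for $\v^2>1$), so it is not a gap you introduced, but you should flag it as an implicit assumption rather than assert it follows from the hypotheses.
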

\begin{proof}
The class $ֿֿֿֿֿֿ\u$ is automatically effective.  By \cref{Lem: Hilbert-Chow,Lem: isotropic divisorial l=2 1}, the only remaining case is $\u=\u_2$ and $\langle\v,\u_2\rangle=2$.

First, suppose that $\HH_{\WW}$ admits an effective exceptional or spherical class $\w$.  Then from the minimality of $\v$ and the assumptions in \eqref{eqn:RestrictionsOnv^2-2}, we have $\langle \v,\w\rangle> 0$. Indeed, we may write $\v=x\u_2+y\w$ with $x,y\in\Z$ by \cref{Prop:isotropic lattice}, so the conditions $\langle\v,\u_2\rangle=2$ and $\langle\v,\w\rangle=0$ imply that $\v^2=4$ or $2$ if $\w$ is exceptional or spherical, respectively, contrary to \eqref{eqn:RestrictionsOnv^2-2}.  Thus by \eqref{eqn:u2 and u1} (and the analogue for spherical $\w$), we must have $0<\langle \u_1,\v\rangle<\langle\v,\u_2\rangle=2$, so $\langle\v,\u_1\rangle=1$ and the result follows from \cref{Lem: Hilbert-Chow}.

Now suppose that $\HH_{\WW}$ admits no effective spherical or exceptional classes.  Then the proof of \cref{Lem: isotropic divisorial l=2 1} applies with $\u_2$ instead, giving the result.
\end{proof}

\subsubsection{$\ell(\u_1)=\ell(\u_2)=1$} We begin again by determining necessary conditions for a potential wall $\WW$ to be totally semistable and for $\codim (M_{\sigma_+}(\v)\backslash M_{\sigma_0}^s(\v))=1$.
\begin{Prop}\label{Prop: 1-1 case totally semistable and codim 1}
Let $\WW$ be a potential wall for positive and minimal $\v$ such that $\HH_{\WW}$ is isotropic with $\ell(\u_1)=\ell(\u_2)=1$. 
\begin{enumerate}
    \item\label{enum:IsotropicTotallySemistable l=1} If $\WW$ is totally semistable, then $\langle\v,\w\rangle=0$ and $\langle \v,\u_1\rangle=1$, where $\w$ is an effective spherical class in $\HH_{\WW}$.
    \item If $\codim(M_{\sigma_0}(\v)\backslash M^s_{\sigma_0}(\v))=1$, then
    \begin{enumerate}
        \item\label{enum:IsotropicCodimOne-<v.u>=1 l=1} $\langle \v,\u\rangle=1$ for a primitive isotropic $\u\in\HH_{\WW}$, or
        \item\label{enum:IsotropicCodimOne-<v.w>=0 l=1} $\langle \v,\w\rangle=0$ for a spherical or exceptional class $\w$.
    \end{enumerate} 
\end{enumerate}  
\end{Prop}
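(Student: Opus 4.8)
The plan is to run the same Harder--Narasimhan codimension estimate used in the proof of \cref{Prop:LGU walls of low codimension}, but now feeding in the dimension bounds appropriate to $\ell(\u_1)=\ell(\u_2)=1$. Concretely, I would fix a generic $E\in M_{\sigma_+}(\v)$, let its $\sigma_-$-Harder--Narasimhan filtration correspond to a decomposition $\v=\sum_i\a_i$ into classes of descending $\sigma_-$-phase, and recall from \cref{hyperbolic} that every $\a_i$ lies in $\HH=\HH_\WW$. Since $\HH$ has rank two and each $\a_i$ is effective (lies in $C_\WW$), each $\a_i$ is a non-negative multiple of $\u_1$, of $\u_2$, of the effective spherical or exceptional class $\w$ (when one exists, i.e.\ in \cref{enum:IsotropicLatticeEffectiveExceptional} or \cref{enum:IsotropicLatticeEffectiveSpherical} of \cref{Prop:isotropic lattice}), or is itself a positive class. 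I would then apply \cref{Prop:HN codim},
\[
\codim\FF(\a_1,\dots,\a_n)^o\geq\sum_i\bigl(\a_i^2-\dim\MM_{\sigma_-}(\a_i)\bigr)+\sum_{i<j}\langle\a_i,\a_j\rangle,
\]
and bound each summand using the three standing dimension estimates: a positive $\a_i$ contributes $0$ by \cref{Thm:exist:nodal}; an isotropic factor $\a_i=m\u$ contributes $-\dim\MM_{\sigma_-}(m\u)\geq-\lfloor m/2\rfloor$ by \cref{prop:isotropic} (using $\ell(\u)=1$); and a negative factor $\a_i=m\w$ contributes the value dictated by \cref{Lem:dimension negative}, namely $-m^2$ in the spherical case and $-(m^2-1)/2$ for $m$ odd, $-m^2/2$ for $m$ even in the exceptional case (so a single spherical factor contributes $-1$ and a single exceptional factor contributes $0$).

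The decisive feature, and the reason the $\ell=1$ analysis differs from the $\ell=2$ one, is that an isotropic factor $2\u$ now satisfies $\dim\MM_{\sigma_-}(2\u)\leq1$, so it carries a genuine penalty of $-1$ rather than none; this is exactly the mechanism producing a totally semistable wall. Indeed, pairing a factor $2\u_1$ (penalty $-1$) with a spherical factor $\w$ (penalty $-1$) against the cross term $\langle2\u_1,\w\rangle=2\langle\u_1,\w\rangle\geq2$ can sum to $0$, and does so precisely when $\langle\u_1,\w\rangle=1$. I would carry out the case analysis exactly as in \cref{Prop:LGU walls of low codimension}: first the cases where some $\a_i^2<0$ (forcing \cref{enum:IsotropicLatticeEffectiveExceptional} or \cref{enum:IsotropicLatticeEffectiveSpherical} of \cref{Prop:isotropic lattice}), splitting according to whether the remaining factors are isotropic, mixed isotropic-positive, or all positive; then the cases where all $\a_i^2\geq0$, invoking \cref{Prop:HN filtration all positive classes} to dispose of the all-positive subcase. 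Throughout, the structural identities $\langle\u_1,\u_2\rangle=\langle\u_1,\w\rangle^2$ (spherical) and $\langle\u_1,\u_2\rangle=2\langle\u_1,\w\rangle^2$ (exceptional) from \cref{Prop:isotropic lattice}, the evenness of $\HH$ in the spherical case, and the parity facts of \cref{Rem:Odd lattice} are needed to certify that the cross terms are large enough to force $\codim\geq2$ outside the listed configurations.

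Tabulating the outcomes yields both assertions at once. For \cref{enum:IsotropicTotallySemistable l=1}, every configuration produces $\codim\geq1$ unless $\v=2\u_1+\w$ with $\w$ spherical and $\langle\u_1,\w\rangle=1$; this forces $\v^2=2$, $\langle\v,\u_1\rangle=1$, and $\langle\v,\w\rangle=0$, and in the absence of an effective spherical class — that is, in \cref{enum:IsotropicLatticeNoEffectiveNegatives} of \cref{Prop:isotropic lattice} and in the exceptional \cref{enum:IsotropicLatticeEffectiveExceptional} — a direct check shows no decomposition attains $\codim0$ (the single exceptional factor contributes only $0$, and cannot pay for the isotropic doubling), giving the claimed necessity of a spherical $\w$. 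For part (2), the decompositions attaining $\codim=1$ are precisely $\v=\u+(\v-\u)$ with $\langle\v,\u\rangle=1$ for a primitive isotropic $\u$ and $(\v-\u)^2>0$, and $\v=\w+(\v-\w)$ with $\langle\v,\w\rangle=0$ and $(\v-\w)^2>0$ for a spherical or exceptional $\w$; all other decompositions give $\codim\geq2$.

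The main obstacle I anticipate is the bookkeeping in the genuinely mixed cases, where a decomposition simultaneously has a negative factor $b_0\w$, one or two isotropic factors $b_j\u_j$, and possibly several positive factors: there the floors $\lfloor b_j/2\rfloor$ must be weighed against the cross terms $b_0b_j\langle\w,\u_j\rangle$ and $b_ib_j\langle\u_1,\u_2\rangle$, and the minimality hypothesis $\langle\v,\w\rangle\geq0$ must be invoked to control the sign of the aggregate term $b_0\langle\w,\v\rangle$. Making these inequalities sharp — so that equality $\codim\in\{0,1\}$ really singles out only the listed classes and no spurious configuration slips through — is the delicate part, just as the spherical subcase was the subtle point (and the source of an error in the literature, cf.\ the remark following \cref{Prop:LGU walls of low codimension}) in the $\ell=2$ analysis.
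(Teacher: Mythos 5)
Your proposal is correct and follows essentially the same route as the paper's own proof: the same codimension estimate from \cref{Prop:HN codim}, the same dimension inputs (\cref{prop:isotropic} with $\ell(\u)=1$ giving the floor $\lfloor m/2\rfloor$, \cref{Lem:dimension negative}, and \cref{Prop:HN filtration all positive classes}), and the same case division by the signs of the $\a_i^2$; in particular you correctly isolate the unique totally semistable configuration $\v=\w+2\u_1$ with $\w$ spherical and $\langle\u_1,\w\rangle=1$, exactly as the paper does. The only minor imprecision is your claim that the $\codim=1$ decompositions are \emph{precisely} $\v=\u+(\v-\u)$ or $\v=\w+(\v-\w)$ — the paper's bookkeeping turns up further ones such as $\v=\w+3\u_1$, $\v=2\u_1+\u_2$, or $\v=2(\w+\u_1)$ — but every such configuration still satisfies the numerical conclusions (a) or (b), so the statement being proved is unaffected.
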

\begin{proof}Again we assume that $\u_1$, $\u_2$, and $\w$ are labelled and oriented in accordance with \cref{Rem:Even and Odd pairings} and the discussion preceeding \cref{Prop:LGU walls of low codimension} so that $\langle\u_1,\w\rangle>0$.  

For a given $E\in M_{\sigma_+}(\v)$, let the Harder-Narasimhan filtration of $E$ with respect to $\sigma_-$ correspond to a decomposition $\v=\sum_i \a_i$.  We shall estimate the codimension of the sublocus $\FF(\a_0,\ldots,\a_n)^o$ of destabilized objects which is equal to
 \begin{equation}
\sum_i (\a_i^2-\dim \MM_{\sigma_-}(\a_i))+\sum_{i<j}\langle \a_i,\a_j \rangle.
\end{equation}

We will divide the discussion into two cases as we did for \cref{Prop:LGU walls of low codimension}, depending on whether or not one of the classes has negative square.

(I) We first assume that one of the $\a_i$ satisfies $\a_i^2<0$, say $\a_0=b_0 \w$ for an effective spherical or exceptional class $\w$.  

Assume that $\a_1=b_1\u_1$ and $\a_2=b_2\u_2$ are isotropic.  Then 
 \begin{equation}\label{eq: 1,1 case I}
\begin{split}
& \sum_i (\a_i^2-\dim \MM_{\sigma_-}(\a_i))+\sum_{i<j}\langle \a_i,\a_j \rangle\\
\geq & (\a_0^2-\dim \MM_{\sigma_-}(\a_0))+\sum_{i \geq 1}b_0 \langle \w,\a_i \rangle
-\left\lfloor\frac{b_1}{2}\right\rfloor-\left\lfloor\frac{b_2}{2}\right\rfloor+b_1 b_2 \langle \u_1,\u_2 \rangle\\
\geq & -\dim\MM_{\sigma_-}(\a_0)+b_0 \langle \w,\v \rangle-\left\lfloor\frac{b_1}{2}\right\rfloor-\left\lfloor\frac{b_2}{2}\right\rfloor+b_1 b_2 \langle \u_1,\u_2 \rangle\\
\geq & -\dim\MM_{\sigma_-}(\a_0)-\left\lfloor\frac{b_1}{2}\right\rfloor-\left\lfloor\frac{b_2}{2}\right\rfloor+b_1 b_2 \langle \u_1,\u_2 \rangle,
\end{split}
\end{equation} 
where $b_0\langle\v,\w\rangle\geq 0$ from the minimality of $\v$.

First suppose that $\w^2=-2$, so $\dim\MM_{\sigma_-}(\a_0)=-b_0^2$ and thus 
\begin{equation}\label{eq: 1,1 case I spherical}
\begin{split}
\codim\FF(\a_0,\ldots,\a_n)^o &\geq b_0^2+b_1 b_2-\frac{b_1}{2}-\frac{b_2}{2}\geq 1+b_1 b_2-\frac{b_1}{2}-\frac{b_2}{2}\\
&=1+\frac{b_2(b_1-1)+b_1(b_2-1)}{2}\geq 1.
\end{split}
\end{equation}
Thus if $\codim\FF(\a_0,\dots,\a_n)^o=1$ then we must have $\langle \v, \w\rangle=0$, $\langle \u_1,\u_2\rangle=1$, and $\v=\w+\u_1+\u_2$.  But then $\langle \v,\w\rangle=-2<0$, contrary to assumption.  So we must have $\codim\FF(\a_0,\dots,\a_n)^0\geq 2$ in this case.

If instead $\w^2=-1$, then $\dim\MM_{\sigma_-}(\a_0)=\left\lfloor-\frac{b_0^2}{2}\right\rfloor$ and $2\mid\langle \u_1,\u_2\rangle$ by part \ref{enum:IsotropicLatticeEffectiveExceptional} of \cref{Prop:isotropic lattice}, so
\begin{equation}\label{eq: 1,1 case I exceptional}
\begin{split}
\codim\FF(\a_0,\ldots,\a_n)^o &\geq \frac{b_0^2}{2}+2b_1b_2-\frac{b_1}{2}-\frac{b_2}{2}\geq\frac{1}{2}+2b_1b_2-\frac{b_1}{2}-\frac{b_2}{2}\\
&=\frac{1}{2}+\frac{b_1(2b_2-1)+b_2(2b_1-1)}{2}\geq\frac{3}{2},
\end{split}
\end{equation}
so $\codim\FF(\a_0,\ldots,\a_n)^o\geq 2$ in this case.

Now assume that $\a_1=b_1 \u_j$ and $\a_i^2>0$ for $i>1$.  Then 
\begin{equation}
\begin{split}
 \sum_i (\a_i^2-\dim \MM_{\sigma_-}(\a_i))+\sum_{i<j}\langle \a_i,\a_j \rangle
&\geq  \a_0^2-\dim\MM_{\sigma_-}(\a_0)+\sum_{i \geq 1}b_0 \langle \w,\a_i \rangle
-\left\lfloor\frac{b_1}{2}\right\rfloor+\sum_{i \geq 2} b_1 \langle \u_j,\a_i \rangle\\
&\geq -\dim\MM_{\sigma_-}(\a_0)+b_0 \langle \w,\v \rangle
-\left\lfloor\frac{b_1}{2}\right\rfloor+b_1 \langle \u_j,\a_2 \rangle\\ 
&\geq -\dim\MM_{\sigma_-}(\a_0)+\frac{b_1}{2}.
\end{split}
\end{equation}

If $\w^2=-2$ then $$-\dim\MM_{\sigma_-}(\a_0)+\frac{b_1}{2}=b_0^2+\frac{b_1}{2}\geq\frac{3}{2},$$ while if $\w^2=-1$, then $$\codim\FF(\a_0,\ldots,\a_n)^o\geq-\dim\MM_{\sigma_-}(\a_0)+\frac{b_1}{2}\geq \frac{b_0^2}{2}+\frac{b_1}{2}\geq 1,$$ with equality in the last inequality only if $b_1=b_0=1$, in which case the first inequality is strict.  So we always have $\codim\FF(\a_0,\dots,\a_n)^o\geq 2$ in this case.

We can now assume that there are no positive classes amongst the Harder-Narasimhan factors, i.e. $\v=b_0 \w+b_1 \u_j$.  But $\v^2>0$ forces $j=1$, so we may assume this outright.  Then $0 \leq \langle \v,\w \rangle=b_0 \w^2+b_1 \langle \u_1,\w \rangle$, so our estimate becomes 

 \begin{equation}\label{eq: 1,1 case I no positive}
\begin{split}
\codim\FF(\a_0,\a_1)^o=&\sum_i (\a_i^2-\dim \MM_{\sigma_-}(\a_i))+\sum_{i<j}\langle \a_i,\a_j \rangle\\
= & b_0^2 \w^2-\dim\MM_{\sigma_-}(\a_0)-\left\lfloor\frac{b_1}{2}\right\rfloor+b_0b_1\langle \w,\u_1\rangle\\
\geq & b_0^2 \w^2-\dim\MM_{\sigma_-}(\a_0)+\frac{b_0b_1\langle \w,\u_1\rangle}{2}+\frac{b_1}{2}\left(b_0\langle \w,\u_1\rangle-1\right).
\end{split}
\end{equation}

If $\w^2=-2$, then $b_0^2\w^2-\dim\MM_{\sigma_-}(\a_0)=-b_0^2$, so the last line of \eqref{eq: 1,1 case I no positive} becomes $$-b_0^2+\frac{b_0 b_1\langle \w,\u_1\rangle}{2}+\frac{b_1}{2}(b_0\langle \w,\u_1\rangle-1)=\frac{b_0}{2}\langle \v,\w\rangle+\frac{b_1}{2}(\langle \v,\u_1\rangle-1)>0$$
unless $\langle \v,\w\rangle=0$ and $\langle \v,\u_1\rangle=1$.  But then $\v=\w+2\u_1$ and $\langle \w,\u_1\rangle=1$, in which case indeed $\codim\FF(\a_0,\a_1)^o=0$ as in \cref{enum:IsotropicTotallySemistable l=1}.  Moreover, $\codim \FF(\a_0,\a_1)^o\geq 2$ unless $\langle \w,\u_1\rangle=2$ and $\v=\w+\u_1$, as in both \cref{enum:IsotropicCodimOne-<v.w>=0 l=1,enum:IsotropicCodimOne-<v.u>=1 l=1}, or $\langle \w,\u_1\rangle=1$ and $\v=\w+3\u_1$ or $\w+4\u_1$, as in \cref{enum:IsotropicCodimOne-<v.u>=1 l=1}.  In each of these cases $\codim\FF(\a_0,\a_1)^o=1$.

If $\w^2=-1$, then $b_0^2\w^2-\dim\MM_{\sigma_-}(\a_0)=\left\lceil-\frac{b_0^2}{2}\right\rceil$, so the last line of \eqref{eq: 1,1 case I no positive} gives $$\codim\FF(\a_0,\a_1)^o\geq-\frac{b_0^2}{2}+\frac{b_0 b_1\langle \w,\u_1\rangle}{2}+\frac{b_1}{2}(b_0\langle \w,\u_1\rangle-1)=\frac{b_0}{2}\langle \v,\w\rangle+\frac{b_1}{2}(\langle \v,\u_1\rangle-1)>0$$
unless again $\langle \v,\w\rangle=0$ and $\langle \v,\u_1\rangle=1$, in which case $\v=\w+\u_1$ and $\langle \w,\u_1\rangle=1$.  In this case, however, $\codim \FF(\a_0,\a_1)^o=1$ as in both \cref{enum:IsotropicCodimOne-<v.w>=0 l=1} and \cref{enum:IsotropicCodimOne-<v.u>=1 l=1}.  Moreover, $\codim \FF(\a_0,\a_1)^o\geq 2$ unless $\langle \w,\u_1\rangle=1$ and $\v=\w+2\u_1,2(\w+\u_1)$, in which case $\codim\FF(\a_0,\a_1)^o=1$.  

Finally, assume that other than $\a_0=b_0 \w$, $\a_i^2>0$ for all $i>0$.  Then the estimate becomes 

\begin{equation}\label{eq: 1,1 case I no isotropic}
\begin{split}
\codim\FF(\a_0,\ldots,\a_n)^o=&\sum_i (\a_i^2-\dim \MM_{\sigma_-}(\a_i))+\sum_{i<j}\langle \a_i,\a_j \rangle\\
= & -\dim\MM_{\sigma_-}(\a_0)+b_0\langle \w,\v\rangle+\sum_{0<i<j}\langle \a_i,\a_j\rangle\geq \frac{b_0^2}{2}>0.
\end{split}
\end{equation}
Moreover, $\codim\FF(\a_0,\ldots,\a_n)^o\geq 2$ unless $\langle \w,\v\rangle=0$ and $\v=\w+\a_1$, as in \cref{enum:IsotropicCodimOne-<v.w>=0 l=1}.  In this case we have $\codim\FF(\a_0,\a_1)^o=1$, and we require that $$0<\a_1^2=(\v-\w)^2=\v^2+\w^2,$$ so $\v^2>2$ or $\v^2>1$ if $\w^2=-2$ or $\w^2=-1$, respectively.

(II) We next assume that $\a_i^2 \geq 0$ for all $i$.

We assume $\a_1=b_1 \u_1$ and $\a_2=b_2 \u_2$.
Then 
 \begin{equation}\label{eq: spherical 1,1 case II,a}
\begin{split}
\codim\FF(\a_1,\dots,\a_n)^o=& \sum_i (\a_i^2-\dim \MM_{\sigma_-}(\a_i))+\sum_{i<j}\langle \a_i,\a_j \rangle\\
\geq &
-\left\lfloor\frac{b_1}{2}\right\rfloor-\left\lfloor\frac{b_2}{2}\right\rfloor+b_1 b_2 \langle \u_1,\u_2 \rangle\\
\geq &\frac{b_1(b_2-1)+b_2(b_1-1)}{2}>0,
\end{split}
\end{equation}
unless $\v=\u_1+\u_2$ and $\langle \u_1,\u_2\rangle=1$, in which case $\codim\FF(\a_1,\a_2)^o=1$, as in \cref{enum:IsotropicCodimOne-<v.u>=1 l=1}.  If, say, $b_1=1$ and $b_2\geq 2$, then we have $$\codim\FF(\a_1,\a_2)^o\geq-\left\lfloor\frac{1}{2}\right\rfloor-\left\lfloor\frac{b_2}{2}\right\rfloor+b_2\langle \u_1,\u_2\rangle\geq \frac{b_2}{2}\geq 1,$$ with equality only if $b_2=2$ and $\langle \u_1,\u_2\rangle=1$.  Thus $\codim\FF(\a_1,\dots,\a_n)^o$ is always positive in this case.  Moreover, $\FF(\a_1,\dots,\a_n)^o$ has codimension one only if $\v=\u_1+\u_2,2\u_1+\u_2,\u_1+2\u_2$ with $\langle \u_1,\u_2\rangle=1$, as in \cref{enum:IsotropicCodimOne-<v.u>=1 l=1}.  Otherwise, $\codim\FF(\a_1,\dots,\a_n)^o\geq 2$.

Now we assume that $\a_1=b_1 \u_j$ and $\a_i^2>0$ for $i \geq 2$.
In this case, we also see that
\begin{equation}
\begin{split}
\codim\FF(\a_1,\dots,\a_n)^o=&\sum_i (\a_i^2-\dim \MM_{\sigma_-}(\a_i))+\sum_{i<k}\langle \a_i,\a_k \rangle\\
=&-\left\lfloor\frac{b_1}{2}\right\rfloor+\sum_{i>1}b_1\langle \u_j,\a_i\rangle+\sum_{1<i<k}\langle \a_i,\a_k\rangle\\
\geq&b_1(\langle \v,\u_j\rangle-\frac{1}{2})+\sum_{1<i<k}\langle \a_i,\a_k\rangle\\
\geq&b_1(\langle \v,\u_j\rangle-\frac{1}{2})>1,
\end{split}
\end{equation}
unless $\langle \v,\u_j\rangle=1$, and $\v=b_1\u_j+\a_2$ with $b_1=1,2$, in which case $\codim\FF(\a_1,\a_2)^o=1$, as in \cref{enum:IsotropicCodimOne-<v.u>=1 l=1}.  

Finally, if $\a_i^2>0$ for all $i$, then $\codim\FF(\a_1,\dots,\a_n)^o\geq 2$ by Proposition \ref{Prop:HN filtration all positive classes}.
\end{proof}

We prove the converse to Proposition \ref{Prop: 1-1 case totally semistable and codim 1} in the following lemmas.  We begin with the case of a totally semistable wall as in \cref{enum:IsotropicTotallySemistable l=1} of \cref{Prop: 1-1 case totally semistable and codim 1}.
\begin{Lem}\label{Lem:isotropic totally semistable divisorial contraction l=1}
Suppose that $\HH_\WW$ contains an effective spherical class $\w$ and an isotropic class $\u$ such that $\langle \v,\u\rangle=1=\ell(\u)$ and $\langle\v,\w\rangle=0$.  Then $\WW$ is totally semistable and induces a $\P^1$-fibration on $M_{\sigma_+}(\v,L)$ for $L\equiv D+\frac{\rk \v}{2}K_X\pmod 2$, where $D$ is a nodal cycle.  For the other determinant, $M_{\sigma_+}(\v,L+K_X)\backslash M_{\sigma_0}^s(\v,L+K_X)$ is a divisor which does not get contracted.  
\end{Lem}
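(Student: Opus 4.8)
The plan is to first reduce the lattice to completely explicit form. Since $\HH_\WW$ contains an effective spherical class $\w$, we are in Case \ref{enum:IsotropicLatticeEffectiveSpherical} of \cref{Prop:isotropic lattice}, so $\HH_\WW=\Z\w+\Z\u_1$ with $\u_2=\u_1+\langle\u_1,\w\rangle\w$ and $\HH_\WW$ even. Feeding the hypotheses $\langle\v,\u\rangle=1=\ell(\u)$ and $\langle\v,\w\rangle=0$ into the analysis carried out in \cref{Prop: 1-1 case totally semistable and codim 1} (the $\w^2=-2$, no-positive-class stratum) forces $\v=\w+2\u_1$ with $\langle\u_1,\w\rangle=1$; equivalently $\v=\u_1+\u_2$ with $\langle\u_1,\u_2\rangle=1$, whence $\v^2=2$ and $\dim M_{\sigma_\pm}(\v)=3$. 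I will write $T$ for the unique $\sigma_0$-stable spherical object with $\v(T)=\w$, with orientation fixed so that $\phi^+(\u_1)<\phi^+(\w)$ as in \cref{Rem:IsotropicOrientation}. The dimension bound in \cref{prop:isotropic} shows that $M_{\sigma_0}(\u_1)$ is a curve and that $M_{\sigma_0}^s(2\u_1)$ is two-dimensional.

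For the determinant $L\equiv D+\frac{\rk \v}{2}K_X\pmod 2$, the plan is to exploit the $\sigma_0$-stable objects of class $2\u_1$. By \cref{prop:isotropic} these form a two-dimensional family $G=\varpi_*(F)$ with $F$ a $\varpi^*(\sigma_0)$-stable object on $\widetilde X$ satisfying $\iota^*F\not\cong F$, and they occur for exactly one of the two determinant classes of $2\u_1$. For such $G$, stability gives $\hom(T,G)=\ext^2(T,G)=0$, so $\ext^1(T,G)=\langle\w,2\u_1\rangle=2$, and the extensions $0\to G\to E\to T\to 0$ sweep out a $\P^1=\P\Ext^1(T,G)$ of objects, all $\sigma_0$-S-equivalent to $T\oplus G$ and $\sigma_+$-stable by \cite[Lemma 6.9]{BM14b}. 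A determinant computation, using that $\det\varpi_*(F)$ contributes a factor $\OO_X(\rk(\u_1)K_X)$ which combines with $\det T\equiv D+\frac{\rk\w}{2}K_X$, places these $E$ in $M_{\sigma_+}(\v,L)$ with $L\equiv D+\frac{\rk\v}{2}K_X$. Since this produces a three-dimensional family of strictly $\sigma_0$-semistable objects inside the connected (by \cref{Thm:exist:nodal}) three-dimensional space $M_{\sigma_+}(\v,L)$, I conclude $M_{\sigma_0}^s(\v,L)=\varnothing$, i.e. $\WW$ is totally semistable; and by \cref{Thm:WallContraction}, $\pi^+$ contracts each $\P^1$ to $[T\oplus G]$, exhibiting $M_{\sigma_+}(\v,L)$ as a $\P^1$-fibration over $M_{\sigma_0}(2\u_1)$.

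For the other determinant $L+K_X$, \cref{prop:isotropic} now gives that $2\u_1$ has no $\sigma_0$-stable object, so a strictly $\sigma_0$-semistable $E$ has three Jordan--H\"older factors: $T$ together with $F_1,F_2\in M_{\sigma_0}^s(\u_1)$. Because $\langle\u_1,\u_1\rangle=0$, the $F_i$ do not extend one another, so each such $E$ is an extension $0\to F_1\oplus F_2\to E\to T\to 0$ classified by $\Ext^1(T,F_1\oplus F_2)=\C^2$. This is the \emph{main obstacle}: I must show $\pi^+$ does not collapse this locus. The key point is that when $F_1\not\cong F_2$ (the generic situation), $\Aut(F_1\oplus F_2)=\C^*\times\C^*$ acts on $\Ext^1(T,F_1\oplus F_2)=\C^2$ with a dense orbit, so there is a \emph{unique} $\sigma_+$-stable extension $E$ in each S-equivalence class; this is in sharp contrast with the even-determinant case, where the single stable $G$ has $\Aut(G)=\C^*$ acting by scalars, producing a whole $\P^1$ of non-isomorphic siblings. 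Consequently $\pi^+$ is generically injective on the closure of $\{[F_1\oplus F_2\oplus T]\}$, which is two-dimensional (a $\Sym^2$ of the curve $M_{\sigma_0}(\u_1)$) and hence a divisor in the three-dimensional $M_{\sigma_+}(\v,L+K_X)$ that is not contracted.

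The two steps I expect to be purely routine are the determinant bookkeeping, which decides which choice of $L$ carries the stable $G$ (and hence the $\P^1$-fibration) versus the decomposable $F_1\oplus F_2$ (and hence the non-contracted divisor), and the verification that $M_{\sigma_0}(\u_1)$ is a smooth curve. The genuinely delicate comparison is the automorphism-group/orbit computation on $\Ext^1(T,-)$ described above, which is exactly what separates the two determinants and which I expect to occupy the heart of the argument.
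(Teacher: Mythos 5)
Your reduction to $\v=\w+2\u_1$ with $\langle\u_1,\w\rangle=1$, and your $\P^1$'s of extensions $0\to A\to E\to T\to 0$ with $A\in M^s_{\sigma_0}(2\u_1,2L')$, are exactly the paper's construction. The first genuine gap is your deduction of total semistability: you argue that a three-dimensional family of strictly $\sigma_0$-semistable objects inside the \emph{connected} three-dimensional space $M_{\sigma_+}(\v,L)$ must exhaust it. Connectedness does not give irreducibility, and in fact by Proposition \ref{prop:irred-comp:v^2=2} of this very paper, $M_{\sigma_+}(\v,L)$ with $\v^2=2$ and $L\equiv D+\frac{\rk\v}{2}K_X\pmod 2$ has exactly \emph{two} irreducible components. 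Accordingly, the paper's own proof claims only that these extensions sweep out \emph{an entire irreducible component} (and the $\P^1$-fibration is to be read on that component; see the remark following \cref{Prop:isotropic-classification}). The other component is generically built from the decomposition $\v=\u_1+\u_2$, not from $2\u_1$ and $\w$, and your dimension count says nothing about it; so your conclusion that $M_{\sigma_0}^s(\v,L)=\varnothing$ and that all of $M_{\sigma_+}(\v,L)$ fibers in $\P^1$'s over $M_{\sigma_0}(2\u_1)$ is not established.

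For the determinant $L+K_X$, your mechanism (a unique $\sigma_+$-stable extension in each S-equivalence class, via the $\Aut(F_1\oplus F_2)=\C^*\times\C^*$ action on $\Ext^1(T,F_1\oplus F_2)\cong\C^2$) is the same one the paper invokes through \cite[Lemmas 6.1-6.3]{CH15}, but two things are missing. First, $F_1\not\cong F_2$ is not a genericity assumption: the determinant forces $\{\det F_1,\det F_2\}=\{L',L'+K_X\}$, which is what rules out $F_1\cong F_2$ entirely (and also rules out the other conceivable JH type $\{T,A\}$ with $A\in M^s_{\sigma_0}(2\u_1,2L')$, since that type has determinant $\equiv L$, not $L+K_X$). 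Without this, your claim that ``the $F_i$ do not extend one another because $\langle\u_1,\u_1\rangle=0$'' fails: for $F_1\cong F_2$ one has $\ext^1(F,F)=\hom(F,F)+\hom(F,F(K_X))\geq 1$. Second, and more seriously, your description of the strictly semistable locus is incomplete. There is a second stratum of $\sigma_+$-stable, strictly $\sigma_0$-semistable objects, namely the extensions $0\to A_1\to E\to A_2\to 0$ coming from the decomposition $\v=\u_1+\u_2$, with $A_1\in M_{\sigma_+}(\u_1,L')$ and $A_2\in M_{\sigma_+}(\u_2)$ strictly $\sigma_0$-semistable (recall $\WW$ is totally semistable for $\u_2$ by \cref{Prop:isotropic lattice}). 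This stratum is disjoint from yours, since Harder--Narasimhan filtrations with respect to $\sigma_-$ are unique, and it is two-dimensional. To conclude that the divisor $M_{\sigma_+}(\v,L+K_X)\setminus M^s_{\sigma_0}(\v,L+K_X)$ is not contracted, you must also check that $\pi^+$ has finite fibers on this stratum; the paper does so with a separate computation showing $\ext^1(A_2,A_1)=1$, hence again a unique stable extension per S-equivalence class. Your proposal never sees these objects, so as written it does not exclude contracted curves in the part of the divisor you did not parametrize.
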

\begin{proof}
We first observe that since $\langle\v,\w\rangle=0$, it follows from the analogue of \eqref{eqn:u2 and u1} in the spherical case that $\langle\v,\u_1\rangle=\langle\v,\u_2\rangle$.  As $\ell(\u_1)=\ell(\u_2)$, we may assume that $\u=\u_1$.  By \cref{Prop:isotropic lattice} we may write $\v=x\w+y\u_1$ with $x,y\in\Z$.  Then $1=\langle \v,\u_1\rangle=x\langle \w,\u_1\rangle$ which implies that $x=1=\langle \w,\u_1\rangle$.  But then $\langle\v,\w\rangle=0$ forces $y=2$.  Since $\ell(\u_1)=1$, $M^s_{\sigma_0}(2\u_1,2L')$ is non-empty and two-dimensional by \cref{prop:isotropic}.  Moreover, for the unique $\sigma_0$-stable spherical object $T$ of class $\w$ and any $A\in M^s_{\sigma_0}(2\u_1,2L')$, stability ensures that $\ext^1(T,A)=\langle 2\u_1,\w\rangle=2$.  Then the $\P^1$ worth of extensions $$0\to A\to E\to T\to 0$$ gets contracted by crossing $\WW$, and varying $A$ in $M_{\sigma_0}^s(2\u_1,2L')$ generically sweeps out an entire irreducible component of $M_{\sigma_+}(\v,L)$, where $$L=2L'+\det(T)\equiv \det(T)\equiv D+\frac{\rk \v}{2}K_X\pmod 2.$$

For the other determinant, observe that the only decompositions of $\v$ into effective classes are $\v=\w+2\u_1=\u_1+\u_2$, and from the proof of Proposition \ref{Prop: 1-1 case totally semistable and codim 1} only the former decomposition corresponds to a totally semistable wall.  Moreover, in the case of the decomposition $\v=\w+2\u_1$ for the determinant $L+K_X$, the strictly $\sigma_0$-semistable locus has codimension 1, so $\WW$ is not totally semistable for $M_{\sigma_+}(\v,L+K_X)$.  Indeed, if $E\in M_{\sigma_+}(\v,L+K_X)$ has this decomposition for its Harder-Narasimhan filtration with respect to $\sigma_-$, then the kernel $A$ of the surjection $E\onto T$ would be in $M_{\sigma_-}(2\u_1,2L'+K_X)$.  But $M_{\sigma_-}^s(2\u_1,2L'+K_X)=\varnothing$ by \cref{prop:isotropic}, so $A$ would have to be strictly $\sigma_-$-semistable, and from determinant considerations its Jordan-H\"{o}lder factors would have to be $A_1\in M_{\sigma_-}(\u_1,L')$ and $A_2\in M_{\sigma_-}(\u_1,L'+K_X)$.  But then $A_1\ncong A_2,A_2(K_X)$, as $\det(A_1)\neq\det(A_2)=\det(A_2(K_X))$, so $$\ext^1(A_1,A_2)=\langle \u_1,\u_1\rangle+\hom(A_1,A_2)+\ext^2(A_1,A_2)=\hom(A_2,A_1(K_X))=0,$$ from which it follows that $A=A_1\oplus A_2$.  Using \cite[Lemmas 6.1-6.3]{CH15}, we thus get a unique $\sigma_+$-stable extension $$0\to A_1\oplus A_2\to E\to T\to 0,$$ which is unique in its S-equivalence class with respect to $\sigma_0$, and varying the $A_i$ spans a non-contracted divisor of strictly $\sigma_0$-semistable objects with the prescribed Harder-Narasimhan filtration for $\sigma_-$.  

Now consider the other decomposition, $\v=\u_1+\u_2$, and take $A_1\in M_{\sigma_+}(\u_1,L')$ and $A_2\in M_{\sigma_+}(\u_2)$.  Then any nontrivial extension \begin{equation}\label{eqn:Other decomposition}
0\to A_1\to E\to A_2\to 0
\end{equation} is $\sigma_+$-stable by \cite[Lemma 9.3]{BM14b}, as the parallelogram spanned by $\u_1$ and $\u_2$ has no lattices points other than its vertices.  In order for $\det(E)=L+K_X$ we must have $$\det(A_2)+L'=L+K_X=2L'+\det(T)+K_X,$$ so that $\det(A_2)=L'+\det(T)+K_X$.  As $M_{\sigma_0}^s(\u_2)=\varnothing$, $A_2$ must be strictly $\sigma_0$-semistable with stable factors $T$ and $A_1'\in M_{\sigma_+}(\u_1)$.  It follows that  $$L'+\det(T)+K_X=\det(A_2)=\det(T)+\det(A_1'),$$ so $A_1'\in M_{\sigma_+}(\u_1,L'+K_X)$.  In particular, $\det(A_1)\neq\det(A_1')$, and thus $A_1\ncong A_1',A_1'(K_X)$.  Hence by stability, $$\Hom(T,A_1)=\Hom(A_1,T)=\Hom(A_1',A_1)=\Hom(A_1,A_1'(K_X))=0,$$ from which we see that $\Hom(A_2,A_1)=0=\Ext^2(A_2,A_1)=0$, by applying $\Hom(\blank,A_1)$ to the short exact sequence \eqref{eqn:Other decomposition}.  Thus $\ext^1(A_2,A_1)=1$, and there exists a unique $\sigma_+$-stable extension $E$, which is also unique in its S-equivalence class with respect to $\sigma_0$.  Letting the $A_i$ vary, we again sweep out a divisor that is not contracted by $\WW$, as claimed.
\end{proof}

Now we move on to \cref{enum:IsotropicCodimOne-<v.u>=1 l=1}.
\begin{Lem}\label{Lem:isotropic divisorial l=1 1}
Assume that $\v$ is minimal in $\HH_\WW$, which contains primitive isotropic classes $\u_1$ and $\u_2$ such that $\ell(\u_1)=\ell(\u_2)=1$, and suppose that $\langle \v,\u\rangle=1$ for a primitive isotropic $\u\in\HH_\WW$ with $\ell(\u)=1$.  
\begin{enumerate}
    \item\label{enum:IsotropicDivisorialContraction l=1 v^2>=3} If $\v^2\geq 3$, then $\WW$ induces a divisorial contraction on $M_{\sigma_+}(\v,L)$.
    \item If either
    \begin{enumerate}
        \item\label{enum:IsotropicDivisorialNonContraction l=1 v^2=1} $\v^2=1$ or
        \item\label{enum:IsotropicDivisorialNonContraction l=1 v^2=2} $\v^2=2$, $\HH_\WW$ contains a spherical class, and $L \equiv D+(\frac{\rk \v}{2}+1)K_X \pmod 2$, where $D$ is a nodal cycle,
    \end{enumerate}
    then $M_{\sigma_+}(\v,L)\backslash M^s_{\sigma_0}(\v,L)$ is a divisor which is not contracted by $\WW$.
\end{enumerate}  
\end{Lem}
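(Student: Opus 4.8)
The plan is to exploit the Fourier--Mukai transform $\Phi$ together with the structural results already established for isotropic walls, reducing everything to explicit extension constructions and dimension counts. The key preliminary observation is that since $\langle \v,\u\rangle=1$ with $\ell(\u)=1$, and by \cref{Prop:isotropic lattice} $\HH_\WW$ has $\ell(\u_1)=\ell(\u_2)=1$, after applying $[-1]\circ\Phi$ (cf.\ \cref{Prop:Uhlenbeck morphism} and the discussion preceding it) I may assume $\u=\u_1$ corresponds to the class of points, so that $M_{\sigma_0}(\u_1)\cong X$. The strategy for \ref{enum:IsotropicDivisorialContraction l=1 v^2>=3} is exactly parallel to \cref{Lem: isotropic divisorial l=2 1,Lem:isotropic totally semistable divisorial contraction l=1}: I would show $\v-\u_1$ is again minimal (so that $M^s_{\sigma_0}(\v-\u_1)\neq\varnothing$ by \cref{Prop: 1-1 case totally semistable and codim 1}), then for $F\in M^s_{\sigma_0}(\v-\u_1,L')$ and $G\in M_{\sigma_0}(\u_1)\cong X$ compute $\ext^1(F,G)=\langle \v-\u_1,\u_1\rangle=1$ (using $\hom(F,G)=\ext^2(F,G)=\hom(G,F(K_X))=0$ by $\sigma_0$-stability), producing a unique non-trivial extension
\begin{equation*}
0\to G\to E\to F\to 0
\end{equation*}
which is $\sigma_+$-stable by \cite[Lemma 6.9]{BM14b} and lies in one $\sigma_0$-S-equivalence class. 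A dimension count, with $(\v-\u_1)^2=\v^2-2\geq 1$ and the factor $X$ from varying $G$, shows these sweep out a family of dimension $(\v-\u_1)^2+1+\dim X=\v^2=\dim M_{\sigma_+}(\v)-1$, hence a divisor; since $G$ is contracted to a point these extensions are contracted, giving a divisorial contraction.

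For the minimality of $\v-\u_1$ I would argue as in \cref{Lem:NonisotropicDivisorialContraction}: if $\HH_\WW$ contains no effective spherical or exceptional class there is nothing to check, while if it contains such a class $\w$ (necessarily $\w=\w$ with $\langle\u_1,\w\rangle>0$ oriented as in \cref{Rem:IsotropicOrientation}), I would verify $\langle \v-\u_1,\w\rangle\geq 0$ from $\langle\v,\w\rangle\geq 0$ and the relation $\u_2=\u_1+\langle\u_1,\w\rangle\w$ (in the spherical case) or $\u_1+2\langle\u_1,\w\rangle\w$ (in the exceptional case) of \cref{Prop:isotropic lattice}, using $\v^2\geq 3$ to exclude the borderline rational solutions just as in the Pell-equation estimates there. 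I would also handle the degenerate sub-possibility $(\v-\u_1)^2=0$, i.e.\ $\v-\u_1$ proportional to an isotropic class, separately: here $\v=\u_1+k\u_2$ and $\langle\v,\u_1\rangle=1$ forces $\langle\u_1,\u_2\rangle=1$, $k=1$, so $\v-\u_1=\u_2$ with $M^s_{\sigma_0}(\u_2)=M_{\sigma_0}(\u_2)\neq\varnothing$, and the same extension argument applies with $F\in M_{\sigma_0}(\u_2)$.

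For the non-contracted cases in part (2), the point is that $\v-\u_1$ fails to admit $\sigma_0$-stable representatives of the required determinant, so I would instead produce a divisor whose generic member is \emph{not} contracted, mirroring the second half of \cref{Lem:isotropic totally semistable divisorial contraction l=1} and \cref{Lem:IsotropicNoncontractedDivisor l=2}. In case \ref{enum:IsotropicDivisorialNonContraction l=1 v^2=2} with the determinant $L\equiv D+(\tfrac{\rk\v}{2}+1)K_X\pmod 2$, I would analyze the decomposition $\v=\w+2\u_1$ (spherical $\w$), showing via \cref{prop:isotropic} that the $2\u_1$-factor must split as $A_1\oplus A_2$ with $A_i\in M_{\sigma_+}(\u_1,L')$ and $M_{\sigma_+}(\u_1,L'+K_X)$ of distinct determinant; then $\ext^1$ between the non-isomorphic factors and $T$ (the $\sigma_0$-stable spherical object of class $\w$) computes to give a unique extension in each S-equivalence class, so varying the factors sweeps out a divisor where $\pi^+$ is injective. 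In case \ref{enum:IsotropicDivisorialNonContraction l=1 v^2=1}, where $(\v-\u_1)^2=-1$, I would similarly identify $\v-\u_1$ with an exceptional class and run the corresponding non-contracted extension construction. The main obstacle I anticipate is the careful bookkeeping of determinants across the two components $M_{\sigma_+}(\v,L)$ and $M_{\sigma_+}(\v,L+K_X)$ (cf.\ \cref{Rem:NonIsotropicDeterminantIrrelevant}): distinguishing precisely which determinant yields a contracted versus a non-contracted divisor requires tracking $\det$ through each short exact sequence and invoking the emptiness statements of \cref{prop:isotropic} at the right determinant, and this is where a naive argument would conflate the two components.
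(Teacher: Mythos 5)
Your proposal breaks down at two linked points, both in part \ref{enum:IsotropicDivisorialContraction l=1 v^2>=3}. First, the opening reduction is not available: the identification of $M_{\sigma_0}(\u_1)$ with $X$ via a Fourier--Mukai kernel (\cref{Prop:Uhlenbeck morphism}) requires $\ell(\u_1)=2$, whereas the hypothesis of this lemma is precisely $\ell(\u_1)=\ell(\u_2)=1$ --- the iLGU situation. In that case $M_{\sigma_0}(\u_1)$ is at most one-dimensional (for $G$ of class $\u_1$ with $G\ncong G(K_X)$ one has $\ext^1(G,G)=1$, cf.\ \cref{prop:isotropic}), and no universal-family kernel of this type exists; this failure is exactly why the paper devotes a separate section to the iLGU wall. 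So both the normalization ``$\u_1=$ class of points'' and the ``$\dim X$'' term in your dimension count are unfounded (and even granting them, $(\v-\u_1)^2+1+\dim X=\v^2+1=\dim M_{\sigma_+}(\v,L)$, not $\v^2$).

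Second, and more seriously, your extension construction proves the \emph{opposite} of (1). With the decomposition $\v=(\v-\u_1)+\u_1$ you get $\ext^1(F,G)=\langle\v-\u_1,\u_1\rangle=1$, so each pair $(F,G)$ gives a \emph{unique} nontrivial extension; every $\sigma_0$-S-equivalence class then meets the resulting divisor in a single point, hence $\pi^+$ is injective there and contracts no curves --- this is exactly the mechanism the paper uses to exhibit \emph{non}-contracted divisors in part (2) and in \cref{Lem:isotropic totally semistable divisorial contraction l=1}. The paper's proof of the contraction instead uses the decomposition $\v=(\v-2\u_1)+2\u_1$, taking $E_1\in M^s_{\sigma_0}(2\u_1,2L')$, which is nonempty and two-dimensional precisely because $\ell(\u_1)=1$ (\cref{prop:isotropic}); then $\ext^1(E_2,E_1)=\langle\v-2\u_1,2\u_1\rangle=2$ produces $\P^1$'s of S-equivalent $\sigma_+$-stable extensions, and varying $E_1,E_2$ sweeps out the contracted divisor. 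This choice in turn forces case analysis you did not anticipate: when $\HH_\WW$ contains a spherical class $\w$ and $\v^2\in\{4,6\}$, the class $\v-2\u_1$ has square $0$ or $2$ and may fail to carry $\sigma_0$-stable objects of the required determinant, so the paper switches to decompositions such as $\v=\w+2\u_1+2\u_1$ and $\v=\w+2\u_1+\u_1$, building $\sigma_+$-stable extensions of $T$ by $E_1\oplus E_2$ via \cite{CH15}. Your treatment of part (2) is essentially in line with the paper, but as written the proof of part (1) does not go through.
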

\begin{proof}
Suppose first that $\HH_{\WW}$ contains an effective spherical or exceptional class $\w$ and $\v^2\geq 3$.  Then by minimality of $\v$ and  \eqref{eqn:u2 and u1} (and its analogue in the spherical case), $\langle\v,\u_2\rangle\geq \langle\v,\u_1\rangle$ with equality only if $\langle\v,\w\rangle=0$.  But if $\langle\v,\w\rangle=0$, then $\u$ could be either $\u_1$ or $\u_2$, and writing $\v=x\w+y\u$ with $x,y\in\Z$, we see that $\langle\v,\u\rangle=1$ and $\langle\v,\w\rangle=0$ imply that $\v=\w+(-\w^2)\u$, where $\langle \u,\w\rangle=1$.  It follows that $\v^2=-\w^2$, which were explicitly excluded.  Thus $\langle\v,\w\rangle>0$ and we see that $\u=\u_1$.  Moreover, we see by the same reasoning that $\langle\v,\u_1\rangle=1$ implies that $\v=\w+y\u_1$ and $\langle\w,\u_1\rangle=1$ so that $\v^2=\w^2+2y\equiv\w^2\pmod 2$.  

We will break the proof up into different parts based on the decomposition of $\v$ we will use.  

Suppose first that $\v^2\geq 3$ if $\w$ is exceptional and $\v^2\geq 8$ if $\w$ is spherical.  Setting $\v'=\v-2\u_1$, it follows that $\v'^2=\v^2-4\geq-1$ or $\v'^2\geq 4$ if $\w$ is exceptional or spherical, respectively.  As $\langle \v',\u_1\rangle=1$, we have $\v'\in C_{\WW}$, and from $\langle\v',\w\rangle=\frac{\v^2+w^2-4}{2}$, which is positive if $\w$ is spherical, we have $M^s_{\sigma_0}(\v')\neq\varnothing$ by Proposition \ref{Prop: 1-1 case totally semistable and codim 1}.  Then for $E_1\in M^s_{\sigma_0}(2\u_1)$ and $E_2\in M^s_{\sigma_0}(\v')$, we have $\Hom(E_2,E_1)=\Hom(E_1,E_2(K_X))=0$ by stability.  It follows that $\ext^1(E_2,E_1)=\langle\v',2\u_1\rangle=2$, so there is a $\P^1$ worth of extensions of the form $$0\to E_1\to E\to E_2\to 0,$$ which gets contracted by $\WW$.  Varying $E_1$ and $E_2$ in their moduli sweeps out a divisor in $M_{\sigma_+}(\v)$.  Moreover, as $\det(E_1)=2L'$ by \cref{prop:isotropic}, we may choose $E_2\in M_{\sigma_0}^s(\v')$ to have either determinant $L''$ or $L''+K_X$, where $[L'' \mod K_X]=c_1(\v')$, to get a divisorial contraction on each $M_{\sigma_+}(\v,2L'+L'')$ and $M_{\sigma_+}(\v,2L'+L''+K_X)$.  We have proven the first claim of the lemma for $\w$ exceptional, and to complete the proof of this claim for $\w$ spherical we must consider when $\v^2=6,4$.

If $\v^2=6$ and $L\equiv D+(\frac{\rk \v}{2}+1)K_X\pmod 2$, then the same argument gives a divisorial contraction on $M_{\sigma_+}(\v,L)$, as then $\v'^2=2$ (i.e. $\langle\v',\w\rangle=0$) and $M_{\sigma_0}^s(\v',L-2L')\neq\varnothing$ by Lemma \ref{Lem:isotropic totally semistable divisorial contraction l=1}.  If instead $L\equiv D+\frac{\rk \v}{2}K_X\pmod 2$, then we will use a different decomposition of $\v$.  The conditions $\langle\v,\u_1\rangle=1$ and $\v^2=6$ force $\v=\w+4\u_1$ with $\langle\u_1,\w\rangle=1$, so instead of the above decomposition ($\v=(\v-2\u_1)+2\u_1$), we use a different one, $\v=\w+\v''$ where $\v'':=2\u_1+2\u_1$.  Indeed, take the unique $\sigma_0$-stable object $T$ of class $\w$, and two non-isomorphic objects $E_1,E_2\in M^s_{\sigma_0}(2\u_1,2L')$.  Then by \cite[Lemmas 6.1-6.3]{CH15} the extensions of the form $$0\to E_1\oplus E_2\to E\to T\to 0$$ are $\sigma_+$-stable and move in a two-dimensional family contracted to the same point by $\WW$.  Varying $(E_1,E_2)\in (M^s_{\sigma_0}(2\u_1,2L')\times M^s_{\sigma_0}(2\u_1,2L'))\backslash\Delta$, where $\Delta$ is diagonal, sweeps out a contracted divisor in $M_{\sigma_+}(\v,L)$.

If $\v^2=4$, then the condition $\langle\v,\u_1\rangle=1$ forces $\v=\w+3\u_1$.  Take the unique $\sigma_0$-stable object $T$ of class $\w$, let $E_1\in M^s_{\sigma_0}(2\u_1,2L')$, and $E_2\in M^s_{\sigma_0}(\u_1)$.  We consider extensions of the form $$0\to 0\to E_1\oplus E_2\to E\to T \to 0.$$  These extensions move in a one-dimensional family by \cite[Lemma 6.3]{CH15} and are $\sigma_+$-stable by \cite[Lemma 6.1]{CH15}.  For fixed $E_i$, this curve of extensions is contracted by $\WW$, and varying the $E_i$ sweeps out a divisor.  As $\det(E_2)$ can be either $L'$ or $L'+K_X$, we get a divisorial contraction in each component as before.  

This concludes the proof of \cref{enum:IsotropicDivisorialContraction l=1 v^2>=3} for $\HH_\WW$ that falls into \cref{enum:IsotropicLatticeEffectiveExceptional,enum:IsotropicLatticeEffectiveSpherical} of \cref{Prop:isotropic lattice}.

For the second claim of the lemma for $\HH_\WW$ falling into \cref{enum:IsotropicLatticeEffectiveExceptional,enum:IsotropicLatticeEffectiveSpherical} of \cref{Prop:isotropic lattice}, we must consider $\v^2=1$ and $\v^2=2$, which occur when $\w$ is exceptional and spherical, respectively.  In the first case, we must have $\v=\w+\u_1$.  Letting $F\in M_{\sigma_0}^s(\u_1)$ and $T$ be the unique (up to $\otimes\OO_X(K_X)$) $\sigma_0$-stable exceptional object of class $\w$, we consider the unique non-trivial extension, $$0\to F\to E\to T\to 0.$$ Varying $F
\in M_{\sigma_0}^s(\u_1)$, we sweep out a non-contracted divisor in each of $M_{\sigma_+}(\v,L)$ and $M_{\sigma_+}(\v,L+K_X)$, giving \cref{enum:IsotropicDivisorialNonContraction l=1 v^2=1}.  The second case has been dealt with in the second statement of Lemma \ref{Lem:isotropic totally semistable divisorial contraction l=1}, giving \cref{enum:IsotropicDivisorialNonContraction l=1 v^2=2}.

Finally, we suppose that $\HH_{\WW}$ contains no effective spherical or exceptional class.  We note that since $\langle \v,\u\rangle=1$, $\HH_\WW$ must be even.  Indeed, it follows from $\langle\v,\u\rangle=1$ that $\HH_\WW=\Z\u+\Z\v$, and if $\v^2$ were odd, then $\frac{\v^2+1}{2}\u-\v$ would be an exceptional class, so $\v^2$ must be even.  But then $\HH_\WW$ must be even as well.  

We first prove \cref{enum:IsotropicDivisorialContraction l=1 v^2>=3} in this case.  So suppose that $\v^2\geq 3$.  Then $\HH_\WW$ being even gives $\v^2\geq 4$, and by \cref{Rem:IsotropicOrientation} we may write $\v=\v'+2\u_1$ as above to get a divisor swept out by contracted $\P^1$'s of extensions $$0\to E_1\to E\to E_2\to 0$$ with $E_1\in M_{\sigma_0}^s(2\u_1,2L')$ and $E_2\in M_{\sigma_0}^s(\v')$.  Note that if $\v^2>4$, so that $\v'^2>0$, $M_{\sigma_0}^s(\v')$ is non-empty by Proposition \ref{Prop: 1-1 case totally semistable and codim 1}, while if $\v^2=4$, so that $\v'^2=0$, then $\v'=\u_2$ and $M_{\sigma_0}^s(\u_2)\neq\varnothing$ because $C_{\WW}=P_{\HH}$ by \cref{Prop:isotropic lattice}. 
Either way, we may choose $E_2$ to have the appropriate determinant to give a divisorial contraction in each $M_{\sigma_+}(\v,L)$.  This completes the proof of \cref{enum:IsotropicDivisorialContraction l=1 v^2>=3} of the Lemma.

The final option to consider is $\v^2=2$, in which case the only possibility for a destabilizing exact sequence is $$0\to E_1\to E\to E_2\to 0$$ for $E_i\in M^s_{\sigma_0}(\u_i)$, which span a divisor which is not contracted as $\ext^1(E_2,E_1)=1$.
\end{proof}
Now we prove the converse to \cref{enum:IsotropicCodimOne-<v.w>=0 l=1} of \cref{Prop: 1-1 case totally semistable and codim 1}.
\begin{Lem}\label{Lem:isotropic divisorial 1-1 2}
Suppose that $\HH_\WW$ is isotropic with $\ell(\u_1)=\ell(\u_2)=1$ and $\langle \v,\w\rangle=0$ for an effective spherical or exceptional class $\w\in\HH_\WW$.  
\begin{enumerate}
    \item\label{enum:IsotropicDivisorialContraction l=1 <v.w>=0} If $\w^2=-2$ and either 
    \begin{enumerate}
        \item $\v^2>2$, or
        \item $\v^2=2$ and $\langle\v,\u_1\rangle>1$,
    \end{enumerate} then $\WW$ induces a divisorial contraction on $M_{\sigma_+}(\v,L)$.
    \item If either 
    \begin{enumerate}
        \item\label{enum:IsotropicDivisorialNonContraction l=1 <v.w>=0 v^2=2}$\w^2=-2$, $\v^2=2$, $\langle\v,\u_1\rangle=1$, and $L \equiv D+(\frac{\rk \v}{2}+1)K_X \pmod2$, or
        \item\label{enum:IsotropicDivisorialNonContraction l=1 <v.w>=0 w^2=-1} $\w^2=-1$,
    \end{enumerate}
    then $\codim(M_{\sigma_+}(\v,L)\backslash M^s_{\sigma_0}(\v,L))=1$ but this divisor is not contracted. 
\end{enumerate}
\end{Lem}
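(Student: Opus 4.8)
The plan is to realize the entire strictly $\sigma_0$-semistable locus by a single family of extensions, and then to decide contractibility from one $\Ext^1$-computation. First I would record that, by \cref{Prop:isotropic lattice}, $\HH_\WW=\Z\w+\Z\u_1$, and a direct check in this rank-two lattice shows that the only classes of square $-1$ (resp. square $-2$) are $\pm\w$; since $\w$ is effective and $\langle\v,\w\rangle=0$, the vector $\v$ is automatically minimal. Setting $\a:=\v-\w$, one computes $\a^2=\v^2+\w^2$ and $\langle\a,\w\rangle=-\w^2>0$, so $\a$ is minimal as well. For the $\sigma_0$-stable object $T$ of class $\w$ and any $F\in M^s_{\sigma_0}(\a)$, stability together with Serre duality gives $\Hom(F,T)=\Hom(T,F)=\Ext^2(F,T)=0$ (the last because $T\ncong F(K_X)$ are $\sigma_0$-stable of the same phase and distinct class), whence $\ext^1(F,T)=\langle\a,\w\rangle=-\w^2$. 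This single number $-\w^2$ is what controls everything.

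For the spherical case $\w^2=-2$ of part (1), I would first verify $M^s_{\sigma_0}(\a)\neq\varnothing$. When $\v^2>2$ we have $\a^2=\v^2-2>0$, and minimality of $\a$ with $\langle\a,\w\rangle=2>0$ rules out every totally-semistable condition of \cref{Prop: 1-1 case totally semistable and codim 1}, so $M^s_{\sigma_0}(\a)\neq\varnothing$; when $\v^2=2$ and $\langle\v,\u_1\rangle>1$, writing $\v$ in the basis $\{\w,\u_1\}$ forces $\v=\u_1+\w$ and $\a=\u_1$, and here $M^s_{\sigma_0}(\u_1)=M_{\sigma_0}(\u_1)\neq\varnothing$. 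Since now $\ext^1(F,T)=2$, there is a $\P^1=\P(\Ext^1(F,T))$ of mutually $\sigma_0$-S-equivalent extensions of $T$ by $F$ (in the direction dictated by \cref{Rem:IsotropicOrientation}), all $\sigma_+$-stable by \cite[Lemma 6.9]{BM14b}. A dimension count, $1+\dim M_{\sigma_0}(\a)=\a^2+2=\v^2=\dim M_{\sigma_+}(\v,L)-1$, shows these objects sweep out a divisor, each of whose $\P^1$'s is contracted by $\pi^+$; as the relative Picard rank is then one, this is the whole contracted locus, giving the divisorial contraction.

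Part (2)(a) I would reduce to an already-proved statement: the equations $\v^2=2$ and $\langle\v,\u_1\rangle=1$ force $\v=2\u_1+\w=\u_1+\u_2$ with $\langle\u_1,\w\rangle=1$, which is exactly the configuration of \cref{Lem:isotropic totally semistable divisorial contraction l=1}. The determinant $L\equiv D+(\tfrac{\rk\v}{2}+1)K_X\pmod 2$ is precisely the ``other'' determinant treated in the second half of that lemma, for which the strictly $\sigma_0$-semistable locus is a non-contracted divisor; so (2)(a) follows directly. For part (2)(b), $\w^2=-1$, I would run the same extension construction: here $\a^2=\v^2-1\geq 0$ and $\langle\a,\w\rangle=1>0$, so $M^s_{\sigma_0}(\a)\neq\varnothing$ (by \cref{Prop: 1-1 case totally semistable and codim 1} when $\a^2>0$, and directly when $\a=\u_1$), but now $\ext^1(F,T)=1$, so each $\sigma_0$-S-equivalence class contains a \emph{unique} $\sigma_+$-stable extension. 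Hence varying $F$ sweeps out a divisor (again the dimension count gives codimension one, which by the proof of \cref{Prop: 1-1 case totally semistable and codim 1} is exactly $M_{\sigma_+}(\v,L)\setminus M^s_{\sigma_0}(\v,L)$), but no curve is contracted, so the divisor is not contracted.

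The main obstacle I anticipate is uniformly controlling the boundary cases where $\a$ degenerates to the isotropic class $\u_1$ (so that $M_{\sigma_0}(\a)$ has the smaller dimension $1$ rather than $\a^2+1$ with $\a^2>0$), since there the nonemptiness of $M^s_{\sigma_0}(\a)$ must be argued from \cref{Prop:isotropic lattice} rather than from \cref{Prop: 1-1 case totally semistable and codim 1}. The second delicate point is the determinant bookkeeping that cleanly separates the contracted spherical case from the non-contracted case (2)(a): one must track $\det$ through the extension to land in the prescribed component $M_{\sigma_+}(\v,L)$, and it is the value of $\langle\v,\u_1\rangle$ (which pins down $\v=\u_1+\w$ versus $\v=2\u_1+\w$) that dictates which phenomenon occurs.
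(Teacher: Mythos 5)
Your proposal is correct and takes essentially the same route as the paper's proof: both set $\a=\v-\w$, obtain nonemptiness of $M^s_{\sigma_0}(\a)$ from \cref{Prop: 1-1 case totally semistable and codim 1} when $\a^2>0$ (and from \cref{Prop:isotropic lattice} when $\a=k\u_1$), and decide contraction versus non-contraction from the single number $\ext^1=\langle\a,\w\rangle=-\w^2$, with the degenerate case $\v^2=2$, $\langle\v,\u_1\rangle=1$ deferred to \cref{Lem:isotropic totally semistable divisorial contraction l=1}. The only cosmetic differences are that you organize by the lemma's case structure rather than by $\a^2>0$ versus $\a^2=0$, and that you inline the $\w^2=-1$, $\v=\w+\u_1$ argument which the paper instead cites from \cref{Lem:isotropic divisorial l=1 1}.
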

\begin{proof}
Consider the Mukai vector $\a:=\v-\w$.  Then $$\a^2=\v^2+\w^2>\w^2, \langle \a,\v \rangle=\v^2>0,
\mbox{ and }\langle \a,\w\rangle=-\w^2.$$  If $\a^2>0$, then since $\langle \a,\w\rangle>0$, $M^s_{\sigma_0}(\a)\neq\varnothing$ by Proposition \ref{Prop: 1-1 case totally semistable and codim 1}, and we consider the $\sigma_+$-stable extensions $$0\to F\to E\to G\to 0,$$ where $F\in M^s_{\sigma_0}(\a)$ and $G\in M_{\sigma_0}^s(\w)$.  If $\w^2=-2$, then $G$ is the unique $\sigma_0$-stable spherical object $T$ of class $\w$, and by stability $\ext^1(T,F)=\langle\w,\a\rangle=2$, so these extensions span a $\P^1$ which gets contracted by $\WW$.  If $\w^2=-1$, then $G$ is $T$ or $T(K_X)$, and each choice of $G$ gives a unique such non-trivial extension, as $\ext^1(G,F)=\langle\a,\w\rangle=1$ by stability.  A dimension count immediately gives that varying $F$ in $M_{\sigma_0}^s(\a)$ spans a divisor which gets contracted if $\w^2=-2$ but does not if $\w^2=-1$.  Note that we may choose $F$ to have either determinant appropriately to give the claimed behavior for each $M_{\sigma_+}(\v,L)$.

It remains to consider when $\a^2=0$, i.e $\v^2=-\w^2$.  The fact that $\a$ is an effective isotropic class such that $\langle \a,\w\rangle>0$ implies that $\a=k\u_1$.  If $\w^2=-1$, then $k=\langle \u_1,\w\rangle=1$, in which case $\v=\w+\u_1$ and the claim follows from \cref{enum:IsotropicDivisorialNonContraction l=1 v^2=1} of Lemma \ref{Lem:isotropic divisorial l=1 1}.  On the other hand, if $\w^2=-2$, then $$2=-\w^2=\langle \a,\w\rangle=\langle k\u_1,\w\rangle=k\langle \u_1,\w\rangle,$$ so either $\v=\w+2\u_1$ and $\langle \u_1,\w\rangle=1$ or $\v=\w+\u_1$ and $\langle \u_1,\w\rangle=2$.  In the first case, $\v^2=2$ and $\langle\v,\u_1\rangle=1$, as in \cref{enum:IsotropicDivisorialNonContraction l=1 <v.w>=0 v^2=2}, so the claim follows from \cref{Lem:isotropic totally semistable divisorial contraction l=1}.  In the second case, we consider the $\P^1$ worth of extensions $$0\to F\to E\to T\to 0$$ with $F\in M^s_{\sigma_0}(\u_1)$ and $T\in M^s_{\sigma_0}(\w)$ the unique $\sigma_0$-stable spherical object of class $\w$.  Varying $F\in M_{\sigma_0}^s(\u_1,L')$ (or $M_{\sigma_0}^s(\u_1,L'+K_X)$), these extensions span a contracted divisor in each $M_{\sigma_+}(\v,L)$.
\end{proof}

\subsection{Non-minimal case}\label{subsec:non-minimal case}

Finally, we consider the case that $\langle\v,\w\rangle<0$.  As usual, we follow the orientation described in \cref{Rem:IsotropicOrientation} so that $\phi^+(\w)>\phi^+(\v)$, and hence
$\phi^-(\w)<\phi^-(\v)$, and denote by $T$ the unique (up to $-\otimes\OO_X(K_X)$) $\sigma_0$-stable object of class $\w$.
We set 
$\TT_1:=\langle T,T(K_X) \rangle$ if $\w^2=-1$ (resp. $\TT_1:=\langle T\rangle$ if $\w^2=-2$) and
$\FF_1$ is the full subcategory of 
$\PP(1)$ generated by $\sigma_0$-stable objects $E$
with $\phi^+(E)<\phi^+(T)$.
We also let
$\TT_1^*$ be the full subcategory of 
$\PP(1)$ generated by $\sigma_0$-stable objects $E$
with $\phi^-(E)>\phi^-(T)$
and
$\FF_1^*:=\langle T,T(K_X) \rangle$ if $\w^2=-1$ (resp. $\FF_1^*:=\langle T\rangle$ if $\w^2=-2$).
We set $\AA_0=\PP(1)$,
$\AA_1=\langle \TT_1[-1],\FF_1 \rangle$ and
$\AA_1^*:=\langle \TT_1^*,\FF_1^*[1] \rangle$.
 Let $\Phi:\Db(X) \to \Db(X)$ be the equivalence
defined by $T$ as in \cref{eqn:spherical reflection,eqn:weakly spherical reflection}.
Then we have equivalences
\begin{equation}
\begin{split}
\Phi:& \AA_0 \isomor \AA_1\\
\Phi^{-1}:&\AA_0 \isomor \AA_1^*.
\end{split}
\end{equation}
The proof is identical to that for the non-isotropic case.

Write $n:=-2\langle \v,\w \rangle$ if $\w^2=-1$ (resp. $n:=-\langle \v,\w\rangle$ if $\w^2=-2$), and assume that $n>0$.
Then we have isomorphisms
\begin{equation}\label{eq:iso:Phi}
\begin{matrix}
\Phi:& M_{\sigma_+}(\v)& \isomor& M_{\sigma_-}(\v-n\w),\\
\Phi:& M_{\sigma_+}(\v-n\w)& \isomor& M_{\sigma_-}(\v),\\
\end{matrix}
\end{equation}
where $\v-n\w$ is minimal and $\langle\v-n\w,\w\rangle>0$.  By \cref{Prop:LGU walls of low codimension,Prop: 1-1 case totally semistable and codim 1} and the fact that $\langle\v,\w\rangle\neq0$, we have a birational map
\begin{equation}\label{eqn:birational map for v-nw}M_{\sigma_-}(\v-n\w) \dashrightarrow M_{\sigma_+}(\v-n\w)
\end{equation}
unless $\langle\v-n\w,\u\rangle=1$ and $\ell(\u)=2$.  In this case, $\WW$ is totally semistable for $\v-n\w$ as well and induces a divisorial contraction.  We note that the S-equivalence class of $E\in M_{\sigma_+}(\v)$ is determined by that of $\Phi(E)\in M_{\sigma_+}(\v-n\w)$, as $E$ is an extension of $\Phi(E)$ and $(T\oplus T(K_X))^{\oplus n}$ if $\w^2=-1$ (resp. $T^{\oplus n}$ if $\w^2=-2$), so it follows that $\WW$ induces a divisorial contraction for $M_{\sigma_+}(\v)$ as well.  

Otherwise, precomposing and postcomposing the birational map in \eqref{eqn:birational map for v-nw} with the isomorphisms from \eqref{eq:iso:Phi}, we get a birational map
\begin{equation}\label{eqn:PrePostComposition}
\Phi \circ \Phi:M_{\sigma_+}(\v) \dashrightarrow M_{\sigma_-}(\v).
\end{equation}
As the birational map in \eqref{eqn:birational map for v-nw} is isomorphic in codimension one unless 
\begin{enumerate}
    
    \item$\langle\v-n\w,\u\rangle=2=\ell(\u)$; or 
    \item $\langle\v-n\w,\u\rangle=1=\ell(\u),$
\end{enumerate}
the same holds true for the birational map in \eqref{eqn:PrePostComposition}.  In either of these cases, crossing $\WW$ induces a divisorial contraction on $M_{\sigma_+}(\v-n\w)$ by \cref{Lem:isotropic divisorial l=1 1,Lem: isotropic divisorial l=2 3}, so the same holds true for $M_{\sigma_+}(\v)$ by the above discussion regarding S-equivalence in the case $\langle\v-n\w,\u\rangle=1$, $\ell(\u)=2$.

\begin{Rem}\label{Rem:OrthogonalIsomorphism}As in the non-isotropic case, if $\langle \v, \w \rangle=0$, then
$\Phi$ again induces an isomorphism
$$\Phi:M_{\sigma_+}(\v) \to M_{\sigma_-}(\v).$$
\end{Rem}

\begin{proof}[Proof of \cref{Prop:isotropic-classification}]
The proposition follows from the above discussion and \cref{Lem:P1FibrationExceptional,Lem:P1FibrationSpherical,Lem: isotropic divisorial l=2 2,Lem:IsotropicNoncontractedDivisor l=2,Lem: isotropic divisorial l=2 3,Lem:isotropic totally semistable divisorial contraction l=1,Lem:isotropic divisorial l=1 1,Lem:isotropic divisorial 1-1 2}.  The only point that needs mentioning is that the restrictions on $\v^2$ in these lemmas can be equivalently rephrased in terms of either the pairing with $\w$ or $\u$ as in the statement of \cref{Prop:isotropic-classification}.
\end{proof}

\begin{Ex}\label{Ex:ConfusingSmallContraction}
An important example arises in the context of \cref{Rem:OrthogonalIsomorphism}.  Suppose that $\langle\v,\w\rangle=0$ for the effective exceptional class $\w$ in the isotropic lattice $\HH_\WW$.  Then we have seen that if $\langle\v,\u\rangle>\ell(\u)$ for all primitive isotropic $\u\in C_\WW\cap\HH_\WW$, then $M_{\sigma_+}(\v,L)\setminus M_{\sigma_0}^s(\v,L)$ is a divisor that does not get contracted.  We will see in the next section (specifically \cref{prop:flops}) that $\WW$ nevertheless induces a small contraction if $\v$ is primitive.  

This is an interesting situation.  On the one hand, we know from \cref{Rem:OrthogonalIsomorphism} that $\Phi$ induces an isomorphism between $M_{\sigma_+}(\v)$ and $M_{\sigma_-}(\v)$.  Moreover, from its definition, $\Phi$ restricts to the identity on the common open subset $M_{\sigma_0}^s(\v)\subset M_{\sigma_+}(\v)\cap M_{\sigma_-}(\v)$.  On the other hand, as $\pi^+$ is a small contraction, we may flop it to get another minimal model $\tilde{M}$ of $M_{\sigma_+}(\v)$, but $\tilde{M}\ncong M_{\sigma_-}(\v)$.  

This phenomenon, which is present on any Enriques surface, leads one to wonder whether or not the minimal model $\tilde{M}$ can nevertheless be obtained by Bridgeland wall-crossing.  It may be possible to reach $\tilde{M}$ by crossing a different wall bounding the chamber containing $\sigma_+$, or this may be a counter-example to the conjecture that all minimal models of $M_\sigma(\v)$, for $\sigma\in\Stabd(X)$ generic, are isomorphic to $M_\tau(\v)$ for some $\tau\in\Stabd(X)$.
\end{Ex}

\section{Flopping walls}\label{Sec:FloppingWalls}
In \crefrange{Sec:TotallySemistable-non-isotropic}{Sec:Isotropic walls} we have given necessary and sufficient criteria for the wall $\WW$ to be totally semistable, to induce a $\P^1$ fibration, and to induce a divisorial contraction.  In this section, we discuss the remaining possibility for the contraction morphism $\pi^+$.  That is, if $\WW$ does not induce a fibration or a divisorial contraction, then it must either induce a small contraction, that is, the exceptional locus of $\pi^+$ must have codimension at least two, or $\WW$ is fake wall so that $\pi^+$ does not contract any curves.  In the next result, we give precise criteria for when $\WW$ is a genuine wall inducing a small contraction, at least for $\v$ primitive.  It is the only result in our work so far that has assumed that $\v$ is primitive.
\begin{Prop} \label{prop:flops}
Assume that $\v$ is primitive and that $\WW$ induces neither a divisorial contraction nor a $\P^1$-fibration.  If either
\begin{enumerate}
\item \label{enum-prop:sum2positive}
$\v^2\geq 3$ and $\v$ can be written as a sum 
$\v = \a_1 +\a_2$ with $\a_i\in P_\HH$ such that $L\equiv \frac{r}{2}K_X\pmod 2$ if for each $i$, $\a_i^2=0$ and $\ell(\a_i)=2$; or 
\item\label{enum-prop:exceptional} there exists an exceptional class $\w$ and either
\begin{enumerate}
\item\label{enum-prop:exceptionalflop1}
$0< \langle  \w,\v\rangle\leq\frac{\v^2}{2}$, or
\item\label{enum-prop:exceptionalflop2}
$\langle \v,\w\rangle=0$ and $\v^2\geq 3$; or
\end{enumerate}
\item\label{enum-prop:spherical} there exists a spherical class $\w$ and either
\begin{enumerate}
\item\label{enum-prop:sphericalflop1}
$0 < \langle \w, \v\rangle < \frac{\v^2}2$, or
\item\label{enum-prop:sphericalflop2}
$\langle \w,\v\rangle=\frac{\v^2}{2}$ and $\v-\w$ is a spherical class,
\end{enumerate}
\end{enumerate}
then $\WW$ induces a small contraction on $M_{\sigma_+}(\v,L)$.
\end{Prop}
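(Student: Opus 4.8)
The plan is to establish the small contraction by showing that in each listed case the exceptional locus of $\pi^+$ has codimension at least two, while simultaneously verifying that $\pi^+$ does in fact contract some curve (so that $\WW$ is a genuine wall and not fake). The overall strategy is to reduce to the corresponding statements for minimal Mukai vectors, exactly as was done in the divisorial and $\P^1$-fibration analyses, using \cref{Prop:NonMinimalIsomorphism,Prop:OrthgonalIsomorphism 2,Prop:CompositionSphericalExceptional} in the non-isotropic case and the discussion of \cref{subsec:non-minimal case} in the isotropic case. Since these Fourier--Mukai equivalences are isomorphisms in codimension one away from the orthogonal hyperplanes $\w^\perp$, they preserve both the codimension of the exceptional locus and the property of being a small contraction, so it suffices to treat the minimal case.

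For the codimension estimates, the key tool is \cref{Prop:HN codim}, which bounds $\codim\FF(\a_1,\dots,\a_s)^o$ from below by $\sum_i(\a_i^2-\dim\MM_{\sigma_-}(\a_i))+\sum_{i<j}\langle\a_i,\a_j\rangle$. I would run through the possible HN-decompositions $\v=\sum_i\a_i$ with respect to $\sigma_-$, splitting into positive-square factors and negative-square (spherical/exceptional) factors just as in the proofs of \cref{Lem:non-isotropic no totally semistable wall,Prop:LGU walls of low codimension,Prop: 1-1 case totally semistable and codim 1}. The point is that once we are in the complementary regime to the totally semistable and divisorial cases classified earlier, these estimates force $\codim\FF(\a_1,\dots,\a_s)^o\geq 2$ for every decomposition, so the total strictly $\sigma_0$-semistable locus has codimension $\geq 2$. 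Concretely: in case \ref{enum-prop:exceptional} with $0<\langle\w,\v\rangle\leq\frac{\v^2}{2}$ and in case \ref{enum-prop:spherical} with $0<\langle\w,\v\rangle<\frac{\v^2}{2}$, the pairing being strictly positive and bounded away from the divisorial thresholds yields the strict inequality $\langle\a_i,\a_j\rangle\geq 2$ contributions needed; the boundary subcases \ref{enum-prop:exceptionalflop2}, \ref{enum-prop:sphericalflop2}, and the isotropic case \ref{enum-prop:sum2positive} require the hypotheses $\v^2\geq 3$ (resp.\ the splitting $\v=\a_1+\a_2$ into positive classes, resp.\ $\v-\w$ spherical) precisely to rule out the codimension-one decompositions identified in \cref{Lem:NonisotropicDivisorialContraction,Lem:ExceptionalDivisorialNonContraction} and the isotropic divisorial lemmas.

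Having the codimension bound, I would then invoke the dichotomy recorded after \cref{Thm:WallContraction}: a non-fake wall with $\codim(M_{\sigma_\pm}(\v)\setminus M^s_{\sigma_0}(\v))\geq 2$ is necessarily a flopping (small) contraction by \cite[Thm.~1.4(b)]{BM14a} and \cite[Thm.~11.3]{Nue14b}. So the remaining task is to exhibit a contracted curve, i.e.\ to show $\WW$ is not fake. For this I would produce, in each case, an explicit family of $\sigma_+$-stable objects that become S-equivalent across $\WW$: for an exceptional or spherical $\w$ with $\langle\v,\w\rangle>0$, one forms extensions of a $\sigma_0$-stable object of class $\v-k\w$ by copies of $T$ (and $T(K_X)$), using that $\ext^1$ is computed by the relevant pairing and that such extensions are $\sigma_+$-stable by \cite[Lemma~6.9]{BM14b} or \cite[Lemmas~6.1--6.3]{CH15}; by the codimension count these sweep out a locus of codimension $\geq 2$ whose fibers over $\overline{M}_+$ are positive-dimensional. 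In case \ref{enum-prop:sum2positive} one instead uses the decomposition $\v=\a_1+\a_2$ into positive isotropic or positive-square classes and the same extension construction over $\MM_{\sigma_0}(\a_1)\times\MM_{\sigma_0}(\a_2)$.

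The main obstacle I expect is the careful bookkeeping at the boundary subcases \ref{enum-prop:exceptionalflop2} and \ref{enum-prop:sphericalflop2}, where $\langle\w,\v\rangle$ equals the critical value $0$ or $\frac{\v^2}{2}$: here the naive estimate is tight, and one must use the precise parity constraints on $\HH$ (as in \cref{Rem:Even and Odd pairings}) together with the excluded hypotheses (e.g.\ $\v^2\geq 3$, or $\v-\w$ spherical rather than isotropic) to upgrade a codimension-one bound to codimension two and to confirm that a curve really is contracted rather than the locus being a non-contracted divisor of the type appearing in \cref{Lem:ExceptionalDivisorialNonContraction}. Distinguishing the genuine small contraction from that subtle non-contracted-divisor phenomenon, and verifying it across the determinant decomposition $M_{\sigma_+}(\v,L)\sqcup M_{\sigma_+}(\v,L+K_X)$, is where the delicate work lies.
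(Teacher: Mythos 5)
There is a genuine gap in your proposed architecture, and it occurs exactly at the point you flag as delicate. Your plan is to prove that the strictly $\sigma_0$-semistable locus $M_{\sigma_+}(\v,L)\setminus M^s_{\sigma_0}(\v,L)$ has codimension at least two in every listed case, and then invoke the dichotomy that a non-fake wall with this codimension bound is flopping. But that codimension claim is \emph{false} in case \ref{enum-prop:exceptionalflop2}: when $\w$ is exceptional with $\langle\v,\w\rangle=0$ and $\v^2\geq 3$, \cref{Lem:ExceptionalDivisorialNonContraction} shows that (for $\v$ minimal) the strictly $\sigma_0$-semistable locus is precisely a \emph{divisor} $D_{\sigma_+}(\v)$, swept out by the unique extensions $0\to T\to E\to F\to 0$ with $F\in M^s_{\sigma_0}(\v-\w)$ — it simply is not contracted by $\pi^+$. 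The same phenomenon occurs in the isotropic setting (\cref{Lem:IsotropicNoncontractedDivisor l=2}, \cref{Ex:ConfusingSmallContraction}). So your proposed fix of ``upgrading a codimension-one bound to codimension two'' via parity constraints cannot succeed: the bound is genuinely one, because a wall being a small contraction constrains the exceptional locus of $\pi^+$, not the strictly semistable locus, and these two loci have different codimensions here precisely because the semistable divisor is not contracted. The dichotomy you cite (non-fake $+$ codim $\geq 2$ $\Rightarrow$ flopping) goes only one way and does not apply in this case.

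The repair is to exploit the standing hypothesis more aggressively, which is what the paper does: since $\WW$ is assumed to induce neither a divisorial contraction nor a $\P^1$-fibration, the only remaining alternatives are a fake wall or a small contraction, so it \emph{suffices} to exhibit a single positive-dimensional family of $\sigma_+$-stable objects of class $\v$ that become S-equivalent with respect to $\sigma_0$. This is exactly your step of producing contracted curves — which is correct and, together with your reduction to minimal Mukai vectors, constitutes essentially the whole proof. In case \ref{enum-prop:sum2positive} one uses \cite[Lemma 9.2, Lemma 9.3]{BM14b} to get $\ext^1(A_2,A_1)\geq 2$ and hence a contracted projective space of extensions; in case \ref{enum-prop:exceptionalflop1} one reduces via the parallelogram argument either to case \ref{enum-prop:sum2positive} or to extensions of $F\in M^s_{\sigma_0}(\v-\w)$ by $T$ with $\ext^1(F,T)\geq 2$; in case \ref{enum-prop:exceptionalflop2} one contracts the $\P^1\times\P^1$ of extensions of $F\in M^s_{\sigma_0}(\v-2\w)$ by $T\oplus T(K_X)$ (living in codimension two, off the non-contracted divisor); and case \ref{enum-prop:spherical} follows \cite[Lemma 9.1, case (b)]{BM14b} after noting that a $(-1)$-class in the parallelogram throws you back into case \ref{enum-prop:exceptional}. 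Your codimension estimates via \cref{Prop:HN codim} are thus both unnecessary for the conclusion and unprovable as stated.
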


\begin{proof}
Note that it suffices to show that some positive dimensional subvariety of $\sigma_+$-stable objects becomes S-equivalent with respect to $\sigma_0$ and thus gets contracted by $\pi^+$. 

We consider \cref{enum-prop:sum2positive} first, so $\v=\a_1+\a_2$ with $\a_i\in P_{\HH}$.  Using \cite[Lemma 9.2]{BM14b}, we may assume that the parallelogram with vertices $0,\a_1,\v,\a_2$ does not contain any lattice point other than its vertices.  In particular, the $\a_i$ are primitive, and without loss of generality, we may assume that $\phi^+(\a_1)<\phi^+(\a_2)$.  By Theorem \ref{Thm:exist:nodal}, there exist $\sigma_+$-stable objects $A_i$ with $\v(A_i)=\a_i$.  If $\a_i^2>0$ for each $i$, then the signature of $\HH$ forces $\langle \a_1,\a_2\rangle\geq 2$ so that $\ext^1(A_2,A_1)\geq 2$.  If, say, $\a_1^2=0$, then by part \ref{thm:Classification,Divisorial} of Theorem \ref{classification of walls} and the assumptions that $\v^2\geq 3$ and that $\WW$ does not induce a divisorial contraction, we must have either $\ell(\a_1)=2$ and $\langle \v,\a_1\rangle \geq 3$ or $\ell(\a_1)=1$ and $\langle \v,\a_1\rangle\geq 2$.  So either way $\langle \a_2,\a_1\rangle\geq 2$ and again $\ext^1(A_2,A_1)\geq 2$.  By \cite[Lemma 9.3]{BM14b}, any nontrivial extension $$0\to A_1\to E\to A_2\to 0$$ is $\sigma_+$-stable of class $\v$.  All such extensions are non-isomorphic but S-equivalent with respect to $\sigma_0$, giving a projective space of positive dimension contracted by $\pi^+$.  Moreover, in all of the above cases, we may choose $A_1$ or $A_2$ to have the appropriate determinant so that $E$ can have either of the two possible determinants for $\v$, except when $\a_1^2=\a_2^2=0$ and $\ell(\a_1)=\ell(\a_2)=2$.   

When $\a_1^2=\a_2^2=0$ and $\ell(\a_1)=\ell(\a_2)=2$, then the argument above produces a projective space of positive dimension dimension contracted by $\pi^+$ if $$L=\det(A_1)+\det(A_2)\equiv\frac{\rk(A_1)}{2}K_X+\frac{\rk(A_2)}{2}K_X=\frac{r}{2}K_X\pmod 2,$$ as claimed.  Observe further that if $\HH$ contains an exceptional or spherical class $\w$, then $\a_2=\a_1-2\frac{\langle\a_1,\w\rangle}{\w^2}\w$, where we note that if $\w^2=-2$ then $\langle\a_1,\w\rangle$ is even by \cref{Rem:Even and Odd pairings}, so $$\v=\a_1+\a_2=2\a_1-2\frac{\langle\a_1,\w\rangle}{\w^2}\w=2\left(\a_1-\frac{\langle\a_1,\w\rangle}{\w^2}\w\right)$$ is not primitive, contary to our assumptions.  Thus this possibility only occurs in \cref{enum:nonegativeclasses} of \cref{Prop:lattice classification}.

We move on to \cref{enum-prop:exceptionalflop1}.  Assume first that $\v$ is minimal.  Then $\langle \v,\w \rangle>0$ means
$\w$ is effective.  Since $(\v-\w)^2 \geq -1$ and $\langle\v,\w\rangle\leq\frac{\v^2}{2}<\v^2$, we see that $\langle\v,\v-\w\rangle>0$ so that $\v-\w$ must be effective.  From the assumptions, we observe that $\langle \w, \v-\w\rangle=\langle \w,\v\rangle+1\geq 2$.  As in the proof of \cite[Proposition 9.1]{BM14b}, we consider the parallelogram $\mathbf{P}$ with vertices $0,\w,\v,\v-\w$ and the function $f(\a)=\a^2$ on $\mathbf{P}$, and the same argument as given there shows that $f(\a)>-1$ unless $\a\in\{\w,\v-\w\}$.  It follows that if $\mathbf{P}$ contains any lattice point $\a$ other than its vertices, then both $\a^2\geq 0$ and $(\v-\a)^2\geq 0$.  So $\v$ is the sum of two positive classes, and we are in \cref{enum-prop:sum2positive}.  It is easy to see that indeed $\v$ satisfies the extra condition $\v^2\geq 3$ if such an $\a$ is isotropic.  We may therefore assume that no such lattice points exist.  Let $T$ be a $\sigma_+$-stable object of class $\w$ and $F$ be any $\sigma_+$-stable object of class $\v-\w$.  Then, assuming $\phi^+(\w)<\phi^+(\v-\w)$ without loss of generality, we get $\ext^1(F,T)\geq2$, so there is a positive dimensional projective space worth of $\sigma_+$-stable extensions that are S-equivalent with respect to $\sigma_0$ and thus get contracted by $\pi^+$.

Now assume that $\v$ is not minimal.  Consider the composition of spherical and weakly-spherical twists as in Proposition \ref{Prop:NonMinimalIsomorphism} or Proposition \ref{Prop:CompositionSphericalExceptional}, and denote it by $\Phi$.  Then $\Phi_*(\v)$ is minimal, and as $\langle \Phi_*(\w),\Phi_*(\v)\rangle=\langle \w,\v\rangle$, we see that $\Phi_*(\w)$ is an effective exceptional class and satisfies the same inequality in the hypothesis of \ref{enum-prop:exceptionalflop1} for $\Phi_*(\v)$ instead $\v$.  Thus $\Phi_*(\v-\w)$ is effective and satisfies $\Phi_*(\v-\w)^2\geq -1$.  Moreover, depending on the parity of the index of the chamber occupied by $\v$, we either get $M_{\sigma_\pm}(\v)\cong M_{\sigma_\pm}(\Phi_*(\v))$ or $M_{\sigma_{\pm}}(\v)\cong M_{\sigma_\mp}(\Phi_*(\v))$, and the S-equivalence class of $E\in M_{\sigma_+}(\v)$ is determined by that of $\Phi(E)\in M_{\sigma_\pm}(\Phi_*(\v))$, respectively.  Thus the result follows from the work of the previous paragraph.

Now let us consider \cref{enum-prop:exceptionalflop2}.  We first assume that $\v$ is minimal.  As $\v^2\geq3$ by assumption, $(\v-2\w)^2\geq-1$, and since $\langle \v,\w\rangle=0$ and $\langle\v,\v-2\w\rangle=\v^2>0$, we may assume that both $\w$ and $\v-2\w$ are effective.  Note also that $\langle \w,\v-2\w\rangle=2$.  Let $T,T(K_X)$ be the two $\sigma_+$-stable exceptional objects of class $\w$ and let $F$ be a $\sigma_+$-stable object of class $\v-2\w$.  As in the previous cases, we may assume that the parallelogram with vertices $0,\w,\v,\v-\w$ has no additional lattices points so that therefore the parallelogram with vertices $0,2\w,\v,\v-2\w$ has no lattices points other than $0,\w,2\w,\v,\v-\w,\v-2\w$.  Without loss of generality we may assume that $\phi^+(\w)>\phi^+(\v)$.  Then for any extension $$0\to F\to E\to T\oplus T(K_X)\to 0$$ corresponding to non-zero extensions in each of $\Ext^1(T,F)$ and $\Ext^1(T(K_X),F)$, $E$ satisfies $$\Hom(T,E)=\Hom(T(K_X),E)=0.$$  It follows that $E$ is $\sigma_+$-stable of class $\v$.  Indeed, if not, then the class of the maximal destabilizing subobject $A$ would satisfy $\phi^+(\v(A))>\phi^+(\v)$ and thus must either be $\w$ or $2\w$.  But then we would get $\Hom(T,E)\neq 0$ or $\Hom(T(K_X),E)\neq 0$, a contradiction.  Thus we get a $\P^1\times\P^1$ worth of non-isomorphic $\sigma_+$-stable objects of class $\v$ that gets contracted by $\pi^+$.  If $\v$ is not minimal, then as in \cref{enum-prop:exceptionalflop1}, we may apply the composition of spherical and weakly-spherical twists $\Phi$ to reduce to the minimal case.

Finally, we move on to \cref{enum-prop:spherical} and deal with both subcases at the same time.  The proof proceeds exactly as in \cite[Lemma 9.1, case (b)]{BM14b} persuant to the following remark: if there is a lattice point $\a$ in the parallelogram with vertices $0,\w,\v,\v-\w$ that satisfies $\a^2=-1$, then we are in \cref{enum-prop:exceptional}, which we have covered already.  The argument of \cite[Lemma 9.1, case (b)]{BM14b} then carries through without change.
\end{proof}

Now we prove the converse to Proposition \ref{prop:flops}, namely that if $\HH_\WW$ does not fall into any of the above mentioned cases, then $\WW$ is not a genuine wall.
\begin{Prop}\label{prop: fake or non-walls}
Assume that $\v$ is primitive and that $\WW$ induces neither a divisorial contraction nor a $\P^1$-fibration.  Assume further that we are not in  \crefrange{enum-prop:sum2positive}{enum-prop:spherical} of Proposition \ref{prop:flops}.  Then $\WW$ is either a fake wall, or not a wall at all.
\end{Prop}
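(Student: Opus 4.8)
The plan is to establish the statement in its direct form by showing that the contraction $\pi^+$ collapses nothing. The main leverage is the criterion recorded just after \cref{Thm:WallContraction}: a non-fake wall for which the strictly $\sigma_0$-semistable locus $Z:=M_{\sigma_+}(\v,L)\setminus M_{\sigma_0}^s(\v,L)$ has codimension at least two is automatically a flopping wall. So the strategy is to prove that, once the divisorial, fibration, and flop cases are all excluded, the locus $Z$ is either empty—in which case $M_{\sigma_+}(\v,L)=M_{\sigma_0}^s(\v,L)$ and $\WW$ is not a wall at all—or has codimension at least two. In the latter situation, if $\WW$ were non-fake the criterion would force it to be a flopping wall, contradicting the hypothesis; hence $\WW$ is fake.

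First I would reduce to the case that $\v$ is minimal. The Fourier--Mukai reductions of \cref{Prop:NonMinimalIsomorphism,Prop:CompositionSphericalExceptional} in the non-isotropic case, together with the reduction of \cref{subsec:non-minimal case} in the isotropic case, transport $\WW$ to the wall for a minimal vector $\v_0$ while being isomorphisms in codimension one and identities on the common $\sigma_0$-stable locus; since the divisorial, fibration, and flop conditions are phrased through the pairings $\langle\v,\w\rangle$ and $\langle\v,\u\rangle$, which are preserved by the reflections, these hypotheses transfer to $\v_0$, and whether $\WW$ is fake or not a wall is unchanged. With $\v$ minimal I would then split according to whether $\HH_\WW$ is isotropic, invoking \cref{Prop:lattice classification,Prop:isotropic lattice}. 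The codimension of each destabilizing stratum $\FF(\a_1,\dots,\a_n)^o$ is governed by \cref{Prop:HN codim}, and the explicit computations already performed—\cref{Lem:non-isotropic no totally semistable wall} in the non-isotropic case and \cref{Prop:LGU walls of low codimension,Prop: 1-1 case totally semistable and codim 1} in the isotropic case—pin down precisely when $\codim Z=1$.

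The heart of the argument is then to verify that every surviving codimension-one occurrence is already excluded. Each such configuration was shown earlier to be either a divisorial contraction or one of the ``divisor-not-contracted'' configurations of \cref{Lem:ExceptionalDivisorialNonContraction,Lem:IsotropicNoncontractedDivisor l=2} and their isotropic analogues. Cross-referencing the pairing conditions, I would check that each divisor-not-contracted configuration either satisfies a flop condition of \cref{prop:flops} (for instance the orthogonal exceptional case $\langle\v,\w\rangle=0$ with $\v^2\geq 3$, which is \cref{enum-prop:exceptionalflop2} and is exactly the phenomenon of \cref{Ex:ConfusingSmallContraction}), or else is ruled out by the primitivity of $\v$ through the parity and perfect-square relations forced by \cref{Rem:Even and Odd pairings}. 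Since divisorial and flop cases are excluded by hypothesis, all strata then have codimension at least two, and the criterion following \cref{Thm:WallContraction} identifies $\WW$ as fake whenever $Z\neq\varnothing$.

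The main obstacle is precisely this final bookkeeping at the low-square boundary, where the codimension-one count alone does not separate flops from fake walls. The delicate points are the non-isotropic exceptional configuration with $\langle\v,\w\rangle=0$ and $\v^2=2$, and the isotropic configurations with $\v^2\in\{1,2,4\}$: here one must use primitivity of $\v$ (which rules out the non-primitive vectors $\v=m(\w+\u)$ and $\v=2(\w+\u)$ that the perfect-square relations would otherwise produce) together with a direct extension count—showing that $\ext^1$ between the two $\sigma_0$-stable Jordan--H\"older factors equals one—to conclude that the destabilizing divisor carries no rational curve of $S$-equivalent objects. With no curve contracted, $\pi^+$ collapses nothing on this stratum, and combining with the codimension $\geq 2$ analysis of all other strata yields that $\WW$ is fake, completing the converse to \cref{prop:flops} and thereby the final case of \cref{classification of walls}.
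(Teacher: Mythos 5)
There is a genuine gap at the core of your strategy. The criterion recorded after \cref{Thm:WallContraction} states that a \emph{non-fake} wall with $\codim(M_{\sigma_\pm}(\v)\setminus M_{\sigma_0}^s(\v))\geq 2$ is necessarily a flopping wall; it does not, and cannot, identify any wall as fake. A nonempty strictly $\sigma_0$-semistable locus of codimension at least two is consistent with both a fake wall and a flopping wall, and a codimension estimate alone will never separate the two: that distinction is about whether curves of S-equivalent objects exist inside $Z$, not about how big $Z$ is. Your attempted contradiction — ``if $\WW$ were non-fake the criterion would force it to be a flopping wall, contradicting the hypothesis'' — is moreover circular: the hypothesis of \cref{prop: fake or non-walls} excludes the \emph{numerical} conditions (1)--(3) of \cref{prop:flops}, not small contractions themselves. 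To turn ``flopping wall'' into a contradiction you would need to know that every flopping wall satisfies one of those numerical conditions, which is exactly the converse statement this proposition is supposed to establish. Your closing sentence, that the criterion ``identifies $\WW$ as fake whenever $Z\neq\varnothing$'' with $\codim Z\geq 2$, reads the criterion backwards.

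What the paper does instead is prove something strictly stronger than a codimension bound. For minimal $\v$ with $\v^2\geq 3$, it shows that under the stated hypotheses there are \emph{no} strictly $\sigma_0$-semistable objects at all, so $\WW$ is not a wall: if $E$ were destabilized, failure of condition (1) forces a destabilizing spherical or exceptional factor of class $\w$; failure of (2) and (3), together with minimality, forces $\langle\v,\w_i\rangle>\tfrac{\v^2}{2}$ for every such class, and a hyperbola/parallelogram argument in $\HH_\R$ then shows $\v$ would have to lie in the interior of the parallelogram spanned by $0,\w_0,\w_1,\w_0+\w_1$, making $\v-\w_i$ non-effective — a contradiction. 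For minimal $\v$ with $\v^2=1,2$ the paper classifies the possible Jordan--H\"older decompositions and shows via $\ext^1=1$ counts (and the earlier non-contracted-divisor lemmas) that the strictly semistable locus, though possibly a divisor, carries no contracted curves; and for non-minimal $\v$ it shows the totally semistable wall is fake because the sequence of spherical/exceptional twists identifies S-equivalence classes, forcing $\pi^+$ to be an isomorphism. Your proposal gestures at the extension counts only as a patch at the low-square boundary, but for the main regime $\v^2\geq 3$ you have no mechanism to produce either emptiness of $Z$ or absence of contracted curves, which is precisely where the content of the proposition lies.
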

\begin{proof}
We consider first the case that $\v$ is minimal in its $G_{\HH}$-orbit.  Furthermore, we assume for now that $\v^2\geq 3$ and prove that in this case every $\sigma_+$-stable object $E$ of class $\v$ is $\sigma_0$-stable.  If not, then some such $E$ is strictly $\sigma_0$-semistable, and thus $\sigma_-$-unstable.  Let $\a_1,\ldots,\a_n$ be the Mukai vectors of the HN-filtration factors of $E$ with respect to $\sigma_-$.  By assumption on the failure of condition \ref{enum-prop:sum2positive}, the $\a_i$ cannot all be in $P_{\HH}$, so $E$ must have a destabilizing spherical or exceptional subobject or quotient $T$ with $\v(T)=\w$.  

If there is only one $\sigma_0$-stable spherical or exceptional object (in the latter case, uniqueness is only up to $-\otimes\OO_X(K_X)$ of course), then clearly $\v-\w\in P_{\HH}$, so $\v^2-2\langle \w,\v\rangle+\w^2\geq 0$, contradicting the assumption about the failure of conditions \ref{enum-prop:exceptional} and \ref{enum-prop:spherical} of Proposition \ref{prop:flops}.

Now suppose instead that there are two $\sigma_0$-stable spherical/exceptional objects with Mukai vectors $\w_0,\w_1$.  We must have $\v-\w\in C_{\WW}$, and moreover, by \cite[Lemma 4.6]{Yos16b} any stable factor of $T$ must also be spherical or exceptional, so $\v-\w_0$ or $\v-\w_1$ must be effective as well.  The assumption about the failure of conditions \ref{enum-prop:exceptional} and \ref{enum-prop:spherical} in addition to the minimality assumption on $\v$ force $\langle \v,\w_i\rangle>\frac{\v^2}{2}$, and thus that $(\v-\w_i)^2<\w_i^2$, for $i=0,1$.  But then $\v$ must lie above the concave up (region of the) hyperbola $(\v-\w_1)^2=\w_1^2$ and below the concave down hyperbola $(\v-\w_0)^2=\w_0^2$.  In case $\w_0^2=\w_1^2$, these two hyperbolas intersect at 0 and $\w_0+\w_1$, as pictured in \cref{fig:RegionBetweenHyperbolas}, while if, say, $-1=\w_0^2\neq \w_1^2=-2$, then we must have $(\v-\w_0)^2\leq -2$ and $(\v-\w_1)^2<-2$.  Similarly to the previous case, $\v$ must lie above or on the hyperbola $(\v-\w_0)^2=-2$ and below the concave down hyperbola $(\v-\w_1)^2=-2$.  One can easily check that, writing $\v_i=x_i\w_0+y_i\w_1$, $i=1,2$, these hyperbola intersect at two points $\v_1$ and $\v_2$ both of which satisfy $0<x_i,y_i<1$.  In either case, it follows that $\v$ must be located in the interior of the parallelogram with vertices $0,\w_0,\w_1,\w_0+\w_1$.  But then neither $\v-\w_0$ nor $\v-\w_1$ can be effective, a contradiction.  Thus $\WW$ is not a wall at all as every $E\in M_{\sigma_+}(\v)$ is $\sigma_0$-stable.
\begin{figure}
   \begin{tikzpicture}[scale=1.5]
   \draw [->] (-1.5,0) -- (2,0);
   \draw[->] (0,-.2) -- (0,2);
   \draw [blue,domain=-1.5:0] plot (\x,{sqrt(.5*(pow(\x-1,2)-1))});
   \draw [blue,domain=-.43593:0] plot (\x,{.733002-sqrt(.5*(pow(\x+1.43593,2)-1))});
   \draw [blue,domain=-.43593:.5] plot (\x,{.733002+sqrt(.5*(pow(\x+1.43593,2)-1))});
   \draw [red,domain=-1.5:2] plot (\x,{sqrt(.5*(pow(\x,2)+.25))});
   \filldraw [gray] (-.2,.38079) circle (1pt) node [anchor=north east] {$\v_0$};
   \filldraw [gray] (1,0) circle (1pt) node [anchor=south west] {$\w_0$};
   \filldraw [gray] (-1.43593,0.733002) circle (1pt) node [anchor=east] {$\w_1$};
	\node[above] at (-1,1.5) {$(\v-\w_0)^2=-1$};
	\node[above right] at (.3,1.5) {$(\v-\w_1)^2=-1$};
	\node[left] at (2,1) {$\v^2=\v_0^2$};
   \end{tikzpicture}
   \caption{The region between the two hyperbolas when $\w_0^2=\w_1^2=-1$.}
   \label{fig:RegionBetweenHyperbolas}
\end{figure}
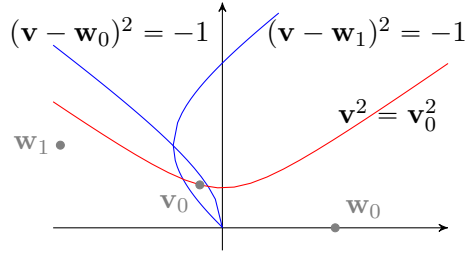

Now let us suppose that $\v$ is still minimal but $\v^2=1,2$.  We will show that though there may be some strictly $\sigma_0$-semistable object $E$, there are no curves of such objects that $\sigma_+$-stable.  Take a strictly $\sigma_0$-semistable object $E$ and consider its Jordan-H\"{o}lder filtration with respect to $\sigma_0$ with $\sigma_0$-stable factors $E_i$ with $\v(E_i)=\a_i$.  Then we may write $\v=\sum_{i=1}^n\a_i$, and the minimality of $\v$ forces $\langle\v,\a_i\rangle\geq 0$ for all $i$, so we may order the $\a_i$ such that $$0\leq\langle\v,\a_1\rangle\leq\langle\v,\a_2\rangle\leq\dots\leq\langle\v,\a_n\rangle.$$

Let us begin with $\v^2=1$, and we observe that for $E$ to be strictly $\sigma_0$-semistable we must have $\langle\v,\a_1\rangle=0$, since otherwise $$1=\v^2=\sum_{i=1}^n\langle\v,\a_i\rangle\geq n.$$  But then the signature of $\HH_\WW$ forces $\a_1^2<0$, so that $\a_1^2=-2$ or $-1$ and $E_1$ is spherical or exceptional, respectively.  We claim that actually $E_1$ must be exceptional.  Indeed, if $E_1$ were spherical then $\HH_\WW$ would be non-isotropic by \cref{Prop:isotropic lattice}, so $\WW$ would induce a divisorial contraction for each choice of $L$ by \cref{Lem:NonisotropicDivisorialContraction}, contrary to assumption.  Thus $E_1$ must be exceptional.  It follows that $\HH_\WW$ is isotropic as $(\v-\a_1)^2=0$, and $\langle\v-\a_1,\a_1\rangle=1$ implies that $\v=\a_1+\u_1$, which is the only decomposition of $\v$ into effective classes.  If $\ell(\u_1)=2$, then by \cref{Lem: Hilbert-Chow} $\WW$ is totally semistable but contracts no curves.  If $\ell(\u_1)=1$, then by \cref{Lem:isotropic divisorial l=1 1} $M_{\sigma_+}(\v,L)\backslash M_{\sigma_0}^s(\v,L)$ is a divisor containing no contracted curves for each choice of $L$, as required.

Now we consider $\v^2=2$.  If $\langle\v,\a_1\rangle>0$, then we must have $n=2$ and $\langle\v,\a_1\rangle=\langle\v,\a_2\rangle=1$.  Moreover, $$\a_1^2=(\v-\a_2)^2=\v^2-2\langle\v,\a_2\rangle+\a_2^2=\a_2^2.$$  Thus if $\a_1^2\geq 0$, then $\a_2^2\geq 0$, so from $\v^2=2$, we see that the only decomposition with $\a_i\in P_\HH$ occurs when $\a_i^2=0$ and $\langle\a_1,\a_2\rangle=1$.  It follows that $\ell(\a_1)=\ell(\a_2)=1$, so we get that $M_{\sigma_+}(\v,L)\backslash M_{\sigma_0}^s(\v,L)$ is a divisor containing no contracted curves by the end of the proof of \cref{Lem:isotropic divisorial l=1 1}.  Otherwise, $\a_i^2<0$, and we have $\langle\v,\a_i\rangle=\frac{\v^2}{2}$ as in \cref{enum-prop:exceptionalflop1,enum-prop:sphericalflop2} of \cref{prop:flops}, contrary to our assumption.  

Thus we must consider $\langle\v,\a_1\rangle=0$, and from the signature of $\HH_\WW$ we must have $\a_1^2<0$ so that $a_1^2=-2$ or $-1$ and $E_1$ is spherical or exceptional, respectively.  

If $E_1$ is spherical, then $(\v-\a_1)^2=0$, so $\HH_\WW$ is isotropic, and we either have $\v-\a_1=\u_1$ with $\langle\u_1,\a_1\rangle=2$ or $\v-\a_1=2\u_1$ with $\langle\u_1,\a_1\rangle=1$ (in which case $\ell(\u_1)=1$).  If $\ell(\u_1)=2$, then we must have $\v=\a_1+\u_1$ with $\langle\a_1,\u_1\rangle=2$, and this is the only decomposition of $\v$.  But then by \cref{Lem:P1FibrationSpherical} $\WW$ is not a wall at all since by assumption $\WW$ does not induce a $\P^1$-fibration.  If $\ell(\u_1)=1$, then from the end of the proof of \cref{Lem:isotropic divisorial 1-1 2}, we cannot have the decomposition $\v=\a_1+\u_1$ as this would lead to a divisorial contraction for each choice of $L$, contrary to assumption.  Thus we must have $\v=\a_1+2\u_1$ with $\langle\a_1,\u_1\rangle=1$.  But then as $\WW$ does not induce a $\P^1$-fibration, we must have $L\equiv D+(\frac{\rk\v}{2}+1)K_X\pmod 2$ and $M_{\sigma_+}(\v,L)\backslash M_{\sigma_0}^s(\v,L)$ is a divisor that contains no contracted curves by \cref{Lem:isotropic totally semistable divisorial contraction l=1}, as required.

If $E_1$ is exceptional, then $\HH_\WW$ must be non-isotropic.  Indeed, if $\HH_\WW$ were isotropic, then we could write $\v=x\u_1+y\a_1$ with $x,y\in\Z$, and the two conditions $\v^2=2$ and $\langle\v,\a_1\rangle=0$ would force $2=y^2$, a contradiction.  If $L\equiv D+\frac{\rk\v}{2}K_X\pmod2$ and $\HH_\WW$ falls into \cref{enum:OneExceptionalOneSpherical} of \cref{Prop:lattice classification}, then from \cref{Lem:ExceptionalDivisorialNonContraction}, $\WW$ would induce a divisorial contraction, contrary to our assumptions.  So either $\HH_\WW$ falls into \cref{enum:OneExceptionalOneSpherical} of \cref{Prop:lattice classification} and $L\notequiv D+\frac{\rk\v}{2}K_X\pmod 2$ or $\HH_\WW$ does not fall into \cref{enum:OneExceptionalOneSpherical} of \cref{Prop:lattice classification}.  But then \cref{Lem:ExceptionalDivisorialNonContraction} shows that no curves get contracted, as required.

If $\v$ is not minimal, then $\v\in\CC_n$ for some $0\neq n\in\Z$ and there exists a minimal class $\v_0$ in the same orbit.  By \cref{Prop:NonMinimalIsomorphism,Prop:CompositionSphericalExceptional}, if $n$ is even then $M_{\sigma_+}(\v)\cong M_{\sigma_+}(\v_0)$, induced by a sequence of spherical/exceptional twists, and if $n$ is odd, then $M_{\sigma_+}(\v)\cong M_{\sigma_+}(\v_1)\cong M_{\sigma_-}(\v_0)$.  Since the assumptions of the proposition are invariant under $G_{\HH}$, the same asumptions apply to $\v_0$, so every $\sigma_\pm$-stable object of class $\v_0$ is $\sigma_0$-stable.  If $\Phi$ is the sequence of spherical/exceptional twists used in this isomorphism, then from the definition of a spherical/exceptional twist, it is easy to see that the $S$-equivalence class of $\Phi(E_0)$ is determined by that of $E_0$.  Since this equivalence is trivial on $M_{\sigma_{\pm}}(\v_0)$, it must be trivial on $M_{\sigma_+}(\v)$ as well, implying that $\pi^+$ is an isomorphism.  Thus $\WW$ is a fake wall, as claimed. 
\end{proof}

Finally we can prove \cref{classification of walls}:
\begin{proof}[Proof of \cref{classification of walls}]
The theorem follows from \cref{prop: fake or non-walls,prop:flops,Prop:isotropic-classification,Lem: condition for totally semistable wall,Prop:NonisotropicDivisorialContraction}.
\end{proof}
We will end this section by observing that \cref{classification of walls,Prop:OrthgonalIsomorphism 2,Prop:CompositionSphericalExceptional} prove part \ref{enum:MT1-two moduli are birational} of \cref{Thm:MainTheorem1}.  Indeed, connect $\sigma$ and $\tau$ by a path, $\sigma(t)$, with $0\leq t\leq 1$, $\sigma(0)=\sigma$, and $\sigma(1)=\tau$.  Observe that as the set of walls is lcoally finite, $\sigma(t)$ only crosses finitely many of them, and by perturbing $\sigma(t)$ slightly, we may assume that $\sigma(t)$ only crosses one wall at a time (that is, if $\sigma(t_0)\in\WW$ then $\sigma(t_0)$ is a generic point of $\WW$).  If $\sigma(t)$ does not cross any totally semistable walls, then $M_\sigma(\v)$ and $M_\tau(\v)$ are clearly birational.  Otherwise, it suffices consider that $\sigma=\sigma_+$ and $\tau=\sigma_-$ are two sufficiently close stability conditions separated by a single totally semistable wall $\WW$.  If there exists a spherical/exceptional class $\w\in\HH_\WW$ such that $\langle\v,\w\rangle=0$, then $M_{\sigma_+}(\v)\cong M_{\sigma_-}(\v)$ by \cref{Prop:OrthgonalIsomorphism 2,Prop:CompositionSphericalExceptional}.  Otherwise, by \cref{classification of walls,Prop:NonMinimalIsomorphism,Prop:CompositionSphericalExceptional}, we may assume that $\v$ is minimal so that we must have $\langle\v,\u\rangle=1$ for an isotropic $\u\in\HH_\WW$ such that $\ell(\u)=2$.  But then we have seen that $\pi^+$ is a divisorial contraction, so we again have $M_{\sigma_+}(\v)$ and $M_{\sigma_-}(\v)$ are birational, as claimed.  

In \cref{sec:Main Theorems} we will prove part \ref{enum:MT1-birational map given by FM transform} of \cref{Thm:MainTheorem1} and the rest of our main results.  In particular, we will show that, under generic conditions, the birational map between $M_{\sigma_+}(\v)$ and $M_{\sigma_-}(\v)$ is induced by a Fourier-Mukai transform.  This is obvious when $\WW$ induces a small contraction, but is more subtle in the case of divisorial contractions.  But while the behavior of divisorial contractions for Bridgeland moduli on Enriques surfaces is analogous to the K3 case for divisorial contractions of Brill-Noether, Hilbert-Chow, and Li-Gieseker-Uhlenbeck type, there is an additional type of divisorial contraction that requires extra care.  We will turn to this subject in the next section.
\section{LGU on the covering K3 surface}
As its name suggests, the induced Li-Gieseker-Uhlenbeck (iLGU) type contraction, which occurs when the hyperbolic lattice $\HH_\WW$ contains an isotropic $\u$ such that $\langle\v,\u\rangle=1=\ell(\u)$, is induced from the covering K3.  The associated HN-filtration factors are objects $F\in M_{\sigma_0}^s(2\u)$.  But while this stable locus has dimension two, it is not proper as $M_{\sigma_0}(2\u)\backslash M_{\sigma_0}^s(2\u)\neq\varnothing$.  So we cannot use the usual machinery as in the regular LGU case where $\ell(\u)=2$.  There we use the universal family associated to the proper two-dimensional moduli space to obtain the requisite Fourier-Mukai transform as in \cite[Corollary 2.8]{BM01}.  Instead, we show that we may induce a Fourier-Mukai transform on $X$ by considering the corresponding LGU type contraction on its K3 cover $\widetilde{X}$.  So we turn now to considering LGU contractions more carefully in the K3 case.

\subsection{Fourier-Mukai transform associated to the Li-Gieseker-Uhlenbeck contraction}\label{Sec:FM transform associated to LGU}

Let $X_1$ be a K3 surface and $\u_1$ 
be a primitive isotropic Mukai vector on $X_1$.
For a general stability condition
$\sigma\in\Stabd(X_1)$, we consider $X_2:=M_\sigma(\u_1)$, which is another K3 surface.
Let $\EE \in \Db(X_1 \times X_2,1_{X_1} \times \alpha)$ be a
universal object for $M_\sigma(\u_1)$, considered as a twisted object where
$\alpha$ is a 2-cocycle of $\OO_{X_2}^{\times}$. 
We set
\begin{equation}
\Phi:=\Phi_{X_1 \to X_2}^{\EE^{\vee}},\;
\Psi:=\Phi_{X_2 \to X_1}^{\EE}.
\end{equation}

For a smooth variety $Y$,
we also set
\begin{equation}
D_Y(E):=E^{\vee}=\RlHom_{\OO_Y}(E,\OO_Y),\; E \in\Db(Y).
\end{equation}

Let $\v_1$ be a Mukai vector on $X_1$ such that $\langle \u_1,\v_1 \rangle=2$.  It is well known that, using the Fourier-Mukai transform $\Phi$, the moduli space $M_{\sigma}(\v_1)$ is isomorphic to a moduli space of rank two Gieseker stable sheaves of Mukai vector $\v_2=\v(\Phi(\v_1))$ on $X_2$.  In the next result, we translate a well-known concrete result about crossing the Uhlenbeck wall for $\v_2$ on $X_2$ to an instrinsic analogous statement for $X_1$.

\begin{Prop}\label{prop:uhl}
There is an object $\FF \in \Db(X_1 \times X_1)$
which induces a Fourier-Mukai transform
$\Phi_{X_1 \to X_1}^{\FF^{\vee}}:\Db(X_1) \to \Db(X_1)$
such that
\begin{enumerate}
\item\label{enum:PreservationOfStabd}
$\Phi_{X_1 \to X_1}^{\FF^{\vee}}$ preserves $\Stab^\dagger(X_1)$.
\item\label{enum:FormulaForLGUFMTransform} The induced action on $H^*(X_1,\Z)$ is given by the formula
\begin{equation}
D_{X_1} \circ \Phi_{X_1 \to X_1}^{\FF^{\vee}}(\v)
=-(\v+\tfrac{\v_1^2}{2} \langle \v,\u_1 \rangle \u_1-\langle \v,\v_1 \rangle \u_1-
\langle \v,\u_1 \rangle \v_1),
\end{equation} for any $\v\in H^*(X_1,\Z)$.
\end{enumerate} 
\end{Prop}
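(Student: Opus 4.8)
The plan is to build the desired autoequivalence as a composition of three Fourier–Mukai functors on the two-K3-surface picture $X_1,X_2$, and then to transport the standard Uhlenbeck-wall-crossing statement on $X_2$ back to $X_1$ via $\Phi,\Psi$. The starting point is that $\Phi=\Phi_{X_1\to X_2}^{\EE^\vee}$ identifies $M_\sigma(\v_1)$ with a moduli space $M_H(\v_2)$ of rank-two Gieseker-stable sheaves on $X_2$, where $\v_2=\v(\Phi(\v_1))$ and $\langle\v_2,(0,0,1)\rangle=\rk\v_2=2$. On $X_2$ the relevant wall is the genuine Uhlenbeck wall for a rank-two class, and the concrete description of crossing it — dualizing the underlying sheaf — is classical (cf. the rank-two case underlying \cite[Cor.~2.8]{BM01} and the Uhlenbeck picture of \cite{Li97,Lo12}). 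Concretely, the wall-crossing autoequivalence on $X_2$ that realizes the birational involution is $D_{X_2}$ (derived dual) up to a shift, and the plan is to conjugate it back to $X_1$.

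\emph{First} I would define $\FF$ so that $\Phi_{X_1\to X_1}^{\FF^\vee}$ equals the composite $\Psi\circ D_{X_2}\circ\Phi$ (with appropriate shift), where $\Psi=\Phi_{X_2\to X_1}^{\EE}$ is the inverse transform. The object $\FF\in\Db(X_1\times X_1)$ is then obtained as the convolution kernel of this composite; since $\Phi$ and $\Psi$ are Fourier–Mukai and $D_{X_2}$ is represented by $\OO_\Delta$ composed with the duality, the composite is again of Fourier–Mukai type by the standard kernel-convolution formalism, giving part of the existence claim. \emph{For \ref{enum:PreservationOfStabd}}, I would argue that each factor preserves the distinguished component: $\Phi,\Psi$ are equivalences of derived categories of K3 surfaces inducing maps between $\Stabd(X_1)$ and $\Stabd(X_2)$, while $D_{X_2}$ (the derived dual) is known to preserve $\Stabd$ up to the standard identifications since it sends skyscrapers to shifted skyscrapers, keeping point-objects stable. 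Composing, $\Phi_{X_1\to X_1}^{\FF^\vee}$ preserves $\Stabd(X_1)$.

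\emph{The cohomological formula \ref{enum:FormulaForLGUFMTransform}} is then a purely lattice-theoretic computation: I would track the action on the Mukai lattice of each of the three factors. The functor $\Phi_*$ sends $\v_1\mapsto\v_2$ and $\u_1\mapsto(0,0,1)$ (the point class), and is an isometry of Mukai lattices. On $X_2$, the dual $D_{X_2}$ acts on the Mukai lattice by the classical rule $(r,c,s)\mapsto(r,-c,s)$ (with a global sign from the shift). Conjugating back by $\Psi_*=\Phi_*^{-1}$ and using $\langle\u_1,\v_1\rangle=2$ together with $\u_1^2=0$, one expands the image of a general $\v$ in the basis adapted to $\u_1,\v_1$ and their orthogonal complement; the coefficients $\tfrac{\v_1^2}{2}\langle\v,\u_1\rangle$, $-\langle\v,\v_1\rangle$, $-\langle\v,\u_1\rangle$ fall out precisely from how duality reflects the rank-two factor fixing the point class and the determinant data.

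\emph{The main obstacle} I anticipate is the careful bookkeeping of the twist $\alpha$ and the shifts: because $\EE$ is only an $\alpha$-twisted universal family, $D_{X_2}$ and the transforms $\Phi,\Psi$ live on twisted derived categories, and one must verify that the twists cancel in the composite $\Psi\circ D_{X_2}\circ\Phi$ so that $\FF$ is genuinely an (untwisted) object on $X_1\times X_1$ and the result is an honest autoequivalence of $\Db(X_1)$. Verifying that the Brauer obstruction cancels — equivalently that the rank-two moduli space on $X_2$ has a universal family compatible with duality — is the delicate point; once it is settled, the identification of the composite with the Uhlenbeck wall-crossing on $X_2$ and the cohomological formula are essentially formal consequences of the isometry property of the Mukai-lattice actions.
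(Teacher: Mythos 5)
Your overall architecture matches the paper's: conjugate the rank-two wall-crossing on $X_2$ back to $X_1$ through $\Phi$ and $\Psi$, then read off the lattice action. But there is a genuine gap, and it sits exactly at the point you flagged as "the delicate point" and then left unresolved: the wall-crossing functor on $X_2$ is \emph{not} the plain dual $D_{X_2}$ (even up to shift); it is $E\mapsto E^\vee\otimes N$ where $N=\det\Phi(F)$ is the determinant of the rank-two image class, i.e.\ $(\otimes N)\circ D_{X_2}$. The twist by $N$ cannot be dropped, for two independent reasons. First, at the level of categories: $\Phi$ lands in $\Db(X_2,\alpha^{-1})$, the dual $D_{X_2}$ carries this to $\Db(X_2,\alpha)$, while $\Psi=\Phi_{X_2\to X_1}^{\EE}$ is only defined on $\Db(X_2,\alpha^{-1})$. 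So your composite $\Psi\circ D_{X_2}\circ\Phi$ is not even well-defined; the Brauer classes do not cancel by themselves. They are cancelled precisely by tensoring with $N$, which is an $\alpha^{-2}$-twisted line bundle (the determinant of a rank-two $\alpha^{-1}$-twisted sheaf), so that $(\otimes N)\colon\Db(X_2,\alpha)\to\Db(X_2,\alpha^{-1})$ restores the correct twist before applying $\Psi$. Second, at the level of cohomology: $D_{X_2}$ sends $(2,\xi,a)\mapsto(2,-\xi,a)\neq\v_2$, so without $\otimes N$ your composite does not fix $\v_1$ (nor map $M_\omega(\v_2,\alpha)$ birationally to itself), and the formula in part (2) — which, as you can check by substituting $\v=\u_1$ and $\v=\v_1$, asserts that the map is $+1$ on $\Q\u_1+\Q\v_1$ and $-1$ on its orthogonal complement — would simply be false for your functor. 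Your remark that duality "fixes the point class and the determinant data" is the error: plain duality negates the determinant data, and $\otimes N$ is what fixes it.

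A secondary but real problem is the covariance bookkeeping. You propose to define $\FF$ so that $\Phi_{X_1\to X_1}^{\FF^\vee}$ \emph{equals} the composite through $D_{X_2}$; but that composite is contravariant (an anti-equivalence), whereas a Fourier--Mukai transform $\Phi_{X_1\to X_1}^{\FF^\vee}$ is covariant, so this identification cannot hold as stated. The paper's resolution is to set $\Xi:=\Psi\circ(\otimes N)\circ D_{X_2}\circ\Phi$ and take the autoequivalence $D_{X_1}\circ\Xi$; Grothendieck--Serre duality rewrites $[2]\circ\Xi$ as the covariant composite $D_{X_1}\circ\Phi_{X_2\to X_1}^{\EE^\vee}\circ(\otimes N^\vee)\circ\Phi$, which is visibly of Fourier--Mukai type and each of whose factors preserves the distinguished component (for $\Phi$ this follows because it sends $M_\sigma(\u_1)$ to skyscrapers, so $\Phi(\sigma)$ is geometric), giving both the existence of $\FF$ and part (1). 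With the $\otimes N$ factor and the $D_{X_1}$ correction inserted, your lattice computation then goes through exactly as the paper's does.
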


\begin{proof}
For $F \in \Db(X_1)$ with $\v(F)=\v_1$,
we set
\begin{equation}
N:=\det \Phi(F),\mbox{ and }\v_2:=\v(\Phi(F))=(2,\xi,a).
\end{equation}
As mentioned above, $\Phi$ induces an isomorphism between $M_{\sigma}(\v_1)$ and the moduli space $M_\omega(\v_2,\alpha)$ of $\omega$-Gieseker stable $\alpha$-twisted sheaves of Mukai vector $\v_2$ on $X_2$.  It is well known that crossing the LGU wall of the Gieseker chamber for $\v_2$ is induced by the Fourier-Mukai transform $(\blank)^\vee\otimes N=(\otimes N)\circ D_{X_2}$.  Pulling this back to $X_1$, we have a sequence of equivalences:
\begin{equation}
\begin{CD}
\Db(X_1) @>{\Phi}>> \Db(X_2,\alpha^{-1}) @>{D_{X_2}}>> 
\Db(X_2,\alpha)  
@>{\otimes N}>>\Db(X_2,\alpha^{-1})@>{\Psi}>> \Db(X_1),
\end{CD}
\end{equation}
and we denote their composition by
\begin{equation}\label{eq:Xi}
\Xi:=\Psi \circ(\otimes N) \circ D_{X_2} \circ \Phi,
\end{equation}
which is a (contravariant)-equivalence $\Db(X_1) \to \Db(X_1)$.
Thus
$D_{X_1} \circ \Xi$ is an autoequivalence of $\Db(X_1)$. 

Since
$[2] \circ \Xi=
D_{X_1} \circ \Phi_{X_2 \to X_1}^{\EE^{\vee}} \circ(\otimes N^{\vee})  \circ \Phi$
(by Grothendieck-Serre duality),
$D_{X_1} \circ \Xi$ defines a Fourier-Mukai transform
$\Phi_{X_1 \to X_1}^{\FF^{\vee}}:\Db(X_1) \to \Db(X_1)$.

To see that $\Phi_{X_1\to X_1}^{\FF^\vee}$ preserves $\Stabd(X_1)$, note first that $\Phi(\sigma)$ is geometric as $\Phi$ identifies the objects of $M_{\sigma}(\u_1)$ with the points of $X_2$ considered as $\Phi(\sigma)$-stable objects of class $(0,0,1)$.  Thus
$\Phi$ induces an isomorphism $\Stabd(X_1) \cong
\Stabd(X_2)$. Since $\otimes N^{\vee}$ also preserves the space of full numerical stability conditions, $\Phi_{X_1 \to X_1}^{\FF^{\vee}}$
preserves $\Stabd(X_1)$, as claimed in \ref{enum:PreservationOfStabd}.

Now we prove \ref{enum:FormulaForLGUFMTransform}.  Since $\v_2=(2,\xi,a)$, it follows that
$$
(\Q \v_2+\Q \varrho_{X_2})^\perp=\Set{e^{\xi/2}(0,D,0) \ | \ D\in H^2(X_2,\Q)},
$$
where the orthogonal complement is taking inside $H^*(X_2,\Q)$.  Moreover, under $(\otimes N) \circ D_{X_2}$, we have
\begin{equation}
\begin{matrix}
\v_2 & \mapsto & \v_2\\
\varrho_{X_2} & \mapsto & \varrho_{X_2}\\
e^{\xi/2}(0,D,0) & \mapsto & -e^{\xi/2}(0,D,0).
\end{matrix}
\end{equation}
Hence
\begin{equation}
\Xi_{|(\Q \u_1+\Q \v_1)}=1_{(\Q \u_1+\Q \v_1)},\;
\Xi_{|(\Q \u_1+\Q \v_1)^\perp}=-1_{(\Q \u_1+\Q \v_1)^\perp}.
\end{equation}
Then it is easy to see that for any $\v\in H^*(X_1,\Z)$ we have 
\begin{equation}
\Xi(\v)=-(\v+\tfrac{\v_1^2}{2} \langle \v,\u_1 \rangle \u_1-
\langle \v,\v_1 \rangle \u_1-
\langle \v,\u_1 \rangle \v_1).
\end{equation}
\end{proof}

\subsection{The induced Li-Gieseker-Uhlenbeck wall and its Fourier-Mukai transform on an Enriques surface}
Having delved more deeply into the FM transform associated to a wall of LGU type on K3 surfaces, we explore the corresponding picture on the Enriques quotient.  So let $X$ be an Enriques surface with K3 cover $\widetilde{X}$.

\begin{Prop}\label{prop:Enriques-refl}
Assume that $\Pic(\widetilde{X})=\varpi^*\Pic(X)$.
Let $\u_0$ and $\v_0$ be Mukai vectors such that
$\u_0$ is primitive and isotropic,
$\langle \u_0,\v_0 \rangle=1$ and $\ell(\u_0)=1$.
We set
$$
\v':=-(\varrho_X+2\v_0^2 \langle \varrho_X,\u_0 \rangle \u_0
-2\langle \varrho_X,\v_0 \rangle \u_0-2
\langle \varrho_X,\u_0 \rangle \v_0).
$$
Then there is an autoequivalence 
$\Phi_{X \to X}^{\EE^{\vee}}:\Db(X) \to \Db(X)$
such that 
\begin{enumerate}
\item\label{enum:FormulaForLGUFM-Enriques} For any $\v\in\Hal(X,\Z)$,
\begin{equation}
D_X \circ \Phi_{X \to X}^{\EE^{\vee}}(\v)=
-(\v+2\v_0^2 \langle \v,\u_0 \rangle \u_0-2\langle \v,\v_0 \rangle \u_0-
2\langle \v,\u_0 \rangle \v_0).
\end{equation}
\item\label{enum:ClassOfRestriction}
$\v(\EE|_{X \times \{ x\}})=\v'$.
\item
$\varpi^* \circ D_X \circ \Phi_{X \to X}^{\EE^{\vee}}=\Xi \circ \varpi^*$
for $\Xi$ in \eqref{eq:Xi}. 
\end{enumerate}
\end{Prop}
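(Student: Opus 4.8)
The goal is to produce an autoequivalence $\Phi_{X \to X}^{\EE^{\vee}}$ on the Enriques surface $X$ whose action on the algebraic Mukai lattice realizes the exceptional/iLGU reflection formula, whose kernel restricts along $X \times \{x\}$ to an object of the prescribed class $\v'$, and which is compatible with the corresponding K3-level transform $\Xi$ from \eqref{eq:Xi} under pullback. The strategy is to \emph{descend} the K3 construction of \cref{prop:uhl} to $X$ via the covering map $\varpi$, using the assumption $\Pic(\widetilde{X}) = \varpi^*\Pic(X)$ to guarantee that the relevant moduli space on $\widetilde{X}$ and its universal family are $\iota^*$-equivariant and hence descend.

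\textbf{First steps.} I would begin by setting $\u_1 := \varpi^*\u_0$ and $\v_1 := \varpi^*\v_0$ on $\widetilde{X}$, noting $\langle \u_1,\v_1\rangle = 2\langle\u_0,\v_0\rangle = 2$ so that the hypotheses of \cref{prop:uhl} are met. This produces $\Xi = \Psi \circ (\otimes N) \circ D_{X_2} \circ \Phi$ and the Fourier-Mukai object $\FF$ on $\widetilde{X} \times \widetilde{X}$ with $D_{\widetilde X} \circ \Phi_{\widetilde X \to \widetilde X}^{\FF^\vee} = \Xi$. The action of $\Xi$ on $H^*(\widetilde X,\Z)$ is given by the formula in \cref{prop:uhl}\ref{enum:FormulaForLGUFMTransform}, and I would check that with $\v_1^2 = 2\v_0^2$ and the pairings on $\widetilde X$ being twice those on $X$, the restriction of $\Xi$ to $\varpi^*\Hal(X,\Z)$ matches $\varpi^*$ applied to the claimed formula in part \ref{enum:FormulaForLGUFM-Enriques}: this is the routine lattice verification that underlies part (3). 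The key input then is that under $\Pic(\widetilde X)=\varpi^*\Pic(X)$, the moduli space $M_{\varpi^*\sigma}(\u_1)$ carries the covering involution and the isotropic class $\u_1$ is $\iota^*$-invariant, so the whole K3 construction is $\iota^*$-equivariant.

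\textbf{Descent and the main obstacle.} The heart of the argument is showing that $\Xi$, being $\iota^*$-equivariant, descends to an (anti-)equivalence on $\Db(X)$ — that is, producing the object $\EE$ on $X \times X$ whose pullback-induced transform intertwines with $\Xi$ via $\varpi^* \circ D_X \circ \Phi_{X\to X}^{\EE^\vee} = \Xi \circ \varpi^*$. I expect the main obstacle to be the \emph{twisting/gerbe bookkeeping}: the K3 universal family $\EE$ is $\alpha$-twisted, and one must track the Brauer class through $D_{X_2}$, $\otimes N$, and $\Psi$ to confirm that $\FF$ is genuinely untwisted, and then verify that the descent to $X$ does not introduce an obstruction. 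Concretely I would use the equivariant structure to define $\EE$ on $X \times X$ as the descent of (a suitable twist of) the kernel underlying $\Xi$, invoke the theory relating $\iota^*$-equivariant stability conditions and autoequivalences from \cite{MMS09} (as recalled around \cref{thm:CoveringMap} and \cref{subsubsec:inducing stability}) to deduce that $\Phi_{X\to X}^{\EE^\vee}$ preserves $\Stabd(X)$ via part \ref{enum:PreservationOfStabd} of \cref{prop:uhl} together with the closed embedding $\varpi^*:\Stabd(X)\hookrightarrow\Stabd(\widetilde X)$. Part \ref{enum:ClassOfRestriction}, computing $\v(\EE|_{X\times\{x\}})=\v'$, then follows by applying the descended transform to the skyscraper class $\varrho_X$ using the formula in \ref{enum:FormulaForLGUFM-Enriques} and matching with the definition of $\v'$, while part (3) is exactly the compatibility established in the descent; the final verification is to confirm the two numerical formulas agree after accounting for the factor-of-two discrepancy between the Mukai pairings on $X$ and $\widetilde X$.
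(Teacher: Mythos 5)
Your overall skeleton---pull back $\u_0,\v_0$ to the K3 cover, invoke \cref{prop:uhl} to obtain $\FF$ and $\Xi$, then descend to $X$---is the same as the paper's. But there is a genuine gap at the step you summarize as ``the whole K3 construction is $\iota^*$-equivariant.'' This does not follow from the $\iota^*$-invariance of $\u_1=\varpi^*\u_0$, $\v_1=\varpi^*\v_0$ and of the stability condition: the kernel $\FF$ is built from a universal family on $M_{\varpi^*(\sigma)}(\u_1)\times\widetilde X$, which is determined only up to non-canonical choices (twists by line bundles from the moduli factor, and the identification of the moduli space with itself under the induced involution), so invariance of the numerical input yields at best an isomorphism $(\iota\times 1_{\widetilde X})^*(\FF)\cong(1_{\widetilde X}\times\tau)^*(\FF)\otimes L$ for \emph{some} automorphism $\tau$ of $\widetilde X$ and \emph{some} line bundle $L$---not genuine commutation of $\Phi_{\widetilde X\to\widetilde X}^{\FF^\vee}$ with $\iota$. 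The heart of the paper's proof is precisely closing this gap: since the fibers of $\FF$ are stable objects whose Mukai vector $\tfrac{1}{2}\varpi^*(\v')$ is $\iota^*$-invariant, one gets the pair $(\tau,L)$ as above; then one compares the induced actions on $H^*(\widetilde X,\Z)$, using the explicit formula of \cref{prop:uhl} (which visibly commutes with $\iota^*$ because $\u_1,\v_1$ are $\iota^*$-invariant), to force $L=\OO_{\widetilde X}$ and $\tau^*=\iota^*$ on cohomology; finally the Torelli theorem gives $\tau=\iota$. Only after this genuine equivariance is established can one apply the descent argument of \cite[Theorem 4.5]{BM98} to produce $\EE\in\Db(X\times X)$ with $(\varpi\times 1_{\widetilde X})_*(\FF)\cong(1_X\times\varpi)^*(\EE)$. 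Without the Torelli step, your descent has no equivariant structure to descend along.

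Relatedly, you misidentify the main obstacle as ``twisting/gerbe bookkeeping.'' That issue is already resolved inside \cref{prop:uhl}: by construction $\Xi=\Psi\circ(\otimes N)\circ D_{X_2}\circ\Phi$ passes from $\Db(X_1)$ through the twisted categories $\Db(X_2,\alpha^{\pm 1})$ and back to $\Db(X_1)$, the Brauer twists cancelling, so $\FF$ is an honest untwisted object of $\Db(\widetilde X\times\widetilde X)$ from the start. The delicate point is not the Brauer class but the equivariant structure on the kernel, which is exactly what the moduli-plus-Torelli argument supplies. Your treatment of parts \ref{enum:FormulaForLGUFM-Enriques} and \ref{enum:ClassOfRestriction} (the lattice comparison with the factor of two, and restricting the descended kernel to $X\times\{x\}$) is fine once the descent is actually in place.
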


\begin{proof}
We note that $\v'$ is a primitive and isotropic Mukai vector with
$\ell(\v')=2$.
We set $\v_1:=\varpi^*(\v_0)$ and $\u_1:=\varpi^*(\u_0)$.
By \cref{prop:uhl}, we have an object
$\FF \in \Db(\widetilde{X} \times \widetilde{X})$
which defines a Fourier-Mukai transform satisfying \ref{enum:PreservationOfStabd} and \ref{enum:FormulaForLGUFMTransform} of \cref{prop:uhl}.  We will show that $\FF$ descends, in an appropriate sense, to an object $\EE\in\Db(X\times X)$ that defines the desired autoequivalence of $\Db(X)$. 

Since $\Phi_{\widetilde{X} \to \widetilde{X}}^{\FF^{\vee}}$ preserves 
$\Stab^\dagger(\widetilde{X})$, which is isomorphic to $\Stab^\dagger(X)$ from the assumption $\Pic(\widetilde{X})=\varpi^*\Pic(X)$ by  \cite[Theorem 1.2]{MMS09}, there is an $\iota$-invariant stability condition
$\sigma' \in \Stab^\dagger(\widetilde{X})$ such that 
$\FF$ is a family of $\sigma'$-stable objects.
By the $\iota$-invariance of $\sigma'$,
$\iota^*(\FF|_{\widetilde{X} \times \{ \widetilde{X} \}})$ is also $\sigma'$-stable, and moreover, $2\v(\FF|_{\widetilde{X} \times \{ \widetilde{X}\}})=\varpi^*(\v')$.  Hence  
we have  
an isomorphism $\tau:\widetilde{X} \to \widetilde{X}$ 
such that
\begin{equation}
(\iota \times 1_{\widetilde{X}})^*(\FF) \cong
(1_{\widetilde{X}} \times \tau)^*(\FF) \otimes L,
\end{equation}
where $L$ is a line bundle on $\widetilde{X}$.
Then we have a commutative diagram

\begin{equation}
\begin{CD}
\Db(\widetilde{X}) @>{\Phi_{\widetilde{X} \to 
\widetilde{X}}^{\FF^{\vee}}}>> \Db(\widetilde{X})\\
@V{\iota^*}VV @VV{(\otimes L^{\vee}) \circ \tau^*}V \\
\Db(\widetilde{X}) @>>{\Phi_{\widetilde{X} \to 
\widetilde{X}}^{\FF^{\vee}}}>\Db(\widetilde{X}).\\
\end{CD}
\end{equation}
By \cref{prop:uhl}, we see that
\begin{equation}
\iota^* \circ \Phi_{\widetilde{X} \to \widetilde{X}}^{\FF^{\vee}}(\v)
=\Phi_{\widetilde{X} \to \widetilde{X}}^{\FF^{\vee}} \circ \iota^*(\v),\;
 \v \in H^*(\widetilde{X},\Z).
\end{equation}
Hence we get
\begin{equation}
\iota^*(\v)=(\otimes L^{\vee}) \circ \tau^*(\v),\;
\v \in H^*(\widetilde{X},\Z),
\end{equation}
which implies $L=\OO_{\widetilde{X}}$
and $\iota^*=\tau^*$ 
on $H^*(\widetilde{X},\Z)$ (by substituting $\v=\v(\OO_{\widetilde{X}})$).

By the Torelli theorem,
$\iota=\tau$.  Therefore $\Phi_{\widetilde{X} \to \widetilde{X}}^{\FF^{\vee}}$ and 
$\iota$ commute.
Then as in the proof of \cite[Theorem 4.5]{BM98}, there is $\EE \in \Db(X \times X)$ such that
\begin{equation}
(\varpi \times 1_{\widetilde{X}})_*(\FF)
\cong (1_X \times \varpi )^*(\EE).
\end{equation}

It follows that $\Phi_{X \to X}^{\EE^{\vee}}$ defines the desired 
Fourier-Mukai transform.
\end{proof}

\begin{Rem}
The conditions (1) and (2) are cohomological, and
\cite[Theorem 1.2 (ii)]{MMS09} shows that there are autoequivalences of $\Db(X)$
satisfying (1) and (2) 
without the assumption $\Pic(\widetilde{X})=\varpi^*\Pic(X)$.
Since we need the property (3) for the proof of Proposition \ref{prop:LGUK3} below, we 
gave a proof.
\end{Rem}
Now we can relate the induced Fourier-Mukai transform $\Phi_{X\to X}^{\EE^\vee}$ to the birational behavior at a iLGU wall.

\begin{Prop}\label{prop:LGUK3}
Assume that $\Pic(\widetilde{X})=\varpi^*\Pic(X)$.
Let $\WW$ be a wall for $\v$ defined by an isotropic Mukai vector
$\u$ such that $\langle \u,\v \rangle=1$ and $\ell(\u)=1$.
Let $\sigma_0 \in \WW$ be a generic stability condition.
Assume that $\sigma_\pm$ are sufficiently close stability conditions in opposite, adjacent chambers separated by $\WW$.
Then 
$D_X \circ \Phi_{X \to X}^{\EE^{\vee}}$ induces an isomorphism
$M_{\sigma_+}(\v) \cong M_{\sigma_-}(\v)$.
\end{Prop}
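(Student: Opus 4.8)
The plan is to reduce the assertion to the Li--Gieseker--Uhlenbeck wall-crossing isomorphism on the covering K3 surface $\widetilde X$ and to transport it back to $X$ through the commutation relation established in \cref{prop:Enriques-refl}. First I would apply \cref{prop:Enriques-refl} with $\u_0=\u$ and $\v_0=\v$ to obtain the (anti-)autoequivalence $\Theta:=D_X\circ\Phi_{X\to X}^{\EE^\vee}$ of $\Db(X)$. Since $\langle\u,\v\rangle=1$, the vector $\v$ is automatically primitive. Substituting into the cohomological formula in part \ref{enum:FormulaForLGUFM-Enriques} of \cref{prop:Enriques-refl}, and using $\langle\v,\u\rangle=1$ and $\langle\v,\v\rangle=\v^2$, one finds $\Theta(\v)=-(\v+2\v^2\u-2\v^2\u-2\v)=\v$. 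Thus $\Theta$ fixes the numerical class $\v$ and in particular carries objects of class $\v$ to objects of class $\v$.

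Next I would record the matching statement on $\widetilde X$. Setting $\v_1:=\varpi^*\v$ and $\u_1:=\varpi^*\u$, the class $\u_1$ is primitive isotropic with $\langle\u_1,\v_1\rangle=2$, so $\widetilde\WW:=\varpi^*\WW$ is precisely an LGU wall for $\v_1$ in the sense of \cref{prop:uhl}, and $\varpi^*(\sigma_\pm)$ lie in the two adjacent chambers. The contravariant equivalence $\Xi$ of \eqref{eq:Xi} is assembled from the Fourier--Mukai transform $\Phi$ onto the rank-two Gieseker moduli on $X_2=M_\sigma(\u_1)$ followed by the reflection $(\otimes N)\circ D_{X_2}$ across the Uhlenbeck wall. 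The classical description of this wall-crossing for rank-two sheaves (the content encoded by \cref{prop:uhl}, and the K3 analogue of \cref{Prop:Uhlenbeck morphism}) shows that $\Xi$ takes $\varpi^*(\sigma_+)$-semistable objects of class $\v_1$ to $\varpi^*(\sigma_-)$-semistable objects of class $\Xi(\v_1)=\v_1$, yielding an isomorphism $M_{\varpi^*(\sigma_+)}(\v_1)\cong M_{\varpi^*(\sigma_-)}(\v_1)$; here the identity $\Xi(\v_1)=\v_1$ is exactly the cohomological compatibility forced by $\varpi^*\circ\Theta=\Xi\circ\varpi^*$ and $\Theta(\v)=\v$.

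I would then descend this to $X$ using property (3) of \cref{prop:Enriques-refl}, namely $\varpi^*\circ\Theta=\Xi\circ\varpi^*$. For $E\in M_{\sigma_+}(\v)$, the pullback $\varpi^*E$ is $\varpi^*(\sigma_+)$-semistable of class $\v_1$ by the pullback criterion for semistability recalled in \cref{subsubsec:inducing stability}, so $\varpi^*\Theta(E)=\Xi(\varpi^*E)$ is $\varpi^*(\sigma_-)$-semistable of class $\v_1$. Applying the same criterion in the reverse direction, $\Theta(E)$ is $\sigma_-$-semistable, and since $\v$ is primitive and $\sigma_-$ is generic, $\Theta(E)$ is in fact $\sigma_-$-stable; together with $\Theta(\v)=\v$ this gives $\Theta(E)\in M_{\sigma_-}(\v)$. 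Because $\Theta$ is a kernel transform it sends flat families to flat families, hence induces a morphism $M_{\sigma_+}(\v)\to M_{\sigma_-}(\v)$; running the identical argument for the inverse (anti-)equivalence $\Theta^{-1}$, which also fixes $\v$, produces the inverse morphism, so $\Theta$ is an isomorphism.

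The main obstacle I anticipate is the second step: confirming on the K3 side that $\Xi$ realizes precisely the LGU wall-crossing isomorphism rather than some other birational modification. This requires verifying that after transforming by $\Phi$ the chambers of $\varpi^*(\sigma_\pm)$ correspond to the Gieseker and dual-Gieseker chambers for the rank-two class $\v_2=(2,\xi,a)$ on $X_2$, that the $\alpha$-twisting is tracked consistently, and --- since $\Xi$ is contravariant --- that the dual-twist reflection sends the $\sigma_+$-side stable locus to the $\sigma_-$-side with matching orientation (the direction of the isomorphism being immaterial to the final statement). A secondary technical point, which I would dispatch using the primitivity of $\v$, is that $\v_1=\varpi^*\v$ need not be primitive, so one must argue at the level of semistability on $\widetilde X$ and only recover stability after descending to $X$.
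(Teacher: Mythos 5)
You follow the same overall route as the paper: apply \cref{prop:Enriques-refl} to produce $\Theta:=D_X\circ\Phi_{X\to X}^{\EE^{\vee}}$, transport an LGU wall-crossing statement from $\widetilde{X}$ through the intertwining relation $\varpi^*\circ\Theta=\Xi\circ\varpi^*$, and descend using the pullback criterion for semistability. Your computation $\Theta(\v)=\v$ is correct, and the descent step at the end is sound. But the proposal has a genuine gap at exactly the step you yourself flag as ``the main obstacle'': you assert, without proof, that $\varpi^*(\sigma_\pm)$ lie in the two chambers adjacent to the LGU wall for $\v_1=\varpi^*(\v)$ and that $\Xi$ exchanges those chambers, so that $\Xi$ induces an isomorphism $M_{\varpi^*(\sigma_+)}(\v_1)\cong M_{\varpi^*(\sigma_-)}(\v_1)$. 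This is not automatic. A priori, the path from $\varpi^*(\sigma_+)$ to $\varpi^*(\sigma_-)$ in $\Stabd(\widetilde{X})$ could cross further walls for $\v_1$ --- most dangerously a Hilbert--Chow-type wall arising from rank-one subsheaves of the rank-two $\alpha^{-1}$-twisted sheaves $\Phi(\varpi^*(F))$ on $X_2=M_{\varpi^*(\sigma)}(\u_1)$ --- in which case $\Xi$, which by its construction in \eqref{eq:Xi} only reflects across the Uhlenbeck wall, would not carry the chamber of $\varpi^*(\sigma_+)$ onto that of $\varpi^*(\sigma_-)$, and the claimed isomorphism would fail.

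Closing this gap is precisely where the hypothesis $\Pic(\widetilde{X})=\varpi^*\Pic(X)$ does its real work, beyond enabling the construction of $\EE$ in \cref{prop:Enriques-refl}. The paper's argument is: under this hypothesis, every class $\w\in\Hal(\widetilde{X},\Z)$ satisfies $\langle\w,\u_1\rangle\in 2\Z$ (a primitive isotropic class on an Enriques surface necessarily has both $H^0$- and $H^4$-components even, so its pullback pairs evenly with the whole lattice when $\NS(\widetilde{X})=\varpi^*\NS(X)$); hence the Brauer class $\alpha$ on $X_2$ is \emph{nontrivial} and there exist no rank-one $\alpha$-twisted sheaves whatsoever. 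Consequently $\Phi(\varpi^*(F))$ is automatically $\mu$-stable for every $F\in\MM_{\sigma_+}(\v)$, and since $\mu$-stability with respect to a generic polarization is independent of the B-field, the \emph{only} wall separating $\varpi^*(\sigma_+)$ from $\varpi^*(\sigma_-)$ is the LGU wall, so $\Xi$ does switch the two chambers. Your proposal never invokes the nontriviality of $\alpha$ nor this lattice consequence of the genericity hypothesis, so the K3-side isomorphism --- the heart of the proof --- remains unestablished. Two smaller remarks: the genericity of $\varpi^*(\sigma_\pm)$ with respect to $\v_1$, which you also use silently, rests on the identification of $\Stabd(\widetilde{X})$ with $\varpi^*\Stabd(X)$ furnished by the same hypothesis; and your ``secondary technical point'' is moot, since $\langle\v,\u\rangle=1$ forces $\ell(\v)=1$, so $\v_1=\varpi^*(\v)$ is in fact primitive.
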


\begin{proof}
There are two important consequences of our assumption that $\Pic(\widetilde{X})=\varpi^*\Pic(X)$.  The first is that $H^*_{\alg}(\widetilde{X},{\Bbb Z})=\NS(X)_{tf}(2) \oplus \langle 2 \rangle
\oplus \langle -2 \rangle$, where we recall that $\NS(X)_{tf}(2)$ is the same underlying lattice as the torsion free lattice $\NS(X)_{tf}$ but with pairing multiplied by 2; and the second is that $\Stabd(\widetilde{X})=\Stabd(X)$.  In particular, it follows from the first consequence that setting $X_1:=\widetilde{X}$,
$\v_1:=\varpi^*(\v)$, and $\u_1:=\varpi^*(\u)=\u_1$, there is no $\w\in\Hal(X_1,\Z)$ with $\langle \w,\u_1 \rangle=1$.
Thus $\alpha$ as in \cref{Sec:FM transform associated to LGU} defines a non-trivial Brauer class on $X_2:=M_{\varpi^*(\sigma)}(\u_1)$, and there are no $\alpha$-twisted sheaves on $X_2$ of rank 1.  From the second consequence it follows that $\varpi^*(\sigma_\pm)$ remains generic with respect to $\varpi^*(\v)$.

Considering the autoequivalence $\Phi$ constructed in the proof of \cref{prop:uhl}, we see that for $F \in \MM_{\sigma_+}(\v)$, $\Phi(\varpi^*(F))$ is a $\mu$-stable $\alpha^{-1}$-twisted sheaf of rank two, as there are no twisted sheaves of rank one.  Since $\varpi^*(\sigma_\pm)$ remains generic, $\mu$-stability is taken with respect to a generic polarization, in which case $\mu$-stability is independent of the choice of the B-field.  Thus $\varpi^*(\sigma_+)$ and $\varpi^*(\sigma_-)$ are separated by the single wall associated to the LGU contraction.  The construction of $\Xi$ in \cref{prop:uhl} shows that $\Xi$ switches the chambers containing $\varpi^*(\sigma_\pm)$, so it follows that $\Xi(\varpi^*(\sigma_+))$ and $\varpi^*(\sigma_-)$ belong to the same chamber.  Therefore $\Xi$ induces an isomorphism
$\MM_{\varpi^*(\sigma_+)}(\v_1) \to \MM_{\varpi^*(\sigma_-)}(\v_1)$.  
By \cref{prop:Enriques-refl} (3), it follows that
$D_X \circ \Phi_{X \to X}^{\EE^{\vee}}$ induces
an isomorphism $\MM_{\sigma_+}(\v) \to \MM_{\sigma_-}(\v)$, as claimed.
\end{proof}



\subsection{The case of odd rank}\label{subsec:iLGU odd rank}
For the construction of the equivalence $\Phi_{X \to X}^{\EE^{\vee}}$,
we had to assume that $\Pic(\widetilde{X})=\varpi^*\Pic(X)$. 
Then by using the equivalence, we constructed the isomorphism in Proposition \ref{prop:LGUK3}. In this subsection, we shall construct 
an isomorphism $M_{\sigma_+}(\v,L) \to M_{\sigma_-}(\v,L)$ without 
the construction of $\Phi_{X \to X}^{\EE^{\vee}}$, and thus without the assumption $\Pic(\widetilde{X})=\varpi^*\Pic(X)$, if $\rk \v$ is odd.

For a stability condition $\sigma \in \Stab^\dagger(X)$,
we set $\sigma':=\varpi^*(\sigma)$.
Then we have a morphism 
$\psi:\MM_\sigma(\v) \to \MM_{\sigma'}(\v')$
by sending $E \in \MM_\sigma(\v)$ to
$\varpi^*(E)$, where $\v'=\varpi^*(\v)$.  We need to study when the restriction of $\psi$ to the stable locus gives $\sigma'$-stable objects, the content of the next lemma. 

\begin{Lem}[{\cite[Theorem 8.1]{Nue14b}}]\label{Lem:pullback remains stable} 
For a $\sigma$-stable object $E$, 
$\varpi^*(E)$ is properly $\sigma'$-semistable if and only if
$E=\varpi_*(F)$, where $F$ is a $\sigma'$-stable object
 with $F \not \cong \iota^*(F)$.
In particular $\rk E$ is even.
\end{Lem}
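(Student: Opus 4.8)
The plan is to run everything through the standard formalism of the \'etale double cover $\varpi$. The ingredients I will use are: the projection formulas $\varpi^*\varpi_*F\cong F\oplus\iota^*F$ and $\varpi_*\varpi^*E\cong E\oplus E(K_X)$; the biadjunction $\varpi^*\dashv\varpi_*\dashv\varpi^*$ (the second adjunction being valid because $\varpi$ is \'etale); the definition of $\sigma'=\varpi^*(\sigma)$ from \cref{subsubsec:inducing stability}, in particular that $\PP_{\sigma'}(\phi)=\{G:\varpi_*G\in\PP_\sigma(\phi)\}$ and that $E$ is $\sigma$-semistable iff $\varpi^*E$ is $\sigma'$-semistable; the fact (\cref{thm:CoveringMap} and the ensuing description of $\varpi^*$) that $\sigma'$ is $\iota^*$-invariant, so $\iota^*$ preserves $\sigma'$-stability and phases; the injectivity of $\varpi^*$ on $\Hal(X,\Z)$; and finally that tensoring by the numerically trivial $K_X$ preserves each $\PP_\sigma(\phi)$. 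The category $\PP_{\sigma'}(\phi)$ is a finite-length heart, which legitimizes the Jordan--H\"older arguments below.

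For the ``if'' direction, assume $E=\varpi_*F$ with $F$ $\sigma'$-stable and $F\not\cong\iota^*F$. The projection formula gives $\varpi^*E\cong F\oplus\iota^*F$, and since $\sigma'$ is $\iota^*$-invariant, $\iota^*F$ is $\sigma'$-stable of the same phase as $F$. Thus $\varpi^*E$ is a direct sum of two non-isomorphic $\sigma'$-stable objects of equal phase: it is $\sigma'$-semistable but decomposable, hence properly semistable. The parenthetical claim is immediate, since pushforward along the degree-two map doubles rank: $\rk E=\rk\varpi_*F=2\rk F$.

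For the ``only if'' direction, which is the main content, let $\phi$ be the phase of $E$; then $\varpi^*E$ is $\sigma'$-semistable of phase $\phi$ (from $\varpi_*\varpi^*E\cong E\oplus E(K_X)\in\PP_\sigma(\phi)$ and the definition of $\sigma'$). Choose a $\sigma'$-stable subobject $F\hookrightarrow\varpi^*E$ in $\PP_{\sigma'}(\phi)$. By the adjunction $\varpi_*\dashv\varpi^*$, the inclusion corresponds to a nonzero $g:\varpi_*F\to E$; as $\varpi_*F\in\PP_\sigma(\phi)$ and $E$ is stable of phase $\phi$, the map $g$ is a surjection in $\PP_\sigma(\phi)$. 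Applying the exact functor $\varpi^*$, which sends $\PP_\sigma(\phi)$ into $\PP_{\sigma'}(\phi)$, yields a surjection $F\oplus\iota^*F\cong\varpi^*\varpi_*F\twoheadrightarrow\varpi^*E$ in the finite-length heart $\PP_{\sigma'}(\phi)$. Consequently the Jordan--H\"older factors of $\varpi^*E$ form a sub-multiset of $\{F,\iota^*F\}$; but $F$ is an honest subobject and $\varpi^*E$ is \emph{properly} semistable (length $\ge 2$), so this multiset equals $\{F,\iota^*F\}$. Hence $\v(\varpi^*E)=\v(F\oplus\iota^*F)$, and the displayed surjection is an isomorphism $\varpi^*E\cong F\oplus\iota^*F$.

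Finally I rule out $F\cong\iota^*F$ and descend. Computing endomorphisms two ways: on one side $\Hom(\varpi^*E,\varpi^*E)=\Hom(E,\varpi_*\varpi^*E)=\Hom(E,E)\oplus\Hom(E,E(K_X))$ has dimension at most $2$ because $E$ is stable; on the other side, if $F\cong\iota^*F$ then $\varpi^*E\cong F^{\oplus 2}$ forces this endomorphism space to be $\mathrm{Mat}_2(\C)$ of dimension $4$, a contradiction, so $F\not\cong\iota^*F$. For the descent, $\varpi^*\varpi_*F\cong F\oplus\iota^*F\cong\varpi^*E$ together with injectivity of $\varpi^*$ on $\Hal(X,\Z)$ gives $\v(\varpi_*F)=\v(E)$; combined with the surjection $g:\varpi_*F\twoheadrightarrow E$ in the finite-length heart, the kernel has trivial class and hence vanishes, so $E\cong\varpi_*F$ and $2\mid\rk E$. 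The main obstacle I anticipate is purely the bookkeeping ensuring all objects lie in a common finite-length heart $\PP_{\sigma'}(\phi)$ so that the Jordan--H\"older and ``surjection of equal class is an isomorphism'' steps are valid; this rests on $\varpi^*$ preserving $\PP_\bullet(\phi)$, which in turn uses the $K_X$-invariance of the heart. Everything else is formal manipulation of the (bi)adjunctions and central charges.
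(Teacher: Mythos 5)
Your proof is correct. Note that the paper itself contains no argument for this lemma --- it is quoted directly from \cite{Nue14b} (Theorem 8.1) --- so there is no internal proof to compare against; your route (pull back, pick a $\sigma'$-stable subobject $F$, use the adjunction $\varpi_*\dashv\varpi^*$ to descend to a surjection $\varpi_*F\twoheadrightarrow E$, match Mukai vectors via injectivity of $\varpi^*$ and the vanishing $Z=0\Rightarrow 0$ in a slice, and rule out $F\cong\iota^*F$ by comparing $\End(\varpi^*E)\cong\Hom(E,E)\oplus\Hom(E,E(K_X))$ with $\mathrm{Mat}_2(\C)$) is exactly the standard descent argument along the \'etale double cover and is in substance the proof of the cited result. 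The one ingredient worth flagging, which you correctly isolate, is that $-\otimes\OO_X(K_X)$ preserves each $\PP_\sigma(\phi)$; this is not free, but it does follow from the paper's own assertion that $\PP_{\varpi_*(\varpi^*(\sigma))}(\phi)=\PP_\sigma(\phi)$, since then $E\in\PP_\sigma(\phi)$ gives $\varpi_*\varpi^*E\cong E\oplus E(K_X)\in\PP_\sigma(\phi)$ and direct summands of objects of a slice stay in that slice.
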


\begin{Lem}\label{Lem:odd rank embedding}
If $\rk\v$ is odd, then 
$\psi:\MM_\sigma^s(\v,L) \to \MM_{\sigma'}(\v')$ is an embedding and
the image is contained in $\MM_{\sigma'}^s(\v')$, where $[L \mod K_X]=c_1(\v)$.
\end{Lem}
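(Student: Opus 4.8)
The plan is to leverage the projection formula for the \'etale double cover $\varpi$, namely $\varpi_*\varpi^*(E)\cong E\otimes\varpi_*\OO_{\widetilde{X}}\cong E\oplus E(K_X)$ (valid since $\chr(k)\neq 2$ makes $\varpi$ \'etale of degree two, with $\varpi_*\OO_{\widetilde X}=\OO_X\oplus\OO_X(K_X)$ as $K_X$ is $2$-torsion), together with \cref{Lem:pullback remains stable}. The odd-rank hypothesis will do double duty: it forces the image of $\psi$ to lie in the stable locus, and it forces $\psi$ to be injective.

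First I would check that $\psi$ maps $\MM_\sigma^s(\v,L)$ into $\MM_{\sigma'}^s(\v')$. For $E\in\MM_\sigma^s(\v,L)$ the object $\varpi^*(E)$ is $\sigma'$-semistable, as recorded in \cref{subsubsec:inducing stability}. By \cref{Lem:pullback remains stable}, $\varpi^*(E)$ can be properly $\sigma'$-semistable only if $E\cong\varpi_*(F)$ with $\rk E$ even; since $\rk\v$ is odd this cannot happen, so $\varpi^*(E)$ is in fact $\sigma'$-stable.

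Next I would establish injectivity on closed points. If $\varpi^*(E_1)\cong\varpi^*(E_2)$ for $E_1,E_2\in\MM_\sigma^s(\v,L)(k)$, then applying $\varpi_*$ and the projection formula gives $E_1\oplus E_1(K_X)\cong E_2\oplus E_2(K_X)$. Because $K_X$ is numerically trivial, all four summands are $\sigma$-stable of the same phase $\phi$, so uniqueness of the decomposition into stable factors in the finite-length abelian category $\PP_\sigma(\phi)$ yields either $E_1\cong E_2$ or $E_1\cong E_2(K_X)$. The second alternative gives $L=\det(E_1)\cong\det(E_2)\otimes K_X^{\otimes\rk\v}\cong L\otimes K_X$, where the last isomorphism uses that $\rk\v$ is odd (so $K_X^{\otimes\rk\v}\cong K_X$); this forces $K_X\cong\OO_X$, contradicting that $X$ is Enriques. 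Hence $E_1\cong E_2$. This is precisely the step where the fixed determinant $L$ and the odd rank are indispensable: for even rank, $\psi$ genuinely identifies $E$ with $E(K_X)$.

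Finally, for the embedding statement I would check that the differential is injective and then upgrade pointwise injectivity to a monomorphism. The tangent map at $[E]$ is $\varpi^*\colon\Ext^1_X(E,E)\to\Ext^1_{\widetilde{X}}(\varpi^*E,\varpi^*E)$; under the adjunction isomorphism $\Ext^1_{\widetilde{X}}(\varpi^*E,\varpi^*E)\cong\Ext^1_X(E,E)\oplus\Ext^1_X(E,E(K_X))$ it becomes post-composition with the unit $\eta_E\colon E\to E\oplus E(K_X)$, which is the inclusion of the first summand, so the map is $\xi\mapsto(\xi,0)$ and is injective. Running the argument of the previous paragraph in families (using the relative pushforward $(\id\times\varpi)_*(\id\times\varpi)^*\EE\cong\EE\oplus\EE\otimes p_X^*K_X$ and the relative determinant to separate $\EE$ from $\EE\otimes p_X^*K_X$) shows that $\psi$ is a monomorphism on $\MM_\sigma^s(\v,L)$; being in addition unramified, it is a locally closed immersion, i.e.\ an embedding. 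I expect the genuine obstacle to be this final upgrade to a scheme- or stack-theoretic embedding, together with the careful determinant bookkeeping in the families version of the injectivity argument; the stability of the image, by contrast, is immediate from \cref{Lem:pullback remains stable}.
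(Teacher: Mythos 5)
Your first half is exactly the paper's: the inclusion of the image in $\MM_{\sigma'}^s(\v')$ follows from \cref{Lem:pullback remains stable} and the odd-rank hypothesis, in both treatments. For the embedding half the two routes diverge: the paper disposes of it in one line by citing \cite[Proposition 7.2]{Nue14b}, whereas you reprove it from scratch. Your pointwise injectivity argument (push forward by $\varpi_*$, use the projection formula $\varpi_*\varpi^*E\cong E\oplus E(K_X)$, invoke uniqueness of stable factors in the finite-length category $\PP_\sigma(\phi)$, and then rule out $E_1\cong E_2(K_X)$ by comparing determinants, where odd rank gives $\det(E_2(K_X))=L\otimes K_X\neq L$) is correct, as is the computation that the tangent map is the inclusion $\xi\mapsto(\xi,0)$ under the adjunction splitting; this is essentially the content of the citation, so the self-contained version is a genuine gain in readability.

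The gap is in the very last step. The implication you invoke, ``monomorphism $+$ unramified $\Rightarrow$ locally closed immersion,'' is false as a general principle, and in fact vacuous: any monomorphism locally of finite type is automatically unramified (its diagonal is an isomorphism, so $\Omega_{X/Y}=0$), so your hypothesis is no stronger than ``monomorphism,'' and there exist finite-type monomorphisms of schemes that are not immersions. A standard example: normalize a nodal curve $C$ and delete one of the two preimages of the node; the resulting map to $C$ is bijective, unramified, and a monomorphism, but it is not an immersion (it is surjective onto the reduced $C$ without being an isomorphism). So ``mono $+$ unramified'' does not buy you locally closed image, which is exactly what can fail. To close this you need an additional input specific to the situation, for instance: (i) in the setting where the lemma is actually applied, $\sigma$ is generic, so the coarse space $M_\sigma(\v,L)$ is projective, and a proper monomorphism into the separated target \emph{is} a closed immersion; or (ii) identify the image of $\psi$ (together with that of $\MM_\sigma^s(\v,L+K_X)$) with the $\iota^*$-fixed locus inside $\MM_{\sigma'}^s(\v')$, which is closed, using descent along the \'etale double cover $\varpi$ for simple objects. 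The paper sidesteps all of this by outsourcing the embedding statement to \cite[Proposition 7.2]{Nue14b}.
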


\begin{proof}
Since $\rk\v$ is odd, the image is  contained in $\MM_{\sigma'}^s(\v')$ by \cref{Lem:pullback remains stable}.
Combining with \cite[Proposition 7.2]{Nue14b}, we get the claim.
\end{proof}

In particular, if $\v$ has odd rank and $\sigma$ is generic, then \cref{Lem:odd rank embedding} implies that $\psi$ embeds all of $M_\sigma(\v,L)$ into $M_{\sigma'}^s(\v',L)$.  Moreover, for any $\sigma''$ in a sufficiently small neighborhood of $\sigma'$, we also have an embedding
\begin{equation}\label{eq:psi}
M_\sigma^s(\v,L) \hookrightarrow M_{\sigma'}^s(\v') \hookrightarrow M_{\sigma''}(\v')
\end{equation}
by openness of stability.

For the rest of the section, we assume that $\rk \v$ is odd.  Let $\WW\subset\Stabd(X)$ be a wall defined by a primitive isotropic $\u$ with $\langle\v,\u\rangle=1=\ell(\u)$ and $\sigma_0$ a generic stability condition in $\WW$.  We denote by $\sigma_0'=\varpi^*(\sigma_0)$, and we continue to denote by $\v'$ and $\u'$ the pull-backs $\varpi^*(\v)$ and $\varpi^*(\u)$, respectively.  Similarly, let $\WW'$ be the wall for $\v'$ in $\Stabd(\widetilde{X})$ defined by $\u'$.  Take $\sigma'' \in \WW'$ in a neighborhood of $\sigma_0'$ such that $\WW'$ is the unique wall for $\v'$ in a small neighborhood $U_{\sigma''}$ of $\sigma''$ and such that $\sigma''$ is generic with respect to $\u'$.
We also take $\sigma_\pm'' \in U_{\sigma''}$ in opposite and adjacent chambers for $\v'$, separated by the single wall $\WW'$. 
We may assume that $\varpi^*(\sigma_+)$ and $\varpi^*(\sigma_-)$ belong to the same chambers as $\sigma_+''$ and $\sigma_-''$, respectively.
Let $\psi_\pm:M_{\sigma_\pm}(\v,L) \to M_{\sigma_\pm''}(\v')$ be the embeddings for
$\sigma_{\pm}$
in \eqref{eq:psi}.  

We will show in a moment that there is an isomorphism $\varphi:M_{\sigma_+}(\v,L)\to M_{\sigma_-}(\v,L)$ despite $\varphi$ (possibly) not being induced by an autoequivalence of $\Db(X)$.  But first we start with a cohomological lemma.
\begin{Lem}\label{Lem:u and v}
Let $\v'=\varpi^*(\v)$ and $\u'=\varpi^*(\u)$ where $\langle\v,\u\rangle=1=\ell(\u)$ for the primitive isotropic vector $\u$.  Then the sublattice $\Z \u'+\Z \v'$ is saturated in $H^*(\widetilde{X},\Z)$.  

In particular, if $\WW\subset\Stabd(X)$ is the wall for $\v$ defined by $\u$ and $\WW'\subset\Stabd(\widetilde{X})$ is the wall for $\v'$ defined by $\u'$), then we have $\HH_{\WW'}=\Z\v'+\Z\u'=\varpi^*\HH_\WW$ so that $\langle\v_1,\v_2\rangle\in 2\Z$ for any $\v_1,\v_2\in\HH_{\WW'}$.
\end{Lem}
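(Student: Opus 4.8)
The plan is to show directly that the saturation $\bar N$ of $N:=\Z\u'+\Z\v'$ in $H^*(\widetilde X,\Z)$ equals $N$; the ``in particular'' clause will then follow formally. First I would record the Gram data coming from $\langle\varpi^*a,\varpi^*b\rangle=2\langle a,b\rangle$: since $\ell(\u)=1$ the class $\u'=\varpi^*\u$ is primitive, and $\u'^2=0$, $\langle\u',\v'\rangle=2\langle\u,\v\rangle=2$, $\v'^2=2\v^2$. For any $w=x\u'+y\v'\in(N\otimes\Q)\cap H^*(\widetilde X,\Z)$, pairing with $\u'$ gives $\langle w,\u'\rangle=2y\in\Z$, and pairing with $\v'$ gives $\langle w,\v'\rangle=2x+2\v^2 y\in\Z$; as $2y\in\Z$ forces $2\v^2 y\in\Z$, we also get $2x\in\Z$. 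Hence $\bar N\subseteq\tfrac12 N$, so $\bar N/N$ is represented by the three classes $\tfrac12\varpi^*\u,\ \tfrac12\varpi^*\v,\ \tfrac12\varpi^*(\u+\v)$, and it suffices to show that none of these lies in $H^*(\widetilde X,\Z)$.

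The key observation is that, because $\Hal(\widetilde X,\Z)$ is primitively embedded in $H^*(\widetilde X,\Z)$, for a primitive $\x\in\Hal(X,\Z)$ one has $\tfrac12\varpi^*\x\in H^*(\widetilde X,\Z)$ if and only if $\varpi^*\x$ is $2$-divisible, i.e. if and only if $\ell(\x)=2$. I would therefore check that each of $\u,\v,\u+\v$ has $\ell=1$. All three are primitive, since each pairs to $1$ with $\u$ (using $\langle\u,\v\rangle=1$ and $\u^2=0$). For $\u$ this is the hypothesis. For $\v$ and $\u+\v$ I would use the parity of the square: as $\rk\v$ is odd (the standing assumption of this subsection), writing $\v=(r_v,D_v,\tfrac{s_v}{2})$ with $r_v\equiv s_v\equiv 1\pmod 2$ gives $\v^2=D_v^2-r_v s_v\equiv 1\pmod 2$, so $\v^2$ is odd and hence $(\u+\v)^2=\v^2+2$ is odd too. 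Since $\ell(\x)=2$ forces $\x^2\equiv 0\pmod 8$ by \cref{primitive} and the remark following it, an odd square excludes $\ell=2$; thus $\ell(\v)=\ell(\u+\v)=1$. Consequently all three half-classes are non-integral, $\bar N=N$, and $N$ is saturated.

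To deduce the ``in particular'', note that $\langle\u,\v\rangle=1$ and $\u^2=0$ make $M:=\Z\u+\Z\v\subseteq\Hal(X,\Z)$ a lattice of Gram determinant $-1$, hence unimodular and therefore saturated; as $\HH_\WW$ is the saturated rank-two lattice containing $\u$ and $\v$, this gives $\HH_\WW=M$ and so $\varpi^*\HH_\WW=\Z\u'+\Z\v'=N$. On the K3 side, $\HH_{\WW'}$ is the saturated rank-two lattice containing $\u'$ and $\v'$; since $\HH_{\WW'}\supseteq N$, both have rank two, and $N$ was just shown to be saturated, we conclude $\HH_{\WW'}=N=\varpi^*\HH_\WW$. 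Finally, for $\v_1=\varpi^*x_1,\ \v_2=\varpi^*x_2\in\HH_{\WW'}$ with $x_1,x_2\in\HH_\WW$, the scaling property yields $\langle\v_1,\v_2\rangle=2\langle x_1,x_2\rangle\in 2\Z$.

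I expect the main obstacle to be the clean exclusion of $\ell(\u+\v)=2$: a naive rank argument is inconclusive, since the parity of $\rk(\u+\v)=\rk\u+\rk\v$ is not controlled (the rank of $\u$ is not pinned down by the hypotheses), so the exclusion must instead route through the congruence $\x^2\equiv 0\pmod 8$ satisfied by $\ell=2$ classes. The decisive input there is that $\v^2$ is odd, which is precisely where the standing hypothesis $\rk\v$ odd enters; without it the lemma can genuinely fail (if $\ell(\v)=2$, then $\tfrac12\v'$ is an integral class in $\bar N\setminus N$).
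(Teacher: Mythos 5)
Your argument for the case $\rk\v$ odd is correct, and it takes a genuinely different route from the paper's. You trap the saturation $\bar N$ of $N=\Z\u'+\Z\v'$ inside $\tfrac12 N$ by integrality of the Mukai pairing against $\u'$ and $\v'$, and then exclude the three half-classes $\tfrac12\varpi^*\u$, $\tfrac12\varpi^*\v$, $\tfrac12\varpi^*(\u+\v)$ via the dictionary ``$\tfrac12\varpi^*$ of a primitive class is integral if and only if its $\ell$-invariant equals $2$'' (valid because $\Hal(\widetilde{X},\Z)$ is saturated in $H^*(\widetilde{X},\Z)$), combined with parity of squares. The paper instead completes the primitive vector $\u'$ to a basis of the saturation $L=\Z\u'+\Z\w$, writes $\v'=a\u'+b\w$, gets $b\langle\w,\u'\rangle=2$, and rules out $b=\pm2$ by a rank-parity argument when $\rk\v$ is odd and by a $c_1$-divisibility argument when $\rk\v$ is even. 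In the odd case your computation is arguably cleaner (note you do implicitly use that the intersection form on $\Num(X)$ is even when computing $\v^2\equiv r_vs_v\pmod 2$, which is fine).

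The genuine gap is in your last paragraph. The lemma is stated and proved in the paper with no parity restriction on $\rk\v$ -- the paper's proof has a dedicated even-rank branch -- and this generality is actually needed, since the lemma is cited again in \cref{subsec:iLGU even rank} (in the proof of \cref{Prop:phi isomorphism even rank}), where $\rk\v$ is even. Your claim that without odd rank ``the lemma can genuinely fail (if $\ell(\v)=2$, then $\tfrac12\v'$ is integral)'' is false under the lemma's hypotheses: $\langle\v,\u\rangle=1$ with $\u$ primitive isotropic already forces $\ell(\v)=1$ for either parity. Indeed, since $\Num(X)$ is an even lattice, isotropy of $\u=(r_u,D_u,\tfrac{s_u}{2})$ gives $r_us_u=(D_u^2)\in 2\Z$, which together with $r_u\equiv s_u\pmod 2$ forces $r_u$ and $s_u$ both even; then, exactly as in \cref{Rem:Even and Odd pairings}, any class $\w$ with $\ell(\w)=2$ pairs evenly with $\u$. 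Applied to $\v$ and to $\u+\v$ (both of which pair to $1$ with $\u$), this excludes $\ell=2$ with no appeal to the parity of squares, so your strategy extends verbatim to even rank once the ``odd square'' step is replaced by this ``even pairing with an isotropic class'' step -- which is essentially the parity mechanism behind the even-rank branch of the paper's proof and the opening observation of \cref{subsec:iLGU even rank}. The same computation also corrects your side remark that $\rk\u$ ``is not pinned down by the hypotheses'': isotropy pins it down to be even, so $\rk(\u+\v)\equiv\rk\v\pmod 2$.
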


\begin{proof}
Let $L$ be the saturation of $\Z\u'+\Z\v'$ in $H^*(\widetilde{X},\Z)$.  Since $\u'$ is primitive, there is $\w \in L$ such that $L=\Z \u'+\Z \w$.  We set $\v'=a \u'+b \w$ ($a,b \in \Z$).  Then $2=\langle \v',\u' \rangle=b \langle \w,\u' \rangle$.  Hence $b=\pm 1,\pm 2$.  If $b=\pm 2$, then $\rk \v'=a \rk \u'\pm 2 \rk \w \in 2\Z$, which is impossible if $\rk\v'=\rk\v$ is odd.  If $\rk\v$ is even, then let us write $\v=(2n,D_1,s)$ and $\u=(2m,D_2,t)$ so that $\v'=(2n,\varpi^*D_1,2s)$ and $\u'=(2m,\varpi^*D_2,2t)$.  Then $\langle\v',\u'\rangle=2$ implies that $$2(D_1,D_2)=(\varpi^*D_1,\varpi^*D_2)\equiv 2\pmod 4,$$ so $(D_1,D_2)$ is odd.  On the other hand, $\v'=\pm 2\w+a\u'$ gives $\varpi^*D_1=\pm2D_3+a\varpi^*D_2$, where $D_3=c_1(\w)$.  But as $\varpi^*:\Pic(X)/\langle K_X\rangle\into\Pic(\widetilde{X})$ is a primitive embedding, we must have $D_3=\varpi^*D_3'$ for some $D_3\in\Pic(X)$.  It follows that $D_1=\pm2D_3'+aD_2$ and thus $$(D_1,D_2)=\pm2(D_3',D_2)+a(D_2)^2\equiv0\pmod 2,$$ a contradiction.  Thus, in either case, $b=\pm 1$ so that $L=\Z \u'+\Z \v'$, as required.

The first equality of the last statement follows because $\HH_{\WW'}\subset\Hal(\widetilde{X},\Z)$ is a saturated hyperbolic sublattice of rank two containing $\Z\v'+\Z\u'$, and the second equality follows from $\langle\v,\u\rangle=1$ so that $\HH_\WW=\Z\v+\Z\w$. 
\end{proof}
Now we are in position to prove the following result.

\begin{Prop}\label{Prop:phi isomorphism odd rank}
We have an isomorphism $\varphi:M_{\sigma_+}(\v,L) \to M_{\sigma_-}(\v,L)$
with a commutative diagram
\begin{equation}\label{eq:varphi=Xi}
\begin{CD}
M_{\sigma_+}(\v,L) @>{\varphi}>> M_{\sigma_-}(\v,L)\\
@V{\psi_+}VV  @VV{\psi_-}V\\ 
M_{\sigma_+''}(\v') @>>{\Xi}> M_{\sigma_-''}(\v')\\
\end{CD}
\end{equation}
In particular, it is the identity on $M_{\sigma_0}^s(\v,L)$.
\end{Prop}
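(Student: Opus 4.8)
The plan is to transport the Fourier--Mukai isomorphism $\Xi$ of \cref{prop:uhl} across the K3 cover using the embeddings $\psi_\pm$, and to set $\varphi:=\psi_-^{-1}\circ\Xi\circ\psi_+$ once we know that $\Xi$ carries the image of $\psi_+$ onto that of $\psi_-$. First I would apply \cref{prop:uhl} with $X_1=\widetilde{X}$, $\u_1=\u'=\varpi^*\u$, and $\v_1=\v'=\varpi^*\v$: since $\ell(\u)=1$ the class $\u'$ is primitive isotropic, and $\langle\u',\v'\rangle=2\langle\u,\v\rangle=2$, so the hypotheses hold and $\Xi$ is the (contravariant) equivalence attached to crossing the Li--Gieseker--Uhlenbeck wall $\WW'$. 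Exactly as in the proof of \cref{prop:LGUK3}---the part of that argument which is pure K3 geometry and uses neither a descent of the kernel nor the hypothesis $\Pic(\widetilde{X})=\varpi^*\Pic(X)$---$\Xi$ switches the chambers of $\sigma_\pm''$ and hence induces an isomorphism $\Xi\colon M_{\sigma_+''}(\v')\to M_{\sigma_-''}(\v')$. Because the moduli space depends only on the chamber, I would replace $\sigma_\pm''$ by $\varpi^*(\sigma_\pm)$, so that $\sigma_\pm''$ are themselves $\iota$-invariant.

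The heart of the matter---and the step I expect to be the main obstacle---is to show that $\Xi$ preserves $\iota$-invariance, i.e.\ that $\iota^*\circ\Xi\cong\Xi\circ\iota^*$. The key observation is that, unlike in \cref{prop:Enriques-refl}, I do not need to descend the kernel to $X\times X$; only this weaker commutation is required, and it can be established without the hypothesis $\Pic(\widetilde{X})=\varpi^*\Pic(X)$. Since $\sigma_0'=\varpi^*(\sigma_0)$ is $\iota$-invariant, the K3 surface $X_2=M_{\sigma_0'}(\u')$ carries the involution $\widehat{\iota}\colon[A]\mapsto[\iota^*A]$, and a seesaw argument shows that the (twisted) universal family and its Brauer class are $\widehat{\iota}$-equivariant up to a line bundle, say $(\iota\times\widehat{\iota})^*\EE\cong\EE\otimes p_{X_2}^*M'$. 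Tracing this equivariance through each factor of $\Xi=\Psi\circ(\otimes N)\circ D_{X_2}\circ\Phi$ in \eqref{eq:Xi} (using that $D_{X_2}$ and $\otimes N$ are manifestly $\widehat{\iota}$-equivariant up to line bundles) yields $\iota^*\circ\Xi\cong(\otimes B)\circ\Xi\circ\iota^*$ for some line bundle $B$ on $\widetilde{X}$. Comparing cohomological actions then settles the twist: part~\ref{enum:FormulaForLGUFMTransform} of \cref{prop:uhl}, together with $\iota^*\u'=\u'$, $\iota^*\v'=\v'$, and the fact that $\iota^*$ is an isometry, gives $\iota^*\circ\Xi_*=\Xi_*\circ\iota^*$ on $H^*(\widetilde{X},\Z)$, whereas $\otimes B$ acts by multiplication by $e^{c_1(B)}$; hence $e^{c_1(B)}=1$, so $c_1(B)=0$, and since $\Pic(\widetilde{X})$ is torsion-free this forces $B\cong\OO_{\widetilde{X}}$ and $\iota^*\circ\Xi\cong\Xi\circ\iota^*$.

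With the commutation in hand the construction is immediate. For $E\in M_{\sigma_+}(\v,L)$, \cref{Lem:pullback remains stable} and \cref{Lem:odd rank embedding} give that $\varpi^*E=\psi_+(E)$ is an $\iota$-invariant, $\sigma_+''$-stable object of class $\v'$; then $\Xi(\varpi^*E)$ is $\sigma_-''$-stable, and $\iota^*\Xi(\varpi^*E)\cong\Xi(\iota^*\varpi^*E)=\Xi(\varpi^*E)$, so it is again $\iota$-invariant. As $\v$ is primitive (forced by $\langle\v,\u\rangle=1$) and of odd rank $\rk\v'=\rk\v$, every $\iota$-invariant $\sigma_-''$-stable object of class $\v'$ is of the form $\varpi^*E'$ for a $\sigma_-$-stable $E'$; twisting by $K_X$ if necessary---which leaves the pullback unchanged---we may take $\det E'=L$, and such $E'\in M_{\sigma_-}(\v,L)$ is unique because $\psi_-$ is an embedding. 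Thus $\psi_-$ maps $M_{\sigma_-}(\v,L)$ isomorphically onto the $\iota$-invariant locus, $\Xi$ restricts to an isomorphism of the $\iota$-invariant loci (its inverse $\Xi^{-1}$ also commuting with $\iota^*$), and $\varphi:=\psi_-^{-1}\circ\Xi\circ\psi_+$ is a well-defined isomorphism $M_{\sigma_+}(\v,L)\to M_{\sigma_-}(\v,L)$ fitting into \eqref{eq:varphi=Xi} by construction. Finally, for $E\in M_{\sigma_0}^s(\v,L)$ the pullback $\varpi^*E$ is $\sigma_0'$-stable, hence lies in the common stable locus on which $\Xi$ restricts to the identity; therefore $\Xi(\varpi^*E)=\varpi^*E$ and $\varphi(E)=E$, so $\varphi$ is the identity on $M_{\sigma_0}^s(\v,L)$, as asserted.
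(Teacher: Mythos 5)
The fatal step is your second sentence: you assert that, ``exactly as in the proof of \cref{prop:LGUK3},'' the equivalence $\Xi$ switches the chambers of $\sigma_\pm''$ and so induces an isomorphism $M_{\sigma_+''}(\v')\to M_{\sigma_-''}(\v')$, and that this part is ``pure K3 geometry'' needing neither kernel descent nor $\Pic(\widetilde{X})=\varpi^*\Pic(X)$. That is not true. In \cref{prop:LGUK3} the chamber-switching conclusion sits downstream of the statement that every sheaf in the twisted Gieseker moduli on $X_2$ is $\mu$-stable, which there is deduced from the \emph{non-triviality} of the Brauer class $\alpha$ (``there are no twisted sheaves of rank one'') --- and non-triviality of $\alpha$ is precisely what the hypothesis $\Pic(\widetilde{X})=\varpi^*\Pic(X)$ buys. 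Without that hypothesis $\alpha$ may be trivial, and then a rank-one destabilizer, i.e.\ a class $\w$ with $\langle\w,\u'\rangle=1$, could a priori exist; its wall is the \emph{same} wall $\WW'$ (since $(1,\tfrac{D}{2},b)$, $(0,0,1)$, $(2,D,a)$ span the same rank-two lattice over $\Q$), so $\WW'$ would then not be a pure LGU wall, $M_\omega(2,D,a)$ could contain strictly $\mu$-semistable sheaves, and the functor underlying $\Xi$ (derived dual tensored by $N$) would not carry every $\sigma_+''$-stable object to a $\sigma_-''$-stable one. Everything later in your proposal --- ``$\Xi(\varpi^*E)$ is $\sigma_-''$-stable,'' and your closing assertion that $\Xi$ ``restricts to the identity'' on the common stable locus --- rests on exactly this unproven point. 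The paper closes it by proving $M_\omega(2,D,a)=M_\omega^{\mu s}(2,D,a)$ even when $\alpha$ is trivial: a rank-one destabilizer would produce $\w$ with $2\w\in\Z\v'+\Z\u'$ and $\langle\w,\u'\rangle=1$, contradicting the saturatedness of $\HH_{\WW'}=\Z\v'+\Z\u'$ established in \cref{Lem:u and v} (this is where $\langle\v,\u\rangle=1$ does real work). Your proposal contains no substitute for this lattice-theoretic input.

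A secondary remark: even if you supplied that ingredient, your $\iota$-equivariance machinery is more than is needed, and it carries its own unaddressed technical burdens (you need an $\iota$-invariant stability condition that is \emph{generic with respect to} $\u'$ so that $X_2$ carries the involution $\widehat{\iota}$ and a twisted universal family --- the paper deliberately chooses $\sigma''$ generic, which is not $\iota$-invariant in general --- plus the equivariance of the twisted kernel up to a line bundle). The paper avoids all of this: once $\Xi$ is known to be an isomorphism which is the identity on $M_{\sigma''}^s(\v')$, the maps $\Xi\circ\psi_+$ and $\psi_-$ are proper injective morphisms (by \cref{Lem:odd rank embedding} and \eqref{eq:psi}) agreeing on the dense common open subset $M_{\sigma_0}^s(\v,L)$, so their images coincide and $\varphi:=\psi_-^{-1}\circ\Xi\circ\psi_+$ is the desired isomorphism --- no commutation of $\Xi$ with $\iota^*$ is ever required.
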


\begin{proof}
For $X_1:=\widetilde{X}$ and $X_2:=M_{\sigma''}(\u')$, we shall apply Proposition \ref{prop:uhl} to get an
(anti-) autoequivalence $\Xi$ of $\Db(X_1)$.  Recall from the construction of $\Xi$ that there is an equivalence $\Phi:\Db(X_1)\to\Db(X_2,\alpha)$ inducing an isomorphism between $M_{\sigma_+''}(\v')$ and the moduli space $M_\omega(2,D,a)$ of $\omega$-Gieseker semistable $\alpha$-twisted sheaves on $X_2$.  Moreover, $\Xi$ is the pull-back to $X_1$ of the autoequivalence $E\mapsto E^\vee\otimes\det(E)$, which induces an isomorphism $M_{\sigma_+''}(\v')\to M_{\sigma_-''}(\v')$.

We claim that $M_\omega(2,D,a)$ consists only of $\mu$-stable sheaves.  Granting this for the moment, we see that for locally free $E\in M_\omega(2,D,a)$, we have $E^\vee\otimes\det(E)\cong E$, so the isomorphism
$M_{\sigma_+''}(\v') \to M_{\sigma_-''}(\v')$ induced by $\Xi$ is the identity
on $M_{\sigma''}^s(\v')$, which corresponds to the locus $M_\omega^{lf}(2,D,a)$ on $X_2$.  

Now let us prove our claim that $M_\omega(2,D,a)=M^{\mu s}_\omega(2,D,a)$.  We note first that the claim is obvious if $\alpha$ is non-trivial, as we saw in the proof of \cref{prop:LGUK3}.  So suppose that the Brauer class $\alpha$ is trivial.  If there exists a strictly $\mu$-semistable sheaf $E\in M_\omega(2,D,a)$ with subsheaf $F\subset E$ such that $\mu(F)=\mu(E)$, then from the genericity of $\omega$, it would follow that $2\mid D$ and $\v(F)=(1,\tfrac{D}{2},b)$ so that $\langle-\v(F),(0,0,1)\rangle=1$.  But $2\v(F)=(2,D,a)+(2b-a)(0,0,1)$, so writing $-\v(F)=\Phi(\w)$, we get $\w$ such that $2\w\in\Z\v'+\Z\u'$ and $\langle\w,\u'\rangle=1$.  As $\HH_{\WW'}=\Z\v'+\Z\u'$ is saturated by Lemma \ref{Lem:u and v}, this is impossible, proving the claim in this case as well.

By \eqref{eq:psi}, we get an isomorphism $\varphi$ with 
the commutative diagram \eqref{eq:varphi=Xi}.
\end{proof}

We close this subsection by describing the relationship between the Picard groups of $M_{\sigma_\pm}(\v,L)$.
\begin{Prop}\label{Prop:R relates thetas}
Let $R$ be the cohomological action on $K(X)$ given by
$$
R(E)=E+2\v^2 \langle \v(E),\u \rangle U-2\langle \v(E),\v \rangle U-2\langle \v(E),\u \rangle V,\;
E \in K(X),
$$ 
where $U, V \in K(X)$ satisfy $\v(U)=\u,\v(V)=\v$.
Then we have a commutative diagram modulo torsion.
\begin{equation}\label{eqn:ReflectionRelatesThetas}
\begin{CD}
K(X)_\v @>{R}>> K(X)_\v \\
@V{\theta_{\v,\sigma_+}}VV  @VV{\theta_{\v,\sigma_-}}V\\ 
\Pic(M_{\sigma_+}(\v,L)) @<<{\varphi^*}< \Pic(M_{\sigma_-}(\v,L))\\
\end{CD}
\end{equation}
\end{Prop}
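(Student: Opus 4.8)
The plan is to establish the commutative diagram \eqref{eqn:ReflectionRelatesThetas} by exploiting the compatibility of the Donaldson-Mukai map with Fourier-Mukai type equivalences, together with the explicit description of the isomorphism $\varphi$ obtained in \cref{Prop:phi isomorphism odd rank}. The key observation is that $\varphi$ fits into the commutative diagram \eqref{eq:varphi=Xi} relating it to the autoequivalence $\Xi$ on the covering K3 surface via the pullback morphisms $\psi_\pm$, and that the cohomological action $R$ on $K(X)$ is precisely the descent to $X$ of the cohomological action of $D_{\widetilde X}\circ\Xi$ computed in \cref{prop:uhl}\ref{enum:FormulaForLGUFMTransform}. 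Thus the strategy is to transfer the known behavior of the Donaldson-Mukai map under $\Xi$ on $\widetilde X$ down to $X$ through the pullback $\varpi^*$.

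First I would recall the functoriality of the Donaldson-Mukai map: for any family $\EE$ of objects parametrized by a scheme $S$ and any morphism $f:S'\to S$ with pullback family $f^*\EE$, one has $\theta_{f^*\EE}=f^*\circ\theta_\EE$; and for a Fourier-Mukai equivalence $\Phi$ inducing an isomorphism of moduli spaces with $\Phi_*$ acting on $K$-theory, the two Donaldson-Mukai maps differ by precomposition with $\Phi_*$ (see \cite[Section 8.1]{HL10}). I would apply this on $\widetilde X$ to the equivalence $\Xi$: the isomorphism $M_{\sigma_+''}(\v')\to M_{\sigma_-''}(\v')$ induced by $\Xi$ relates $\theta_{\v',\sigma_+''}$ and $\theta_{\v',\sigma_-''}$ through the $K$-theoretic action of $\Xi$, whose cohomological shadow is given by the formula in \cref{prop:uhl}\ref{enum:FormulaForLGUFMTransform}. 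Next I would use the commutative square \eqref{eq:varphi=Xi} together with the fact that $\psi_\pm$ is pullback by $\varpi$ on families, so that $\psi_\pm^*\theta_{\v',\sigma_\pm''}=\theta_{\v,\sigma_\pm}\circ\varpi^*$ on $K(X)_\v$ (after accounting for the relation $\langle\varpi^*\v,\varpi^*\w\rangle=2\langle\v,\w\rangle$, which explains the factors of $2$ and $\v^2$ rather than $\v'^2/2$ appearing in the definition of $R$).

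The main obstacle I expect will be bookkeeping the compatibility between the action $R$ on $K(X)$ and the action $D_{\widetilde X}\circ\Xi$ on $K(\widetilde X)$ under the embedding $\varpi^*$, together with the derived dual $D_X$. Specifically, I must verify that $\varpi^*\circ R = (D_{\widetilde X}\circ\Xi)\circ\varpi^*$ modulo torsion on the sublattice $K(X)_\v$, which amounts to comparing the formula defining $R$ with that in \cref{prop:uhl}\ref{enum:FormulaForLGUFMTransform} after substituting $\v_1=\varpi^*(\v)$, $\u_1=\varpi^*(\u)$; here the doubling of the pairing under $\varpi^*$ converts $\tfrac{\v_1^2}{2}=\tfrac{(\varpi^*\v)^2}{2}=\v^2$ and similarly rescales the other coefficients, reproducing $R$ exactly. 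The derived dual $D_X$ contributes a sign and an involution on $K$-theory that must be tracked carefully, since the diagram \eqref{eqn:ReflectionRelatesThetas} involves $\varphi^*$ going in the reverse direction; I would handle this by noting that $D_X$ acts as $-1$ on the Mukai lattice up to the standard involution and absorbing it into the definition of $R$, which already incorporates the relevant signs.

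Finally I would assemble the pieces: combining $\psi_\pm^*\theta_{\v',\sigma_\pm''}=\theta_{\v,\sigma_\pm}\circ\varpi^*$ with the $\Xi$-compatibility on $\widetilde X$ and the identity $\varpi^*\circ R=(D_{\widetilde X}\circ\Xi)\circ\varpi^*$, then using injectivity of $\varpi^*:K(X)_\v\to K(\widetilde X)_{\v'}$ modulo torsion to cancel the $\varpi^*$ and conclude $\varphi^*\circ\theta_{\v,\sigma_-}=\theta_{\v,\sigma_+}\circ R$ modulo torsion, which is exactly the commutativity of \eqref{eqn:ReflectionRelatesThetas}. The injectivity modulo torsion follows from $\langle\varpi^*\v,\varpi^*\w\rangle=2\langle\v,\w\rangle$ being a nondegenerate rescaling, so no information is lost upon pulling back.
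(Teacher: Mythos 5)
Your proposal is correct and takes essentially the same route as the paper's proof: pull back a (quasi-)universal family along $\varpi\times 1$, use the classifying-map functoriality of the Donaldson--Mukai morphism to express $\psi_\pm^*\circ\theta_{\v',\sigma_\pm''}\circ\varpi^*$ in terms of $\theta_{\v,\sigma_\pm}$, and then combine the compatibility $\theta_{\v',\sigma_+''}=-\Xi^*\circ\theta_{\v',\sigma_-''}\circ\Xi$ with the cohomological identity $\varpi^*\circ R=-\Xi\circ\varpi^*$ and the square \eqref{eq:varphi=Xi}. The one bookkeeping slip is that the pullback relation reads $\psi_\pm^*\circ\theta_{\v',\sigma_\pm''}\circ\varpi^*=2\theta_{\v,\sigma_\pm}$ modulo torsion, the factor $2$ arising from $\varpi_*\varpi^*\EE=\EE\otimes(\OO_X\oplus\OO_X(K_X))$ rather than from the doubling of the Mukai pairing (which instead accounts for the coefficients of $R$); since this factor appears uniformly on both sides of the final identity, it cancels and your conclusion stands.
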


\begin{proof}
Consider a quasi-universal family $\EE\in\Db(X\times M_{\sigma_\pm}(\v,L))$ for the moduli space $M_{\sigma_\pm}(\v,L)$ and the map $\varpi\times 1$ fitting into the commutative diagram: 
\[\xymatrix{
\widetilde{X}\times M_{\sigma_\pm}(\v,L) \ar[r]^-{\varpi\times 1} \ar[rd]^{}_{p_{M_{\sigma_\pm}(\v,L)}} &
X\times M_{\sigma_\pm}(\v,L) \ar[d]^{p_{M_{\sigma_\pm}(\v,L)}}\\
&M_{\sigma_\pm}(\v,L)}\]
Then by \eqref{eq:psi}, we may consider $(\varpi\times 1)^*\EE$ to be a family of objects in $M_{\sigma_\pm''}(\v')$ parameterized by $M_{\sigma_\pm}(\v,L)$.  The morphism $\psi_\pm$ is the classifying map associated to the family $(\varpi\times 1)^*\EE$, so by \cite[Theorem 8.1.5]{HL10} we get $$\theta_{(\varpi\times 1)^*\EE}=\psi_\pm^*\circ\theta_{\v',\sigma_\pm''},$$ where $\theta_{(\varpi\times 1)^*\EE}:K(\widetilde{X})_{\v'}\to\Pic(M_{\sigma_\pm}(\v,L))$ is the Donaldson-Mukai homomorphism associated to the family $(\varpi\times 1)^*\EE$.  Precomposing with the pull-back from $K(X)_\v$ gives the following commutative diagram:
\begin{equation}\label{eqn:PullBackDonaldsonMukai}
\xymatrix{
K(X)_\v \ar[r]^{\varpi^*} \ar[d]^{}_{2\theta_{\v,\sigma_{\pm}}}& K(\widetilde{X})_{\v'}\ar[ld]_{\theta_{(\varpi\times 1)^*\EE}}\ar[d]^{\theta_{\v',\sigma_{\pm}''}}\\
\Pic(M_{\sigma_\pm}(\v,L))& \ar[l]^{\psi_\pm^*} \Pic(M_{\sigma_{\pm}''}(\v'))
}
\end{equation}
Indeed, for $E\in K(X)_\v$, we have 
\begin{align*}
    \theta_{(\varpi\times 1)^*\EE}(\varpi^*E)&=\det(p_{M_{\sigma_\pm}(\v,L)!}((\varpi\times 1)^*\EE\otimes p_{\widetilde{X}}^*(\varpi^*E)))\\
    &=\det(p_{M_{\sigma_\pm}(\v,L)!}((\varpi\times 1)^*\EE\otimes (\varpi\times 1)^*(p_X^*E)))\\
    &=\det(p_{M_{\sigma_\pm}(\v,L)!}((\varpi\times 1)^*(\EE\otimes p_X^*E)))\\
    &=\det(p_{M_{\sigma_\pm}(\v,L)!}((\varpi\times 1)_*(\varpi\times 1)^*(\EE\otimes p_X^*E)))\\
    &=\det(p_{M_{\sigma_\pm}(\v,L)!}((\EE\otimes p_X^*(E\otimes(\OO_X\oplus\OO_X(K_X))))))=\theta_{\v,\sigma_\pm}(E)+\theta_{\v,\sigma_\pm}(E(K_X)),
\end{align*}
so modulo torsion, $\theta_{(\varpi\times 1)^*\EE}(\varpi^*E)=2\theta_{\v,\sigma_\pm}(E)$, as claimed.

Using $\varpi^* \circ R=-\Xi \circ \varpi^*$ and $\theta_{\v',\sigma''_+}=
-\Xi^* \circ \theta_{\v',\sigma''_-} \circ \Xi$, where we have abused notation by using $\Xi$ to denote both the autoequivalence and the induced isomorphism $\Xi:M_{\sigma_+''}(\v')\to M_{\sigma_-''}(\v')$, the proposition then follows from \eqref{eq:varphi=Xi}.
\end{proof}
\begin{Rem}\label{Rem:ReflectionBMDivisors}
\begin{enumerate}
\item
Since the claim is independent of the choice of complex structure,
we can reduce to the case where $\Pic(\widetilde{X})=\varpi^*\Pic(X)$.
Then the claim is a consequence of Proposition \ref{prop:Enriques-refl}. 
\item
Since $\varpi^*(\xi_\tau)=2\xi_{\varpi^*(\tau)}$, it follows that  
$$\ell_{\CC_\pm}(\tau)=\theta_{\v,\sigma_\pm}(\xi_\tau)=
\psi_\pm^*(\theta_{\v',\sigma_\pm''}(\xi_{\varpi^*(\tau)}))=\psi_\pm^*\ell_{\CC_\pm''}(\tau).$$  This gives another proof of \cite[Proposition 10.2]{Nue14b}.
\end{enumerate}
\end{Rem}

\subsection{The case of even rank}\label{subsec:iLGU even rank}We show in this subsection that the technique of \cref{subsec:iLGU odd rank} also generalizes to Mukai vectors $\v$ of even rank.  

For the rest of the section, we assume that $\rk \v$ is even.  Let $\WW\subset\Stabd(X)$ be a wall defined by a primitive isotropic $\u$ with $\langle\v,\u\rangle=1=\ell(\u)$ and $\sigma_0$ a generic stability condition in $\WW$.  We denote by $\sigma_0'=\varpi^*(\sigma_0)$, and we again denote by $\v'$ and $\u'$ the pull-backs $\varpi^*(\v)$ and $\varpi^*(\u)$, respectively.  Similarly, let $\WW'\subset \Stabd(\widetilde{X})$ be the wall for $\v'$ defined by $\u'$.  Take $\sigma'' \in \WW'$ in a neighborhood of $\sigma_0'$ such that $\WW'$ is the unique wall for $\v'$ in a small neighborhood $U_{\sigma''}$ of $\sigma''$ and such that $\sigma''$ is generic with respect to $\u'$.
We also take $\sigma_\pm'' \in U_{\sigma''}$ in opposite and adjacent chambers for $\v'$, separated by the single wall $\WW'$. 
We may assume that $\varpi^*(\sigma_+)$ and $\varpi^*(\sigma_-)$ belong to the same chambers as $\sigma_+''$ and $\sigma_-''$, respectively.

We observe first that we still have $\ell(\v)=1$ so that $\varpi^*\v$ is primitive.  Indeed, as in \cref{Rem:Even and Odd pairings}, if $\ell(\v)$ were $2$, then $2\mid\rk(\u)$ would imply that $\langle\v,\u\rangle$ were even, which is absurd.  It therefore follows from \cref{Lem:pullback remains stable} that we have morphisms $\psi_\pm:M_{\sigma_\pm}(\v,L)_{sm} \to M_{\varpi^*(\sigma_\pm)}^s(\v') \into M_{\sigma_\pm''}^s(\v')$ which are \'{e}tale double covers onto their images.  Moreover, by \cref{Lem:isotropic divisorial l=1 1} we have a birational map $\varphi:M_{\sigma_+}(\v,L)\dashrightarrow M_{\sigma_-}(\v,L)$ which is the identity on $M_{\sigma_0}^s(\v,L)$.  As $M_{\sigma_\pm}(\v,L)$ has only terminal l.c.i. singularities and numerically trivial canonical divisor by \cite[Theorem 8.2]{Nue14b}, it follows that $\varphi$ is an isomorphism in codimension one by \cite[Corollary 3.54]{KM98}.  This defines an isomorphism $\varphi^*:\Cl(M_{\sigma_-}(\v,L))\isomor\Cl(M_{\sigma_+}(\v,L))$, where $\Cl(-)$ denotes the group of Weil divisors moduli linear equivalence.  If $\v^2\geq 8$, then $\codim(\Sing(M_{\sigma_\pm}(\v,L))\geq 4$ so that $M_{\sigma_\pm}(\v,L)$ is locally factorial by \cite[Exp. XI, Cor. 3.14]{SGA2} and thus $\Cl(M_{\sigma_\pm}(\v,L))=\Pic(M_{\sigma_\pm}(\v,L))$. We will show in the next result that, even for $2<\v^2<8$, $\varphi$ extends to an isomorphism so that we may nevertheless identify $\Pic(M_{\sigma_\pm}(\v,L))$.
\begin{Prop}\label{Prop:phi isomorphism even rank}
If $\v^2>2$, then the birational map $\varphi:M_{\sigma_+}(\v,L)\dashrightarrow M_{\sigma_-}(\v,L)$ extends to an isomorphism fitting into a commutative diagram modulo torsion.
\begin{equation}\label{eqn:ReflectionRelatesThetasEven}
\begin{CD}
K(X)_\v @>{R}>> K(X)_\v \\
@V{\theta_{\v,\sigma_+}}VV  @VV{\theta_{\v,\sigma_-}}V\\ 
\Pic(M_{\sigma_+}(\v,L)) @<<{\varphi^*}< \Pic(M_{\sigma_-}(\v,L))\\
\end{CD}
\end{equation}
\end{Prop}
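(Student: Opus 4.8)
The plan is to run the argument of \cref{Prop:phi isomorphism odd rank,Prop:R relates thetas} in the even rank case, the essential new feature being that the comparison morphisms $\psi_\pm$ are now étale double covers onto their images rather than embeddings. We already know that $\varphi:M_{\sigma_+}(\v,L)\dashrightarrow M_{\sigma_-}(\v,L)$ is an isomorphism in codimension one and restricts to the identity on the common open subset $M_{\sigma_0}^s(\v,L)$, so two things remain: to promote $\varphi$ to a global isomorphism, and to establish the commuting square \eqref{eqn:ReflectionRelatesThetasEven}.

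For the isomorphism I would first transport the K3 equivalence. As in the proof of \cref{Prop:phi isomorphism odd rank}, the autoequivalence $\Xi$ of \cref{prop:uhl} associated to $\u'=\varpi^*(\u)$ and $\v'=\varpi^*(\v)$ induces an isomorphism $\Xi:M_{\sigma_+''}(\v')\isomor M_{\sigma_-''}(\v')$ which is the identity on the $\mu$-stable locally free locus $M_{\sigma''}^s(\v')$, since there $E^\vee\otimes\det(E)\cong E$. The images $\bar M_\pm:=\psi_\pm(M_{\sigma_\pm}(\v,L)_{sm})$ are irreducible subvarieties of $M_{\sigma_\pm''}^s(\v')$ meeting in the common dense open $\psi(M_{\sigma_0}^s(\v,L))$ on which $\psi_+=\psi_-$ and $\Xi=\id$; hence $\Xi(\bar M_+)=\bar M_-$, and $\Xi$ restricts to an isomorphism $\bar M_+\isomor\bar M_-$. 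Because $\psi_\pm$ are étale and agree over $\psi(M_{\sigma_0}^s(\v,L))$, the restriction $\Xi|_{\bar M_+}$ lifts uniquely along the two covers to an isomorphism of smooth loci $M_{\sigma_+}(\v,L)_{sm}\isomor M_{\sigma_-}(\v,L)_{sm}$ extending the identity on $M_{\sigma_0}^s(\v,L)$; by irreducibility this lift coincides with $\varphi$ and satisfies $\psi_-\circ\varphi=\Xi\circ\psi_+$ there.

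The main obstacle is extending this isomorphism of smooth loci across the (codimension $\geq 2$) singular locus, where $\psi_\pm$ is no longer defined because $\varpi^*$ of a singular point may be properly $\sigma'$-semistable by \cref{Lem:pullback remains stable}; unlike the odd-rank case the étale-double-cover picture does not see these points, and an isomorphism in codimension one between $K$-trivial spaces need not be globally an isomorphism. To overcome this I would invoke deformation invariance: the assertion that $\varphi$ is a genuine isomorphism is preserved under deformation of the complex structure, so after passing to relative moduli spaces of Bridgeland-stable objects over a family of Enriques surfaces (\cite{BLMNPS19}) I may specialize to a surface with $\Pic(\widetilde{X})=\varpi^*\Pic(X)$. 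There \cref{prop:LGUK3}, which is insensitive to the parity of $\rk\v$, produces the functor $D_X\circ\Phi_{X\to X}^{\EE^\vee}$ inducing a global isomorphism $M_{\sigma_+}(\v)\cong M_{\sigma_-}(\v)$; spreading this out identifies $\varphi$ with a morphism on the whole of $M_{\sigma_+}(\v,L)$, and by symmetry an isomorphism. Finally, for the commuting square \eqref{eqn:ReflectionRelatesThetasEven} I would repeat the Donaldson--Mukai computation of \cref{Prop:R relates thetas} verbatim: pulling back a quasi-universal family along $\varpi\times 1$ gives $\theta_{(\varpi\times 1)^*\EE}=\psi_\pm^*\circ\theta_{\v',\sigma_\pm''}$ and $\theta_{(\varpi\times 1)^*\EE}\circ\varpi^*=2\theta_{\v,\sigma_\pm}$ modulo torsion (that $\psi_\pm$ is a double cover rather than an embedding is irrelevant, as both are morphisms), and then combining $\varpi^*\circ R=-\Xi\circ\varpi^*$, $\theta_{\v',\sigma_+''}=-\Xi^*\circ\theta_{\v',\sigma_-''}\circ\Xi$, and $\psi_-\circ\varphi=\Xi\circ\psi_+$ yields the square; its purely cohomological nature again lets me reduce to the generic-Picard case and conclude via \cref{prop:Enriques-refl} exactly as in \cref{Rem:ReflectionBMDivisors}.
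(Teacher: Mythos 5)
Your construction of $\Xi$, the lifting along the \'{e}tale double covers $\psi_\pm$ to get $\psi_-\circ\varphi=\Xi\circ\psi_+$ on the smooth loci, and your Donaldson--Mukai computation for the square all match the paper's proof. The genuine gap is in your extension step. "Being an isomorphism" is an \emph{open} condition on the base of a flat proper family, not a closed one, and your argument runs in the closed direction: you deform the given (special) $X$ to nearby Enriques surfaces with $\Pic(\widetilde{X})=\varpi^*\Pic(X)$, where \cref{prop:LGUK3} gives an isomorphism, and then try to specialize back. But a limit of isomorphisms need not be an isomorphism -- the fiberwise closure of the graphs can break into extra components (already for a family of automorphisms of $\P^1$ degenerating), and since $M_{\sigma_\pm}(\v,L)$ are $K$-trivial with an identification in codimension one, nothing in the family argument excludes that the special-fiber map is a flop rather than an isomorphism. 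The Matsusaka--Mumford type rigidity that would repair this requires matching relative polarizations across the family, which is exactly the missing ingredient; so the deformation step cannot be salvaged without the ample-class comparison below.

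What the paper does instead -- and you have all the pieces for it except the final assembly -- is to reverse the logical order. First it proves the commutative square for the \emph{birational} map, by defining $\varphi^*$ on $\Cl(M_{\sigma_-}(\v,L))$ (legitimate since $\varphi$ is an isomorphism in codimension one) and noting that $\theta_{\v,\sigma_+}(x)$ and $\varphi^*\circ\theta_{\v,\sigma_-}\circ R(x)$ agree away from $\Sing(M_{\sigma_+}(\v,L))$, which has codimension at least two because $\v^2>2$; this is your step three, run at the level of class groups so that it needs no isomorphism as input. Then it \emph{uses} the square: writing $\d=\v-\v^2\u\in\HH_\WW\cap\v^\perp$, the map $R$ is the reflection $R_\d(\z)=\z-2\tfrac{\langle\z,\d\rangle}{\d^2}\d$, which fixes $\xi_{\sigma_0}$ and (after perturbing so that $\xi_{\sigma_\pm}=\pm x\d+\xi_{\sigma_0}$) satisfies $R(\xi_{\sigma_+})=\xi_{\sigma_-}$. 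Hence
$$\ell_{\CC^+}(\sigma_+)=\varphi^*\bigl(\theta_{\v,\sigma_-}(R(\xi_{\sigma_+}))\bigr)=\varphi^*\ell_{\CC^-}(\sigma_-),$$
so $\varphi^*$ carries an ample class to an ample class, and a birational map between normal projective varieties that is an isomorphism in codimension one and matches ample divisors extends to an isomorphism by \cite[Exercise 5.6]{KSC}. In short: prove the square first via $\Cl$, compute $R(\xi_{\sigma_+})=\xi_{\sigma_-}$, and let ampleness force the extension; the deformation detour is both unnecessary and invalid as stated.
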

\begin{proof}
As in the proof of \cref{Prop:phi isomorphism odd rank}, we have an autoequivalence $\Xi$ that induces an isomorphism $M_{\sigma_+''}(\v')\to M_{\sigma_-''}(\v')$ which is the identity on $M_{\sigma''}^s(\v')$.  Indeed, the same proof goes through since we still have $\HH_{\WW'}=\Z\v'+\Z\u'$ by \cref{Lem:u and v}.

Restricting the birational map $\varphi$ to the smooth locus $M_{\sigma_+}(\v,L)_{sm}$, we get a commutative diagram
\[
\xymatrix{M_{\sigma_+}(\v,L)_{sm}\ar@{-->}[r]^{\varphi}\ar[d]_{\psi_+}&M_{\sigma_-}(\v,L)_{sm}\ar[d]^{\psi_-}\\
M_{\sigma_+''}(\v')\ar[r]^\Xi & M_{\sigma_-''}(\v')}
\] with $\varphi$ the identity on $M_{\sigma_0}^s(\v,L)_{sm}$.  Moreover, we have two homomorphisms 
\begin{equation*}
\begin{split}
    \varphi^*\circ\theta_{\v,\sigma_-}\circ R:K(X)_\v\to \Cl(M_{\sigma_+}(\v,L))\\
    \theta_{\v,\sigma_+}:K(X)_\v\to\Pic(M_{\sigma_+}(\v,L)),
\end{split}
\end{equation*}
where $R$ is defined as in \cref{Prop:R relates thetas} and we have abused notation by using $\varphi^*$ for the restriction of $$\varphi^*:\Cl(M_{\sigma_-}(\v,L))\to\Cl(M_{\sigma_+}(\v,L))$$ to the subgroup $\Pic(M_{\sigma_-}(\v,L))\subset\Cl(M_{\sigma_-}(\v,L))$.  For any $x\in K(X)_\v$, \cref{Prop:R relates thetas}, which had nothing to do with $\rk\v$ being odd, implies that $\theta_{\v,\sigma_+}(x)$ and $\varphi^*\circ\theta_{\v,\sigma_-}\circ R(x)$ agree away from $\Sing(M_{\sigma_+}(\v,L))$, which has codimension at least two since $\v^2>2$.  It follows that $\varphi^*\circ\theta_{\v,\sigma_-}\circ R=\theta_{\v,\sigma_+}$, which gives the commutative diagram \eqref{eqn:ReflectionRelatesThetasEven}.

Finally, we prove that $\varphi$ extends to an isomorphism.  As $\xi_{\sigma_0}\in(\v^\perp\cap\u^\perp)_\R$, $R(\xi_{\sigma_0})=\xi_{\sigma_0}$ and thus $\varphi^*\ell_{\CC^+}(\sigma_0)=\ell_{\CC^-}(\sigma_0)$.  Moreover, letting $\d=\v-\v^2\u\in\HH_{\WW}\cap\v^\perp$, it is easy to see that $R=R_\d$ is just the reflection of $\v^\perp$ in the hyperplane $\d^\perp$ defined by $R_\d(\z)=\z-2\frac{\langle\z,\d\rangle}{\d^2}\d$ (see \cref{Sec:Picard groups} for more on this).  Perturbing $\sigma_\pm$ and $\sigma_0$ slightly, we may assume that we can write $\xi_{\sigma_\pm}=\pm x\d+\xi_{\sigma_0}$ for $x\in\R$.  
In particular, $R(\xi_{\sigma_+})=\xi_{\sigma_-}$.  But then $$\ell_{\CC^+}(\sigma_+)=\varphi^*(\theta_{\v,\sigma_-}(R(\xi_{\sigma_+})))=\varphi^*(\theta_{\v,\sigma_-}(\xi_{\sigma_-}))=\varphi^*\ell_{\CC^-}(\sigma_-).$$  As both $\ell_{\CC^\pm}(\sigma_\pm)$ are ample, $\varphi$ extends to an isomorphism by \cite[Exercise 5.6]{KSC}, as required.
\end{proof}
\section{Main theorems}\label{sec:Main Theorems}
\begin{proof}[Proof of Theorem \ref{Thm:MainTheorem1}]
We proved part \ref{enum:MT1-two moduli are birational} at the end of \cref{Sec:FloppingWalls}, so we focus on part \ref{enum:MT1-birational map given by FM transform}.  Nevertheless, we recall part of the setup form the proof of part \ref{enum:MT1-two moduli are birational}.  We may connect $\sigma$ and $\tau$ by a path which intersects walls in points that lie on no other walls.  As the set of walls is locally finite, the path will intersect only finitely many walls, and thus for the purpose of proving the theorem, it suffices to consider one wall $\WW$, a generic stability condition $\sigma_0\in\WW$, and nearby stability conditions $\sigma_\pm$.  

Suppose that $\langle \v,\w\rangle<0$ for an effective spherical or exceptional class $\w\in\HH_\WW$.  Then suppose that $\v=\v_n\in\CC_n$ with $\v_0$ the minimal Mukai vector in the $G_\HH$-orbit of $\v$.  Letting $\Phi^\pm$ be the sequence of spherical or exceptional twists giving the isomorphism $M_{\sigma_\pm}(\v)\to M_{\sigma_\pm}(\v_0)$ if $n$ is even (resp., $M_{\sigma\pm}(\v)\to M_{\sigma_\mp}(\v_0)$ if $n$ is odd), we see that it suffices to prove the theorem in the case that $\v$ is minimal.  Indeed, if $\Phi$ is an autoequivalence inducing a birational map $M_{\sigma_+}(\v_0)\dashrightarrow M_{\sigma_-}(\v_0)$ as in the theorem, then $(\Phi^-)^{-1}\circ\Phi\circ\Phi^+$ (resp., $(\Phi^-)^{-1}\circ\Phi^{-1}\circ\Phi^+$) proves the theorem for $\v$ if $n$ is even  (resp., if $n$ is odd).

Thus we may assume that $\langle \v,\w\rangle\geq 0$ for all effective spherical and exceptional classes $\w\in\HH_\WW$, and we break up the proof into cases.

Suppose first that $\langle\v,\w\rangle=0$ for some effective spherical or exceptional class $\w\in\HH_\WW$.  But then the minimality of $\v$ forces $\w=\w_0=\v(T_0^+)$ or $\w_1=\v(T_1^+)$ as in \cref{Prop:NonMinimalIsomorphism,Prop:CompositionSphericalExceptional} and the discussions following them.  But we showed there that the spherical/exceptional twists $R_{T_0^+}$ or $R_{T_1^+}$ induce an isomorphism $M_{\sigma_+}(\v)\to M_{\sigma_-}(\v)$.  So we may take $U$ to be the entire moduli space $M_{\sigma_+}(\v)$.  Notice that this discussion covers walls inducing $\P^1$-fibrations, walls of Brill-Noether type, as well as flopping walls induced by an exceptional class $\w\in\HH_\WW$ such that $\langle\v,\w\rangle=0$ when $\v^2\geq 3$.

Otherwise $\langle\v,\w\rangle>0$ for all effective spherical or exceptional classes $\w\in\HH_\WW$.  We first consider the case that $\WW$ is a flopping wall or a fake wall.  Then $\WW$ being a flopping or fake wall implies that $\codim(M_{\sigma_+}(\v)\backslash M_{\sigma_0}^s(\v))\geq 2$ by \cref{Lem:non-isotropic no totally semistable wall,Prop:LGU walls of low codimension,Prop: 1-1 case totally semistable and codim 1}.  In this case we may take $U$ to be the open subset of $\sigma_0$-stable objects, so there is nothing to prove (i.e. we just take $\Phi=\Id$).

Now we consider the case that $\WW$ induces a divisorial contraction and $\langle \v,\w\rangle>0$ for all effective spherical/exceptional classes $\w\in\HH_\WW$, i.e. divisorial contractions coming from Hilbert-Chow, LGU, or iLGU walls.  We will show that we may take $U$ to be the entire moduli space $M_{\sigma_+}(\v)$ in the case of Hilbert-Chow and LGU walls.  That is, we show that there is an autoequivalence that induces an isomorphism $M_{\sigma_+}(\v)\isomor M_{\sigma_-}(\v)$.  We show that the same is true for an iLGU wall if $\Pic(\widetilde{X})=\varpi^*\Pic(X)$, while in general we can still take $U$ to be an open subset of codimension one, proving the theorem. 

\textbf{Hilbert-Chow:} Here we assume that $\HH_\WW$ contains an isotropic vector $\u$ such that $\langle \v,\u\rangle=1$ and $\ell(\u)=2$.  Then by Proposition \ref{Prop:Uhlenbeck morphism} and Lemma \ref{Lem: Hilbert-Chow} we may, up to the shift by 1, identify $M_{\sigma_+}(\v)$ with the $(\beta,\omega)$-Gieseker moduli space $M^\beta_{\omega}(-\v)$ of stable sheaves of rank one.  Up to tensoring with a line bundle, we may assume that $M_{\sigma_+}(\v)$ parametrizes the shifts $I_Z[1]$ of ideal sheaves of 0-dimensional subschemes $Z\in\Hilb^n(X)$, where $n=\frac{\v^2+1}{2}$.  Moreover, $\sigma_-$-stable objects are precisely $I_Z^{\vee}[1]$.  But then $\Phi(\blank):=(\blank)^{\vee}[2]$ provides the required autoequivalence.

\textbf{LGU:} As in \cref{Prop:Uhlenbeck morphism}, shifting by one identifies $M_{\sigma_+}(\v)$ with the moduli space $M_{\omega}(2,c,s)$ of $\omega$-Gieseker stable sheaves $F$ with $\v(F)=(2,c,s)=-\v$.  Choosing $L\in\Pic(X)$ with $[L\mod K_X]=c$, we get that $\Phi(\blank):=(\blank)^{\vee}\otimes \OO(L)[2]$ is the required autoequivalence, as $\Phi(F[1])=F^{\vee}\otimes \OO(L)[1]$ is an object of $M_{\sigma_-}(\v)$, for any $F\in M_{\omega}(2,c,s)$.

\textbf{iLGU:} If we take $U$ to be the open subset $M_{\sigma_0}^s(\v)\subset M_{\sigma_+}(\v)$, then $M_{\sigma_+}(\v)\backslash U$ has codimension one, and we may take $\Phi=\Id$.  We can say more if $\Pic(\widetilde{X})=\varpi^*\Pic(X)$.  Indeed, then we may take $U=M_{\sigma_+}(\v)$ and $\Phi=D_X\circ\Phi_{X\to X}^{\EE^\vee}$ as in \cref{prop:LGUK3}.
\end{proof}

Now let us prove \cref{Thm:application1} as an application of \cref{Thm:MainTheorem1}.
\begin{proof}[Proof of \cref{Thm:application1}]
By the proof of \cite{Yos03}, there is an (anti)-autoequivalence
$\Phi$ such that $\rk \Phi(\v) =1$.
Applying Theorem \ref{Thm:MainTheorem1}, we get the first claim.
For the second claim, we use \cite[sect. 1]{OS11}.
\end{proof}
Similarly, \cref{Thm:application2} follows from \cref{Thm:MainTheorem1}.
\begin{proof}[Proof of \cref{Thm:application2}]
As in \cite[Section 4]{Yos16a} (for generic $X$) or the arguments in \cref{Lem:e-poly,Lem:rank2}, there is an autoequivalence $\Phi$ such that $\Phi(\v)=\w=(0,C,\chi)$ where $C\to\P^1$ is a double cover of $\P^1$ and $\chi\neq 0$.  Then by Theorem \ref{Thm:MainTheorem1},
we get the first claim.
If $\Pic(\widetilde{X})=\varpi^*\Pic(X)$, then by \cite{Yos16a}, \cite[Assumption 2.16]{Sac12} holds
for $M_H(\w,L')$. Hence by 
\cite[Thm. 3.1, Thm. 4.4]{Sac12}, we get the second claim.
\end{proof}

\section{Picard groups of moduli spaces}\label{Sec:Picard groups}

In this section we prove \cref{Cor:Picard}.  Before doing so, however, we prove that the maps $\ell_\CC:\CC\to\Num(M_{\CC}(\v,L))$ fit together to form one coherent map $$\Stabd(X)\to\Num(M_{\sigma_+}(\v,L))$$ whose image is contained in $\Mov(M_{\sigma_+}(\v,L))$.
\subsection{How the Bayer-Macr\`{i} maps fit together}Recall from \cref{eqn:def of xi,eqn:def of ell(sigma)} that $\ell_\CC$ can be written as the composition 
\begin{equation}\label{eqn:ell as composition}
\CC\into\Stabd(X)\mor[\ZZ]\Hal(X,\C)\mor[I]\v^\perp\mor[\theta_{\v,\sigma_\pm}]\Num(M_{\sigma_+}(\v,L)),
\end{equation}
where $\ZZ(\sigma)=\mho_\sigma$, $I(\mho_\sigma)=\xi_\sigma$, and $\theta_{\v,\sigma_\pm}$ are cohomological Donaldson-Mukai maps associated to the universal families for $M_{\sigma_\pm}(\v,L)$ as in \cref{eqn:DonaldsonMukai}.  To show that these fit together it suffices to study how the $\ell_\CC$ are related across a single wall crossing, and  from \eqref{eqn:ell as composition}, it therefore suffices to study the relationship between $\theta_{\v,\sigma_+}$ and $\theta_{\v,\sigma_-}$. 

So, for a given Mukai vector $\v\in\Hal(X,\Z)$ with $\v^2>0$, consider two adjacent chambers $\CC^+$ and $\CC^-$ separated by a wall $\WW$, and, as always, choose a generic $\sigma_0\in\WW$ and nearby stability conditions $\sigma_\pm\in\CC^\pm$.  The signature of $\HH_\WW$ and its saturatedness force $\v^{\perp} \cap \HH_{\WW}=\Z\d$ for some integral $\d$.  We have already seen in \cref{Prop:OrthgonalIsomorphism 2,Prop:CompositionSphericalExceptional} that in some cases $\d=\w_n$ is a spherical or exceptional class, in which case there is an isomorphism $M_{\sigma_-}(\v,L)\isomor M_{\sigma_+}(\v,L)$ induced by $R_{T_n^+}$, where $T_n^+$ is the $\sigma_+$-stable object of class $\w_n$.  When $\v$ is primitive and $\WW$ is defined by an isotropic vector $\u$ with 
\begin{enumerate}
    \item $\langle \v,\u \rangle=1,2$ and $\ell(\u)=2$, or 
    \item $\langle \v,\u \rangle=\ell(\u)=1$,
\end{enumerate}
then we claim that $\d=\v-\frac{\v^2}{\langle \v,\u \rangle}\u$.  We certainly have $\d':=\v-\frac{\v^2}{\langle\v,\u\rangle}\u\in\left(\v^\perp\cap\HH_\WW\right)_\Q$, and when $\langle\v,\u\rangle=1$, it is clear that $\d'\in\v^\perp\cap\HH_\WW$ so that $\d'=m\d$ for some $m\in\Z$.  But then $1=\langle\d',\u\rangle$ forces $m=1$.  Otherwise, $\langle\v,\u\rangle=2$ and $\ell(\u)=2$, forcing $\rk(\v)$, and thus $\v^2$, to be even, so $\d'\in\v^\perp\cap\HH_\WW$ with $\d'=m\d$.  If $m\neq 1$, then $\langle\d',\u\rangle=2$ forces $m=2$, and, as $\d'^2=-\v^2$, we see that $4\mid\v^2$, so $2 \mid \v$, contradicting the assumption that $\v$ was primitive. Therefore $m=1$, and $\d=\v-\frac{\v^2}{\langle\v,\u\rangle}\u$, as claimed.  In any case, we define $R_{\d}$ to be the reflection of $\v^\perp$ in the hyperplane $\d^\perp$.  That is, $R_\d$ is defined by the formula $$R_{\d}(\z)=\z-2 \frac{\langle \z,\d \rangle}{\d^2}\d=\z-2\frac{\langle \z,\u \rangle}{\langle \v,\u \rangle}\d=\z-2\frac{\langle\z,\u\rangle}{\langle\v,\u\rangle}\v+2\v^2\frac{\langle\z,\u\rangle}{\langle\v,\u\rangle^2}\u,\;\;\z\in\v^\perp.$$  Observing that $R_{m\d}(\z)=R_\d(\z)$ for any $m\in\Z$, we see that even if $\d=\frac{1}{2}(\v-\frac{\v^2}{\langle\v,\u\rangle}\u)$, when $\langle\v,\u\rangle=2=\ell(\u)$ and $\v$ is not primitive, the reflection remains the same.

We will prove in the next result that in all of the cases mentioned above $\theta_{\v,\sigma_\pm}$ differ by precomposing with the reflection $R_\d$.

\begin{Prop}\label{Prop:Pic-relation}
Let $\WW$ be a wall for $\v\in\Hal(X,\Z)$ with $\v^2>0$, and for generic $\sigma_0\in\WW$ let $\sigma_\pm$ be sufficiently close stability conditions in the opposite and adjacent chambers separated by $\WW$.  Then we may identify $\Pic(M_{\sigma_+}(\v,L))$ with 
$\Pic(M_{\sigma_-}(\v,L))$.  Moreover, if
\begin{enumerate}
    \item $\langle \v,\w \rangle=0$ for a spherical or exceptional class $\w\in C_\WW\cap\HH_\WW$, or
    \item $\langle \v,\u \rangle=\ell(\u)$ for an isotropic vector $\u\in\HH_\WW$, or
    \item $\langle \v,\u \rangle=1$ for an isotropic vector $\u\in\HH_\WW$ with $\ell(\u)=2$,
\end{enumerate}
then $\theta_{\v,\sigma_+}=\theta_{\v,\sigma_-} \circ R_\d$,
where $\v^\perp\cap\HH_\WW=\Z\d$ and $R_\d$ is the reflection of $\v^\perp$ in the hyperplane $\d^\perp$.  Otherwise, we have
$\theta_{\v,\sigma_+}=\theta_{\v,\sigma_-}$.
\end{Prop}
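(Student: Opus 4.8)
The plan is to fix an identification $\Pic(M_{\sigma_+}(\v,L))\cong\Pic(M_{\sigma_-}(\v,L))$, reduce the comparison of $\theta_{\v,\sigma_+}$ and $\theta_{\v,\sigma_-}$ to the computation of a single cohomological reflection, and then check that computation against the explicit Fourier--Mukai kernels furnished by \cref{classification of walls}. Write $\v^\perp\cap\HH_\WW=\Z\d$; this is rank one and negative definite by the signature and primitivity of $\HH_\WW$, and the discussion preceding the statement records $\d$ explicitly in the relevant cases. The map to be compared with is the reflection $R_\d(\z)=\z-2\langle\z,\d\rangle\d/\d^2$ of $\v^\perp$.

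For the identification of Picard groups I would first use \cref{classification of walls} together with the codimension estimates of \cref{Lem:non-isotropic no totally semistable wall,Prop:LGU walls of low codimension,Prop: 1-1 case totally semistable and codim 1} to separate two regimes: either $\codim(M_{\sigma_\pm}(\v,L)\setminus M_{\sigma_0}^s(\v,L))\ge 2$, or this locus is a divisor, which by those same results happens precisely in the situations listed in (1)--(3). In the first regime the common open subset $U:=M_{\sigma_0}^s(\v,L)$ identifies $\Pic(M_{\sigma_+}(\v,L))$ with $\Pic(M_{\sigma_-}(\v,L))$ by restriction, using normality and $\Q$-factoriality from \cref{prop:pss} and \cite{Nue14b}. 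In the second regime each of (1)--(3) supplies a genuine isomorphism $\varphi\colon M_{\sigma_+}(\v,L)\to M_{\sigma_-}(\v,L)$ induced by an (anti-)autoequivalence $\Phi$: the (weakly) spherical reflection of \cref{Cor:OrthogonalIsomorphism 1,Prop:OrthgonalIsomorphism 2,Prop:CompositionSphericalExceptional} in case (1); the derived duals $(\blank)^\vee[2]$ and $(\blank)^\vee\otimes\OO(L)[2]$ from the proof of \cref{Thm:MainTheorem1} in the Hilbert--Chow and Li--Gieseker--Uhlenbeck subcases of (2) and (3); and the transform of \cref{prop:LGUK3,Prop:phi isomorphism odd rank,Prop:phi isomorphism even rank} in the induced Li--Gieseker--Uhlenbeck subcase. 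I would then take $\varphi^*$ as the identification.

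The engine of the comparison is the transformation law of the Donaldson--Mukai map \eqref{eqn:DonaldsonMukai} under $\Phi$. Transforming a universal family $\EE^-$ on $M_{\sigma_-}(\v,L)\times X$ by $\Phi$ in the $X$-direction produces a universal family for $M_{\sigma_+}(\v,L)$ pulled back along $\varphi$; the projection formula together with \cite[Sect.~8.1]{HL10}, exactly as in the proof of \cref{Prop:R relates thetas}, then gives $\theta_{\v,\sigma_+}=\varphi^*\circ\theta_{\v,\sigma_-}\circ\Phi_*$, i.e.\ $\theta_{\v,\sigma_+}=\theta_{\v,\sigma_-}\circ\Phi_*$ after the identification, so everything reduces to proving $\Phi_*|_{\v^\perp}=R_\d$. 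In case (1) this is immediate: $\Phi_*$ is the reflection $\z\mapsto\z-2\langle\z,\w\rangle\w/\w^2$ in the spherical or exceptional class $\w$, which fixes $\v$ since $\langle\v,\w\rangle=0$, preserves $\v^\perp$, and there equals $R_\d$ with $\d=\w$. In case (2) with $\ell(\u)=1$ the identity $\theta_{\v,\sigma_+}=\theta_{\v,\sigma_-}\circ R_\d$ is precisely the content of \cref{Prop:R relates thetas,Prop:phi isomorphism odd rank,Prop:phi isomorphism even rank}. In the remaining divisorial subcases I would compute $\Phi_*$ directly on the rank-two lattice $\HH_\WW$ and its orthogonal complement, check that it matches the K3 formula of \cref{prop:uhl,Prop:Uhlenbeck morphism}, and hence conclude it equals $R_\d$ for $\d=\v-\tfrac{\v^2}{\langle\v,\u\rangle}\u$. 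Finally, in the codimension $\ge 2$ regime the two universal families restrict to the same family on $U$ up to a line bundle pulled back from $U$; since $x\in K(X)_\v$ forces $\chi(\EE_u\otimes x^\vee)=-\langle\v(x),\v\rangle=0$, the projection formula kills this ambiguity, so $\theta_{\v,\sigma_+}$ and $\theta_{\v,\sigma_-}$ agree on $U$ and therefore globally.

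I expect the main obstacle to be the bookkeeping of the Picard identification in the borderline small-contraction situation of \cref{Ex:ConfusingSmallContraction}, where $\langle\v,\w\rangle=0$ for an exceptional $\w$ yet the contraction $\pi^+$ is nevertheless small. There the strictly semistable locus is still a divisor, so the codimension $\ge 2$ argument is unavailable and one must rely on $\varphi^*$; yet $\Phi=R_T$ restricts to the identity on $U$, because $\Ext^\bullet(T,E)=0$ for $\sigma_0$-stable $E$ when $\langle\v,\w\rangle=0$. One therefore has to confirm that the $\varphi^*$-identification genuinely differs from the naive restriction-to-$U$ identification by exactly $R_\d$, i.e.\ that the discrepancy is carried by the class $\theta_{\v,\sigma_-}(\d)$. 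Verifying this compatibility, and more generally checking that cases (1)--(3) coincide exactly with the divisorial regime so that the stated dichotomy is exhaustive and the two regimes are assigned consistent identifications, is where the care lies; the remaining lattice computation of $\Phi_*$ is routine.
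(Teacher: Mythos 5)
Your overall route is the paper's route: identify $\Pic(M_{\sigma_+}(\v,L))$ with $\Pic(M_{\sigma_-}(\v,L))$ via the wall-crossing isomorphisms (spherical/exceptional reflections, derived duals, the iLGU transform) or via the common open subset $M_{\sigma_0}^s(\v,L)$, and then reduce everything to the statement $\Phi_*|_{\v^\perp}=R_\d$ in K-theory. The computation you call routine is exactly what the paper carries out (for Hilbert--Chow it is the Grothendieck-duality computation $\theta_{\v,\sigma_+}(\z)=-\theta_{\v,\sigma_-}(\z^\vee)$ on the explicit basis of $\v^\perp$). However, there is a genuine gap in your case analysis: the dichotomy ``$\codim(M_{\sigma_\pm}(\v,L)\setminus M_{\sigma_0}^s(\v,L))\geq 2$ or this locus is a divisor'' is not exhaustive, and the leftover cases are not a technicality. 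When $\WW$ is totally semistable, $M_{\sigma_0}^s(\v,L)=\varnothing$ and the strictly semistable locus is the whole moduli space; this occurs in your case (3) (Hilbert--Chow, by \cref{Lem: Hilbert-Chow}), in the overlaps of (1) and (2) (types (TSS3), (TSS4)), and --- crucially --- whenever $\v$ is \emph{not minimal} in its $G_{\HH_\WW}$-orbit, i.e.\ $\langle\v,\w\rangle<0$ for some effective spherical or exceptional $\w$ (type (TSS1)). For the first two kinds your assigned autoequivalences happen to still do the job, so only the labelling of the regimes is wrong; but for non-minimal $\v$ your proposal has no mechanism at all: none of (1)--(3) need hold, there is no common open subset, and yet the proposition asserts $\theta_{\v,\sigma_+}=\theta_{\v,\sigma_-}$ (or the appropriate reflection) for such $\v$ too. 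The missing idea is the reduction to the minimal class $\v_0$ via the compositions of spherical/exceptional twists $\Phi^\pm$ of \cref{Prop:NonMinimalIsomorphism,Prop:CompositionSphericalExceptional}, together with the key observation that $(\Phi^+)_*=(\Phi^-)_*$ on cohomology (the twists on the two sides of the wall use objects with the same Mukai vectors), so the birational map $(\Phi^-)^{-1}\circ\Phi^+$ introduces no discrepancy between the universal families in the K-group beyond, in the boundary case, a conjugated reflection $R_{(\Phi^+)_*^{-1}(\w)}$. Without this step the ``otherwise $\theta_{\v,\sigma_+}=\theta_{\v,\sigma_-}$'' half of the statement is unproved for non-minimal $\v$.

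On your flagged obstacle (\cref{Ex:ConfusingSmallContraction}): your instinct is correct and this is not left unresolved in the paper. Since $\varphi$ is induced by $R_T$, the pullback $\varphi^*\EE^-$ differs from $\EE^+$ in the K-group of the product by classes supported on copies of $T$ and $T(K_X)$, and pushing this discrepancy through the Donaldson--Mukai construction is precisely what converts the naive restriction-to-$U$ identification into $\theta_{\v,\sigma_-}\circ R_\w$; so the discrepancy is indeed carried by the reflection, exactly as you guessed. Had you carried out that K-group comparison (and the minimal-vector reduction above), your argument would close.
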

\begin{proof}
We claim first that we may identify $\Pic(M_{\sigma_+}(\v,L))$ with $\Pic(M_{\sigma_-}(\v,L))$.  Indeed, by \cref{classification of walls,Prop:NonMinimalIsomorphism,Prop:OrthgonalIsomorphism 2,Prop:CompositionSphericalExceptional}, $M_{\sigma_+}(\v,L)\cong M_{\sigma_-}(\v,L)$ if $\langle\v,\w\rangle=0$ for a spherical or exceptional $\w\in C_\WW\cap\HH_\WW$; in particular, the two moduli spaces are isomorphic when $\WW$ is a totally semistable wall of types (TSS3) and (TSS4) and when $\WW$ is a Brill-Noether wall.  The two moduli spaces are also isomorphic when $\WW$ is a wall of Hilbert-Chow or Li-Gieseker-Uhlenbeck type, as we showed in the proof of part \ref{enum:MT1-birational map given by FM transform} of \cref{Thm:MainTheorem1}.  For walls of iLGU type, we proved that $M_{\sigma_+}(\v,L)\cong M_{\sigma_-}(\v,L)$ in \cref{Prop:phi isomorphism odd rank,Prop:phi isomorphism even rank}.  According to \cref{classification of walls}, $M_{\sigma_+}(\v,L)$ and $M_{\sigma_-}(\v,L)$ are isomorphic away from a locus of codimension at least two in all other cases.

Now let us move on to proving the claims about the compatibility of the $\theta$-maps.  As usual, let $\v_0$ be the minimal Mukai vector in the $G_\HH$-orbit of $\v$, and if $\v\in\CC_n$, then we consider the composition of spherical or exceptional twists $\Phi^{\pm}$ giving the isomorphism $M_{\sigma_\pm}(\v)\to M_{\sigma_\pm}(\v_0)$ (resp., $M_{\sigma\pm}(\v)\to M_{\sigma_\mp}(\v_0)$) if $n$ is even (resp., if $n$ is odd). 

Suppose first that $\codim(M_{\sigma_+}(\v_0,L_0)\backslash M_{\sigma_0}^s(\v_0,L_0))\geq 2$.  Then the two moduli spaces $M_{\sigma_\pm}(\v_0,L_0)$ share a common open subset on which the universal families agree.  As the complement of $M_{\sigma_0}^s(\v_0,L_0)$ has codimension at least two and the maps $\theta_{\v_0,\sigma_\pm}$ are determined by their restriction to curves in this open subset, we get $\theta_{\v_0,\sigma_+}=\theta_{\v_0,\sigma_-}$.  For $\v$, the autoequivalence giving the birational map $M_{\sigma_+}(\v,L)\dashrightarrow M_{\sigma_-}(\v,L)$ is induced by $(\Phi^-)^{-1}\circ\Phi^+$.  As the classes of the spherical/exceptional objects occuring in $\Phi^+$ and $\Phi^-$ are identical, this autoequivalence does not change the class of the universal family in the K-group.  Therefore, we again have $\theta_{\v,\sigma_+}=\theta_{\v,\sigma_-}$.

Now suppose that $\codim(M_{\sigma_+}(\v_0,L_0)\backslash M_{\sigma_0}^s(\v_0,L_0))=1$.  Then by \cref{Prop:LGU walls of low codimension,Prop: 1-1 case totally semistable and codim 1} and \cref{Lem:non-isotropic no totally semistable wall}, we must have either $\langle\v_0,\w\rangle=0$ for a spherical or exceptional class $\w\in C_\WW\cap\HH_\WW$ or $\langle\v_0,\u\rangle=\ell(\u)$ for a primitive isotropic $\u\in\HH_\WW$.  

In the first case, we have that $\w=\w_0$ or $\w_1$ and $R_{T_0^+}$ or $R_{T_1^+}$ induces an isomorphism $M_{\sigma_+}(\v_0,L_0)\to M_{\sigma_-}(\v_0,L_0)$ by \cref{Prop:OrthgonalIsomorphism 2,Prop:CompositionSphericalExceptional}.  Considering the classes of the universal families in the K-group we see that $\theta_{\v_0,\sigma_+}=\theta_{\v_0,\sigma_-}\circ R_\w$.  Similarly, we have seen that $(\Phi^{-})^{-1}\circ R_{T_i^+}\circ\Phi^+$ induces an isomorphism $M_{\sigma_+}(\v,L)\to M_{\sigma_-}(\v,L)$ for $i=0$ or 1, so taking the classes of the universal families in the K-group we see that \[\theta_{\v,\sigma_+}=\theta_{\v,\sigma_-}\circ ((\Phi^{-})_*^{-1}\circ R_{\w}\circ(\Phi^+)_*)=\theta_{\v,\sigma_-}\circ R_{(\Phi^+)_*^{-1}(\w)},\] because $(\Phi^+)_*=(\Phi^-)_*$, as we noted above, and $\Phi\circ R_T\circ\Phi^{-1}=R_{\Phi(T)}$ for any autoequivalence $\Phi$ and spherical/exceptional object $T$ \cite[Lemma 8.21]{Hu1}.  Setting $\w':=(\Phi^+)_*^{-1}(\w)$, we get $\theta_{\v,\sigma_+}=\theta_{\v,\sigma_-}\circ R_{\w'}$ for the spherical/exceptional $\w'\in C_\WW\cap\HH_\WW$ such that $\langle\v,\w'\rangle=0$, as claimed.

In the second case, \cite{BM14b} or the arguments in section 10
showed that $\theta_{\v_0,\sigma_+}=\theta_{\v_0,\sigma_-} \circ R_\d$.
As in the previous cases, we see that $$\theta_{\v,\sigma_+}=\theta_{\v,\sigma_-}\circ R_{(\Phi^+)_*^{-1}(\d)},$$ giving the claim for $\v$.

Finally, we must consider when $M_{\sigma_0}^s(\v_0,L)=\varnothing$.  By \cref{classification of walls}, we then have $\v=\v_0$, and either $\WW$ is a Hilbert-Chow wall or $\langle\v,\w\rangle=0$ for a spherical or exceptional $\w\in C_\WW\cap\HH_\WW$ and $\langle\v,\u\rangle=\ell(\u)$ for primitive isotropic $\u\in\HH_\WW$.  These latter cases have already been covered above, so it only remains to consider the case of a Hilbert-Chow wall.  We saw in the proof of part \ref{enum:MT1-birational map given by FM transform} of \cref{Thm:MainTheorem1} that $\Phi(\blank)=(\blank)^\vee[2]$ induces an isomorphism between $M_{\sigma_+}(\v,L)$ and $M_{\sigma_-}(\v,L)$, where we may assume that $\v=-(1,0,\tfrac{1}{2}-n)$, $\u=(0,0,1)$, and $L=\OO_X$.  Then $\v^\perp$ is spanned by $-\d=(1,0,n-\frac{1}{2})$ and classes of the form $(0,c,0)$ for $c\in\Num(X)$.  As the universal family $\EE_-$ of $M_{\sigma_-}(\v,L)$ is given by $\EE_+^\vee[2]$, where $\EE_+$ is the universal family of $M_{\sigma_+}(\v,L)$, we have by Grothendieck duality: $$\theta_{\v,\sigma_+}(\z)=-\theta_{\v,\sigma_-}(\z^\vee).$$  In particular, $\theta_{\v,\sigma_+}((1,0,n-\frac{1}{2}))=-\theta_{\v,\sigma_-}((1,0,n-\frac{1}{2}))$ and $$\theta_{\v,\sigma_+}((0,c,0))=-\theta_{\v,\sigma_-}((0,-c,0))=\theta_{\v,\sigma_-}((0,c,0)).$$  On the other hand, $R_\d((1,0,n-\frac{1}{2}))=-(1,0,n-\frac{1}{2})$ and $R_\d((0,c,0))=(0,c,0)$, so we see that indeed $$\theta_{\v,\sigma_+}=\theta_{\v,\sigma_-}\circ R_\d,$$ as required.
\end{proof}

Since $\xi_{\sigma_0} \in (\HH_\WW^\perp)_\R$, so in particular $\xi_{\sigma_0}\in(\d^\perp\cap\v^\perp)_\R$, we have 
\begin{equation}\label{eqn:nef divisors agree at wall}
\theta_{\v,\sigma_+}(\xi_{\sigma_0})=
\theta_{\v,\sigma_-}(R_\d(\xi_{\sigma_0}))=\theta_{\v,\sigma_-}(\xi_{\sigma_0})
\end{equation}
by Proposition \ref{Prop:Pic-relation}.  It follows that the maps $\ell_{\CC^\pm}$ agree along $\WW$ and thus fit together to give a piece-wise analytic continuous map $$\ell:\Stabd(X)\to\Num(M_{\sigma_+}(\v,L)),$$ as claimed.  It is important to note that in any of the three cases enumerated in \cref{Prop:Pic-relation}, the image under $\ell$ of a path $\sigma:[-\tfrac{1}{2},\frac{1}{2}]\to\Stabd(X)$ with $\sigma(-\tfrac{1}{2})=\sigma_+$, $\sigma(0)=\sigma_0$, and $\sigma(\frac{1}{2})=\sigma_+$ is a path contained entirely in the nef cone of $M_{\sigma_+}(\v,L)$ because of the action of the reflection $R_\d$.  Indeed, as $\sigma(t)$ approaches $\sigma(0)=\sigma_0$, $\ell(\sigma(t))$ approaches $\ell(\sigma_0)$, but then $R_\d$ causes $\ell(\sigma(t))$, $t>0$, to bounce off the wall of $\Nef(M_{\sigma_+}(\v,L))$ containing $\ell(\sigma_0)$ and continue back into the interior.  

Note that while most of these bouncing walls correspond either to divisorial contractions or $\P^1$-fibrations, and thus extremal walls of $\Mov(M_{\sigma}(\v,L))$, the Enriques case differs from the K3 case in that there are bouncing walls that induce small contractions.  Indeed, in the presence of exceptional classes $\w\in C_\WW\cap\HH_\WW\cap\v^\perp$, $\WW$ is a bouncing wall inducing a small contraction if $\v^2\geq 3$ as in \cref{Ex:ConfusingSmallContraction}.

\begin{Rem}
Recall that the contraction morphisms $\pi^\pm:M_{\sigma_\pm}(\v,L)\to \overline{M}_\pm$ are defined by the linear systems
$|n \theta_{\v,\sigma_\pm}(\xi_{\sigma_0})|$ $(n \gg 0)$.  From \eqref{eqn:nef divisors agree at wall}, these  
define morphisms $\pi^\pm:M_{\sigma_{\pm}}(\v,L) \to \P$
such that $\im \pi^+=\im \pi^-$, and thus $\overline{M}_+=\overline{M}_-$, as the normalization of $\im\pi^+=\im\pi^-$.  

Let $M'_{\sigma_0}(\v)$ be the set of 
S-equivalence classes
of $\sigma_0$-semistable objects $E$ with $\v(E)=\v$.
Then $\im \pi^\pm$ is a subset of $M'_{\sigma_0}(\v)$, which is a proper subset in general. 
\end{Rem}

\subsection{Proof of \cref{Cor:Picard}}
Now we shall prove Corollary \ref{Cor:Picard}.
For the computation of the Picard groups in odd rank cases,
we can use deformation of Enriques surfaces, since
$\theta_{\v,\sigma}$ is well-defined for a relative moduli space over
a family of Enriques surfaces.
Thus in either case of \cref{Cor:Picard}, we may assume that $\varpi^*\Pic(X)=\Pic(\widetilde{X})$, where $\widetilde{X}$
is the covering K3 surface.
By \cref{Thm:application1,Thm:application2,Prop:Pic-relation}, 
it is sufficient to prove that $\theta_{\v,\sigma}$ is an isomorphism 
for a special pair of $\v$ and $\sigma$.  Specifically, we may assume that $\v=(1,0,\tfrac{1}{2}-n)$ or 
$\v=(2,D,a)$ with
$(D,\eta)=1$ for a divisor $\eta$ with $(\eta^2)=0$.
In the first case,
$\theta_{\v,\sigma}$ is an isomorphism if
$M_\sigma(\v,L)=\Hilb^n(X)$ $(n \geq 2)$ (see the proof of \cite[Corollary A.4]{Yos16a}).
So we shall treat the second case. 

Replacing $\v$ by $\v \exp(k \eta)$, we may assume that
$D^2=-2,-4$.
Since $X$ is unnodal,
we can take an ample divisor $H$ with $(D,H)=0$.
We set $n:=\frac{D^2}{2}+1-a$.
Then 
$$
\v=(2,D,\tfrac{D^2}{2}+1-n)=\v(\OO_X(D))+\v(I_Z),\;
I_Z \in \Hilb^n(X).
$$
We take $\beta_0 \in \Pic(X)_{\Q}$
with $(\beta_0,D)=\frac{D^2}{2}+n$.
Then $\chi(\OO_X(D-\beta_0))=\chi(I_{Z}(-\beta_0))$.
We take $\beta  \in \Pic(X)_{\Q}$ in a neighborhood of $\beta_0$
such that $\chi(\OO_X(D-\beta))<\chi(I_{Z}(-\beta))$.
We take a stability condition $\sigma$ such that $M_\sigma(\v,L)$ is the 
moduli space of $\beta$-twisted stable sheaves $M_H^\beta(\v,L)$.
For a non-trivial extension
$$
0 \to \OO_X(D) \to E \to I_{Z} \to 0,\; Z \in \Hilb^n(X),
$$
$E$ is a $\beta$-twisted stable sheaf.  Let us denote by $\ZZ$ the universal subscheme $\ZZ\subset X\times\Hilb^n(X)$ and by $p_X$, $p_{\Hilb^n(X)}$ the projections of $X\times\Hilb^n(X)$ onto its first and second factors, respectively.
Since $H^0(X,\OO_X(\pm D))=H^0(X,\OO_X(\pm D+K_X))=0$, we see that 
$\VV:=\lExt^1_{p_{\Hilb_X^n}}(I_{\ZZ},p_X^*\OO_X(D))$ 
is a locally free sheaf 
on $\Hilb^n(X)$ of rank $n-1-\tfrac{D^2}{2}$.
For $n>0$, let $P=\P(\VV^\vee)$ and let $\pi:P \to \Hilb^n(X)$ be the projective bundle
parameterizing non-trivial extensions
of $I_{Z}$ by $\OO_X(D)$ with universal family of extensions
\begin{equation}\label{eq:beta}
0 \to \OO_X(D) \boxtimes \OO_P(\lambda) \to \EE \to
\pi^*I_{\ZZ} \to 0 ,
\end{equation}
where
$\OO_P(\lambda)$ is the tautological line bundle on $P$.
Hence we have a morphism 
$\psi:P \to M_\sigma(\v,L)$.

\begin{Lem}
$\theta_{\v,\sigma}$ is an isomorphism if $\v^2 \geq 4$.
\end{Lem}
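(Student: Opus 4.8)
The plan is to exploit the projective bundle $P=\P(\VV^\vee)$ together with its classifying map $\psi:P\to M_\sigma(\v,L)$ and the universal extension \eqref{eq:beta} to pin down $\psi^*\theta_{\v,\sigma}$ completely, extract injectivity of $\theta_{\v,\sigma}$ from this, and then upgrade injectivity to an isomorphism by a rank count. First I would record that the hypothesis $\v^2\ge 4$ does the combinatorial work: since $\v^2=4n-4-D^2$ and $\rk\VV=n-1-\tfrac{D^2}{2}$, the bound $\v^2\ge 4$ forces $\rk\VV\ge 2$ for both $D^2=-2$ and $D^2=-4$. Hence $\pi:P\to\Hilb^n(X)$ is an honest projective bundle with fibers of positive dimension, so $\Pic(P)\cong\pi^*\Pic(\Hilb^n(X))\oplus\Z\,\OO_P(\lambda)$.

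The key computation is the formula for $\theta_\EE=\psi^*\theta_{\v,\sigma}$. From \eqref{eq:beta} one has, in $K(X\times P)$, the identity $[\EE]=[p_X^*\OO_X(D)\otimes p_P^*\OO_P(\lambda)]+[(1_X\times\pi)^*I_\ZZ]$. Applying $\det\circ Rp_{P!}\bigl((-)\otimes p_X^*(x^\vee)\bigr)$ and using the projection formula on the first summand and flat base change along $\pi$ on the second, I expect to obtain
\begin{equation*}
\psi^*\theta_{\v,\sigma}(x)=\theta_\EE(x)=\OO_P\bigl(m(x)\lambda\bigr)\otimes\pi^*\theta_{I_\ZZ}(x),\qquad m(x)=-\langle\v(x),\v(\OO_X(D))\rangle,
\end{equation*}
where $\theta_{I_\ZZ}:K(X)\to\Pic(\Hilb^n(X))$ is the Donaldson--Mukai map for the universal ideal sheaf. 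Injectivity of $\theta_{\v,\sigma}$ then follows cleanly: if $\theta_{\v,\sigma}(x)=0$ then $\theta_\EE(x)=0$, and the direct-sum decomposition of $\Pic(P)$ forces both $m(x)=0$ and $\theta_{I_\ZZ}(x)=0$. Since $\theta_{I_\ZZ}$ restricts to an isomorphism $\v(I_Z)^\perp\isomor\NS(\Hilb^n(X))$ for $n\ge 2$, the vanishing $\theta_{I_\ZZ}(x)=0$ confines $\v(x)$ to the line $\Q\,\v(I_Z)$; writing $\v(x)=t\,\v(I_Z)$ and using $m(x)=0$ together with $\langle\v(I_Z),\v(\OO_X(D))\rangle=n-1-\tfrac{D^2}{2}=\rk\VV\neq 0$ yields $t=0$, so $x=0$ in $K(X)_\v$.

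The main obstacle is surjectivity, because $\dim M_\sigma(\v,L)-\dim P=\rk\VV\ge 2$, so $\psi(P)$ has codimension at least two and $\psi^*$ cannot be used to transport information from $\Pic(P)$ back to $\Pic(M_\sigma(\v,L))$; the bundle gives injectivity but not the full Picard group. To finish I would match ranks: $\rk K(X)_\v=\rho(X)+1$, and since $\v^2\ge 4$ is even with $\ell(\v)=1$ (as $c_1(\v)=D$ satisfies $(D,\eta)=1$), \cref{Thm:application2} applies to give $h^{1,0}(M_\sigma(\v,L))=0$ and a birational model $M_H(\w,L')$, from which $\Pic(M_\sigma(\v,L))$ is free of rank $\rho(X)+1$. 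An injective map between free groups of equal rank has finite-index image, and the remaining point is saturation (index one). Here I would reduce, as in the opening of the proof of \cref{Cor:Picard}, to the case $\varpi^*\Pic(X)=\Pic(\widetilde X)$ and invoke the corresponding generation statement on the covering K3 surface, or argue directly that the image is primitive using that $m(x)$ attains the value $\pm 1$ on $\v^\perp$ while the $\Hilb$-component already surjects onto $\NS(\Hilb^n(X))$. This saturation step, rather than the injectivity computation, is where I expect the real difficulty to lie.
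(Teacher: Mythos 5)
Your core computation is the paper's own: the identity $\psi^*\theta_{\v,\sigma}(x)=\OO_P(m(x)\lambda)\otimes\pi^*\theta_{I_\ZZ}(x)$ is exactly \eqref{eqn:DonaldsonMukai for P}, derived in the same way from \eqref{eq:beta}, and injectivity of $c_1\circ\psi^*\circ\theta_{\v,\sigma}$ is what the paper reads off from it. Two corrections in your injectivity half, though. First, a misidentification: for $n\geq 2$ the kernel of $\theta_{I_\ZZ}$ on $K(X)$ (mod torsion) is the line $\Q\,\v(\C_x)$, not $\Q\,\v(I_Z)$ --- the paper records $\theta_0(\C_p)=\OO_{\Hilb^n(X)}$, and $\v(I_Z)$ cannot lie in the kernel, since then the kernel would have rank two, contradicting injectivity on $\v(I_Z)^\perp$. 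Your conclusion survives because $m(\C_x)=1\neq 0$ (in fact $\C_x\notin K(X)_\v$, so $\theta_{I_\ZZ}|_{K(X)_\v}$ is already injective mod torsion). Second, and more seriously: $\v^2\geq 4$ does not put you in the range $n\geq 2$. Since $\v^2=-D^2+4(n-1)$ with $D^2\in\{-2,-4\}$, the value $\v^2=4$ occurs exactly when $n=1$ and $D^2=-4$, where $\Hilb^1(X)=X$ and the input you invoke, $\v(I_Z)^\perp\cong\NS(\Hilb^n(X))$, is false (the ranks are $\rho+1$ and $\rho$). The paper treats this case by a separate direct computation showing that $c_1\circ\psi^*\circ\theta_{\v,\sigma}$ is then an isomorphism onto $\NS(X)\oplus\Z\lambda$.

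The genuine gap is in your surjectivity step, on both counts. (a) $\Pic(M_\sigma(\v,L))$ is not free: \cref{Thm:application2} itself gives $\pi_1(M_\sigma(\v,L))\cong\Z/2\Z$, so $\Pic$ has $2$-torsion; and its rank does not follow from $h^{1,0}=0$ plus the existence of a birational model --- $h^{1,0}=0$ only says $\Pic=\NS$, and computing $\rk\NS$ of that model is precisely the nontrivial input. The paper gets both facts, torsion $\cong\Z/2\Z$ and $\rk\Pic(M_\sigma(\v,L))=\rk K(X)_\v$, from \cite[Theorem 5.1]{Sac12}; without this your rank count has no source. (b) Neither of your saturation routes works as stated. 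The K3-cover route reintroduces exactly the index-two ambiguity you are trying to kill: $\varpi^*$ doubles the Mukai pairing, and comparing Donaldson--Mukai maps through the cover produces $2\theta_{\v,\sigma}$ rather than $\theta_{\v,\sigma}$, cf.\ \eqref{eqn:PullBackDonaldsonMukai}. The direct route rests on a false premise: restricted to $K(X)_\v$, the $\Hilb$-component does \emph{not} surject onto $\NS(\Hilb^n(X))$ --- by \eqref{eqn:DonaldsonMukai for P} it equals $r\delta+(r(2+\tfrac{D^2}{2}-n)+2s)\eta+\xi$, so for $r=0$ the $\eta$-coefficient is even and the projection has index two --- and even if both projections were surjective, that would not imply saturation (the sublattice $\{(a,b)\in\Z^2 : a\equiv b\pmod 2\}$ surjects onto both factors but is not saturated). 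What actually closes the argument, and is what the paper does, is to read saturation directly off the matrix of generators: $\delta+(n-1)\lambda+(2+\tfrac{D^2}{2}-n)\eta$ and $\lambda-2\eta$ have $(\delta,\lambda)$-minor equal to $\pm1$, so together with $D^\perp$ they span a direct summand of $\NS(P)$; combined with Sacca's rank and torsion statements this yields the isomorphism.
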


\begin{proof}
We note that $\v^2=-D^2+4(n-1)$.
Hence $\v^2 \geq 4$ if and only if 
$n \geq 2$, or $n=1$ and $D^2=-4$. 
We fix $\eta \in \NS(X)$ with $(\eta,D)=1$. 
Then we have a decomposition $\NS(X)=D^\perp \oplus \Z \eta$.
For $\xi \in \NS(X)$, let $u_\xi \in K(X)$ be an element of $K(X)$ such that 
$\v(u_\xi)=(0,\xi,0)$ and $c_1(u_\xi)=\xi$.
Any $u\in K(X)$ can be written $u=r \OO_X+s \C_x+u_\xi$, so from $\langle\v,\v(u_\eta)\rangle=1$ it follows that
\begin{equation}
u-\langle \v,\v(u) \rangle u_\eta=
r \OO_X+s \C_x+u_{\xi-(\xi,D)\eta}+
(r(2+\tfrac{D^2}{2}-n)+2s)u_\eta \in K_\v(X).
\end{equation} 
Hence any $u \in K(X)_\v$ is written as
\begin{equation}\label{eq:u}
 u=r \OO_X+s \C_x+u_{\xi}+
(r(2+\tfrac{D^2}{2}-n)+2s)u_\eta,\; r,s \in\Z, \xi \in D^\perp.
\end{equation}

For any $\alpha\in K(X)$, we set 
$\theta_0(\alpha):=\det p_{\Hilb^n(X) !}(I_{\ZZ} \otimes p_X^*(\alpha^{\vee}))$.
Then $\theta_0(\C_p)=\OO_{\Hilb^n(X)}$.
We have an injective homomorphism
$\NS(X) \to \NS(\Hilb^n(X)) (\cong \Pic(\Hilb^n(X)))$ 
by sending $\xi \in \NS(X)$ to $c_1(\theta_0(u_\xi))$.
We regard $\NS(X)$ as a subgroup of $\NS(\Hilb^n(X))$
by this homomorphism.
If $n \geq 2$, then
$\NS(\Hilb^n(X))=\Z \delta \oplus \NS(X)$ with 
$\delta:=c_1(\theta_0(\OO_X))$.
From the exact sequence \eqref{eq:beta} and \cite[Lemma 8.1.2]{HL10}, we see that writing any $u\in K(X)_\v$ as in \eqref{eq:u},  we have
\begin{equation}\label{eqn:DonaldsonMukai for P}
\begin{split}
c_1 (p_{P!}(\EE \otimes p_X^*(u^{\vee})))=&
-\langle \v(\OO_X(D)),\v(u) \rangle\lambda
+\pi^*c_1(\theta_0(u))\\
=&
r(\delta+(n-1)\lambda+(2+\tfrac{D^2}{2}-n)\eta)
-s(\lambda-2\eta)+\xi,
\end{split}
\end{equation}
where we have surpressed the $\pi^*$ in the second equality since $\pi^*:\NS(\Hilb^n(X))\to\NS(P)$ is injective and
$$
\NS(P)=\NS(X) \oplus \Z \delta \oplus \Z \lambda=
D^\perp \oplus \Z \eta \oplus \Z \delta \oplus \Z \lambda.
$$
By \cite[Theorem 8.1.5]{HL10}, the homomorphism $K(X)_\v\to\NS(P)$ in \eqref{eqn:DonaldsonMukai for P} is precisely $c_1$ composed with the pull-back along the classifying morphism $\psi$ of the universal Donaldson-Mukai map $\theta_{\v,\sigma}$, it follows from the formula in \eqref{eqn:DonaldsonMukai for P} that
$$
c_1 \circ \psi^* \circ \theta_{\v,\sigma}:K(X)_\v \to \Pic(M_H^\beta(\v,L)) \to \Pic(P) \to \NS(P)
$$
is injective (up to torsion) and its image is a direct summand of $\NS(P)$.

If $n=1$ and $D^2=-4$, then $\rk\VV=n+1=2$ and
$\NS(P)=\NS(X) \oplus \Z \lambda$.
Since  
$2+\tfrac{D^2}{2}-n=-1$, 
\begin{equation}
c_1(p_{P!}(\EE \otimes p_X^*(u^{\vee})))=
-r \eta-s(\lambda-2\eta)+\xi.
\end{equation}
Hence $c_1 \circ \psi^* \circ \theta_{\v,\sigma}$
is an isomorphism.

In either case, it follows that $\theta_{\v,\sigma} (K(X)_\v)$ is a direct summand of $\Pic(M_\sigma(\v,L))$.
Since the torsion submodule of $\Pic(M_\sigma(\v,L))$ is
isomorphic to 
$\Z /2 \Z$ and $\rk \Pic(M_\sigma(\v,L))=\rk K(X)_\v$
(\cite[Theorem 5.1]{Sac12}),
$\theta_{\v,\sigma}$ is an isomorphism.
\end{proof}

\section{Appendix: Some non-normal moduli spaces of sheaves}\label{App: exceptional case}
In this final section, we shall describe the two fundamental outlying examples of moduli spaces of sheaves (Bridgeland semistable objects) on Enriques surfaces.  Indeed, by \cref{prop:pss} for $\sigma\in\Stabd(X)$ generic with respect to $\v$ satisfying $\v^2>0$, $M_{\sigma}(\v)$ is normal unless 
\begin{enumerate}
    \item $\v=2\v_0$ with $\v_0^2=1$, or
    \item $\v^2=2$ and $X$ is nodal.
\end{enumerate} 
Moreover, these are precisely the exceptions to $M_\sigma^s(\v)$ having torsion canonical divisor and Gorenstein, terminal, l.c.i. singularities \cite[Theorem 8.2]{Nue14b}.  Similarly, we have seen in \cref{Thm:exist:nodal} when $\v$ is primitive $M_\sigma(\v,L)$ is irreducible if $X$ is unnodal and can only be reducible for nodal $X$ if $\v^2=2$.  

We will prove here that the failure of normality in these two cases is due to the presence of multiple irreducible components.  In the first case, when $\v=2\v_0$ with $\v_0^2=1$, we describe a conneected component of $M_\sigma(\v,L)$ for $L\equiv K_X\pmod 2$ consisting of two irreducible components.  In the second case, when $\v^2=2$, we describe all of $M_\sigma(\v,L)$ for $L\equiv D+\frac{r}{2}K_X\pmod 2$, which consists of precisely two irreducible components.  In both cases, we also describe the other component parametrizing objects with the other determinant.  

\subsection{Some components of $M_\sigma(\v,L)$ when $\v=2\v_0$ with $\v_0^2=1$}
By \cite[Theorem 4.6]{Yos03}, there exists a Fourier-Mukai transform $\Phi:\Db(X)\to\Db(X)$ such that $\Phi_*(\v_0)=(1,0,-\frac{1}{2})$.  By \cref{Thm:MainTheorem1}, it follows that $M_\sigma(\v)$ is birational to the moduli space $M_\omega(2,0,-1)$ of $\omega$-Gieseker semistable sheaves of Mukai vector $2(1,0,-\frac{1}{2})=(2,0,-1)$ for a generic ample divisor $\omega$.  In particular, to understand the number of irreducible components, we are reduced to analyzing the moduli space $M_\omega(2,0,-1)$.

\subsubsection{A connected component of $M_\omega((2,0,-1),K_X)$}

For the Mukai vector $\v=(2,0,-1)$, we shall describe a connected component
of $M_\omega(\v,K_X)$, which is a refinement of \cite[Remark 2.3]{Yos16a}.

Recall that $\varpi:\widetilde{X} \to X$ is the quotient map from the covering K3 surface $\widetilde{X}$ with covering involution $\iota$.
We set $E_0:=\OO_X \oplus \OO_X(K_X)$ and begin by concretely studying the singular locus of the stable locus of $M_\omega^s(\v,K_X)$.
\begin{Lem}\label{Lem:ext^2}
In the open subscheme $M_\omega^s(\v,K_X)$ of stable sheaves,
the singular locus is 
\begin{equation}\label{eq:sing}
\Set{ \varpi_*(I_W(D)) \ | \ I_W \in \Hilb^n(\widetilde{X}),\;\iota^*(D)=-D, n=(D^2)/2+2, I_W(D) \not \cong \iota^*(I_W(D)) },
\end{equation}
where $D=0$ or $(D^2)=-4$.
In particular, the singular locus consists of isolated singular points in 
the open subscheme of locally free sheaves, and 
an irreducible 4-dimensional subscheme. 
\end{Lem}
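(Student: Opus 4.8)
The plan is to locate the singular points of $M_\omega^s(\v,K_X)$ by deformation theory, recast the resulting condition as a descent statement along the étale double cover $\varpi$, and then extract the parametrizing data $(W,D)$ of \eqref{eq:sing} by a Mukai-vector and determinant computation. Throughout, $\v=(2,0,-1)$, so $\v^2=4$ and $\dim M_\omega(\v,K_X)=\v^2+1=5$ by \cref{prop:pss}.

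First I would treat the deformation theory. For a stable sheaf $E$ with $\v(E)=\v$, the tangent space to the fixed-determinant moduli space at $[E]$ is $\Ext^1_0(E,E)$ and the obstructions lie in $\Ext^2_0(E,E)=\ker\big(\Ext^2(E,E)\to H^2(\OO_X)\big)$. Since $X$ is Enriques, $H^2(\OO_X)\cong H^0(K_X)^\vee=0$, so the obstruction space is all of $\Ext^2(E,E)$. By Serre duality $\Ext^2(E,E)\cong\Hom(E,E(K_X))^\vee$, and as $E$ and $E(K_X)$ are stable of equal slope, this is nonzero precisely when $E\cong E(K_X)$, in which case it is one-dimensional. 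A Riemann--Roch count $\ext^1(E,E)=\ext^0(E,E)+\ext^2(E,E)-\chi(E,E)=1+\ext^2(E,E)+\v^2$ then shows $\ext^1=5$ when $E\not\cong E(K_X)$ (so $[E]$ is a smooth point of the five-dimensional moduli space), whereas $\ext^1=6>5$ when $E\cong E(K_X)$, forcing $[E]$ to be singular. Thus the singular locus is exactly $\{[E]\in M_\omega^s(\v,K_X): E\cong E(K_X)\}$.

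Next I would establish the descent: a stable $E$ satisfies $E\cong E(K_X)$ if and only if $E\cong\varpi_*(F)$ for a stable sheaf $F$ on $\X$ with $F\not\cong\iota^*(F)$. The direction $(\Leftarrow)$ is the projection formula $\varpi_*(F)\otimes K_X\cong\varpi_*(F\otimes\varpi^*K_X)\cong\varpi_*(F)$, using $\varpi^*K_X=\OO_\X$ (as $\varpi$ is étale), while stability of $\varpi_*(F)$ under $F\not\cong\iota^*(F)$ is exactly \cref{Lem:pullback remains stable}. The direction $(\Rightarrow)$ is standard descent along the $\Z/2\Z$-cover (compare the proof of \cite[Theorem 8.1]{Nue14b}): an isomorphism $E\cong E(K_X)$ provides the extra datum needed to split off a $\iota$-translate from $\varpi^*E$ and descend. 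Since $\rk\varpi_*(F)=2\rk F=\rk E=2$, we get $\rk F=1$; a rank-one torsion-free sheaf is automatically stable and of the form $F=I_W(D)$ with $W\in\Hilb^n(\X)$ and $D\in\Pic(\X)$.

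Then I would match invariants. As $\varpi$ is étale, $\ch(\varpi_*I_W(D))=\varpi_*\ch(I_W(D))$, and $\varpi_*\circ\iota^*=\varpi_*$ gives $\varpi_*D=-\varpi_*D$, hence $\varpi_*D=0$ in $H^2(X,\Q)$; combined with the required stability this pins down $\iota^*(D)=-D$. Using $\varpi^*\varpi_*(I_W(D))\cong I_W(D)\oplus\iota^*I_W(D)$ and the scaling $\langle\varpi^*a,\varpi^*a\rangle=2\langle a,a\rangle$, one finds $\langle\v(E),\v(E)\rangle=\langle\v(F),\v(F)\rangle+\langle\v(F),\iota^*\v(F)\rangle$, so that $\v(\varpi_*I_W(D))=(2,0,-1)$ is equivalent to the single relation $n=(D^2)/2+2$. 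The determinant is $\det\varpi_*(I_W(D))=\mathrm{Nm}(\OO(D))\otimes\det\varpi_*\OO_\X$ with $\det\varpi_*\OO_\X=K_X$; a norm computation shows $\det=K_X$ forces $D^2\equiv0\pmod4$, with $D^2\equiv2\pmod4$ landing instead on $M_\omega^s(\v,\OO_X)$. Together with $n\ge0$ (so $D^2\ge-4$) and negative-definiteness of the anti-invariant part $\{\iota^*D=-D\}$ of $\NS(\X)$ (so $D^2\le0$ and even), this leaves exactly $D=0$ or $D^2=-4$, which is \eqref{eq:sing}. Finally, for $D=0$ we have $n=2$ and $W\mapsto\varpi_*(I_W)$ is finite onto its image (since $\varpi^*\varpi_*I_W\cong I_W\oplus\iota^*I_W$ recovers $\{W,\iota W\}$), so the locus is irreducible of dimension $4=\dim\Hilb^2(\X)$ and non-locally-free; for $D^2=-4$ we have $n=0$, $F=\OO(D)$ is a line bundle, $\varpi_*\OO(D)$ is locally free, and there are only finitely many such $D$ up to $D\mapsto-D$, giving isolated points. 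The main obstacle I anticipate is the bookkeeping of the third paragraph, namely descending a $K_X$-fixed stable sheaf to $\X$ with the correct stability and computing $\mathrm{Nm}(\OO(D))$ sharply enough to split $D^2=-4$ (determinant $K_X$) from $D^2=-2$ (determinant $\OO_X$); by contrast the deformation-theoretic identification of the singular locus and the numerical $\v^2$-matching are routine.
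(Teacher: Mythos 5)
Your overall route coincides with the paper's: identify the singular points of $M_\omega^s(\v,K_X)$ with the stable sheaves fixed by $-\otimes K_X$ via $\Ext^2(E,E)\cong\Hom(E,E(K_X))^\vee$, descend such a sheaf to $E\cong\varpi_*(I_W(D))$ on the K3 cover, and extract $\iota^*D=-D$ and $n=(D^2)/2+2$ from the Mukai vector; the structure statements for $D=0$ and $(D^2)=-4$ at the end also match. One slip along the way: ``$\varpi_*\circ\iota^*=\varpi_*$ gives $\varpi_*D=-\varpi_*D$'' is not a valid deduction (and stability plays no role there); the correct derivation, which is the paper's, is $D+\iota^*D=c_1\bigl(\varpi^*\varpi_*(I_W(D))\bigr)=\varpi^*c_1(E)=0$, since $c_1(E)$ is torsion and $\NS(\widetilde{X})$ is torsion-free.

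The genuine gap is your exclusion of $(D^2)=-2$. You propose to rule it out by a ``norm computation'' asserting that $\det\varpi_*(I_W(D))=\mathrm{Nm}(\OO(D))\otimes K_X$ equals $K_X$ exactly when $D^2\equiv 0\pmod 4$. No such formula can hold in general: since $\varpi^*\mathrm{Nm}(\OO(D))=\OO(D+\iota^*D)=\OO_{\widetilde{X}}$, the norm restricted to anti-invariant classes is a group homomorphism $\Pic(\widetilde{X})^-\to\ker(\varpi^*)=\{\OO_X,K_X\}\cong\Z/2\Z$, whereas $D\mapsto \tfrac{D^2}{2}\bmod 2$ fails to be additive on any lattice containing two classes with odd intersection number. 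The anti-invariant lattice $U\oplus U(2)\oplus E_8(-2)\subset H^2(\widetilde{X},\Z)$ contains such pairs, and they can even be made algebraic: the negative definite rank-two lattice spanned by $e-f+v$ and $2e-f$, with $e,f$ the standard basis of $U$ and $v\in E_8(-2)$ of square $-4$, has odd pairing and no $(-2)$-vectors, hence is realized inside $\NS(\widetilde{X})^-$ for suitable Enriques surfaces by surjectivity of the period map. So the dichotomy you invoke is not a theorem, and --- as you yourself flagged --- without it your argument does not eliminate $D^2=-2$. The fix is the paper's much more elementary argument: since $\iota^*D=-D$ and $\varpi^*\omega$ is $\iota$-invariant and ample, $(D,\varpi^*\omega)=0$, so neither $D$ nor $-D$ can be effective; but on a K3 surface Riemann--Roch gives $\chi(\OO_{\widetilde{X}}(D))=2+(D^2)/2\geq 1$ whenever $(D^2)\geq -2$, which would force $D$ or $-D$ to be effective. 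Hence any nonzero anti-invariant algebraic class has $(D^2)\leq -4$, and combined with $n\geq 0$ this yields exactly $D=0$ or $(D^2)=-4$, with no determinant bookkeeping needed.
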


\begin{proof}
If $M_\omega^s(\v,K_X)$ is singular at $E$, then 
$E(K_X) \cong E$, which implies $E= \varpi_*(I_W(D))$, where 
$I_W$ is the ideal sheaf of a 0-dimensional subscheme $W$ of  $\widetilde{X}$
and $D$ is a divisor on $\widetilde{X}$ (cf. \cite[Lemma 2.13]{Yamada}). 
By the stability of $E$, $\iota^*(I_W(D)) \not \cong I_W(D)$.
Since $\varpi^*(\varpi_*(I_W(D)))=I_W(D) \oplus I_{\iota(W)}(\iota^*(D))$, it follows that
$D+\iota^*(D)=0$ and $\deg W=(D^2)/2+2$.
In particular, as $$(D,\varpi^*(\omega))=(\iota^*D,\iota^*\varpi^*(\omega))=(\iota^*D,\varpi^*(\omega)),$$ we get $(D,\varpi^*(\omega))=0$, so  either $D=0$ or neither $D$ nor $-D$ is effective.  As $\widetilde{X}$ is a K3 surface, we cannot have $(D^2)\geq-2$, since then $D$ or $-D$ would be effective, so we must in fact have $(D^2)\leq -4$.  From $\deg W\geq 0$, however, we get $(D^2) \geq -4$.  We conclude therefore that either $D=0$ or $(D^2)=-4$, giving the description in \eqref{eq:sing}. 

In the open subscheme of locally free sheaves,
$W=\emptyset$, so the singular locus there is isolated, consisting of $\varpi^*(\OO_{\widetilde{X}}(D))$ for the finitely many divisors $D$ such that $\iota^*D=-D$ and $(D^2)=-4$.
If $D=0$ and $\deg W=2$, then 
$\varpi_*(I_W)$ is a non-locally free sheaf
with $\varpi_*(I_W)^{\vee \vee} \cong E_0$,
which gives an irreducible component 
$$
\Set{ \varpi_*(I_W) \in M_\omega^s(\v,K_X) \ | \ I_W \in \Hilb^2(\widetilde{X}) }
$$
of the singular locus of dimension 4. 
\end{proof}
From the lemma it follows that the singularities of the open subscheme $M_\omega^{s,lf}(\v,K_X)$ parametrizing locally free sheaves in $M_\omega^s(\v,K_X)$ are isolated 5-dimensional hypersurface singularities.  Therefore, we get the following corollary:
\begin{Cor}
The open subscheme $M_\omega^{s,lf}(\v,K_X)$ parametrizing $\omega$-stable locally free sheaves of Mukai vector $\v$ and determinant $K_X$ is normal.
\end{Cor}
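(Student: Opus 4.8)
The plan is to deduce normality of $M_\omega^{s,lf}(\v,K_X)$ from Serre's criterion, verifying that it is regular in codimension one ($R_1$) and satisfies $S_2$. First I would record that for $\v=(2,0,-1)$ one has $\v^2=\langle\v,\v\rangle=4$, so that by \cref{prop:pss} together with \eqref{eqn:DimensionOfStackAndCoarse} (and the identification $M_\omega(\v,K_X)\cong M_\sigma(\v,K_X)$) the moduli space $M_\omega^{s}(\v,K_X)$ has dimension $\v^2+1=5$. By \cref{Lem:ext^2} the singular locus of $M_\omega^s(\v,K_X)$ meets the open locally free locus $M_\omega^{s,lf}(\v,K_X)$ only in the finitely many isolated points $\varpi_*(\OO_{\widetilde{X}}(D))$ with $\iota^*(D)=-D$ and $(D^2)=-4$. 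Thus every point of $M_\omega^{s,lf}(\v,K_X)$ of codimension at most one lies in the smooth locus, and $R_1$ holds.

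For $S_2$ I would analyze the complete local ring at an arbitrary point $[E]\in M_\omega^{s,lf}(\v,K_X)$ by deformation theory. Since $E$ is $\omega$-stable it is simple, so the fixed-determinant deformation functor is pro-representable and its completed local ring is the Kuranishi algebra: the germ cut out of the tangent space $\Ext^1(E,E)_0$ by an obstruction map valued in $\Ext^2(E,E)_0$ (with leading term the Yoneda square). Because $X$ is an Enriques surface we have $H^1(\OO_X)=H^2(\OO_X)=0$, so both trace maps vanish and $\Ext^i(E,E)_0=\Ext^i(E,E)$ for $i=1,2$. Serre duality then gives $\Ext^2(E,E)\cong\Hom(E,E(K_X))^\vee$, which is $0$ when $E\not\cong E(K_X)$ (a smooth point) and one-dimensional when $E\cong E(K_X)$, the latter being exactly the singular points found in \cref{Lem:ext^2}. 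In all cases $\dim\Ext^2(E,E)_0\leq 1$, so the local model is cut out of a smooth germ by at most one equation; it is therefore a hypersurface, hence a complete intersection, hence Cohen--Macaulay, and in particular satisfies $S_2$. Serre's criterion $R_1+S_2$ then yields normality.

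The step requiring genuine care is the identification of the completed local ring with the Kuranishi hypersurface model, together with the traceless-$\Ext$ bookkeeping for the fixed-determinant problem: it is precisely the vanishing $H^1(\OO_X)=H^2(\OO_X)=0$ that forces the traceless and full $\Ext$-groups to coincide and pins the obstruction space down to dimension at most one. A dimension count confirms consistency with the remark preceding the corollary: at a singular point ($E\cong E(K_X)$) one has $\ext^0(E,E)=1$ and, by $\chi(E,E)=-\v^2=-4$ and $\ext^2(E,E)=\hom(E,E(K_X))=1$, necessarily $\ext^1(E,E)=6$, so that the local model is a hypersurface of dimension $5$ in the $6$-dimensional tangent space; but for the normality statement only the inequality $\dim\Ext^2(E,E)_0\leq 1$ is actually needed.
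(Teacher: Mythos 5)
Your proof is correct and follows essentially the same route as the paper: the paper also combines the isolated-singularities conclusion of \cref{Lem:ext^2} with the fact that the singularities are hypersurface singularities (hence Cohen--Macaulay) to conclude normality via Serre's criterion. The only cosmetic difference is that you justify the hypersurface local model directly from Kuranishi theory and the bound $\ext^2(E,E)\leq 1$, whereas the paper invokes the same deformation-theoretic fact via its citation of Yamada.
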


Now we turn to the 4-dimensional component of the singular locus.
We have an injective morphism $X \to \Hilb^2(\widetilde{X})$ by sending
$I_z$ $(z \in X)$ to $\varpi^*(I_z)=I_{\varpi^{-1}(z)}$.
The image is the $\iota$-invariant Hilbert scheme $\Hilb^2(\widetilde{X})^{\iota}$.
We shall identify $X$ with  $\Hilb^2(\widetilde{X})^{\iota}$.
We also have a morphism
$$
\begin{matrix}
\Hilb^2(\widetilde{X}) & \to & M_\omega(\v,K_X)\\
I_W & \mapsto & \varpi_*(I_W),
\end{matrix}
$$
 which is a double covering to
its image.
If $I_W \in \Hilb_{\widetilde{X}}^2 \setminus X$, then
$\varpi_*(I_W)$ is a stable non-locally free sheaf and
the fiber over $\varpi_*(I_W)$ is $\{ I_W, I_{\iota(W)} \}$.
If $I_W=I_{\varpi^{-1}(z)}$, then 
$$\varpi_*(I_W)=\varpi_*(\varpi^*(I_z))=I_z\otimes(\OO_X\oplus\OO_X(K_X))=I_z \otimes E_0,$$ which is a properly semi-stable
sheaf.

For $E=\varpi_*(I_W)$, we have $-\chi(E,E)/2=2$, and hence
by using
\cite[Fact 2.4]{Yamada} and the paragraph preceeding \cite[Lemma 2.13]{Yamada},
we see that around $E$, 
$M_\omega^s(\v,K_X)$ is analytic locally defined by 
a hypersurface  $F(t_1,t_2,...,t_6)=0$ in $(\C^6,0)$
such that 
$$
F(t_1,t_2,...,t_6)=\sum_{i=1}^n t_i^2+G(t_1,t_2,...,t_6),\;\;
n \geq 2,\; G(t_1,t_2,...,t_6) \in (t_1,t_2,...,t_6)^3.
$$
Since the singular locus is 4-dimensional at $E=\varpi_*(I_W)$, we must have $n=2$. 
 Therefore there are at most two irreducible components intersecting
along the 4-dimensional singular locus.  We shall prove that there are exactly two such irreducible components $M_0$ and $M_1$ and that their union is connected.

Let $\psi:M_\omega(\v,K_X) \to N$ be the contraction map to the 
Uhlenbeck compactification.
\begin{Lem}\label{Lem:M_0}
There is an irreducible component
$M_0$ of $M_\omega(\v,K_X)$ 
such that $M_0$ contains a $\mu$-stable locally free sheaf and
$\psi(M_0)$ contains $M_\omega(\v',K_X) \times S^2 X$,
where $\v'=\v(E_0)$.
\end{Lem}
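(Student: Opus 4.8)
The plan is to exhibit $M_0$ as the irreducible component of $M_\omega(\v,K_X)$ containing the $4$-dimensional locus
\[
Z:=\Set{\varpi_*(I_W)\in M_\omega^s(\v,K_X)\ |\ I_W\in\Hilb^2(\widetilde{X})}
\]
produced in \cref{Lem:ext^2}, and then to read off both asserted properties from this single choice. First I would record the two elementary computations that make $Z$ the right object. Since $\varpi_*\OO_{\widetilde{X}}=E_0$, any $E=\varpi_*(I_W)$ satisfies $E^{\vee\vee}=E_0$ and $E^{\vee\vee}/E=\varpi_*(\OO_W)$, a sheaf of length two supported on the cycle $\varpi_*[W]\in S^2X$; hence $\psi(E)=([E_0],\varpi_*[W])$. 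Because $\Hilb^2(\widetilde{X})\to S^2\widetilde{X}\to S^2X$ is surjective, $\varpi_*[W]$ sweeps out a dense subset of $S^2X$, so $\psi(Z)$ is dense in $\{[E_0]\}\times S^2X$. Since $\v'=\v(E_0)=(2,0,1)$ has $\v'^2=-4$, there is no stable sheaf of class $\v'$, so $M_\omega(\v',K_X)=\{[E_0]\}$ is a single point and $\{[E_0]\}\times S^2X=M_\omega(\v',K_X)\times S^2X$.

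Next I would show that $\overline{Z}$ lies in a $5$-dimensional component $M_0$. The locus $Z$ is irreducible, being the image of the irreducible $\Hilb^2(\widetilde{X})$, so its closure is contained in a single irreducible component $M_0$ of $M_\omega(\v,K_X)$. To see that $\dim M_0=5=\v^2+1$ (using \cref{prop:pss}), I invoke the local description obtained just before this lemma: near each $E\in Z$ the scheme $M_\omega^s(\v,K_X)$ is analytically the hypersurface $t_1^2+t_2^2+G(t)=0$ in $(\C^6,0)$ with $G\in(t_1,\dots,t_6)^3$. A hypersurface is pure-dimensional, so every component of $M_\omega(\v,K_X)$ through a point of $Z$ has dimension $5$; in particular $\dim M_0=5$ and $Z$ is not itself a component. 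Consequently $\psi(M_0)\supseteq\psi(\overline{Z})\supseteq\overline{\psi(Z)}=M_\omega(\v',K_X)\times S^2X$, where I use that $\psi$ is a morphism of projective varieties, hence proper, so that its image is closed and the dense containment upgrades to equality of closures.

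Finally I would check that $M_0$ contains a $\mu$-stable locally free sheaf, which amounts to showing that $M_0$ is not contained in the non-locally-free locus. Any non-locally-free $E\in M_\omega(\v,K_X)$ fits in $0\to E\to E^{\vee\vee}\to Q\to 0$ with $E^{\vee\vee}$ a $\mu$-semistable bundle of class $(2,0,n-1)$, where $n=\len Q\geq 1$. Such bundles exist only for $n=1$ (class $(2,0,0)$) and $n=2$ (class $\v'$, forcing $E^{\vee\vee}=E_0$), and each resulting stratum has dimension at most $4$; hence the non-locally-free locus has dimension $\leq 4$. Since $\dim M_0=5$, locally free sheaves are dense in $M_0$, and as the $\mu$-strictly-semistable bundles form a proper closed sublocus, the generic member of $M_0$ is a $\mu$-stable locally free sheaf.

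The main obstacle is the purity statement in the second paragraph: a priori $\overline{Z}$ could sit inside a lower-dimensional component, and ruling this out is precisely what forces me to use the explicit two-branch hypersurface model $t_1^2+t_2^2+G=0$ for the transverse singularity along $Z$. This local model is what simultaneously guarantees that $M_0$ is $5$-dimensional (hence meets the locally free locus) and foreshadows the second component $M_1$ meeting $M_0$ along $Z$.
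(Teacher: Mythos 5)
Your computation of $\psi$ on the locus $Z=\Set{\varpi_*(I_W)}$, the identification $M_\omega(\v',K_X)=\{[E_0]\}$, and the properness argument upgrading density of $\psi(Z)$ to a containment of closures are all fine. The genuine gap is at the one point where the proof must do real work: the choice of $M_0$. You take $M_0$ to be \emph{some} irreducible component containing $\overline{Z}$, but there are exactly two such components — this is precisely what the two-branch local model $t_1^2+t_2^2+G=0$ along $Z$ encodes, and it is the content of \cref{prop:connected} that the paper's $M_0$ and $M_1$ intersect exactly along $S_1=\overline{Z}$. One of the two, namely $M_1$, consists \emph{entirely} of non-locally-free sheaves: every $E\in M_1$ sits in an exact sequence $0\to E\to E_0\to Q\to 0$ with $Q$ of length two, so $E^{\vee\vee}=E_0\neq E$. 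Moreover $\psi(M_1)=M_\omega(\v',K_X)\times S^2X$ by \cref{Lem:M_1}, so the second conclusion of the lemma holds for $M_1$ as well and cannot distinguish the two branches. Your argument therefore has to rule out the possibility that the chosen component is $M_1$, and this is exactly where it fails.

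The failure is in your dimension count of the non-locally-free locus. For $n=\len Q=2$ you correctly force $E^{\vee\vee}\cong E_0$, but the family of stable kernels of surjections $E_0\onto Q$ with $\len Q=2$ is $5$-dimensional, not of dimension at most $4$: it is exactly $M_1$, which the paper exhibits as (the image of) a $\P^1$-bundle over $\Hilb^2(X)$; equivalently, $\dim\Quot^2_{E_0}=2(\rk E_0+1)=6$, the group $\Aut(E_0)/\C^*\cong\C^*$ acts with one-dimensional orbits along which the kernel is constant, leaving a $5$-dimensional family. So the non-locally-free locus is $5$-dimensional, locally free sheaves are not automatically dense in a $5$-dimensional component through $Z$, and your final paragraph collapses (the same issue infects the passage from ``locally free'' to ``$\mu$-stable locally free,'' which you dispatch with an unproven ``proper closed sublocus'' claim). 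No dimension count alone can single out the correct branch; one needs an actual construction of $\mu$-stable locally free sheaves together with a degeneration connecting them to the Uhlenbeck stratum $\{[E_0]\}\times S^2X$ — which is what the paper supplies via the extensions $0\to\OO_X\to E\to I_Z(K_X)\to 0$, the density statement from the proof of \cite[Lemma 2.8]{Yos16a}, and Langton's theorem applied to a one-parameter family degenerating a $\mu$-stable locally free sheaf to such an extension.
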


\begin{proof}
Consider the stack $\MM_\omega(\v,K_X)^{\mu ss}$
of $\mu$-semistable sheaves $E$ with $\v(E)=\v$ and $\det(E)=K_X$, and let $\MM_0$ be the irreducible component containing all locally free sheaves fitting in the short exact sequence
\begin{equation}
0 \to \OO_X \to E \to I_Z (K_X) \to 0,
\end{equation}
where $I_Z \in \Hilb^2(X)$.  Then by the proof of \cite[Lemma 2.8]{Yos16a}, 
$$
\MM_0':=\Set{ E \in \MM_0 \ | \ \text{$E$ is a $\mu$-stable locally free sheaf}}
$$
is a nonempty open and dense substack of $\MM_0$.
Let $M_0$ be the irreducible component of $M_\omega(\v,K_X)$
containing the associated coarse moduli scheme of $\MM_0'$.
We shall prove that $\psi(M_0)$ contains $M_\omega(\v',K_X) \times S^2 X$,
where $\v'=\v(E_0)$.

Let $E\in\MM_0$ be a locally free sheaf fitting in
\begin{equation}
0 \to \OO_X \to E \to I_Z (K_X) \to 0,
\end{equation}
where $I_Z \in \Hilb^2(X)$.  Consider a point $F\in\MM_0'$ and a generic curve $T\subset\MM_0$ connecting $E$ and $F$.  This corresponds to a deformation $\EE$ over the curve $T$ such that $\EE_{t_0}=E$ and $\EE_{t_1}=F$ for points $t_0,t_1\in T$.  In particular, as $T$ is taken generic, we have $\EE_t$ is a $\mu$-stable locally free sheaf for $t\neq t_0$.  Note that $E$ is $\mu$-semistable but not Gieseker semistable, as $$\mu(\OO_X)=0=\mu(I_Z(K_X))\text{ and } \chi(\OO_X)=1>-1=\chi(I_Z(K_X)),$$ so $\EE$ does not induce a morphism $T\to M_0$.  By Langton's theorem \cite[Theorem 2.B.1]{HL10}, however, we obtain another family $\FF$ over $T$ of semistable sheaves $\FF_t$ such that $\FF_t=\EE_t$ for $t \ne t_0$.  This new family does induce a morphism $T \to M_0$.
Since $\psi$ extends to the family of $\mu$-semistable sheaves
$\EE_t$, 
\begin{equation}
\psi(\FF_{t_0})=\lim_{t \to t_0} \psi(\FF_t)=\psi(\EE_{t_0})=
(E_0, [Z]) \in M_\omega(\v',K_X) \times S^2 X,
\end{equation}
where 
$[Z]$ is the 0-cycle defined by $Z$.
Hence our claim holds.
\end{proof}

For the construction of the other irreducible component $M_1$,
we prepare the following lemma.
\begin{Lem}\label{Lem:nonlocally free extension is semistable}
For $Z \in \Hilb^2(X)$ and a non-trivial extension
\begin{equation}\label{eqn:nonlocally free extension}
0 \to I_Z \to E \to \OO_X(K_X) \to 0,
\end{equation}
$E\in M_\omega(\v,K_X)$.
\end{Lem}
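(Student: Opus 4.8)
The plan is to verify directly that $E$ is an $\omega$-Gieseker semistable sheaf with the correct numerical data, after which membership in $M_\omega(\v,K_X)$ is automatic. First I would record the bookkeeping. Since $E$ is an extension of the torsion-free sheaves $\OO_X(K_X)$ and $I_Z$, it is itself torsion-free; additivity of $\v$ and of determinants along the sequence gives $\v(E)=\v(I_Z)+\v(\OO_X(K_X))=(1,0,-\tfrac32)+(1,0,\tfrac12)=(2,0,-1)=\v$ and $\det E=\OO_X\otimes\OO_X(K_X)=\OO_X(K_X)$. I would also record $\chi(E)=0$ (for instance from $\chi(E)=-\langle\v(\OO_X),\v\rangle$) and $\chi(\OO_X(K_X))=1$ by Riemann--Roch, both of which are needed for the Gieseker comparison. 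Thus only semistability remains to be proved.

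Next I would reduce semistability to a case analysis over \emph{saturated} rank-one subsheaves $A\subset E$, which suffices because saturating a subsheaf only raises its reduced Hilbert polynomial. Composing $A\hookrightarrow E$ with the projection $E\twoheadrightarrow\OO_X(K_X)$, I split into two cases. If this composite vanishes, then $A\subseteq I_Z$; as $I_Z$ is saturated in $E$ (its quotient $\OO_X(K_X)$ being torsion-free), $A$ is a saturated rank-one subsheaf of the rank-one sheaf $I_Z$, forcing $A=I_Z$, with $(c_1(A),\omega)=0$ and $\chi(A)=\chi(\OO_X)-\len(Z)=-1$. If the composite is nonzero it is generically injective, so $A$ embeds into $\OO_X(K_X)$; writing the image as $I_{W'}(K_X-C)$ with $C$ effective, one gets $(c_1(A),\omega)=-(C,\omega)\le 0$ because $K_X$ is numerically trivial and $\omega$ is ample.

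These two cases together show $\mu(A)\le 0=\mu(E)$ for every rank-one subsheaf, so $E$ is $\mu$-semistable, and the only remaining danger is Gieseker destabilization by a slope-zero subsheaf. For such a subsheaf the leading and next-to-leading coefficients of the reduced Hilbert polynomials already agree, so the comparison collapses to $\chi(A)\le\tfrac12\chi(E)=0$. The first case gives $\chi(I_Z)=-1\le 0$. In the second case slope zero forces $C=0$ and $A=I_{W'}(K_X)$, whence $\chi(A)=\chi(\OO_X(K_X))-\len(W')=1-\len(W')$; this is $\le 0$ unless $W'=\varnothing$, and $W'=\varnothing$ would mean $A=\OO_X(K_X)$ maps isomorphically onto the quotient $\OO_X(K_X)$, i.e.\ the sequence \eqref{eqn:nonlocally free extension} splits --- excluded by the hypothesis that the extension is non-trivial. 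Hence no subsheaf destabilizes and $E$ is semistable.

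I expect the main obstacle to be precisely this last point: upgrading from $\mu$-semistability to genuine Gieseker semistability requires isolating the slope-zero subsheaves and then invoking non-triviality of the extension to eliminate the single borderline subobject $\OO_X(K_X)$. Everything else --- the Riemann--Roch computations of $\chi(E)$ and $\chi(\OO_X(K_X))$, the reduction to saturated subsheaves, and the passage from the full Hilbert polynomial to its constant term once slope and $c_1$ coincide --- is routine and I would not belabor it.
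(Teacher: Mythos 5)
Your proof is correct and follows essentially the same route as the paper's: both reduce to rank-one subsheaves, split into cases according to whether the composite with the projection $E\onto\OO_X(K_X)$ vanishes, use $\chi(I_Z)=-1$ to handle subsheaves of $I_Z$, and use non-triviality of the extension to exclude the borderline subsheaf $\OO_X(K_X)$ itself. The only difference is presentational — the paper argues by contradiction with a destabilizing stable subsheaf and quotes $\mu$-semistability of $E$ upfront, while you verify it directly within the case analysis.
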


\begin{proof}
Suppose not, and let $F$ be a saturated, destabilizing, stable subsheaf of $E$.  In particular, $\rk(F)=1$.  As $E$ is $\mu$-semistable, $\mu_\omega(F)\leq\mu_\omega(E)$, so we must have $\mu_\omega(F)=\mu_\omega(E)$ and $\chi(F)>\frac{\chi(E)}{2}=0$.  It follows from $\chi(I_Z)=-1$ that the composition $\phi:F\into E\onto \OO_X(K_X)$ must be nonzero so that from stability $\chi(F)\leq\chi(\OO_X(K_X))=1$, and thus $\chi(F)=\chi(\OO_X(K_X))$.  Then $\phi$ is an isomorphism, contradicting the non-triviality of the extension \eqref{eqn:nonlocally free extension}.  Hence $E$ is semistable, as claimed. 
\end{proof}
Observe that $E$ as in \eqref{eqn:nonlocally free extension} is not locally free and  dualizing \eqref{eqn:nonlocally free extension} twice we get another short exact sequence 
\begin{equation}\label{eqn:s.e.s. on double duals}
    0\to \OO_X\to E^{\vee\vee}\to\OO_X(K_X)\to 0.
\end{equation}
Moreover, we may put \cref{eqn:nonlocally free extension,eqn:s.e.s. on double duals} together into the following short exact sequence of complexes
$$\begin{CD}
0 @>>> I_Z @>>> E @>>> \OO_X(K_X) @>>> 0\\
& & @VVV  @VVV  @V\Id VV \\
0 @>>> \OO_X @>>> E^{\vee\vee} @>>> \OO_X(K_X)@>>> 0
\end{CD}$$
and apply the Snake lemma to see that $E$ fits into another short exact sequence \begin{equation}\label{eqn:alternative defining short exact sequence}0\to E\to E^{\vee\vee}\to \OO_Z\to 0.\end{equation} As $\Ext^1(\OO_X(K_X),\OO_X)=0$, we see that $E^{\vee\vee}=\OO_X\oplus\OO_X(K_X)$, so we may compose the natural embedding $\OO_X(K_X)\into\OO_X\oplus\OO_X(K_X)$ with the surjection $E^{\vee\vee}\onto \OO_Z$ to obtain a morphism $\OO_X(K_X)\to\OO_Z$.  We will show in the next lemma that we may determine when $E$ as in \eqref{eqn:nonlocally free extension} is in $M_\omega^s(\v,K_X)$ based on the surjectivity of the associated morphism $\OO_X(K_X)\to\OO_Z$.
\begin{Lem}\label{Lem:criterion for extension to be stable}
A sheaf $E$ fitting into the short exact sequence \eqref{eqn:nonlocally free extension} is stable if and only if the associated morphism $\OO_X(K_X)\to\OO_Z$ is surjective.
\end{Lem}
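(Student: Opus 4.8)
The plan is to prove both directions by analyzing the morphism $\OO_X(K_X) \to \OO_Z$ obtained by composing the natural inclusion $\OO_X(K_X) \into E^{\vee\vee} = \OO_X \oplus \OO_X(K_X)$ with the surjection $E^{\vee\vee} \onto \OO_Z$ from the short exact sequence \eqref{eqn:alternative defining short exact sequence}. The key observation is that stability of $E$ (which by \cref{Lem:nonlocally free extension is semistable} is automatically semistable) fails precisely when $E$ admits a destabilizing subsheaf, and I expect such a subsheaf to be detectable through the failure of surjectivity of $\OO_X(K_X) \to \OO_Z$.

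First I would handle the \textbf{only if} direction contrapositively. Suppose the morphism $\phi:\OO_X(K_X) \to \OO_Z$ is not surjective. Then its image is $\OO_{Z'}$ for a proper subscheme $Z' \subsetneq Z$ (possibly with $Z'$ of length $0$ or $1$). The kernel of the composite $\OO_X(K_X) \to \OO_Z \to \OO_{Z'}$, or rather the preimage structure, should produce a subsheaf of $E^{\vee\vee}$ containing a twisted ideal sheaf $I_{Z''}(K_X)$ that maps into $E$ via \eqref{eqn:alternative defining short exact sequence}, yielding a rank-one subsheaf of $E$ of strictly larger Euler characteristic than allowed. Concretely, I would track how the splitting $E^{\vee\vee} = \OO_X \oplus \OO_X(K_X)$ interacts with the surjection onto $\OO_Z$: non-surjectivity of the $\OO_X(K_X)$-component means $\OO_X(K_X)$ contributes a subsheaf $I_{W}(K_X) \subset E$ (for $W$ the scheme-theoretic image datum) with $\chi(I_W(K_X)) > 0$ and the same slope, which destabilizes $E$.

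For the \textbf{if} direction, I would assume $\phi$ is surjective and show no destabilizing subsheaf can exist. Following the method of \cref{Lem:nonlocally free extension is semistable}, any destabilizing saturated stable subsheaf $F \subset E$ is rank one with $\mu_\omega(F) = \mu_\omega(E)$ and $\chi(F) \geq 1$. Composing $F \into E \into E^{\vee\vee} = \OO_X \oplus \OO_X(K_X)$ and projecting to each factor, I would argue that surjectivity of $\phi$ forces the image of $F$ in $\OO_X(K_X)$ to factor through a proper subsheaf, contradicting $\chi(F) = \chi(\OO_X(K_X))$ as in the previous lemma; the point is that surjectivity of the induced map onto $\OO_Z$ rigidifies how $F$ can sit inside $E^{\vee\vee}$, leaving no room for the isomorphism $F \isomor \OO_X(K_X)$ that would split the extension.

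\textbf{The main obstacle} I anticipate is bookkeeping the scheme structure on $Z$ when it is non-reduced: the surjectivity of $\OO_X(K_X) \to \OO_Z$ is a pointwise-plus-infinitesimal condition, and translating it cleanly into the (non)existence of destabilizing subsheaves requires care with the length-$2$ subscheme $Z$ and the local structure of $E$ near $\Supp(Z)$. I would control this by working with the two short exact sequences \eqref{eqn:nonlocally free extension} and \eqref{eqn:alternative defining short exact sequence} simultaneously and applying $\Hom(\blank, \OO_X)$ and $\Hom(\OO_X(K_X), \blank)$ to extract the relevant $\Ext$ and $\Hom$ groups, using $\Ext^1(\OO_X(K_X), \OO_X) = 0$ as already established, so that the splitting of $E^{\vee\vee}$ is unambiguous and the destabilizing subsheaf, if present, is forced to be $\OO_X(K_X)$ itself.
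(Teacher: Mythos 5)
Your contrapositive argument for the \emph{only if} direction is structurally the same as the paper's (the paper packages it as a snake-lemma computation on the two exact sequences, you phrase it via $\ker\phi=\OO_X(K_X)\cap E$), but it contains a numerical slip: since the extension \eqref{eqn:nonlocally free extension} is non-trivial, $\Hom(\OO_X(K_X),E)=0$, so $\phi\neq 0$ and, when $\phi$ is not surjective, its image has length exactly one; hence $\ker\phi=\OO_X(K_X)\cap E\cong I_{z_1}(K_X)$ has $\chi=0$, not $\chi>0$ as you claim. This half is nevertheless repairable, because $\chi(E)=0$, and Gieseker stability of $E$ is violated by any rank-one subsheaf of equal slope with $\chi(F)\geq\chi(E)/2=0$; so a subsheaf with $\chi=0$ already does the job.

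The genuine gap is in your \emph{if} direction, and it comes from using the wrong threshold throughout: you take a ``destabilizing'' $F\subset E$ to satisfy $\chi(F)\geq 1$, but that is the condition for violating \emph{semistability}, which is already excluded by \cref{Lem:nonlocally free extension is semistable} with no input from $\phi$ whatsoever. What separates stable from properly semistable here are exactly the rank-one subsheaves of equal slope with $\chi(F)=0$, i.e. $I_z$ or $I_z(K_X)$ (Mukai vector $(1,0,-\tfrac{1}{2})$), and your argument never engages with them; in particular, your appeal to surjectivity in order to forbid $F\cong\OO_X(K_X)$ is redundant (non-triviality of the extension already forbids it) and is not what surjectivity is actually needed for. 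The paper's proof runs precisely through the missing case: $E$ is properly semistable iff it contains $I_z$ or $I_z(K_X)$; the option $I_z$ is killed by $\Hom(I_z,I_Z)=\Hom(I_z,\OO_X(K_X))=0$; and containing $I_{z_1}(K_X)$ is shown, via the snake lemma applied to \eqref{eqn:nonlocally free extension}, \eqref{eqn:s.e.s. on double duals} and \eqref{eqn:alternative defining short exact sequence}, to be equivalent to $\phi$ factoring through the length-one subsheaf $\OO_{z_1}\subsetneq\OO_Z$. To repair your version: use the threshold $\chi(F)\geq 0$; for $\chi(F)=1$ argue exactly as in \cref{Lem:nonlocally free extension is semistable}; for $\chi(F)=0$ note that the composite $F\to E\to\OO_X(K_X)$ is nonzero (else $F\subseteq I_Z$ gives $\chi(F)\leq -1$), so $F\cong I_{z_1}(K_X)$, and then $\Hom(I_{z_1}(K_X),\OO_X)=0$ forces $F\subseteq\OO_X(K_X)\cap E=\ker\phi$, contradicting surjectivity of $\phi$.
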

\begin{proof}
The sheaf $E$ is properly semistable if and only if it contains a torsion free subsheaf of Mukai vector $(1,0,-\frac{1}{2})$, that is, either the ideal sheaf of a point $I_z$ or its twist $I_z(K_X)$.  As $\Hom(I_z,I_Z)=0=\Hom(I_z,\OO_X(K_X))$, we see that $E$ is properly semistable if and only if $E$ contains $I_{z_1}(K_X)$ for some $z_1\in X$.  But this is equivalent to $E$ sitting in a short exact sequence \begin{equation}\label{eqn:strictly semistable s.e.s}0\to I_{z_1}(K_X)\to E\to I_{z_2}\to 0\end{equation} for some other point $z_2\in X$.  

Taking double duals of \eqref{eqn:strictly semistable s.e.s}, we again get a short exact sequence of complexes
$$
\begin{CD}
0@>>> I_{z_1}(K_X)@>>> E@>>>I_{z_2}@>>>0\\
&&@VVV @VVV @VVV\\
0@>>>\OO_X(K_X)@>>>E^{\vee\vee}@>>>\OO_X@>>>0,
\end{CD}
$$
so using the Snake lemma we see that, as claimed, the composition $\OO_X(K_X)\into E^{\vee\vee}\onto\OO_Z$ is not surjective, factoring instead through the proper subsheaf $\OO_{z_1}\subsetneq\OO_Z$.  Note that we also see that $Z=\{z_1,z_2\}$.

Conversely, if the composition $\OO_X(K_X)\into E^{\vee\vee}\onto\OO_Z$ is not surjective, then it factors through $\OO_{z_1}$ for some $z_1\in Z$, and we get a short exact sequence of complexes 
$$\begin{CD}
0@>>>\OO_X(K_X)@>>>E^{\vee\vee}@>>>\OO_X@>>>0\\
&&@VVV@VVV@VVV\\
0@>>>\OO_{z_1}@>>>\OO_Z@>>>\OO_{z_2}@>>>0
\end{CD}
$$ for $z_2\in Z$.  It follows that $E$ sits in a short exact sequence as in \eqref{eqn:strictly semistable s.e.s}.
\end{proof}

We have seen that any $E$ fitting into \eqref{eqn:nonlocally free extension} fits into \eqref{eqn:alternative defining short exact sequence}.  We show that the converse holds as well if $E$ is stable.
\begin{Lem}\label{Lem:alternative sequence is equivalent for stable E}
Let $E$ be a stable sheaf fitting into a short exact sequence as in \eqref{eqn:alternative defining short exact sequence}, $$0\to E\to \OO_X\oplus \OO_X(K_X)\to \OO_Z\to 0$$ for $Z\in\Hilb^2(X)$.  Then $E$ fits into a short exact sequence as in \eqref{eqn:nonlocally free extension}.
\end{Lem}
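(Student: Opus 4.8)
The plan is to read off \eqref{eqn:nonlocally free extension} directly from \eqref{eqn:alternative defining short exact sequence} by composing the inclusion $E\into E^{\vee\vee}=\OO_X\oplus\OO_X(K_X)$ with the projection onto the second summand, and then invoking stability of $E$ to pin down the kernel. Writing the surjection in \eqref{eqn:alternative defining short exact sequence} as $(\phi_1,\phi_2)\colon\OO_X\oplus\OO_X(K_X)\onto\OO_Z$ and letting $\psi:=p_2|_E\colon E\to\OO_X(K_X)$ be the restriction of the second projection, the immediate goal is to show $\psi$ is surjective with $\Ker\psi=I_Z$.

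First I would identify $\Ker\psi$ and $\coker\psi$ by a $3\times 3$ diagram whose columns are the split inclusion--projection sequence $0\to\OO_X\to\OO_X\oplus\OO_X(K_X)\to\OO_X(K_X)\to 0$, its restriction to $E$, and the sequence $0\to\im\phi_1\to\OO_Z\to\OO_Z/\im\phi_1\to 0$ obtained by restricting $(\phi_1,\phi_2)$ to the first factor. The $3\times 3$ lemma then yields the four-term exact sequence
\begin{equation*}
0\to\Ker\phi_1\to E\mor[\psi]\OO_X(K_X)\to\OO_Z/\im\phi_1\to 0,
\end{equation*}
so that $\Ker\psi=\Ker\phi_1$ and $\coker\psi\cong\OO_Z/\im\phi_1$. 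Thus everything reduces to proving that $\phi_1$ is surjective.

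The crux is this surjectivity, and it is where stability enters. Since $\v=(2,0,-1)$ forces $\chi(E)=0$ and $c_1(E)=0$, Gieseker stability requires every rank-one subsheaf $F\subseteq E$ with $c_1(F)=0$ (hence $\mu_\omega(F)=\mu_\omega(E)$) to satisfy $\chi(F)<0$. Applying this to $F=\Ker\phi_1\subseteq\OO_X\subseteq E$, the three possibilities $\phi_1=0$, $\im\phi_1$ of length one, and $\phi_1$ surjective give $\chi(F)=1,0,-1$ respectively; only the last is compatible with stability. Hence $\phi_1$ is surjective, $\coker\psi=0$, and $\Ker\phi_1=I_Z$ --- a surjection $\OO_X\onto\OO_Z$ is given by a nowhere-vanishing section of $\OO_Z$ and so has annihilator exactly $I_Z$ --- producing the sequence \eqref{eqn:nonlocally free extension} with the same $Z$.

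The main obstacle I anticipate is the middle case of the stability estimate: when $\im\phi_1$ has length one, $\Ker\phi_1$ is the ideal of a single point with $\chi=0$, which violates stability only because $E$ is strictly stable (it would be permitted for a merely semistable sheaf). This is precisely the dichotomy that separates the two irreducible components in this subsection, so it must be handled with the strict inequality rather than $\leq$. A secondary, purely bookkeeping point is the identification $\Ker\phi_1=I_Z$ for the very same subscheme $Z$ appearing in \eqref{eqn:alternative defining short exact sequence}; one could instead deduce the statement from Lemma \ref{Lem:criterion for extension to be stable}, but the direct projection argument above seems cleanest and self-contained.
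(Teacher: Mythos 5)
Your proposal is correct and follows essentially the same route as the paper's (very terse) proof: compose the inclusion $E\into\OO_X\oplus\OO_X(K_X)$ with the projection $p_2$ onto $\OO_X(K_X)$, use stability to force this composition to be surjective, and identify its kernel with $I_Z$. Your $3\times 3$/snake-lemma bookkeeping and the case analysis on $\Ker\phi_1$ (applying the stability inequality $\chi(F)<0$ to the subsheaf rather than to the quotient $\im\psi$, as the paper implicitly does) simply fill in the details the paper leaves to "similar arguments as above."
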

\begin{proof}
Indeed, if $E$ is a stable sheaf fitting into \eqref{eqn:alternative defining short exact sequence}, then the stability of $E$ and $\OO_X(K_X)$ forces the composition $E\into \OO_X\oplus\OO_X(K_X)\mor[p_2]\OO_X(K_X)$ is surjective, where $p_2$ is the projection onto the second factor.  Then similar arguments as above show that the kernel of this composition is $I_Z$, giving the lemma.
\end{proof}

Now we are able to define the second irreducible component $M_1$.  Since $\hom(\OO_X(K_X),I_Z)=0$ and $\ext^2(\OO_X(K_X),I_Z)=\hom(I_Z,\OO_X)=1$, we see that $$\ext^1(\OO_X(K_X),I_Z)=\langle(1,0,\tfrac{1}{2}),(1,0,-\tfrac{3}{2})\rangle+1=2,$$ so we have a family
of semistable non-locally free sheaves 
\begin{equation}
0 \to I_{\ZZ} \otimes \OO_{\P}(1) \to \EE \to \OO_{\P \times X}(K_X) \to 0
\end{equation}
on the projective bundle
$$
q:\P:=\P(\Ext^1_p(\OO_{\Hilb^2(X) \times X}(K_X),I_{\ZZ})) \to \Hilb^2(X),
$$
where $\ZZ$ is the universal family on $\Hilb^2(X) \times X$
and $p:\Hilb^2(X) \times X \to \Hilb^2(X)$ is the projection.
By \cref{Lem:nonlocally free extension is semistable}, $\EE$ gives rise to a morphism 
\begin{equation}
\begin{matrix}
g:& \P & \to & M_\omega(\v,K_X)\\
& z & \mapsto & \EE_z. \\
\end{matrix}
\end{equation}
We set $M_1:=g(\P)$, and prove in the next result that $M_1$ is an irreducible component of $M_\omega(\v,K_X)$.
\begin{Lem}\label{Lem:M_1}
$M_1$ is an irreducible component of $M_\omega(\v,K_X)$ and satisfies $M_1=\psi^{-1}(M_\omega(\v',K_X) \times S^2 X)$. 
\end{Lem}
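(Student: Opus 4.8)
The plan is first to show that $M_1 = g(\P)$ is a five-dimensional irreducible subvariety, and then to match it, stratum by stratum, with the preimage under $\psi$ of the deepest boundary piece $M_\omega(\v',K_X)\times S^2 X$.

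First I would record the relevant dimensions. Since $\v=(2,0,-1)$ has $\v^2=4$, \cref{prop:pss} gives $\dim M_\omega(\v,K_X)=\v^2+1=5$. On the other side, $\Hilb^2(X)$ is smooth of dimension $4$ and $\Ext^1_p(\OO_X(K_X),I_{\ZZ})$ has rank $2$, so $q:\P\to\Hilb^2(X)$ is a $\P^1$-bundle and $\P$ is irreducible of dimension $5$. By \cref{Lem:nonlocally free extension is semistable} the classifying morphism $g:\P\to M_\omega(\v,K_X)$ is well defined, and since $\P$ is projective and $M_\omega(\v,K_X)$ separated, $g$ is proper, so $M_1=g(\P)$ is closed and irreducible. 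It then suffices to prove $g$ is generically finite, for then $\dim M_1=5=\dim M_\omega(\v,K_X)$, which forces the irreducible closed set $M_1$ to be an irreducible component. For the generic finiteness I would exhibit an inverse on the stable locus. Set $\P^s:=g^{-1}(M^s_\omega(\v,K_X))$; it is open and nonempty (for generic $Z=\{z_1,z_2\}$ and generic extension class the induced map $\OO_X(K_X)\to\OO_Z$ is surjective, so the resulting $E$ is stable by \cref{Lem:criterion for extension to be stable}), hence dense. Given a stable $E\in M_1$, \cref{Lem:alternative sequence is equivalent for stable E} shows $E^{\vee\vee}=E_0=\OO_X\oplus\OO_X(K_X)$ and $E_0/E=\OO_Z$; since $\OO_X$ and $\OO_X(K_X)$ are non-isomorphic, the two line subbundles of $E_0$ are canonical, so $I_Z=\ker(E\to\OO_X(K_X))$ and the extension $0\to I_Z\to E\to\OO_X(K_X)\to 0$ are recovered from $E$ up to the scalar automorphisms of $I_Z$ and $\OO_X(K_X)$. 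This recovers the point of $\P$, so $g|_{\P^s}$ is injective and $\dim M_1=5$.

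Then I would identify $M_1$ with the $\psi$-preimage of the deepest Uhlenbeck stratum. Here $\v'=\v(E_0)=(2,0,1)$ has $\v'^2=-4$, and $(1,0,\tfrac12)^2=-1$ is exceptional, so \cref{Lem:dimension negative} (with $m=2$ and determinant $K_X$) gives $M_\omega(\v',K_X)=\{[E_0]\}$, a single reduced point; hence $M_\omega(\v',K_X)\times S^2 X\cong S^2 X$ is the closed codimension-one stratum of the Uhlenbeck space $N$. On the dense open $\P^s$ every sheaf is stable and non-locally free with $E^{\vee\vee}=E_0$ and $E_0/E=\OO_Z$ of length two, so $\psi(E)=([E_0],[Z])\in M_\omega(\v',K_X)\times S^2 X$; this gives $M_1\subseteq\psi^{-1}(M_\omega(\v',K_X)\times S^2 X)$, both sides closed. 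For the reverse inclusion on the stable locus, take a stable $E$ with $\psi(E)\in\{[E_0]\}\times S^2 X$: then $E^{\vee\vee}$ is a locally free semistable sheaf of class $\v'$ with determinant $K_X$, and the vanishing $\Ext^1(\OO_X(K_X),\OO_X)=H^1(\OO_X(-K_X))=0$ forces $E^{\vee\vee}=E_0$, while $E_0/E=\OO_Z$ has length two; \cref{Lem:alternative sequence is equivalent for stable E} then puts $E$ into the form \eqref{eqn:nonlocally free extension}, so $E\in g(\P)=M_1$. Taking closures of the dense stable loci on both sides yields the equality $M_1=\psi^{-1}(M_\omega(\v',K_X)\times S^2 X)$.

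The main obstacle I anticipate is this last step: making the comparison precise along the boundary, that is, on the properly semistable locus and where the length-two cycle degenerates. This requires the explicit description of the Uhlenbeck contraction $\psi$ on non-locally free sheaves as $E\mapsto(\text{polystable degeneration of }E^{\vee\vee},\ \text{length cycle of }E^{\vee\vee}/E)$, together with the fact that the locus of strictly semistable $E$ (where $\OO_X(K_X)\to\OO_Z$ fails to be surjective, by \cref{Lem:criterion for extension to be stable}) and the locus of non-reduced $Z$ are of positive codimension and lie in the closures of the respective stable loci, so that the set-theoretic equality established generically propagates to all of $M_1$. The cohomological vanishings on the Enriques surface used above, and the compatibility with the analogous description of the component $M_0$ in \cref{Lem:M_0} and of the singular locus in \cref{Lem:ext^2}, are routine and can be invoked as needed.
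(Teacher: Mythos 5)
Your first half (dimension count, generic injectivity of $g$ on the stable locus, hence $\dim M_1=5$ and $M_1$ is a component) and your treatment of stable points of $\psi^{-1}(M_\omega(\v',K_X)\times S^2 X)$ via \cref{Lem:alternative sequence is equivalent for stable E} both match the paper's proof, and the forward inclusion $M_1\subseteq\psi^{-1}(M_\omega(\v',K_X)\times S^2 X)$ by closedness of the target is fine.

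The gap is in the reverse inclusion over the properly semistable locus, which you flag as the "main obstacle" but then dismiss as routine. The locus $M_\omega^{pss}(\v,K_X)=\Set{I_{z_1}\oplus I_{z_2}(K_X) \ |\ z_1,z_2\in X}$ is four-dimensional and lies \emph{entirely} inside $\psi^{-1}(\{[E_0]\}\times S^2X)$, so the asserted equality forces $M_\omega^{pss}(\v,K_X)\subseteq M_1$; this is exactly what your "taking closures" step must deliver, and it does not follow from anything you have established. Your argument needs that the strictly semistable part of $\psi^{-1}(\{[E_0]\}\times S^2X)$ lies in the closure of its stable part; but a priori $M_\omega^{pss}(\v,K_X)$ could be a four-dimensional piece of $\psi^{-1}(\{[E_0]\}\times S^2X)$ not contained in $\overline{\psi^{-1}(\{[E_0]\}\times S^2X)\cap M_\omega^s(\v,K_X)}$, in which case the lemma would simply be false as stated, so the claim cannot be "invoked as needed" --- it is the content to be proved. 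The paper closes this not by a density argument but by a direct computation: by \cref{Lem:criterion for extension to be stable}, a non-stable extension $0\to I_Z\to E\to\OO_X(K_X)\to 0$ has $\OO_X(K_X)\to\OO_Z$ non-surjective at some $z_1\in Z$, hence $E$ sits in $0\to I_{z_2}(K_X)\to E\to I_{z_1}\to 0$ and $g$ sends it to the point $[I_{z_1}\oplus I_{z_2}(K_X)]$ of the coarse moduli space; letting $Z$ run over all of $\Hilb^2(X)$ (reduced $Z$ for the off-diagonal classes, non-reduced $Z$ for the diagonal classes $[I_z\oplus I_z(K_X)]$) shows that these images sweep out all of $M_\omega^{pss}(\v,K_X)$, i.e.\ $M_\omega^{pss}(\v,K_X)\subseteq g(\P)=M_1$. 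Once you insert this identification of the $S$-equivalence classes of the degenerate extensions, your proof is complete and coincides with the paper's; without it, the equality is only established on the stable locus and fails to account for a codimension-one piece of $\psi^{-1}(\{[E_0]\}\times S^2X)$.
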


\begin{proof}
We begin by showing that $g$ is injective
on the stable locus. 
Indeed, for a stable sheaf $E$ in the image of $g$, $Z$ is determined uniquely by $E^{\vee\vee}/E$ so that the exact sequence \eqref{eqn:nonlocally free extension}
is uniquely determined by $E$.

Now we prove that the set  
$$
M_\omega^{pss}(\v,K_X)=\Set{I_{z_1} \oplus I_{z_2}(K_X) \ |\ z_1,z_2 \in X}\subset M_\omega(\v,K_X)
$$
of properly semistable polystable
sheaves is contained in $M_1$.  First assume that $Z=\{z_1,z_2 \}$ $(z_1 \ne z_2)$.
If $E$ is properly semistable, then by \cref{Lem:criterion for extension to be stable}, the associated morphism $\OO_X(K_X) \to \OO_Z$ is not surjective, say at $z_1$, so $E$ fits into an exact sequence
$$
0 \to I_{z_2}(K_X) \to E \to I_{z_1} \to 0,
$$ 
giving the point $I_{z_2}(K_X) \oplus I_{z_1}$ of $M_\omega(\v,K_X)$.
Note, moreover, that it follows from $\ext^1(I_{z_1},I_{z_2}(K_X))=1$ that $g$ is injective if $z_1 \ne z_2$.

If $Z=\{z \}$ with a non-reduced structure
and $E$ is properly semistable, then again by \cref{Lem:criterion for extension to be stable}, the induced morphism $\OO_X(K_X) \to \OO_Z$ is not surjective and $E$ fits into an exact sequence $$0\to I_z(K_X)\to E\to I_z\to 0,$$ giving the point $I_z \oplus I_z(K_X)$.  Thus $M_\omega^{pss}(\v,K_X)\subset M_1$ as claimed.  Note further that the point $I_z(K_X)\oplus I_z$ of $M_\omega(\v,K_X)$ is independent of the choice of a scheme structure on $Z$, so we also see that $g^{-1}(I_z\oplus I_z(K_X))$ has positive dimension.

As $g$ is injective away from $q^{-1}(\Delta_X)$, where $\Delta_X\subset\Hilb^2(X)$ is the locus of non-reduced subschemes, it follows that $\dim(M_1)=5$.  As $M_1$ is proper, irreducible, and of maximal dimension, it is an irreducible component of $M_\omega(\v,K_X)$. 

Since $M_\omega^{pss}(\v,K_X)\subset M_1\subset\psi^{-1}(M_\omega(\v',K_X)\times S^2 X)$, to prove the second claim, we must show that $E \in M_\omega^s(\v,K_X)$ satisfying
$\psi(E) \in M_\omega(\v',K_X) \times S^2 X$ is in $M_1$.  But such an $E$ is a stable sheaf fitting into an exact sequence as in \eqref{eqn:alternative defining short exact sequence}.  It follows from \cref{Lem:alternative sequence is equivalent for stable E} that $E$ fits into the exact sequence
\eqref{eqn:nonlocally free extension}.
Therefore $E\in M_1$, as claimed.  
\end{proof}

We will show that $M_0 \cup M_1$ is a connected component of
$M_\omega(\v,K_X)$, but the first step is to show that $M_1$ does not meet any other components.  In the next lemma, we show that away from the locus of properly semistable sheaves S-equivalent to $I_z\oplus I_z(K_X)$, $M_1$ is normal and thus meets no other components.
\begin{Lem}\label{Lem:normal}
The local deformation space is smooth at $E$ if
$E$ is S-equivalent to $I_{z_1} \oplus I_{z_2} (K_X)$ ($z_1 \ne z_2$).
Hence $I_{z_1} \oplus I_{z_2} (K_X)$ ($z_1 \ne z_2$)
is contained in the normal open subscheme
$$
M_\omega(\v,K_X)^*:=\Set{ E \in M_\omega(\v,K_X) \ |\ \Ext^2(E,E)=0}
$$
of $M_\omega(\v,K_X)$.
\end{Lem}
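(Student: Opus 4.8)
The plan is to reduce the smoothness assertion to a single $\Ext$-vanishing for the polystable representative and to verify that vanishing by Serre duality. Since closed points of $M_\omega(\v,K_X)$ parametrize S-equivalence classes, represented by polystable sheaves, the point corresponding to $E$ is represented by $F:=I_{z_1}\oplus I_{z_2}(K_X)$. The local deformation space of $F$ is governed by the tangent space $\Ext^1(F,F)$ and the obstruction space $\Ext^2(F,F)$; as $H^2(\OO_X)=0$ the trace map on $\Ext^2$ vanishes, so the determinant-preserving obstructions coincide with $\Ext^2(F,F)$ itself. Thus it suffices to show $\Ext^2(F,F)=0$, which forces the Kuranishi space of $F$ to be smooth.

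Next I would compute $\Ext^2(F,F)$ by Serre duality on $X$, using $\omega_X=\OO_X(K_X)$ and $2K_X=0$. Serre duality gives $\Ext^2(F,F)\cong \Hom(F,F(K_X))^\vee$, and since $F(K_X)=I_{z_1}(K_X)\oplus I_{z_2}$, the group $\Hom(F,F(K_X))$ splits into the four summands $\Hom(I_{z_1},I_{z_1}(K_X))$, $\Hom(I_{z_1},I_{z_2})$, $\Hom(I_{z_2}(K_X),I_{z_1}(K_X))$ and $\Hom(I_{z_2}(K_X),I_{z_2})$. All four are $\Hom$'s between stable sheaves sharing the same reduced Hilbert polynomial (each sheaf has numerical Mukai vector $(1,0,-\tfrac12)$), hence each is $0$ or an isomorphism. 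The first and last vanish because $I_{z_i}$ and $I_{z_i}(K_X)$ have distinct determinants $\OO_X\neq\OO_X(K_X)$, while the two middle terms vanish because $z_1\neq z_2$ forces $I_{z_1}\not\cong I_{z_2}$. Therefore $\Hom(F,F(K_X))=0$ and $\Ext^2(F,F)=0$, establishing smoothness of the local deformation space.

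Finally I would deduce the two stated consequences. Since $\Ext^2(F,F)=0$ and this is an open condition by upper-semicontinuity of $\dim\Ext^2(E_t,E_t)$, the point $[F]=[E]$ lies in $M_\omega(\v,K_X)^*$. For normality it remains to record the local structure at this strictly semistable point: because the two summands of $F$ are non-isomorphic stable sheaves, $\Aut(F)\cong\C^*\times\C^*$ is a torus, and the vanishing of obstructions means that, after an \'etale slice, $M_\omega(\v,K_X)$ is locally the affine GIT quotient of the smooth space $\Ext^1(F,F)$ by the reductive group $\Aut(F)$; invariant rings of such quotients are normal. Hence $M_\omega(\v,K_X)^*$ is a normal open subscheme containing $I_{z_1}\oplus I_{z_2}(K_X)$.

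I expect the main obstacle to be not the $\Ext$ computation, which is routine once stability is invoked, but the care required in the deformation theory at a strictly semistable point: one must pass to the polystable representative $F$ and use the \'etale (Luna) slice description to translate $\Ext^2(F,F)=0$ into smoothness of the Kuranishi space and normality of the local GIT model, rather than smoothness of the moduli space itself, which genuinely fails since the torus $\Aut(F)$ acts nontrivially on $\Ext^1(F,F)$.
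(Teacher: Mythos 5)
Your proof is correct, and its core computation is the same as the paper's: your four vanishing $\Hom$'s inside $\Hom(F,F(K_X))$ are exactly Serre-dual to the four vanishing $\Ext^2$'s between the Jordan--H\"older factors that the paper writes down. Where you diverge is in how the deformation-theoretic conclusion is packaged. The paper argues globally: it realizes $M_\omega(\v,K_X)$ as a GIT quotient of an open subscheme $Q$ of a Quot scheme by $\PGL(N)$, observes that the locus $Q^*$ where $\Ext^2(E,E)=0$ is smooth (for $m\gg 0$ the Quot-scheme obstruction space is identified with $\Ext^2(E,E)$), and concludes that $M_\omega(\v,K_X)^*=Q^*/\PGL(N)$ is normal simply because invariants of a normal ring are normal --- no slice theorem needed. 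You argue locally, via the Kuranishi space and Luna's \'etale slice theorem, presenting the moduli space near $[F]$ as $\Ext^1(F,F)$ modulo the reductive group $\Aut(F)\cong\C^*\times\C^*$. That is valid, but note two small points. First, the lemma asserts normality of all of $M_\omega(\v,K_X)^*$, whereas you verify it only at $[F]$; your argument does extend verbatim to every point of $M_\omega(\v,K_X)^*$ (at any polystable representative $G$ one has $\Ext^2(G,G)=0$ by definition of the locus and $\Aut(G)$ reductive), but this should be said, and the paper's global Quot argument gets it in one stroke. Second, the paper's phrasing handles an arbitrary sheaf $E$ S-equivalent to $I_{z_1}\oplus I_{z_2}(K_X)$: the same four vanishings give $\Ext^2(E,E)=0$ for every representative via the JH filtration, not just for the polystable one.

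Your closing caveat is the one thing that is actually wrong, though it lies outside the lemma's claims: smoothness of the moduli space at $[F]$ does \emph{not} fail. The two off-diagonal weight spaces of $\Ext^1(F,F)$ are each one-dimensional (since $\ext^1(I_{z_1},I_{z_2}(K_X))=\langle\v_0,\v_0\rangle=1$), so the invariant ring of the six-dimensional space $\Ext^1(F,F)$ under the one-dimensional torus $\Aut(F)/\C^*$ is a polynomial ring in five variables, and your own local model is smooth --- consistent with the paper's subsequent Lemma \ref{Lem:M^*}, which proves that $M_\omega(\v,K_X)^*$ is in fact smooth.
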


\begin{proof}
We begin by showing that $M_\omega(\v,K_X)^*$ is normal.  Let $Q$ be an open subscheme of a suitable quot-scheme
$\Quot_{\OO_X(-mH)^{\oplus N}/X}$, which parameterizes quotients
$\OO_X(-mH)^{\oplus N} \to E$,
such that $M_\omega(\v,K_X)$ is a GIT-quotient of
$Q$ by $PGL(N)$.
Let $Q^*$ be the open subscheme of $Q$
such that $\Ext^2(E,E)=0$. Then $Q^*$ is smooth.
Hence the open subscheme $M_\omega(\v,K_X)^*=Q^*/PGL(N)$ of
$M_\omega(\v,K_X)$ is a normal scheme.  

Now suppose that $E$ is S-equivalent to $I_{z_1}\oplus I_{z_2}(K_X)$ with $z_1 \ne z_2$.  Then
$$
\Ext^2(I_{z_1},I_{z_1})=\Ext^2(I_{z_1}, I_{z_2} (K_X))=
\Ext^2(I_{z_2}(K_X),I_{z_1})=\Ext^2(I_{z_2}(K_X),I_{z_2}(K_X))=0.
$$
Hence $\Ext^2(E,E)=0$ so that $E\in Q^*$.  It follows that the polystable representative, $I_{z_1}\oplus I_{z_2}(K_X)$, is in $M_\omega(\v,K_X)^*$, as claimed.
\end{proof}
We can actually prove that $M_\omega(\v,K_X)^*$ is smooth:
\begin{Lem}\label{Lem:M^*} 
$\P \times_{M_\omega(\v,K_X)} M_\omega(\v,K_X)^* 
\to M_\omega(\v,K_X)^*$ is a closed immersion whose
image is a connected component of $M_\omega(\v,K_X)^*$.
In particular $M_\omega(\v,K_X)^* $ is smooth.
\end{Lem}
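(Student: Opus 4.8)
The plan is to study the restriction of the classifying morphism $g:\P\to M_\omega(\v,K_X)$ to $\P^*:=\P\times_{M_\omega(\v,K_X)}M_\omega(\v,K_X)^*=g^{-1}(M_\omega(\v,K_X)^*)$, show it is a closed immersion, and then use the normality of $M_\omega(\v,K_X)^*$ established in Lemma \ref{Lem:normal} to promote its image to a connected component. First I would pin down the image set-theoretically. By Lemma \ref{Lem:criterion for extension to be stable}, a point of $\P$ parametrizes either a stable sheaf or a properly semistable sheaf S-equivalent to some $I_{z_1}\oplus I_{z_2}(K_X)$. A Serre-duality computation shows that the non-reduced boundary points are excluded: $\Ext^2(I_z\oplus I_z(K_X),I_z\oplus I_z(K_X))\neq 0$ because the cross term $\Ext^2(I_z,I_z(K_X))\cong\Hom(I_z,I_z)^\vee\neq 0$, whereas for $z_1\neq z_2$ all the relevant $\Ext^2$ groups vanish. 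Hence $\P^*$ omits precisely the positive-dimensional fibers of $g$ lying over the points $I_z\oplus I_z(K_X)$ of Lemma \ref{Lem:M_1}, and combining this with the injectivity of $g$ on the stable locus and on the $z_1\neq z_2$ locus (both from Lemma \ref{Lem:M_1}) shows that $g|_{\P^*}$ is injective.

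Next I would prove $g|_{\P^*}$ is a closed immersion. Properness is free: $g$ is proper since $\P$ is proper over $k$, and $g|_{\P^*}$ is its base change along the open immersion $M_\omega(\v,K_X)^*\hookrightarrow M_\omega(\v,K_X)$. Universal injectivity follows from injectivity on points together with the triviality of residue field extensions over the algebraically closed field $k$. It then remains to check that $g|_{\P^*}$ is unramified; granting this, the combination proper $+$ unramified $+$ universally injective gives a proper monomorphism, i.e. a closed immersion, so that $g(\P^*)\cong\P^*$ is a smooth closed subscheme of $M_\omega(\v,K_X)^*$ of dimension $5$, equal to $M_1\cap M_\omega(\v,K_X)^*$. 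At a stable sheaf $E$ the tangent map is the usual assignment sending a first-order deformation of the extension $0\to I_Z\to E\to \OO_X(K_X)\to 0$ to the induced deformation of $E$ in $\Ext^1(E,E)$, and its injectivity is a direct deformation-theoretic check. The hard part will be the properly semistable points $E=I_{z_1}\oplus I_{z_2}(K_X)$ with $z_1\neq z_2$: here I would invoke the Luna étale slice, under which $M_\omega(\v,K_X)^*$ is locally modeled by $\Ext^1(E,E)/\!\!/\Aut(E)$. One computes $\ext^1(E,E)=2+1+1+2=6$, with the stabilizer $\Aut(E)/\C^*\cong\C^*$ acting on these six coordinates with weights $(0,0,0,0,+1,-1)$; the ring of invariants is therefore a polynomial ring in five variables, so the slice is smooth and irreducible of dimension $5$. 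This simultaneously verifies unramifiedness at $E$ and shows that $M_\omega(\v,K_X)^*$ is analytically irreducible (and smooth) there.

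With the closed immersion in hand, I would conclude that $g(\P^*)=M_1\cap M_\omega(\v,K_X)^*$ is a connected component. It is closed, being the intersection of the closed set $M_1=g(\P)$ (closed because $\P$ is proper) with $M_\omega(\v,K_X)^*$; it is irreducible of dimension $5$, being open in the irreducible component $M_1$; and it is connected, being isomorphic to the connected $\P^*$. For openness I would use that $M_\omega(\v,K_X)^*$ is normal by Lemma \ref{Lem:normal}, hence unibranch at every point: at each $E\in g(\P^*)$ the local ring is a domain, so $M_\omega(\v,K_X)^*$ has a unique local irreducible component through $E$, necessarily of dimension $5$. Since $g(\P^*)$ is a closed irreducible subset of dimension $5$ passing through $E$, it must coincide with this local component, so $g(\P^*)$ is open as well. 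Being clopen and connected, it is a connected component, as claimed.

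Finally, smoothness of all of $M_\omega(\v,K_X)^*$ follows formally. The distinguished component $g(\P^*)\cong\P^*$ is smooth, since $\P$ is a projective bundle over the smooth $\Hilb^2(X)$; and every point of $M_\omega(\v,K_X)^*\setminus M_1$ is a stable sheaf $E$ with $\Ext^2(E,E)=0$, hence an unobstructed, smooth point of $M_\omega(\v,K_X)$. Thus $M_\omega(\v,K_X)^*$ is smooth everywhere. I expect the genuine obstacle to be the unramifiedness and local-irreducibility analysis at the properly semistable points $I_{z_1}\oplus I_{z_2}(K_X)$, which is exactly where the Luna slice and the weight computation for the $\C^*$-action on $\Ext^1(E,E)$ do the real work; the remaining steps are formal once injectivity, properness, and normality are assembled.
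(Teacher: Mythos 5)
Your outline tracks the paper's proof almost step for step: the exclusion of the non-reduced polystable points $I_z\oplus I_z(K_X)$ from $M_\omega(\v,K_X)^*$ via $\Ext^2\neq 0$, the resulting injectivity of $g'$ from \cref{Lem:M_1}, the identification of the image with $M_1^*=M_1\cap M_\omega(\v,K_X)^*$, the clopen-ness of $M_1^*$ via normality (\cref{Lem:normal}), and the final two-case smoothness argument are all exactly what the paper does. The one place you diverge is the mechanism for proving $g'$ is a closed immersion, and that is where there is a genuine gap. You run the criterion ``proper $+$ universally injective $+$ unramified $\Rightarrow$ closed immersion'', and to get unramifiedness at a polystable point $E=I_{z_1}\oplus I_{z_2}(K_X)$ ($z_1\neq z_2$) you compute the Luna slice: $\ext^1(E,E)=6$, vanishing obstructions, the residual $\C^*$ acting with weights $(0,0,0,0,+1,-1)$, invariant ring a polynomial ring in five variables. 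That computation is correct (and in fact sharpens \cref{Lem:normal} from normality to smoothness at such $E$), but the conclusion you draw from it --- that it ``simultaneously verifies unramifiedness at $E$'' --- is a non sequitur. Unramifiedness is a property of the morphism, namely the vanishing of $\Omega_{\P^*/M_\omega(\v,K_X)^*}$ where $\P^*$ denotes your fiber product; smoothness of the target at $E$ says nothing about whether the differential of $g'$ is injective at the point of $\P$ lying over $E$. To extract unramifiedness from the slice you would have to restrict the universal extension $\EE$ to the \'{e}tale slice and check that the induced classifying map from a neighborhood in $\P^*$ into $\Ext^1(E,E)/\!\!/\Aut(E)$ is immersive, a computation you never perform (your treatment of the stable points has the same, milder, issue: ``a direct deformation-theoretic check'' is asserted, not carried out).

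The gap is repairable, and the repair is precisely the paper's argument, for which you already possess every ingredient: $g'$ is proper and injective, hence finite; being injective it is birational onto its image; the image $M_1^*$ is normal, being open in the normal scheme $M_\omega(\v,K_X)^*$ of \cref{Lem:normal}; and a finite birational morphism onto a normal variety is an isomorphism onto its image by Zariski's main theorem. This yields the closed immersion with no tangent-space or unramifiedness analysis whatsoever, after which your unibranch argument for openness of $M_1^*$ and your two-case smoothness conclusion go through verbatim. In short: keep everything else, delete the unramifiedness step, and invoke Zariski's main theorem --- the appeal to the Luna slice as a substitute for an actual computation of the differential of $g'$ is the one step in your proposal that does not hold up.
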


\begin{proof}
We first show that the image of $g':\P \times_{M_\omega(\v,K_X)} M_\omega(\v,K_X)^*  \to M_\omega(\v,K_X)^*$ a connected component of $M_\omega(\v,K_X)^*$.  First observe that $g'$ is proper since $g$ is a proper morphism.  We further note that the image of $g'$ is $M_1^*:=M_1 \cap M_\omega(\v,K_X)^*$,
which is therefore a connected component of $M_\omega(\v,K_X)^*$ by Lemma \ref{Lem:normal}.

Now let us show that $g'$ is a closed immersion.  If $E\in M^{pss}_\omega(\v,K_X)$ is $S$-equivalent to $I_{z} \oplus I_z(K_X)$, then
we see that $\Ext^2(E,E) \ne 0$. Hence $E$ is not contained in 
$M_\omega(\v,K_X)^*$.
Therefore $g'$ is injective, which implies that
$g'$ is a finite birational map between normal schemes.
Therefore, $g'$ is an isomorphism onto its image, proving the claim.

As $M_\omega^{pss}(\v,K_X)\subset M_1$, the open subscheme $M_\omega(\v,K_X)^*\backslash M_1^*$ is smooth by \cite{Kim98} and the definition of $M_\omega(\v,K_X)^*$.  Since $M_1^*$ is isomorphic to an open subscheme of a $\P^1$ bundle over $\Hilb^2(X)$, it is smooth, which gives the final statement of lemma.
\end{proof}

Finally, we put everything together to show that $M_0\cup M_1$ is a connected component of $M_\omega(\v,K_X)$.
\begin{Prop}\label{prop:connected}
\begin{enumerate}
\item
The singular locus of $M_\omega(\v,K_X)$ consists of a 4-dimensional subscheme
$$
S_1:=\{ \varpi_*(I_W) \mid I_W \in \Hilb^2(\widetilde{X}) \} 
$$
and a finite set of points 
$$
S_2:=\{\varpi_*({\cal O}_{\widetilde{X}}(D)) \mid \iota(D)=-D, (D^2)=-4\}.
$$
\item
$M_0$ and $M_1$
intersect along $S_1$.
\item
$M_0 \cup M_1$ is a connected component of $M_\omega(\v,K_X)$.
\end{enumerate}
\end{Prop}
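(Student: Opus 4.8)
The plan is to prove the three assertions in turn, using the smooth open subscheme $M_\omega(\v,K_X)^*=\{E:\Ext^2(E,E)=0\}$ of Lemma~\ref{Lem:M^*} as the organizing device. For part (1), I would first identify the non-smooth locus as exactly $M_\omega(\v,K_X)\setminus M_\omega(\v,K_X)^*$. Since $M_\omega(\v,K_X)^*$ is smooth by Lemma~\ref{Lem:M^*}, we get $\Sing(M_\omega(\v,K_X))\subseteq M_\omega(\v,K_X)\setminus M_\omega(\v,K_X)^*$. Computing $\Ext^2(E,E)$ on the two strata: for stable $E$ one has $\Ext^2(E,E)\neq 0$ iff $E\cong E(K_X)$, which by Lemma~\ref{Lem:ext^2} happens exactly on $S_2$ and on the stable part of $S_1$; for a properly semistable $E\simeq I_{z_1}\oplus I_{z_2}(K_X)$, Lemma~\ref{Lem:normal} and the proof of Lemma~\ref{Lem:M^*} give $\Ext^2(E,E)\neq 0$ iff $z_1=z_2$, i.e.\ exactly at the $\iota$-invariant points $I_z\oplus I_z(K_X)=\varpi_*(I_{\varpi^{-1}(z)})\in S_1$. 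Hence $M_\omega(\v,K_X)\setminus M_\omega(\v,K_X)^*=S_1\cup S_2$. For the reverse inclusion $S_1\cup S_2\subseteq\Sing$, the stable singular points are handled by Lemma~\ref{Lem:ext^2}, while the remaining $\iota$-invariant points lie in the closure of the dense, $4$-dimensional stable locus of the irreducible $S_1$ and so are singular because $\Sing$ is closed. This yields $\Sing(M_\omega(\v,K_X))=S_1\cup S_2$.

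For part (2), since $M_\omega(\v,K_X)$ is reduced (Proposition~\ref{prop:pss}) and $M_0\neq M_1$, the intersection $M_0\cap M_1$ is closed and contained in $\Sing=S_1\cup S_2$. The points of $S_2$ are $\mu$-stable locally free, so the Uhlenbeck map $\psi$ sends them into the top stratum of $N$, which is disjoint from $\{E_0\}\times S^2X$; hence $S_2\cap M_1=\varnothing$ because $M_1=\psi^{-1}(\{E_0\}\times S^2X)$ by Lemma~\ref{Lem:M_1}, and therefore $M_0\cap M_1\subseteq S_1$. For the reverse inclusion I would use that $\psi$ is birational on $M_0$ (an isomorphism on the dense locus of $\mu$-stable locally free sheaves of Lemma~\ref{Lem:M_0}), so $\psi(M_0)$ is $5$-dimensional and, again by Lemma~\ref{Lem:M_0}, contains the $4$-dimensional $\{E_0\}\times S^2X$ in its image. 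The theorem on fibre dimensions then forces every component of $M_0\cap\psi^{-1}(\{E_0\}\times S^2X)=M_0\cap M_1$ to have dimension $\geq 4$. Since $S_1$ is irreducible of dimension $4$ and $M_0\cap M_1$ is a closed subset of $S_1$ of dimension $\geq 4$, we conclude $M_0\cap M_1=S_1$.

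For part (3), connectedness is immediate from $M_0\cap M_1=S_1\neq\varnothing$, and $M_0\cup M_1$ is closed, so it remains to show it is open, i.e.\ that no irreducible component $M'\neq M_0,M_1$ meets it. By part (1), $M_\omega(\v,K_X)^*$ is precisely the smooth locus, so its connected components correspond bijectively to the irreducible components of $M_\omega(\v,K_X)$, and Lemma~\ref{Lem:M^*} already identifies $M_1\cap M_\omega(\v,K_X)^*$ as one such component. Any $M'$ meeting $M_0\cup M_1$ would do so inside $S_1\cup S_2$. Near a generic (stable) point of $S_1$ the analytic germ is $t_1^2+t_2^2+G$ with $G\in(t_1,\dots,t_6)^3$ (the $n=2$ case preceding Lemma~\ref{Lem:ext^2}), which has exactly two branches; these must be $M_0$ and $M_1$, so $M'$ cannot contain a generic point of $S_1$. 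Along $S_2$ the germ is an isolated hypersurface singularity, hence locally irreducible, and $S_2\subseteq M_0$ once one knows the normal scheme $M_\omega^{s,lf}(\v,K_X)$ is irreducible, so no new component passes through $S_2$ either. Combining these local statements with the component-correspondence shows $M_0\cup M_1$ is open.

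The main obstacle is the local structure of $M_\omega(\v,K_X)$ at the properly semistable points $I_z\oplus I_z(K_X)$ of $S_1$, where the stable-locus computation is unavailable. There one must analyze the Kuranishi slice directly: one computes that $\Ext^1(E,E)$ is $8$-dimensional and decomposes under the reductive stabilizer $\C^\times$ as $V_0\oplus V_+\oplus V_-$ with $\dim V_0=4$ and $\dim V_+=\dim V_-=2$, while $\Ext^2(E,E)$ is $2$-dimensional, so the \'etale-local model of the moduli space is the GIT quotient of the obstruction locus by $\C^\times$. I would argue that this quotient has exactly two branches, matching $M_0$ and $M_1$, so that $M_0\cup M_1$ is open along all of $S_1$. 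A comparable but easier point is checking the irreducibility of $M_\omega^{s,lf}(\v,K_X)$ needed for $S_2\subseteq M_0$, which I expect to extract from the explicit description of $M_0$ in Lemma~\ref{Lem:M_0} together with the birationality of $\psi$.
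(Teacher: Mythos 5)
Your parts (1) and (2) are correct and follow the paper's own route. Part (1) is exactly the combination of Lemma~\ref{Lem:ext^2} with Lemma~\ref{Lem:M^*} (smoothness of $M_\omega(\v,K_X)^*$), together with the observation from Lemma~\ref{Lem:normal} that the only properly semistable points with $\Ext^2\neq 0$ are the points $I_z\oplus I_z(K_X)=\varpi_*(I_{\varpi^{-1}(z)})\in S_1$. Part (2) is exactly the paper's use of Lemma~\ref{Lem:M_0} ($\psi(M_0)\supseteq M_\omega(\v',K_X)\times S^2X$) and Lemma~\ref{Lem:M_1} ($M_1=\psi^{-1}(M_\omega(\v',K_X)\times S^2X)$); your appeal to the fibre-dimension theorem (delicate on a singular target like the Uhlenbeck space) can be replaced by the simpler remark that $M_0\cap M_1=M_0\cap\psi^{-1}(\{E_0\}\times S^2X)$ surjects onto the $4$-dimensional $\{E_0\}\times S^2X$, hence has dimension $\geq 4$, and being a closed subset of the irreducible $4$-fold $S_1$ it must equal $S_1$.

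The gap is in part (3), and you have named it yourself: the branch count at the $2$-dimensional locus of polystable points $\{I_z\oplus I_z(K_X)\}\subset S_1$. Your argument correctly rules out a third component $M'$ through the stable points of $S_1$ (at most two branches there, both occupied by $M_0$ and $M_1$ once part (2) is known) and through $S_2$ (a $5$-dimensional isolated hypersurface singularity is analytically irreducible, since two $5$-dimensional branches in $(\C^6,0)$ would meet in dimension $\geq 4$; note this already suffices, so your detour through irreducibility of $M^{s,lf}_\omega(\v,K_X)$ and the claim $S_2\subseteq M_0$ is unnecessary). But a component $M'$ could a priori meet $M_0\cup M_1$ only inside the polystable locus, where the hypersurface description $t_1^2+t_2^2+G=0$ from \cite{Yamada} is unavailable, as it applies only at stable points. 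Your proposed remedy, the Luna slice at $E=I_z\oplus I_z(K_X)$, with the correct data $\ext^1(E,E)=8$, $\ext^2(E,E)=2$ and stabilizer $\C^*$ acting with weights $0,\pm1$ as you state, is the right tool, but ``I would argue that this quotient has exactly two branches'' is precisely the statement that needs a proof: one must compute the quadratic part of the equivariant Kuranishi map via the Yoneda pairings and analyze the resulting GIT quotient, and the proposal as written does not supply this. For what it is worth, the paper's own one-line proof of (3) (``by the description of the singular locus'') leans on its earlier claim that at most two irreducible components meet \emph{along} the $4$-dimensional singular locus, which likewise only addresses intersections containing stable points of $S_1$; so you have isolated the genuinely delicate point of the argument, but you have not closed it.
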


\begin{proof}
(1) follows from Lemma \ref{Lem:ext^2} and Lemma \ref{Lem:M^*}. 
By Lemma \ref{Lem:M_0} and Lemma \ref{Lem:M_1},
$M_0$ intersects $M_1$ along the 4-dimensional singular locus.
By the description of the singular locus,
$M_0 \cup M_1$ is a connected component of
$M_\omega(\v,K_X)$.
\end{proof}

\begin{Rem}
The restriction of $\psi$ to $S_1=M_0 \cap M_1$ is a double cover onto $M_\omega(\v',K_X) \times S^2 X$.  Indeed, for $\varpi_*(I_{w_1,w_2})$ $(w_1 \ne w_2,\iota(w_2))$, we push forward the short exact sequence on $\widetilde{X}$ 
$$0\to I_{w_1,w_2}\to\OO_{\widetilde{X}}\to\OO_{w_1,w_2}\to 0$$ to obtain the short exact sequence 
$$0\to\varpi_*(I_{w_1,w_2})\to\OO_X\oplus\OO_X(K_X)\to\OO_{\varpi(w_1),\varpi(w_2)}\to 0.$$
From this it is clear that $\psi^{-1}(\psi(\varpi_*(I_{w_1,w_2})))=\{\varpi_*(I_{w_1,w_2}),\varpi_*(I_{w_1,\iota(w_2)})\}$.
\end{Rem}

\subsubsection{A remark on $M_\omega(\v,0)$ $(\v=(2,0,-1))$}
We discuss here the structure of the other component of $M_\omega(\v)$ parametrizing sheaves with determinant $\OO_X$ vis-a-vis the Uhlenbeck contraction.
\begin{Lem}
If $E \in M_\omega(\v,0)$ is not a $\mu$-stable locally free sheaf, then 
$E$ is S-equivalent to $I_{z_1} \oplus I_{z_2}$ or
$I_{z_1}(K_X) \oplus I_{z_2}(K_X)$.  
\end{Lem}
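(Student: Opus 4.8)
The plan is to work entirely on the Gieseker side and stratify by the failure of $\mu$-stability, using that $\v=(2,0,-1)=2\v_0$ with $\v_0=(1,0,-\tfrac12)$ and $\v_0^2=1$, so that $\chi(E)=0$ and the Gieseker-stable sheaves of class $\v_0$ are exactly the twisted point-ideals $I_z,I_z(K_X)$. First I would record that a Gieseker semistable $E$ is automatically $\mu$-semistable, and dispose of the $\mu$-\emph{stable} case. If $E$ is $\mu$-stable and locally free it is excluded by hypothesis, so suppose $E$ is $\mu$-stable but not locally free. Then its reflexive hull $E^{\vee\vee}$ is locally free, still $\mu$-stable, has $\det E^{\vee\vee}=\det E=\OO_X$, and $\v(E^{\vee\vee})=\v+ \v(E^{\vee\vee}/E)=(2,0,-1+\ell)$ with $\ell=\operatorname{length}(E^{\vee\vee}/E)\ge 1$. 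For $\ell=1$ this class is $(2,0,0)$, which has $\ell(\v)=2$ and square $0$, so by \cref{Thm:exist:nodal} a nonempty moduli space forces $\det\equiv\tfrac{r}{2}K_X=K_X\pmod 2$, contradicting $\det=\OO_X$; for $\ell\ge 2$ the square is $4-4\ell\le-4<-2$, again forcing emptiness by \cref{Thm:exist:nodal}. Hence no $\mu$-stable non-locally-free sheaf of this class and determinant exists, and I may assume from now on that $E$ is $\mu$-semistable but not $\mu$-stable.

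Next I would extract a saturated rank-one slope-zero subsheaf $F_1=I_{W_1}(D_1)\subset E$ with torsion-free rank-one quotient $F_2=I_{W_2}(D_2)$. Comparing determinants gives $D_1+D_2\sim 0$ in $\Pic(X)$ with $D_1,D_2$ numerically trivial, hence $(D_1,D_2)\in\{(0,0),(K_X,K_X)\}$; write $D$ for their common value. Comparing $\ch_2$ gives $\operatorname{length}(W_1)+\operatorname{length}(W_2)=2$. The core step is to show $E$ cannot be Gieseker \emph{stable}. Indeed $\chi(F_1)=1-\operatorname{length}(W_1)$, so if $E$ were stable then $\chi(F_1)<\tfrac12\chi(E)=0$ forces $\operatorname{length}(W_1)=2$ and $F_2=\OO_X(D)$, and Gieseker semistability rules out the split extension (which contains the destabilizing line bundle $\OO_X(D)$), so $E$ would be a nonzero class in $\Ext^1(\OO_X(D),I_{W_1}(D))\cong H^1(I_{W_1})$. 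A cohomology computation (from $0\to I_{W_1}\to\OO_X\to\OO_{W_1}\to 0$ together with $h^0(\OO_X)=1$, $h^1(\OO_X)=0$) shows this space is one-dimensional, so the non-split extension is unique. For reduced $W_1=\{z_1,z_2\}$ the sheaf $I_{z_1}(D)\oplus I_{z_2}(D)$ realizes it, via the surjection $(a,b)\mapsto a-b$ onto $\OO_X(D)$ with kernel $I_{W_1}(D)$, the extension being non-split because $\Hom(\OO_X(D),I_{z_1}(D)\oplus I_{z_2}(D))=H^0(I_{z_1})\oplus H^0(I_{z_2})=0$; this decomposable sheaf is visibly not stable, a contradiction. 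By irreducibility of $\Hilb^2(X)$ and openness of the stable locus, the same conclusion propagates to non-reduced $W_1$, so $E$ is never Gieseker stable.

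It then remains to conclude: since $E$ is Gieseker semistable but not stable, its Jordan--Hölder factors with respect to Gieseker stability are Gieseker-stable sheaves of class $\v_0=(1,0,-\tfrac12)$, i.e. twisted point-ideals, and the determinant constraint $\det E=\OO_X$ forces both factors to carry the same twist $D\in\{0,K_X\}$. Therefore $E$ is S-equivalent to $I_{z_1}\oplus I_{z_2}$ (when $D=0$) or to $I_{z_1}(K_X)\oplus I_{z_2}(K_X)$ (when $D=K_X$), as claimed. I expect the main obstacle to be precisely the stability dichotomy in the length-$(2,0)$ case: one must argue that the a priori possible Gieseker-\emph{stable}, properly $\mu$-semistable sheaf does not occur, and the clean way to do this is the uniqueness of the non-split extension combined with its explicit identification with the decomposable sheaf, extended to all of $\Hilb^2(X)$ by a specialization argument.
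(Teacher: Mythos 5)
Your proof is correct, but it settles the decisive case by a genuinely different mechanism from the paper's. Both arguments funnel everything into a slope-zero filtration $0 \to I_{W_1}(D) \to E \to I_{W_2}(D) \to 0$ with $D \in \{0,K_X\}$, $\deg W_1 + \deg W_2 = 2$, and $\deg W_1 \geq 1 \geq \deg W_2$ forced by Gieseker semistability, and the whole difficulty is the case $W_2 = \varnothing$. The paper resolves that case constructively and uniformly in $W_1$: there $E^{\vee\vee} \cong \OO_X(D)^{\oplus 2}$, so one can choose an intermediate sheaf $E \subset F \subset E^{\vee\vee}$ with $F/E \cong \C_{z_1}$, run the ``$\chi>0$ produces sections'' argument a second time on $F$ (which has $\chi(F)=1$) to obtain a second saturated copy of $\OO_X(D)$, and intersect it with $E$; the resulting new filtration has factors $I_{z_1}(D)$ and $I_{z_2}(D)$, which exhibits the S-equivalence class at once, with no distinction between reduced and non-reduced $W_1$ and no family argument. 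You instead exclude stability by a dimension count: $\ext^1(\OO_X(D), I_{W_1}(D)) = h^1(I_{W_1}) = 1$, so the non-split extension is unique; for reduced $W_1$ it is the visibly unstable sheaf $I_{z_1}(D) \oplus I_{z_2}(D)$, and for non-reduced $W_1$ you specialize over the irreducible $\Hilb^2(X)$ using openness of stability in the universal extension family, then recover the Jordan--H\"{o}lder factors abstractly from the numerics of $\v_0$. Both routes are valid; the paper's is shorter, identifies the JH factors (which points occur) explicitly, and needs no case separation at all, while yours isolates the clean structural reason a stable extension cannot exist, namely $\dim \Ext^1 = 1$.

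One small repair: to kill the $\mu$-stable non-locally-free case you invoke \cref{Thm:exist:nodal} for the classes $(2,0,-1+\ell)$, but that theorem concerns generic polarizations and primitive Mukai vectors, whereas $\omega$ was only chosen generic for $(2,0,-1)$, and $(2,0,1)$ is not primitive. Either perturb $\omega$ --- $\mu$-stability of a fixed sheaf is an open condition in the polarization --- and use that any stable object satisfies $\v(E)^2 \geq -2$ to handle the non-primitive classes, or argue directly as the paper in effect does: $\chi(E^{\vee\vee}) = \ell > 0$ forces $h^0(E^{\vee\vee}) \neq 0$ or $\Hom(E^{\vee\vee}, \OO_X(K_X)) \neq 0$, and either one contradicts $\mu$-stability of $E^{\vee\vee}$. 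With that substitution this case disappears without any appeal to moduli nonemptiness.
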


\begin{proof}
If $E$ is not locally free, then we see that
$\chi(E^{\vee \vee})>0$.
Hence $H^0(E^{\vee \vee}) \ne 0$ or
$H^0(E^{\vee}(K_X)) \ne 0$.  It follows from considerations of $\mu$-stability that $E^{\vee\vee}$ contains $\OO_X(D)$, where $D=0$ or $K_X$, respectively, as a saturated subsheaf.  Intersecting with $E$ gives an exact sequence
$$
0 \to I_Z(D) \to E \to I_W(D) \to 0
$$
with $D=0,K_X$.
By the semi-stability of $E$, $\deg(Z)\geq 1$, $\deg(W)\leq 1$, and $\deg(Z)+\deg(W)=2$.  If $\deg(Z)=\deg(W)=1$, then $E$ is S-equivalent to $I_Z(D)\oplus I_W(D)$, as claimed.  If $\deg W=0$, then
$E^{\vee \vee}=\OO_X(D)^{\oplus 2}$ with $E^{\vee\vee}/E\cong\OO_Z$ where now $Z\in\Hilb^2(X)$.  Choosing a torsion free sheaf $F$ with
$E \subset F \subset E^{\vee \vee}$ and $F/E=\OO_{z_1}$, $F$ is $\mu$-semistable and $\chi(F)=1$, so the same argument as for $E^{\vee\vee}$ shows that 
$\Hom(\OO_X(D),F) \ne 0$.  Slope-stability considerations force $\OO_X(D)$ to be a saturated subsheaf, and intersecting with $E$ gives another short exact sequence $$0\to I_{Z'}(D)\to E\to I_{W'}(D)\to 0$$ with $\deg(Z')\geq 1$, $\deg(W')\leq 1$, and $\deg(Z')+\deg(W')=2$.  This time, however, we see that $$0\neq\OO_X(D)/I_{Z'}(D)=(\OO_X(D)+E)/E\subset F/E=\OO_{z_1},$$ so $Z'=z_1$ and $W'=z_2$ for points $z_1,z_2\in X$.
Therefore $E$ is S-equivalent to $I_{z_1}(D)\oplus I_{z_2}(D)$ in this case as well.
\end{proof}

Thus the complement of the open subset of $\mu$-stable locally free sheaves
in $M_\omega(\v,0)$ is parameterized by
two copies of $S^2 X$, and this locus is not contracted by the
morphism to the Uhlenbeck compactification.

\subsection{Irreducible components of $M_\sigma(\v)$
with $\v^2=2$}
In this subsection, we shall prove the following claim. 
\begin{Prop}\label{prop:irred-comp:v^2=2}
Let $\v:=(r,\xi,a)$ be a Mukai vector such that $\v^2=2$.
Assume that $L \equiv D+\frac{r}{2}K_X \pmod 2$, where $D$ is a nodal cycle.
Then $M_\omega(\v,L)$ has two irreducible components.
\end{Prop}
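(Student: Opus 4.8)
The plan is to reduce the problem to the concrete study of a moduli space of sheaves via a Fourier-Mukai transform, exactly as was done for the case $\v=(2,0,-1)$ in the previous subsection. Since $\v^2=2$ and $L\equiv D+\frac{r}{2}K_X\pmod 2$ with $D$ a nodal cycle, we are in the situation where $X$ is nodal and $\v$ is either primitive of the spherical type or of the form relevant to \cref{enum:IsotropicTSS-P1Fibration Spherical l=1} of \cref{Prop:isotropic-classification}. First I would use \cref{Thm:MainTheorem1} (part \ref{enum:MT1-two moduli are birational}) together with the construction of Fourier-Mukai transforms that change the rank (as in the proofs of \cref{Thm:application1,Thm:application2}) to normalize $\v$ to a convenient form. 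Because $\v^2=2$, the minimal model in the $G_\HH$-orbit will sit on a wall $\WW$ whose hyperbolic lattice $\HH_\WW$ contains a spherical class $\w$; the key numerical input is that $\v$ decomposes as $\v=\w+\u$ with $\u$ isotropic and $\langle\v,\w\rangle=0$, $\v^2=2$, precisely as recorded in the Remark following \cref{Prop:isotropic-classification}.

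**The two components.**
The heart of the argument is to exhibit the two irreducible components explicitly. Following the template of \cref{Lem:M_0,Lem:M_1}, I would identify:
\begin{enumerate}
\item a component $M_0$ that generically parametrizes $\mu$-stable locally free sheaves (or their Bridgeland-stable analogues), obtained by deforming extensions and applying Langton's theorem to control the limiting behavior under the Uhlenbeck-type contraction $\psi$; and
\item a component $M_1$ swept out by the non-locally-free (or strictly-destabilized) objects, realized as a projective bundle of extensions $0\to A\to E\to T\to 0$, where $T$ is the unique $\sigma_0$-stable spherical object of class $\w$ and $A$ varies in a two-dimensional moduli space $M^s_{\sigma_0}(\u)$ (using \cref{Lem:P1FibrationSpherical} and \cref{prop:isotropic}).
\end{enumerate}
Here I would invoke that $\langle\u,\w\rangle=1$ forces $\ext^1(T,A)=\langle\u,\w\rangle=2$ by stability, giving a $\P^1$-family of extensions, and a dimension count (as in the proof of \cref{Lem:M_1}) shows $\dim M_1=\v^2+1=3$, so $M_1$ is a genuine component of maximal dimension. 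The map $g:\P\to M_\sigma(\v,L)$ sending an extension class to the corresponding object is shown to be generically injective using that the spherical quotient $T$, and hence the extension sequence, is recovered from $E$ up to the ambiguity along the locus of properly semistable objects.

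**The obstacle and completion.**
The main obstacle will be verifying that $M_0$ and $M_1$ are genuinely distinct components and that $M_\sigma(\v,L)$ has no \emph{other} components — equivalently, that $M_0\cup M_1$ exhausts the moduli space. For this I would translate back (via $\Phi$ and, if necessary, the pull-back $\varpi^*$ to the covering K3 as in \cref{Lem:pullback remains stable}) to a Gieseker moduli space $M_\omega(\v',L')$ of sheaves of small rank, and run the normality analysis of \cref{Lem:normal,Lem:M^*,prop:connected}: one shows that $M_\sigma(\v,L)^*:=\{E:\Ext^2(E,E)=0\}$ is smooth, that its complement is precisely the intersection locus $M_0\cap M_1$ (the singular locus, a divisor given by the strictly $\sigma_0$-semistable objects from \cref{Lem:P1FibrationSpherical}), and that $M_0\cup M_1$ is therefore closed and open, hence all of $M_\sigma(\v,L)$ by connectedness (\cref{Thm:exist:nodal}). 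The delicate point is controlling the local analytic structure at the intersection — reproducing the $\sum t_i^2+\text{(higher order)}$ hypersurface description used in the $(2,0,-1)$ case — to conclude that exactly two components meet there and neither is absorbed into the other. Once normality of $M_\sigma(\v,L)^*$ and the identification of its complement with $M_0\cap M_1$ are established, the conclusion that $M_\omega(\v,L)$ has exactly two irreducible components follows as in \cref{prop:connected}.
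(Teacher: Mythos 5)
Your overall architecture matches the paper's: reduce to a rank-two moduli problem, exhibit one component whose generic member is a $\mu$-stable locally free sheaf and one that is a $\P^1$-bundle of non-locally-free objects, then cap the count at two using the hypersurface structure of the germ at singular points together with connectedness. The genuine gap is in your reduction step. You invoke \cref{Thm:MainTheorem1}(1) to carry the number of irreducible components from $M_\sigma(\v,L)$ to a rank-two Gieseker space, but a birational equivalence in the sense of that theorem (a common open subset $U$) preserves the number of irreducible components only when $U$ is dense in \emph{every} component of both spaces. Part (1) asserts no such density, and the codimension-two control of part (2) --- which would give density, since all spaces here are pure of dimension three --- is available only under the hypothesis $\Pic(\widetilde{X})=\varpi^*\Pic(X)$, which \cref{prop:irred-comp:v^2=2} does not make. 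This is not a removable formality: for $\v^2=2$ with the nodal determinant, certain walls are totally semistable for an entire component (this is exactly the phenomenon the proposition is meant to pin down), so one must rule out that $U$ misses a component outright. There is also a dependency concern: the proof of \cref{Thm:MainTheorem1} runs through \cref{classification of walls} and \cref{Prop:isotropic-classification}, and, for instance, \cref{Lem:P1FibrationExceptional} cites \cref{prop:connected} from this very appendix; the appendix results are deliberately proved by means independent of the wall-crossing machinery.

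The paper closes this gap by replacing birational transfer with a motivic one. \cref{Lem:e-poly} and \cref{Lem:rank2} (with the nodal-cycle bookkeeping of \cref{Lem:nodal}) show $e(M_\omega(\v,L))=e(M_{\omega'}(\v',L'))$ for $\v'=(2,\xi',0)$ adapted to an elliptic fibration $\pi:X\to\P^1$: the Fourier--Mukai transforms give isomorphisms of Bridgeland moduli, and the virtual Hodge polynomial is insensitive to the choice of generic stability condition by \cite{Nue14a}. Since the spaces involved are pure of dimension three, the top coefficient of $e$ counts the irreducible components, so the count transfers with no genericity hypothesis at all. In the rank-two model the two components are then built by explicit sheaf theory rather than by wall-crossing lemmas: $M_0$ via elementary transformations $0\to E_0\to E\to L'\to 0$ of the spherical bundle $E_0$ along a smooth curve in $|2f|$, shown $\mu$-stable with respect to $H_0+nf$, $n\gg0$ (\cref{Lem:M_1 v^2=2}), and $M_1$ as the $\P^1$-bundle over $X$ of kernels $0\to E\to E_0(f)\to\C_x\to 0$; the bound ``at most two'' is \cref{Lem:irred-comp-2}, using that the germ at a singular point is a hypersurface with nontrivial quadratic term (hence at most two branches) and that the two-dimensional part of the singular locus is irreducible (\cref{ext^2-2}). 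Your steps for the components and the cap are the right shape, but note two slips: the two-dimensional space of subobjects is $M^s_{\sigma_0}(2\u)$, not $M^s_{\sigma_0}(\u)$, and the extension count is $\ext^1(T,A)=\langle 2\u,\w\rangle=2$, not $\langle\u,\w\rangle$.
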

As in the previous subsection, we begin by reducing to studying a certain moduli space of Gieseker semistable sheaves of rank two.  To do so, we make use of a motivic invariant called the virtual Hodge polynomial.  Recall that for a variety $Y$, the virtual Hodge polynomial $e(Y)$ is defined by $e(Y)=\sum_{p,q}e^{p,q}(Y)x^py^q$, where $e^{p,q}(Y):=\sum_k (-1)^kh^{p,q}(H^k_c(Y))$ are the virtual Hodge numbers of the natural mixed Hodge structure on the cohomology of $Y$ with compact support (see \cite{Yos14} and the references therein for a more complete explanation).
\begin{Lem}\label{Lem:e-poly}
Let $\v:=(r,\xi,a)$ be a Mukai vector such that $\v^2$ is even.
\begin{enumerate}
\item
There is $\v'=(2,L',a')$ such that
$e(M_\omega(\v,L))=e(M_\omega(\v',L'))$,
where $L'+K_X \equiv L+\frac{r}{2}K_X \pmod 2$.
\item
For a smooth rational curve $C$,
we have an equality
\begin{equation}\label{eq:(-2)}
e(M_\omega(\v,L))= e(M_{\omega'}(\v',L')),
\end{equation}
where $\sigma'=\Phi(\sigma)$ for the spherical twist $\Phi=R_{\OO_C(-1)}$, 
$\v'=\Phi(\v)=(r,\xi+(C,\xi)C,a)$ and $L'=L+(L,C)C$.
\end{enumerate}
\end{Lem}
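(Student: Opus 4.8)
The plan is to exploit the fact that the virtual Hodge polynomial $e$ depends only on the isomorphism class of the coarse moduli space, together with the following general principle: any (anti-)autoequivalence $\Phi$ of $\Db(X)$ carries $\sigma$-(semi)stable objects of class $\v$ bijectively onto $\Phi(\sigma)$-(semi)stable objects of class $\Phi_*(\v)$, compatibly in families, and hence induces an isomorphism of moduli stacks $\MM_\sigma(\v)\cong\MM_{\Phi(\sigma)}(\Phi_*(\v))$ and of their coarse spaces $M_\sigma(\v,L)\cong M_{\Phi(\sigma)}(\Phi_*(\v),L')$. Thus each part reduces to producing the correct $\Phi$ and tracking the pair $(\v,L)\mapsto(\v',L')$.

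First I would treat (2). The object $\OO_C(-1)$ on the smooth rational curve $C$ is spherical with $\v(\OO_C(-1))=(0,C,0)$ (so that $\v(\OO_C(-1))^2=(C^2)=-2$), hence $\Phi=R_{\OO_C(-1)}$ is the spherical twist of \eqref{eqn:spherical reflection} and is an autoequivalence. Its induced action on $\Hal(X,\Z)$ is the spherical reflection $\w\mapsto\w+\langle\w,\v(\OO_C(-1))\rangle\v(\OO_C(-1))$; since $\langle(r,\xi,a),(0,C,0)\rangle=(\xi,C)$, this gives $\Phi_*(\v)=(r,\xi+(\xi,C)C,a)=\v'$, and correspondingly the determinant transforms by $L\mapsto L+(L,C)C=L'$ (using $(L,C)=(\xi,C)$). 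Setting $\sigma'=\Phi(\sigma)$, the general principle yields $M_\sigma(\v,L)\cong M_{\sigma'}(\v',L')$, and the equality in \eqref{eq:(-2)} follows at once.

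Next I would treat (1). By the reduction of \cite{Yos03} --- the same mechanism used in the proof of \cref{Thm:application1} to reach rank one --- there is an (anti-)autoequivalence $\Phi$ with $\rk\Phi_*(\v)=2$; writing $\Phi_*(\v)=(2,L',a')$ and following the effect of $\Phi$ on $c_1$ and on the $K_X$-twist records the stated parity condition $L'+K_X\equiv L+\tfrac{r}{2}K_X\pmod 2$. Applying the general principle once more and identifying source and target with the Gieseker-type moduli spaces in the statement yields the desired equality. Throughout, I would allow $\Phi$ to be assembled from the elementary pieces already available in the paper: tensoring by line bundles, the spherical twists of part (2), and the rank-changing Fourier--Mukai kernels of \cite{Yos03}.

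The main obstacle is ensuring that the identification produced by $\Phi$ is a genuine isomorphism of moduli spaces --- and hence $e$-preserving --- rather than merely the birational equivalence guaranteed by \cref{Thm:MainTheorem1}. Concretely, one must check that $\Phi(\sigma)$ lands in a chamber for $\v'$ in which the Bridgeland moduli space coincides with the Gieseker-type moduli space appearing in the statement, so that the strictly semistable loci match and no genuine wall is crossed in passing between them (absorbing also any polarization adjustment needed to return to $\omega$). This is essential because the $e$-changing walls --- divisorial contractions and $\P^1$-fibrations, as classified in \cref{classification of walls} --- would otherwise alter the virtual Hodge polynomial; the content of the lemma is precisely that the rank reduction and the passage to $\v'$ can be realized purely by (anti-)autoequivalences, which induce stack isomorphisms and therefore leave $e$ unchanged.
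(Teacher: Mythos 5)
There is a genuine gap, and it sits exactly where you located your ``main obstacle'' --- but your proposed resolution runs in the wrong direction. Your autoequivalence mechanism is fine and matches the paper's: $R_{\OO_C(-1)}$ (resp.\ the $(-1)$-reflection $R_{\OO_X}$, which is what the paper actually uses for part (1), sending $(r,\xi,a)\mapsto(-2a,\xi,-r/2)$, rather than a direct appeal to \cite{Yos03}) preserves $\Stabd(X)$ and induces an isomorphism $M_\sigma(\v,L)\cong M_{\sigma'}(\v',L')$ with $\sigma'=\Phi(\sigma)$. The problem is the passage from the Bridgeland moduli space $M_{\sigma'}(\v',L')$ back to the \emph{Gieseker} moduli space $M_{\omega'}(\v',L')$ appearing in the statement. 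You propose to do this by checking that $\Phi(\sigma)$ lands in the Gieseker chamber for $\v'$, ``so that no genuine wall is crossed.'' This check will in general fail: the image of the Gieseker chamber under a spherical twist or a $(-1)$-reflection is some other chamber, and there is no way to arrange that it coincide with a Gieseker chamber for $\v'$. Were such a chamber check possible, the lemma would hold as an isomorphism of moduli spaces, and there would be no reason to state it only at the level of virtual Hodge polynomials --- the weak form of the statement is itself the clue that wall-crossing cannot be avoided.

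The missing ingredient, which is what the paper invokes at precisely this point, is the wall-crossing \emph{invariance} of the virtual Hodge polynomial for Bridgeland moduli on Enriques surfaces, established in \cite{Nue14a}: for generic stability conditions the $e$-polynomial of $M_\sigma(\v,L)$ does not depend on the choice of $\sigma$. With this in hand, the argument is two lines: the autoequivalence gives $e(M_\omega(\v,L))=e(M_{\Phi(\sigma)}(\v',L'))$ on the nose, and invariance under change of (generic) stability condition gives $e(M_{\Phi(\sigma)}(\v',L'))=e(M_{\omega'}(\v',L'))$, whatever chamber $\Phi(\sigma)$ happens to lie in. Note also that your justification for why the chamber check is ``essential'' --- that divisorial contractions and $\P^1$-fibration walls, as in \cref{classification of walls}, would alter $e$ --- directly contradicts this invariance result and is not correct; it is exactly because $e$ is insensitive to \emph{all} walls that the lemma can be proved without any control over where $\Phi(\sigma)$ lands. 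For part (1) you should likewise replace the single appeal to \cite{Yos03} (which produces rank one, available only when $\v^2$ is odd) by the iteration used in the paper: twists by line bundles and $R_{\OO_X}$, combined at each stage with the $e$-invariance of \cite{Nue14a} and \cite{Yos16a}, to reduce to rank $2$ while tracking the determinant modulo $2$.
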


\begin{proof}

(1)
Since the $(-1)$-reflection $R_{\OO_X}$ associated to
$\OO_X$ preserves $\Stab^\dagger(X)$, 
we have an isomorphism
\begin{equation}
M_\sigma(\v,L))\cong M_{\sigma'}(\v',L'),
\end{equation}
where $\sigma'=R_{\OO_X}(\sigma)$, 
$\v'=R_{\OO_X}(\v)=(-2a,\xi,-r/2)$ 
and $L'=L+\tfrac{r+2a}{2}K_X$.
By \cite{Nue14a},
we have
\begin{equation}
e(M_\omega(\v,L))= e(M_{\omega'}(\v',L')).
\end{equation}
By similar arguments as in \cite{Nue14a} or \cite{Yos16a},
we get (1).

(2)
Let $\Phi:\Db(X) \to \Db(X)$ be the twist functor
associated to the spherical object $\OO_C(-1)$.
The action of $\Phi$ preserves
$\Stab^\dagger(X)$, so
we have an isomorphism
\begin{equation}
M_\sigma(\v,L) \cong M_{\sigma'}(\v',L)
\end{equation}
where $\sigma'=\Phi(\sigma)$, 
$\v'=(r,\xi+(C,\xi)C,a)$ and $L'=L+(L,C)C$.
By using \cite{Nue14a} again,
we get \eqref{eq:(-2)}.
\end{proof}
In the specific case that we are interested in, we can in fact get more explicit:

\begin{Lem}\label{Lem:rank2}
Let $\v=(r,\xi,a)$ be a Mukai vector with
$\v^2=2$ and $L$ a divisor such that $L \equiv D+\frac{r}{2}K_X \pmod 2$,
where $D$
is a nodal cycle.
Then there is a Mukai vector $\v'=(2,\xi',0)$
and an elliptic fibration $\pi:X \to \P^1$
such that 
\begin{enumerate}
\item
$e(M_\omega(\v,L))=e(M_{\omega'}(\v',L'))$,
\item
$(\xi',C)=2$ for a general fiber $C$ of $\pi$
and $L' \equiv D' +K_X \pmod 2$,
where $D'$ is a nodal cycle.
\end{enumerate}
\end{Lem}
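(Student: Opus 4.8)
The plan is to obtain $\v'$ from $\v$ by applying a composition of the same autoequivalences already used in \cref{Lem:e-poly} — the $(-1)$-reflection $R_{\OO_X}$, the spherical twists $R_{\OO_C(-1)}$ along $(-2)$-curves $C$, tensoring by line bundles, and the Fourier--Mukai transform attached to a suitable relative moduli space on an elliptic pencil (as in \cite{Yos03,Yos16a}). Each of these acts as an isometry of the Mukai lattice $\Hal(X,\Z)$, so the square $\v^2 = 2$ is preserved automatically, and each preserves the virtual Hodge polynomial by \cite{Nue14a}, so conclusion (1) follows for free once the Mukai vector and determinant are brought to the stated form. It therefore suffices to prove the purely lattice-theoretic statement that, within the orbit of $\v$ under the group generated by these operations, there is a representative $(2,\xi',0)$ with $\xi'^2 = 2$ and $(\xi',C) = 2$ for the general fibre $C$ of some elliptic fibration, carrying the hypothesised determinant to one of the form $L' \equiv D' + K_X \pmod 2$.

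First I would invoke \cref{Lem:e-poly}(1) to reduce to the rank-two case, replacing $\v$ by $\v_1 = (2,\xi_1,a_1)$ (still with $\v_1^2 = 2$) and recording the determinant relation $L_1 + K_X \equiv L + \tfrac{r}{2}K_X \pmod 2$; since $\v^2 = 2$ is not divisible by $8$, \cref{primitive} gives $\ell(\v) = 1$ and $\v$ primitive, which is what keeps the rank reduction well-behaved. Next I would fix an elliptic pencil $\pi\colon X \to \P^1$ — these always exist on an Enriques surface, and their general fibre classes $C$ are primitive isotropic vectors with $C^2 = 0$ — and normalise the fibre degree. Tensoring by a line bundle $M$ sends $(2,\xi_1,a_1)$ to $(2,\xi_1 + 2M, a_1 + (\xi_1,M) + M^2)$, hence changes $(\xi_1,C)$ only by even amounts and preserves $\xi^2 - 4a = \v^2$; combining this with the reflection group, which one expects to act transitively enough on primitive square-$2$ vectors of the relevant rank-two hyperbolic sublattice, I would arrange $(\xi',C) = 2$ and simultaneously force the constant term to vanish, so that $\v' = (2,\xi',0)$ with $\xi'^2 = 2$. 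The choice $(\xi',C) = 2$ is exactly the balanced case $\gcd(\rk,\deg_C) = 2$, which is what one expects to produce the fibrewise strictly semistable locus responsible for reducibility.

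Throughout I would propagate the determinant congruence step by step, using that tensoring leaves $c_1$ and the determinant unchanged modulo $2$ (since $\det(E\otimes M) = \det E \otimes M^{2}$), that the spherical twist sends $\xi \mapsto \xi + (C,\xi)C$ and $L \mapsto L + (L,C)C$, and that $R_{\OO_X}$ shifts $L$ by a multiple of $K_X$. Here the set $\Delta(X)_{-2}$ and the collection of nodal cycles are permuted by the reflections, so the input condition $L \equiv D + \tfrac{r}{2}K_X$ from \cref{Thm:exist:nodal} is transported to $L' \equiv D' + K_X \pmod 2$ for a new nodal cycle $D'$; verifying that the accumulated $K_X$-shifts land on $+K_X$ rather than $0$ is where the parity of $r$ must be chased, with the odd case absorbed by an extra application of a $(-1)$-reflection.

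The main obstacle I anticipate is this determinant bookkeeping carried out in tandem with the \emph{simultaneous} normalisation of the three numerical constraints $\rk = 2$, $a' = 0$, and $(\xi',C) = 2$: because every autoequivalence couples rank, fibre degree and the constant term, one must sequence the twists and choose the elliptic pencil so that fixing $(\xi',C)$ does not reintroduce a nonzero $a'$, and so that the final $\xi'$ admits a nodal-cycle representative congruent to $L' - K_X$. Establishing that a \emph{single} elliptic pencil with $(\xi',C) = 2$ can be selected compatibly with the nodal condition — as opposed to only after passing to an abstract lattice orbit — is the delicate point, which I would settle by combining the explicit description of fibre classes as primitive isotropic vectors with the transitivity of the reflection group on square-$2$ vectors, exactly as in the analogous reduction used for \cref{Thm:application2}.
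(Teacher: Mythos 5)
Your toolkit is the right one (the reduction to rank two via \cref{Lem:e-poly}(1), line bundle twists, the exceptional reflection $R_{\OO_X}$, spherical twists along $(-2)$-curves, with \cref{Lem:nodal} transporting the nodal-cycle congruence), but the order of your construction contains a genuine gap. You fix an elliptic pencil at the outset and then try to move $\v$ into position relative to it, appealing to ``transitivity of the reflection group on primitive square-$2$ vectors.'' The group you actually have at your disposal is much smaller than the orthogonal (or even Weyl) group of the abstract Enriques lattice: it is generated by twists $R_{\OO_C(-1)}$ only along classes of smooth rational curves that really exist on $X$, by $R_{\OO_X}$, and by tensoring with line bundles. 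Relative to a \emph{fixed} half-fiber $f$ (so $C=2f$), tensoring changes $(\xi,f)$ only by even integers, and $R_{\OO_X}$ fixes $\xi$ altogether; hence $(\xi,f)\bmod 2$ can only be altered by a spherical twist along a $(-2)$-curve $C'$ with $(C',\xi)(C',f)$ odd. A nodal Enriques surface may carry very few $(-2)$-curves, and no such $C'$ need exist, in which case $(\xi',f)=1$, i.e.\ $(\xi',C)=2$, is simply unreachable for your chosen pencil. So the transitivity you invoke is not available, and this is precisely the ``delicate point'' your last paragraph flags without resolving.

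The paper's proof reverses the quantifiers so that this issue never arises: the fibration is \emph{constructed from} $\xi'$ at the very end rather than fixed at the start. Concretely, after the rank-two reduction one first kills the constant term: choosing an isotropic divisor $\eta$ with $(\xi'+2\lambda,\eta)=1$ and twisting by $e^{k\eta+\lambda}$ makes $a'=0$ (the coefficient of $k$ in the new constant term is exactly $(\xi'+2\lambda,\eta)=1$), so $(\xi'^2)=2$ automatically; then $\xi'$ is made effective (replacing $\v'$ by $\v'e^{-\xi'}$ if necessary) and nef, using reflections along actual $(-2)$-curves together with \cref{Lem:nodal} and \eqref{eq:(-2)} to carry the congruence $L'\equiv D'+K_X\pmod 2$ along. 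Only now does one produce the pencil: since $\xi'$ is primitive of square $2$, there is an isotropic $\eta$ with $(\xi',\eta)=1$, which is effective because $\xi'$ is nef; writing $\eta=f+\sum_i C_i$ with $f$ nef isotropic and $C_i$ smooth rational, nefness gives $(\xi',f),(\xi',C_i)\geq 0$, and the Hodge index theorem rules out $(\xi',f)=0$, so $(\xi',f)=1$. The pencil $|2f|$ then satisfies $(\xi',C)=2$ by construction, and since this last step never modifies $\v'$, there is no risk of disturbing $a'=0$ — the ``simultaneous normalization'' you worry about is dissolved rather than solved.
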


\begin{proof}
For a primitive Mukai vector $\v$ such that $\v^2=2$,
we take $\v'=(2,\xi',a')$ and $L'$ satisfying
Lemma \ref{Lem:e-poly}.
As $\ell(\v)=1$, we can find an isotropic divisor
$\eta$ such that
$(\xi'+2\lambda,\eta)=1$, where $\lambda \in \NS(X)$.   
Replacing $\v'$ by $\v' e^{k \eta+\lambda}$, for an appropriate choice of $k$, we may assume that 
$({\xi'}^2)=2$ and $a'=0$.
Then $\xi'$ is effective or $-\xi'$ is effective.
Since $\v' e^{-\xi'}=(2,-\xi',a')$, we may assume that $\xi'$ is effective.
Since $L \equiv D+\frac{r}{2}K_X \pmod 2$ for a nodal cycle $D$, 
by using Lemma \ref{Lem:nodal} and \eqref{eq:(-2)}, 
we may assume that $\xi'$ is nef and
$L' \equiv D'+K_X \pmod 2$ for a nodal cycle $D'$.

As $(\xi'^2)=2$, $\xi'$ is primitive, so we may find an isotropic divisor $\eta$ with $(\xi',\eta)=1$.  The Riemann-Roch theorem then implies that
$\eta$ is effective.
Let $\eta=f+\sum_i C_i$ be a decomposition of $\eta$ such that 
$f$ is a nef and isotropic divisor, and $C_i$ are smooth rational curves.
Then $(\xi',f), (\xi',C_i) \geq 0$ imply that
$(f,\xi')=1$ by using Hodge index theorem.
The linear system
$|2f|$ induces an elliptic fibration $\pi:X \to \P^1$
satisfying our requirements. 
\end{proof}

\begin{Lem}\label{Lem:nodal}
For nodal cycles $D$ and $D'$, 
 $D+(D,D')D' \equiv D'' \pmod 2$, where $D''$ is another nodal cycle. 
\end{Lem}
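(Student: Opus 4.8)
The plan is to recognize the class $\delta:=D+(D,D')D'$ as the image $R_{D'}(D)$ of $D$ under the reflection of $\Num(X)$ in the $(-2)$-class $D'$, and then to show that $\delta$ is, up to sign, the class of a nodal cycle. Since $(D'^2)=-2$, the formula $R_{D'}(x)=x+(x,D')D'$ defines an isometry of $\Num(X)$, so $(\delta^2)=(D^2)=-2$. Because the assertion is modulo $2$ and $-\delta\equiv\delta\pmod 2$ (as $-2\delta\in 2\,\Num(X)$), it suffices to produce a nodal cycle $D''$ with $\delta=\pm D''$ in $\Num(X)$; working in $\Num(X)=\NS(X)/\langle K_X\rangle$ also lets me ignore the $2$-torsion class $K_X$, which is harmless for the way the lemma is invoked in \cref{Lem:rank2}.

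First I would reduce the reflection $R_{D'}$ to the Weyl group generated by the irreducible $(-2)$-curves. A nodal cycle, being an effective $(-2)$-divisor with $H^1(\mathcal{O}_{D'})=0$ and hence $h^0(\mathcal{O}_{D'})=1$, is connected and supported on an ADE configuration of smooth rational curves, of which it is the fundamental cycle. Its class is then the highest root of the corresponding ADE root subsystem, and the reflection in a highest root is an element of the finite Weyl group generated by the reflections $R_C$ in the components $C$ of $\supp(D')$. Thus $R_{D'}$ lies in the nodal Weyl group $W$ of $X$, i.e. the subgroup of $O(\Num(X))$ generated by reflections in classes of irreducible $(-2)$-curves.

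Next I would invoke the structure theory of nodal classes on Enriques surfaces (see, e.g., Cossec--Dolgachev): $W$ permutes the set of $(-2)$-roots of $\Num(X)$, and its action is compatible with the partition of the roots into effective and anti-effective classes determined by the ample cone, so that the $W$-orbit of the class of a nodal cycle consists precisely of classes of the form $\pm(\text{nodal cycle})$. Applying this to $\delta=R_{D'}(D)$ yields a nodal cycle $D''$ with $\delta=\pm D''$, and together with $-D''\equiv D''\pmod 2$ this gives $D+(D,D')D'\equiv D''\pmod 2$, as required. As a concrete sanity check one can argue effectivity directly: if $(D,D')\ge 0$ then $\delta=D+(D,D')D'$ is a nonnegative combination of effective classes, hence effective, while if $(D,D')<0$ the two cycles must share components and a short analysis shows $-\delta$ is effective; in either case the effective representative is an effective $(-2)$-class.

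The main obstacle is the passage from \emph{effective $(-2)$-class} to genuine \emph{nodal cycle}, that is, verifying the condition $H^1(\mathcal{O}_{D''})=0$ rather than merely $(D''^2)=-2$ with $D''$ effective. This is exactly where the ADE/root-system structure of nodal classes is indispensable, since on a nodal Enriques surface there do exist effective $(-2)$-classes that are not nodal cycles, so effectivity alone is not enough. A secondary point to handle with care is the bookkeeping of the canonical class: the reflection is cleanest on $\Num(X)$, and I would phrase the congruence there to avoid the $2$-torsion ambiguity arising from $K_X\notin 2\,\NS(X)$, noting that in \cref{Lem:rank2} only the class modulo $2\,\Num(X)$ is ever used.
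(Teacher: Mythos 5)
Your opening move is sound: $D+(D,D')D'$ is indeed the reflection $R_{D'}(D)$, and your claim (a) is essentially correct — a nodal cycle has ADE support (this is where $H^1(\OO_{D'})=0$ enters, via Artin's theory of rational singularities), its class is a positive root of the root system spanned by its components, and reflection in any root lies in the Weyl group generated by the simple ones, so $R_{D'}$ does lie in the nodal Weyl group $W$. The genuine gap is your claim (b), that the $W$-orbit of a nodal class consists of classes $\pm(\text{nodal cycle})$: this is not a theorem in Cossec--Dolgachev, and it is false. Take an Enriques surface with two $(-2)$-curves $C_1,C_2$ satisfying $(C_1,C_2)=2$ (the components of an $I_2$-fiber of a genus one fibration). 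Then $R_{C_2}(C_1)=C_1+2C_2$ is effective of square $-2$, but $(C_1+2C_2,C_2)=-2<0$ forces $C_2$ into the fixed part of the linear system, so every member of $|C_1+2C_2|$ is $C_2$ plus a member of $|C_1+C_2|$; one checks $h^1(\OO_{C_1+2C_2})=1$, and any other member $C_2+G$ is disconnected, so no member is a nodal cycle, and $-(C_1+2C_2)$ is not effective. The root-theoretic mechanism breaks down precisely when $(D,D')\geq 2$, because then $\Z D+\Z D'$ is not negative definite, so there is no finite root system containing both classes in which $R_{D'}(D)$ could be a root. This matters: when $(D,D')$ is even the lemma is trivial (take $D''=D$), so its entire content is the odd case, which includes $(D,D')\geq 3$; there your integral claim $R_{D'}(D)=\pm D''$ in $\Num(X)$ is strictly stronger than the lemma and does not follow from anything you cite. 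Indeed the nodal cycle the lemma produces lies in general in the class $D+(D,D')D'+2\xi$ for some uncontrolled $\xi$, not in $\pm\bigl(D+(D,D')D'\bigr)$ itself.

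It is worth comparing with what the paper actually does, because it is also a reflection argument — but carried out in the Mukai lattice with stable sheaves, and the return trip to nodal cycles is geometric rather than lattice-theoretic. Since $D,D'$ are nodal, \cref{enum:SphericalNonemptiness} of \cref{Thm:exist:nodal} provides $H$-stable bundles $E,E'$ with $\v(E)=(2,D+K_X,0)$ and $\v(E')=(2,D'+K_X,0)$; for $n\gg 0$ the kernel $F:=\ker\bigl(\Hom(E',E(nH))\otimes E'\to E(nH)\bigr)$ is again a stable bundle by \cite[Thm. 1.7]{Yos09}, its Mukai vector is (up to sign) the reflection of $e^{nH}\v(E)$ in $\v(E')$, hence $\v(F)^2=-2$, and a direct computation gives $c_1(F)\equiv D+(D,D')D'+\tfrac{\rk F}{2}K_X\pmod 2$. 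Now the \emph{converse} direction of the same nonemptiness criterion is applied: because a stable spherical bundle with these invariants exists, its determinant must be congruent to $D''+\tfrac{\rk F}{2}K_X$ modulo $2$ for some nodal cycle $D''$, which is the lemma. The point is that the question of which square $(-2)$ classes modulo $2$ contain nodal cycles is governed by the existence of stable spherical sheaves — a geometric input that the nodal Weyl group action cannot see, and exactly the step your argument cannot close.
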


\begin{proof}
For nodal cycles $D$ and $D'$ and an ample divisor $H$,
let $E$ be a $H$-stable vector bundle with $\v=\v(E)=(2,D+K_X,0)$
and
$E'$ a $H$-stable vector bundle with $\v'=\v(E')=(2,D'+K_X,0)$.
Then for $E(nH)$ $(n \gg 0)$,
$$
F:=\ker (\Hom(E',E(nH)) \otimes E' \to E(nH))
$$
is a stable vector bundle
with $\v(F)=-e^{nH}(2,D+K_X,0)-\langle e^{nH}\v,\v' \rangle \v'$ (\cite[Thm. 1.7]{Yos09}).  Thus $\v(F)^2=-2$.  Since 
\begin{equation}
\langle e^{nH}\v,\v' \rangle=(D,D')-2n^2(H^2)+2n(H,D'-D),
\end{equation}
we get
\begin{equation}
\begin{split}
\frac{\rk F}{2} \equiv & 1+(D,D')  \pmod 2,\\
c_1(F) \equiv & D+K_X+(D,D')(D'+K_X) \pmod 2\\
\equiv &
D+(D,D')D'+\frac{\rk F}{2}K_X \pmod 2.
\end{split}
\end{equation}
As $F$ is thus a stable spherical bundle, it follows that there is a nodal cycle $D''$ such that $$D''+\frac{\rk F}{2}K_X\equiv c_1(F)\equiv D+(D,D')D'+\frac{\rk F}{2}K_X \pmod 2,$$ and the result follows.
\end{proof}

By Lemma \ref{Lem:rank2}, it is sufficient to 
describe the irreducible components of $M_\omega(\v,L)$
for $\v=(2,\xi,0)$ with $(\xi^2)=2$ and $L$ such that 
$L \equiv D +K_X \pmod 2$ and $(L,C)=2$
for a general fiber $C$ of an elliptic fibration
$\pi:X \to \P^1$.
For this purpose, we first describe a 2-dimensional component
of the singular locus of $M_\omega(\v,L)$.
   
\begin{Lem}\label{ext^2-2}
For a Mukai vector $\v=(2,\xi,0)$ with $(\xi^2)=2$ and a divisor $L$
with $L \equiv D+K_X \pmod 2$, 
the singular locus of $M_\omega(\v,L)$ is
\begin{equation}
\Set{ \varpi_*(I_W(\widetilde{L})) 
\ |\ I_W \in \Hilb^n(\widetilde{X}),\; 
\iota^*(\widetilde{L})=\varpi^*(L)-\widetilde{L},\;
n=(\widetilde{L}^2)/2+1},
\end{equation}
where $(\widetilde{L}^2)=0,-2$.
In particular, the 2-dimensional component of the singular locus is
irreducible.
\end{Lem}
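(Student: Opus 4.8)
The plan is to follow the strategy of \cref{Lem:ext^2} and identify the singular locus with the sheaves fixed by $-\otimes\OO_X(K_X)$. First I would observe that $(\xi^2)=2$ forces $\xi$ to be primitive, so $\v$ is primitive with $\ell(\v)=1$ and no rank-one subsheaf can share the slope $\mu_\omega(\v)=\tfrac{\xi\cdot\omega}{2}$ for generic $\omega$; hence there are no strictly semistable sheaves and $M_\omega(\v,L)=M_\omega^s(\v,L)$. On this stable locus the obstruction space at $[E]$ is $\Ext^2(E,E)_0$, and since $H^2(\OO_X)=0$ on an Enriques surface the trace splitting gives $\Ext^2(E,E)_0=\Ext^2(E,E)\cong\Hom(E,E(K_X))^\vee$ by Serre duality. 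Because $K_X$ is numerically trivial, $E$ and $E(K_X)$ have the same reduced Hilbert polynomial, so stability makes any nonzero map between them an isomorphism; thus $\Ext^2(E,E)\neq0$ iff $E\cong E(K_X)$. The Kuranishi model at $[E]$ is then a hypersurface in $\Ext^1(E,E)_0$ whose defining function has vanishing linear part, so $M_\omega(\v,L)$ is singular at $[E]$ exactly when $E\cong E(K_X)$.

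Next I would translate $E\cong E(K_X)$ into a pushforward description. By \cite{Yamada} (Lem. 2.13) such an $E$ has the form $\varpi_*(F)$ for a coherent sheaf $F$ on $\X$; comparing ranks gives $\rk F=1$, so $F=I_W(\widetilde L)$ with $W$ zero-dimensional and $\widetilde L\in\NS(\X)$, while stability of $E$ forces $F\not\cong\iota^*F$ by \cref{Lem:pullback remains stable}. Pulling back the determinant, $\varpi^*L=\det\varpi^*E=\widetilde L+\iota^*\widetilde L$, which is the stated relation $\iota^*\widetilde L=\varpi^*L-\widetilde L$. Since $\varpi$ is étale, $\ch(\varpi_*F)=\varpi_*\ch(F)$, and comparing degree-four terms with $\v=(2,\xi,0)$ (so $\ch_2(E)=-1$) yields $\tfrac{(\widetilde L^2)}{2}-\deg W=-1$, i.e. $n=\deg W=\tfrac{(\widetilde L^2)}{2}+1$.

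Then I would pin down $(\widetilde L^2)\in\{0,-2\}$. The class $N:=2\widetilde L-\varpi^*L$ is integral and anti-invariant, hence orthogonal to the ample class $\varpi^*\omega$, so the Hodge index theorem on the K3 surface $\X$ gives $N^2\le0$; using $\widetilde L\cdot\varpi^*L=\tfrac12(\varpi^*L)^2=(L^2)=2$ one computes $N^2=4((\widetilde L^2)-1)$, whence $(\widetilde L^2)\le1$. Because $\NS(\X)$ is even this forces $(\widetilde L^2)\le0$, and $n\ge0$ forces $(\widetilde L^2)\ge-2$, leaving the two even values $0$ and $-2$. The value $-2$ (with $n=0$) contributes the finitely many locally free $\varpi_*(\OO_{\X}(\widetilde L))$, an isolated set, while $(\widetilde L^2)=0$ (with $n=1$) gives the two-dimensional part.

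Finally, for irreducibility of the two-dimensional component I would show $\widetilde L$ is unique up to $\iota$. For $E=\varpi_*(I_w(\widetilde L))$ with $(\widetilde L^2)=0$ the double dual is $E^{\vee\vee}=\varpi_*(\OO_{\X}(\widetilde L))$, of Mukai vector $(2,\xi,1)$ with square $(\xi^2)-4=-2$; since $E$ is $\mu$-stable so is $E^{\vee\vee}$, making it the stable spherical sheaf of that class, which exists and is unique by \cref{Thm:exist:nodal} precisely because $L\equiv D+K_X\pmod2$. Uniqueness of this spherical sheaf $S$ determines $\{\widetilde L,\iota^*\widetilde L\}$, so the two-dimensional locus equals $\{E:E^{\vee\vee}\cong S,\ E^{\vee\vee}/E\cong\OO_p\}$, the injective image of $\X=\Hilb^1(\X)$ under $w\mapsto\varpi_*(I_w(\widetilde L))$, and is therefore irreducible. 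The main obstacle I anticipate is exactly this last uniqueness step: without identifying $E^{\vee\vee}$ with the unique spherical bundle one can only bound the admissible isotropic $\widetilde L$ by a lattice count, and it is the spherical rigidity of \cref{Thm:exist:nodal}, enabled by the hypothesis on $L$, that collapses those choices to a single $\iota$-orbit.
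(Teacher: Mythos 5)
Your proposal is correct and follows essentially the same route as the paper's proof: identify the singular locus with $\Set{E \ |\ E\cong E(K_X)}$ via genericity and Serre duality, write such $E$ as $\varpi_*(I_W(\widetilde{L}))$, extract the determinant and Chern-class constraints, and deduce irreducibility of the two-dimensional component from the fact that $E^{\vee\vee}=\varpi_*(\OO_{\widetilde{X}}(\widetilde{L}))$ is the (unique) stable spherical bundle of Mukai vector $(2,\xi,1)$. The paper's proof is simply terser, leaving implicit the Hodge-index bound giving $(\widetilde{L}^2)\in\{0,-2\}$ and the step where uniqueness of the spherical bundle pins down $\{\widetilde{L},\iota^*\widetilde{L}\}$, both of which you supply correctly.
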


\begin{proof} 
Since $\omega$ is generic, $M_\omega(\v,L)=M_\omega^s(\v,L)$ is of expected dimension, so the singular locus is
$$
\Set{ E \in M_\omega(\v,L) \ |\ \Ext^2(E,E) \ne 0}=\Set{E\in M_\omega(\v,L)\ | \ E\cong E(K_X)}.
$$ 
If $E(K_X) \cong E$, then
we see that $E \cong \varpi_*(I_W(\widetilde{L}))$, 
where $\iota^*(I_W(\widetilde{L}))\ncong I_W(\widetilde{L}))$ and $W=\varnothing$ or $W=\{w \}$.
If $W=\{w \}$, then 
$E^{\vee \vee}=\varpi_*(\OO_{\widetilde{X}}(\widetilde{L}))$
is a spherical vector bundle with $\v(E^{\vee \vee})=(2,\xi,1)$.
Therefore the claim holds.
\end{proof}

\begin{Rem}
For $x \in X$, we set $\varpi^{-1}(x)=\{z,\iota(z)\}$.
Then $\varpi_*(I_z(\widetilde{L}))$ and 
$\varpi_*(I_{\iota(z)}(\widetilde{L}))$
are not locally free at $x$.
Hence the 2-dimensional component of the singular locus
is a double covering of $X$. 
\end{Rem}

The existence of a two dimensional component of the singular locus has the following important consequence for the reducibility of $M_\omega(\v,L)$.
\begin{Lem}\label{Lem:irred-comp-2}
There are at most two irreducible components of
$M_\omega(\v,L)$.
\end{Lem}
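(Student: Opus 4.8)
The plan is to bound the number of components by combining the local analytic structure of $M:=M_\omega(\v,L)$ along its singular locus with a global connectedness argument. First I would invoke the normalization already set up: by Lemma \ref{Lem:rank2} there is an isomorphism $M_\omega(\v,L)\cong M_{\omega'}(\v',L')$ with $\v'=(2,\xi',0)$, $(\xi'^2)=2$ and $L'\equiv D'+K_X\pmod 2$, and since an isomorphism preserves the number of irreducible components it suffices to treat $\v=(2,\xi,0)$ with $(\xi^2)=2$, so that Lemma \ref{ext^2-2} applies. Because $\v$ is primitive (any factorization $\v=m\v_0$ would give $m^2\mid\v^2=2$, forcing $m=1$) and $\omega$ is generic, $M=M^s_\omega(\v,L)$; hence every point is a stable sheaf $E$ with $\Hom(E,E)=\C$ and $\ext^2(E,E)=\hom(E,E(K_X))\leq 1$ by Serre duality. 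By Kuranishi theory in the form of \cite[Fact 2.4]{Yamada}, $M$ is then, at each of its points, analytically a hypersurface inside a smooth germ of dimension $\ext^1(E,E)=\v^2+1+\ext^2(E,E)$; in particular $M$ is Cohen--Macaulay and equidimensional of dimension $\v^2+1=3$.

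Next I would run the local branch count at the two-dimensional part of the singular locus, exactly as in the analysis of $M_\omega((2,0,-1),K_X)$ above. By Lemma \ref{ext^2-2}, $\Sing(M)$ is the union of the irreducible $2$-dimensional component $S$ (the double cover of $X$) and finitely many isolated points. At a point $E\in S$ we have $\ext^2(E,E)=1$ and $\ext^1(E,E)=4$, so $M$ is analytically $\{F=0\}\subset(\C^4,0)$ with
\[
F=\sum_{i=1}^{n}t_i^2+G,\qquad n\geq 2,\quad G\in(t_1,\dots,t_4)^3 ,
\]
the lower bound $n\geq 2$ coming again from \cite[Fact 2.4]{Yamada}. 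The singular locus of this germ has dimension at most $4-n$; since by Lemma \ref{ext^2-2} it is $2$-dimensional near $E$, we get $n\leq 2$, hence $n=2$. After the splitting lemma $F$ is right-equivalent to $t_1^2+t_2^2+\widetilde G(t_3,t_4)$, i.e.\ $uv-g(t_3,t_4)$ with $u=t_1+it_2$, $v=t_1-it_2$, which has at most two analytic branches. Consequently at most two irreducible components of $M$ pass through a general point of $S$, and since $S$ is irreducible, at most two irreducible components of $M$ contain $S$.

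Finally I would globalize. By Theorem \ref{Thm:exist:nodal} the isomorphism above identifies $M$ with a connected moduli space, so $M$ is connected; being Cohen--Macaulay it is therefore connected in codimension one. Thus the dual graph on the irreducible components of $M$ --- with an edge drawn whenever two components meet in codimension one, that is, in dimension $2$ --- is connected. Any such $2$-dimensional intersection is contained in $\Sing(M)$, whose only $2$-dimensional part is the irreducible $S$; hence every edge joins two components that both contain $S$, and there are at most two of those. A connected graph all of whose edges lie among at most two vertices has at most two vertices, so $M$ has at most two irreducible components, as claimed.

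The main obstacle I expect is the local step: ensuring that the quadratic part of the Kuranishi obstruction along $S$ has rank exactly $2$ (so the transverse singularity is a node with two branches, not something of higher corank) and confirming that the resulting germ $uv-g$ genuinely has at most two branches. Granting the rank bound $n\geq 2$ from \cite{Yamada}, the dimension of $S$ supplied by Lemma \ref{ext^2-2} forces $n=2$ and does the rest; the global conclusion is then a routine application of connectedness in codimension one for Cohen--Macaulay schemes, with the isolated singular points and any hypothetical extra component automatically excluded because they cannot contribute codimension-one intersections.
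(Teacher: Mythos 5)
Your proof is correct and follows essentially the same route as the paper's: the Kuranishi germ at a point of the two-dimensional singular locus is a hypersurface whose defining equation has non-trivial quadratic part, hence (by the multiplicity-two/factorization argument, which your splitting-lemma normal form $uv-g(t_3,t_4)$ makes explicit) at most two analytic branches, and connectedness forces every irreducible component to meet the others along that locus, giving at most two components in total. Two remarks. First, your opening step misquotes Lemma \ref{Lem:rank2}: that lemma asserts only the equality of virtual Hodge polynomials $e(M_\omega(\v,L))=e(M_{\omega'}(\v',L'))$, not an isomorphism of the Gieseker moduli spaces (the autoequivalence identifies the Bridgeland moduli, and the passage to Gieseker moduli goes through wall-crossing invariance of $e$). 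This does not damage your argument, since the lemma is stated in the already-reduced setting $\v=(2,\xi,0)$, $(\xi^2)=2$, so the reduction can simply be dropped; alternatively it can be repaired by noting that for equidimensional projective varieties the coefficient of $(xy)^{\dim}$ in the virtual Hodge polynomial counts top-dimensional irreducible components, which is exactly how the paper uses Lemma \ref{Lem:rank2} to deduce Proposition \ref{prop:irred-comp:v^2=2} from this lemma. Second, your ambient germ $(\C^4,0)$ is the correct one, since $\ext^1(E,E)=1+1+\v^2=4$ at points with $E\cong E(K_X)$; the paper's proof writes $(\C^3,0)$, which appears to be a typo, and your explicit appeal to Cohen--Macaulayness and connectedness in codimension one (Hartshorne) spells out a step the paper leaves implicit.
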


\begin{proof}
If there are two irreducible components of $M_\omega(\v,L)$, then
the connectedness of $M_\omega(v,L)$ implies they intersect along 
the 2-dimensional component of the singular locus. 
By \cite{Yamada},
the analytic germ of $M_\omega(\v,L)$ at a singular point $E$ is described as a hypersurface
$F(x_1,x_2,x_3)=0$ in $(\C^3,0)$ with a non-trivial quadratic term. 
If $M_\omega(\v,L)$ is reducible, then each irreducible component is defined
by a factor of $F(x_1,x_2,x_3)$.
Therefore $M_\omega(\v,L)$ has at most two irreducible components.
\end{proof}

Let $f$ be the reduced part of a multiple fiber of $\pi$ and
$E_0 \in M_\omega(\v',L-2f)$ be a spherical vector bundle,
where $\v'=(2,\xi-2f,0)$.
We shall prove Proposition \ref{prop:irred-comp:v^2=2}
by constructing two irreducible components $M_0$ and $M_1$
of $M_\omega(\v,L)$.  We begin by constructing a component $M_0$ containing locally free sheaves using elementary transformations as in \cite[Section 5.2]{HL10}.  To this end we must study how $E_0$ restricts to a general fiber of $\pi$.  
\begin{Lem}\label{Lem:splitting type}
Let $C$ be a general fiber of the elliptic fibration.
Then ${E_0}|_C \cong \OO_C(p) \oplus \OO_C(q)$ $(p \ne q)$.
\end{Lem}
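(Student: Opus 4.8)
The plan is to analyze $E_0|_C$ through three successive inputs: its numerics, its semistability, and a global rigidity constraint coming from the sphericality of $E_0$. First I would pin down the degree. Since $c_1(E_0)=\xi-2f$ in $\Num(X)$ and $C$ is a general fiber, disjointness of fibers gives $(f,C)=0$, while $(\xi,C)=(L,C)=2$ by the hypotheses carried over from \cref{Lem:rank2}. Hence
\[
\deg(E_0|_C)=(\xi-2f,C)=(\xi,C)-2(f,C)=2,
\]
so $E_0|_C$ is a rank two, degree two (slope one) bundle on the smooth elliptic curve $C$.

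Next, semistability of the restriction. I would argue by a relative Harder--Narasimhan spreading argument rather than invoking a heavy restriction theorem: if $E_0|_{C_t}$ were unstable for all $t$ in a dense open subset of the base $\P^1$, the maximal destabilizing sub-line bundles (of fiber degree $\geq 2$) would glue to a relative subsheaf, whose saturation in $E_0$ is a rank one subsheaf whose slope, for a polarization sufficiently close to the fiber class, exceeds $\mu(E_0)$ -- contradicting the ($\mu$-)semistability of the stable bundle $E_0$. Thus $E_0|_C$ is semistable for general $C$, so it is one of $\OO_C(p)\oplus\OO_C(q)$ with $p\neq q$, the non-regular bundle $\OO_C(p)^{\oplus 2}$, or an indecomposable twist $F_2\otimes M$ of the Atiyah bundle by a degree one line bundle $M$.

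Then I would exclude the last two types generically. Set $\AA:=\pi_*\lEnd(E_0)$ on $\P^1$; over the semistable locus its fiber at $t$ is $\End(E_0|_{C_t})$, which has dimension $4$ for $\OO_C(p)^{\oplus 2}$ and dimension $2$ for the other two cases, the two-dimensional algebra being the reduced $\C\times\C$ in the split case and the non-reduced $\C[\epsilon]/(\epsilon^2)$ in the indecomposable case. Upper semicontinuity of $t\mapsto\hom(E_0|_{C_t},E_0|_{C_t})$ makes the $\OO_C(p)^{\oplus 2}$ locus closed, and on the open set where $\AA$ is locally free of rank two the indecomposable locus is the closed vanishing locus of the discriminant of the trace form. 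So both bad types occur only on proper closed subsets provided the good type occurs at a single fiber; equivalently, it suffices to exhibit one fiber of type $\OO_C(p)\oplus\OO_C(q)$ with $p\neq q$.

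Finally, the crux is producing such a fiber, which I would extract from the rigidity $\Ext^1_X(E_0,E_0)=0$. The Leray spectral sequence for $\pi$ yields a short exact sequence $0\to H^1(\P^1,\AA)\to\Ext^1_X(E_0,E_0)\to H^0(\P^1,R^1\pi_*\lEnd(E_0))\to 0$ (using $R^q\pi_*=0$ for $q\geq 2$ and $H^p(\P^1,-)=0$ for $p\geq 2$), so both outer terms vanish; the vanishing of $H^0(\P^1,R^1\pi_*\lEnd(E_0))$, combined with a degree computation on $\P^1$ via relative duality (with $\omega_\pi$ numerically trivial on fibers), forces $\AA$ to be generically of rank two with nondegenerate trace form, simultaneously excluding a generic $\OO_C(p)^{\oplus 2}$ (rank four) and a generic $F_2\otimes M$ (degenerate trace). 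A cleaner alternative I would pursue is the relative Fourier--Mukai/spectral transform of the elliptic fibration: $E_0$ determines a double cover $\widetilde{C}\to\P^1$, and $E_0|_C$ has the desired split type exactly over the étale locus, the cover being reduced because $E_0$ is a genuine stable bundle with its computed spherical invariants. The main obstacle is precisely this step: the split type $\OO_C(p)\oplus\OO_C(q)$ is indistinguishable from the indecomposable $F_2\otimes M$ by $h^0$, $h^1$, or $\dim\End$ alone, so ruling out a uniformly indecomposable restriction genuinely requires the global rigidity of $E_0$ (or the reducedness of the spectral cover), and cannot be settled by any fiberwise numerical count.
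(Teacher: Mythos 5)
Your steps (1)--(4) are correct (the degree count, the relative Harder--Narasimhan argument for semistability of the general restriction, Atiyah's classification, and the semicontinuity reduction to exhibiting a single split fiber), and they package the problem differently from the paper, which works directly with the relative Fourier--Mukai transform of the fibration. But your step (5), the crux, has a genuine gap, and the mechanism you propose cannot be repaired. Since $E_0$ is spherical, $E_0(K_X)\cong E_0$, hence $\lEnd(E_0)\otimes\OO_X(K_X)\cong\lEnd(E_0)$, and relative duality identifies $R^1\pi_*\lEnd(E_0)$ generically with $\AA^\vee(-2)$, where $\AA:=\pi_*\lEnd(E_0)$. In \emph{both} the generically split and the generically indecomposable ($F_2\otimes M_t$) scenario the sheaves $\AA$ and $R^1\pi_*\lEnd(E_0)$ have rank two, and the constraints you extract from rigidity are: $\chi(\AA)-\chi(R^1\pi_*\lEnd(E_0))=\chi(E_0,E_0)=2$, which (up to torsion at the bad fibers) gives $\deg\AA=-1$, together with $H^1(\AA)=0$ and $H^0(\AA^\vee(-2))=0$. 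All of these hold for $\AA\cong\OO_{\P^1}\oplus\OO_{\P^1}(-1)$, and nothing in this bookkeeping --- ranks, degrees, cohomology vanishing on $\P^1$ --- knows whether the generic fiber type is $\OO_C(p)\oplus\OO_C(q)$ or $F_2\otimes M$; in particular, once $\deg\AA=-1$ the discriminant of the trace form is a section of $\OO_{\P^1}(2)$, and your constraints do not prevent it from vanishing identically. So rigidity, fed through Leray and duality, cannot produce the one good fiber you need. The same objection hits your ``cleaner alternative'': reducedness of the spectral cover ``because $E_0$ is a genuine stable bundle with its computed spherical invariants'' is not an argument, and stability alone does not preclude non-reduced spectral covers on elliptic surfaces in general.

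The missing idea --- and what the paper's terse phrase ``whose support is a double cover of $\P^1$'' is silently encoding --- is a parity fact special to Enriques elliptic fibrations: $\pi$ has two multiple fibers, the general fiber is $C\in|2f|$ with $f$ a half-fiber, so $(D\cdot C)=2(D\cdot f)$ is even for \emph{every} divisor class $D$; in particular $\pi$ admits no section and no odd-degree multisection. In either bad scenario there is a canonical point $p_t\in C_t$ for general $t$: the unique point with $E_0|_{C_t}\cong\OO_{C_t}(p_t)^{\oplus 2}$, respectively the unique degree-one sub-line bundle $M_t=\OO_{C_t}(p_t)\subset F_2\otimes M_t$. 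The closure of $\{p_t\}_t$ --- concretely, the reduced horizontal support of $F:=\Phi_{X\to X}^{\EE^\vee}(E_0)[1]$, or the curve swept by the saturation of the relative socle line bundle --- would be a curve $\Sigma$ with $(\Sigma\cdot C)=1$, which is odd: contradiction. This is exactly how the paper's proof runs: $F$ is purely one-dimensional of fiber length two, the no-section fact forces its reduced horizontal support to be an irreducible bisection carrying $F$ generically with rank one, hence $F$ restricted to a general dual fiber is $\C_p\oplus\C_q$ with $p\neq q$, and inverting the fiberwise transform gives $E_0|_{C}\cong\OO_C(p)\oplus\OO_C(q)$. If you want to keep your fiberwise setup, replace step (5) by this parity argument; no appeal to $\Ext^1(E_0,E_0)=0$ is needed anywhere.
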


\begin{proof}
For the primitive and isotropic Mukai vector $\u:=(0,C,1)$,   $\ell(\u)=2$ so that $M_\omega(\u,C)$ is a fine moduli space
which is isomorphic to $X$, where $\omega$ is general.
As in \cite{Bri98}, there is an elliptic fibration $M_\omega(\u,C) \to \P^1$ that comes from regarding $M_\omega(\u,C)$ as a smooth compactification of the relative Picard scheme $\Pic^1(X/\P^1) \to \P^1$ of degree 1.
We shall identify $M_\omega(\u,C) \to \P^1$ with
the elliptic fibration $X \to \P^1$.

For a universal family $\EE$ of $M_\omega(\u,C)$, we have a relative Fourier-Mukai transform
$\Phi_{X \to X}^{\EE^{\vee}}: \Db(X) \to \Db(X)$.
Then $F:=\Phi_{X \to X}^{\EE^{\vee}}(E_0)[1]$ is a purely 1-dimensional sheaf
whose support is a double cover of $\P^1$.
Thus for a general fiber $C$ with $F|_C=\C_p \oplus \C_q$, one can check that ${E_0}|_C \cong \OO_C(p) \oplus \OO_C(q)$.
\end{proof}

Let $L$ be a line bundle on a smooth fiber $C \in |2f|$ with $\v(L)=(0,2f,0)$.
We set $L^*:=\EE xt^1_{\OO_X}(L,\OO_X)$.
Then $L^*$ is a line bundle of degree 0 on $C$ so that $\v(L^*)=(0,2f,0)$.  Observe that $$\langle\v(E_0^\vee),\v(L^*)\rangle=\langle(2,2f-\xi,0),(0,2f,0)\rangle=-2<0,$$ so there is necessarily a non-zero homomorphism $\psi:E_0^\vee\to L^*$. 
Moreover we may assume that
$\psi$ is surjective by Lemma \ref{Lem:splitting type}.
We will see in the next two results that these give rise to a component of $M_\omega(\v,L)$ containing $\mu$-stable locally free sheaves.  As $E_0$ is $\mu$-stable with respect to any polarization \cite{Kim94,Qin91}, for any given starting polarization $H_0$, $\{E_0\}= M_{H_0}(\v',L-2f)$.  We will have cause to vary the polarization in the next lemma, but it is important to state that the stability of $E_0$ remains unchanged.
\begin{Lem}\label{Lem:M_1 v^2=2}
For a non-zero homomorphism
$\psi:E_0^{\vee} \to L^*$, 
we set $E:=\RlHom_{\OO_X}(\Cone(\psi),\OO_X)[1]$.
Then $E$ is a $\mu$-stable torsion free sheaf with respect to
$H_0+nf$, $n \gg 0$.
If $\psi$ is surjective, then $E$ is a locally free sheaf.
\end{Lem}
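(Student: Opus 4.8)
The plan is to analyze the cone construction directly and establish the three claims in sequence: that $E$ is torsion-free, that it is $\mu$-stable with respect to $H_0 + nf$ for $n \gg 0$, and that $E$ is locally free when $\psi$ is surjective. First I would compute the Mukai vector of $E$ to confirm $\v(E) = \v = (2,\xi,0)$; this is a routine Chern character bookkeeping using the defining triangle for $\Cone(\psi)$ together with the duality shift $\RlHom_{\OO_X}(\blank,\OO_X)[1]$, which realizes $D_X(\blank)[1]$ on this two-term complex. Since $\psi: E_0^\vee \to L^*$ has a purely one-dimensional target $L^*$ (a degree-zero line bundle on $C \in |2f|$) and $\v(E_0^\vee) = (2, 2f-\xi, 0)$, the cone has $\v(\Cone(\psi)) = (0,2f,0) - (2,2f-\xi,0) = (-2, \xi, 0)$, and applying $D_X[1]$ returns $(2,\xi,0) = \v$, confirming the class is correct.

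**Torsion-freeness and the stability argument.** Next I would show $E$ is a torsion-free sheaf concentrated in degree $0$. The key point is that $\Cone(\psi)$ fits into a triangle $E_0^\vee \to L^* \to \Cone(\psi) \to E_0^\vee[1]$, so its cohomology sheaves are controlled: $\HH^{-1}(\Cone(\psi)) = \ker\psi$ and $\HH^0(\Cone(\psi)) = \coker\psi$. Dualizing and shifting, $E = D_X(\Cone(\psi))[1]$ becomes a genuine sheaf precisely because $\ker\psi$ is a torsion-free (indeed reflexive, being a subsheaf of the locally free $E_0^\vee$) rank-two sheaf and $\coker\psi$ is supported on $C$; I would track the local-to-global $\EE xt$ spectral sequence to verify the only surviving cohomology sits in degree $0$ and has no torsion. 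For $\mu$-stability I would use the perturbation $H_0 + nf$: because $(\xi \cdot f)$ is controlled and $E_0$ is $\mu$-stable with respect to every polarization (as stated, by \cite{Kim94,Qin91}), the elementary transformation along $C \in |2f|$ changes the slope only through the fiber direction, and I would show that any destabilizing saturated subsheaf $F \subset E$ would produce, after dualizing back, a destabilizing subobject of $E_0^\vee$ or a subsheaf of $L^*$ of the wrong degree. Choosing $n \gg 0$ makes the fiber class $f$ dominate the slope computation, ruling out destabilizers whose $c_1$ differs from a multiple of $f$, and reducing the remaining finitely many numerical possibilities to contradictions with the stability of $E_0$.

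**Local freeness and the main obstacle.** Finally, assuming $\psi$ is surjective, I would prove $E$ is locally free. Surjectivity of $\psi: E_0^\vee \onto L^*$ means $\Cone(\psi)[-1] = \ker\psi$ is an honest sheaf (no degree-$0$ cohomology in the cone), so $\Cone(\psi)$ is quasi-isomorphic to $\ker\psi[1]$ up to the $L^*$ contribution; more precisely surjectivity kills $\coker\psi$, leaving $\Cone(\psi)$ with a single cohomology sheaf that is the kernel of a surjection of a locally free sheaf onto a line bundle on a smooth divisor. Such a kernel is locally free (it is an elementary modification of a vector bundle along a smooth divisor, hence reflexive of homological dimension zero), and since $E$ is its dual up to shift, $E = \ker(\psi)^\vee$ is then locally free as well because the dual of a locally free sheaf is locally free and the $\EE xt^1$ obstruction vanishes. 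The main obstacle I anticipate is the $\mu$-stability step: controlling all candidate destabilizing subsheaves after the elementary transformation requires care, since the transformation can a priori create new subsheaves supported near $C$, and I must argue that the choice $n \gg 0$ together with the genericity in Lemma \ref{Lem:splitting type} (which guarantees ${E_0}|_C$ splits with distinct summands) eliminates these. The distinctness $p \ne q$ in Lemma \ref{Lem:splitting type} is precisely what ensures the elementary modification does not split off a subbundle along $C$, and I expect that to be the crux of ruling out the remaining destabilizers.
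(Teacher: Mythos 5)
Your argument for local freeness is correct and genuinely different from the paper's: when $\psi$ is surjective, $\ker\psi$ is an elementary modification of $E_0^{\vee}$ along the smooth curve $C$, hence locally free, and $E=\RlHom_{\OO_X}(\Cone(\psi),\OO_X)[1]=(\ker\psi)^{\vee}$; the paper instead computes $\lExt^i(E,\OO_X)$ and quotes \cite[Prop.\ 1.1.10]{HL10} to get reflexivity, hence local freeness for a rank-two sheaf on a surface. Your torsion-freeness step, however, is imprecise at the one point that matters: what must be shown is that $\lExt^1(E,\OO_X)=\HH^0(\Cone(\psi))=\coker\psi$ is supported in dimension \emph{zero} (true because $\im\psi$ is a nonzero subsheaf of the line bundle $L^*$ on the integral curve $C$, so the quotient has finite length); saying $\coker\psi$ is ``supported on $C$'' is not enough --- were its support one-dimensional, the same criterion would show $E$ is \emph{not} torsion free. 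Also, the relevant tool is not the local-to-global Ext spectral sequence but simply dualizing the defining triangle, which in one stroke gives $\RlHom(E,\OO_X)=\Cone(\psi)[-1]$ and the short exact sequence $0\to E_0\to E\to L\to 0$; this sequence is the backbone of the stability argument and never appears in your sketch.

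The genuine gap is in the $\mu$-stability step. ``Choosing $n\gg0$'' is only meaningful if you produce a bound on $n$ that is uniform over all rank-one subsheaves $F\subset E$; otherwise the destabilizer may vary with $n$ and the argument is circular. The paper achieves uniformity as follows: any rank-one $F\subset E$ meets $E_0$ in a rank-one subsheaf (the quotient $L$ is torsion), and since $E_0$ is $\mu$-stable with respect to \emph{every} polarization $H_0+nf$, $n\ge 0$, one extracts two facts --- letting $n\to\infty$ gives $2(c_1(E_0\cap F),f)\le(c_1(E_0),f)=1$, hence $(c_1(E_0\cap F),f)\le 0$ and so $(c_1(F),f)\le 0$ because $c_1(F)-c_1(E_0\cap F)\in\{0,2f\}$; and $n=0$ gives $2(c_1(E_0\cap F),H_0)<(c_1(E_0),H_0)$. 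Together these yield $2(c_1(F),H_0+nf)<(c_1(E),H_0+nf)$ for every $F$ once $n>(4f,H_0)$, a bound independent of $F$. Your sketch contains no substitute for this mechanism: destabilizers do not fall into ``finitely many numerical possibilities''; the phrase ``ruling out destabilizers whose $c_1$ differs from a multiple of $f$'' does not correspond to any actual dichotomy; and Lemma \ref{Lem:splitting type} plays no role whatsoever in the stability proof --- in the paper it is used only beforehand, to guarantee that a \emph{surjective} $\psi$ exists, whereas the present lemma asserts stability for every nonzero $\psi$, surjective or not. (Your dual-side strategy could be salvaged: $E^{\vee}=\ker\psi\subset E_0^{\vee}$, and the same two-fold use of the stability of $E_0^{\vee}$ along the ray $H_0+nf$ bounds $(c_1(G),f)$ and $(c_1(G),H_0)$ uniformly for rank-one $G\subset\ker\psi$; but that uniformity argument is exactly what is missing, not a detail.)
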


\begin{proof}
We first show that $E$ is torsion free, and locally free if $\psi$ is surjective.  Dualizing the exact triangle $$E_0^\vee\mor[\psi]L^*\to\Cone(\psi)\to E_0^\vee[1],$$ shifting by 1, and taking cohomology sheaves, we immediately see that $E$ is a coherent sheaf fitting into an exact sequence
\begin{equation}
0 \to E_0 \to E \to L \to 0.
\end{equation} 
Moreover, as $\RlHom(E,\OO_X)=\Cone(\psi)[-1]$, we get that $\lExt^1(E,\OO_X)=\HH^{0}(\Cone(\psi))=\coker(\psi)$, which is supporded in codimension two, and $\lExt^2(E,\OO_X)=0$, so
$E$ is torsion free by \cite[Proposition 1.1.10]{HL10}.  Moreover, if $\psi$ is surjective, so that $\coker(\psi)=0$, then $E$ is reflexive (by \cite[\textit{ibid.}]{HL10}) and thus locally free.

Now we show that $E$ is $\mu$-stable.  Let $F$ be a subsheaf of $E$ with $\rk F=1$.
Then $E_0 \cap F$ is a  rank 1 subsheaf of $E_0$.
Since $E_0$ is $\mu$-stable for any ample divisor,
$$
2(c_1(E_0 \cap F),H_0+nf)<(c_1(E_0),H_0+nf)\;\;(n \geq 0).
$$
Hence $2(c_1(E_0 \cap F),f) \leq (c_1(E_0),f)=1$ and
$2(c_1(E_0 \cap F),H_0)<(c_1(E_0),H_0)$.
In particular
$(c_1(E_0 \cap F),f) \leq 0$.
As $c_1(F)=c_1(E_0\cap F)+2f$, $(c_1(F),f) \leq 0$.
If $n>(4f,H_0)$, then $2(c_1(F),H_0+nf)<(c_1(E),H_0+nf)$.
Therefore $E$ is $\mu$-stable with respect to $H_0+nf$ for $n\gg 0$, as claimed.
\end{proof}
Considering the irreducible component containing the sheaves $E$ constructed in \cref{Lem:M_1 v^2=2}, we get the following result.
\begin{Cor}
For $\omega=H_0+n f$ ($n \gg 0$),
there is an irreducible component $M_0$ of $M_\omega(\v,L)$ 
which contains a $\mu$-stable locally free sheaf.
\end{Cor}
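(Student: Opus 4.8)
The plan is to obtain the corollary as an immediate packaging of Lemma~\ref{Lem:M_1 v^2=2} together with the construction preceding it: all of the substantive work (the verification of $\mu$-stability and of local freeness) is already contained in that lemma, and what remains is to exhibit a single such sheaf as a point of $M_\omega(\v,L)$ and to invoke the fact that any point of a projective scheme lies on an irreducible component.

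Concretely, I would first fix a \emph{general} smooth fiber $C\in|2f|$, so that by Lemma~\ref{Lem:splitting type} the restriction $E_0|_C$ splits as $\OO_C(p)\oplus\OO_C(q)$ with $p\ne q$. Taking a degree-zero line bundle on $C$, viewed as a torsion sheaf $L$ on $X$ with $\v(L)=(0,2f,0)$, and its dual $L^*=\lExt^1_{\OO_X}(L,\OO_X)$, the inequality $\langle\v(E_0^\vee),\v(L^*)\rangle=-2<0$ recorded in the text guarantees a non-zero homomorphism $\psi\colon E_0^\vee\to L^*$; the point of choosing $C$ general is precisely that the splitting of $E_0|_C$ into \emph{distinct} line bundles allows $\psi$ to be taken surjective. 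Applying Lemma~\ref{Lem:M_1 v^2=2} to this surjective $\psi$ produces $E:=\RlHom_{\OO_X}(\Cone(\psi),\OO_X)[1]$, which that lemma asserts is a $\mu$-stable \emph{locally free} sheaf with respect to $\omega=H_0+nf$ for $n\gg 0$.

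It then remains only to confirm that $E$ lies in the prescribed moduli space and to package it. From the defining sequence $0\to E_0\to E\to L\to 0$ one reads off $\v(E)=\v(E_0)+(0,2f,0)=(2,\xi-2f,0)+(0,2f,0)=\v$, and since the pushforward to $X$ of a line bundle on $C\in|2f|$ contributes $\OO_X(2f)$ to the determinant while $\det E_0$ corresponds to $L-2f$, one obtains $\det E=L$; hence $E\in M_\omega(\v,L)$. As $M_\omega(\v,L)$ is projective, I would let $M_0$ be any irreducible component through the point $[E]$, which then contains the $\mu$-stable locally free sheaf $E$, as claimed. I do not anticipate a genuine obstacle: the only delicate inputs — surjectivity of $\psi$ (via Lemma~\ref{Lem:splitting type}) and $\mu$-stability for $n\gg 0$ (via Lemma~\ref{Lem:M_1 v^2=2}) — are already established, so this corollary is essentially bookkeeping, its purpose being to guarantee that the forthcoming component $M_0$ is genuinely distinct from the non-locally-free component $M_1$.
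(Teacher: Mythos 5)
Your proposal is correct and matches the paper's own (largely implicit) argument: the paper constructs $E$ via the surjective $\psi\colon E_0^\vee\to L^*$ (surjectivity supplied by Lemma \ref{Lem:splitting type} and the splitting $\OO_C(p)\oplus\OO_C(q)$ with $p\neq q$), applies Lemma \ref{Lem:M_1 v^2=2} to get a $\mu$-stable locally free sheaf in $M_\omega(\v,L)$, and defines $M_0$ as the irreducible component containing such sheaves. Your additional bookkeeping (computing $\v(E)=\v$ and $\det E=L$ from the sequence $0\to E_0\to E\to L\to 0$) is a correct, if routine, verification that the paper leaves to the reader.
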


The second irreducible component, $M_1$, parametrizes $E$ fitting into an exact sequence
\begin{equation}
0 \to E \to E_0(f) \to \C_x \to 0, \; x \in X.
\end{equation}
In particular, such $E$ are not locally free, so $M_1$ is indeed distinct from $M_0$.  Moreover, as in the previous subsection, $M_1$ has the structure of a $\P^1$-bundle, this time over $X$ (instead of $\Hilb^2(X)$).  Moreover, the fibers of this $\P^1$ bundle are contracted by the Uhlenbeck morphism.

By Lemma \ref{Lem:irred-comp-2}, $M_0$ and $M_1$ are the irreducible components
of $M_\omega(\v,L)$, which shows
Proposition \ref{prop:irred-comp:v^2=2} by Lemma \ref{Lem:rank2}.

\bibliographystyle{plain}
\bibliography{NSF_Research_Proposal}

\end{document}